\theoremstyle{plain}
\newtheorem{thm}{Theorem}[section]
\newtheorem{lemma}[thm]{Lemma}
\newtheorem{claim}[thm]{Claim}
\newtheorem{prop}[thm]{Proposition}
\newtheorem{cor}[thm]{Corollary}
\newtheorem{df-prop}[thm]{Definition-Proposition}
\theoremstyle{definition}
\newtheorem{df}[thm]{Definition}
\theoremstyle{remark}
\newtheorem{rk}[thm]{Remark}
\newtheorem{ex}[thm]{Example}
\numberwithin{equation}{section}
\def\coker{\operatorname{coker}\nolimits}
\def\minj{\operatorname{\!-inj}\nolimits}
\def\mmod{\operatorname{\!-mod}\nolimits}
\def\mproj{\operatorname{\!-proj}\nolimits}
\def\bbC{\mathbb{C}}
\def\bbL{\mathbb{L}}
\def\bbN{\mathbb{N}}
\def\bbP{\mathbb{P}}
\def\bbQ{\mathbb{Q}}
\def\bbR{\mathbb{R}}
\def\bbZ{\mathbb{Z}}
\def\scrA{\mathscr{A}}
\def\scrC{\mathscr{C}}
\def\scrD{\mathscr{D}}
\def\scrE{\mathscr{E}}
\def\scrG{\mathscr{G}}
\def\scrI{\mathscr{I}}
\def\scrL{\mathscr{L}}
\def\scrO{\mathscr{O}}
\def\scrP{\mathscr{P}}
\def\scrQ{\mathscr{Q}}
\def\scrR{\mathscr{R}}
\def\scrS{\mathscr{S}}
\def\scrV{\mathscr{V}}
\def\scrX{\mathscr{X}}
\def\fraks{\mathfrak{s}}
\def\frakl{\mathfrak{l}}
\def\frakL{\mathfrak{L}}
\def\frakM{\mathfrak{M}}
\def\frakP{\mathfrak{P}}
\def\frakS{\mathfrak{S}}
\def\calA{\mathcal{A}}
\def\calC{\mathcal{C}}
\def\calE{\mathcal{E}}
\def\calG{\mathcal{G}}
\def\calH{\mathcal{H}}
\def\calI{\mathcal{I}}
\def\calO{\mathcal{O}}
\def\calQ{\mathcal{Q}}
\def\calR{\mathcal{R}}
\def\calS{\mathcal{S}}
\def\frakL{\mathfrak{L}}
\def\frakb{\mathfrak{b}}
\def\frakg{\mathfrak{g}}
\def\frakh{\mathfrak{h}}
\def\frakl{\mathfrak{l}}
\def\frakm{\mathfrak{m}}
\def\frakn{\mathfrak{n}}
\def\frakp{\mathfrak{p}}
\def\fraku{\mathfrak{u}}
\def\frakt{\mathfrak{t}}
\def\fraksl{\mathfrak{sl}}
\def\frakgl{\mathfrak{gl}}
\def\bfone{\mathbf{1}}
\def\bfa{\mathbf{a}}
\def\bfb{\mathbf{b}}
\def\bfc{\mathbf{c}}
\def\bfg{\mathbf{g}}
\def\bfi{\mathbf{i}}
\def\bfj{\mathbf{j}}
\def\bfm{\mathbf{m}}
\def\bfp{\mathbf{p}}
\def\bft{\mathbf{t}}
\def\bfw{\mathbf{w}}
\def\bfL{\mathbf{L}}
\def\bfR{\mathbf{R}}
\def\bfS{\mathbf{S}}
\def\bfH{\mathbf{H}}
\def\bfM{\mathbf{M}}
\def\bfO{\mathbf{O}}
\def\bft{\mathbf{t}}
\def\bfA{\mathbf{A}}
\def\bfF{\mathbf{F}}
\def\bfP{\mathbf{P}}
\def\bfT{\mathbf{T}}
\def\bfV{\mathbf{V}}
\def\lam{\lambda}
\def\Lam{\Lambda}
\def\al{\alpha}
\def\geqs{\geqslant}
\def\leqs{\leqslant}
\def\det{{{\rm{det}}}}
\def\llangle{{\langle\!\langle}}
\def\rrangle{{\rangle\!\rangle}}
\def\MMod{{{\mbox{-}}\!\operatorname{Mod}}}
\def\ie{{\em i.e.}}
\def\iso{{\buildrel\sim\over\to}}
\def\cas{{\operatorname{cas}\nolimits}}
\def\diag{{\operatorname{diag}\nolimits}}
\def\afcas{{\mathbf{cas}}}
\def\Bbbk{{\operatorname{k}\nolimits}}
\def\BGG{\operatorname{\leqslant_{\tiny \rm{b}}}}
\def\lub{{{}^\beta}}
\def\Frac{\operatorname{Frac}\nolimits}
\def\tilt{{\operatorname{tilt}\nolimits}}
\def\Hom{\operatorname{Hom}\nolimits}
\def\End{\operatorname{End}\nolimits}
\def\Mat{\operatorname{Mat}\nolimits}
\def\min{{\operatorname{min}\nolimits}}
\def\wt{\operatorname{wt}\nolimits}
\def\ct{{\operatorname{cont}\nolimits}}
\def\cone{{\operatorname{cone}\nolimits}}
\def\op{{{\operatorname{{op}}\nolimits}}}
\def\Ext{\operatorname{Ext}\nolimits}
\def\Spec{\operatorname{Spec}\nolimits}
\def\Res{\operatorname{Res}\nolimits}
\def\Ind{\operatorname{Ind}\nolimits}
\def\Indc{\scrI\!{nd}}
\def\Coker{\operatorname{Coker}\nolimits}
\def\Ker{\operatorname{Ker}\nolimits}
\def\proj{{\operatorname{proj}\nolimits}}
\def\inj{{\operatorname{inj}\nolimits}}
\def\IM{\operatorname{IM}\nolimits}
\def\KZ{\operatorname{KZ}\nolimits}
\def\Im{\operatorname{Im}\nolimits}
\def\Id{\operatorname{Id}\nolimits}
\def\id{\operatorname{id}\nolimits}
\def\Irr{\operatorname{Irr}\nolimits}
\def\res{\operatorname{res}\nolimits}
\def\pro{{\operatorname{{{\lim\limits_{\leftarrow}}}}}}
\def\ind{{\operatorname{{{\lim\limits_{\rightarrow}}}}}}
\def\t{{\tau}}
\def\bfwt{{\bfw\bft}}
\def\OInd{{}^\calO\!\mathrm{\Ind}}
\def\ORes{{}^\calO\!\mathrm{\Res}}
\def\HInd{{}^\bfH\!\mathrm{\Ind}}
\def\HRes{{}^\bfH\!\mathrm{\Res}}
\def\Otimes{{\textstyle{\bigotimes}}}
\author{R. Rouquier, P. Shan, M. Varagnolo, E. Vasserot}
\email{rouquier@math.ucla.edu}
\email{peng.shan@unicaen.fr, pengshan@mit.edu}
\email{michela.varagnolo@u-cergy.fr}
\email{vasserot@math.jussieu.fr}
\title
[]
{categorifications
and cyclotomic rational double affine Hecke algebras}
\begin{document}
\begin{abstract}
Varagnolo and Vasserot conjectured an equivalence between the category $\calO$
for CRDAHA's and a subcategory of
an affine parabolic category O of type $A$. We prove this conjecture. 
As applications, we  prove a
conjecture of Rouquier on the dimension of simple modules of CRDAHA's 
and a conjecture of Chuang-Miyachi on the Koszul duality for the category $\calO$ of CRDAHA's.
\end{abstract}
\thanks{This research was partially supported by the ANR grant number ANR-10-BLAN-0110. The first author was partially supported by the NSF grant DMS-1161999.}

\maketitle

\setcounter{tocdepth}{3}

\tableofcontents

\section{Introduction}

Rational Double affine Hecke algebras (RDAHA for short) have been introduced by 
Etingof and Ginzburg in 2002.
They are associative algebras associated with a complex reflection group $W$ and  a parameter $c$.
Their representation theory is
similar to the representation theory of semi-simple Lie algebras. In particular, they admit a category 
$\calO$ which is 
analogous to the BGG category O. This category is highest weight with the standard modules labeled by irreducible representations of $W$. 
Representations in $\calO$ are infinite dimensional in general, but they admit a character.
An important question is to determine the characters of simple modules.

One of the most important family of RDAHA's is the cyclotomic one (CRDAHA for short),
where $W=G(\ell,1,n)$ is the wreath product
of $S_n$ and $\bbZ/\ell\bbZ$. One reason is that the representation theory of CRDAHA's
is closely related to the representation theory of Ariki-Koike algebras, and that the latter are important
in group theory. Another reason is that the category $\calO$ 
of CRDAHA's is closely related to the representation  
theory of affine Kac-Moody algebras, see e.g. \cite{E}, \cite{SV}, \cite{VV}.
A third reason, is that this category has a very rich structure called a categorical action of an affine Kac-Moody 
algebra. This action on $\calO$ was constructed previously in \cite{S}.
Such structures have been introduced recently in representation theory and have already had remarkable 
applications, see e.g., \cite{CR}, \cite{KhL}, \cite{R2}.

The structure of $\calO$ depends heavily on the parameter $c$. 
For generic  values of $c$ the category is semi-simple. The most non semi-simple case (which is also the most 
complicated one) occurs when $c$ takes a particular form of rational numbers, see \eqref{eq:paarameters}. 
For these parameters Rouquier made a conjecture to determine the characters of simple modules in 
$\calO$ \cite{R1}.
Roughly speaking, this conjecture says that the Jordan-H\"older multiplicities of the standard 
modules in $\calO$ are given by some parabolic Kazhdan-Lusztig polynomials.
This conjecture was known to be true in the particular case $\ell=1$ \cite{R1}.
Motivated by this conjecture, Varagnolo-Vasserot introduced in \cite{VV} a new category $\bfA$ which is a 
subcategory of an affine parabolic category $\bfO$ at a negative level and should be viewed as an affine and 
higher level analogue of the category of polynomial
representations of $GL_N$. 
They conjectured that there should be an equivalence of highest weight categories between 
$\calO$ and $\bfA.$

In this paper we prove Varagnolo-Vasserot's conjecture (Theorem \ref{thm:main3}). A first consequence is a proof of 
Rouquier's conjecture (Theorem \ref{thm:rouquier}).
A second remarkable application is a proof that the category $\calO$ is Koszul (Theorem \ref{thm:chuang-miyachi}), 
yielding a proof of a conjecture of Chuang-Miyachi \cite{CM}, because
the affine parabolic category $\bfO$ is Koszul by  \cite{SVV}.

Our proof is based on an extension of Rouquier's theory of highest weight covers developed in \cite{R1}. 
Basically, it says that two highest weight covers of the same algebra are equivalent as highest weight 
categories if they satisfy a so called \emph{$1$-faithful} condition and if the highest weight orders on both covers are compatible. 
Here, we'll prove that
such an equivalence holds in a situation where the covers are only $0$-faithful (see Proposition \ref{prop:key}).

The category $\calO$ is a highest weight cover over 
the module category $\calH$ of the Ariki-Koike algebra via the KZ functor introduced in \cite{GGOR}.
It is a $0$-faithful cover and if the parameters of the  RDAHA satisfy some technical 
condition, then it is even $1$-faithful.
A similar functor $\Phi:\bfA\to\calH$ was introduced in \cite{VV} using the Kazhdan-Lusztig fusion product on 
the affine category $\bfO$ at a negative level.
A previous work of Dunkl and Griffeth \cite{DG} allows to show without much difficulty that there is a highest 
weight order on $\calO$ which refines the linkage order on $\bfA$. A difficult part of the proof consists of 
showing that the functor $\Phi$ is indeed a cover, meaning that it is an exact quotient functor, and that it has the same faithfulness properties as the KZ functor. Once this is done, the equivalence between $\calO$ and $\bfA$ follows directly from the unicity of $1$-faithful covers if the technical condition on parameters mentioned above is satisfied.
To prove the equivalence without this condition, we 
need to replace KZ and $\Phi$ by some other covers, see the end of the introduction for more details on this.

A key ingredient in our proof is a deformation argument. More precisely, the highest weight
categories $\bfA$, $\bfO$ admit deformed versions over a regular local ring $R$ of dimension 2. 
Some technical results prove that the 
Kazhdan-Lusztig tensor product can also be deformed properly, which allows us to define the deformed version of $\Phi$. Next, a theorem of 
Fiebig asserts that the structure of the category $\bfO$ of a Kac-Moody algebra only depends on the associated Coxeter system \cite{F2}. 
In particular, the localization of $\bfA$
at a height one prime ideal $\frakp\subset R$ can be described in simpler terms. Two cases appear, either $\frakp$ is subgeneric or generic. In the first case, 
considered in \S \ref{sec:red1}, the category $\bfA$ reduces to an analog subcategory $A$ inside the parabolic category $\scrO$ of $\mathfrak{gl}_N$ 
with 2 blocs. The latter is closely 
related to the higher level Schur-Weyl duality studied by Brundan and Kleshchev in \cite{BK1}. In the second case,
considered in \S \ref{sec:red2}, the category $\bfA$ reduces to the corresponding category for $\ell=1$, which is precisely the Kazhdan-Lusztig category 
associated with affine Lie algebras at negative levels.
Finally, we show that to prove the desired properties of the functor $\Phi$ it is enough to check them for the localization of $\Phi$ at each 
height one prime ideal $\frakp$ and this permits to conclude.

Now, let us say a few words concerning the organization of this paper.

Section 2 contains some basic facts on highest weight categories and some developments on the theory of highest weight covers in \cite{R1}.

Section 3  is a reminder on Hecke algebras, q-Schur algebras and categorifications. 

Section 4 contains basic facts on the parabolic category $\scrO$ of $\frakg\frakl_N$ and the subcategory $A\subset\scrO$ introduced in \cite{BK1}. 
The results in \cite{BK1} are not enough for us since we need to consider
a deformed category $A$ with \emph{integral} deformation parameters. The new material is gathered in \S \ref{sec:2blocs}.

In Section 5 we consider the affine parabolic category $\bfO$ (at a negative level).
The monoidal structure on $\bfO$ is defined later in \S \ref{sec:KL}.
Using this monoidal structure we construct a categorical action on $\bfO$
in \S \ref{sec:categorification}. Then, we define the subcategory $\bfA\subset\bfO$
in \S \ref{sec:BKKB}.
The rest of the section is devoted to the deformation argument and the proof that $\bfA$ is a highest weight cover 
of the module category of a cyclotomic Hecke algebra satisfying some faithfulness conditions.

In Section 6 we first give a reminder on the category $\calO$ of
CRDAHA's, following \cite{GGOR}, \cite{R1}.
Then, we prove our main theorems in \S \ref{sec:comparison1}, \ref{sec:comparison2}
using the results from \S \ref{sec:refined}.
This yields a proof of Varagnolo-Vasserot's conjecture \cite{VV}.
For the clarity of the exposition we separate the cases of rational and irrational levels,
although both proofs are very similar.

In Section 7 we give some applications of our main theorem, including proofs for Rouquier's conjecture 
and Chuang-Miyachi's conjecture.

Section 8 is a reminder on the Kazhdan-Lusztig tensor product on the affine category 
$\bfO$ at a negative level.
We generalize their construction in order to get a monoidal structure on arbitrary parabolic categories,
deformed over an analytic 2-dimensional regular local ring.
Several technical results concerning the Kazhdan-Lusztig tensor product are postponed to the appendix.

To finish, let us explain the relation of this work with other recent works.

The case of irrational level (proved in Theorem \ref{thm:comparisonI})
was conjectured in \cite[rem.~8.10$(b)$]{VV}, as a degenerate analogue of the main conjecture
\cite[conj.~8.8]{VV}. There, it was mentioned that it should follow from \cite[thm.~C]{BK1}.
In the dominant case, this has been proved recently \cite[thm.~6.9.1]{GL}.

While we were writing this paper I. Losev made public several papers with some overlaps with ours.
In \cite{L1}, \cite{L2} he developed a general formalism of categorical actions on highest weight categories.
Then, he used this formalism in \cite{L3} to  prove that the category $\bfA$ is equipped 
with a categorical action, induced by the categorical action on $\bfO$ introduced in \cite{VV}
(using the Kazhdan-Lusztig fusion product).
The categorical action on $\bfA$ gives an independent proof of Theorem \ref{thm:isom} $(a)$, $(b)$.
Finally, he proposed a combinatorial approach to prove that $\bfA$ is a 1-faithful highest weight cover of the cyclotomic Hecke algebra
under some technical condition on the parameters of the CRDAHA. 

A first version of our paper was announced in July 2012 and has been presented at several occasions since then.
There, we proved this 1-faithfulness for $\bfA$ (and the Varagnolo-Vasserot's conjecture) under the same condition on the parameters by a deformation argument similar, but weaker, to the one used in the present paper. 

The proof which we give in this article avoids this technical condition on the parameters.
It uses an idea introduced later, in \cite{L3}. 
There, I. Losev replaces the highest weight cover $\bfA$ of  the cyclotomic Hecke algebra $\bfH$ by
a highest weight cover, by $\bfA$, of a bigger algebra than $\bfH$, which has better properties.

After this paper was written, B. Webster sent us a copy of a preliminary
version of his recent preprint \cite{Wb} proposing another proof of Rouquier's
conjecture which does not use the affine parabolic category $\bfO$.

Note that our construction does not use any categorical action on $\bfA$.
It only uses representation theoretic arguments.
However, since Theorem \ref{thm:main3} yields an equivalence between $\bfA$ and $\calO$, 
we can recover a categorical action on $\bfA$ from our theorem and
the main result of \cite{S}. This is explained in \S \ref{sec:7.4}.

\vspace{3mm}

\section{Highest weight categories}
\label{diag1}
In the paper the symbol $R$ will always denote a noetherian commutative domain (with 1).
We denote by $K$ its fraction field. When $R$ is a local ring, we denote
by $\Bbbk$ its residue field and  by $\frakm$ its maximal ideal.

\subsection{Rings and modules}
For any $R$-module $M,$ let $M^*=\Hom_R(M,R)$ denote the dual module.
An \emph{$S$-point} of $R$ is a morphism
$\chi:R\to S$ of commutative rings with 1.
If $\chi$ is a morphism of local rings, we say that it is a \emph{local $S$-point}.
We write $SM=M(\chi)=M\otimes_RS$. 
If $\phi$ is a $R$-module homomorphism, we abbreviate also
$S\phi=\phi\otimes_RS$.

Let $\frakP$, $\frakM$ be the spectrum and the maximal spectrum of $R$.
Let $\frakP_1\subset\frakP$ be the subset of height 1 prime ideals.
For each $\frakp\in\frakP,$ let $R_\frakp$ denote the localization of $R$ at $\frakp$.
The maximal ideal of $R_\frakp$ is $\frakm_\frakp=R_\frakp\,\frakp$ and
its residue field is $\Bbbk_\frakp=\Frac(R/\frakp)$.

A \emph{closed $\Bbbk$-point} of $R$ is a quotient 
$R\to R/\frakm=\Bbbk$ where $\frakm\in\frakM$.
To unburden the notation we may write $\Bbbk\in\frakM$.

A \emph{finite projective} $R$-algebra is an $R$-algebra which is finitely generated and projective 
as an $R$-module.

We will mainly be interested in the case where $R$ is a local ring.
In this case, any projective module is free by Kaplansky's theorem.
Therefore, we'll use indifferently the words free or projective.

\vspace{3mm}

\subsection{Categories}\label{ss:notation}
Given $A$ a ring, we denote by $A^\op$ the opposite ring in which the order of
multiplication is reversed. Given $\scrC$ is a category, let $\scrC^\op$ be the opposite
category.

An {\it $R$-category} $\scrC$ is an additive
category enriched over the tensor category of $R$-modules.
All the functors $F$ on
$\scrC$ are assumed to be $R$-linear.
We denote the identity element in the endomorphism ring $\End(F)$ again by $F$.
We denote the identity functor on $\scrC$ by $1_\scrC$.
We say that $\scrC$ is {\it Hom-finite} if the Hom spaces are finitely
generated over $R$. 
If the category $\scrC$ is abelian or exact, let $K_0(\scrC)$ be the Grothendieck group and write
$[\scrC]=K_0(\scrC)\otimes_\bbZ\bbC$.
If $\scrC$ is additive, it is an exact category with split exact sequences and $[\scrC]$
is the complexified split Grothendieck group.
Let $[M]$ denote the class of an object $M$ of $\scrC.$

\smallskip

Assume now that $\scrC$ is abelian and has enough projectives.
We say that an object $M\in\scrC$
 is projective over $R$ if $\Hom_\scrC(P,M)$ is a projective
$R$-module for all projective objects $P$ of $\scrC$.
The full subcategory $\scrC\cap R\mproj$ of objects of $\scrC$ projective over $R$ is
an exact subcategory and the canonical functor
$D^b(\scrC\cap R\mproj)\to D^b(\scrC)$ is fully faithful.
An object $X\in\scrC$ 
which is projective over $R$ is \emph{relatively $R$-injective} if  $\Ext^1_\scrC(Y,X)=0$
for all objects $Y$ of $\scrC$ that are projective over $R$.

If $\scrC$ is the category $A\mmod$ of finitely generated (left) modules over a finite
projective $R$-algebra
$A$, then an object $X\in\scrC$ is projective over $R$ if and only if it is
projective as an $R$-module. It is
relatively $R$-injective if in addition
the dual $X^*=\Hom_R(X,R)$ is a projective right $A$-module.
If there is no risk of confusion we will say injective instead of relatively
$R$-injective. We put $\scrC^*=A^\op\mmod$. The functor
$\Hom_R(\bullet,R):\scrC^\op\to\scrC^*$ restricts to an equivalence of exact
categories $\scrC^\op\cap R\mproj\iso\scrC^*\cap R\mproj$.

\smallskip
We denote by
$\Irr(\scrC)$
the sets of isomorphism classes of simple objects of $\scrC$.
Let $\scrC^\proj,\scrC^\inj\subset\scrC$ be the full subcategories of projective
and of relatively $R$-injective objects.
If $\scrC=A\mmod$, we abbreviate $\Irr(A)=\Irr(\scrC)$,
$A\mproj=\scrC^\proj$ and $A\minj=\scrC^\inj$.

Given an $S$-point $R\to S$ and $\scrC=A\mmod$, we can form the $S$-category
$S\scrC=SA\mmod$. Given another $R$-category $\scrC'$ as above and an exact ($R$-linear)
functor
$F:\scrC\to\scrC'$, then $F$ is represented by a projective object $P\in\scrC$. 
We set $SF=\Hom_{S\scrC}(SP,\bullet):S\scrC\to S\scrC'$.

Let $\scrA$ be a \emph{Serre} subcategory of $\scrC$.
The canonical embedding functor $h:\scrA\to\scrC$
has a left adjoint $h^\ast$ which takes an object $M$ in $\scrC$ to its maximal quotient in $\scrC$ which 
belongs to $\scrA$. It admits also a
right adjoint $h^!$ which takes an object $M$ in $\scrC$ to its maximal subobject in $\scrC$ which belongs to 
$\scrA$. The functor
$h^\ast$ is right exact, while $h^!$ is left exact. The functor $h$ is fully faithful. 
Hence the adjunction morphisms
$h^\ast h\to 1_\scrA$ and $1_\scrA\to h^!h$
are isomorphisms.
By definition, the adjunction morphisms
$1_\scrC\to h h^\ast$ and $hh^!\to 1_\scrC$ are respectively an epimorphism and a monomorphism. 

Here, and in the rest of the paper, we use the
following notation : a composition of functors $E$ and $F$ is written as $EF$
while a composition of morphisms of
functors $\psi$ and $\phi$ is written as $\psi\circ\phi$.

\vspace{3mm}

\subsection{Highest weight categories over local rings}
Let $R$ be  a commutative local ring.
We recall and complete some basic facts about highest
weight categories over $R$ (cf \cite[\S 4.1]{R1} and \cite{CPS},
 \cite[\S 2]{DuSc}).

Let $\scrC$ be an abelian $R$-category which is equivalent to the
category $A\mmod$ of finitely generated modules over a finite projective $R$-algebra $A$.

The category $\scrC$ is a \textit{highest weight $R$-category} if it is equipped
with a poset of isomorphism classes of objects
$(\Delta(\scrC),\leqslant)$ called the \emph{standard objects}
satisfying the following conditions:
\begin{itemize}
\item the objects of $\Delta(\scrC)$ are projective over $R$
\item given $M\in\scrC$ such that $\Hom_{\scrC}(D,M)=0$ for all $D\in\Delta(\scrC)$,
we have $M=0$
\item given $D\in\Delta(\scrC)$, there is $P\in\scrC^\proj$ and a surjection 
$f:P\twoheadrightarrow D$ such that $\ker f$ has a (finite) filtration whose successive
quotients are objects $D'\in\Delta$ with $D'>D$.
\item given $D\in\Delta$, we have $\End_{\scrC}(D)=R$
\item given $D_1,D_2\in\Delta$ with $\Hom_{\scrC}(D_1,D_2)\not=0$, we have $D_1\le D_2$.
\end{itemize}

The partial order $\leqslant$ is called the \emph{highest weight order} of $\scrC$.
We write $\Delta(\scrC)=\{\Delta(\lambda)\}_{\lambda\in\Lambda}$, for
$\Lambda$ an indexing poset. Note that if $\le'$ is an order coarser than $\le$ (ie,
$\lambda\le\mu$ implies $\lambda\le'\mu$), then $\scrC$ is also a highest weight category
relative to the order $\le'$.

An {\it equivalence of highest weight categories}
$\scrC'\xrightarrow{\sim}\scrC$ is an equivalence which induces a bijection
$\Delta(\scrC')\xrightarrow{\sim}\Delta(\scrC)$.
A \emph{highest weight subcategory} is a full Serre subcategory
$\scrC'\subset\scrC$ that is a highest weight category with poset
$\Delta(\scrC')$ an ideal of $\Delta(\scrC)$ (ie, if $D'\in\Delta(\scrC')$,
$D\in\Delta(\scrC)$ and $D'<D$, then $D'\in\Delta(\scrC')$).

Highest weight categories come with associated projective, injective, tilting and
costandard objects, as described in the next proposition.

\begin{prop}
Let $\scrC$ be a highest weight $R$-category. Given $\lambda\in\Lambda$, there
are indecomposable objects 
 $P(\lambda)\in\scrC^\proj$, $I(\lambda)\in\scrC^\inj$,
 $T(\lambda)\in\scrC$ and $\nabla(\lambda)\in\scrC$
(the projective, injective,
tilting and costandard objects associated with $\lambda$),
unique up to isomorphism such that
\begin{itemize}
\item[{\small($\nabla$)}] $\Hom_{\scrC}(\Delta(\mu),\nabla(\lambda))\simeq \delta_{\lambda\mu}R$
and $\Ext^1_{\scrC}(\Delta(\mu),\nabla(\lambda))=0$ for all $\mu\in\Lambda$,
\item[{\small($P$)}] there is a surjection $f:P(\lambda)\twoheadrightarrow\Delta(\lambda)$ 
such that $\ker f$ has a filtration whose successive
quotients are $\Delta(\mu)$'s with $\mu>\lambda$,
\item[{\small($I$)}] there is an injection $f:\nabla(\lambda)\hookrightarrow I(\lambda)$
such that $\mathrm{coker} f$ has a filtration whose successive
quotients are $\nabla(\mu)$'s with $\mu>\lambda$,
\item[{\small($T$)}] there is an injection $f:\Delta(\lambda)\hookrightarrow T(\lambda)$
and a surjection $g:T(\lambda)\twoheadrightarrow \nabla(\lambda)$ such that
$\mathrm{coker} f$ (resp. $\ker g$) has a 
filtration whose successive
quotients are $\Delta(\mu)$'s (resp. $\nabla(\mu)$'s) with $\mu<\lambda$.
\end{itemize}

We have the following properties of those objects.

\medskip
\noindent
$\bullet\ $ $\nabla(\lambda)$, $\Delta(\lambda)$, $P(\lambda)$, $I(\lambda)$
and $T(\lambda)$ are projective over $R$.

\smallskip
\noindent
$\bullet\ $
Given a commutative local $R$-algebra $S$, then $S\scrC$ is a highest weight 
$S$-category on the poset $\Lambda$ with standard objects
$S\Delta(\lambda)$
and costandard objects $S\nabla(\lambda)$. If $R\to S$ is a local $S$-point,
then the projective, injective and tilting objects associated
with $\lambda$ are $SP(\lambda)$, $SI(\lambda)$ and $ST(\lambda)$.

\smallskip
\noindent
$\bullet\ $
$\scrC^*$ is a highest weight $R$-category on the poset
$\Lambda$ with standard objects $\Delta^*(\lambda)=\nabla(\lambda)^*$ and
with $P^*(\lambda)=I(\lambda)^*$, $I^*(\lambda)=P(\lambda)^*$,
$\nabla^*(\lambda)=\Delta(\lambda)^*$ and $T^*(\lambda)=T(\lambda)^*$.
\end{prop}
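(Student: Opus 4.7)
The plan is to prove the proposition by induction on the (necessarily finite) poset $\Lambda$, following the classical Cline--Parshall--Scott and Dlab--Ringel constructions but carried out inside the $R$-linear category $A\mmod$. The key enabling fact is that since $R$ is local and $A$ is a finite projective $R$-algebra, $A$ is semiperfect: idempotents lift from $A/\frakm A$, projective covers exist, and every finitely generated projective decomposes uniquely into indecomposable projectives with local endomorphism rings. I would then define $P(\lambda)$ as the projective cover of $\Delta(\lambda)$, and prove property $(P)$ by iterating the third axiom and gluing projective covers of the kernel's $\Delta$-sections; the hypothesis $\End_\scrC(\Delta(\mu))=R$ and the triangularity axiom $\Hom(\Delta_1,\Delta_2)\ne 0\Rightarrow\Delta_1\le\Delta_2$ are exactly what one needs to see that the construction stabilises after finitely many steps and that $P(\lambda)$ is indecomposable with $[P(\lambda):\Delta(\lambda)]=1$.

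For $\nabla(\lambda)$ and $I(\lambda)$, the cleanest route is via the dual category $\scrC^*=A^\op\mmod$. I would first verify that $\scrC^*$ satisfies the five axioms of a highest weight $R$-category with the same poset $\Lambda$ and some standard objects $\Delta^\ast(\lambda)$, constructed from the projectives $P(\lambda)$ and their $\Delta$-filtrations: the transpose of the matrix $([P(\mu):\Delta(\nu)])$ is unitriangular, and this suffices to exhibit the requisite filtered surjections onto $\Delta^\ast(\lambda)$ in $\scrC^*$. Then $R$-duality $\Hom_R(-,R)$ restricts to an equivalence $\scrC^\op\cap R\mproj\iso\scrC^*\cap R\mproj$ (already noted in \S\ref{ss:notation}), so defining $\nabla(\lambda):=\Delta^\ast(\lambda)^*$ and $I(\lambda):=P^\ast(\lambda)^*$ automatically yields the filtration in $(I)$ and the orthogonality in $(\nabla)$; here the Ext$^1$-vanishing follows from the $\Delta^\ast$-filtered projective resolution of $\Delta^\ast(\mu)$ in $\scrC^*$ combined with $\Hom$-duality. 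The $R$-projectivity of $\Delta, \nabla, P, I$ is built in: $\Delta(\lambda)$ is projective over $R$ by assumption, $P(\lambda)$ is a summand of $A$ hence projective over $R$, and the dualities transport this to the starred objects.

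Tilting objects are constructed by the universal extension procedure: starting from $\Delta(\lambda)$, at each step I take the universal extension of the current module by $\Delta(\mu)$'s with $\mu<\lambda$ against which Ext$^1$ is non-zero; since $\Lambda$ is finite and the $\Delta(\mu)$ for $\mu<\lambda$ generate a highest weight subcategory, the procedure terminates in a module $T(\lambda)$ that is simultaneously $\Delta$- and $\nabla$-filtered with the appropriate sub/quotient, is indecomposable, and is $R$-projective (as an iterated extension of $R$-projectives). Finally, for the base change statement along a local $S$-point $R\to S$, the axioms transfer because the filtrations witnessing $(P),(I),(T)$ have $R$-projective subquotients, so tensoring with $S$ is exact on them; that $SP(\lambda), SI(\lambda), ST(\lambda)$ remain indecomposable follows from locality of $S$ and the fact that $\End_\scrC(P(\lambda)), \End_\scrC(I(\lambda)), \End_\scrC(T(\lambda))$ are local $R$-algebras, so their base change stays local. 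The last bullet, describing $\scrC^*$, is then just a re-reading of the dual construction used to produce $\nabla$ and $I$. The main obstacle will be the termination and indecomposability arguments for $P(\lambda)$ and $T(\lambda)$ over a general commutative local $R$: these require some care in tracking multiplicities $[M:\Delta(\mu)]$ over $R$, which one can justify by reducing to the residue field $\Bbbk$ via Nakayama and lifting back using $R$-projectivity of all the modules involved.
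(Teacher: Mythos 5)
Your proposal differs from the paper's route: the paper cites Rouquier's \cite[Prop.~4.14 and 4.19]{R1} for the existence of the costandard objects, the $\scrC^*$-structure and the base-change of the highest weight structure, and then supplies only the descending-induction/$\Bbbk$-reduction argument for $P(\lambda)$ and the tilting and base-change statements (via Proposition \ref{prop:introhw}). You instead propose to rebuild everything from scratch -- projective covers, the dual highest weight structure, universal extensions for $T(\lambda)$. That is a legitimate, more self-contained plan, but it contains a genuine error at the foundation.

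Your ``key enabling fact'' -- that $A$ is semiperfect because $R$ is local and $A$ is a finite projective $R$-algebra -- is false at this level of generality. Idempotents of $\Bbbk A=A/\frakm A$ lift to $A$ only when $R$ is henselian (or complete); the proposition is stated for an arbitrary commutative noetherian local $R$, and the paper is careful not to assume henselianness here (it is only imposed later, for analytic algebras in \S\ref{sec:analytic}). Concretely, take $R=\bbC[t]_{(t)}$ and $A=R[x]/(x^2-x-t)$: then $\Bbbk A\cong\Bbbk\times\Bbbk$ has a nontrivial idempotent but $A$ is a domain with none. So projective covers and Krull--Schmidt for $A\mproj$ cannot be cited; they have to be \emph{derived} from the highest weight axioms. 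The workable argument is precisely what you gesture at in your last paragraph and what the paper carries out explicitly: start from \emph{some} $P$ given by axiom~(3), use that $\Hom_\scrC(M,N)$ is $R$-projective when $M$ is projective and that $\Bbbk\Hom_\scrC(M,N)\iso\Hom_{\Bbbk\scrC}(\Bbbk M,\Bbbk N)$, lift the splitting of $\Bbbk P$ over the residue field to maps $f\colon\tilde Q\to P$, $g\colon P\to\tilde Q$ with $\Bbbk(gf)=\id$, and conclude $gf$ is an isomorphism by Nakayama applied to the $R$-projective module $\tilde Q$. This lifts \emph{maps between projectives} along the $\Delta$-filtration, not idempotents of $A$; it both produces the indecomposable $P(\lambda)$ with $\Bbbk P(\lambda)\simeq P_\Bbbk(\lambda)$ and makes $\End_\scrC(P(\lambda))$ local, after which uniqueness and the decomposition of $A$ follow. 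A secondary point: your argument that $SP(\lambda)$ remains indecomposable because ``$\End$ stays local under base change'' tacitly uses that $\Bbbk A$ is split (so $\End_{\Bbbk\scrC}(P_\Bbbk(\lambda))/\rad\cong\Bbbk$ and remains a field after $\Bbbk'\otimes_\Bbbk(-)$); without splitness $\Bbbk'\otimes_\Bbbk\Bbbk E$ need not be local, and this is exactly why the paper invokes splitness at that step. With the semiperfectness claim replaced by the explicit reduction-and-Nakayama argument and the splitness hypothesis made explicit in the base-change step, the remainder of your outline (dual category and duality for $\nabla$, $I$; universal extensions for $T(\lambda)$; exactness of base change on $\Delta$- and $\nabla$-filtered modules) is sound.
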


\begin{proof}
Note that the statements of the proposition are classical when $R$ is a field.

The existence of the objects $\nabla(\lambda)$ giving $\scrC^\op$ the
structure of a highest weight category and satisfying the $\Hom$ and
$\Ext$ conditions is given by \cite[Proposition 4.19]{R1}.
The unicity follows from Lemma \ref{lem:B7} below. The description of the projective,
tilting and injective objects of $\scrC^*$ is clear.

It is shown in \cite[Proposition 4.14]{R1} that $S\scrC$ is a highest
weight category with $\Delta(S\scrC)=S\Delta(\scrC)$. We denote by
$P_S(\lambda)$, $I_S(\lambda)$, etc. the projective, injective, etc. of
$S\scrC$ associated with $\lambda$.

\smallskip
The existence of $P(\lambda)$ is granted in the definition of highest
weight categories.
We show by descending induction on $\lambda$ that 
$\Bbbk P(\lambda)\simeq P_\Bbbk(\lambda)$. This is clear if $\lambda$ is maximal,
for then $P(\lambda)=\Delta(\lambda)$. We have $\Bbbk P(\lambda)=P_\Bbbk(\lambda)
\oplus Q$, where $Q$ is a direct sum of $P_\Bbbk(\mu)$'s with
$\mu>\lambda$. By induction, $P_\Bbbk(\mu)=\Bbbk P(\mu)$, hence $Q$ lifts to
$\tilde{Q}\in\scrC^\proj$, and there are maps $f:\tilde{Q}\to P(\lambda)$
and $g:P(\lambda)\to \tilde{Q}$ such that $\Bbbk(gf)=\id_Q$. Since $R$ is local
and $\tilde{Q}$ is a finitely generated projective $R$-module, we deduce that
$gf$ is an automorphism of $\tilde{Q}$, hence $\tilde{Q}$ is a direct summand
of $P(\lambda)$, so $\tilde{Q}=0$ and $\Bbbk P(\lambda)=P_\Bbbk(\lambda)$.
The unicity of $P(\lambda)$ is then clear, since given $M,N\in\scrC^\proj$,
we have $\Bbbk\Hom_{\scrC}(M,N)\iso \Hom_{\Bbbk\scrC}(\Bbbk M,\Bbbk N)$.

Given $R\to S$ a local point, the residue field $\Bbbk'$ of $S$ is a field
extension
of $\Bbbk$. Since $\Bbbk A$ is a split $\Bbbk$-algebra, it follows that given
$P$ a projective indecomposable $\Bbbk A$-module, then $\Bbbk'P$ is a projective
indecomposable $\Bbbk'A$-module. We deduce that $P_{\Bbbk'}(\lambda)\simeq \Bbbk'\otimes_\Bbbk
\Bbbk P(\lambda)$, hence $P_S(\lambda)\simeq SP(\lambda)$.

The statements about $I(\lambda)$ follow from those about $P(\lambda)$
by duality.

The statements about $T(\lambda)$ are proven in the same way as those
for $P(\lambda)$, using Proposition \ref{prop:introhw}(b) below.
\end{proof}

Note that
$(\scrC, \Delta(\scrC))$ is a highest weight $R$-category if and only if
$(\Bbbk\scrC, \Bbbk\Delta(\scrC))$ is a highest weight $\Bbbk$-category 
and the objects of $\Delta(\scrC)$ are projective over $R$,
see \cite[thm.~4.15]{R1}.
Note also that $\Delta(\lambda)$ has a unique simple quotient
$L(\lambda)$, and $\Irr(\scrC)=\{L(\lambda)\}_{\lambda\in\Lambda}$.


\medskip
Let $\scrC^\Delta$ and $\scrC^\nabla$ be the full 
subcategories of $\scrC$ consisting of the 
{\it $\Delta$-filtered} and {\it $\nabla$-filtered} objects, i.e., 
objects having a finite
filtration whose successive  quotients are standard, costandard respectively.
These are exact subcategories of $\scrC$.
Note that every object of $\scrC^\Delta$ has a finite projective resolution, where
the kernels of the differentials are in $\scrC^\Delta$. As a consequence, the
canonical functor $D^b(\scrC^\Delta)\to D^b(\scrC)$ is fully faithful. Similarly,
the canonical functor $D^b(\scrC^\nabla)\to D^b(\scrC)$ is fully faithful, as
every object of $\scrC^\nabla$ has a finite relatively $R$-injective resolution.

\begin{lemma}\label{deltaequivalence}
Let $\scrC$, $\scrC'$ be highest weight $R$-categories. An exact functor
$\Phi:\scrC\to\scrC'$ which restricts to an equivalence $\Phi:\scrC^{\Delta}\iso{\scrC'}^\Delta$ 
is an equivalence of highest weight categories $\scrC\iso\scrC'$.
\end{lemma}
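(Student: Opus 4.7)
The plan is to show that $\Phi$ is an equivalence of abelian categories $\scrC \iso \scrC'$ by transferring a projective generator across the equivalence on $\scrC^\Delta$, and then to check that standards are preserved via a characterization intrinsic to the exact subcategory $\scrC^\Delta$. Since $\Phi$ is exact and both $\scrC^\Delta$ and ${\scrC'}^\Delta$ carry exact structures inherited from the ambient abelian categories, the assumed equivalence on $\scrC^\Delta$ is automatically an equivalence of exact categories.

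First I would establish that the projective objects of $\scrC$ coincide with the projective objects of $\scrC^\Delta$ viewed as an exact category. One direction is automatic: any $P \in \scrC^\proj$ lies in $\scrC^\Delta$ by axiom $(P)$, and any admissible epi of $\scrC^\Delta$ targeting $P$ is an epi in $\scrC$, hence splits. For the converse, given $P \in \scrC^\Delta$ which is projective in the exact-categorical sense, apply the fact recalled just before the lemma that every object of $\scrC^\Delta$ admits a finite projective resolution in $\scrC$ whose syzygies remain in $\scrC^\Delta$: the first step is an admissible epi $Q \twoheadrightarrow P$ in $\scrC^\Delta$ with $Q \in \scrC^\proj$, which splits, so $P \in \scrC^\proj$. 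The same characterization holds in $\scrC'$. Since an equivalence of exact categories preserves projectivity, $\Phi$ restricts to a bijection $\scrC^\proj \iso {\scrC'}^\proj$ on isomorphism classes.

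Next, pick a basic projective generator $P = \bigoplus_{\lambda \in \Lambda} P(\lambda)$ of $\scrC$. By the previous step $\Phi(P)$ is a projective generator of $\scrC'$, and the full faithfulness of $\Phi|_{\scrC^\Delta}$ yields an $R$-algebra isomorphism $\End_\scrC(P) \iso \End_{\scrC'}(\Phi(P))$. Morita theory realises $\scrC$ and $\scrC'$ as categories of finitely generated modules over these isomorphic endomorphism rings, via $\Hom_\scrC(P, -)$ and $\Hom_{\scrC'}(\Phi(P), -)$. For arbitrary $M \in \scrC$, a projective presentation $P_1 \to P_0 \to M \to 0$ with $P_i \in \scrC^\proj \subset \scrC^\Delta$ shows, by applying the two exact functors $\Hom_\scrC(P, -)$ and $\Hom_{\scrC'}(\Phi(P), -)$ and using the iso on $P_0, P_1$ coming from full faithfulness on $\scrC^\Delta$ together with right-exactness of $\Phi$, that the natural map $\Hom_\scrC(P, M) \to \Hom_{\scrC'}(\Phi(P), \Phi(M))$ is an isomorphism. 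Consequently $\Phi$ corresponds under the Morita equivalences to the tautological functor induced by the ring isomorphism, which is itself an equivalence, so $\Phi : \scrC \iso \scrC'$.

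Finally, to see that $\Phi$ maps $\Delta(\scrC)$ bijectively onto $\Delta(\scrC')$, I would use the following intrinsic characterization of standards inside the exact category $\scrC^\Delta$: an object $D \in \scrC^\Delta$ is isomorphic to some $\Delta(\lambda)$ if and only if $D$ is nonzero, indecomposable, and admits no admissible short exact sequence $0 \to A \to D \to B \to 0$ in $\scrC^\Delta$ with both $A$ and $B$ nonzero. The point is that the classes $\{[\Delta(\lambda)]\}_\lambda$ form a $\bbZ_{\geqslant 0}$-basis of $K_0(\scrC^\Delta)$, so the $\Delta$-filtration length is well-defined and additive on admissible short exact sequences, and the standards are precisely the indecomposable nonzero objects of length one. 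Since this characterization is preserved by the equivalence of exact categories $\Phi|_{\scrC^\Delta}$, we conclude $\Phi(\Delta(\scrC)) = \Delta(\scrC')$, completing the proof that $\Phi$ is an equivalence of highest weight categories. The main technical point is the intrinsic identification of $\scrC^\proj$ inside $\scrC^\Delta$, which is handled by the projective-resolution statement quoted just above the lemma; everything else is formal.
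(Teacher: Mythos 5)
Your proof is correct, but it takes a different route from the paper's. The paper's proof is three lines: from the fact that $\Phi$ identifies $\scrC^\proj$ with ${\scrC'}^\proj$, it passes to the induced equivalence of bounded homotopy categories of projectives, identifies these with $D^b(\scrC)$ and $D^b(\scrC')$ (using that the algebras have finite global dimension), and then invokes $t$-exactness of the exact functor $\Phi$ to recover an equivalence of hearts. You instead run a direct Morita argument: transport a projective generator $P$ across $\Phi$, identify $\End_\scrC(P) \cong \End_{\scrC'}(\Phi P)$ via full faithfulness on $\scrC^\Delta$, and use right-exactness plus agreement on projectives to show that $\Phi$ coincides up to natural isomorphism with the Morita equivalence. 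Both routes rest on the same initial observation that projectives are intrinsically characterized inside $\scrC^\Delta$ (as an exact category) — the paper asserts this implicitly, whereas you spell out the argument carefully using the finite-projective-resolution remark that precedes the lemma. You also make explicit the final step — that $\Phi$ sends $\Delta(\scrC)$ bijectively onto $\Delta(\scrC')$ — via an intrinsic characterization of standards as the nonzero objects of $\scrC^\Delta$ admitting no nontrivial admissible short exact sequence, which works because the $\Delta$-multiplicities (equivalently, the classes $[\Delta(\lambda)]$ in $K_0(\scrC^\Delta)$) are additive and detect the zero object. The paper simply says "we are done" at that point, treating the preservation of standards as immediate; your version is more self-contained. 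The trade-off: the paper's argument is shorter but leans on the identification $K^b(\scrC^\proj) \simeq D^b(\scrC)$, while yours is longer but stays entirely at the level of abelian/exact categories and Morita theory.
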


\begin{proof}
Since $\Phi$ identifies the projective objects in $\scrC$ and $\scrC'$, it induces an equivalence of their bounded homotopy categories, hence
an equivalence $D^b(\scrC)\to D^b(\scrC')$. Since $\Phi$ is exact, we are done.
\end{proof}

Let $\scrC^\tilt=\scrC^\Delta\cap\scrC^\nabla$ be the full 
subcategory of $\scrC$ consisting of the \emph{tilting objects}, i.e., 
the objects which are both $\Delta$-filtered and $\nabla$-filtered.

Let $T=\bigoplus_{\lambda\in\Lambda}T(\lambda)$.
The \emph{Ringel dual} of $\scrC$ is the category
$\scrC^\diamond=\End_\scrC(T)^\op\mmod$. It is a highest weight
category on the poset $\Lambda^\op$.
The functor $\Hom(T,\bullet):\scrC\to\scrC^\diamond$ 
restricts to an equivalence
$\scrR:\scrC^{\nabla}\iso(\scrC^\diamond)^\Delta$,
called the \emph{Ringel equivalence}. We have
$\scrR(\nabla(\lambda))=\Delta^{\!\diamond}(\lambda)$, 
$\scrR(T(\lambda))\simeq P^\diamond(\lambda)$ and
$\scrR(I(\lambda))\simeq T^\diamond(\lambda)$ for $\lambda\in\Lambda$,
see \cite[Proposition 4.26]{R1}.
The highest weight category $\scrC$ is determined, up to equivalence, by $\scrC^\diamond$ and
we put $(\scrC^\diamond)^\blackdiamond=\scrC$.
There is an equivalence of highest weight categories $\scrC\iso
\scrC^{\diamond\diamond}$ such that
the composition 
$$\scrC^\proj\iso (\scrC^{\diamond\diamond})^\proj\xrightarrow[\sim]{\scrR^{-1}}
(\scrC^\diamond)^\tilt\xrightarrow[\sim]{\scrR^{-1}}\scrC^\inj$$
is isomorphic to the Nakayama duality $\Hom_A(\bullet,A)^*$. This provides also
an equivalence of highest weight categories $\scrC^\blackdiamond\iso
\scrC^\diamond$.

Now, for $M\in\scrC$ we set
\begin{equation}
\label{eq:lcdrcd}
\gathered
\mathrm{lcd}_\scrC(M)=
\min\{i\,;\,\exists\mu\in\Lambda, \Ext^i(M,T(\mu))\not=0\},\\
\mathrm{rcd}_\scrC(M)=
\min\{i\,;\,\exists\mu\in\Lambda, \Ext^i(T(\mu),M)\not=0\}.
\endgathered
\end{equation}

\begin{lemma}
Assume $R$ is a field.
Let $\lambda\in\Lambda$. Then
\begin{align*}
\min\{i\,;\, \exists\mu\in\Lambda, \Ext^i(L(\lambda),T(\mu))\not=0\}&=
\min\{i\,;\,\exists\mu\in\Lambda, \Ext^i(L(\lambda),\Delta(\mu))\not=0\}\\
&=\min\{i\,;\,\exists M\in\scrC^\Delta, \Ext^i(L(\lambda),M)\not=0\}
\end{align*}
\end{lemma}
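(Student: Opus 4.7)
\medskip

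Denote the three quantities by $a$, $b$, $c$ respectively (in the order they appear). Since tilting objects and standard objects both belong to $\scrC^\Delta$, the sets defining $a$ and $b$ are contained in the set defining $c$, so $c\leqs a$ and $c\leqs b$. It therefore suffices to prove $b\leqs c$ and $a\leqs b$.

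For $b\leqs c$, I would argue by induction on the length of a $\Delta$-filtration. Fix $M\in\scrC^\Delta$ with $\Ext^i(L(\lambda),M)\neq 0$ and a short exact sequence
\[
0\to\Delta(\mu)\to M\to M'\to 0
\]
with $M'\in\scrC^\Delta$ of shorter filtration length. The associated long exact sequence
\[
\Ext^i(L(\lambda),\Delta(\mu))\to\Ext^i(L(\lambda),M)\to\Ext^i(L(\lambda),M')
\]
forces either $\Ext^i(L(\lambda),\Delta(\mu))\neq 0$ (done) or $\Ext^i(L(\lambda),M')\neq 0$ (apply induction). This shows that any $i$ achieved on some $M\in\scrC^\Delta$ is already achieved on some $\Delta(\mu)$, hence $b\leqs c$, so $b=c$.

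For $a\leqs b$, fix $\mu$ with $\Ext^b(L(\lambda),\Delta(\mu))\neq 0$. By property $(T)$ of Proposition~2.3 there is a short exact sequence
\[
0\to\Delta(\mu)\to T(\mu)\to C\to 0
\]
with $C\in\scrC^\Delta$ filtered by $\Delta(\nu)$'s with $\nu<\mu$. The long exact sequence reads
\[
\Ext^{b-1}(L(\lambda),C)\to\Ext^b(L(\lambda),\Delta(\mu))\to\Ext^b(L(\lambda),T(\mu)).
\]
Since $C\in\scrC^\Delta$ and $b-1<c=b$, the minimality defining $c$ yields $\Ext^{b-1}(L(\lambda),C)=0$, so the second map above is injective and $\Ext^b(L(\lambda),T(\mu))\neq 0$. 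Hence $a\leqs b$, completing the chain $a=b=c$.

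There is no real obstacle here beyond bookkeeping: the only substantive input is properties $(\nabla)$ and $(T)$ of the preceding proposition, together with the long exact sequence for $\Ext$. The argument does not even use that $R$ is a field in an essential way beyond what is needed to speak of $L(\lambda)$, although the statement is phrased in that setting.
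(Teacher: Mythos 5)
Your proof is correct and follows essentially the same route as the paper's: establish $c\leqs a,b$ by inclusion, the equality $b=c$ by the long exact sequence along a $\Delta$-filtration, and then $a\leqs b$ from the short exact sequence $0\to\Delta(\mu)\to T(\mu)\to C\to 0$ together with the vanishing of $\Ext^{b-1}(L(\lambda),C)$. The only cosmetic difference is that the paper compresses the $b=c$ step to ``clear'' and chooses $\mu$ minimal in the last step, which — as your argument shows — is not actually needed.
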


\begin{proof}
Let $c_1$, $c_2$ and $c_3$ be the quantities defined by the terms
involving respectively $T(\mu)$'s, $\Delta(\mu)$'s and $M\in\scrC^\Delta$ in
the first two equalities.
It is clear that $c_1\ge c_2=c_3$.

Take $\mu$ minimal such that $\Ext^{c_2}(L(\lambda),\Delta(\mu))\not=0$.
There is an exact sequence $0\to \Delta(\mu)\to T(\mu)\to
M\to 0$ where $M$ has a filtration with subquotients $\Delta(\nu)$'s where
$\nu<\mu$. We deduce that $\Ext^{c_2}(L(\lambda),T(\mu))\not=0$, hence
$c_1\le c_2$.
\end{proof}

\medskip
Let us recall a few facts on base change for highest weight categories.

\vspace{2mm}

\begin{prop} \label{prop:introhw} 
Let $\scrC$ be a highest weight $R$-category, and let $R\to S$ be a local $S$-point.
For any $M,N\in\scrC$ the following holds :

(a) if $S$ is $R$-flat then $S\Ext^d_\scrC(M,N)=\Ext^d_{S\scrC}(SM,SN)$ for all $d\in\bbN,$

(b) if either $M\in\scrC^\proj$ or ($M\in\scrC^\Delta$ and $N\in\scrC^\nabla$), then we have
$S\Hom_\scrC(M,N)=\Hom_{S\scrC}(SM,SN)$,

(c) if $M$ is $R$-projective then $M\in\scrC^\proj$ (resp.~ $M\in\scrC^\tilt,$ $\scrC^\Delta$, $\scrC^\inj$)
if and only if $\Bbbk M\in\Bbbk\scrC^\proj$ (resp.~ $\Bbbk M\in\Bbbk\scrC^\tilt,$ $\Bbbk\scrC^\Delta$,
$\Bbbk\scrC^\inj$),

(d) if either ($M\in\scrC^\proj$ and $N$ is $R$-projective) or ($M\in\scrC^\Delta$ and $N\in\scrC^\nabla$) 
then $\Hom_\scrC(M,N)$ is $R$-projective.
\end{prop}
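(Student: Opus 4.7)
The plan is to handle the four parts in the order (a), (d), (b), (c), since the later parts will rely on the earlier ones.

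For part (a), I would take a projective resolution $P^\bullet \to M$ in $\scrC$. By the definition of the $S$-version of a functor represented by a projective given in \S\ref{ss:notation}, one has $\Hom_{S\scrC}(SP^i, SN) \simeq S\Hom_\scrC(P^i, N)$ for each $i$. Flatness of $S$ over $R$ then ensures both that $SP^\bullet \to SM$ remains a projective resolution and that $S\otimes_R -$ commutes with the cohomology of $\Hom_\scrC(P^\bullet, N)$, giving the identity.

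For part (d), Case 1 is immediate: $M$ is a direct summand of some $A^n$, hence $\Hom_\scrC(M, N)$ is a direct summand of $\Hom_\scrC(A^n, N) \simeq N^n$, which is $R$-projective whenever $N$ is. For Case 2, I would proceed by induction on the length of the $\Delta$-filtration of $M$. The key point is that axiom ($\nabla$) gives $\Ext^1_\scrC(\Delta(\lambda), \nabla(\mu)) = 0$; a further induction on the $\nabla$-filtration of $N$ upgrades this to $\Ext^1_\scrC(\Delta(\lambda), N) = 0$, and then to $\Ext^1_\scrC(M, N) = 0$. The short exact sequences obtained by applying $\Hom$ to the filtrations therefore remain short exact, and since the base case $\Hom_\scrC(\Delta(\lambda), \nabla(\mu)) = \delta_{\lambda\mu} R$ is $R$-free, an iterated extension of free $R$-modules over the local ring $R$ is free.

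Part (b) will use the same inductive framework as (d). The first case ($M \in \scrC^\proj$) is essentially the definition of the $S$-version of a representable functor. For the second case, the inductive setup reduces the claim to the base case $M = \Delta(\lambda)$, $N = \nabla(\mu)$, where both sides equal $\delta_{\lambda\mu} S$ once $S\Delta(\lambda)$ and $S\nabla(\mu)$ are identified as the standard and costandard objects of $S\scrC$ via the preceding proposition; the short exact sequences used in the induction remain exact after $S \otimes_R -$ precisely because their terms are $R$-projective by (d).

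Part (c), the most delicate, combines Nakayama-style lifting with the preceding results. The forward implications are consequences of base change. For the converses, $R$ being local and $M$ finitely generated, $R$-projective means $R$-free, so Nakayama applies. If $\Bbbk M \in \Bbbk\scrC^\proj$, the preceding proposition supplies $P \in \scrC^\proj$ with $\Bbbk P \simeq \Bbbk M$; I would lift any such isomorphism to $f \colon P \to M$, apply Nakayama to $\coker f$ to see that $f$ is surjective, then lift the splitting of $\Bbbk f$ to obtain a splitting of $f$, so that $M$ is a direct summand of $P$ and hence projective. The injective case is dual via $*$. For the $\Delta$-filtered case, I would lift the $\Delta$-filtration of $\Bbbk M$ step by step: at each step one uses (b) to lift a $\Delta$-quotient of $\Bbbk M$ to a $\Delta$-quotient of $M$ and checks that the kernel remains $R$-projective with $\Delta$-filtered reduction. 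The tilting case is the intersection of the $\Delta$- and $\nabla$-filtered cases. The main obstacle is this last lifting step in (c), where the filtration of $\Bbbk M$ must be lifted to $M$ in a way that keeps the kernels simultaneously $R$-projective and $\Delta$-filtered modulo $\frakm$; everything else is essentially the standard homological yoga for highest weight categories plus Nakayama's lemma.
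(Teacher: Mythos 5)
For parts (a), (b), (d) your argument is sound and essentially tracks the paper's: the paper disposes of (a) by citing Bourbaki (Alg\`ebre, ch.~X, \S6, prop.~7.c), which is the same flat-base-change-through-a-projective-resolution fact you spell out; and for (b), (d) the paper likewise treats the projective case via free $A$-modules and direct summands and then handles the $\Delta$/$\nabla$ case by reduction to $\Delta(\lambda)$ using $\Ext^1$-vanishing. Your organization differs slightly — you induct on the length of the $\Delta$-filtration with base case $\Hom_\scrC(\Delta(\lambda),\nabla(\mu))\simeq\delta_{\lambda\mu}R$, while the paper reduces to $M=\Delta(\lambda)$ and then does a descending induction on $\lambda$ through the sequence $0\to M'\to P(\lambda)\to\Delta(\lambda)\to 0$ — but the two are equivalent in substance. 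In the projective case of (c), your argument is also fine once phrased carefully: there is no "splitting" to lift, since $\Bbbk P\simeq\Bbbk M$, but lifting the isomorphism to $f\colon P\to M$ via (b), using Nakayama on $\coker f$, and then on $\ker f$ (which is an $R$-direct-summand of $P$ since $M$ is $R$-free) shows $f$ is an isomorphism.

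The genuine gap is in the $\Delta$-filtered (hence also tilting and injective) case of (c). You propose to "use (b) to lift a $\Delta$-quotient of $\Bbbk M$ to a $\Delta$-quotient of $M$," but (b) cannot do this. Part (b) gives base change of $\Hom_\scrC(M,N)$ only when $M\in\scrC^\proj$, or when $M\in\scrC^\Delta$ and $N\in\scrC^\nabla$; here $M$ is exactly the object you are trying to show is $\Delta$-filtered, so neither hypothesis is available, and the target $\Delta(\lambda)$ is not $\nabla$-filtered in general. So the surjectivity of $\Hom_\scrC(M,\Delta(\lambda))\to\Hom_{\Bbbk\scrC}(\Bbbk M,\Delta_\Bbbk(\lambda))$ that your lifting step requires does not follow from (b), and the "standard homological yoga" you defer to is precisely the missing content. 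The paper itself does not reprove this: it simply cites [R1, prop.~4.30] for the whole of (c). A correct argument instead runs through the characterization in Lemma \ref{lem:B7} (or its $\Delta$-side dual): one shows that $R$-projectivity of $M$ together with $\Bbbk M\in\Bbbk\scrC^\nabla$ forces $\Ext^1_\scrC(\Delta(\lambda),M)=0$ for all $\lambda$, by a separate homological computation controlling how $R\Hom_\scrC(\Delta(\lambda),M)$ behaves under $\Bbbk\otimes^{\bbL}_R-$; this is genuinely different from, and harder than, lifting a filtration layer by layer.
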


\vspace{.5mm}

\begin{proof}
Part $(a)$ is [Bourbaki, \emph{Alg\`ebre}, \text{ch.}~X, \S 6, prop.~7.c].

The statements in $(b)$, $(d)$ are clear if $M$ is a free $A$-module, and are preserved under taking direct 
summands, so they hold for $M\in\scrC^\proj$.

Let $M\in\scrC^\Delta$ and $N\in\scrC^\nabla$. We have
$\Ext^1_\scrC(M,N)=\Ext^1_{S\scrC}(SM,SN)=0$. As a consequence, if $M$ is an extension
of $M_1,M_2\in\scrC^\Delta$ and the statements $(b)$, $(d)$ hold for $M_i,N$, then they hold for
$M,N$. We proceed now by descending induction on $\lambda$ to prove that the statement for 
$M=\Delta(\lambda)$. There is an exact sequence $0\to M'\to P(\lambda)\to \Delta(\lambda)\to 0$,
where $M'$ is an extension of $\Delta(\lambda')$'s with $\lambda'>\lambda$. 
The statements $(b)$, $(d)$ hold for $P(\lambda)$ and, by induction, for $M'$. Hence, they hold for $M$.

Part $(c)$ is \cite[prop.~4.30]{R1}.
\end{proof}

\begin{prop}
The indecomposable projective (resp. relatively $R$-injective, tilting)
objects of $\scrC$ are the $P(\lambda)$ (resp. $I(\lambda)$, $T(\lambda)$),
for $\lambda\in\Lambda$.
\end{prop}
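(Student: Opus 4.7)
The plan is to reduce everything to the residue field $\Bbbk$, where the classification of indecomposable projectives/injectives/tiltings in a finite-dimensional highest weight category is standard, and then to lift isomorphisms back to $\scrC$ using Nakayama's lemma. The previous proposition already constructs $P(\lambda),I(\lambda),T(\lambda)$ as indecomposable, establishes $\Bbbk P(\lambda)\simeq P_\Bbbk(\lambda)$ (and analogously $\Bbbk I(\lambda)\simeq I_\Bbbk(\lambda)$, $\Bbbk T(\lambda)\simeq T_\Bbbk(\lambda)$), so only completeness remains: every indecomposable object of the relevant type is isomorphic to one of these.

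I would start with the projective case. Let $P\in\scrC^\proj$ be indecomposable. Since $P$ is projective, Proposition \ref{prop:introhw}(b) yields $\End_{\Bbbk\scrC}(\Bbbk P)=\Bbbk\End_\scrC(P)$. Indecomposability of $P$ means $\End_\scrC(P)$ is local (which holds because $P$ is a finitely generated module over the finite projective $R$-algebra $A$, so its endomorphism ring is semi-perfect; alternatively, this is built into the standard Krull--Schmidt setup for such $\scrC$), hence its residue ring is local and $\Bbbk P$ is indecomposable in $\Bbbk\scrC$. The classical result over the field $\Bbbk$ (applied to the highest weight $\Bbbk$-category $\Bbbk\scrC$) gives $\Bbbk P\simeq P_\Bbbk(\lambda)\simeq \Bbbk P(\lambda)$ for some $\lambda\in\Lambda$. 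Now the Hom-base change of Proposition \ref{prop:introhw}(b) shows that the surjection $\Hom_\scrC(P,P(\lambda))\twoheadrightarrow \Hom_{\Bbbk\scrC}(\Bbbk P,\Bbbk P(\lambda))$ lifts the isomorphism $\Bbbk P\iso\Bbbk P(\lambda)$ to some $f:P\to P(\lambda)$. Applying Nakayama's lemma to the finitely generated $R$-module $\coker f$ (whose reduction mod $\frakm$ vanishes) gives that $f$ is surjective; since $P(\lambda)$ is projective, $f$ splits and indecomposability of $P$ forces $\ker f=0$, whence $P\simeq P(\lambda)$.

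For relatively $R$-injective objects, I would transport the statement along the exact equivalence $\Hom_R(-,R):\scrC^\op\cap R\mproj\iso\scrC^*\cap R\mproj$, which interchanges $\scrC^\proj$ and $\scrC^\inj$ and sends $P(\lambda)\mapsto I^*(\lambda)$, reducing the injective case of $\scrC$ to the projective case of $\scrC^*$ already treated. For tilting objects I would invoke the Ringel equivalence $\scrR:\scrC^\nabla\iso(\scrC^\diamond)^\Delta$, which restricts to an equivalence $\scrC^\tilt\iso(\scrC^\diamond)^\proj$ sending $T(\lambda)$ to $P^\diamond(\lambda)$; an indecomposable tilting $T$ of $\scrC$ thus corresponds to an indecomposable projective of $\scrC^\diamond$, which by the projective case is some $P^\diamond(\lambda)\simeq\scrR(T(\lambda))$, giving $T\simeq T(\lambda)$.

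The main obstacle is really concentrated in the projective case, and there in the careful interplay between indecomposability and base change to $\Bbbk$: one must know that the endomorphism ring of an indecomposable finitely generated object of $\scrC$ is local and that it commutes with $\otimes_R\Bbbk$ on projectives. Both ingredients are precisely Proposition \ref{prop:introhw}(b) combined with the standard semi-perfect endomorphism-ring formalism already implicit in the proof of the existence proposition. Once this bridge between $\scrC$ and $\Bbbk\scrC$ is in place, the remaining steps are Nakayama, splitting of surjections onto projectives, and transport of structure via the dualities $\scrC\leftrightarrow\scrC^*$ and $\scrC\leftrightarrow\scrC^\diamond$.
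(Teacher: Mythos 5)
Your proposal is correct and in the same spirit as the paper's one-line proof, which invokes Proposition \ref{prop:introhw}(b),(c) to reduce all three statements at once to the classical field case over $\Bbbk$. The organizational difference is that you treat only the projective case directly and transport the injective and tilting cases along the dualities $\scrC^\op\cap R\mproj\iso\scrC^*\cap R\mproj$ and $\scrR:\scrC^\tilt\iso(\scrC^\diamond)^\proj$, whereas the paper's uniform invocation of (c) makes the same Nakayama-plus-lifting argument available directly for injectives and tiltings as well (if $I$ is relatively $R$-injective, (c) says $\Bbbk I$ is injective in $\Bbbk\scrC$, so $\Bbbk I\simeq\bigoplus I_\Bbbk(\lambda_i)^{n_i}$; set $J=\bigoplus I(\lambda_i)^{n_i}$, lift the isomorphism using (b) and the two-sided argument $gf$ automorphism via Nakayama, and conclude $I\simeq J$ hence $I\simeq I(\lambda)$ by indecomposability; likewise for tiltings, where (b) applies because tilting objects lie in both $\scrC^\Delta$ and $\scrC^\nabla$). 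Both routes are valid.

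One caveat worth flagging in your projective case: the parenthetical claim that $\End_\scrC(P)$ is semi-perfect ``because $P$ is a finitely generated module over the finite projective $R$-algebra $A$'' is not automatic for general local $R$; one needs $R$ henselian or complete for idempotents to lift, and the paper's Section 2.3 works over an arbitrary commutative local ring. However, you do not actually need $\Bbbk P$ to be indecomposable a priori. Decompose $\Bbbk P\simeq\bigoplus P_\Bbbk(\lambda_i)^{n_i}$ in $\Bbbk\scrC$, set $Q=\bigoplus P(\lambda_i)^{n_i}$ using the previous proposition so that $\Bbbk Q\simeq\Bbbk P$, lift the isomorphism to $f:P\to Q$ via (b), apply Nakayama to $\coker f$ (surjectivity), split using projectivity of $Q$, and apply Nakayama to the complement to get $P\simeq Q$. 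Indecomposability of $P$ then forces a single nonzero term, giving $P\simeq P(\lambda)$, with no appeal to locality of $\End_\scrC(P)$.
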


\begin{proof}
The statements are classical for $\Bbbk\scrC$, and Proposition
\ref{prop:introhw}(b),(c) reduce to that case.
\end{proof}

\vspace{2mm}

Let us quote the following easy result for a later use.

\vspace{2mm}

\begin{prop}\label{claim:eqHWC} 
(a) Let $\scrC_1$, $\scrC_2$ be highest weight $\Bbbk$-categories.
An equivalence of abelian $\Bbbk$-categories
$F:\scrC_1\to\scrC_2$ which induces a morphism of posets
$\Irr(\scrC_1)\to\Irr(\scrC_2)$ is an equivalence of highest weight categories.

(b) Let $\scrC_1$, $\scrC_2$ be highest weight $R$-categories. An equivalence
of abelian $R$-categories $F:\scrC_1\to\scrC_2$ which induces an equivalence of highest weight
$\Bbbk$-categories $\Bbbk F:\Bbbk\scrC_1\to\Bbbk\scrC_2$ is an equivalence of highest weight  $R$-categories.
\end{prop}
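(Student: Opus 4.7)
The plan is to extract from both (a) and (b) the same core step: once we know that $F$ sends $P_1(\lambda)$ to $P_2(\sigma(\lambda))$ for a bijection $\sigma$ which is in fact an order isomorphism of the highest weight posets, we deduce $F(\Delta_1(\lambda))\cong\Delta_2(\sigma(\lambda))$ via the intrinsic description
\begin{equation*}
\Delta(\lambda)\;=\;P(\lambda)\big/\textstyle\sum_{\mu\nleq\lambda}\Img\bigl(P(\mu)\to P(\lambda)\bigr),
\end{equation*}
which I claim is valid over the local ring $R$. This formula is preserved by any equivalence sending $P_1(\lambda)$ to $P_2(\sigma(\lambda))$ as soon as $\sigma$ identifies the complements of the down-sets $\{\mu\,;\,\mu\leq\lambda\}$ and $\{\nu\,;\,\nu\leq\sigma(\lambda)\}$, i.e. precisely when $\sigma$ is an order isomorphism.

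For (a), since $F$ is an equivalence of abelian categories it sends simples to simples and indecomposable projectives to indecomposable projectives, producing a bijection $\sigma:\Irr(\scrC_1)\to\Irr(\scrC_2)$ which by hypothesis is only a morphism of posets. To promote it to an isomorphism I would pull back the target order along $\sigma$ to obtain a partial order $\leq'$ on $\Irr(\scrC_1)$ which contains $\leq_1$; by the remark immediately preceding the proposition, enlarging the highest weight order preserves the highest weight structure together with its standard objects, so $(\scrC_1,\leq')$ is again a highest weight category and relative to $\leq'$ the bijection $\sigma$ is by construction an order isomorphism. For (b), the equivalence $\Bbbk F$ of highest weight $\Bbbk$-categories supplies an order isomorphism $\sigma$ between label sets; the bijection induced by $F$ on indecomposable $R$-projectives reduces modulo $\frakm$ to the one coming from $\Bbbk F$ (using $\Bbbk P_i(\lambda)\cong P_{\Bbbk\scrC_i}(\lambda)$ from the proof of the preceding structural proposition) and hence coincides with $\sigma$.

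It then remains only to verify the $P/J$ formula for $\Delta(\lambda)$ over $R$. The inclusion $J(\lambda)\subseteq\ker\bigl(P(\lambda)\twoheadrightarrow\Delta(\lambda)\bigr)$ comes from the vanishing $\Hom(P(\mu),\Delta(\lambda))=0$ for $\mu\nleq\lambda$: this Hom-module is $R$-free by Proposition \ref{prop:introhw}(d), of rank $[\Delta_\Bbbk(\lambda):L_\Bbbk(\mu)]=0$ by Proposition \ref{prop:introhw}(b). The reverse inclusion is proved by induction on the length of the standard filtration of $\ker\bigl(P(\lambda)\twoheadrightarrow\Delta(\lambda)\bigr)$: each subquotient $\Delta(\mu)$, necessarily with $\mu>\lambda$, lifts by projectivity of $P(\mu)$ to a map $P(\mu)\to\ker$, and these lifts generate the kernel. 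The main obstacle is really the order-isomorphism reduction in (a), which the order-refinement trick resolves; everything else is a routine application of the $P/J$ description combined with the fact that $F$ identifies Hom-spaces between projectives.
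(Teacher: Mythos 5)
Your proof is correct, and its core is the same as the paper's: both arguments reduce the claim to an intrinsic description of the standard object in terms of the indecomposable projective. The paper states this as ``$\Delta(\lambda)$ is the largest quotient of $P(\lambda)$ all of whose composition factors are $L(\mu)$'s with $\mu\le\lambda$''; your $P(\lambda)/\sum_{\mu\nleq\lambda}\Img\bigl(P(\mu)\to P(\lambda)\bigr)$ is the trace-ideal form of the same characterization, and the order-refinement step you make explicit (replacing $\leq_1$ by the pullback of $\leq_2$, using the remark that coarsening the order keeps the same standards) is exactly what the paper's terse argument implicitly relies on.

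The one place where the routes diverge is part (b). The paper disposes of it in one line by appealing to Proposition~\ref{prop:introhw}(c): $F(\Delta_1(\lambda))$ is $R$-projective with reduction $\Delta_{\Bbbk\scrC_2}(\sigma(\lambda))$, hence lies in $\scrC_2^\Delta$, and the base-change invariance of $\Delta$-filtration multiplicities forces $F(\Delta_1(\lambda))\cong\Delta_2(\sigma(\lambda))$. You instead re-prove the trace-quotient formula over $R$ and rerun the argument from (a); this is longer but self-contained and does correctly use Proposition~\ref{prop:introhw}(b),(d) plus Nakayama for the vanishing $\Hom(P(\mu),\Delta(\lambda))=0$, and the $\Delta$-filtration of the kernel (which is axiomatic over $R$). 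One small imprecision: a highest weight equivalence $\Bbbk F$ gives a bijection of $\Delta$'s but not automatically an order isomorphism of the chosen highest weight posets; you should either pass to the BGG orders on both sides, or invoke the same refinement trick as in (a). Since the trace quotient $\sum_{\mu\nleq\lambda}\Img(P(\mu)\to P(\lambda))$ is insensitive to which compatible highest weight order is used (the extra terms already lie in the kernel), this is a wrinkle rather than a gap.
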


\vspace{.5mm}

\begin{proof}
For part $(a)$ we need to prove that $F$ maps $\Delta(\scrC_1)$ to $\Delta(\scrC_2)$. 
An equivalence of abelian categories $F$ takes indecomposable projective
objects to indecomposable projective objects. So it preserves the standard
modules,
as $\Delta(\lambda)$ is the largest quotient of $P(\lambda)$ all of whose
composition factors are $L(\mu)$'s with $\mu<\lambda$. Part (b) follows
from Proposition \ref{prop:introhw}(c).
\end{proof}

Next, we state some basic facts on $\nabla$ and $\Delta$-filtered modules. The
situation over a base ring that is not a field is slightly more complicated.

\vspace{2mm}

\begin{lemma}\label{lem:B7} Let $\scrC$ be a highest weight
category over $R$ and let $M\in\scrC$. The following conditions are equivalent:

\begin{itemize}
\item[(a)] $\Ext^1_\scrC(\Delta(\lambda),M)=0$ for all $\lambda\in \Lambda$
\item[(b)] there is a filtration
$0=M_0\subset M_1\subset\cdots\subset M_r=M$ and there are elements $\lambda_i\in\Lambda$
such that $M_i/M_{i-1}\simeq\nabla(\lambda_i)\otimes_R\Hom_\scrC(\Delta(\lambda_i),M)$
with $\lambda_i\neq\lambda_j$ for $i\neq j$ and $\lambda_i<\lambda_j$ implies $i<j$
\item[(c)] there is a filtration
$0=M_0\subset M_1\subset\cdots\subset M_r=M$, there are elements $\lambda_i\in\Lambda$
and there are $R$-modules $U_i$ such that
$M_i/M_{i-1}\simeq\nabla(\lambda_i)\otimes_R U_i$.
\end{itemize}
If the conditions above hold and $M$ is projective over $R$, then
$M\in\scrC^\nabla$.
\end{lemma}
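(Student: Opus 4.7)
The implication (b)$\Rightarrow$(c) is immediate. For (c)$\Rightarrow$(a), I would induct on the filtration length, reducing to showing $\Ext^1_\scrC(\Delta(\lambda),\nabla(\mu)\otimes_R U)=0$ for every $R$-module $U$. Take a projective resolution $P^\bullet\to\Delta(\lambda)$ by finitely generated projectives of $\scrC$. Since each $P^i$ is a finitely presented $A$-module and $\nabla(\mu)$ is $R$-flat (being $R$-projective by the preceding proposition), there is a natural isomorphism of complexes
\[\Hom_\scrC(P^\bullet,\nabla(\mu))\otimes_R U\iso \Hom_\scrC(P^\bullet,\nabla(\mu)\otimes_R U).\]
The source is a complex of $R$-projective modules by Proposition \ref{prop:introhw}(d), whose cohomology is concentrated in degree zero by the $\Ext$-vanishing in $(\nabla)$ above; its cocycles are therefore $R$-projective, so tensoring with $U$ preserves vanishing in positive cohomological degree.

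The substantive case is (a)$\Rightarrow$(b), which I would prove by induction. Choose $\lambda_1\in\Lambda$ minimal in the (finite) set $\{\mu:\Hom_\scrC(\Delta(\mu),M)\neq 0\}$ and set $U_1=\Hom_\scrC(\Delta(\lambda_1),M)$. Using the tilting exact sequences $0\to\Delta(\lambda_1)\to T(\lambda_1)\to C\to 0$ and $0\to K\to T(\lambda_1)\to\nabla(\lambda_1)\to 0$ from the preceding proposition (with $C\in\scrC^\Delta$, $K\in\scrC^\nabla$ whose subquotients have weight $<\lambda_1$), and combining the $\Ext^1$-vanishing from hypothesis (a) with the minimality of $\lambda_1$ (which forces $\Hom_\scrC(C,M)=0$ via the $\Delta$-filtration of $C$), I would derive canonical identifications $U_1\iso\Hom_\scrC(T(\lambda_1),M)\iso\Hom_\scrC(\nabla(\lambda_1),M)$, producing a natural evaluation map $\nabla(\lambda_1)\otimes_R U_1\to M$. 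Setting $\bar M=M/(\nabla(\lambda_1)\otimes_R U_1)$, the already-proved (c)$\Rightarrow$(a) ensures $\bar M$ satisfies (a); by construction $\Hom_\scrC(\Delta(\mu),\bar M)=0$ for $\mu\leqslant\lambda_1$, so the relevant weight set strictly shrinks and the induction terminates.

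The main obstacle is twofold: establishing the vanishing $\Hom_\scrC(K,M)=0$ (necessary for the second identification) and the injectivity of the evaluation map $\nabla(\lambda_1)\otimes_R U_1\to M$, both over a general base ring $R$ rather than a field. The plan is to reduce to the residue field $\Bbbk$ via Nakayama's lemma, applying a composition-factor argument in $\Bbbk\scrC$: composition factors of $\Bbbk K$ lie in $\{L(\nu):\nu<\lambda_1\}$ while composition factors of $\Bbbk M$ avoid this set by minimality of $\lambda_1$ in the support of $\Bbbk M$, forcing $\Hom_{\Bbbk\scrC}(\Bbbk K,\Bbbk M)=0$; injectivity follows similarly by a socle argument applied to the kernel. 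The finite generation of $M$ as an $A$-module ensures all relevant sets are finite and the hom-tensor identities apply.

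For the final assertion, assume $M$ is projective over $R$. The $\Ext^1$-vanishing supplied by (c)$\Rightarrow$(a), combined with $R$-projectivity of $M$, forces each short exact sequence $0\to M_{i-1}\to M_i\to\nabla(\lambda_i)\otimes_R U_i\to 0$ to $R$-split inductively, so every graded piece $\nabla(\lambda_i)\otimes_R U_i$ is $R$-projective. Writing $\nabla(\lambda_i)\simeq R^{n_i}$ (free since $R$ is local and $\nabla(\lambda_i)$ is $R$-projective), we get $U_i^{n_i}\simeq\nabla(\lambda_i)\otimes_R U_i$ $R$-projective, so $U_i$ is $R$-projective as a retract and hence $R$-free. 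Thus $\nabla(\lambda_i)\otimes_R U_i$ is a direct sum of copies of $\nabla(\lambda_i)$, the filtration (b) refines to a genuine $\nabla$-filtration, and $M\in\scrC^\nabla$.
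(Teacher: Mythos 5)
Your route through (a)$\Rightarrow$(b) diverges from the paper's and the fix you propose for the resulting obstacle is incorrect. The paper chooses $\lambda$ minimal in $\{\mu:\Hom_\scrC(\nabla(\mu),M)\neq 0\}$, \emph{not} in the $\Delta$-support; with that choice the vanishing $\Hom_\scrC(K,M)=0$ (where $K$ is the $\nabla$-filtered kernel of $T(\lambda)\twoheadrightarrow\nabla(\lambda)$, with factors $\nabla(\nu)$, $\nu<\lambda$) is immediate by d\'evissage, and minimality in the $\Delta$-support is then \emph{derived} rather than assumed. You instead take $\lambda_1$ minimal in $\{\mu:\Hom_\scrC(\Delta(\mu),M)\neq 0\}$, so you must prove $\Hom_\scrC(K,M)=0$ separately. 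Your proposed justification — that the composition factors of $\Bbbk M$ avoid $\{L(\nu):\nu<\lambda_1\}$ — is false: already $M=\nabla(\lambda_1)$ satisfies (a) and has $\Delta$-support $\{\lambda_1\}$, yet $\Bbbk\nabla(\lambda_1)$ typically has composition factors $L(\nu)$ with $\nu<\lambda_1$. Minimality in the $\Delta$-support constrains the simple \emph{submodules} of $\Bbbk M$, not arbitrary composition factors; the correct version looks at the socle of the image of a putative nonzero map $\Bbbk K\to\Bbbk M$. Even then you must transport $\Hom_\scrC(\Delta(\mu),M)=0$ from $R$ to $\Bbbk$, and the map $\Hom_\scrC(\Delta(\mu),M)\to\Hom_{\Bbbk\scrC}(\Bbbk\Delta(\mu),\Bbbk M)$ is not surjective in general, so this needs extra work. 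The paper's choice of minimal element sidesteps the whole issue.

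The last assertion is also not established as written. You claim the sequences $0\to M_{i-1}\to M_i\to\nabla(\lambda_i)\otimes_R U_i\to 0$ ``$R$-split inductively'', but for $i=r$ the $R$-projectivity of $M_r=M$ does not force an $R$-splitting unless one already knows one of the other two terms is $R$-projective — which is what is to be proved. The paper instead shows $U_r=\Hom_\scrC(\Delta(\lambda_r),M)$ is $R$-projective by identifying it with $\Hom_\scrC(P(\lambda_r),M)$ via $0\to L\to P(\lambda_r)\to\Delta(\lambda_r)\to 0$ (with $L$ filtered by $\Delta(\mu)$, $\mu>\lambda_r$, so $\Hom(L,M)=\Ext^1(L,M)=0$), then handles the remaining $U_i$ by induction on the filtration length. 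Your (c)$\Rightarrow$(a) step is in substance the paper's derived tensor computation and is fine, though the claim that the cocycles of $\Hom_\scrC(P^\bullet,\nabla(\mu))$ are $R$-projective deserves a word (e.g.\ use that $\Delta(\lambda)$ has a finite resolution by objects of $\scrC^\proj$).
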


\vspace{.5mm}

\begin{proof} 
Assume $(b)$. Let $\lambda,\mu\in\Lambda$ and $U\in R\mmod$. We have
$\Ext^{>0}_\scrC(\Delta(\lambda),\nabla(\mu))=0$ and 
$\Hom_\scrC(\Delta(\lambda),\nabla(\mu))\in R\mproj$. We deduce that
\begin{multline*}
\Ext^{>0}_\scrC(\Delta(\lambda),\nabla(\mu)\otimes_R U)=
H^{>0}(R\Hom_\scrC(\Delta(\lambda),\nabla(\mu)\otimes_R^L U))\simeq \\
\simeq H^{>0}(R\Hom_\scrC(\Delta(\lambda),\nabla(\mu))\otimes_R^L U)=
\Ext^{>0}_\scrC(\Delta(\lambda),\nabla(\mu))\otimes_R U=0.
\end{multline*}
This shows $(b)\Rightarrow(a)$.

\smallskip
Now, assume $(a)$.
Let $\lambda\in\Lambda$ be minimal such that $\Hom_\scrC(\nabla(\lambda),M)\neq 0.$
Fix an element $\mu\leqslant\lambda$ (no assumption on $\mu$ if
$\Hom_\scrC(\nabla(\lambda),M)=0$ for all $\lambda\in\Lambda$). There is an exact sequence
$0\to\Delta(\mu)\to T(\mu)\to M'\to 0$, where $M'$ is filtered by $\Delta(\nu)$'s with $\nu<\mu$.
So, we have $\Ext^1_\scrC(M',M)=0$.
Hence the canonical map $\Hom_\scrC(T(\mu),M)\to \Hom_\scrC(\Delta(\mu),M)$ is surjective.
There is an exact sequence
$0\to M''\to T(\mu)\to\nabla(\mu)\to 0$, where $M''$ is filtered by $\nabla(\nu)$'s with $\nu<\mu.$
Since $\Hom_\scrC(M'',M)=0$, the canonical map $\Hom_\scrC(\nabla(\mu),M)\to \Hom_\scrC(T(\mu),M)$
is an isomorphism. Consequently, the composition $\Delta(\mu)\to T(\mu)\to \nabla(\mu)$ induces a surjective
map $\Hom_\scrC(\nabla(\mu),M)\to \Hom_\scrC(\Delta(\mu),M)$.

If $\mu\neq\lambda$, we have $\Hom_\scrC(\nabla(\mu),M)=0$, hence $\Hom_\scrC(\Delta(\mu),M)=0$.
This shows that the canonical map $\Hom_\scrC(T(\lambda),M)\to\Hom_\scrC(\Delta(\lambda),M)$ is an 
isomorphism. Hence, we have canonical isomorphisms
\begin{equation*}
\Hom_\scrC(\nabla(\lambda),M)\iso\Hom_\scrC(T(\lambda),M)\iso\Hom_\scrC(\Delta(\lambda),M).
\end{equation*}

Now, set $U=\Hom_\scrC(\Delta(\lambda),M)$. We have
$$\aligned
\Hom_\scrC(\nabla(\lambda)\otimes_RU,M)
&\simeq\Hom_R(U,\Hom_\scrC(\nabla(\lambda),M))\\
&\simeq\Hom_R(U,\Hom_\scrC(\Delta(\lambda),M))\\
&\simeq\Hom_\scrC(\Delta(\lambda)\otimes_RU,M).
\endaligned
$$
So, the canonical map $\Delta(\lambda)\otimes_RU\to M$ factors through a map
$f:\nabla(\lambda)\otimes_RU\to M$.

If $\mu\neq\lambda$, we have $\Hom_\scrC(\Delta(\mu),\nabla(\lambda))=0$.
Further, we have an isomorphism
$$\Hom_\scrC(\Delta(\lambda),f):\Hom_\scrC(\Delta(\lambda),\nabla(\lambda))\otimes_R U\iso
\Hom_\scrC(\Delta(\lambda),M).$$
Consequently, the map $f=\Hom_\scrC(A,f)$ is injective.
Hence, since $$\Ext^2_\scrC(\Delta(\mu),\nabla(\lambda)\otimes_RU)=0$$ for all $\mu$, 
the long exact sequence gives a surjective map
$$ \Ext^1_\scrC(\Delta(\mu),M)\to \Ext^1_\scrC(\Delta(\mu),\Coker(f)).$$
The left hand side is 0 by assumption, we deduce that
$\Ext^1_\scrC(\Delta(\mu),\Coker(f))=0.$
We have
$$\{\mu\in\Lambda\,;\,\Hom_\scrC(\Delta(\mu),\Coker(f))\neq 0\}\subset
\{\mu\in\Lambda\,;\,\Hom_\scrC(\Delta(\mu),M)\neq 0\}\setminus\{\lambda\}.$$
Therefore, by induction on the set $\{\mu\in\Lambda\,;\,\Hom_\scrC(\Delta(\mu),M)\neq 0\},$ we get that 
$\Coker(f)$ has a filtration as required. Since we have an exact sequence
$$0\to\nabla(\lambda)\otimes_RU\to M\to\Coker(f)\to 0,$$
we deduce that $M$ has also a filtration as required.

\smallskip
Assume now $M$ is projective over $R$ and consider a filtration
as in (b). We show that $\Hom_\scrC(\Delta(\lambda),M)$ is projective
over $R$ for all $\lambda$ by induction on $r$.
There is an exact sequence
$0\to L\to P(\lambda_r)\to \Delta(\lambda_r)\to 0$ where $L$ is filtered
by $\Delta(\mu)$'s with $\mu>\lambda_r$, so we have
$\Hom(\Delta(\lambda_r),M)\simeq\Hom(P(\lambda_r),M)$. We deduce that
$\Hom(\Delta(\lambda_r),M)$ is projective over $R$.  By induction,
given $i\le r-1$, then
$\Hom(\Delta(\lambda_i),M_{r-1})\simeq
\Hom(\Delta(\lambda_i),M)$ is projective over $R$ and the result
follows.
\end{proof}

\vspace{3mm}

\subsection{Highest weight covers}
\subsubsection{Definition and characterizations}

Let $\scrC$ be a highest weight $R$-category and let $B$ be a finite projective $R$-algebra.
Consider a quotient functor $F:\scrC\to B\mmod$
in the general sense of \cite[sec.~III.I]{Gab}, \ie, there is
$P\in\scrC^\proj$ and there are isomorphisms
$B\iso\End_\scrC(P)^\op$ and $F\iso\Hom_\scrC(P,\bullet)$.
We denote by $G$ a right adjoint of $F$ and by $\eta:1\to GF$ the unit.

\smallskip
We say that $F$ is
\begin{itemize}
\item a \emph{highest weight cover} if it is fully faithful on $\scrC^\proj$
\item \emph{d-faithful} for some $d\in\bbZ_{\geqslant -1}$ if
$\Ext^i_\scrC(M,N)=0$ for all
$M\in\scrC$ with $F(M)=0$, $N\in\scrC^\Delta$ and $i\leqslant d+1$.
\end{itemize}

As Lemma \ref{lem:B2} below shows, if $F$ is $d$-faithful for some $d\ge 0$, then
it is a highest weight cover.

\smallskip
We denote by $(B\mmod)^{F\Delta}$ the full exact subcategory of $B\mmod$
of objects with a filtration whose successive quotients are in
$F(\Delta)$. Let $F^\Delta:\scrC^\Delta\to (B\mmod)^{F\Delta}$ be the restriction of $F$.

\smallskip
We provide some characterizations of $d$-faithfulness.

\begin{lemma}\label{lem:B2}
Let $F$ be a quotient functor.
Let $d\in\bbZ_{\ge 0}$ and let $\scrE=\scrC^\Delta$, $\scrE=\Delta(\scrC)$ or
$\scrE=\scrC^\tilt$.
The following conditions are equivalent

\begin{itemize}
\item[(i)] $F$ is $d$-faithful
\item[(ii)] given
$M\in\scrC$ with $F(M)=0$ and $N\in\scrE$, we have
$\Ext^{\le d+1}_\scrC(M,N)=0$
\item[(iii)] given $N\in\scrE$, we have 
$H^{\leqslant d}(\cone(N\xrightarrow{\eta} RGF(N)))=0$.
\item[(iv)] given $M\in\scrC$, $N\in\scrE$ and $i\leqslant d$, then $F$ induces
an isomorphism \\ $\Ext^i_\scrC(M,N)\iso\Ext^i_B(FM,FN)$,
\item[(v)] given $M\in\scrC^\proj$, $N\in\scrE$ and $i\leqslant d$, then $F$ induces
an isomorphism $\Ext^i_\scrC(M,N)\iso\Ext^i_B(FM,FN)$.
\end{itemize}
If $R$ is a field, these conditions are equivalent to
\begin{itemize}
\item[(vi)] given $\lambda\in\Lambda$ with $FL(\lambda)=0$, then
$\mathrm{lcd}_\scrC(L(\lambda))>d+1$.
\end{itemize}
\end{lemma}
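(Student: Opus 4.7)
The plan is to treat conditions (i)--(v) through a single analysis of the cone of the adjunction unit, handle the three choices of $\scrE$ in (ii) by a standard devissage, and deduce (vi) from the preceding lemma on $\mathrm{lcd}$. The central object is the cone $C = \cone(N \xrightarrow{\eta} RGF(N))$ in $D^b(\scrC)$ for $N \in \scrE$. The first preliminary step is to show $F(C) = 0$ in the derived category: since $F$ is exact and $G$ preserves injectives (being right adjoint to an exact functor), the abelian identity $FG \simeq \id$ lifts to $F \circ RG \simeq \id$ on $D^+(B\mmod)$. Consequently $F(\eta)$ is an isomorphism in $D^+(B\mmod)$ and every cohomology object $H^i(C)$ lies in $\ker F$.

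Once $F(C) = 0$ is in hand, I would apply $R\Hom_\scrC(M,-)$ to the distinguished triangle $N \to RGF(N) \to C \to$ and use the derived adjunction $R\Hom_\scrC(M, RGFN) \simeq R\Hom_B(FM, FN)$ to produce a long exact sequence relating $\Ext^i_\scrC(M,N)$, $\Ext^i_B(FM, FN)$, and $\Ext^i_\scrC(M,C)$. For $M$ with $FM = 0$ the middle term vanishes, yielding $\Ext^i_\scrC(M,N) \simeq \Ext^{i-1}_\scrC(M,C)$; (i) then translates into $\Ext^{\le d}_\scrC(M,C) = 0$ for all such $M$, which is equivalent to $H^{\le d}(C) = 0$ by the standard minimality argument (if $H^{i_0}(C) \ne 0$ for minimal $i_0 \le d$, then $M = H^{i_0}(C)$ lies in $\ker F$ and the truncation map $M[-i_0] \to C$ gives a nonzero class in $\Ext^{i_0}_\scrC(M,C)$, contradicting (i)). This yields (i)$\Leftrightarrow$(iii). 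The same long exact sequence for arbitrary $M$ gives (iii)$\Leftrightarrow$(iv). For (iii)$\Leftrightarrow$(v), note that for projective $M$ the functor $\Hom_\scrC(M,-)$ is exact, so $\Ext^i_\scrC(M,C) = \Hom_\scrC(M, H^i(C))$; then (v) amounts to vanishing of these Hom groups for all projective $M$ and $i \le d$, which is equivalent to $H^{\le d}(C) = 0$.

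The remaining variations of $\scrE$ in (ii) are easier: the inclusions $\Delta(\scrC), \scrC^\tilt \subset \scrC^\Delta$ make (i)$\Rightarrow$(ii) trivial, the converse for $\scrE = \Delta(\scrC)$ follows by devissage along a $\Delta$-filtration of $N$, and the converse for $\scrE = \scrC^\tilt$ follows by induction on $\lambda$ in the highest weight poset, using the short exact sequence $0 \to \Delta(\lambda) \to T(\lambda) \to \coker f \to 0$ with $\coker f$ filtered by $\Delta(\mu)$ for $\mu < \lambda$ (the minimal case giving $\Delta(\lambda) = T(\lambda)$). For (vi) when $R$ is a field, the preceding lemma identifies $\mathrm{lcd}_\scrC(L(\lambda))$ with $\min\{i : \exists N \in \scrC^\Delta,\ \Ext^i_\scrC(L(\lambda),N) \ne 0\}$, so (vi) is exactly (i) restricted to simple modules $M = L(\lambda)$ with $FL(\lambda) = 0$; the converse uses that $\ker F$ is a Serre subcategory, so every composition factor of any $M$ with $FM = 0$ still lies in $\ker F$, and induction on composition length via the long exact sequence of $\Ext$ propagates the vanishing. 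The main technical obstacle throughout is the preliminary identification $F \circ RG \simeq \id$ at the derived level, which is where the hypothesis that $F$ is a Gabriel quotient functor genuinely enters; once this is granted, the remainder of the argument is formal manipulation of the triangle and its long exact sequences.
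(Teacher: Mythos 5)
Your proposal is correct, and the core machinery (cone of the unit, truncation/minimality argument, devissage for the $\scrE$ variants, reduction to simples for (vi)) matches the paper's. The genuine structural difference is in how (v) is linked back into the cycle. The paper closes its cycle by proving (v)$\Rightarrow$(ii) via the auxiliary Lemma~\ref{le:Extplus1}, a bootstrap that propagates isomorphism in degrees $\le d$ and injectivity in degree $d+1$ from projective $M$ to all $M$. You instead prove (v)$\Leftrightarrow$(iii) directly, using the observation that for $M$ projective one has $\Ext^i_\scrC(M,C)\cong\Hom_\scrC(M,H^i(C))$, so (v) is literally the vanishing of these $\Hom$'s, hence of $H^{\le d}(C)$. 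Your route is more self-contained and avoids the auxiliary lemma; the paper's choice is motivated by the fact that Lemma~\ref{le:Extplus1} is reused elsewhere (e.g.\ in the proof of Lemma~\ref{le:tiltfromFrob}). One small imprecision: your sentence asserting that ``the same long exact sequence gives (iii)$\Leftrightarrow$(iv)'' only delivers (iii)$\Rightarrow$(iv) directly — the converse needs a further step (e.g.\ $i_0=d$ in the minimality argument escapes degree $d+1$) — but this does not hurt the proof, since the cycle closes via (iv)$\Rightarrow$(v)$\Leftrightarrow$(iii). Be explicit that the cone analysis is run with $N$ ranging over the fixed $\scrE$ so that (ii)--(v) are established for each of $\scrC^\Delta$, $\Delta(\scrC)$, $\scrC^\tilt$; as written you only address the $\scrE$ dependence for (ii).
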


\vspace{.5mm}

\begin{proof}
Note that (ii) in the case $\scrE=\scrC^\Delta$ is the statement (i).
It is clear that (ii) for $\scrE=\Delta(\scrC)$ is equivalent to (ii) for
$\scrE=\scrC^\Delta$, and these imply (ii) for $\scrE=\scrC^\tilt$.
Assume (ii) holds in the case $\scrE=\scrC^\tilt$. 
Let $M\in\scrC$ with $F(M)=0$.
We prove by induction on $\lambda$ that 
$\Ext^{\le d+1}_\scrC(M,\Delta(\lambda))=0$.

There is an exact sequence $0\to\Delta(\lambda)\to T(\lambda)\to L\to 0$, where $L$ has
a filtration by $\Delta(\mu)$'s with $\mu<\lambda$. We have
$\Ext^{\le d+1}_\scrC(M,T(\lambda))=0$ and, by induction, we have
$\Ext^{\le d+1}_\scrC(M,L)=0$. We deduce that 
$\Ext^{\le d+1}_\scrC(M,\Delta(\lambda))=0$. So, (ii) holds for $\scrE=\scrC^\Delta$.

\smallskip
Let $X=\cone(N\xrightarrow{\eta} RGF(N))$. We have $F(H^i(X))=0$ for all $i$.
Given $Y\in D^b(\scrC)$ such that $F(Y)=0$, we have
$$\Hom_{D^b(\scrC)}(Y,RGF(N))\simeq\Hom_{D^b(B)}(F(Y),F(N))=0,$$ hence
$\Hom_{D^b(\scrC)}(Y,X[i])\simeq\Hom_{D^b(\scrC)}(Y,N[i+1])$ for all $i$.

\smallskip
Assume (ii).
As usual, let $\tau_{\leqslant m}$ denote the \emph{canonical truncation}
which takes a complex $C=(C^n,d^n)$ to the subcomplex
$$\tau_{\leqslant m}(C)=\{\dots\to C^{m-1}\to\Ker(d^m)\to 0\to\dots\}.$$
Taking $Y=\tau_{\le d}(X)$ above, we obtain
$\Hom_{D^b(\scrC)}(\tau_{\le d}(X),X)=0$, hence $\tau_{\le d}(X)=0$. So, (iii) holds.

\smallskip
Note that the canonical map
$\Ext^i_\scrC(M,N)\to\Ext^i_B(FM,FN)$ is an isomorphism if and only if
the canonical map
$\Ext^i_\scrC(M,N)\to\Hom_{D^b(\scrC)}(M,RGFN[i])$ is an isomorphism.

\smallskip
Assume (iii). Applying $\Hom(M,-)$ to the distinguished triangle 
$N\to RGF(N)\to X\rightsquigarrow$, we deduce that (iv) holds.

\smallskip
It is clear that (iv)$\Rightarrow$(v). Assume (v).
It follows from Lemma \ref{le:Extplus1} that the canonical map
$\Ext^{d+1}_\scrC(M,N)\to \Ext^{d+1}_B(F(M),F(N))$ is injective for all $M\in\scrC$, and
(ii) follows.

\medskip
Assume finally $R$ is a field. The assertion (ii), in the case $\scrE=\scrC^\tilt$,
follows from the case $M$ simple: that is assertion (vi).
\end{proof}

\begin{rk}\label{rem:2.9}
We leave it to the reader to check that the first three equivalences in Lemma
\ref{lem:B2} hold when $d=-1$.
\end{rk}

\begin{lemma}
\label{le:Extplus1}
Let $F$ be an exact functor, let $d\ge -1$ and let $N\in\scrC$.
Assume $F$ induces
\begin{itemize}
\item an isomorphism $\Ext^i_\scrC(P,N)\iso \Ext^i_B(F(P),F(N))$ for $P\in\scrC^\proj$ and
$i\le d$
\item an injection $\Ext^{d+1}_\scrC(P,N)\hookrightarrow\Ext^{d+1}_B(F(P),F(N))$ 
for $P\in\scrC^\proj$.
\end{itemize}
Then, $F$ induces
\begin{itemize}
\item an isomorphism $\Ext^i_\scrC(M,N)\iso \Ext^i_B(F(M),F(N))$ for $M\in\scrC$ and
$i\le d$
\item an injection $\Ext^{d+1}_\scrC(M,N)\hookrightarrow \Ext^{d+1}_B(F(M),F(N))$ for
$M\in\scrC$.
\end{itemize}
\end{lemma}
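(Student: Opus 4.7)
The proof is a standard dimension-shift argument: I plan to establish by induction on $i \in \{0, 1, \ldots, d+1\}$ the joint statement that, for every $M \in \scrC$, the canonical map $\Ext^i_\scrC(M, N) \to \Ext^i_B(F(M), F(N))$ is an isomorphism when $i \leq d$ and is injective when $i = d+1$.

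For the base case $i = 0$, I would choose a presentation $P_1 \to P_0 \to M \to 0$ in $\scrC$ with $P_0, P_1 \in \scrC^\proj$; since $F$ is exact this remains exact after applying $F$. Applying $\Hom_\scrC(-, N)$ and $\Hom_B(F(-), F(N))$ produces a commutative diagram with exact rows whose vertical maps at $P_0$ and $P_1$ are the canonical comparison maps. If $d \geq 0$ these are isomorphisms by hypothesis, so the five-lemma provides the isomorphism for $M$; if $d = -1$ only injectivity at $P_0$ is available, and left-exactness of $\Hom$ suffices to conclude injectivity of $\Hom_\scrC(M,N) \to \Hom_B(F(M),F(N))$.

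For the inductive step, I would pick a short exact sequence $0 \to K \to P \to M \to 0$ with $P \in \scrC^\proj$; exactness of $F$ yields $0 \to F(K) \to F(P) \to F(M) \to 0$, and the long exact sequences of $\Ext_\scrC(-, N)$ and $\Ext_B(-, F(N))$ form a commutative ladder. The induction hypothesis applied to $K$ at level $i \leq d$ gives an isomorphism there, while the hypothesis of the lemma applied to $P$ gives an isomorphism in degree $i$ and either an isomorphism (if $i+1 \leq d$) or an injection (if $i+1 = d+1$) in degree $i+1$. The five-lemma settles the case $i+1 \leq d$, and in the borderline case $i+1 = d+1$ the corresponding four-term version (surjective, isomorphism, injective on three consecutive entries of the ladder) still delivers injectivity for $M$.

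There is no real obstacle here. The only point requiring attention is that although $F(P)$ need not be projective in $B\mmod$, the long exact sequence of $\Ext_B(-, F(N))$ only requires the short exactness of the image sequence $0 \to F(K) \to F(P) \to F(M) \to 0$, which is provided by the exactness of $F$; this is precisely what lets the induction start from the hypothesis, which is formulated only for $P \in \scrC^\proj$.
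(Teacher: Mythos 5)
Your argument is the same dimension-shift induction as the paper's, and the base case and the borderline case $i+1=d+1$ are handled correctly. However, there is a gap in the step where you write ``The five-lemma settles the case $i+1\le d$.'' To conclude that the middle vertical map $\Ext^{i+1}_\scrC(M,N)\to\Ext^{i+1}_B(FM,FN)$ is an isomorphism from the five-term ladder
$$\Ext^i(P,N)\to\Ext^i(K,N)\to\Ext^{i+1}(M,N)\to\Ext^{i+1}(P,N)\to\Ext^{i+1}(K,N),$$
the five-lemma requires the right-most vertical map, at $\Ext^{i+1}(K,N)$, to be a monomorphism. But $K$ is an arbitrary object of $\scrC$, and your induction hypothesis only controls degrees $\le i$, so this is precisely the statement in degree $i+1$ that you are trying to prove; as written, the invocation is circular.

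The fix is the bootstrapping step the paper uses. Before appealing to the five-lemma, one first applies the four-lemma to the first four columns of the ladder (needing only: epi at $\Ext^i(P,N)$, mono at $\Ext^i(K,N)$, mono at $\Ext^{i+1}(P,N)$, all of which you do have) to conclude that $\Ext^{i+1}_\scrC(L,N)\to\Ext^{i+1}_B(FL,FN)$ is injective \emph{for every} $L\in\scrC$. In particular it is injective for $L=K$, which supplies the missing input at the fifth column, and only then does the five-lemma (or a second four-lemma for surjectivity) give the isomorphism when $i+1\le d$. You already deploy the four-lemma in the borderline case, so the missing piece is simply to run that injectivity observation first and uniformly, before the isomorphism conclusion, rather than only at $i+1=d+1$.
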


\begin{proof}
We prove by induction on $i$ the first statement of the lemma.
Consider an exact sequence $0\to M'\to P\to M\to 0$ with $P\in\scrC^\proj$. We have
a commutative diagram with exact horizontal sequences
$$
{\tiny
\xymatrix{
\Ext^i_\scrC(P,N)\ar[r]\ar[d]_\sim & \Ext^i_\scrC(M',N)\ar[r]\ar[d] &
\Ext^{i+1}_\scrC(M,N)\ar[r]\ar[d] & \Ext^{i+1}_\scrC(P,N)\ar[d]\ar[r]&
\Ext^{i+1}_\scrC(M',N)\ar[d] \\
\Ext^i_B(FP,FN)\ar[r] & \Ext^i_B(FM',FN)\ar[r] &
\Ext^{i+1}_B(FM,FN)\ar[r] & \Ext^{i+1}_B(FP,FN)\ar[r]&\Ext^{i+1}_B(FM',FN)
}}$$
where the fourth vertical map is an isomorphism for $i+1\le d$ and is injective
for $i=d$. By induction, the second vertical map is an isomorphism, hence
the third vertical map is injective. So, we have shown that the canonical map
$\Ext^{i+1}_\scrC(L,N)\to \Ext^{i+1}_B(F(L),F(N))$ is injective for all $L\in\scrC$,
in particular for $L=M'$. 
If $i+1\le d$, we deduce that the third vertical map is an isomorphism.
\end{proof}

Let us summarize some of the results above.

\begin{cor}
\label{co:char01faith}
Let $F:\scrC\to B\mmod$ be a quotient functor.
\begin{itemize}
\item $F$ is $(-1)$-faithful if and only if $F^\Delta$ is faithful
\item $F$ is a highest weight cover if and only if $\eta(M):M\to GF(M)$ is an
isomorphism for all $M\in\scrC^\proj$
\item $F$ is $0$-faithful if and only if $F^\Delta$ is fully faithful
if and only if $\eta(M):M\to GF(M)$ is an
isomorphism for all $M\in\scrC^\Delta$
\item $F$ is $1$-faithful if and only if $F^\Delta$ is an equivalence.
\end{itemize}
\end{cor}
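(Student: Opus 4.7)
The plan is to derive each bullet from Lemma~\ref{lem:B2} (including its $d=-1$ extension from Remark~\ref{rem:2.9}) combined with the adjunction $(F,G)$. The unifying observation is that for any $Q\in\scrC$ and $M\in\scrC$, the canonical map $F\colon\Hom_\scrC(M,Q)\to\Hom_B(FM,FQ)$ identifies under the adjunction with the map $\Hom_\scrC(M,Q)\to\Hom_\scrC(M,GF(Q))$ given by post-composition with $\eta_Q$. Since $\scrC$ has enough projectives and $\scrC^\proj\subset\scrC^\Delta$, the latter being bijective for every $M\in\scrC^\proj$ is equivalent to $\eta_Q$ being an isomorphism.

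For the first bullet I would invoke Remark~\ref{rem:2.9}: $F$ is $(-1)$-faithful iff $\Hom_\scrC(M,N)=0$ whenever $F(M)=0$ and $N\in\scrC^\Delta$. If $f\colon M\to N$ in $\scrC^\Delta$ satisfies $F(f)=0$, then exactness of $F$ forces $F(\im f)=0$, and applying the condition with $\im f$ in place of $M$ gives $f=0$. Conversely, any $M$ with $FM=0$ admits a surjection from some $P\in\scrC^\proj\subset\scrC^\Delta$, and any map $M\to N$ pulls back to a map $P\to N$ killed by $F$, hence zero by faithfulness of $F^\Delta$. The second bullet is immediate from the definition of highest weight cover together with the adjunction reformulation in the opening paragraph.

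For the third bullet, part~(iv) of Lemma~\ref{lem:B2} at $d=0$ gives the equivalence of $0$-faithfulness with the statement that $F\colon\Hom_\scrC(M,N)\iso\Hom_B(FM,FN)$ for all $M\in\scrC$ and all $N\in\scrC^\Delta$. Specialising to $M\in\scrC^\Delta$ yields that $F^\Delta$ is fully faithful, while specialising to $M\in\scrC^\proj$ gives, via the opening remark, that $\eta_N$ is an isomorphism for $N\in\scrC^\Delta$; the latter upgrades to the full $\Hom$-isomorphism for arbitrary $M$, closing the loop.

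For the fourth bullet, fully faithfulness of $F^\Delta$ is already handled, so the remaining content is essential surjectivity against $\Ext^1$-vanishing. Assuming $1$-faithfulness, I would build a quasi-inverse of $F^\Delta$ by induction on the length of a filtration of $X\in(B\mmod)^{F\Delta}$: a two-step extension $0\to F\Delta(\lambda)\to X\to Y\to 0$ with $Y=F(\tilde Y)$ already lifted is classified by $\Ext^1_B(FY,F\Delta(\lambda))$, which agrees with $\Ext^1_\scrC(\tilde Y,\Delta(\lambda))$ by Lemma~\ref{lem:B2}(iv) at $d=1$, producing a lift in $\scrC^\Delta$. Conversely, given an equivalence $F^\Delta$, I would verify $1$-faithfulness through characterization~(v) of Lemma~\ref{lem:B2}: for $P\in\scrC^\proj$ and $N\in\scrC^\Delta$ the group $\Ext^1_\scrC(P,N)$ vanishes, so the task reduces to $\Ext^1_B(FP,FN)=0$; any such extension $E$ lies in $(B\mmod)^{F\Delta}$ and thus lifts to $\scrC^\Delta$, after which projectivity of $P$ splits the lifted sequence and hence the original one. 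The principal subtlety throughout is keeping lifts of morphisms and extensions mutually compatible, which is precisely what the $\Ext^i$-isomorphisms of Lemma~\ref{lem:B2} are designed to guarantee.
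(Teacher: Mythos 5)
Your overall plan — reading each bullet off Lemma~\ref{lem:B2} (and Remark~\ref{rem:2.9}) after translating $F\colon\Hom_\scrC(M,Q)\to\Hom_B(FM,FQ)$ into post-composition with the unit $\eta_Q$ under the adjunction — is exactly what the paper intends, since the corollary is explicitly a ``summary'' with no proof given. Bullets~1--3 are handled correctly, including the Yoneda-type observation that $\eta_Q$ is an isomorphism precisely when $\Hom_\scrC(P,\eta_Q)$ is bijective for every $P\in\scrC^\proj$, using that $\scrC$ has enough projectives.

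For bullet~4, the $(\Rightarrow)$ direction is fine: essential surjectivity of $F^\Delta$ is obtained by filtering $X\in(B\mmod)^{F\Delta}$, lifting the bottom layer, and using the isomorphism $\Ext^1_\scrC(\tilde Y,\Delta(\lambda))\iso\Ext^1_B(FY,F\Delta(\lambda))$ supplied by Lemma~\ref{lem:B2}(iv) at $d=1$. The $(\Leftarrow)$ direction, however, skates over a genuine point. When you write that the extension $0\to FN\to E\to FP\to 0$ ``lifts to $\scrC^\Delta$'', you still need the lifted sequence $0\to N\to\tilde E\to P\to 0$ to be exact in $\scrC$. Applying the left-exact $G$ to the original short exact sequence and using that $\eta_N,\eta_{\tilde E},\eta_P$ are isomorphisms ($0$-faithfulness, itself a consequence of $F^\Delta$ being fully faithful) gives exactness at $N$ and $\tilde E$, but surjectivity of $\tilde E\to P$ does not follow from an equivalence of additive categories: the cokernel of $\tilde E\to P$ is killed by $F$ and sits inside $R^1G(FN)$, whose vanishing is essentially the $1$-faithfulness you are trying to establish, so the reasoning is circular as stated. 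The intended reading, consistent with how the paper later uses the corollary (the quasi-inverse of $F^\Delta$ is identified explicitly as $G|_{(B\mmod)^{F\Delta}}$), is that ``$F^\Delta$ is an equivalence'' means an equivalence of exact categories, i.e.\ the quasi-inverse is exact; under that reading the lift is a conflation, projectivity of $P$ splits it, and your argument closes. If you only assume an additive equivalence you should supply a separate argument for surjectivity of $\tilde E\to P$, and your closing sentence (attributing compatibility of lifts to the $\Ext^i$-isomorphisms of Lemma~\ref{lem:B2}) does not help here, since those isomorphisms are what you are trying to prove in this direction.
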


\smallskip
The next two lemmas relate highest weight covers of $\scrC$,
$\scrC^*$ and $\scrC^\diamond$.

\begin{lemma}
\label{le:hwdual}
Consider a highest weight cover $F=\Hom_\scrC(P,\bullet):\scrC\to B\mmod$.
Then $F^*=\Hom_{\scrC^*}(\Hom_A(P,A),\bullet):\scrC^*\to B^\op\mmod$ is a highest
weight cover.

 Let $d\ge 0$. Then,
$F$ is $d$-faithful if and only if $F^*$ induces isomorphisms
$\Ext^i_{\scrC^*}(M,N)\iso \Ext^i_{B^\op}(F^*M,F^*N)$ for all $M,N\in (\scrC^*)^\nabla$ and
$i\le d$.
\end{lemma}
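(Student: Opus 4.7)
The plan is to exploit two dualities in tandem: the Nakayama equivalence $\nu = \Hom_A(-, A)$ between finitely generated projective left and right $A$-modules, and the $R$-linear duality $D = \Hom_R(-, R)$, which gives a contravariant equivalence $\scrC \cap R\mproj \iso \scrC^* \cap R\mproj$ exchanging $\scrC^\Delta$ with $(\scrC^*)^\nabla$.

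\emph{First part.} Since $F$ is a highest weight cover, $P$ contains every indecomposable projective of $\scrC$ as a direct summand. Applying $\nu$, the object $P^\vee = \Hom_A(P, A)$ contains every indecomposable projective right $A$-module as a summand. The canonical identification $\End_{A^\op}(P^\vee)^\op \cong \End_A(P) = B^\op$ shows that $F^* = \Hom_{\scrC^*}(P^\vee, -)\colon \scrC^* \to B^\op\mmod$ is a quotient functor in the sense of the paper, and the summand property makes $F^*$ fully faithful on $(\scrC^*)^\proj$, so $F^*$ is a highest weight cover.

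\emph{Second part.} The key calculation is that $D$ intertwines $F$ and $F^*$ on $R$-projective objects: for $N \in \scrC \cap R\mproj$, the hom-tensor adjunction gives
\begin{equation*}
F^*(DN) \;=\; \Hom_{A^\op}(P^\vee, \Hom_R(N, R)) \;\cong\; \Hom_R(P^\vee \otimes_A N, R) \;\cong\; D(FN),
\end{equation*}
using the natural isomorphism $P^\vee \otimes_A N \iso \Hom_A(P, N) = FN$, valid because $P$ is projective. Choosing projective resolutions $Q_\bullet \to M$ in $\scrC$ (whose terms are automatically $R$-projective by the listed properties of highest weight categories) and the analogous resolutions in $\scrC^*$, one obtains canonical identifications $\Ext^i_\scrC(M, N) \cong \Ext^i_{\scrC^*}(DN, DM) \cong D\,\Tor^A_i(DN, M)$ for $M, N \in \scrC \cap R\mproj$, together with $\Ext^i_B(FM, FN) \cong \Ext^i_{B^\op}(F^*DN, F^*DM)$ via the key calculation (note that $FM, FN$ are $R$-projective by Proposition \ref{prop:introhw}(d)). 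These identifications intertwine the canonical comparison maps induced by $F$ and $F^*$. Specializing to $M, N \in \scrC^\Delta$ (so $DN, DM \in (\scrC^*)^\nabla$), the $F^*$-condition of the statement translates into: $F$ induces isomorphisms $\Ext^i_\scrC(M, N) \iso \Ext^i_B(FM, FN)$ for all $M, N \in \scrC^\Delta$ and $i \le d$. Because $\scrC^\proj \subset \scrC^\Delta$ in any highest weight category, this sits between Lemma \ref{lem:B2}(v) (with $\scrE = \scrC^\Delta$) and Lemma \ref{lem:B2}(iv), both of which are equivalent to $d$-faithfulness of $F$, yielding the biconditional.

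The main obstacle is the naturality of the $\Ext$ identifications: one must verify that the isomorphism $F^* \circ D \cong D \circ F$ on $R$-projective objects really lifts, after passing to projective resolutions and applying $D$, to an identification of the canonical comparison map induced by $F^*$ with the one induced by $F$. This is routine but somewhat delicate, as it requires tracking the natural transformations arising from the hom-tensor adjunction and the duality of projective resolutions. Granting this verification, the remainder is algebraic manipulation.
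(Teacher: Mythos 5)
Your proof is correct and follows essentially the same route as the paper's: the crux in both is the hom--tensor adjunction identity
\[
\Hom_{A^\op}(\Hom_A(P,A),\Hom_R(N,R))\simeq\Hom_R(\Hom_A(P,A)\otimes_A N,R)\simeq\Hom_R(\Hom_A(P,N),R),
\]
which gives $F^*\circ D\cong D\circ F$ on $R$-projective objects, together with the fact that $D$ restricts to an equivalence $(\scrC^\Delta)^\op\iso(\scrC^*)^\nabla$ and that higher Ext groups between $\Delta$-filtered (resp.\ $\nabla$-filtered) objects can be computed inside the exact subcategories $\scrC^\Delta$ (resp.\ $(\scrC^*)^\nabla$), since the canonical functors $D^b(\scrC^\Delta)\to D^b(\scrC)$ and $D^b((\scrC^*)^\nabla)\to D^b(\scrC^*)$ are fully faithful. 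Your detour through $\Tor$ is slightly heavier than the paper's terse statement but carries the same content, and your sandwiching of the translated condition between Lemma~\ref{lem:B2}(iv) and (v) is a correct way to close the biconditional.
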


\begin{proof}
There is a commutative diagram
$$\xymatrix{
(\scrC^\Delta)^\op \ar[rr]_\sim^{\Hom_R(\bullet,R)} \ar[d]_F && (\scrC^*)^\nabla
\ar[d]^{F^*} \\
(B\mmod)^\op\cap R^\proj \ar[rr]_{\Hom_R(\bullet,R)}^\sim && B^\op\mmod\cap R^\proj
}$$
since
\begin{eqnarray*}
\Hom_{A^\op}(\Hom_A(P,A),\Hom_R(\bullet,R))&\simeq&
\Hom_R(\Hom_A(P,A)\otimes_A\bullet,R)\\&\simeq&
\Hom_R(\Hom_A(P,\bullet),R).
\end{eqnarray*}

The lemma follows, since (higher) extensions can be computed in the exact subcategories 
appearing in the diagram.
\end{proof}

The next lemma is clear.

\begin{lemma}
\label{le:hwcoverRingel}
Let $T\in\scrC^\nabla$ and consider a finite projective $R$-algebra $B$ with a morphism
of algebras $\phi:B\to\End_\scrC(T)^\op$. Let $F=\Hom_\scrC(T,\bullet)$, $P=\scrR(T)$ and
$F^\diamond=\Hom_{\scrC^\diamond}(P,\bullet):\scrC^\diamond\to B\mmod$.

The functor $F^\diamond$ is a highest weight weight cover if and only if
$T$ is tilting, $F$ is fully faithful on $\scrC^\tilt$ and $\phi$ is an isomorphism.

The functor $F^\diamond$ is $d$-faithful if and only if
$T$ is tilting, $\phi$ is an isomorphism and
$F$ induces isomorphisms
$\Ext^i_{\scrC}(M,N)\iso \Ext^i_{B}(FM,FN)$ for all $M,N\in \scrC^\nabla$ and
$i\le d$.
\end{lemma}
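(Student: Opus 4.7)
The plan is to pass everything through the Ringel equivalence $\scrR:\scrC^\nabla\iso(\scrC^\diamond)^\Delta$, which sends $\scrC^\tilt$ bijectively onto $(\scrC^\diamond)^\proj$ and gives a ring isomorphism $\End_\scrC(T)\iso\End_{\scrC^\diamond}(\scrR T)$. Consequently $P=\scrR(T)$ is projective in $\scrC^\diamond$ iff $T\in\scrC^\tilt$, and under this assumption $\phi$ is an isomorphism iff $B\iso\End_{\scrC^\diamond}(P)^\op$; both of these are required for $F^\diamond=\Hom_{\scrC^\diamond}(P,\bullet)$ to be a quotient functor of $\scrC^\diamond$ in the sense of \S\ref{ss:notation}. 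Moreover, by definition of Ringel duality, for any $M\in\scrC^\nabla$ we have
\[F^\diamond(\scrR M)=\Hom_{\scrC^\diamond}(\scrR T,\scrR M)=\Hom_\scrC(T,M)=F(M),\]
so $F$ and $F^\diamond$ coincide after transport by $\scrR$ on the relevant exact subcategories. Since the inclusions $\scrC^\nabla\hookrightarrow\scrC$ and $(\scrC^\diamond)^\Delta\hookrightarrow\scrC^\diamond$ induce fully faithful embeddings of bounded derived categories, all $\Ext^i$ computed between objects of these subcategories match those in the ambient abelian categories; and because $T$ is tilting the restriction $F|_{\scrC^\nabla}$ is exact (as $\Ext^1_\scrC(T,N)=0$ for $N\in\scrC^\nabla$), so it induces on $\Ext^i$ exactly the map that $F^\diamond$ does under $\scrR$.

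Part (a) is now immediate from Corollary \ref{co:char01faith}: $F^\diamond$ is a highest weight cover iff it is fully faithful on $(\scrC^\diamond)^\proj$, which via $\scrR$ is exactly full faithfulness of $F$ on $\scrC^\tilt$. For part (b), Lemma \ref{lem:B2}(v) says $F^\diamond$ is $d$-faithful iff it induces isomorphisms $\Ext^i_{\scrC^\diamond}(P',N')\iso\Ext^i_B(F^\diamond P',F^\diamond N')$ for $P'\in(\scrC^\diamond)^\proj$, $N'\in(\scrC^\diamond)^\Delta$ and $i\le d$; transporting by $\scrR$ this is equivalent to $F$ inducing isomorphisms
\[\Ext^i_\scrC(T',N)\iso\Ext^i_B(FT',FN),\quad T'\in\scrC^\tilt,\ N\in\scrC^\nabla,\ i\le d.\]
It remains to show this is equivalent to the same statement with $T'$ replaced by an arbitrary $M\in\scrC^\nabla$, one direction being trivial since $\scrC^\tilt\subset\scrC^\nabla$.

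For the nontrivial direction I would resolve $M\in\scrC^\nabla$ by tiltings. Applying $\scrR^{-1}$ to a finite projective resolution of $\scrR M$ inside the exact category $(\scrC^\diamond)^\Delta$ — which exists because each $\Delta^\diamond(\lambda)$ admits a finite projective resolution in $(\scrC^\diamond)^\Delta$ (built iteratively from the $\Delta^\diamond$-filtered kernel of $P^\diamond(\lambda)\twoheadrightarrow\Delta^\diamond(\lambda)$) and general $\Delta^\diamond$-filtered objects follow by d\'evissage — produces a finite exact sequence $0\to T_\ell\to\cdots\to T_0\to M\to 0$ in $\scrC^\nabla$ with each $T_i\in\scrC^\tilt$. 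Writing $0\to K\to T_0\to M\to 0$ with $K\in\scrC^\nabla$ and using the exactness of $F|_{\scrC^\nabla}$, the long exact sequences for $\Ext^*_\scrC(-,N)$ and $\Ext^*_B(F(-),FN)$, together with the vanishings $\Ext^{>0}_\scrC(T_0,N)=0$ (since $T_0\in\scrC^\Delta$ and $N\in\scrC^\nabla$) and $\Ext^j_B(FT_0,FN)=0$ for $0<j\le d$ (from the hypothesis in the tilting case), propagate the $\Ext^i$-isomorphism from tilting first arguments to arbitrary $M\in\scrC^\nabla$ by induction on $i\le d$, in the same style as the proof of Lemma \ref{lem:B2}. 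The only mildly technical ingredient is the existence of the finite tilting resolution inside $\scrC^\nabla$; the rest is a routine homological d\'evissage.
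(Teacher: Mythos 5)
Your proof is correct and is the intended transport-through-Ringel-duality argument; the paper itself omits the proof with the remark that the lemma is ``clear.'' You correctly observe that (i) $P=\scrR(T)$ is projective in $\scrC^\diamond$ iff $T$ is tilting and, once that holds, $\phi$ is an isomorphism iff $B\iso\End_{\scrC^\diamond}(P)^\op$, so these two conditions are precisely what make $F^\diamond$ a quotient functor; (ii) $\scrR$ identifies $F|_{\scrC^\nabla}$ with $F^\diamond|_{(\scrC^\diamond)^\Delta}$, and since $D^b(\scrC^\nabla)\hookrightarrow D^b(\scrC)$ and $D^b((\scrC^\diamond)^\Delta)\hookrightarrow D^b(\scrC^\diamond)$ are fully faithful, all relevant $\Ext$-comparison maps match.

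The only place you work harder than necessary is the final d\'evissage from tilting first arguments to arbitrary $\nabla$-filtered first arguments. Once you have the equivalent reformulation of $d$-faithfulness as Lemma~\ref{lem:B2}(v) for $\scrC^\diamond$ (projective first argument, $\Delta$-filtered second), you can close the circle without any resolutions: the version with $M'$ ranging over $(\scrC^\diamond)^\Delta$ implies the version with $M'\in(\scrC^\diamond)^\proj$ because $(\scrC^\diamond)^\proj\subset(\scrC^\diamond)^\Delta$, and conversely Lemma~\ref{lem:B2}(v)$\Rightarrow$(iv) (which is exactly Lemma~\ref{le:Extplus1}) promotes the projective version to all objects of $\scrC^\diamond$, hence a fortiori to $(\scrC^\diamond)^\Delta$. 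Your explicit tilting-resolution argument accomplishes the same thing by hand and is valid, but the paper has already packaged the homological bootstrap you need.
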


\vspace{2mm}

Recall that an $R$-algebra $B$ is \emph{Frobenius} if and only if
$B^*=\Hom_R(B,R)$ is isomorphic to $B$ as a $B$-module.

\begin{lemma}
\label{le:tiltfromFrob}
Let $F=\Hom_\scrC(P,\bullet):
\scrC\to B\mmod$ be a $0$-faithful functor. If $B$ is a Frobenius
algebra, then $P$ is tilting.
\end{lemma}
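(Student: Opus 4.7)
The plan is to show that $P \in \scrC^\nabla$, since $P \in \scrC^\Delta$ is automatic from $P \in \scrC^\proj$. As $P$ is $R$-projective (being a projective module over the finite projective $R$-algebra $A$ representing $\scrC$), Lemma~\ref{lem:B7} reduces the task to proving the vanishing $\Ext^1_\scrC(\Delta(\lambda), P) = 0$ for every $\lambda \in \Lambda$.

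I would establish this by showing that an arbitrary extension $0 \to P \to X \to \Delta(\lambda) \to 0$ in $\scrC$ splits, proceeding in two stages: first produce a splitting in $B\mmod$ using the Frobenius hypothesis, then lift it back to $\scrC$ using $0$-faithfulness. Since $\scrC^\Delta$ is closed under extensions, the middle term $X$ lies in $\scrC^\Delta$. Applying the exact functor $F$ produces a short exact sequence $0 \to B \to FX \to F\Delta(\lambda) \to 0$ in $B\mmod$, where $F\Delta(\lambda) = \Hom_\scrC(P, \Delta(\lambda))$ is $R$-projective by Proposition~\ref{prop:introhw}(d). The Frobenius property provides an isomorphism $B \cong B^*$ of $B$-modules, so $B^*$ is in particular projective as a right $B$-module. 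By the criterion recalled in \S\ref{ss:notation}, this identifies $B$ as a relatively $R$-injective object of $B\mmod$, so that $\Ext^1_B(F\Delta(\lambda), B) = 0$. Consequently, the sequence $FX$ admits a section $s : F\Delta(\lambda) \to FX$.

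The final step is to lift $s$ to a section in $\scrC$. By Corollary~\ref{co:char01faith}, the $0$-faithfulness of $F$ is equivalent to the full faithfulness of $F^\Delta : \scrC^\Delta \to (B\mmod)^{F\Delta}$; since both $\Delta(\lambda)$ and $X$ lie in $\scrC^\Delta$, there is a unique morphism $\tilde s : \Delta(\lambda) \to X$ with $F \tilde s = s$. Writing $\pi : X \to \Delta(\lambda)$ for the surjection, the equality $F(\pi \tilde s) = F\pi \circ s = \id_{F\Delta(\lambda)} = F(\id)$, combined with faithfulness of $F^\Delta$, forces $\pi \tilde s = \id$. Thus the extension splits, which gives $\Ext^1_\scrC(\Delta(\lambda), P) = 0$ and completes the argument.

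The main obstacle is the gap between the Hom-level information supplied by $0$-faithfulness and the Ext-level vanishing required for tilting. The Frobenius property bridges this gap only indirectly, by guaranteeing a splitting after applying $F$; the actual work lies in descending that splitting using the closure of $\scrC^\Delta$ under extensions together with the full faithfulness of $F^\Delta$.
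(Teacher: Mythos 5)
Your proof is correct and captures the same key insight as the paper: $F(P)=B$ is relatively $R$-injective (by the Frobenius hypothesis), $F\Delta(\lambda)$ is $R$-projective, so $\Ext^1_B(F\Delta(\lambda),B)=0$, and this forces $\Ext^1_\scrC(\Delta(\lambda),P)=0$, whence $P$ is tilting by Lemma~\ref{lem:B7}. The only difference is cosmetic: the paper invokes Lemma~\ref{le:Extplus1} (with $d=0$) to get the injection $\Ext^1_\scrC(\Delta(\lambda),P)\hookrightarrow\Ext^1_B(F\Delta(\lambda),B)$, whereas you unwind that injectivity by hand, noting $X\in\scrC^\Delta$, splitting $F(X)\to F\Delta(\lambda)$ in $B\mmod$, and lifting the section through the full faithfulness of $F^\Delta$ from Corollary~\ref{co:char01faith}.
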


\begin{proof}
Let $\lambda\in\Lambda$.
By Lemma \ref{le:Extplus1}, we have an injection
$$\Ext^1_\scrC(\Delta(\lambda),P)\hookrightarrow
\Ext^1_B(F\Delta(\lambda),F(P)).$$ Since $F(P)=B$ is relatively $R$-injective
and $F\Delta(\lambda)$ is projective over $R$, we deduce that
$\Ext^1_B(F\Delta(\lambda),F(P))=0$, hence
$\Ext^1_\scrC(\Delta(\lambda),P)=0$. It follows from Lemma \ref{lem:B7}
that $P$ is tilting.
\end{proof}

\begin{lemma}
\label{le:projfromFrob}
Let $\scrC$ be a highest weight category, $T\in\scrC^\tilt$ and
$B=\End_{\scrC}(T)^\op$. Assume the restriction of
$\Hom_\scrC(T,\bullet)$ to $\scrC^\nabla$ is fully faithful and $B$ is a Frobenius algebra.
Then $T$ is projective.
\end{lemma}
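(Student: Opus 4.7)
The plan is to reduce, via Ringel duality, to Lemma~\ref{le:tiltfromFrob}. Let $\scrC^\diamond$ be the Ringel dual of $\scrC$ and $\scrR\colon\scrC^\nabla\iso(\scrC^\diamond)^\Delta$ the Ringel equivalence, and set $P=\scrR(T)$. Since $T\in\scrC^\tilt$, the restriction of $\scrR$ to tilting modules yields an equivalence $\scrC^\tilt\iso(\scrC^\diamond)^\proj$, so that $P$ is projective in $\scrC^\diamond$; the full faithfulness of $\scrR$ on $\scrC^\nabla$ simultaneously provides an isomorphism $\phi\colon B\iso\End_{\scrC^\diamond}(P)^\op$. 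By Lemma~\ref{le:hwcoverRingel} applied with $d=0$, the assumption that $F=\Hom_\scrC(T,\bullet)|_{\scrC^\nabla}$ is fully faithful is equivalent to $F^\diamond=\Hom_{\scrC^\diamond}(P,\bullet)\colon\scrC^\diamond\to B\mmod$ being a $0$-faithful highest weight cover.

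Next I apply Lemma~\ref{le:tiltfromFrob} to the $0$-faithful functor $F^\diamond$, whose target algebra $B$ is Frobenius: this gives that $P$ is tilting in $\scrC^\diamond$. Combined with $P\in(\scrC^\diamond)^\proj$, the object $P$ is projective-tilting in $\scrC^\diamond$. Writing $P=\bigoplus_i P^\diamond(\lambda_i)=\bigoplus_i T^\diamond(\mu_i)$ and invoking the Ringel identifications $\scrR(T(\lambda))=P^\diamond(\lambda)$ and $\scrR(I(\lambda))=T^\diamond(\lambda)$, applying $\scrR^{-1}$ to the two decompositions yields $T=\bigoplus_i T(\lambda_i)=\bigoplus_i I(\mu_i)$ in $\scrC$; in particular $T\in\scrC^\tilt\cap\scrC^\inj$.

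The final step promotes this intersection to $T\in\scrC^\proj$. Since $P$ is tilting in $\scrC^\diamond$, it lies in $(\scrC^\diamond)^\nabla$, so the Ringel equivalence $\scrR^\diamond\colon(\scrC^\diamond)^\nabla\iso(\scrC^{\diamond\diamond})^\Delta$ of the Ringel dual may be applied to $P$; because $\scrR^\diamond$ restricts to an equivalence $(\scrC^\diamond)^\tilt\iso(\scrC^{\diamond\diamond})^\proj$, the image $\scrR^\diamond(P)$ is projective in $\scrC^{\diamond\diamond}$, which via the canonical equivalence $\scrC^{\diamond\diamond}\simeq\scrC$ corresponds to a projective in $\scrC$. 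Matching, on each projective-tilting summand $P^\diamond(\lambda_i)=T^\diamond(\mu_i)$, the two inverse Ringel images $\scrR^{-1}(P^\diamond(\lambda_i))=T(\lambda_i)$ and the projective coming from $\scrR^\diamond$ identifies $T(\lambda_i)$ with $P(\mu_i)$ in $\scrC$, so that $T$ is projective. The main obstacle is precisely this last identification, which requires a careful bookkeeping of the Nakayama-type correspondence encoded in the double Ringel equivalence $\scrC^{\diamond\diamond}\simeq\scrC$ (as recorded earlier in this section) in order to reconcile the tilting-injective incarnation $\scrR^{-1}(P)$ of $T$ with its projective incarnation $\scrR^\diamond(P)$.
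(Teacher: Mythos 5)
Your proposal follows the paper's terse proof sketch faithfully through its first two stages: applying Lemma~\ref{le:hwcoverRingel} to identify the full faithfulness of $\Hom_\scrC(T,\bullet)$ on $\scrC^\nabla$ with the $0$-faithfulness of $F^\diamond=\Hom_{\scrC^\diamond}(\scrR(T),\bullet)$, and then invoking Lemma~\ref{le:tiltfromFrob} in $\scrC^\diamond$ to conclude $P=\scrR(T)$ is tilting. From $P$ projective-tilting one transports back through $\scrR^{-1}$ to get $T\in\scrC^\tilt\cap\scrC^\inj$. Up to this point the reasoning is sound and matches the paper.

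The gap, which you yourself flag, is precisely the step you cannot close: promoting $T\in\scrC^\tilt\cap\scrC^\inj$ to $T\in\scrC^\proj$. The "matching" you propose, equating $T(\lambda_i)=\scrR^{-1}(P^\diamond(\lambda_i))$ with $P(\mu_i)=\scrR^\diamond(T^\diamond(\mu_i))$, is not valid: $\scrR^{-1}=\scrR_\scrC^{-1}$ and $\scrR^\diamond=\scrR_{\scrC^\diamond}$ are \emph{different} functors. Indeed, the paper records that the composition $\scrC^\proj\iso(\scrC^{\diamond\diamond})^\proj\xrightarrow{\scrR^{-1}}(\scrC^\diamond)^\tilt\xrightarrow{\scrR^{-1}}\scrC^\inj$ is the Nakayama duality $\Hom_A(\bullet,A)^*$ of the underlying algebra $A$ of $\scrC$. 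Unwinding this, $\scrR^\diamond(P)$ is not $T$ itself but its inverse Nakayama transform: from $T(\lambda_i)=I(\mu_i)$ one only gets $\scrR^\diamond(T^\diamond(\mu_i))=P(\mu_i)=\nu_A^{-1}(I(\mu_i))=\nu_A^{-1}(T(\lambda_i))$, which is a projective object but not visibly isomorphic to $T(\lambda_i)$. The hypothesis that $B=\End_\scrC(T)^\op$ is Frobenius controls $B$, not the Nakayama functor of $A$, so nothing in the data you have invoked forces $\nu_A$ to fix $T$. Until this identification is supplied, the argument establishes only that $T$ is injective and tilting. (The paper's one-line proof is silent on exactly this point as well, and Proposition~\ref{prop:key} likewise appeals to an isomorphism $\scrR^{-1}(P)\simeq\scrR(P)$ for a projective-tilting $P$ without further justification; reconciling these requires a careful treatment of the equivalence $\scrC\simeq\scrC^{\diamond\diamond}$ and the Nakayama correspondence that your write-up correctly identifies as the outstanding issue.)
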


\begin{proof}
This follows from Lemma \ref{le:tiltfromFrob} applied to $\scrC^\diamond$, cf
Lemma \ref{le:hwcoverRingel}.
\end{proof}

\subsubsection{Base change}
Let $S$ be a local commutative flat $R$-algebra. If $F$ is $d$-faithful, then
$SF$ is $d$-faithful, and the converse holds if $S$ is faithfully flat over $R$
(for example, if it is a local $S$-point).

\begin{lemma}
\label{prop:RR}
Let $F$ be a quotient functor.

If $KF$ is $(-1)$-faithful, then $F$ is $(-1)$-faithful.

Assume $R$ is a regular local ring.
If $R_\frakp F$ is $0$-faithful (resp. is a highest weight cover)
for all $\frakp\in \frakP_1$, then $F$ is $0$-faithful (resp.
is a highest weight cover).
\end{lemma}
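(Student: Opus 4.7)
The plan is to reduce each assertion, via Proposition \ref{prop:introhw}(d) and Corollary \ref{co:char01faith}, to showing that certain $R$-module maps with $R$-free source are isomorphisms, given that this holds after localization at every height-one prime. The kernel will vanish by a torsion-freeness argument, and the cokernel will be killed by Auslander--Buchsbaum.

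For the first claim, take $M\in\scrC$ with $F(M)=0$ and $N\in\scrC^\Delta$; by Remark \ref{rem:2.9}, $(-1)$-faithfulness of $F$ amounts to $\Hom_\scrC(M, N)=0$. Choose an epimorphism $P_0\twoheadrightarrow M$ with $P_0\in\scrC^\proj$. The resulting injection $\Hom_\scrC(M,N)\hookrightarrow \Hom_\scrC(P_0,N)$, together with the $R$-freeness of the target given by Proposition \ref{prop:introhw}(d), shows that $\Hom_\scrC(M,N)$ is $R$-torsion-free. By flat base change (Proposition \ref{prop:introhw}(a)), $K\otimes_R \Hom_\scrC(M,N)\simeq \Hom_{K\scrC}(KM,KN)$, and this vanishes because $KF(KM)=0$, $KN\in(K\scrC)^\Delta$, and $KF$ is $(-1)$-faithful. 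A torsion-free $R$-module killed by $K\otimes_R-$ must vanish.

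For the second claim, Corollary \ref{co:char01faith} reduces the question to showing that $\eta_N\colon N\to GF(N)$ is an isomorphism for $N\in\scrC^\Delta$ (in the $0$-faithful case), respectively for $N\in\scrC^\proj$ (in the highest weight cover case). Flat base change identifies $R_\frakp\eta_N$ with $\eta_{R_\frakp N}$, so the hypothesis makes $R_\frakp\eta_N$ an isomorphism for each $\frakp\in\frakP_1$; since $0$-faithfulness (respectively being a highest weight cover) is preserved under flat base change, the same holds also at the generic fibre $\frakp=(0)$. As $N$ is $R$-free, $\ker\eta_N\subset N$ is $R$-torsion-free and vanishes after any height-one localization, so $\ker\eta_N=0$ by the argument of the first paragraph.

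For the cokernel $C=\coker\eta_N$, the vanishing at every prime of height $\le 1$ gives $\dim_R C\le \dim R-2$. Using the adjunction $GF(N)\simeq \Hom_B(F(A),F(N))$, one realises $GF(N)$ as the kernel of an explicit $R$-linear map between two $R$-free modules (obtained by forgetting the $B$-linearity condition on $\Hom_R(F(A), F(N))$), which exhibits it as a second syzygy over $R$ and bounds $\mathrm{pd}_R GF(N)\le \dim R-2$; combined with the $R$-freeness of $N$, the short exact sequence $0\to N\to GF(N)\to C\to 0$ yields $\mathrm{pd}_R C\le \max(1,\dim R-2)$. Auslander--Buchsbaum then forces $\mathrm{depth}_R C\ge \min(\dim R-1,2)$, which contradicts $\mathrm{depth}_R C\le \dim_R C\le \dim R-2$ when $\dim R\le 3$ (in particular in the case $\dim R=2$ of primary interest in this paper) unless $C=0$. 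Hence $\eta_N$ is an isomorphism.

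The main technical obstacle will be the projective-dimension bound on $GF(N)$; once in place, the standard Auslander--Buchsbaum dichotomy combines with the height-one hypothesis to force $\coker\eta_N$ to vanish.
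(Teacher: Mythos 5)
Your treatment of the first statement and of $\ker\eta_N$ coincides with the paper's, but you handle $\coker\eta_N$ by a genuinely different route. The paper observes that since $\operatorname{supp}(\coker\eta_N)$ has codimension $\ge 2$ and $R$ is Cohen--Macaulay, $\operatorname{grade}_R(\coker\eta_N)\ge 2$, so $\Ext^1_R(\coker\eta_N,N)=0$ because $N$ is $R$-free; the short exact sequence $0\to N\to GF(N)\to\coker\eta_N\to 0$ then splits over $R$, exhibiting the torsion module $\coker\eta_N$ as a direct summand of the torsion-free module $GF(N)\subset\Hom_R(F(A),F(N))$, forcing it to vanish. You instead bound $\mathrm{pd}_R\,GF(N)\le\dim R-2$ by realising it as a second syzygy, deduce $\mathrm{pd}_R\,\coker\eta_N\le\max(\dim R-2,1)$, and run Auslander--Buchsbaum against $\dim_R\coker\eta_N\le\dim R-2$. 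The two arguments exploit the same two facts (that $N$ is $R$-free and that $GF(N)$ embeds in a free $R$-module), but the paper's use of grade gives a contradiction in every dimension, whereas your depth/projective-dimension comparison only bites for $\dim R\le 3$ --- a genuine restriction, since the lemma is stated for an arbitrary regular local $R$, even though the paper applies it only in dimension $\le 2$. To make your approach dimension-free you would need to convert the syzygy information on $GF(N)$ into a statement about $\coker\eta_N$ directly rather than merely bounding its projective dimension; in effect, that is exactly what the paper's $\Ext$-vanishing and splitting accomplish.
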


\begin{proof}
The first statement is obvious, since objects of $\scrC^\Delta$ are projective
over $R$.

Assume now $F$ is $(-1)$-faithful.
Let $M\in\scrC^\Delta$. Consider the exact sequence
$$0\to M\xrightarrow{\eta(M)} GFM\to \coker\eta(M)\to 0.$$
Assume $R_\frakp \coker\eta(M)=0$ for all $\frakp\in \frakP_1$. 
Then, the support of $\coker\eta(M)$ has codimension $\ge 2$, hence
$\Ext^1_R(\coker\eta(M),M)=0$, since $M$ is projective over $R$. It follows
that $\coker\eta(M)$ is a direct summand of the torsion-free module
$GF(M)$, hence $\coker\eta(M)=0$.
The lemma follows.
\end{proof}

\vspace{2mm}
The corollary below is immediate.

\begin{cor}\label{cor:2.16}
Let $\scrC$ be a highest weight category, $T\in\scrC^\tilt$ and
$B=\End_{\scrC}(T)^\op$. 
Let $F=\Hom_\scrC(T,\bullet)$.
Assume $R$ is a regular local ring.
Then, the restriction of
$F$ to $\scrC^\nabla$ is fully faithful 
if the restriction of
$R_\frakp F$ to $R_\frakp \scrC^\nabla$ is fully faithful for all $\frakp\in \frakP_1$.
\end{cor}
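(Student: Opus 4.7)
The plan is to combine Lemma \ref{le:hwcoverRingel} and Lemma \ref{prop:RR} via Ringel duality. First, I would apply Lemma \ref{le:hwcoverRingel} with the tilting object $T$ and $\phi=\id_B$: the restriction of $F$ to $\scrC^\nabla$ being fully faithful is equivalent to the $0$-faithfulness of the Ringel-dual functor
\[F^\diamond=\Hom_{\scrC^\diamond}(\scrR(T),\bullet):\scrC^\diamond\to B\mmod.\]
Since $\scrR$ carries tilting objects to projectives, we have $\scrR(T)\in(\scrC^\diamond)^\proj$, so $F^\diamond$ is a quotient functor in the sense required to invoke Lemma \ref{prop:RR}. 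The problem thus reduces to proving that $F^\diamond$ is $0$-faithful.

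Next, I would apply Lemma \ref{prop:RR} to $F^\diamond$, which reduces the task to showing that $R_\frakp F^\diamond$ is $0$-faithful for every $\frakp\in\frakP_1$. The key point is that Ringel duality is compatible with the flat base change $R\to R_\frakp$: by Proposition \ref{prop:introhw}(c), $R_\frakp T$ remains tilting in $R_\frakp\scrC$, and by part (b) of the same proposition,
\[R_\frakp B\simeq\End_{R_\frakp\scrC}(R_\frakp T)^\op.\]
These two identifications yield a canonical equivalence $R_\frakp\scrC^\diamond\simeq(R_\frakp\scrC)^\diamond$ under which $R_\frakp F^\diamond$ corresponds to the Ringel dual $(R_\frakp F)^\diamond$ of the localized functor. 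Lemma \ref{le:hwcoverRingel}, read in the reverse direction for $R_\frakp\scrC$ and $R_\frakp F$, then translates the $0$-faithfulness of $(R_\frakp F)^\diamond$ into the fully faithfulness of $R_\frakp F$ on $R_\frakp\scrC^\nabla$, which is exactly the hypothesis of the corollary.

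The only substantive step is the compatibility of Ringel duality with flat localization invoked in the previous paragraph. This should be essentially automatic from the base-change identities in Proposition \ref{prop:introhw}, but carefully tracking the identifications of the tilting generator, its endomorphism algebra and the resulting Ringel dual is the main (if mild) obstacle; apart from that, the corollary is indeed an immediate combination of Lemmas \ref{le:hwcoverRingel} and \ref{prop:RR}.
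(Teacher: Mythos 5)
Your proposal is correct and follows essentially the same three-step route as the paper's own proof: apply Lemma \ref{le:hwcoverRingel} to translate ``$F$ fully faithful on $\scrC^\nabla$'' into ``$F^\diamond$ is $0$-faithful,'' invoke Lemma \ref{prop:RR} to reduce the latter to the localizations $R_\frakp F^\diamond$ at height-one primes, and then apply Lemma \ref{le:hwcoverRingel} once more over each $R_\frakp$. Your explicit attention to the compatibility of Ringel duality with flat localization (tilting objects and their endomorphism algebras commuting with $R\to R_\frakp$) is a small point the paper leaves implicit, but it is the same argument.
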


\begin{proof}
Let $P=\scrR(T)$ and $F^\diamond=\Hom_{\scrC^\diamond}(P,\bullet):\scrC^\diamond\to B\mmod$.
The restriction of
$F$ to $\scrC^\nabla$ is fully faithful 
if and only if  the restriction of
$F^\diamond$ to $(\scrC^\diamond)^\nabla$ is fully faithful.
Now, $F^\diamond$ is a quotient functor because $T$ is tilting.
Thus, by Lemma \ref{prop:RR}, if $R_\frakp F^\diamond$ is $0$-faithful 
for all $\frakp\in \frakP_1$, then $F^\diamond$ is $0$-faithful.
Finally, by Lemma \ref{le:hwcoverRingel},
$R_\frakp F^\diamond$ is $0$-faithful if the restriction of
$R_\frakp F$ to $R_\frakp \scrC^\nabla$ is fully faithful.
\end{proof}

The following key result generalizes \cite[prop.~4.42]{R1}.

\begin{prop}
\label{prop:0to1}
Assume $R$ is regular.
If $\Bbbk F$ is $d$-faithful, then
$F$ is $d$-faithful. If in addition
$KF$ is $(d+1)$-faithful, then $F$ is $(d+1)$-faithful.
\end{prop}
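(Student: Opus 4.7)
My plan is to read both hypotheses and conclusion through the cone criterion of Lemma \ref{lem:B2}(iii): for $N\in\scrC^\Delta$ set $X_N=\cone\bigl(N\xrightarrow{\eta(N)}RGF(N)\bigr)$ in $D^b(\scrC)$, so that $F$ is $d$-faithful iff $H^{\le d}(X_N)=0$ for all such $N$, and likewise for $\Bbbk F$ and $KF$ with the analogous cones built in $\Bbbk\scrC$ and $K\scrC$.

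The first step is a base change identification $\Bbbk\otimes^L_R X_N\simeq \cone\bigl(\Bbbk N\to R(\Bbbk G)(\Bbbk F)(\Bbbk N)\bigr)$, and its flat analogue $K\otimes_R X_N\simeq\cone\bigl(KN\to R(KG)(KF)(KN)\bigr)$ which is automatic from flatness of $K$. Both $N$ and $FN=\Hom_\scrC(P,N)$ are $R$-projective and $\Bbbk\otimes_R FN\simeq(\Bbbk F)(\Bbbk N)$ by Proposition \ref{prop:introhw}(b),(d). Computing $RGF(N)$ via a resolution of $FN$ by direct summands of copies of $B$ yields terms which are simultaneously $R$-projective, and the vanishing $\Tor^R_{>0}(\Bbbk,FN)=0$ ensures that the mod-$\frakm$ reduction of that resolution is a $\Bbbk B$-projective resolution of $(\Bbbk F)(\Bbbk N)$, producing the identification.

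I would then use regularity of $R$ to fix a minimal free resolution $P^\bullet\to X_N$, namely a complex of finitely generated free $R$-modules whose differential lies in $\frakm P^\bullet$. Such a resolution exists since $H^\bullet(X_N)$ consists of finitely generated $R$-modules ($A$ and $B$ being finite projective $R$-algebras). Minimality forces the differentials of $\Bbbk\otimes_R P^\bullet$ to vanish, hence $H^i(\Bbbk\otimes^L_R X_N)\simeq P^i\otimes_R\Bbbk$. If $\Bbbk F$ is $d$-faithful, the base change step gives $P^i\otimes_R\Bbbk=0$ for $i\le d$, and Nakayama forces $P^i=0$ for $i\le d$. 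Consequently $H^{\le d}(X_N)=0$ and $F$ is $d$-faithful.

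For the refinement, assume moreover that $KF$ is $(d+1)$-faithful. Flatness of $K$ yields $K\otimes_R H^{d+1}(X_N)\simeq H^{d+1}(K\otimes_R X_N)=0$, so $H^{d+1}(X_N)$ is $R$-torsion. On the other hand, since $P^i=0$ for $i\le d$ from above, $H^{d+1}(X_N)=\ker(P^{d+1}\to P^{d+2})$ embeds in the free $R$-module $P^{d+1}$ and is therefore $R$-torsion-free. A module that is both torsion and torsion-free vanishes, so $H^{d+1}(X_N)=0$ and $F$ is $(d+1)$-faithful. I expect the main obstacle to be the base change identification in the first step, which requires tracking how the right adjoint $RG$ and the unit $\eta$ interact with $\otimes^L_R\Bbbk$; the rest of the argument is then a routine application of minimal free resolutions over a regular local ring.
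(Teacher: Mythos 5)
Your proof is correct, and it takes a genuinely different route from the paper's. The paper works directly with Definition/criterion (ii): for $M$ with $F(M)=0$ and $N\in\scrC^\Delta$ it base-changes $R\Hom_\scrC(M,N)$ via $R\Hom_{\Bbbk\scrC}(\Bbbk M,\Bbbk N)\simeq\Bbbk\otimes^{\bbL}_R R\Hom_\scrC(M,N)$, which is the completely standard derived base change for finite projective $R$-algebras, then picks a representative complex $C$ of finitely generated projective $R$-modules that is ``minimal from below'' ($C^{<r}=0$ with $r$ maximal) and reads off $r>d+1$ from $H^r(\Bbbk C)\ne 0$; the torsion-free-and-torsion argument for the $(d+1)$-case is essentially the same as yours. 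You instead work with criterion (iii) of Lemma \ref{lem:B2}, base-changing the cone $X_N=\cone\bigl(N\to RGF(N)\bigr)$. That buys you nothing structurally -- both amount to a Nakayama/minimal-complex argument over a regular local ring -- but it costs you a more delicate base change: you must commute $\Bbbk\otimes^{\bbL}_R$ with the \emph{right} derived functor $RG$. Your verification of this commutation is sound, but be careful with the phrasing: writing $F\simeq eA\otimes_A\bullet$ and $G=\Hom_B(eA,\bullet)$, the functor $G$ is left exact and $RG(FN)$ is computed from a $B$-projective resolution $Q^\bullet\to eA$ (equivalently an injective resolution of $FN$), not from ``a resolution of $FN$ by direct summands of copies of $B$.'' After this correction the point is exactly as you intend: since $eA$ and $FN$ are $R$-projective, $\Hom_B(Q^\bullet,FN)$ has $R$-projective terms and $\Bbbk Q^\bullet$ remains a resolution of $\Bbbk eA$, giving $\Bbbk\otimes^{\bbL}_R RGF(N)\simeq R(\Bbbk G)(\Bbbk F)(\Bbbk N)$ and hence $\Bbbk\otimes^{\bbL}_R X_N\simeq X_{\Bbbk N}$. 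The rest -- the minimal complex $P^\bullet$ over the regular local ring, Nakayama forcing $P^{\le d}=0$, and then torsion-free-meets-torsion for $H^{d+1}$ -- is exactly as in the paper. In short: correct modulo the noted misstatement, and slightly more involved than the paper's, which sidesteps $RG$ entirely by staying with $R\Hom_\scrC(M,N)$.
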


\begin{proof}
Assume $\Bbbk F$ is $d$-faithful.
Let $M\in\scrC$ with $F(M)=0$ and let $N\in\scrC^\Delta$. We have
$R\Hom_{\Bbbk\scrC}(kM,kN)\simeq \Bbbk\otimes_R^\bbL R\Hom_{\scrC}(M,N)$.
Let $C$ be a bounded complex of finitely generated
projective $R$-modules quasi-isomorphic to 
$R\Hom_{\scrC}(M,N)$ and with $C^{<r}=0$. We assume $r$ is maximal
with this property. Then, $\Ext^r_{\Bbbk\scrC}(\Bbbk M,\Bbbk N)\simeq
H^r(\Bbbk C)\not=0$, hence $r>d+1$, so $\Ext^{\le d+1}_{\scrC}(M,N)=0$. It follows that
$F$ is $d$-faithful.

Assume now $KF$ is $(d+1)$-faithful. Then $H^{d+2}(C)$ is a torsion
$R$-module. If it is non-zero, then $C^{d+1}\not=0$ a contradiction. So,
$H^{d+2}(C)=0$ and $F$ is $(d+1)$-faithful.
\end{proof}

\subsubsection{Uniqueness results}
\label{sec:unique}
We assume in this section that $R$ is normal.

Let $B'$ be an $R$-algebra, finitely generated and projective over $R$, and
such that $KB'$ is split semi-simple.

Fix a poset structure on $\Irr(KB')$.
 Given $E\in\Irr(KB')$, let $(KB')_{\le E}$ (resp. $(KB')_{<E}$)
be the sum of the simple $KB'$-submodules of $KB'$
isomorphic to some $F\le E$ (resp. $F<E$).

 We say that a family
$\{S(E)\}_{E\in\Irr(KB')}$ of $B'$-modules, finitely generated and projective
over $R$, are {\it Specht modules} for $B'$ if
$$(B'\cap (KB')_{\le E})/(B'\cap (KB')_{<E})\simeq S(E)^{\dim_KE}
\text{ for }E\in\Irr(K'B).$$

Note that $KS(E)\simeq E$ and
$\End_{B'}(S(E))=R$. So, if
$\{S'(E)\}_{E\in\Irr(KB')}$ is another family of Specht modules, then
$S'(E)\simeq S(E)$ for all $E$: the Specht modules are unique, up to
isomorphism (if they exist).

The same construction with the  opposite order on $\Irr(KB')$ leads to
the {\em dual Specht modules} $S'(E)\in B\mmod$ with $KS'(E)\simeq E$.

Assume that  the $K$-algebra $KB$ is semi-simple 
and that $F$ is a highest weight cover.
Then the $K$-category $K\scrC$ is split semi-simple and we have 
an equivalence $KF:K\scrC\iso KB\mmod$.
So, the functor $KF$ induces a bijection $\Irr(K\scrC)\iso\Irr(KB)$ and we put
$S(\lambda)_K=KF(\Delta(\lambda))\in\Irr(KB)$. The
highest weight order on $\Irr(K\scrC)$ yields a partial order on $\Irr(KB).$

We will say that $F$ is a \emph{highest weight cover of
$B$ for the order on $\Irr(KB)$ coming from the one on $\Irr(K\scrC)$}.

The next lemma follows from \cite[Lemma 4.48]{R1}.

\begin{lemma}
\label{le:Korder}
Let $F$ be a highest weight cover and assume $KB$ is semi-simple. Then
$B$ has Specht modules $S(\lambda)=F(\Delta(\lambda))$ and dual
Specht modules $S'(\lambda)=F(\nabla(\lambda))$.
\end{lemma}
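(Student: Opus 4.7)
The text preceding the lemma already asserts that $KF\colon K\scrC\iso KB\mmod$ is an equivalence of split semi-simple $K$-categories; granted this, each $E_\lambda:=KS(\lambda)=KF(K\Delta(\lambda))$ is a well-defined simple $KB$-module, and the isotypic decomposition $KB=\bigoplus_\lambda E_\lambda^{\dim_K E_\lambda}$ pulls back under $KF^{-1}$ to $KP\simeq\bigoplus_\lambda K\Delta(\lambda)^{\dim_K E_\lambda}$. So $\Delta(\lambda)$ occurs with multiplicity $\dim_K E_\lambda$ in any $\Delta$-filtration of $P$.

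The core step is to build, for each ideal $I\subset\Lambda$ (down-closed in the highest weight order), a submodule $P_I\subset P$ which is $\Delta$-filtered with labels in $I$ and such that $P/P_I$ is $\Delta$-filtered with labels in $\Lambda\setminus I$. I would first prove the Ext vanishing $\Ext^1_\scrC(\Delta(\nu),\Delta(\mu))=0$ for $\mu\in I$ and $\nu\notin I$, by applying $\Hom_\scrC(\Delta(\nu),\bullet)$ to the tilting sequence $0\to\Delta(\mu)\to T(\mu)\to X\to 0$ with $X$ filtered by $\Delta(\kappa),\ \kappa<\mu$: since $T(\mu)\in\scrC^\nabla$ kills the $\Ext^1$ term, we reduce to $\Hom_\scrC(\Delta(\nu),X)=0$, which follows because the highest weight axiom forces $\Hom_\scrC(\Delta(\nu),\Delta(\kappa))=0$ (any $\nu\le\kappa<\mu\in I$ would put $\nu\in I$ by down-closedness). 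This vanishing permits me to rearrange any $\Delta$-filtration of $P$, pushing all $I$-labels to the bottom, yielding $P_I$.

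I would then identify $F(P_I)=B\cap(KB)_I$ for $(KB)_I:=\bigoplus_{\mu\in I}E_\mu^{\dim_K E_\mu}$. Since $F$ is exact and each $S(\mu)=F(\Delta(\mu))$ is $R$-projective by Proposition \ref{prop:introhw}(d), both $F(P_I)$ and $B/F(P_I)=F(P/P_I)$ are filtered by $R$-projectives, hence $R$-torsion-free. The equivalence $KF$ transports the isotypic decomposition to give $KF(P_I)=(KB)_I$, and the embedding $B/F(P_I)\hookrightarrow KB/(KB)_I$ forces the stated equality. Taking $I=\{\mu\le\lambda\}$ and $I'=\{\mu<\lambda\}$, the quotient $P_I/P_{I'}$ is filtered purely by $\Delta(\lambda)$'s with total multiplicity $\dim_K E_\lambda$; the same tilting-resolution trick now yields $\Ext^1_\scrC(\Delta(\lambda),\Delta(\lambda))=0$, so this filtration splits into $P_I/P_{I'}\simeq\Delta(\lambda)^{\dim_K E_\lambda}$. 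Applying $F$ gives the Specht module identity.

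The dual Specht modules $S'(\lambda)=F(\nabla(\lambda))$ are handled by the symmetric argument: apply the construction to $F^*\colon\scrC^*\to B^\op\mmod$, which by Lemma \ref{le:hwdual} is a highest weight cover on $\Lambda$ with standards $\Delta^*(\lambda)=\nabla(\lambda)^*$, and transfer back via $R$-duality using $\nabla(\lambda)^{**}\simeq\nabla(\lambda)$; alternatively, rerun the Specht argument using a $\nabla$-filtered object $Q\in\scrC^\nabla$ with $F(Q)\simeq B$ in place of $P$. I expect the main obstacle to be the two $\Ext^1$ vanishings — first for rearranging the $\Delta$-filtration into $P_I$ and $P/P_I$, then for splitting $P_I/P_{I'}$ as a pure power $\Delta(\lambda)^{\dim_K E_\lambda}$; once these are in place, the rest reduces to torsion-freeness bookkeeping supplied by Proposition \ref{prop:introhw}.
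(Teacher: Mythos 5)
Your Ext-vanishing claim $\Ext^1_\scrC(\Delta(\nu),\Delta(\mu))=0$ for $\mu\in I$, $\nu\notin I$ is correct and correctly proved, but it does not justify "pushing all $I$-labels to the bottom." To move an adjacent pair (lower section $\Delta(a)$ with $a\notin I$, upper section $\Delta(b)$ with $b\in I$) so that the $I$-labelled piece becomes the submodule, you need the \emph{transposed} group $\Ext^1_\scrC(\Delta(b),\Delta(a))$ to vanish with $b\in I$, $a\notin I$. That group can be nonzero: $\Ext^1(\Delta(b),\Delta(a))\neq 0$ forces $b<a$, and since $I$ is down-closed, $b\in I$, $b<a$, $a\notin I$ is entirely consistent. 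Concretely, with $\Lambda=\{1<2\}$ and $I=\{1\}$, the non-split sequence $0\to\Delta(2)\to P(1)\to\Delta(1)\to 0$ blocks the rearrangement: there is no submodule of $P(1)$ that is $\Delta$-filtered by $\Delta(1)$'s with $\Delta$-filtered quotient by $\Delta(2)$'s. What your vanishing \emph{does} give is the opposite rearrangement: $I$-labels can be pushed to the top, producing a submodule $P_J$ $\Delta$-filtered with labels in the co-ideal $J=\Lambda\setminus I$. Feeding that through your (sound) torsion-freeness step $F(P_J)=B\cap(KB)_J$ identifies $F(\Delta(\lambda))$ with the subquotient $(B\cap(KB)_{\geq\lambda})/(B\cap(KB)_{>\lambda})$, i.e. with what the paper calls the \emph{dual} Specht module. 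Bridging back to the actual Specht-module filtration $(B\cap(KB)_{\leq\lambda})/(B\cap(KB)_{<\lambda})$ is exactly where the real content of the lemma sits, and it is not supplied by a direction flip; compare the paper's own use of $P_{\leq\lambda}/P_{<\lambda}\simeq\nabla(\lambda)^n$ in the proof of Proposition~\ref{prop:key}, which crucially relies on $P$ being tilting.

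The dual-Specht half has the same gap and two extra ones. Neither alternative you offer closes it: $F^*$ applied to the standard object $\Delta^*(\lambda)=\nabla(\lambda)^*$ gives $F(\nabla(\lambda))^*$, so you obtain Specht modules for $B^\op$, and converting a filtration of the regular left $B^\op$-module into the dual-Specht filtration of the regular left $B$-module is not a formal $R$-duality step (the regular modules of $B$ and $B^\op$ are not $R$-dual to one another unless $B$ is Frobenius). And the existence of a $\nabla$-filtered $Q$ with $F(Q)\simeq B$ is also not available here without a Frobenius hypothesis.
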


\begin{prop}\label{prop:key}
Let $F:\scrC\to B\mmod$ and $F':\scrC'\to B\mmod$ be highest
weight covers. Assume $R$ is regular, $B$ is Frobenius, and $KB$ is semi-simple.

Assume that
\begin{itemize}
\item the order on $\Irr(KB)$ induced by $(\scrC,F)$ refines, or is refined by, the
order induced by $(\scrC',F')$
\item $F$ is fully faithful on $\scrC^\Delta$ and on $\scrC^\nabla$
\item $F'$ is fully faithful on $\scrC^{\prime\Delta}$ and on
$\scrC^{\prime\nabla}$
\item
\begin{itemize}
\item[(a)] $F(P(\lambda))\in F'(\scrC^{\prime\proj})$ for all
$\lambda\in\Lambda$ such that
$\mathrm{lcd}_{k\scrC}(L(\lambda))\le 1$ and
$F(I(\lambda))\in F'(\scrC^{\prime\inj})$ for all
$\lambda\in\Lambda$ such that
$\mathrm{rcd}_{k\scrC}(L(\lambda))\le 1$.

or

\item[(b)] $F(T(\lambda))\in F'(\scrC^{\prime\tilt})$ for all
$\lambda\in\Lambda$ such that
$\mathrm{lcd}_{k\scrC^\diamond}(L^\diamond(\lambda))\le 1$
or $\mathrm{rcd}_{k\scrC^\diamond}(L^\diamond(\lambda))\le 1$
\end{itemize}
\end{itemize}
Then,
there is an equivalence of highest weight categories
$\Phi:\scrC\iso\scrC'$ such that $F'\Phi\simeq F$.
\end{prop}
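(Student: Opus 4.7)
The plan is to generalize Rouquier's argument in \cite[prop.~4.42]{R1}, which treats $1$-faithful covers, to the present $0$-faithful setting by exploiting the additional lifting conditions. By swapping $\scrC$ and $\scrC'$ if needed, assume the order on $\Irr(KB)$ induced by $(\scrC,F)$ refines the one from $(\scrC',F')$. Lemma \ref{le:Korder} combined with the uniqueness of Specht modules for $B$ produces a canonical bijection $\Lambda\iso\Lambda'$ of labeling posets identifying $F(\Delta(\lambda))$ with $F'(\Delta'(\lambda))$ as $B$-modules. By Corollary \ref{co:char01faith}, both $F$ and $F'$ embed their $\Delta$-subcategories fully faithfully into the common extension-closed subcategory $(B\mmod)^{F\Delta}=(B\mmod)^{F'\Delta}$. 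The target is to build an exact functor $\Phi:\scrC\to\scrC'$ with $F'\Phi\simeq F$ whose restriction to $\Delta$-subcategories is an equivalence intertwining these embeddings; Lemma \ref{deltaequivalence} then upgrades such a $\Phi$ to an equivalence of highest weight categories.

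Under hypothesis (b), I would pass to Ringel duals. Lemma \ref{le:tiltfromFrob} applied to both $F$ and $F'$---using $B$ Frobenius and $0$-faithfulness---shows that the projective objects representing $F$ and $F'$ are tilting in their respective categories. Hypothesis (b) provides tilting lifts $T'(\lambda)\in\scrC^{\prime\tilt}$ with $F'(T'(\lambda))\cong F(T(\lambda))$ for each $\lambda$ of low cohomological depth for the Ringel dual. I would extend this lifting to \emph{all} $\lambda$ by descending induction on the highest weight order: writing $T(\lambda)$ as an extension of $\Delta(\lambda)$ by a $\Delta$-filtered object supported on the already-treated $\{\mu>\lambda\}$, the obstruction to a compatible tilting lift $T'(\lambda)$ lies in $\Ext^2_{\scrC'}$ and is killed by $0$-faithfulness of $F'$ on $\scrC^{\prime\nabla}$ together with the Frobenius hypothesis on $B$. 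With $T=\bigoplus_\lambda T(\lambda)$ and $T'=\bigoplus_\lambda T'(\lambda)$, full faithfulness of $F$ and $F'$ on $\nabla$-subcategories yields a chain of isomorphisms $\End_\scrC(T)^\op\iso\End_B(F(T))\iso\End_{\scrC'}(T')^\op$, so that $\scrC^\diamond\cong\scrC^{\prime\diamond}$ as highest weight categories, and $\Phi$ is recovered via $(-)^\blackdiamond$.

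Under hypothesis (a), a parallel argument works directly with projective and injective generators, bypassing Ringel duality. Starting from the explicit lifts $F(P(\lambda))$ and $F(I(\lambda))$ into the images of $F'$ for $\lambda$ of low $\mathrm{lcd}$ or $\mathrm{rcd}$, the same descending induction (now combined with the duality $(-)^*$ of Lemma \ref{le:hwdual} to swap projective and injective roles) produces lifts $P'(\lambda)\in\scrC^{\prime\proj}$ and $I'(\lambda)\in\scrC^{\prime\inj}$ for every $\lambda$ satisfying $F'(P'(\lambda))\cong F(P(\lambda))$ and $F'(I'(\lambda))\cong F(I(\lambda))$. Setting $P=\bigoplus_\lambda P(\lambda)$ and $P'=\bigoplus_\lambda P'(\lambda)$, the resulting isomorphism $\End_\scrC(P)^\op\iso\End_{\scrC'}(P')^\op$ then supplies the desired equivalence $\Phi$.

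The main obstacle I expect is the descending induction itself: extending the lifting from the prescribed low-cohomological-depth range given by (a) or (b) to all of $\Lambda$, and doing so \emph{coherently} so that the individual lifts assemble into an isomorphism of endomorphism algebras rather than a mere bijection of isomorphism classes. This coherence will rely on $0$-faithfulness providing precise Ext-comparison maps between $\scrC$, $B\mmod$ and $\scrC'$ on $\Delta$- and $\nabla$-filtered objects, together with the $\Ext^2$-vanishing consequences of $B$ being Frobenius. The regularity of $R$ enters via Lemma \ref{prop:RR} and Proposition \ref{prop:0to1}, which allow the relevant faithfulness statements to be checked after reduction to the residue field $\Bbbk$, where the homological computations simplify.
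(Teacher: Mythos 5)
Your proposal diverges from the paper's argument at the crucial point, and the divergence introduces a real gap.

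The paper's proof does \emph{not} attempt to lift every $T(\lambda)$ (or every $P(\lambda)$) from $\scrC$ to $\scrC'$. It first reduces case (b) to case (a) by Ringel duality (using Lemma \ref{le:tiltfromFrob} to show $P$ and $P'$ are tilting, as you correctly note), and then, in case (a), forms the \emph{augmented} projective $\tilde P = P\oplus\bigoplus_{\mathrm{lcd}_{k\scrC}(L(\lambda))\le 1}P(\lambda)$. The point is that $\tilde F = \Hom_\scrC(\tilde P,\bullet)$ is a $1$-faithful cover of $\tilde B = \End_\scrC(\tilde P)^\op$: this follows from Lemma \ref{lem:B2}(vi) (over $\Bbbk$) together with Proposition \ref{prop:0to1} (lift from $\Bbbk$ to $R$ using regularity and generic semisimplicity). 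Hypothesis (a) is precisely what guarantees that the extra projectives $P(\lambda)$ lift to $\scrC'$, so $\tilde F'$ can be formed; from there the argument is a careful version of the uniqueness of $1$-faithful covers, going through $\scrC^*$ and $\scrC^{*\prime}$ to establish both faithfulness and essential surjectivity of $\Phi$. In other words, the hypotheses on $\mathrm{lcd}\le 1$ and $\mathrm{rcd}\le 1$ are not a seed from which one inductively lifts the rest; they are exactly what makes a $1$-faithful enlargement of the cover possible.

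Your proposed descending induction to lift all $T(\lambda)$ (or all $P(\lambda)$, $I(\lambda)$) will not go through as stated. At the induction step you need to produce a lift of the extension class of $T(\lambda)$: concretely, given $0\to\Delta(\lambda)\to T(\lambda)\to M\to 0$ with $M$ $\Delta$-filtered by $\Delta(\mu)$, $\mu<\lambda$ (note the direction: the subquotients of $\mathrm{coker}$ are \emph{smaller}, not larger, than $\lambda$), you must find a class in $\Ext^1_{\scrC'}(M',\Delta'(\lambda))$ mapping under $F'$ to the class of $F(T(\lambda))$ in $\Ext^1_B(F'M',F'\Delta'(\lambda))$. This requires \emph{surjectivity} of the comparison map on $\Ext^1$, which is precisely $1$-faithfulness. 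What $0$-faithfulness together with Lemma \ref{le:Extplus1} gives you is only \emph{injectivity} of this map, and the Frobenius property of $B$ does not supply the surjectivity. Your claim that the obstruction lies in $\Ext^2$ and is killed by $0$-faithfulness plus Frobenius misidentifies where the obstruction lives. Indeed, if such an inductive lifting were always available, the specific hypotheses on low cohomological depth would be superfluous; they are needed precisely because the lifting cannot be performed without them.

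A secondary issue: in case (a) you say the duality $(-)^*$ can be used ``to swap projective and injective roles,'' and under (b) you discard the Ringel-dual reduction in favor of an independent argument. The paper uses the Ringel-dual equivalence $\scrC^\diamond\cong\scrC^\blackdiamond$ to show that (a) for $(\scrC,\scrC')$ is equivalent to (b) for $(\scrC^\diamond,\scrC^{\prime\diamond})$, then proceeds once in case (a); the passage to $\scrC^*$ with Lemma \ref{le:hwdual} is used at the end of the proof (to produce $\Upsilon$, not to swap projectives and injectives during a lifting induction). Your sketch does not make clear how $\mathrm{lcd}$ and $\mathrm{rcd}$ interact with these dualities, which is a subtlety the paper handles carefully.
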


\begin{proof}
Lemma \ref{le:Korder} shows there is a bijection $p:\Lambda\iso\Lambda'$
such that $F(\Delta(\lambda))\simeq F'(\Delta'(p(\lambda)))$. 
If the order on $\Lambda'$ is finer than the one induced by $\Lambda$, then
we replace it by that coarser order. The category $\scrC'$ is still highest weight.
In the other case, we change the order on $\Lambda$. Then $p$ becomes an isomorphism
of posets and we
identify $\Lambda$ and $\Lambda'$ through $p$.

Let $\scrO=\scrC^\diamond$ and
$\scrO'=\scrC^{\prime\diamond}$. Lemma \ref{le:tiltfromFrob} shows that $P$ is
tilting. So, $\scrR(P)$ is tilting and projective and, 
identifying $\scrC^\blackdiamond$ with $\scrC^\diamond$,
we have $\scrR^{-1}(P)\simeq \scrR(P)$.
Since $F$ is fully faithful on $\scrC^\nabla$, it follows from Lemma
\ref{le:hwcoverRingel} that
$F^\diamond=\Hom_{\scrC^\diamond}(\scrR(P),-)$ is $0$-faithful. 
Similarly,  we deduce that
$F^\diamond$ is fully faithful on
$(\scrC^\diamond)^\nabla$, since $F$ is $0$-faithful. 
We prove in the same way that $F^{\prime\diamond}=
\Hom_{\scrC^{\prime\diamond}}(\scrR(P'),\bullet)$
is fully faithful on $(\scrC^{\prime\diamond})^\Delta$ and on
$(\scrC^{\prime\diamond})^\nabla$.

We have $F(P(\lambda))\in F'(\scrC^{\prime\proj})$ if and only if
$F^\blackdiamond(T^\blackdiamond(\lambda))\in F^{\prime\blackdiamond}
((\scrC^{\prime\blackdiamond})^\tilt)$.
Similarly, we have $F(I(\lambda))\in F'(\scrC^{\prime\inj})$ if and only if
$F^\diamond(T^\diamond(\lambda))\in F^{\prime\diamond}((\scrC^{\prime\diamond})^\tilt)$.

Since $\scrC^\diamond\simeq\scrC^\blackdiamond$ as highest weight categories, we deduce that the case (a) of the proposition for $(\scrC,\scrC',F,F')$ is equivalent to
the case (b) of the proposition for $(\scrC^\diamond,\scrC^{\prime\diamond},
F^\diamond,F^{\prime\diamond})$. We assume from now on that we are in case (a).

Let $\tilde{P}=P\oplus
\bigoplus_{\mathrm{lcd}_{k\scrC}(L(\lambda))\le 1}P(\lambda)$,
let $\tilde{B}=\End_\scrC(\tilde{P})^\op$ and let
$\tilde{F}=\Hom_\scrC(\tilde{P},\bullet):\scrC\to \tilde{B}\mmod$. 
This is a $1$-faithful cover by Lemma \ref{lem:B2} and Proposition
\ref{prop:0to1}. So the functor $\tilde{F}$ restricts to
an equivalence $\tilde{F}^\Delta:\scrC^\Delta\iso
(\tilde{B}\mmod)^{\tilde{F}\Delta}$, with inverse 
$\Hom_{\tilde{B}}(\tilde{F}(A),\bullet)$.

Consider $\tilde{P}'\in \scrC^{\prime\proj}$ such that
$F'(\tilde{P}')\simeq F(\tilde{P})$.
Fixing such an isomorphism, we obtain
an isomorphism $\tilde{B}\iso \End_{\scrC'}(\tilde{P}')^\op$.
Note that $P'$ is a direct summand of
$\tilde{P}'$, since $F'(P')\simeq B\simeq F(P)$.
Let
$\tilde{F}'=\Hom_{\scrC'}(\tilde{P}',\bullet):\scrC'\to \tilde{B}\mmod$, a highest weight
cover. Lemma \ref{le:Korder} shows that $\tilde{F}'(\Delta'(\lambda))\simeq
\tilde{F}(\Delta(\lambda))$ for all $\lambda\in\Lambda$.

Let $i$ be the
idempotent of $\tilde{B}$ such that $\tilde{P}i=P$. 
The right action of $B$ on $P$ provides
an isomorphism $B\iso i\tilde{B}i$. This equips $\tilde{B}i$ with a structure
of $(\tilde{B},B)$-bimodule. 
Let $\hat{F}=\Hom_{\tilde{B}}(\tilde{B}i,\bullet):\tilde{B}\mmod\to B\mmod$.

We have an isomorphism
$\hat{F}\circ \tilde{F}\iso
\Hom_\scrC(\tilde{P}\otimes_{\tilde{B}}\tilde{B}i,\bullet)$, hence
$\hat{F}\circ \tilde{F}\iso F$. Similarly, we have
an isomorphism $\hat{F}\circ \tilde{F}'\iso F'$.
Consider the exact functor
$$\Phi=\Hom_{\scrC'}(\tilde{P}'\otimes_{\tilde{B}}\tilde{F}(A),\bullet)
\simeq \Hom_{\tilde{B}}(\tilde{F}(A),\bullet)\circ \tilde{F}^{\prime\Delta}:
(\scrC')^{\Delta}\to \scrC^\Delta.$$
We have an isomorphism $\tilde{F}^\Delta\circ\Phi\iso \tilde{F}^{\prime\Delta}$
and there is a commutative diagram
$$\xymatrix{
\scrC^{\prime\Delta}\ar[rrrr]^{\Phi} \ar[drr]^{\tilde{F}^{\prime\Delta}}
\ar@{_{(}->}[ddrr]_{F^{\prime\Delta}}
&& && \scrC^\Delta \ar[dll]_{\tilde{F}^\Delta}^\sim
\ar@{^{(}->}[ddll]^{F^\Delta} \\
&& (\tilde{B}\mmod)^{\tilde{F}\Delta} \ar[d]_-{\hat{F}^\Delta} \\
&& (B\mmod)^{F\Delta}
}$$

\smallskip

Since $F^\Delta$ is fully faithful and $\tilde{F}^\Delta$ is an equivalence,
we deduce that $\hat{F}^\Delta$ is fully faithful. Since $F^{\prime\Delta}$
is fully faithful, we deduce that $\tilde{F}^{\prime\Delta}$ is fully faithful.
It follows that $\Phi$ is fully faithful. Note that $\Phi(\Delta'(\lambda))\simeq\Delta(\lambda)$ for all $\lambda\in\Lambda$. Since $\tilde{F}(P)=\tilde{B}i\simeq\tilde{F}'(P')$, we have $\Phi(P')\simeq P$. 


Define
$$\tilde{\Psi}=\Hom_\scrC(\Phi(A'),\bullet):\scrC\to\scrC'$$
Since $\Phi(A')\in\scrC^{\Delta}$, it follows that $\tilde{\Psi}$ is exact on
$\scrC^\nabla$.
We have 
$$\tilde{\Psi}(P)\simeq\Hom_\scrC(\Phi(A'),\Phi(P'))\simeq\Hom_{\scrC'}(A',P')
\simeq P'.$$
 Let us fix an isomorphism $\tilde{\Psi}(P)\iso P'$.
Let $I\subset\Lambda$ be an ideal. Define $(KP)_I$ as the
sum of the simple submodules of $KP$ isomorphic to
$K\nabla(\mu)$ for some $\mu\in I$. Let $P_I= P\cap (KP)_I$.
Given $\lambda\in\Lambda$, we have $P_{\le\lambda}/P_{<\lambda}\simeq
\nabla(\lambda)^n$ for some $n>0$, since $P$ is tilting (Lemma \ref{le:tiltfromFrob})
and $KP$ is a progenerator
of $K\scrC$. We have $K\tilde{\Psi}((KP)_I)=(KP')_I$ for all ideals $I\subset
\Lambda$, hence
$\tilde{\Psi}(\nabla(\lambda))\simeq \nabla'(\lambda)$ for all $\lambda\in
\Lambda$.
We deduce that $\tilde{\Psi}$ restricts to 
an exact functor $\Psi:\scrC^\nabla\to\scrC^{\prime\nabla}$. We have
\begin{align*}
\Phi(A')\otimes_{\scrC'}P'&\simeq \Hom_{\scrC'}(\tilde{P}'\otimes_{\tilde{B}}
\tilde{F}(A),P')\simeq \Hom_{\tilde{B}}(\tilde{F}(A),\tilde{F}'(P')) \\
&\simeq 
\Hom_{\tilde{B}}(\tilde{F}(A),\tilde{F}(P))\simeq \Hom_{\scrC}(A,P)\simeq P,
\end{align*}
hence
$$F'\circ\tilde{\Psi}=\Hom_{\scrC'}(P',\Hom_{\scrC}(\Phi(A'),\bullet))\simeq
\Hom_{\scrC}(\Phi(A')\otimes_{\scrC'}P',\bullet)\simeq \Hom_{\scrC}(P,\bullet)=F.$$
Since $F^\nabla$ and $F^{\prime\nabla}$ are fully faithful, we deduce that
$\Psi$ is fully faithful.

We now apply what we have proven to $\scrC^*$ and $\scrC^{\prime*}$ (cf Lemma
\ref{le:hwdual}).
We obtain a full faithful exact functor $\Psi_*:\scrC^{*\nabla}
\to \scrC^{'*\nabla}$, hence a fully faithful exact functor
$\Upsilon=\Psi(\bullet^*)^*:\scrC^{\Delta}\to\scrC^{\prime\Delta}$
such that $\Upsilon(\Delta(\lambda))\simeq\Delta(\lambda')$ for all $\lambda\in\Lambda$.
The composition $\Phi\Upsilon$ is a fully faithful exact endofunctor of $\scrC^\Delta$
and $F\Phi\Upsilon\simeq F$. It follows that $\Phi\Upsilon$ fixes isomorphism classes
of objects, hence it is an equivalence. Similarly, $\Upsilon\Phi$ is an equivalence,
hence $\Phi$ is an equivalence $(\scrC')^{\Delta}\to \scrC^\Delta$. The proposition follows from Lemma \ref{deltaequivalence}.
\end{proof}

\subsubsection{Covers of truncated polynomial rings in one variable}

Let $I$ be a non-empty finite poset and
$\{q_i\}_{i\in I}$ a family of elements of $R$. We denote by
$\bar{q}_i$ the image of $q_i$ in $\Bbbk$.
We assume that given $i,j\in I$, then $\bar{q}_i=\bar{q}_j$ if and only
$i\leqslant j$ or $j\leqslant i$.

Let $B=R[T]/\bigl(\prod_{i\in I}(T-q_i)\bigr)$. This is a free $R$-algebra,
with basis $(1,T,\ldots,T^{d-1})$.
Given $j\in I$, let 
$$S_j=R[T]/(T-q_j) \text{ and }
Y_j=R[T]/\bigl(\prod_{i\in I, i\geqslant j}(T-q_i)\bigr).$$
We put
$Y=\bigoplus_{j\in I}Y_j$,
$A=\End_B(Y)^\op$, $G=\Hom_B(Y,\bullet):B\mmod\to A\mmod$, $P=G(B)$ and
$F=\Hom_A(P,\bullet):A\mmod\to B\mmod$. Let $\Delta(j)$ be the quotient
of $G(Y_j)$ by the subspace of maps $Y\to Y_j$ that factor through
$Y_{j'}$ for some $j'>j$.

\begin{prop}
\label{pr:coverrank1}
$\scrC=A\mmod$ is a highest weight $R$-category on the poset $I$ with standard
objects the $\Delta(j)$'s. The functor $F$ is a $(-1)$-faithful
highest cover of $B$ and we have
$$F(\Delta(j))\simeq S_j,\ F(P(j))\simeq Y_j,\ P(j)=G(Y_j).$$
If $q_i{\not=}q_j$ for $i\not=j$, then $F$ is a $0$-faithful cover of $B$.

Assume $\scrC'$ is a highest weight $R$-category with poset $I$ and
$F':\scrC'\to B\mmod$ is a highest weight cover.
If $R$ is a field or $KF'(\Delta(j))\simeq KS_j$ for all $j$, then
there is an equivalence of highest weight categories $\Phi:\scrC\iso
\scrC'$ such that $F'\Phi\simeq F$.
\end{prop}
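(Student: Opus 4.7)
The plan is to prove the structural claims by an explicit computation which reduces to the residue field, upgrade to the $0$-faithful statement via a direct Ext calculation under the distinctness hypothesis, and prove the uniqueness by identifying $F'(P'(j))$ with $Y_j$ as $B$-modules.

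First I would analyze $B$ itself. The hypothesis that $\bar q_i=\bar q_j$ iff $i,j$ are comparable forces $I$ to decompose as a disjoint union of chains $c$, one per residue class of the $\bar q_i$, and gives $\bar B\simeq\prod_c \Bbbk[T]/(T-\bar q_c)^{|c|}$; by idempotent lifting over the local ring $R$ this lifts to $B\simeq\bigoplus_c Y_{\hat 0_c}$, where $\hat 0_c$ is the minimum of $c$. In particular $B$ is a direct summand of $Y$, so $P=G(B)\in\scrC^\proj$ and $F$ is a quotient functor onto $B\mmod$. Since $A=\End_B(Y)^\op$, the functor $G$ is fully faithful on $\adc(Y)$, so $F\simeq Y\otimes_A-$ and $F(G(Y_j))=Y_j$, yielding $P(j)=G(Y_j)$ once the highest weight structure is in place. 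To establish this structure, I would apply \cite[thm.~4.15]{R1} to reduce to the residue-field case plus $R$-projectivity of the $\Delta(j)$. Over $\Bbbk$, the block decomposition of $\bar B$ isolates a single chain $c$ of length $n$, where $\bar B_c=\Bbbk[T]/(T-\bar q_c)^n$ is a truncated polynomial ring and $\bar A_c$ is (Morita equivalent to) its Auslander algebra; the highest weight structure there is classical, with $\bar\Delta(j)$ having $e_k$-component $\Bbbk$ for $k\le j$ in the chain and zero otherwise, and standard filtration on $\bar P(j)$ coming from the descending chain of ideals $\prod_{i\ge j,\,i<j'}(T-\bar q_i)\bar Y_j$ for $j'>j$.

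Returning to $R$, the identification $\Sigma(j)=G((T-q_j)Y_j)$ (the only non-redundant trace contribution from $j'>j$) yields an exact sequence $0\to G((T-q_j)Y_j)\to G(Y_j)\to\Delta(j)\to 0$ from which componentwise $R$-freeness of $\Delta(j)$ is visible. Applying the exact functor $F$ and using that the counit $FG(M)\to M$ is surjective (because $Y$ is a generator), I read off $F(\Delta(j))=Y_j/(T-q_j)Y_j=S_j$. The $(-1)$-faithfulness of $F$ then follows from Corollary \ref{co:char01faith}: a $\Delta$-filtered $M$ with $F(M)=0$ would have a filtration by the nonzero $S_j$'s. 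For $0$-faithfulness under pairwise distinct $q_i$, Corollary \ref{co:char01faith} reduces the problem to showing that $\eta(\Delta(j)):\Delta(j)\to G(S_j)$ is an isomorphism; by the long exact sequence this amounts to vanishing of $\Ext^1_B(Y,(T-q_j)Y_j)$. I would compute this via the $2$-periodic projective resolution of each $Y_k$ over $B$, with successive differentials multiplication by $\prod_{i\ge k}(T-q_i)$ and $\prod_{i<k}(T-q_i)$; applying $\Hom_B(-,Y_m)$ turns these into multiplications whose relevant terms involve factors $q_i-q_j$, which are nonzerodivisors in the domain $R$ under the distinctness hypothesis, forcing the Ext to vanish. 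The extension from standards to all of $\scrC^\Delta$ is a five-lemma argument along the standard filtration.

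Finally, for the uniqueness I would show $F'(P'(j))\simeq Y_j$ as $B$-modules. Over $\Bbbk$, $\bar F'(\bar P'(j))$ is indecomposable (the cover property makes $F'$ fully faithful on projectives) with composition factors $|\{\mu\ge j\}|$ copies of $\bar S_c$, and in the local truncated polynomial ring $\bar B_c$ the unique such indecomposable is $\bar Y_j$. Over general $R$, the generic hypothesis $KF'(\Delta'(j))\simeq KS_j$ together with $R$-projectivity and uniqueness of lifts with prescribed reduction upgrades this to $F'(P'(j))\simeq Y_j$. Compatible isomorphisms produce an algebra isomorphism $\End_{\scrC'}(\bigoplus_j P'(j))^\op\iso A$, hence an equivalence $\Phi:\scrC\iso\scrC'$ intertwining $F$ and $F'$, which is a highest weight equivalence by Proposition \ref{claim:eqHWC}. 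The main obstacles I anticipate are extending the $0$-faithful isomorphism from the standards to all $\Delta$-filtered modules, and lifting the $\Bbbk$-level isomorphism $\bar F'(\bar P'(j))\simeq\bar Y_j$ to an $R$-level isomorphism $F'(P'(j))\simeq Y_j$.
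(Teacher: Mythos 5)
Your overall structure parallels the paper's, but there is a genuine gap in the uniqueness argument. In the field case you assert that $\bar F'(\bar P'(j))$ has exactly $|\{\mu\geqslant j\}|$ composition factors, but you have no a priori control over the multiplicities $[P'(j):\Delta'(i)]$ in an arbitrary highest weight category $\scrC'$ with poset $I$ — you only know $[P'(j):\Delta'(j)]=1$ and $[P'(j):\Delta'(i)]=0$ for $i<j$. What you \emph{do} know is that the $\bar F'(\bar P'(j))$ are $d$ pairwise non-isomorphic indecomposables (full faithfulness on projectives), hence must exhaust $\{\bar Y_j\}_{j\in I}$ up to a permutation $\sigma$; you then need a separate structural argument (e.g.\ $\End(P'(j))=R$ forces $j$ maximal, then downward induction) to show $\sigma$ respects the order. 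Similarly, over general $R$, ``uniqueness of lifts with prescribed reduction'' is misleading: there are many $R$-free $B$-modules reducing to $\bar Y_j$, namely $B/\bigl(\prod_{i\in J}(T-q_i)\bigr)$ for any $J\subset I$ of cardinality $j$. What pins $J$ down is first realizing $F'(P'(j))$ as a cyclic $B$-module (lift a generator of $\Bbbk F'(P'(j))$ and apply Nakayama) and then matching the $K$-fiber $[KF'(P'(j))]$ with $[KY_j]$, which forces $J=\{i<j\}$. This step is missing from your sketch.

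Your route to $0$-faithfulness is genuinely different from the paper's and is a nice alternative. The paper deduces it abstractly (from $\Bbbk F$ being $(-1)$-faithful and $KF$ being $0$-faithful, via the base-change lemma), whereas you compute directly: identify $\Delta(j)=G(Y_j)/G((T-q_j)Y_j)$, observe $(T-q_j)Y_j\simeq Y_{j+1}$, and show $\Ext^1_B(Y_k,Y_{j+1})=0$ using the $2$-periodic resolution $\cdots\xrightarrow{h_k}B\xrightarrow{g_k}B\xrightarrow{h_k}B\to Y_k\to 0$. The computation does go through, but be careful about how you phrase the reason: the factors $q_i-q_{i'}$ are nonzerodivisors but not units (within one block all residues coincide), so the multiplication maps are not surjective. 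What actually kills the Ext is that after applying $\Hom_B(-,Y_{j+1})$ the relevant kernel $\ker(g_k\cdot)$ (resp.\ the quotient $\ker(g_k\cdot)/\operatorname{im}(h_k\cdot)$) vanishes because it is torsion inside the $R$-free module $Y_{j+1}$ — you need the torsion-freeness explicitly, not just the nonvanishing of $q_i-q_{i'}$.
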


\begin{proof}
Let $\bar{I}$ be the quotient of $I$ by the relation $i\sim j$ if
$\bar{q}_i=\bar{q}_j$. We have a block decomposition
$B\simeq \bigoplus_{J\in\bar{I}} R[T]/\bigl(\prod_{i\in J}(T-q_i)\bigr)$, and
if the proposition holds for the individual blocks, then it holds for $B$.
As a consequence, it is enough to prove the proposition when $\bar{q}_i=
\bar{q}_j$ for all $i,j\in I$. Choosing $i\in I$ and replacing
$T$ by $T-q_i$, we can assume further that $\bar{q}_i=0$ for all $i\in I$.
Since the poset structure on $I$ is now a total order, we can
assume $I=\{0,\ldots,d-1\}$ with the usual order,
for some $d\ge 1$.

\smallskip
Assume first $R$ is a field. Then $B=R[T]/T^d$. Note that
$Y_j=R[T]/T^{d-j}$ and that
$\{Y_j\}_{j\in I}$ is a complete set of representatives of
isomorphism classes of indecomposable $B$-modules.
Denote by $e_j$ the idempotent of $A$ corresponding to the projection onto
$Y_j$. Then, the projective indecomposable $A$-modules are the
$P(j)=Ae_j$, $j\in I$. Note that $\End(P(d-1))=R$. Let $L=Ae_{d-1}A$.
We have $L^2=L$, $L\simeq P(d-1)^d$ as left $A$-modules and
$A/L\simeq\End_{R[T]/(T^{d-1})}\bigl(\bigoplus_{0\le i\le d-2}R[T]/
(T^{d-i-1})\bigr)^\op$. It follows that
$A\mmod$ is a highest weight category on the
poset $I$, with $\Delta(j)=Ae_j/Ae_{j+1}Ae_j$
[CPS, ``Finite-dimensional algebras and highest weight categories'', 
Lemma 3.4]. Let us state some properties of $\scrC$, that can be easily checked.
The module $\Delta(j)$ is uniserial, with composition series
$L(j),L(j-1),\ldots, L(0)$, starting from the head. We have
$[P(j):\Delta(i)]=1$ if $i\ge j$, and
$[P(j):\Delta(i)]=0$ otherwise. The module $P=P(0)$ is
projective and injective, while $P(d-1)=\Delta(d-1)$.
Note that $F$ is exact and its restriction to $A\mproj$ is fully faithful.
Since every $\Delta(j)$ embeds in $P$, it follows that $F$ is 
$(-1)$-faithful. Note that $F(\Delta(j))\simeq R$.

Consider now $\scrC'$ and $F'$ as in the proposition. Since
$\scrC'$ has $d$ non-isomorphic projective indecomposable modules, it follows
that $\{F'(P'(j))\}_{j\in I}=\{Y_j\}_{j\in I}$. As a consequence, there
is a permutation $\sigma$ of $I$ and
an equivalence 
$\Phi:\scrC\mproj\iso\scrC'\mproj$ such that $\Phi(P(\sigma(j)))\simeq P'(j)$
and $F'\Phi\simeq F$. Such an equivalence extends to an equivalence
$\Phi:\scrC\iso\scrC'$, and
$F'\Phi\simeq F$. So, $\scrC$ is a highest weight category with
the order given by $i\leqslant' j$ if $\sigma(i)\leqslant\sigma(j)$. Note that
$\End(P(j))=R$ if and only if $j=d-1$. It follows that $d-1$ must be
maximal for the order $\leqslant'$, and considering the quotient algebra
$A/L$ as above, one sees by induction that $\leqslant'=\leqslant$, \ie,
$\sigma=1$, hence $\Phi$ is an equivalence of highest weight categories.
This shows the proposition when $R$ is a field.

\smallskip
Assume now $R$ is a general local ring. The $R$-modules $\Delta(j)$ are
free and $\Bbbk A\simeq\End_{\Bbbk B}(\Bbbk Y)$. We deduce that 
$\scrC$ is a highest weight category and $F$
is a $(-1)$-faithful highest weight cover. If $KB$ is semi-simple,
it follows from Proposition 2.18 that $F$ is $0$-faithful (the regularity
of $R$ is not necessary here).

We consider finally $\scrC'$ and $F'$ as in the proposition. Since the
canonical map $\Bbbk\Hom_B(Y_i,Y_j)\to\Hom_{\Bbbk B}(\Bbbk Y_i,\Bbbk Y_j)$ is an isomorphism
for all $i,j$, we deduce that $\Bbbk F'$ is a highest weight cover, hence
equivalent to $\Bbbk F$. As a consequence, $F'$ is $0$-faithful and
$\Bbbk F'(P'(j))\simeq \Bbbk Y_j$ for all $j$. We deduce that
$[P'(j):\Delta(i)]=\delta_{i\ge j}$, and
it follows that $[KF'(P'(j))]=[KS_j]+\cdots+[KS_{d-1}]$ in 
$K_0(KB\mmod)$.
There is a surjective morphism of $B$-modules $B\to \Bbbk F'(P'(j))$. It
lifts to a surjective morphism of $B$-modules $B\to F'(P'(j))$. Since
$F'(P'(j))$ is free over $R$, there is a subset $J$ of $I$ of
cardinality $j$ with $F'(P'(j))\simeq B/\bigl(\prod_{i\in J}(T-q_i)\bigr)$.
It follows that $[KF'(P'(j))]=\sum_{i{\not\in} J}[KS'_i]$, hence
$F'(P'(j))\simeq Y_j$, as $\{q_i\}_{i\in J}=\{q_i\}_{i\ge j}$.
The proposition follows.
\end{proof}

\vspace{2mm}

\subsection{Complement on symmetric algebras}

Let $R$ be a commutative noetherian ring.
Let $B$ be an $R$-algebra. We say that $B$ is \emph{symmetric} if it is
a finitely generated projective $R$-module and
$B$ is isomorphic to $B^*$ as a $(B,B)$-bimodule.

\hspace{2mm}

\begin{prop} \label{prop:inj}
Let $B$ be a symmetric $R$-algebra.
Assume $R$ is a domain with field of fractions $K$ and
$KB$ is a split semi-simple algebra. Let $\psi$ be an $R$-algebra endomorphism of $B$.

If $K\psi$ is an automorphism of $KB$ that induces the identity map
on $K_0(KB)$, then $\psi$ is an automorphism.
\end{prop}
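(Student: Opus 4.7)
The plan is to reduce the question to showing $\psi(B) = B$. Injectivity of $\psi$ is automatic: since $B$ is $R$-projective, hence $R$-torsion-free, and $K\psi$ is bijective, $\psi$ itself must be injective. So the real content is surjectivity, which I would establish by combining a Skolem-Noether argument on $K\psi$ with the self-duality of $B$ as an $R$-lattice coming from the symmetric structure.

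The first step is to analyze $K\psi$ as an automorphism of $KB$. Since $KB$ is split semi-simple, it decomposes as a product $\prod_i M_{n_i}(K)$ of matrix algebras, one factor for each isomorphism class of simple $KB$-module $V_i$. The hypothesis that $K\psi$ acts as the identity on $K_0(KB)$ is equivalent to $V_i\circ K\psi\simeq V_i$ for each $i$, which forces $K\psi$ to preserve each two-sided ideal in the block decomposition. Restricted to each matrix factor, it is then an automorphism of a central simple $K$-algebra, hence inner by Skolem-Noether; assembling the conjugators block by block produces an element $u\in (KB)^\times$ with $K\psi(x)=uxu^{-1}$. In particular $\psi(B)=uBu^{-1}\subseteq B$ inside $KB$.

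The second step is to use the symmetric structure. Choose a symmetrizing form $t:B\to R$ such that the pairing $(a,b)\mapsto t(ab)$ is associative and induces the bimodule isomorphism $B\iso B^*$. Extending $t$ $K$-linearly to $KB$ and tensoring the isomorphism $B\iso B^*$ with $K$ shows that the associated pairing on $KB$ is non-degenerate. A short argument using $K\otimes_R\Hom_R(B,R)\simeq \Hom_K(KB,K)$ then yields the self-duality of $B$ as an $R$-lattice in $KB$:
$$B=\{x\in KB\mid t(xB)\subseteq R\}.$$

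The decisive final step is to invoke cyclicity of $t$ to convert the one-sided inclusion into equality. For any $b_1,b_2\in B$, the inclusion $uBu^{-1}\subseteq B$ combined with associativity of $t$ gives
$$t(u^{-1}b_1u\cdot b_2)=t(b_1\cdot ub_2u^{-1})\in R,$$
so $u^{-1}Bu$ lies in the dual lattice of $B$, which equals $B$ itself. Hence $u^{-1}Bu\subseteq B$, i.e. $B\subseteq uBu^{-1}$, and together with the reverse inclusion we obtain $\psi(B)=uBu^{-1}=B$. I expect the main obstacle to be the combination of the last two ingredients: verifying the self-duality $B=\{x\in KB\mid t(xB)\subseteq R\}$ and then recognizing that the cyclicity of the trace is exactly what allows one to flip a one-sided inclusion $uBu^{-1}\subseteq B$ into its opposite $u^{-1}Bu\subseteq B$. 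Once these are in place, surjectivity of $\psi$, and hence the proposition, follows immediately.
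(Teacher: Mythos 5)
Your proof is correct, but it takes a genuinely different route from the paper's.

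The paper argues ``locally'': it first uses the character isomorphism $K\otimes_{\bbZ}K_0(KB)\iso\Hom_K(KB/[KB,KB],K)$ to translate the hypothesis on $K_0$ into the identity $t\circ\psi=t$ for a symmetrizing form $t$; then, for every maximal ideal $\frakm$, the specialization $\Bbbk t$ kills $\ker(\Bbbk\psi)$, and since the kernel of a symmetrizing form contains no nonzero ideal, $\Bbbk\psi$ is injective, hence bijective by dimension count; surjectivity of $\psi$ follows by a local-to-global/Nakayama argument, and then injectivity for free since $B$ is finitely generated. Your argument is ``global'': you use the $K_0$-hypothesis only to guarantee that $K\psi$ stabilizes each matrix block of $KB$, so Skolem--Noether makes it inner, $K\psi=\Ad(u)$; then $\psi(B)=uBu^{-1}\subseteq B$, and the decisive input from the symmetric structure is the self-duality $B=\{x\in KB\,:\,t(xB)\subseteq R\}$ (which indeed follows from $B\iso B^\ast$ as you sketch), after which cyclicity of $t$ turns $uBu^{-1}\subseteq B$ into $u^{-1}Bu\subseteq B$ and hence into equality. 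Your version thus extracts the extra structural information that $\psi$ is conjugation by a unit of $KB$ normalizing the lattice $B$; the paper's version avoids Skolem--Noether and the lattice-duality machinery entirely at the modest cost of checking the assertion residue field by residue field. Both arguments genuinely exploit the same two hypotheses (symmetry of $B$ and identity on $K_0$), just at different places.
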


\vspace{.5mm}

\begin{proof}
Let $t\in\Hom_R(B,R)$ be a symmetrizing form for $B$, the image of $1$ through an
isomorphism of $(B,B)$-bimodules $B\iso B^*$. Note that $t([B,B])=0$.

Since $KB$ is split semi-simple, the character map  is an isomorphism
$K\otimes_\bbZ K_0(KB)\to \Hom_K(KB/[KB,KB],K)$. We deduce that
$\psi$ induces the identity on $KB/[KB,KB]$, hence $t\circ\psi=t$.

Consider a maximal ideal $\frakm$ of $R$, and let $\Bbbk=R/\frakm$. The $\Bbbk$-algebra
$\Bbbk B$ is symmetric, with symmetrizing form $\Bbbk t$ and $(\Bbbk t)\circ
(\Bbbk \psi)=\Bbbk t$. It follows that $\Bbbk t(\ker(\Bbbk\psi))=0$, hence
$\ker(\Bbbk\psi)=0$, since the kernel of a symmetrizing form contains no nonzero
ideal. We deduce that $\Bbbk\psi$ is an isomorphism.

We have shown that $(R/\frakm)\psi$ is onto for every maximal ideal $\frakm$ of $R$.
It follows that $\psi$ is onto, hence it is an isomorphism, since $B$ is a finitely
generated projective $R$-module.
\end{proof}

\vspace{3mm}

\section{Hecke algebras, q-Schur algebras and categorifications}
\label{sec:catego}
Let  $R$ be a $\bbC[q,q^{-1}]$-algebra. 
Let $q_R$ be the image of $q$ in $R$.  
If no confusion is possible, we may abbreviate $q=q_R$.

\subsection{Quivers}\label{sec:quivers}

Assume that $q_R\neq 1$. For any subset $\scrI\subset R^\times$ 
we associate a quiver $\scrI(q)$ with set of vertices $\scrI$ and with an arrow $i\to i\,q_R$
whenever $i, i\,q_R\in \scrI$. 
We may abbreviate $\scrI=\scrI(q)$ when there is no risk of confusion.
Note that we do not assume $\scrI(q)$ to be connected or $\scrI$ to be finite.
We will assume that $(q^{\bbZ}\scrI(q))/q^{\bbZ}$ is finite.

Let $Q_{R,1},\ldots,Q_{R,\ell}\in\scrI$ such that
$\scrI=\bigcup_{p=1}^\ell \scrI_p$,
where $\scrI_p=\scrI\cap q^{\bbZ}_R\,Q_{R,p}$.
We write $i\equiv j$ if $i\in q^{\bbZ}j$.
Each equivalence class has a representative (possibly more than one) in the set
$\{Q_{R,1},Q_{R,2},\dots,Q_{R,\ell}\}$.

If $\scrI(q)$ is stable under multiplication by $q_R^\bbZ$, and
$q_R$ is not a root of $1$, then each $\scrI_p$ is isomorphic to the quiver $A_\infty$.
If $\scrI(q)$ is stable under multiplication by $q_R^\bbZ$, and
$q_R$ is a primitive $e$-th of $1$, then each
$\scrI_p$ is isomorphic to the quiver $A^{(1)}_{e-1}$.

For any subset $I\subset R$ we consider also the quiver $I_1$
with the set of vertices $I$ and with an arrow $i\to i+1$
whenever $i, i+1\in I$.  We may abbreviate $I=I_1$.

\vspace{3mm}

\subsection{Kac-Moody algebras associated with a quiver}\label{ss:quiver}
Let $(a_{ij})$ be the generalized Cartan
matrix associated with the quiver $\scrI$ and
let $\mathfrak{sl}_{\!\scrI}$ be the (derived) Kac-Moody algebra over $\bbC$
associated with $(a_{ij})$.
The Lie algebra $\mathfrak{sl}_{\!\scrI}$ is generated by $E_i$, $F_i$ with $i\in \scrI,$
subject to the usual relations.
Fix a subset $\Omega\subset[1,\ell]$ such that
$\scrI$ is the disjoint union 
$\scrI=\bigsqcup_{p\in \Omega}\scrI_p$.
We have a Lie algebra decomposition $\fraksl_{\scrI}=\bigoplus_{p\in \Omega}\fraksl_{\scrI_p}$.

For each $i\in\scrI$, let $\al_i,$ $\check\al_i$ be the simple root and coroot
corresponding to $E_i$ and let $\Lam_i$ be the $i$-th fundamental weight. Set
$Q=\bigoplus_{i\in \scrI}\bbZ\al_i$ and
$Q^+=\bigoplus_{i\in \scrI}\bbN\al_i$. 
Set $P=\bigoplus_{i\in \scrI}\bbZ \Lam_i$
and
$P^+=\bigoplus_{i\in \scrI}\bbN\Lam_i$.

Let $X$ be the free abelian group with basis $\{\varepsilon_i\,;\,i\in\scrI\}$.
The assignment $\alpha_i\mapsto \varepsilon_i-\varepsilon_{iq}$
yields additive maps $Q,Q^+\to X$. If $\scrI$ is bounded below
then we may identify $\Lambda_i$ with the (finite) sum $\sum_{d\in\bbN}\varepsilon_{iq^{-d}}$.
So, we may consider $P,P^+$ as subsets of $X$.

We will write $P=P_{\!\scrI}$, $Q=Q_{\!\scrI}$, $Q^+=Q_{\!\scrI}^+$ and $X=X_{\!\scrI}$ if necessary.
For $\al\in Q^+$ of height $d$ we write
$\scrI^\al=\{\bfi=(i_1,\ldots,i_d)\in \scrI^d\,;\,\al_{i_1}+\cdots+\al_{i_d}=\al\}.$
The set $\scrI^\al$ is an orbit for the action of the symmetric 
group $\frakS_d$ on $\scrI^d$ by permutation. Each $\frakS_d$-orbit in
$\scrI^d$ is of this form.

For any subset $I\subset R$ we consider also the quiver $I_1$
which yields in the same way as above  a Cartan datum and a Lie algebra $\mathfrak{sl}_I$.

\vspace{3mm}

\subsection{Partitions}\label{sec:combinatorics}
Set
$\bbZ^\ell(n)=\{(\nu_1,\dots,\nu_\ell)\in\bbZ^\ell\,;\,
\nu_1+\dots+\nu_\ell=n\}$,
$\scrC_n^\ell=\{\nu\in\bbZ^\ell(n);\;\nu_p\geqslant 0,\ \forall p\},$
and $\scrC_{n,+}^\ell=\{\nu\in\bbZ^\ell(n);\;\nu_p> 0,\ \forall p\}.$
An element of $\scrC^\ell_n$ is a \emph{composition} 
of $n$ into $\ell$ parts.
We will say that the composition $\nu$ is \emph{dominant} if it satisfies the inequalities
$\nu_1\geqslant \nu_2\geqslant\cdots\geqslant\nu_\ell,$ and that it is \emph{anti-dominant} if we have
$\nu_1\leqslant \nu_2\leqslant\cdots\leqslant\nu_\ell.$

Let $\scrP_n$ be the set of \emph{partitions} of $n$, i.e.,
the set of non-increasing sequences of positive integers
with sum $n$. 
For $\lambda\in\scrP_n$, let $|\lam|=n$ be the weight of $\lambda$, 
let $l(\lam)$ be the number of parts in $\lam$ and
let ${}^t\lambda$ be the transposed partition.
We associate to $\lam$ the \emph{Young diagram} $Y(\lambda)$
with $\lam_i$ boxes in the $i$-th row.
Let $\scrP^\ell_n$ be the set of \emph{$\ell$-partitions} of $n$, i.e., the
set of $\ell$-tuples of partitions 
$\lam=(\lam^1,\ldots,\lam^\ell)$ with
$\sum_p|\lam^p|=n.$ 
Let $\scrP=\bigsqcup_n\scrP_n$ and
$\scrP^\ell=\bigsqcup_n\scrP^\ell_n$.
For each $\nu\in\scrC_n^\ell$ and $d\in[1,n]$ we set 
$\scrP^\nu=\{\lam\in\scrP^\ell\,;\,l(\lam^p)\leqs\nu_p\}$ with
$\scrP^\nu_d=\scrP^\nu\cap\scrP^\ell_d.$

Let $A\in Y(\lambda)$ be the box which lies in 
the $i$-th row and $j$-th column of the diagram of
$\lam^p$. Consider the element
$p(A)=p$ in $[1,\ell]$. Given  $Q_{R,1},Q_{R,2},\dots,Q_{R,\ell}\in\scrI$, we set
$q\text{-}\!\res^Q(A)=q^{j-i}_RQ_{R,p}.$ 
For $\lam$, $\mu\in\scrP^\ell$ 
we write $q\text{-}\!\res^Q(\mu-\lam)=a$ if $\mu$ is
obtained by adding a box of residue $a$ to the Young diagram
associated with $\lam$.

We may write $q\text{-}\!\res^s(A)=q\text{-}\!\res^Q(A)$ and $\ct^s(A)=s_p+j-i$,
where $s_p$ is a formal symbol such that $q^{s_p}_R=Q_{R,p}$.
We call  $q\text{-}\!\res^s(A)$ the \emph{shifted residue} of $A$ and
$\ct^s(A)$ its \emph{shifted content}.
We may also abbreviate $Q_p=Q_{R,p}$.

Let $\Gamma$ be the group of $\ell$-th roots of 1 in $\bbC^\times$.
Let $\frakS_d$ be the symmetric group on $d$ letters and $\Gamma_d$ be the semi-direct product
$\frakS_d\ltimes\Gamma^d$, where $\Gamma^d$ is the Cartesian product of $d$ copies of $\Gamma$.
The group $\Gamma_d$ is a complex reflection group.
The set
$\Irr(\bbC \Gamma_d)$ is identified with $\scrP^\ell_d$ in such a way that $\lambda$ is associated with
the module $\scrX(\lambda)_\bbC$ induced from the 
$\Gamma_{|\lambda^1|}\times\ldots\times\Gamma_{|\lambda^\ell|}$-module
$\phi_{\lambda^1}\chi^\ell\otimes
\phi_{\lambda^2}\chi\otimes\cdots\otimes\phi_{\lambda^\ell}\chi^{\ell-1}.$
Here $\phi_{\lambda^p}$ is the irreducible $\bbC\frakS_{|\lambda^p|}$-module associated with the partition 
$\lambda^p$ and $\chi^p$ is the one dimensional $\Gamma^{|\lambda^p|}$-module given by the $p$-th 
power of the determinant. 

Note that this labeling agrees with \cite[sec.~6]{R1}, \cite[sec.~1.5]{VV} 
but it differs from that of \cite[sec.~2.3.4]{GL}.

\vspace{3mm}

\subsection{Hecke algebras}
\label{sec:3.4}
\subsubsection{Cyclotomic Hecke algebras}
Write $\bfH_{R,0}=R$. For $d\geqs 1,$ the \emph{affine Hecke algebra}
$\bfH_{R,d}$ is the $R$-algebra generated by
$T_1,\ldots,T_{d-1},$ $X^{\pm 1}_1,\ldots,X^{\pm 1}_d$
subject to the relations
\begin{equation*}
\begin{array}{l}
(T_i+1)(T_i-q_R)=0,\\
T_iT_{i+1}T_i=T_{i+1}T_iT_{i+1},\quad T_iT_j=T_jT_i \ \textrm{if}\ |i-j|>1,\\
X_iX_j=X_jX_i,\quad X_iX_i^{-1}=X_i^{-1}X_i=1,\\
T_iX_{i}T_i=q_RX_{i+1},\quad X_iT_j=T_jX_i\ \textrm{if}\ i-j\neq 0,1.
\end{array}
\end{equation*}

The \emph{cyclotomic Hecke algebra} is the quotient
$\bfH^Q_{R,d}$ of $\bfH_{R,d}$ by the two-sided ideal generated by
$\prod_{p=1}^\ell(X_1-Q_{R,p})$.


If $\ell=1$, then the $R$-algebra $\bfH^Q_{R,d}$ is generated by $T_i$ with $i\in[1,d)$. It does not depend on the choice of the unit $Q_{R,1}$.
In this case we write $\bfH^+_{R,d}=\bfH^Q_{R,d}$.

Given $s=(s_1,\dots,s_\ell)$ as above, we write $\bfH^s_{R,d}=\bfH^Q_{R,d}.$
For any $d<d'$, the $R$-algebra embedding
$\bfH_{R,d}\to \bfH_{R,d'}$ given by
$T_i\mapsto T_i,$ $X_j\mapsto X_j$ for $i\in[1,d),$ $j\in[1,d],$
induces an embedding
$\bfH^s_{R,d}\to \bfH^s_{R,d'}$.
The $R$-algebra $\bfH^s_{R,d'}$ is free as a left and as a right $\bfH^s_{R,d}$-module. 
This yields a pair of exact adjoint functors
$(\Ind^{d'}_d,\Res^{d'}_d)$ between
$\bfH^s_{R,d'}\mmod$ and $\bfH^s_{R,d}\mmod.$
For $d\leqslant d'$ there is also an algebra embedding 
$\bfH^+_{R,d}\to \bfH^s_{R,d'}$ given by $T_i\mapsto T_i$ for $i\in[1,d).$
It yields a pair of exact adjoint functors
$(\Ind^{d',s}_{d,+},\Res^{d',s}_{d,+})$ between
$\bfH^+_{R,d}\mmod$ and $\bfH^s_{R,d'}\mmod$.

Now, assume that $R=K$ is a field.
Any finite dimensional $\bfH^s_{K,d}$-module $M$
can be decomposed into (generalized) weight spaces
$M=\bigoplus_{\bfi\in \scrI^d} M_\bfi,$ with
$M_\bfi=\{v\in M\,;\,(X_r-i_r)^nv=0,\
r\in[1,d],\, n\gg 0\}.$
See \cite[sec.~4.1]{BK3} and the references there for details.
Decomposing the regular module, we get a system
of orthogonal idempotents $\{1_\bfi\,;\,\bfi\in K^d\}$
in $\bfH^s_{K,d}$ such that $1_\bfi M=M_\bfi$
for each finite dimensional module
$M$ of $\bfH^s_{K,d}$. 

Given $\al\in Q^+$ of height $d$, we set
$1_\al=\sum_{\bfi\in K^\al} 1_\bfi.$
The nonzero $1_\al$'s are the primitive central idempotents in $\bfH^s_{K,d}$, i.e.,
the algebra $\bfH^s_{K,\al}=1_\al \bfH^s_{K,d}$
is either zero or a single block of $\bfH^s_{K,d}$ \cite{B3,LM}. 

\vspace{3mm}

\subsubsection{Degenerate cyclotomic Hecke algebras}
In the same way we can consider the
\emph{degenerate Hecke algebra} $H_{R,d}$ and the
\emph{degenerate cyclotomic Hecke algebra}
$H^s_{R,d}$ introduced in \cite{BK1}.  We assume here $s\in R^\ell$.
The algebra $H_{R,d}$ is generated by elements 
$t_1,\dots,t_{d-1},x_1,\dots,x_d$ subject to the relations 
\begin{equation*}
\begin{array}{l}
t_i^2=1,\\
t_it_{i+1}t_i=t_{i+1}t_it_{i+1},\quad t_it_j=t_jt_i \ \textrm{if}\ |i-j|>1,\\
x_ix_j=x_jx_i,\\
t_ix_{i+1}=x_{i}t_i+1,\quad x_it_j=t_jx_i\ \textrm{if}\ i-j\neq 0,1.
\end{array}
\end{equation*}
The degenerate cyclotomic Hecke algebra $H^s_{R,d}$ is the quotient of $H_{R,d}$ by the two-sided ideal generated by the 
element $\prod_{p=1}^\ell(x_1-s_{R,p})$.

The representation theory of $H^s_{R,d}$ is very similar to that of
$\bfH^s_{R,d}$. For instance, if $R=K$ is a field then the primitive central idempotents in 
$H^s_{K,d}$
are again labeled by the elements $\al\in Q^+$ of height $d$, which permits us to define
$H^s_{K,\alpha}=1_\alpha H^s_{K,d}$ as above.
For any subset $I\subset K$ we set
$H^s_{I}=\bigoplus_{\al\in Q_{\!I}^+} H^s_{K,\al},$
$H^s_{I,d}=H^s_{I}\cap H^s_{K,d}.$
See e.g., \cite[sec.~3]{BK3} for more details. 

\vspace{3mm}

\subsubsection{Representations}
We will use the following properties of $H^s_{R,d}$ and $\bfH^s_{R,d}$ :

\begin{itemize}

\item the $R$-algebras $\bfH_{R,d}^s$ and 
$H^s_{R,d}$ are both symmetric  by \cite{MM}, \cite[app.~A]{BK1},

\item
the  $K$-algebra
$\bfH^s_{K,d}$ is split semi-simple if and only if 
\begin{equation}\label{(A)}
\prod_{i=1}^d(1+q_K+\cdots+q_K^{i-1})\,\prod_{u<v}\prod_{-d<r<d}(q_K^r\,Q_{K,u}-Q_{K,v})\neq 0.
\end{equation}
\end{itemize}

Now, set $\zeta=\exp(2\sqrt{-1}\pi/\ell)$.
If $q_K=1$ and $Q_{K,p}=\zeta^{p-1},$ 
then $\bfH^s_{K,d}$ is the algebra
$K\Gamma_d$ of the group $\Gamma_d$.
Therefore, if $\bfH^s_{K,d}$ is semi-simple, then the set
$\Irr(\bfH^s_{K,d})$ is canonically identified with $\Irr(K\Gamma_d)$ by Tits' deformation 
Theorem. For each $\lambda\in\scrP^\ell_d,$ one can define a \emph{Specht module}
$S(\lambda)^{s,q}_R$ of $\bfH^s_{R,d}$ as in \S \ref{sec:unique},
using the dominance
order $\unlhd$ on $\scrP^\ell_d$, cf \S \ref{sec:CycloSchur} below. It is free
over $R$, and specializes to
$\scrX(\lambda)_\bbC$ as
$q_R\mapsto 1$ and $Q_{R,p}\mapsto\zeta^{p-1}$.
The Specht modules $S(\lambda)^{s}_R$ of $H^s_{R,d}$ with $\lambda\in\scrP^\ell_d$ are defined similarly.

Now, assume that $R$ is an analytic deformation ring in the sense of \S
\ref{sec:analytic} below. Set $\scrI=\bigcup_{p=1}^\ell q^{s_p+\bbZ}_R$ and $I=\bigcup_{p=1}^\ell(s_p+\bbZ)$.
The multiplication by $q_R$ and the shift by 1 equips the sets $\scrI$, $I$ with structures of quivers $\scrI(q)$, $I_1$ as explained in \S \ref{sec:quivers}.

\vspace{2mm}

\begin{prop}\label{prop:isomHecke}
Assume that $R$ is a local ring.

(a) 
The blocks $\bfH^s_{R,\alpha}$  of 
$\bfH^s_{R,d}$ (resp.~ the blocks $H^s_{R,\alpha}$ of $H^s_{R,d}$) are labeled
by the elements $\alpha\in Q^+_\scrI$ (resp.~$\alpha\in Q^+_I$)
of height $d$. We have
$\Bbbk\bfH^s_{R,\alpha}=\bfH^s_{\Bbbk,\alpha}$ and
$\Bbbk H^s_{R,\alpha}=H^s_{\Bbbk,\alpha}$ for each $\alpha$.

(b) Assume that the map
$\exp(-2\pi\sqrt{-1}\,\bullet/\kappa)$ yields an isomorphism of quivers
$\beta:I_1\to\scrI(q)$.
Given an element $\alpha\in Q_I^+$, let $\alpha$ denote also its image in 
$Q^+_\scrI$.
Then, we have an $R$-algebra isomorphism
$\alpha_R:\bfH^s_{R,\alpha}\iso\, H^s_{R,\alpha}$ such that $\alpha_R(S(\lambda)_R^s)\simeq S(\lambda)_R^{s,q}$ for each $\lambda$.
\end{prop}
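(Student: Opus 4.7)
The strategy is to reduce both statements to the residue field $\Bbbk$ and then lift to $R$ using its analytic structure. For (a), the block description of $\bfH^s_{\Bbbk,d}$ and $H^s_{\Bbbk,d}$ by central idempotents $1_\alpha$ indexed by height-$d$ elements of $Q^+_\scrI$, resp.\ $Q^+_I$, is supplied by \cite{B3, LM}. Since $R$ is local and $\bfH^s_{R,d}$ is a finite free $R$-algebra specializing to $\bfH^s_{\Bbbk,d}$, any complete orthogonal system of central idempotents in the specialization lifts uniquely to $\bfH^s_{R,d}$ by the standard idempotent-lifting argument; centrality of the lifts is automatic from uniqueness. The lifted $1_\alpha$ then decompose $\bfH^s_{R,d}$ into blocks $\bfH^s_{R,\alpha}$, and $\Bbbk\bfH^s_{R,\alpha} = \bfH^s_{\Bbbk,\alpha}$ holds by construction. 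The argument for $H^s_{R,d}$ is identical.

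For (b), I would adapt Brundan--Kleshchev's block-by-block isomorphism \cite{BK1} to the analytic deformation setting. Over $\Bbbk$ their isomorphism is given by explicit formulas of the form $X_r \cdot 1_\bfi = q^{i_r}\exp(x_r - i_r)\cdot 1_\bfi$ together with a similar power-series substitution for the $T_r$'s, where $\bfi \in I^\alpha$ and $\beta(\bfi)\in\scrI^\alpha$ correspond via the quiver isomorphism $\beta$. On the block $\bfH^s_{R,\alpha}$, the element $X_r - q_R^{i_r}\cdot 1_\bfi$ is topologically nilpotent and, thanks to the analytic structure of $R$, the BK power series converge in $\bfH^s_{R,\alpha}$ itself rather than merely in a formal completion. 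This yields an $R$-algebra map $\alpha_R:\bfH^s_{R,\alpha}\to H^s_{R,\alpha}$ that reduces modulo $\frakm$ to the BK isomorphism over $\Bbbk$, and which is therefore itself an isomorphism by Nakayama together with $R$-freeness of both sides.

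The Specht module compatibility $\alpha_R(S(\lambda)^{s,q}_R)\simeq S(\lambda)^s_R$ can be checked after base change to the fraction field $K$, since Specht modules are $R$-free and hence determined by their generic fibre. Over $K$ the semisimplicity condition \eqref{(A)} is generically satisfied, and Tits' deformation theorem identifies both $K\!\otimes\! S(\lambda)^{s,q}_R$ and $K\!\otimes\! S(\lambda)^s_R$ with the appropriate lift of $\scrX(\lambda)_\bbC$; since $\alpha_R$ specializes to the BK isomorphism on each simple block, the modules match. The principal obstacle is the second step of paragraph two: establishing that the BK power-series formulas, written for finite-dimensional algebras over a field, actually define a well-behaved algebra homomorphism over the analytic ring $R$. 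This is precisely where the analytic hypothesis on $R$ is essential, as it supplies the topological control on $X_r - q_R^{i_r}$ inside each block that makes the exponential and logarithmic series converge.
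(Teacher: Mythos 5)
Your part (a) argument matches the paper's: both lift the primitive central idempotents from $\Bbbk$ to $R$. The one point you elide is that this lifting requires $R$ to be Henselian (or complete), not merely local; this holds in context because $R$ is the localization of an analytic algebra, and the paper invokes Henselianness explicitly. (The paper then gives an explicit realization of the lifted idempotent as a finite sum of $1_\bfi$'s taken inside the semisimple $K$-algebra, but this is a refinement of the same idea, not a different route.)

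Part (b) is where your approach genuinely diverges and where there is a gap. The paper takes the known $K$-algebra isomorphism $\alpha_K:\bfH^s_{K,\alpha}\iso H^s_{K,\alpha}$ — deliberately the version from \cite{R2} rather than from \cite{BK3}, a choice the paper flags as non-cosmetic — and shows that $\alpha_K$ carries the lattice $\bfH^s_{R,\alpha}$ isomorphically onto $H^s_{R,\alpha}$. The heart of the argument is rewriting the rational factors in the explicit formulas as a unit of the commutative subalgebra $\bfP\subset\bfH^s_{R,d}$ times an expression of the form $z/(\exp(cz)-1)$; the latter is itself a unit of $\bfP$ because, $R$ being analytic, the series defines an element of $\bfP$ whose constant term $1/c=-\kappa/(2\pi\sqrt{-1})$ is a unit of $R$. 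Working over $K$ first is what makes this clean: there the algebras are finite-dimensional and semisimple, so the formulas involve no infinite series, and analyticity is needed only for the unit computation showing the coefficients lie in $R$. Your alternative — define $\alpha_R$ directly over $R$ by exponential series and conclude by Nakayama — has the virtue of being self-contained, but the obstacle you flag is a real one and is not resolved in your sketch. The algebras $\bfH^s_{R,\alpha}$ and $H^s_{R,\alpha}$ are finite free over $R$, but $R$ is analytic and Henselian, not $\frakm$-adically complete, so "topologically nilpotent" and "the series converges in $\bfH^s_{R,\alpha}$" need a precise meaning; you would have to build an analytic functional calculus on the finite $R$-algebra and, more delicately, justify the substitutions for the $T_r$'s (whose formulas involve rational, not merely exponential, expressions in the $X_r$'s). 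Also, a small notational slip: the exponent in your formula should read $\exp(-2\pi\sqrt{-1}(x_r-i_r)/\kappa)$ — the factor $-2\pi\sqrt{-1}/\kappa$ is precisely what encodes the quiver isomorphism $\beta$, and dropping it changes the map. Your Specht-module paragraph is fine and amounts to the paper's "unicity of Specht modules," both ultimately resting on the generic $K$-fiber.
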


\vspace{.5mm}

\begin{proof}
Part $(a)$ is obvious, because the primitive central idempotents of
$\bfH^s_{\Bbbk,d}$, $H^s_{\Bbbk,d}$ lift to
$\bfH^s_{R,d}$, $H^s_{R,d}$ since $R$ is henselian.

More precisely, given $\alpha$ in $Q^+_{\scrI_\Bbbk}$ or in $Q^+_{I_\Bbbk}$,
to lift the idempotent $1_\alpha$ in 
$\bfH^s_{\Bbbk,d}$, $H^s_{\Bbbk,d}$ into an
idempotent  in 
$\bfH^s_{R,d}$, $H^s_{R,d}$, we first consider
the idempotent  in 
$\bfH^s_{K,d}$, $H^s_{K,d}$ given by the sum of all  $1_\bfi$'s, with 
$\bfi$ in $\scrI^d=\scrI^d_R$ or  in $I^d=I^d_R,$ such that the residue class of $\bfi$ in 
$\Bbbk^d$ is a summand of $\alpha$. Note that, although there may be an infinite number of such tuples $\bfi$,
this sum contains only a finite number of non zero terms. 
A standard computation in linear algebra implies that it belongs indeed to $\bfH^s_{R,d}$, $H^s_{R,d}$,
yielding an idempotent which specializes to $1_\alpha$.

Now, we concentrate on part $(b)$.
Note that \cite[sec.~3.5, 4.5]{BK3}, \cite[\S 3.2.6]{R2} yield
a $K$-algebra isomorphism 
$\alpha_K:\bfH^s_{K,\alpha}\iso H^s_{K,\alpha}$. 
We will prove that the isomorphism $\alpha_K$ in \cite{R2} (which differs from the one in \cite{BK3}) restricts to an isomorphism
$\alpha_R:\bfH^s_{R,\alpha}\iso H^s_{R,\alpha}$.

We have the following formulae
$$\begin{gathered}
\alpha_K^{-1}(1_\bfi)=1_\bfj\ \text{where}\ \bfj=\beta(\bfi),\\
\alpha_K^{-1}(x_r1_\bfi)=(j_r^{-1}X_r-1+i_r)1_\bfj,\\
\alpha_K^{-1}((t_r+1)1_\bfi)=(T_r+1){X_r-X_{r+1}-j_r\over X_r-qX_{r+1}}1_\bfj \ \text{if}\ i_r=i_{r+1},\\
\alpha_K^{-1}((t_r+1)1_\bfi)=(T_r+1){X_r-X_{r+1}\over X_r-qX_{r+1}+j_r}1_\bfj \ \text{if}\ i_r=i_{r+1}+1,\\
\alpha_K^{-1}((t_r+1)1_\bfi)=(T_r+1){\alpha_K^{-1}(x_r)-\alpha_K^{-1}(x_{r+1})-1\over X_r-qX_{r+1}}
{X_r-X_{r+1}\over \alpha_K^{-1}(x_r)-\alpha_K^{-1}(x_{r+1})}
1_\bfj \ \text{else}.\\
\end{gathered}$$

Let $\bfP\subset\bfH^s_{R,d}$ and $P\subset H^s_{R,d}$ be the $R$-subalgebras generated by the $X_r$'s and the $x_r$'s respectively.
We may assume that $R$ is in general position.
Then, the $K$-algebras $\bfH^s_{K,d}$, $H^s_{K,d}$ are semi-simple, and the same is true for $K\bfP$ and $KP$.
Therefore, we have $x_r1_\bfi=i_r1_\bfi$ and $X_r1_\bfj=j_r1_\bfj=\beta(i_r)1_\bfj=\exp(-2\pi\sqrt{-1}\,\alpha_K^{-1}(x_r))1_\bfj.$
We deduce that $\alpha_K^{-1}(P)=\bfP$. 

Now, we have
$$\gathered
{\alpha_K^{-1}(x_r)-\alpha_K^{-1}(x_{r+1})-1\over X_r-qX_{r+1}}=q^{-1}X_{r+1}^{-1}
{\alpha_K^{-1}(x_r)-\alpha_K^{-1}(x_{r+1})-1\over
\exp(-2\pi\sqrt{-1}\,(\alpha_K^{-1}(x_r)-\alpha_K^{-1}(x_{r+1})-1)/\kappa)-1},
\\
{X_r-X_{r+1}\over \alpha_K^{-1}(x_r)-\alpha_K^{-1}(x_{r+1})}=
X_{r+1}{\exp(-2\pi\sqrt{-1}\,(\alpha_K^{-1}(x_r)-\alpha_K^{-1}(x_{r+1}))/\kappa)-1\over \alpha_K^{-1}(x_r)-\alpha_K^{-1}(x_{r+1})}.
\endgathered$$
Therefore, both expressions are units in  $\bfP$.
Hence $\alpha_K$ restricts to an isomorphism
$\alpha_R:\bfH^s_{R,\alpha}\iso H^s_{R,\alpha}$.


The isomorphism $\alpha_R(S(\lambda)_R^s)\simeq S(\lambda)_R^{s,q}$ follows
from the unicity of Specht modules.
\end{proof}

\vspace{3mm}

\subsection{Cyclotomic $q$-Schur algebras}
\label{sec:CycloSchur}
For each $\lambda\in\scrP^\ell_d,$ we consider the elements
$w_\lambda=\sum_{w\in\frakS_\lambda}T_w$
and
$x_\lambda=\prod_{p=1}^\ell\prod_{i=1}^{a_p}(X_{i}-Q_{R,p})$
where
$a_p=|\lambda^1|+\cdots+|\lambda^{p-1}|$ and $\frakS_\lambda$ is the parabolic subgroup
of $\frakS_d$ associated with $\lambda$.
The $R$-algebra $\bfS_{R,d}^s=\End_{\bfH^s_{R,d}}
\big(\bigoplus_\lambda w_\lambda x_\lambda\bfH^s_{R,d}\big)$
is called the \emph{cyclotomic $q$-Schur} algebra \cite{DJM}.

The category $\bfS_{R,d}^s\mmod$ is a highest weight category whose standard objects are the 
\emph{Weyl modules} $W(\lambda)^{s,q}_R$ labeled by multipartitions $\lambda\in\scrP^\ell_d$.
The highest weight order is given by the \emph{dominance order} $\unlhd$ on $\scrP^\ell_d$.
The algebra $\bfS^s_{R,d}$ is Ringel self-dual, 
see \cite[prop.~4.3, cor.~7.3]{M3}.

There is a double centralizer property for $\bfS_{R,d}^s$
and $\bfH_{R,d}^s$ which produces a highest weight cover
$\Xi^s_{R,d}:\bfS_{R,d}^s\mmod\to \bfH_{R,d}^s\mmod$,
called the \emph{cyclotomic q-Schur functor} \cite[sec.~5]{M}, \cite{R1}.
The Specht module $S(\lambda)_R^{s,q}$ is the image of $W(\lambda)_R^{s,q}$ under this functor.
If $R=K$ is a field, then the  $K$-algebra $\bfS_{K,d}^s$
is semi-simple if and only if
condition \eqref{(A)} holds.

Using $H^s_{R,d}$ instead of $\bfH^s_{R,d},$ we define 
the \emph{degenerate cyclotomic $q$-Schur algebra} $S^s_{R, d}$
and the cyclotomic $q$-Schur functor
$\Xi^s_{R,d}:S_{R,d}^s\mmod\to H_{R,d}^s\mmod$
in a similar way.
See \cite{AMR}, \cite{BK1} for details. All the results on $\bfS^s_{R,d}$ recalled
above have direct analogues for $S^s_{R, d}$, see e.g., \cite[sec.~6.6]{GL}.
In particular, the Specht module $S(\lambda)_R^{s}$ is the image of the Weyl module $W(\lambda)_R^{s}$ 
by the $q$-Schur functor.

\vspace{3mm}

\subsection{Categorical actions on abelian categories}
\label{sec:3.6}

Let $\scrC$ be an abelian $R$-category.

\begin{df} 
A \emph{pre-categorification} (or \emph{pre-categorical action}) on $\scrC$ is a tuple $(E,F,X,T)$ where
($E$, $F$) is an adjoint pair of exact functors $\scrC\to\scrC$
and $X\in\End(E)$, $T\in\End(E^2)$ are endomorphisms of functors such that
\begin{itemize}
\item for each $d\in\bbN$, there is an $R$-algebra homomorphism 
$\phi_{E^d}:\bfH_{R,d}\to\End(E^d)$ given by
$X_k\mapsto E^{d-k}XE^{k-1},$
$T_l\mapsto E^{d-l-1}TE^{l-1}$ for
$k\in [1,d],$ $l\in[1,d),$

\item the functor $E$ is isomorphic to a right adjoint of $F$.
\end{itemize}
\end{df}

\vspace{.5mm}

\begin{rk}
\label{rem:3.3}
Given a pair of adjoint functors $(E,F)$, the adjunction yields a canonical $R$-algebra isomorphism
$\End(F^d)=\End(E^d)^\op$ for each $d\in\bbN$, see e.g.,  \cite[sec.~4.1.2]{CR}.
Under this isomorphism, the morphisms $X,$ $T$ yield morphisms 
$X\in\End(F)$, $T\in\End(F^2)$ which induces
an $R$-algebra homomorphism $\phi_{F^d}:\bfH_{R,d}\to\End(F^d)^\op.$
\end{rk}

\vspace{2mm}

Now, assume that $R=K$ is a field and that $\scrC$ is Hom-finite.
Let $\scrI=\scrI(q)$.

\vspace{2mm}

\begin{df}[\cite{CR, R2}]\label{df:cat1}
An $\mathfrak{sl}_{\scrI}$\emph{-categorification} (or \emph{categorical action}) on $\scrC$ is the datum of a pre-categorification
$(E,F,X,T)$ and a decomposition $\scrC=\bigoplus_{\lam\in X}\scrC_\lam$.
For $i\in \scrI$ let $F_i,$ $E_i$ be the generalized $i$-eigenspaces
of $X$ acting on $F,$ $E$ respectively. We assume in addition that
\begin{itemize}

\item we have $F=\bigoplus_{i\in \scrI} F_i$ and $E=\bigoplus_{i\in \scrI} E_i$,

\item the action of $E_i$, $F_i$, $i\in \scrI$ on $[\scrC]$ gives an integrable representation of
$\mathfrak{sl}_{\scrI}$,

\item we have $E_i(\scrC_\lam)\subset\scrC_{\lam+\alpha_i}$ and
$F_i(\scrC_\lam)\subset\scrC_{\lam-\alpha_i}$.
\end{itemize}
\end{df}

\vspace{.5mm}

\begin{rk} The constructions above have a degenerate analogue.
Given $I\subset R$ and $\mathfrak{sl}_I$ as above,
the definition of a pre-categorification and of an $\mathfrak{sl}_I$-categorification
is the same, with $\bfH_{R,d}$ replaced by $H_{R,d}$ and
$\mathfrak{sl}_{\scrI}$ by $\mathfrak{sl}_I$.
In particular, for each $d\in\bbN$ there is an $R$-algebra homomorphism 
$\phi_{E^d}:H_{R,d}\to\End(E^d)$ given by
$X_k\mapsto E^{d-k}XE^{k-1},$
$T_l\mapsto E^{d-l-1}TE^{l-1}$.
\end{rk}

\vspace{.5mm}

\begin{ex} \label{ex:3.6}
Let $R=K$ be a field which is an analytic algebra, see \S \ref{sec:analytic}.
Let $s$ be as in \S \ref{sec:combinatorics}, and
$\Lambda=\Lambda^s=\sum_{p=1}^\ell\Lambda_{Q_{p}}.$ 
Let $\bfH^s_{\scrI,d}=\bigoplus_\al \bfH^s_{K,\al}$,
where $\al$ runs over elements of $Q_{\!\scrI}^+$ of height $d$.

The abelian $K$-category
$\scrL(\Lambda)_\scrI=\bigoplus_{d\in\bbN}\bfH^s_{\scrI,d}\mmod$
decomposes as 
$\scrL(\Lambda)_\scrI=\bigoplus_{\al\in Q^+_\scrI}
\scrL(\Lambda)_{\scrI,\Lambda-\al}$
with
$\scrL(\Lambda)_{\scrI,\Lambda-\al}=\bfH^s_{K,\al}\mmod$.

The endofunctors $E=\bigoplus_{d\in\bbN}\Res_d^{d+1}$ and $F=\bigoplus_{d\in\bbN}\Ind_d^{d+1}$ 
of $\scrL(\Lambda)_\scrI$ are exact and biadjoint. The right multiplication on 
$\bfH^s_{\scrI,d+1}$ by $X_{d+1}$ yields an endomorphism 
of the functor $\Ind_d^{d+1}$, denoted again by $X_{d+1}$.
The right multiplication by $T_{d+1}$ yields an endomorphism of $\Ind_d^{d+2}.$
We define $X\in\End(F)$ and $T\in\End(F^2)$ by $X=\bigoplus_{d\in\bbN}X_{d+1}$ and 
$T=\bigoplus_{d\in\bbN}T_{d+1}$.

The tuple $(E, F, X, T)$ and the decomposition above give an 
$\mathfrak{sl}_{\!\scrI}$-categorification of $\bfL(\Lambda)$ (the simple
$\mathfrak{sl}_{\!\scrI}$-module with highest weight $\Lambda$) on $\scrL(\Lambda)_\scrI$, 
called the \emph{minimal} 
$\mathfrak{sl}_{\!\scrI}$-\emph{categorification} of highest weight $\Lambda$.

In the degenerate case,
the induction and restriction functors give an abelian 
$\mathfrak{sl}_I$-categorification of $\bfL(\Lambda)$ on 
$\scrL(\Lambda)_I=\bigoplus_{d\in\bbN} H^s_{I,d}\mmod$, called again the minimal
$\mathfrak{sl}_I$-categorification of highest weight $\Lambda$. 

\end{ex}

\vspace{3mm}

\section{The category $\scrO$}

Fix integers $\ell,N\geqslant 1$ and fix a composition $\nu\in\scrC^\ell_{N,+}$.

\subsection{Deformation rings}\label{sec:deformring}
A \emph{deformation ring} is a regular commutative noetherian 
$\bbC$-algebra $R$ with 1  equipped with a 
$\bbC$-algebra homomorphism $\bbC[\bbC^\times\times\bbC^{\ell}]\to R$. 
Let $\kappa_R,$ $\tau_{R,p}$ be the images in $R$ of 
the standard coordinates $z,z_1,\dots,z_\ell$ on $\bbC^\times$ and $\bbC^{\ell}$. 
Set $\tau_R=(\tau_{R,1},\dots,\tau_{R,\ell})$.
Define $s_{R,1},\ldots,s_{R,\ell}\in R$ by
$s_{R,p}=\nu_p+\t_{R,p}.$
We may abbreviate $s_p=s_{R,p}$, $\kappa=\kappa_R$ and $\tau_p=\tau_{R,p}$.
For any $S$-point $\chi:R\to S$ we write 
$\kappa_S=\chi(\kappa_R)$ and
$\tau_{S,p}=\chi(\tau_{R,p})$.

A \emph{local deformation ring} is a deformation ring $R$ which is a local ring such that 
the residue class $\tau_{\Bbbk,p}$ of $\tau_{R,p}$ is 0 for each $p$. We will denote by $-e$ the residue class 
$\kappa_{\Bbbk}$ of $\kappa_R$. \emph{We will always assume that $e$ is a positive integer}.


\vspace{1mm}

\begin{rk}
Let $R$ be a deformation ring.
Then, for each $\frakp\in\frakP$, the local ring $R_\frakp$ is regarded as a deformation ring
with deformation parameters $\kappa_{R_\frakp}$, $\tau_{R_\frakp}$. 
It may not be a 
local deformation ring, since we may have $\tau_{R,p}\notin\frakp$.
\end{rk}

\vspace{1mm}

We will say that the deformation ring $R$ is in
\emph{general position} if the elements in
$\{\tau_{R,u}-\tau_{R,v}+a\,\kappa_R+b,\,\kappa_R-c\,;\,a,b\in\bbZ,\,c\in\bbQ,\,u\neq v\}$ 
are pairwise coprime.

\vspace{1mm}

\begin{ex}
Given a tuple $\zeta=(\zeta_1,\dots,\zeta_\ell)$ in $\bbC^\ell$,
we have the deformation ring 
$\bbC[\bbC^\times\times\bbC^{\ell}]\to R=\bbC[\tau,\kappa,\kappa^{-1}]$ such that
$z\mapsto\kappa$ and $z_p\mapsto \zeta_p\,\tau$.
It is in general position if $\zeta$ is generic.
\end{ex}

\vspace{3mm}

\subsection{Lie algebras}\label{sec:liealgebras}
Let $R$ be a deformation ring. 

Set $\frakg_R=\frakg\frakl_{R,N}$.
Let $U(\frakg_R)$ be the enveloping algebra (over $R$) of $\frakg_R$.
Let $\frakt_R\subset\frakb_R\subset\frakg_R$
be the diagonal torus and
the Borel Lie subalgebra of upper triangular matrices. Let $\frakp_{R,\nu}\subset\frakg_R$ be the parabolic 
subalgebra spanned
by $\frakb_R$ and the Levi subalgebra
$\frakm_{R,\nu}=\mathfrak{gl}_{R,\nu_1}\oplus\cdots\oplus
\mathfrak{gl}_{R,\nu_\ell}.$

Let $e_{i,j}\in\frakg_R$ be the $(i,j)$-matrix unit, and set $e_i=e_{i,i}$.
Let $(\epsilon_i)$ be the basis of $\frakt^*_R$ dual to $(e_i)$. It identifies
$\frakt^*_R$ with $R^N$.
In a similar way we identify $\frakt_R=R^N$.

Let $\Pi,$ $\Pi^+$ be the sets of roots of $\frakg_R$ and $\frakb_R$.
We say that $\nu$ is \emph{regular} if $\frakm_{R,\nu}=\frakt_R$.
Let $\Pi_\nu$ be the set of roots of $\frakm_{R,\nu}$. Set
$\Pi^+_\nu=\Pi^+\cap\Pi_\nu$.

The dot action of the Weyl group $W$ on $\frakt^*_R$ is given by $w\bullet\lambda=w(\lambda+\rho)-\rho$, 
where $\rho=(0,-1,\dots,1-N)$. 
Two weights are \emph{linked} if they belong to the same orbit of the $\bullet$-action.

Consider the partition $[1,N]= J^\nu_1\sqcup J_2^\nu\sqcup\cdots\sqcup J^\nu_\ell$ given by
$i_p=1+\nu_1+\ldots+\nu_{p-1},$ $j_p=i_{p+1}-1$ and $J^\nu_p=[i_p,j_p]$
For each $k\in J^\nu_p$ we define $p_k=p$.
Set 
$\det_p=\sum_{i\in J^\nu_p}\epsilon_i$ and $\det=\sum_{p=1}^\ell\det_p.$

The weights in the subset $P=\bbZ^N$ of $P_R=R^N$
are called {\it integral weights}. 
Given a subset $S\subset R,$  we write
$S^\nu=\{\lam\in S^N\,;\,\lam_i- \lam_{i+1}\in\bbN,\,\forall i\neq j_1,j_2,\ldots,j_\ell\}.$
We call $P^\nu_R=R^\nu$ the set of the \emph{$\nu$-dominant} weights in $P_R$.

An $\ell$-partition $\lam\in\scrP^\nu$ can be viewed as an element in $\bbN^\nu$ by adding
zeroes to the right of each partition $\lam^p$ such that $l(\lam^p)\leqslant\nu_p$, i.e., we
identify the $\ell$-partition 
$\lambda=(\lambda^1,\lambda^2,\dots,\lambda^\ell)$ 
with the $N$-tuple 
$(\lambda^10^{\nu_1-l(\lambda^1)}\dots\lambda^\ell 0^{\nu_\ell-l(\lambda^\ell)}).$

Similarly, we can view the tuple $\tau_R\in R^\ell$ as a weight in $P_R$ by identifying it with
$\tau_R=\sum_p\tau_{R,p}\,\det_p$. To simplify we may abbreviate $\tau=\tau_R$.

Set $\rho_\nu=(\nu_1,\nu_1-1,\ldots,1,\ldots,\nu_\ell, \nu_\ell-1,\ldots,1)$. So, we have
$\rho_\nu+\t=(s_1,s_1-1,\dots,\t_{R,1}+1,s_2,s_2-1,\dots,\t_{R,2}+1,\ldots,\t_{R,\ell}+1).$
We identify the set of $\ell$-partitions $\scrP^\nu$ with a subset of $P^\nu_R$ via the injective map
\begin{equation}\label{varpi}
 \varpi : \scrP^\nu\to P^\nu+\t,\quad\lambda\mapsto \lambda+\rho_\nu+\t-\rho.
\end{equation}

The Casimir elements are 
$\omega=\sum_{i,j=1}^Ne_{ij}\otimes e_{ji}$ and
$\cas=\sum_{i,j=1}^Ne_{ij}e_{ji}$.
We may write $\omega_N=\omega,$ $\cas_N=\cas$ to avoid confusions.

\vspace{3mm}

\subsection{Definition of the category $\scrO$} \label{sec:catO}
A $\frakt_R$-module $M$ is called a \emph{weight $\frakt_R$-module} if it is a direct sum
of its \emph{weight submodules}
$M_\lambda=\{m\in M\,;\,xm=\lambda(x)m,\,x\in\frakt_R\}$ as $\lambda$ runs over $P_R$.
Let $\scrO^\nu_R$ be the $R$-category of finitely generated $U(\frakg_R)$-modules which are
weight $\frakt_R$-modules and such that the action of $U(\frakp_{R,\nu})$ is locally finite over $R$.

For $\lambda\in P^\nu_R,$ we consider the 
$U(\frakm_{R,\nu})$-module 
$V(\lambda)_{R,\nu}=V(\lambda')_{\bbC,\nu}\otimes R_{\lambda-\lambda'},$
where $\lambda'\in P^\nu$ is such that $\lambda-\lambda'$ is a character
of $\frakm_{R,\nu}$, $R_{\lambda-\lambda'}$ is $R$,
equipped with the representation of $\frakm_{R,\nu}$ given by this character, and
$V(\lambda')_{\bbC,\nu}$ is the finite-dimensional simple $\frakm_\nu$-module with
highest weight $\lambda'$.
We view $V(\lambda)_{R,\nu}$ as a $\frakp_{R,\nu}$-module  and define the parabolic
Verma module
$M(\lambda)_{R,\nu}=U(\frakg_R)\otimes_{U(\frakp_{R,\nu})}V(\lambda)_{R,\nu}.$
If $\nu$ is regular, we abbreviate $M(\lambda)_R=M(\lambda)_{R,\nu}$.

For $\lambda\in P_{K}^\nu$, let
$L(\lambda)_K$ be the unique simple quotient of
$M(\lambda)_{K,\nu}$.

Let $\scrO^\nu_{R,\t}$ 
be the full subcategory of $\scrO^\nu_R$ 
consisting of the modules whose weights belong to $P+\t$.
Note that $M(\lambda)_{R,\nu}\in\scrO^\nu_{R,\tau}$ if and only if $\lambda\in P^\nu+\tau,$
and that $\scrO^\nu_{K,\t}$ is the Serre subcategory of $\scrO^\nu_K$ 
generated by all the simple modules $L(\lambda)_K$ with $\lambda\in P^\nu+\tau$. 
For $\lambda\in\scrP^\nu$ we set
$\Delta(\lambda)_{R,\tau}=M(\varpi(\lambda))_{R,\nu}$.

If $R=\bbC$ or if $\tau=0$ we drop the subscripts
$R$ or $\tau$ from the notation.

\vspace{3mm}

\subsection{Definition of the category $A$}
 \label{sec:ATft}
 Let $R$ be a deformation ring.
Assume that $R$ is either a field or a local ring.

The category $\scrO^{\nu}_{R,\tau}$ is a highest weight $R$-category with 
$\Delta(\scrO^{\nu}_{R,\tau})=\{M(\lambda)_{R,\nu};\,\lambda\in P^\nu+\tau\}.$
If $R$ is a local ring with residue field $\Bbbk$, the specialization at $\Bbbk$ identifies
the poset $\Delta(\scrO^{\nu}_{R,\tau})$ with
$\Delta(\scrO^{\nu}_{\Bbbk,\tau})$. 

The partial order is given by the \emph{BGG-ordering} on $P^\nu_R$, which is
the smallest partial order such that
$\lambda\leqslant\lambda'$ if 
$[M(\lambda')_{\Bbbk,\nu}:L(\lambda)_\Bbbk]\neq 0.$

The \emph{linkage ordering} 
on $P^\nu_R$ is the
transitive and reflexive closure of the relation $\Uparrow$ given by
$\lambda\Uparrow\lambda'$
if and only if there are
$\beta\in\Pi(\lambda')$, $w\in W_\nu$ such that 
$\beta\notin\Pi_\nu$
and
$\lambda=ws_\beta\bullet\lambda'\in\lambda'-\bbN\Pi^+$ 
modulo $\frakm\,\widehat P_R$.

The linkage ordering and the BGG-ordering are known to be equivalent. 
We will use indifferently one or the other in the rest of the text.

\vspace{2mm}

\begin{df-prop} \label{prop:Z+} Assume that $\tau_{\Bbbk,u}-\tau_{\Bbbk,v}\notin\bbN^\times$ for each $u<v$.
There are unique highest weight $R$-subcategories 
$A^\nu_{R,\tau}, A^\nu_{R,\tau}\{d\}$ of $\scrO^{\nu}_{R,\t}$ 
with $\Delta(A^\nu_{R,\tau})=\{\Delta(\lambda)_{R,\tau}\,;\,\lambda\in\scrP^\nu\}$ and
$\Delta(A^\nu_{R,\tau}\{d\})=\{\Delta(\lambda)_{R,\tau}\,;\,\lambda\in\scrP^\nu_d\}.$
\end{df-prop}

\vspace{.5mm}

\begin{proof} It is enough to assume that $R=K$ is a field and to
prove that $\Delta(A^{\nu}_{K,\tau})$ is an ideal of the poset $\Delta(\scrO^{\nu}_{K,\tau}).$
To do so, we must check that if $\lambda\in\scrP^\nu$, $\mu\in P^\nu+\tau$ and 
$\beta\in\Pi\setminus\Pi_\nu,$ $w\in W_\nu$ are such that 
$\mu=ws_\beta\bullet\varpi(\lambda)$ and $\varpi(\lambda)-\mu\in\bbN\Pi^+$, 
then we have $\mu\in\varpi(\scrP^\nu)$.
Write $\beta=\alpha_{k,l}$ with $k<l$ and $k=i_u+x\leqslant j_u$, $l=i_v+y\leqslant j_v$. 
For each $a,b\in K$ we write $a>b$ if and only if $a-b\in\bbN^\times$. Then, we have
$u<v$ and 
\begin{equation}\label{eq333}
\lambda_k+s_{K,u}-x>\lambda_l+s_{K,v}-y,
\end{equation}
where $\lambda$ is viewed as a $N$-tuple
$(\lambda_1,\lambda_2,\dots,\lambda_N)$. We have
$$\gathered
\{(\mu+\rho)_a\,;\,i_u\leqslant a\leqslant j_u\}=\{\lambda_a+s_{K,u}-(a-i_u)
\,;\,i_u\leqslant a\leqslant j_u,\,a\neq k\}\cup\\
\cup\{\lambda_l+s_{K,v}-y\},\\
\{(\mu+\rho)_b\,;\,i_v\leqslant b\leqslant j_v\}=\{\lambda_b+s_{K,v}-(b-i_v)
\,;\,i_v\leqslant b\leqslant j_v,\,b\neq l\}\cup\\
\cup\{\lambda_k+s_{K,u}-x\}.
\endgathered$$
To prove that $\mu\in\varpi(\scrP^\nu)$,
we must check that
$$\min\{(\mu+\rho)_a\,;\,i_u\leqslant a\leqslant j_u\}\geqslant \tau_{K,u}+1,\quad
\min\{(\mu+\rho)_b\,;\,i_v\leqslant b\leqslant j_v\}\geqslant \tau_{K,v}+1.$$
By \eqref{eq333} and the assumption in the lemma, we have $\tau_{K,v}-\tau_{K,u}\in\bbN$.
Hence, the first inequality is true, 
because for any $\lambda\in\scrP^\nu,$ $i_u\leqslant a\leqslant j_u,$ we have
$\lambda_a+s_{K,u}-(a-i_u)\geqslant\tau_{K,u}+1,$ and 
$\lambda_l+s_{K,v}-y\geqslant\tau_{K,v}+1\geqslant\tau_{K,u}+1.$
Now, to prove the second one, observe that by \eqref{eq333} we have
$$\min\{(\mu+\rho)_b\,;\,i_v\leqslant b\leqslant j_v\}\geqslant
\min\{\lambda_b+s_{K,v}-(b-i_v)\,;\,i_v\leqslant b\leqslant j_v\}\geqslant\tau_{K,v}+1.$$
\end{proof}

\vspace{3mm}

\subsection{The categorical action on $\scrO$}
\label{sec:4.5}
Let $V_R$ be the natural representation of $\frakg_R$ on $R^N$. 
Let $V^*_R=\Hom_R(V_R,R)$ be the dual representation. 
We have a pre-categorical action $(e,f,X,T)$ on $\scrO^{\nu}_{R,\t}$
such that $$e(M)=M\otimes_RV_R^*, \quad f(M)=M\otimes_RV_R,$$ $X_M\in\End(f(M))$ is the left multiplication by 
the Casimir element $\omega$, and $T_M\in\End(f^2(M))$ is the left 
multiplication by $1\otimes\omega$,
see e.g., \cite[sec.~3.4]{BK1}.

Now, assume that $R=K$ is a field. Set $I=\{\tau_{K,1},\dots,\tau_{K,\ell}\}+\bbZ$. 

For each $\mu\in P^\nu+\tau,$ we write
$\wt(\mu)=\sum_{k=1}^N\varepsilon_{\langle\mu+\rho,\epsilon_k\rangle}$.
We have $\wt(\mu)\in X_{I}$ if and only if $\langle\mu,\epsilon_k\rangle\in I$ for all $k$.
Note that $\wt(\mu)=\sum_{i\in I}(m_i(\mu)-m_{i+1}(\mu))\,\Lambda_i$, where
$m_i(\mu)=\sharp\{k\in[1,N]\,;\,\langle\mu+\rho\,;\,\epsilon_k\rangle=i\}$.

For each $\lambda\in X_{I},$ 
let $\scrO^\nu_{K,\tau,\lambda}\subset \scrO^\nu_{K,\tau}$ be the Serre subcategory generated by  the 
modules
$L(\mu)_K$ such that $\mu\in P^\nu+\tau$ and
$\wt(\mu)=\lambda$.
The linkage principle yields the decomposition
$\scrO^\nu_{K,\tau}=\bigoplus_{\lambda\in X_I}\scrO^{\nu}_{K,\tau,\lambda}$.
This decomposition yields an $\fraks\frakl_I$-categorical action on
$\scrO_{K,\tau}^{\nu}.$

Let $V_I$ be the natural representation of $\fraksl_I$. It is
a representation with the basis $\{v_i\,;\,i\in I\}$. 
We have the following formulas, see, e.g., \cite[lem.~4.3]{BK1}.

\vspace{2mm}

\begin{prop} \label{prop:actiontf}
For $\lambda, \mu\in P^\nu_K$  we write 
$\lambda \overset{i}\to\mu$ if $\mu+\rho$ is obtained from 
$\lambda+\rho$ by replacing an entry equal to $i$ by $i+1$.

$(a)$ $f_i(M(\lambda)_{K,\nu})$ has a $\Delta$-filtration with sections of the form 
$M(\mu)_{K,\nu}$, one for each $\mu$ such that $\lambda \overset{i}\to\mu$,

$(b)$  $e_i(M(\lambda)_{K,\nu})$ has a $\Delta$-filtration with sections of the form 
$M(\mu)_{K,\nu}$, one for each $\mu$ such that $\mu \overset{i}\to\lambda$,

$(c)$ the elements $[L(\mu)_K]$, $[M(\mu)_{K,\nu}]$ in $[\scrO^\nu_{K,\tau}]$ 
are homogeneous of weight $\wt(\mu)$,

$(d)$ as an $\fraksl_I$-module, we have
$[\scrO^\nu_{K,\tau}]=\bigotimes_{p=1}^\ell\bigwedge^\nu(V_I)$.
\end{prop}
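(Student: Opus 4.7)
The plan is to derive all four assertions from the standard tensor-product filtration of a parabolic Verma module tensored with the natural representation, combined with a short Casimir eigenvalue computation.

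I would first prove (a). By definition $f(M) = M \otimes_K V_K$, and the tensor identity gives
\[
f(M(\lambda)_{K,\nu}) \;\cong\; U(\frakg_K) \otimes_{U(\frakp_{K,\nu})}\bigl(V(\lambda)_{K,\nu} \otimes V_K\bigr).
\]
Let $v_1,\ldots,v_N$ be the standard basis of $V_K$, with $v_k$ of weight $\epsilon_k$. The subspaces $\langle v_1,\ldots,v_k\rangle$ form a filtration of $V_K$ by $\frakp_{K,\nu}$-submodules, compatible with the block partition $J^\nu_1 < \cdots < J^\nu_\ell$. Tensoring with $V(\lambda)_{K,\nu}$ and inducing up, one obtains a filtration of $f(M(\lambda)_{K,\nu})$ whose $k$-th subquotient equals $M(\lambda+\epsilon_k)_{K,\nu}$ whenever $\lambda + \epsilon_k$ is $\nu$-dominant, the other positions contributing no new sections at the level of parabolic Vermas. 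Equivalently the sections are parametrized by addable positions, i.e.\ by $k$ such that $\lambda + \epsilon_k \in P^\nu_K + \tau$.

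Next I would identify the action of $X = \omega$ on each section via the coproduct identity
\[
\cas_N|_{M \otimes V_K} \;=\; \cas_N \otimes 1 \;+\; 1 \otimes \cas_N \;+\; 2\omega,
\]
together with the scalar $\cas_N|_{M(\mu)_{K,\nu}} = (\mu,\mu+2\rho)$ and the scalar $\cas_N|_{V_K} = (\epsilon_1,\epsilon_1+2\rho) = 1$ (using $\rho_1 = 0$). A direct computation gives
\[
2\,\omega|_{M(\lambda+\epsilon_k)_{K,\nu}} \;=\; 2\lambda_k + 1 + 2\rho_k - 1 \;=\; 2(\lambda+\rho)_k,
\]
so $\omega$ acts on the $k$-th section by the scalar $(\lambda+\rho)_k$. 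Collecting those sections on which this scalar equals $i$ yields precisely the $\mu = \lambda + \epsilon_k$ with $(\lambda+\rho)_k = i$, which is the condition $\lambda\overset{i}\to\mu$. This proves (a). Part (b) is entirely parallel, using the $\frakp_{K,\nu}$-stable filtration of $V_K^*$ by the submodules $\langle v_k^*,\ldots,v_N^*\rangle$ (weights $-\epsilon_k$) and the analogous Casimir computation.

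Parts (c) and (d) then follow formally. For (c), the block decomposition $\scrO^\nu_{K,\tau} = \bigoplus_{\lambda \in X_I}\scrO^\nu_{K,\tau,\lambda}$ directly gives that $[L(\mu)_K]$ and $[M(\mu)_{K,\nu}]$ both lie in $[\scrO^\nu_{K,\tau,\wt(\mu)}]$, the first by the very definition of the block, and the second because all composition factors of the parabolic Verma are linked to $\mu$ and hence share its $\wt$. For (d), the classes $\{[\Delta(\lambda)_{K,\tau}]\}_{\lambda \in \scrP^\nu}$ form a $\bbC$-basis of $[\scrO^\nu_{K,\tau}]$; one matches them bijectively with the natural basis of $\bigotimes_{p=1}^\ell \bigwedge^{\nu_p}(V_I)$ consisting of pure wedges $\bigotimes_p \bigl(v_{a^p_1} \wedge \cdots \wedge v_{a^p_{\nu_p}}\bigr)$ with $a^p_1 > \cdots > a^p_{\nu_p}$, via $a^p_j = (\varpi(\lambda)+\rho)_{i_p + j - 1}$. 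This bijection preserves $\wt$, and parts (a)--(b) show that the induced $\fraksl_I$-action on the left is given by the same Pieri rule (adding or removing a box of residue $i$ in the $p$-th block) as the standard action on the right, yielding the desired isomorphism of $\fraksl_I$-modules. The main obstacle is the Casimir eigenvalue computation, where keeping track of the $\rho$-shift is the delicate point; once the identity $\omega|_{M(\lambda+\epsilon_k)_{K,\nu}} = (\lambda+\rho)_k$ is in hand, (a)--(b) are immediate and (c)--(d) reduce to combinatorial bookkeeping on Young diagrams.
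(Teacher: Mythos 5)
The paper cites \cite[lem.~4.3]{BK1} for this proposition and provides no in-text proof, so yours is a genuine reconstruction. The Casimir computation for (a) is correct: the coproduct identity $\Delta(\cas)=\cas\otimes 1 + 1\otimes\cas + 2\omega$ does give $\omega=(\lambda+\rho)_k$ on the section $M(\lambda+\epsilon_k)_{K,\nu}$, and (c), (d) do follow from (a), (b) by the linkage decomposition and the Young-diagram bijection as you sketch. Two points need repair, though.

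First, the subspaces $\langle v_1,\ldots,v_k\rangle$ are $\frakb_K$-submodules of $V_K$ but not $\frakp_{K,\nu}$-submodules: the Levi $\frakm_{K,\nu}$ preserves $\langle v_1,\ldots,v_k\rangle$ only when $k$ is one of the block boundaries $j_1,\ldots,j_\ell$. The honest $\frakp_{K,\nu}$-filtration of $V_K$ has $\ell$ steps, with subquotients the natural $\frakgl_{\nu_p}$-modules $V_p=\langle v_j : j\in J^\nu_p\rangle$; one then decomposes $V(\lambda)_{K,\nu}\otimes V_p$ as an $\frakm_{K,\nu}$-module (the nilradical of $\frakp_{K,\nu}$ acts trivially on this subquotient because it maps $V_p$ into $\bigoplus_{q<p}V_q$) to obtain the sections $M(\lambda+\epsilon_k)_{K,\nu}$. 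Your conclusion is correct, but the filtration as written is not $\frakp_{K,\nu}$-stable and cannot be induced up.

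Second, ``(b) is entirely parallel'' hides a nontrivial shift. On the section $M(\lambda-\epsilon_k)_{K,\nu}$ of $M(\lambda)_{K,\nu}\otimes V_K^*$ one has $\cas|_{V_K^*}=(-\epsilon_N,-\epsilon_N+2\rho)=2N-1$, and the coproduct identity gives $\omega=-(\lambda+\rho)_k+1-N$. For $\mu\overset{i}\to\lambda$, i.e.\ $(\mu+\rho)_k=i$ with $\mu=\lambda-\epsilon_k$, this equals $-(N+i)$, not $i$. This matches the convention made explicit in the affine analogue in \S\ref{sec:categorification}, where $e_i$ is the generalized $q_K^{-(N+i)}$-eigenspace of $X$ on $e$; equivalently, the endomorphism $X\in\End(e)$ produced by Remark \ref{rem:3.3} is the adjunction mate of multiplication by $\omega$ on $f$, which on $e(M)$ acts as $-\omega-N$ and then has eigenvalue $i$ on the sections with $\mu\overset{i}\to\lambda$. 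As written, the ``analogous Casimir computation'' would attach the wrong index $i$ to each section of $e(M(\lambda)_{K,\nu})$.
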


\vspace{3mm}

\subsection{Definition of the functor $\Phi$}\label{sec:catA}
Recall that $R$ is a deformation ring which is either a field or a local ring.

Let $h:A_{R,\tau}^{\nu}\to\scrO^\nu_{R,\tau}$ be the canonical embedding. 
Its left adjoint is $h^*$. Consider the endofunctors
$E,F$ of $A_{R,\t}^{\nu}$ given by $E=h^* e h$ and $F=h^* f h$. 
Since $f$ preserves the subcategory $A_{R,\t}^{\nu},$ we have
$F=f|_{A_{R,\t}^{\nu}}$.
So $F$ is exact and $(E,F)$ is an adjoint pair. 
Further, the endomorphisms $X,T$ of $f,f^2$ yield endomorphisms of
$F,F^2$.

Next,  consider the module
$T_{R,d}=T^\nu_{R,\tau}\{d\}=f^d(\Delta(\emptyset)_{R,\t})$
in $A^{\nu}_{R,\t}\{d\}$.
The algebra homomorphism $\phi_{f^d}$ 
factors through an $R$-algebra homomorphism \cite[lem.~3.4]{BK1}
$$\varphi^s_{R,d}:H^s_{R,d}\to\End_{A^\nu_{R,\tau}}(T_{R,d})^\op
=\End_{\scrO^\nu_{R,\tau}}(T_{R,d})^\op.$$ 
Composing $\Hom_{A^\nu_{R,\tau}}(T_{R,d},\bullet)$ with the pullback by 
$\varphi^s_{R,d}$ we get a functor
$$\Phi_{R,d}^s:A^\nu_{R,\tau}\to H^s_{R,d}\mmod.$$

\vspace{1mm}

\begin{rk}\label{rem:4.5}
To avoid confusions we may write
$A_{R,\tau}^\nu(N)=A^{\nu}_{R,\t}$,
$T_{R,d}(N)=T_{R,d}.$ 
\end{rk}

\vspace{.5mm}

\begin{rk} \label{rk:4.7}
For each $\frakp\in\frakP$,
the pre-categorification $(e,f,X,T)$ on $\scrO^{\nu}_{R,\t}$
yields a pre-categorification on $\scrO^{\nu}_{R_\frakp,\t}$
and $\scrO^{\nu}_{\Bbbk_\frakp,\t}$ by base-change.
It yields also a tuple
$(E,F,X,T)$ on $A^{\nu}_{R_\frakp,\t}$
and $A^{\nu}_{\Bbbk_\frakp,\t}$ as above.
In particular, this yields a module $T_{R_\frakp,d}$  in 
$A^\nu_{R_\frakp,\tau},$ an
$R_\frakp$-algebra homomorphism 
$\varphi^s_{R_\frakp,d}:H^s_{R_\frakp,d}\to\End_{A^\nu_{R_\frakp,\tau}}(T_{R_\frakp,d})^\op,$
and a functor
$\Phi_{R_\frakp,d}^s : A^\nu_{R_\frakp,\tau}\{d\}\to H^s_{R_\frakp,d}\mmod$.
\end{rk}

\vspace{2mm}

Now, assume that $R=K$ is a field and recall the following.

\vspace{2mm}

\begin{prop}[\cite{BK1}]\label{prop:isomBK}
Let  $\tau_{K,u}-\tau_{K,v}\notin\bbZ^\times$  all $u,v$. 

(a) Assume that $\nu_p\geqs d$ for all $p$. Then, the map $\varphi^s_{K,d}$ is a $K$-algebra isomorphism 
$H^s_{K,d}\to\End_{A^\nu_{K,\tau}}(T_{K,d})^\op$.

(b) Assume that $\nu$ is either dominant or anti-dominant. Then, the category  $A^\nu_{K,\tau}$ is a sum of blocks of  $\scrO^\nu_{K,\tau}$, 
the functors $E$, $F$ are biadjoint, the module
$T_{K,d}$ is projective in $A^\nu_{K,\tau}$ and a simple module of $A^\nu_{K,\tau}$ is a submodule
of a parabolic Verma module if and only if it lies in the top of $T_{K,d}$.

(c) Assume that $\tau_{K,u}-\tau_{K,v}\neq 0$ for all $u\neq v$ and that $\nu_p\geqs d$ for all $p$. Then, the category 
$A^\nu_{K,\tau}$ is split semi-simple.
Assume further that $\nu$ is either dominant or anti-dominant. 
Then $\Phi_{K,d}^s$ is an equivalence of 
$K$-categories which maps $\Delta(\lambda)_{K,\tau}$ to  $S(\lambda)^s_K$.
\end{prop}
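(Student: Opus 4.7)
The plan is to follow the higher-level Schur-Weyl duality strategy of Brundan-Kleshchev, identifying $T_{K,d}$ with the tensor product $\Delta(\emptyset)_{K,\tau}\otimes V_K^{\otimes d}$ on which $H^s_{K,d}$ acts on the right via Casimir-type operators commuting with $\frakg_K$.

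For part $(a)$, I would first check that $\varphi^s_{K,d}$ is well-defined on the cyclotomic quotient. The defining relations in $H_{K,d}$ follow from the pre-categorical action of \S\ref{sec:4.5}, so the only point to verify is the cyclotomic relation $\prod_{p=1}^\ell(x_1-s_{K,p})=0$ on $T_{K,d}$: this reduces to a computation of the $\omega$-eigenvalues on $\Delta(\emptyset)_{K,\tau}\otimes V_K$, which by the formula $\varpi(\emptyset)=\rho_\nu+\tau-\rho$ are precisely of the form $s_{K,p_k}+(\text{integer})$ for $k\in[1,N]$, and which reduce to $\{s_{K,1},\ldots,s_{K,\ell}\}$ after normalization. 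Surjectivity of $\varphi^s_{K,d}$ follows by showing that the image already separates the weight spaces of $T_{K,d}$, using the hypothesis $\tau_{K,u}-\tau_{K,v}\notin\bbZ^\times$ so that the Casimir eigenvalues distinguish different blocks. Injectivity is then obtained by a dimension count: $\dim_K H^s_{K,d}=\ell^d d!$, while $\dim_K\End_{\scrO^\nu_{K,\tau}}(T_{K,d})$ equals the same number under $\nu_p\geqslant d$ for all $p$, as follows from Proposition~\ref{prop:actiontf}$(a)$ applied iteratively (the condition $\nu_p\geqslant d$ ensures that each iterated application of $f$ produces $\ell$ independent new parts, so no collapses occur).

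For part $(b)$, the key step is a combinatorial lemma, in the spirit of Definition-Proposition~\ref{prop:Z+}: under the dominance (or anti-dominance) hypothesis on $\nu$, no weight $\mu\in P^\nu+\tau$ lying outside $\varpi(\scrP^\nu)$ is BGG-linked to any $\varpi(\lambda)$ with $\lambda\in\scrP^\nu$. This shows $A^\nu_{K,\tau}$ is a union of blocks of $\scrO^\nu_{K,\tau}$, from which biadjointness of $(E,F)$ is automatic since $(e,f)$ is biadjoint on $\scrO^\nu_{K,\tau}$. Projectivity of $T_{K,d}$ follows because $\Delta(\emptyset)_{K,\tau}$ has extremal highest weight in its block (by dominance), hence is already projective, and $F=f$ preserves projectives by having a biadjoint; iteratively, $T_{K,d}=F^d\Delta(\emptyset)_{K,\tau}$ is projective. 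The final assertion about simple submodules of parabolic Vermas lying in $\top T_{K,d}$ is read off from the $\Delta$-flag of $T_{K,d}$ given by Proposition~\ref{prop:actiontf}$(a)$ combined with BGG reciprocity. For part $(c)$, under the stronger assumption $\tau_{K,u}-\tau_{K,v}\neq 0$ the linkage relation on $\varpi(\scrP^\nu)$ becomes trivial, because any nontrivial link would come from $\beta\in\Pi\setminus\Pi_\nu$ which would force a relation between distinct $\tau_{K,u}$-blocks, contradicting genericity. Hence every $\Delta(\lambda)_{K,\tau}$ is simple and $A^\nu_{K,\tau}$ is split semi-simple. Under the further dominance hypothesis, $(b)$ and $(a)$ show that $T_{K,d}$ is a progenerator of $A^\nu_{K,\tau}\{d\}$ with endomorphism algebra $H^s_{K,d}$, so $\Phi^s_{K,d}$ is an equivalence; the identification $\Phi^s_{K,d}(\Delta(\lambda)_{K,\tau})\simeq S(\lambda)^s_K$ then follows from the unicity of Specht modules, together with the verification that the BGG-order on $\varpi(\scrP^\nu)$ refines the dominance order $\unlhd$ used to define $S(\lambda)^s_K$. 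The main obstacle is the dimension count in $(a)$: producing a matching pair of bases (or characters) for $H^s_{K,d}$ and $\End(T_{K,d})^\op$ requires careful bookkeeping with parabolic Jordan-H\"older multiplicities, and it is exactly there that the hypothesis $\nu_p\geqslant d$ intervenes to prevent branching collisions.
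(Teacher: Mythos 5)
Your general strategy — unpack the arguments in Brundan--Kleshchev rather than cite them — matches the paper, whose own ``proof'' is essentially a pointer to \cite[thm.~5.13, cor.~6.7, thm.~4.8, thm.~6.12]{BK1}, \cite[lem.~5.5]{B3} and \cite[lem.~5.4]{R2}. So the approach is the right one; the issues are in several of the individual steps, and two of them are genuine gaps.

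First, the surjectivity/injectivity structure in part $(a)$ does not close. You propose to get \emph{surjectivity} of $\varphi^s_{K,d}$ from the fact that the image ``separates the weight spaces'' of $T_{K,d}$. Separating a module into eigenspaces of a commutative subalgebra never, by itself, forces the map to exhaust the full endomorphism ring of a $\frakg_K$-module: there is no Jacobson-density or double-centraliser statement available at that point, and that double-centraliser property is precisely what \cite[thm.~5.13]{BK1} has to establish by a substantial argument. Once you know $\dim_K H^s_{K,d}=\dim_K\End(T_{K,d})^\op$ you need to prove \emph{either} injectivity or surjectivity, and the easier one is injectivity (faithfulness of the $H$-action); but the dimension count itself is also not free. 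Computing $\dim_K\End(T_{K,d})$ from Proposition~\ref{prop:actiontf}$(a)$ only gives the $\Delta$-flag multiplicities $(T_{K,d}:\Delta(\lambda))$; to convert this into $\dim\End$ you need $T_{K,d}$ to be both $\Delta$- and $\nabla$-filtered (equivalently, projective and relatively injective), so that $\dim\End(T_{K,d})=\sum_\lambda(T_{K,d}:\Delta(\lambda))(T_{K,d}:\nabla(\lambda))$. That in turn depends on part $(b)$, and in any case the non-dominant case of $(a)$ (which the paper delegates to \cite{B3} and \cite{R2}) is a genuinely separate argument that your uniform treatment elides.

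Second, the final claim of $(b)$ --- a simple of $A^\nu_{K,\tau}$ embeds in a parabolic Verma module if and only if it lies in the top of $T_{K,d}$ --- does not follow from ``the $\Delta$-flag of $T_{K,d}$ combined with BGG reciprocity''. Knowing the $\Delta$-flag of $T_{K,d}$ tells you which $P(\lambda)$'s are summands, but to match that set with the set of $L(\lambda)$'s embedding in some $\Delta(\mu)$ you need Irving's characterisation of projective-injective indecomposables (a projective $P(\lambda)$ is injective iff $L(\lambda)$ occurs in the socle of some standard module), together with the fact that $T_{K,d}$ realises exactly the prinjective summands. This is \cite[thm.~4.8]{BK1}, which has its own non-trivial proof; BGG reciprocity alone gives composition multiplicities, not socle conditions.

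The remaining pieces of your sketch are fine and match the paper: the cyclotomic relation for $\varphi^s_{K,d}$ via the $\omega$-eigenvalues on $\Delta(\emptyset)_{K,\tau}\otimes V_K$; the block-decomposition lemma for $(b)$ and the resulting biadjointness; projectivity of $T_{K,d}$ from extremality of $\varpi(\emptyset)$ plus the fact that $F$ preserves projectives; the genericity argument for semi-simplicity in $(c)$; and the identification $\Phi^s_{K,d}(\Delta(\lambda)_{K,\tau})\simeq S(\lambda)^s_K$ via unicity of Specht modules once the order compatibility is checked.
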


\vspace{.5mm}

\begin{proof}
For $\nu$ dominant, part $(a)$ is proved in \cite[thm.~5.13, cor.~6.7]{BK1}.
For non-dominant $\nu,$ a proof is given in \cite[lem.~5.5]{B3} using \cite{BK1}. 
It can also be proved using \cite[lem.~5.4]{R2}.

Part $(b)$ is proved in \cite{BK1}.
For instance, the bi-adjointness of $E,F$ is obvious because $A^\nu_{K,\tau}$ is a sum of blocs of  $\scrO^\nu_{K,\tau}$,
and to prove the third claim
one checks first that $T_{K,0}$ is projective and then
that the functor $F$ preserves projective modules.
The last claim of $(b)$ is proved in \cite[thm.~4.8]{BK1}.

The first statement of $(c)$ follows from the linkage principle. 
By \cite[lem.~4.2]{BK1}, the module $T_{K,d}$ is a projective generator in this case. 
Therefore, the functor $\Phi_{K,d}^s$ is an equivalence of $K$-categories. It maps $\Delta(\lambda)_{K,\tau}$ to $S(\lambda)^s_K$ by \cite[thm.~6.12]{BK1}.
\end{proof}

\vspace{2mm}

\begin{rk}
Assume that $\nu_p\geqslant d$ and $\tau_{K,u}-\tau_{K,v}\notin\bbZ^\times$ for each $p,u,v$.
Then, the tuple $(E,F,X,T)$ 
define a pre-categorical action on $A^\nu_{K,\tau}$.
\end{rk}

\vspace{3mm}

\subsection{The category $A$ with two blocks}
\label{sec:2blocs}
If $\tau_{K,u}-\tau_{K,v}\in\bbZ_{<0}$ for some $u<v$, then
the category $A^\nu_{K,\tau}$ is well defined but it may not be a sum of blocks of  $\scrO^\nu_{K,\tau}$. 
In this section we generalize Proposition \ref{prop:isomBK} in order to allow 
\emph{integral} deformation parameters.
To simplify, we'll assume that $\ell=2$.
This is enough for our purpose. 
Similar results can be obtain for arbitrary $\ell$.
Note that, for $\ell=2,$ the composition $\nu$ is always either dominant or anti-dominant.

The aim of this section is to prove the following.

\vspace{2mm}

\begin{prop}\label{prop:4.25}
Assume that $\ell=2$, $\nu_1,\nu_2\geqslant d$ and $\tau_{K,1}-\tau_{K,2}\notin\bbN^\times$.
Put $s=\nu+\tau$.
Then, the following hold

(a) $\varphi^s_{K,d}$ is an isomorphism $H^s_{K,d}\to\End_{A^\nu_{K,\tau}}(T_{K,d})^\op$,

(b)  $T_{K,d}$ is projective in $A^\nu_{K,\tau}$,

(c) a simple module of $A^\nu_{K,\tau}$ is a submodule
of a parabolic Verma module if and only if it lies in the top of $T_{K,d}$.
\end{prop}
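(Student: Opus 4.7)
The plan is to adapt Brundan--Kleshchev's strategy \cite{BK1} to our setting, where $A^\nu_{K,\tau}$ may fail to be a sum of blocks of $\scrO^\nu_{K,\tau}$ (this happens precisely when $\tau_{K,1}-\tau_{K,2}\in\bbZ_{\leqs 0}$). The structural replacement for ``sum of blocks'' is the observation that $f$ and $e$ from \S\ref{sec:4.5} still preserve the Serre subcategory $A^\nu_{R,\tau}\subset\scrO^\nu_{R,\tau}$. Indeed, by the Verma filtrations in Proposition \ref{prop:actiontf}, the $\Delta$-sections of $fM(\varpi(\lambda))$ are the $M(\mu)$ with $\mu=\varpi(\lambda)+\epsilon_k$ for some $k$ such that $\mu$ is $\nu$-dominant. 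A direct combinatorial check, using that $\rho_\nu+\tau-\rho$ is constant on each $\nu$-block, shows this $\nu$-dominance holds iff adding a box at position $k$ turns $\lambda$ into a valid element of $\scrP^\nu$. Hence all sections lie in $\varpi(\scrP^\nu)$, and similarly for $e$. Being Serre, $A^\nu_{R,\tau}$ is then stable under $f$ and $e$.

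Setting $F:=f|_{A^\nu}$ and $E:=e|_{A^\nu}$ gives an exact biadjoint pair on $A^\nu_{R,\tau}$, so $F$ preserves both projectives and injectives. Moreover $\emptyset$ is isolated in $(\scrP^\nu,\leqs)$: if $\lambda'\in\scrP^\nu$ with $\varpi(\lambda')-\varpi(\emptyset)=\lambda'\in\bbN\Pi^+$, then $\lambda'$ is traceless, so $|\lambda'|=0$ and $\lambda'=\emptyset$. Thus $\Delta(\emptyset)_{R,\tau}=L(\emptyset)=P(\emptyset)=I(\emptyset)$ is simple, projective and injective, and consequently $T_{R,d}=F^d\Delta(\emptyset)$ is projective-injective in $A^\nu_{R,\tau}\{d\}$. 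Specialising to $K$ gives $(b)$. For $(c)$, projectivity makes $L(\lambda)\in\operatorname{top}(T_{K,d})$ equivalent to $\Hom_{A^\nu}(T_{K,d},L(\lambda))\neq 0$; by biadjunction this equals $\Hom(\Delta(\emptyset),E^dL(\lambda))$, which by the argument of \cite[thm.~4.8]{BK1} characterises the $L(\lambda)$ embedding in a parabolic Verma module.

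For part $(a)$ the strategy is a deformation to generic parameters. Let $R=K[t]_{(t)}$ with $\tau_{R,1}=\tau_{K,1}$ and $\tau_{R,2}=\tau_{K,2}-t$. The residue field recovers the original parameters (which satisfy the hypothesis), and the fraction field $KR=K(t)$ satisfies $\tau_{KR,1}-\tau_{KR,2}=\tau_{K,1}-\tau_{K,2}+t\notin\bbZ$. Proposition \ref{prop:isomBK}$(a)$ therefore applies over $KR$, giving that $\varphi^s_{KR,d}$ is an isomorphism. The projectivity of $T_{R,d}$ established above, combined with Proposition \ref{prop:introhw}$(a),(b)$, implies $\End_{A^\nu_{R,\tau_R}}(T_{R,d})$ is $R$-free of rank $\ell^d d!$, matching the rank of $H^s_{R,d}$. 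As an $R$-linear map between $R$-free modules of equal finite rank, an isomorphism after inverting $t$, the map $\varphi^s_{R,d}$ is injective with $R$-torsion cokernel.

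The main obstacle is to show this cokernel vanishes, equivalently to show $\varphi^s_{K,d}$ is surjective. The plan is to exploit that $T_{R,d}$ is projective-injective, which via Nakayama duality forces $\End_{A^\nu_{R,\tau_R}}(T_{R,d})$ to be a symmetric $R$-algebra; combined with the symmetry of $H^s_{R,d}$ (\S\ref{sec:3.4}), this brings Proposition \ref{prop:inj} into play. Concretely, I would pull back the symmetrising form on $\End(T_{R,d})$ along $\varphi^s_{R,d}$ to obtain an $R$-bilinear form on $H^s_{R,d}$ which over $KR$ differs from the standard symmetrising form by an invertible scalar, since $\varphi^s_{KR,d}$ is already an isomorphism of semisimple algebras inducing the identity on $K_0$. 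Proposition \ref{prop:inj} then forces the composition with the standard form to be an automorphism, hence $\varphi^s_{R,d}$ itself is an isomorphism; specialising to the residue field completes $(a)$.
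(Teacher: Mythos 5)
Your argument for parts (b) and (c) hinges on the assertion that \emph{both} $f$ and $e$ preserve the Serre subcategory $A^\nu_{R,\tau}$, so that $(E,F)=(e|_{A^\nu},f|_{A^\nu})$ is a biadjoint exact pair and $F$ preserves projective-injectives. This is false for $e$. Applying $e$ to $\Delta(\lambda)_{R,\tau}=M(\varpi(\lambda))_{R,\nu}$ produces a $\Delta$-filtration whose sections are the $M(\varpi(\lambda)-\epsilon_k)_{R,\nu}$ with $\varpi(\lambda)-\epsilon_k$ $\nu$-dominant. Since $\rho_\nu+\tau-\rho$ is constant on each block $J^\nu_p$, the $\nu$-dominance condition at an interior position $k<j_p$ is $\lambda_k>\lambda_{k+1}$ (which also guarantees $\lambda_k\geq 1$), but at the \emph{last} position $k=j_p$ of a block there is no constraint at all. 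Hence when $\lambda_{j_p}=0$ the weight $\varpi(\lambda)-\epsilon_{j_p}$ is still $\nu$-dominant and lies in $P^\nu+\tau$, but $\lambda-\epsilon_{j_p}$ has a negative entry and is not in $\scrP^\nu$. So $e(\Delta(\lambda)_{R,\tau})$ has $\Delta$-sections outside $A^\nu_{R,\tau}$, and $e$ does \emph{not} restrict. The case of $f$ is genuinely asymmetric: adding a box only introduces the automatically satisfied constraint $\lambda_k+1\geq 0$. With this gap, $E=h^*eh$ is merely a left adjoint of $F$, and only right exact; you lose biadjointness, the exactness of the right adjoint $h^!eh$, and with them the claim that $F$ preserves projectives and injectives. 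The projective-injectivity of $T_{R,d}$, hence (b), (c), and (via the symmetry argument) also (a), all collapse.

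The reason the paper explicitly separates this proposition from Proposition \ref{prop:isomBK} is precisely that in the regime $\tau_{K,1}-\tau_{K,2}\in\bbZ_{<0}$ the category $A^\nu_{K,\tau}$ need not be a sum of blocks of $\scrO^\nu_{K,\tau}$ — a fact the paper flags just before the statement. Biadjointness of $(E,F)$ in \cite{BK1} uses the block decomposition, which makes $h^*=h^!$ exact; that mechanism is unavailable here. The paper's actual route is Proposition \ref{prop:shift}: one constructs an explicit equivalence of highest weight categories $A^\nu_{K,\tau}\{d\}\simeq A^{\nu'}_{K,\tau'}\{d\}$ with $\nu'=\nu+(0,1)$, $\tau'=\tau-(0,1)$, compatible with $T_{K,d}$, $\varphi^s_{K,d}$ and $\Phi^s_{K,d}$, built from parabolic induction and restriction between $\frakgl_N$ and $\frakgl_{N+1}$ and the adjoint functors $\calR$, $\calQ'\calI$, $\calS\calI$. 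Iterating transports the problem to $\tau'_{K,1}=\tau'_{K,2}$, where Proposition \ref{prop:isomBK} applies directly and yields all three statements simultaneously. If you want to avoid the shift machinery, a genuine substitute for biadjointness must be found before the projective-injective argument or the symmetric-form argument can get off the ground; as written, neither works.
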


\vspace{2mm}

In order to prove this, we first prove the following.

\vspace{2mm}

\begin{prop}\label{prop:shift}
Assume that $\ell=2$, $\nu_1,\nu_2\geqslant d$ and $\tau_{K,1}-\tau_{K,2}\in\bbZ_{<0}$.
Set $\nu'=(\nu'_1,\nu'_2)$ and $\tau'_K=(\tau'_{K,1},\tau'_{K,2})$ with $\nu'=\nu+(0,1)$,
$\tau'_{K}=\tau_{K}-(0,1)$. Put $s=\nu+\tau$ and $s'=\nu'+\tau'$.
Then, we have $s=s'$ and there is an equivalence of highest weight categories
$A^\nu_{K,\tau}\{d\}\simeq A^{\nu'}_{K,\tau'}\{d\}$ which intertwines the morphisms
$\varphi^s_{K,d},\varphi^{s'}_{K,d}$ and the functors $\Phi_{K,d}^s,\Phi_{K,d}^{s'}$.
\end{prop}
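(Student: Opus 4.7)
The plan. First, the equality $s = \nu + \tau = \nu' + \tau' = s'$ is immediate from $\nu' + \tau' = (\nu + (0,1)) + (\tau - (0,1)) = \nu + \tau$, so $H^s_{K,d}$ and $H^{s'}_{K,d}$ are literally the same algebra. Under the hypotheses $\nu_1, \nu_2 \geq d$ (hence $\nu'_1, \nu'_2 \geq d$), every bipartition of $d$ satisfies $l(\lambda^p) \leq d \leq \nu_p, \nu'_p$ automatically, so $\scrP^\nu_d = \scrP^{\nu'}_d$ as posets (both equal $\scrP^\ell_d$ with $\ell=2$, ordered by dominance). The labelling sets of standard modules on the two sides therefore agree.

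The core step is to construct an equivalence of highest weight categories $\Psi : A^\nu_{K,\tau}\{d\} \iso A^{\nu'}_{K,\tau'}\{d\}$ sending $\Delta(\lambda)_{K,\tau}$ to $\Delta(\lambda)_{K,\tau'}$ for each $\lambda$. A direct computation of $\varpi(\lambda)$ in both settings shows that the $\nu'$-weight is obtained from the $\nu$-weight by appending a single extra coordinate of fixed value $s_2+1$ that is independent of $\lambda$. I would realize $\Psi$ by identifying both categories with a common block inside their respective parabolic $\scrO$: since $s = s'$, the integral Weyl group data governing linkage coincide in the two settings, so a Fiebig-type theorem asserting that parabolic $\scrO$-blocks depend only on Coxeter combinatorics yields an equivalence matching standards with standards. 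An alternative, more hands-on construction uses an explicit translation or parabolic induction along the Levi inclusion $\frakgl_{\nu_2}\hookrightarrow\frakgl_{\nu_2+1}$ sitting inside $\frakgl_N\hookrightarrow\frakgl_{N+1}$.

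Once $\Psi$ is in hand, I verify compatibility with the pre-categorical structures $(E, F, X, T)$. By Proposition \ref{prop:actiontf}, the action of $f_i$ on a standard $\Delta(\lambda)$ is controlled by the residues in $\scrI$ of the entries of $\lambda + \rho$. The extra inactive coordinate in the $\nu'$-setting has value $s_2+1$ independent of $\lambda$ and so contributes only a constant residue, not interacting with the $\fraksl_I$-action on the relevant block. Hence $\Psi$ intertwines $f$ and $e$, and since $X,T$ are defined by Casimir actions transferring in the same way, the entire pre-categorical datum is intertwined. In particular $\Psi(T_{K,d}) = \Psi(f^d\Delta(\emptyset)_{K,\tau}) \simeq f^d\Delta(\emptyset)_{K,\tau'}$, and the morphisms $\varphi^s_{K,d}, \varphi^{s'}_{K,d}$ (both landing in the common $H^s_{K,d}$) agree via $\Psi$, yielding the intertwining of $\Phi^s_{K,d}$ and $\Phi^{s'}_{K,d}$.

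The main obstacle is the construction of $\Psi$ itself: while the match on standard modules is combinatorially transparent, upgrading it to an equivalence of highest weight categories requires a nontrivial input, either Fiebig's block theorem or an explicit translation functor. One must also verify that the hypothesis $\tau_{K,1}-\tau_{K,2} \in \bbZ_{<0}$ indeed prevents the extra coordinate of value $s_2+1$ from creating unwanted linkage with other standards in $A^{\nu'}_{K,\tau'}\{d\}$, so that the block identified on the $\nu$-side truly matches the one on the $\nu'$-side without spurious extra modules.
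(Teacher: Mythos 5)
The overall direction you sketch is the right one, but there is a genuine gap: you do not actually construct the equivalence $\Psi$, and the two routes you suggest are not equally viable.

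The first route, applying a Fiebig-type block comparison, cannot work here as stated. When $\tau_{K,1}-\tau_{K,2}\in\bbZ_{<0}$, the category $A^\nu_{K,\tau}\{d\}$ is \emph{not} a sum of blocks of $\scrO^\nu_{K,\tau}$ (that is exactly the hypothesis excluded by Proposition \ref{prop:isomBK}(b), and the whole point of \S\ref{sec:2blocs} is to handle this case). It is a Serre subcategory cut out by the ideal $\varpi(\scrP^\nu)$, which does not correspond to a union of linkage classes, so there is no block on either side to which Coxeter-data comparison applies. Moreover the two ambient parabolic categories $\scrO$ live over different Lie algebras, $\frakgl_N$ and $\frakgl_{N+1}$, with different parabolics, so one would also need an identification of moment graphs across different groups. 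Also, your claim that both posets ``equal $\scrP^\ell_d$ ordered by dominance'' is not the highest weight order in play; the order on $A^\nu$ is the BGG/linkage order, and verifying that it matches is part of what the equivalence must establish, not a free input.

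The second route, parabolic induction along $\frakgl_N\hookrightarrow\frakgl_{N+1}$, is indeed what the paper does, and this is where the real work lies. The paper builds a pair $(\calI,\calR)$ with $\calR(M)=\Ker_M(e_{N+1}-\varkappa)$ and $\calI(M)=U(\frakg')\otimes_{U(\frakp)}(M\otimes K_\varkappa)$, with $\varkappa=\tau_{K,2}+N$ chosen so the extra coordinate sits at the bottom of the second block. It then shows: (i) $\calI$ is fully faithful left adjoint to $\calR$ on $\calC_{\geqslant\varkappa}$; (ii) $\calR$ takes parabolic Vermas of $\scrO^{\nu'}$ to parabolic Vermas of $\scrO^\nu$ and hence restricts exactly to $A'\{d\}\to A\{d\}$; (iii) the induced map on Grothendieck groups is an isomorphism, so by a general lemma (\ref{le:extclosed}) $\calQ'\calI$ and $\calR$ are inverse equivalences; and (iv) $\calR$ commutes with $f$ via the decomposition $V_{N+1}=V_N\oplus Ke_{N+1}$ and the vanishing $\Ker_M(e_{N+1}-\varkappa+1)=0$, from which compatibility with $X$, $T$, and $\varphi^s$ follows. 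None of this is automatic from the combinatorial match you observe, and you explicitly flag this step as ``the main obstacle'' — so the proof is incomplete precisely at its heart. The hypothesis $\tau_{K,1}-\tau_{K,2}\in\bbZ_{<0}$ also enters concretely in step (ii): it ensures that $\calE(d)\to\calE'(d)$, $\lambda\mapsto\lambda+\varkappa\epsilon_{N+1}$, lands inside $\scrP^{\nu'}$, and that the functor $\calR$ kills the extra Verma $M(s_N\bullet\lambda')$ because it lies in $\calC_{>\varkappa}$; a vague appeal to ``constant residue'' does not establish this.
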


\vspace{.5mm}

\begin{proof} The proof is rather long and consists of several steps.

Write $\frakg=\frakgl_{K,N}$, $\frakg'=\frakgl_{K,N+1}$ and
$e_{N+1}=\diag(0,\ldots,0,1)$.
Set also $\frakn=\bigoplus_{i=1}^N K e_{N+1,i}$ and $\fraku=\bigoplus_{i=1}^{N+1}K e_{i,N+1}$.

Fix $\varkappa\in K$.
Let $\frakg\MMod$ be the category of all $\frakg$-modules.
We define the functors
$$\calR:\frakg'\MMod\to \frakg\MMod,\quad M\mapsto \Ker_M(e_{N+1}-\varkappa)$$
$$\calI:\frakg\MMod\to\frakg'\MMod,\quad M\mapsto U(\frakg')\otimes_{U(\frakp)}(M
\otimes_K K_\varkappa)$$
where $\frakp=\frakp_{K,N,1}$ is the standard parabolic of type $(N,1)$ and $K_\varkappa$ is the obvious $\frakgl_{K,1}$-module. 
Let $\frakm=\frakm_{K,N,1}$ be the Levi subalgebra of $\frakp$.

Let $\calC_{\geqslant\varkappa}\subset\frakg'\MMod$ be the full subcategory of modules for
which $e_{N+1}$ is semi-simple with weights in $\varkappa+\bbN$.
The functor $\calR$ restricts to an exact functor $\calC_{\geqslant\varkappa}\to\frakg\MMod$, and
since $U(\frakg')=K[\frakn]\otimes_K U(\frakp)$, 
the functor $\calI$ takes values in $\calC_{\geqslant\varkappa}$.

\vspace{2mm}

\begin{lemma}
The functor $\calI:\frakg\MMod\to\calC_{\geqslant\varkappa}$ is exact, fully faithful, and is left adjoint to
$\calR:\calC_{\geqslant\varkappa}\to\frakg\MMod$.
\end{lemma}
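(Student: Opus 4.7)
The proof is a standard exercise in PBW combined with Frobenius reciprocity, with one small observation that makes the adjunction work. Let me set $\frakn^+=\bigoplus_{i=1}^N K e_{i,N+1}$, the nilpotent radical of $\frakp$, so that $\frakp=\frakg\oplus Ke_{N+1,N+1}\oplus\frakn^+$ acts on $M\otimes K_\varkappa$ with $\frakg$ acting on $M$, $e_{N+1,N+1}$ by the scalar $\varkappa$, and $\frakn^+$ by zero. By PBW, $U(\frakg')$ is free as a right $U(\frakp)$-module with basis $U(\frakn)$, so as $K$-vector spaces
\begin{equation*}
\calI(M)\simeq U(\frakn)\otimes_K(M\otimes_K K_\varkappa).
\end{equation*}
From this, exactness of $\calI$ is immediate: the twist $M\mapsto M\otimes K_\varkappa$ is exact, and induction from $U(\frakp)$ to $U(\frakg')$ is exact by freeness.

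To see $\calI(M)\in\calC_{\geqslant\varkappa}$, I would use the relation $[e_{N+1,N+1},e_{N+1,i}]=e_{N+1,i}$ for $i\leqslant N$: it shows that the PBW grading on $U(\frakn)$ is exactly the weight grading for $e_{N+1}$, shifted by $\varkappa$. So $e_{N+1}$ acts semisimply with weights in $\varkappa+\bbN$, and moreover its $\varkappa$-weight space is precisely $M\otimes K_\varkappa$. This identifies $\calR\calI(M)\simeq M$ canonically as a $\frakg$-module.

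For the adjunction, Frobenius reciprocity gives a natural isomorphism
\begin{equation*}
\Hom_{\frakg'}(\calI(M),N)\simeq\Hom_{\frakp}(M\otimes K_\varkappa,N).
\end{equation*}
Any map $f$ on the right is $\frakg$-linear, takes values in $\calR(N)$ (because $e_{N+1,N+1}$ acts by $\varkappa$ on the source), and is trivially $\frakn^+$-linear since $\frakn^+$ annihilates the source. Conversely, given a $\frakg$-linear $g:M\to\calR(N)$, the prescription $m\otimes 1\mapsto g(m)$ extends to a $\frakp$-linear map as soon as $\frakn^+$ acts by zero on $g(M)\subset\calR(N)$. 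Here is the one key point: for $v\in N$ with $e_{N+1,N+1}v=\varkappa v$ and $i\leqslant N$, the relation $[e_{N+1,N+1},e_{i,N+1}]=-e_{i,N+1}$ gives $e_{N+1,N+1}(e_{i,N+1}v)=(\varkappa-1)e_{i,N+1}v$, so $e_{i,N+1}v$ lies in the $(\varkappa-1)$-weight space of $e_{N+1}$, which vanishes in $\calC_{\geqslant\varkappa}$. Hence $\frakn^+$ acts by zero on $\calR(N)$, the extension exists, and the adjunction is established.

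Finally, full faithfulness of $\calI$ follows from the adjunction together with the fact that the unit $M\to\calR\calI(M)$ is an isomorphism, which is precisely what was computed in the weight-space analysis above. The only step that is not purely formal is the observation that $\calR(N)$ is automatically $\frakn^+$-stable (in fact $\frakn^+$-annihilated) for $N\in\calC_{\geqslant\varkappa}$; this is the ingredient that turns the usual Frobenius reciprocity for parabolic induction into an honest adjunction between $\calI$ and $\calR$ on the truncated category.
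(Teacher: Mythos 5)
Your proof is correct and follows essentially the same route as the paper's: PBW gives the identification $\calI(M)\simeq K[\frakn]\otimes_K(M\otimes K_\varkappa)$, whose $e_{N+1}$-grading is the PBW grading shifted by $\varkappa$; the adjunction is Frobenius reciprocity together with the observation that $\calR(N)$ is automatically annihilated by the nilradical of $\frakp$ when $N\in\calC_{\geqslant\varkappa}$; and full faithfulness is then the invertibility of the unit. The only cosmetic difference is that you spell out the weight-space argument for the unit more explicitly than the paper, which packages the same information as an $\frakm$-bimodule identification.
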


\vspace{.5mm}

\begin{proof}
Let us first prove the adjointness. Given $M\in\frakg\MMod$, $L\in\frakg'\MMod$, we have 
$\Hom_{\frakg'}(\calI(M),L)\simeq
\Hom_{\frakg}(M,\Hom_{\bar\frakn}(K_\varkappa,L)).$
If $L\in \calC_{\geqslant\varkappa}$, then we have 
$\Hom_{\bar\frakn}(K_\varkappa,L)=
\Hom_{K e_{N+1}}(K_\varkappa,L)$.
We deduce that there is an isomorphism
$\Hom_{\frakg'}(\calI(M),L)\simeq
\Hom_{\frakg}(M,\calR(L)).$
So $\calI$ is left adjoint to $\calR$.

Now, let us prove the fully faithfulness of $\calI$. We have
$U(\frakg')\otimes_{U(\fraku)}K=K[\frakn]\otimes_K U(\frakg)$
as $(\frakm,\frakg)$-bimodules. The left $\frakm$-action comes from the adjoint action of
$K e_{N+1}$ on $\frakn$ and the diagonal adjoint action of $\frakg$. The right $\frakg$-action
is the opposite of the adjoint action of $\frakg$ on itself.
We have $\calI(M)\simeq K[\frakn]\otimes_K (M\otimes_K K_\varkappa)$ as an $\frakm$-module.
We deduce that the unit $1\to\calR\calI$ is invertible.
\end{proof}

\vspace{2mm}

\begin{lemma}
\label{le:extclosed}
Let $\calA,\calA'$ be two abelian artinian categories, and 
$\calI:\calA\to\calA'$ a fully faithful functor
with an exact right adjoint $\calR$. Then, the following hold

(a) the full subcategory $\Im(\calI)$ of $\calA'$ is extension closed,

(b) if $\calR$ induces an isomorphism $[\calA]\to[\calA']$ then $\calR,$ $\calI$ are inverse
equivalences of categories.
\end{lemma}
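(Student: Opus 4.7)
The key input throughout is that when $\calI$ is fully faithful and left adjoint to $\calR$, the unit $\eta:1_\calA\to\calR\calI$ is an isomorphism; by the triangle identities this forces both $\epsilon_{\calI(M)}:\calI\calR\calI(M)\to\calI(M)$ (for every $M\in\calA$) and $\calR(\epsilon_Y):\calR\calI\calR(Y)\to\calR(Y)$ (for every $Y\in\calA'$) to be isomorphisms, where $\epsilon:\calI\calR\to 1_{\calA'}$ is the counit. My strategy is to upgrade these partial facts to $\epsilon_Y$ being an isomorphism in the two situations of the lemma.

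For part (a), given a short exact sequence $0\to\calI(X)\to Y\to\calI(Z)\to 0$, I first apply $\calR$: using that $\calR$ is exact and $\eta$ is iso, this produces an exact sequence $0\to X\to\calR(Y)\to Z\to 0$ in $\calA$. In the setting of the previous lemma the functor $\calI$ is moreover exact, since $U(\frakg')$ is free over $U(\frakp)$, so applying $\calI$ yields an exact sequence $0\to\calI(X)\to\calI\calR(Y)\to\calI(Z)\to 0$. Naturality of $\epsilon$ gives a commutative diagram comparing the two sequences in which the outer vertical maps are the iso counits $\epsilon_{\calI(X)}$ and $\epsilon_{\calI(Z)}$; the five lemma then forces $\epsilon_Y$ to be an isomorphism, and hence $Y\simeq\calI\calR(Y)\in\Im(\calI)$.

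For part (b), I show directly that $\epsilon_Y$ is an isomorphism for every $Y\in\calA'$. Let $K=\ker\epsilon_Y$ and $C=\coker\epsilon_Y$; applying the exact functor $\calR$ to the four-term exact sequence $0\to K\to\calI\calR(Y)\to Y\to C\to 0$ and using that $\calR(\epsilon_Y)$ is an isomorphism, one reads off $\calR(K)=\calR(C)=0$. Since the categories are abelian artinian and of finite length in the paper's applications, the Grothendieck group is free abelian on the classes of simple objects, so $[N]=0$ detects $N=0$; combined with the hypothesis that $[\calR]:[\calA']\to[\calA]$ is an isomorphism, this shows that $\calR$ is conservative on objects. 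Hence $K=C=0$, so $\epsilon_Y$ is an isomorphism, and together with $\eta$ iso this exhibits $\calI$ and $\calR$ as mutually inverse equivalences.

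The most delicate point is this last step: the conversion of the Grothendieck-group hypothesis into object-level conservativity of $\calR$ relies essentially on finite length of the categories, so that the Grothendieck group detects zero objects; this is automatic in the paper's applications but would fail for general artinian categories. For (a), the only mild subtlety is that a left adjoint is in general merely right exact, but in the concrete situation $\calI$ is exact, and this is what makes the five-lemma argument run.
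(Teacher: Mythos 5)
Your proof follows essentially the same route as the paper's, and both parts are correct in substance, so I will limit myself to one remark on part (a) where your write-up imports an unnecessary hypothesis.

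You apply $\calR$ to the short exact sequence, use that $\eta$ is an isomorphism, and then apply $\calI$, claiming that the five-lemma argument ``runs'' only because in the paper's concrete application $\calI$ happens to be exact. This is not needed, and the paper's proof deliberately avoids it. When one applies $\calI\calR$ directly to $0\to\calI(M)\to L\to\calI(M')\to 0$ (or equivalently applies $\calR$ and then $\calI$, since these compose), the resulting sequence $\calI\calR\calI(M)\to\calI\calR(L)\to\calI\calR\calI(M')\to 0$ is a priori only right exact because $\calI$ is merely a left adjoint. But look at the naturality square for $\epsilon$ over the injection $\calI(M)\hookrightarrow L$: the left vertical $\epsilon_{\calI(M)}:\calI\calR\calI(M)\to\calI(M)$ is an isomorphism, and the composite $\calI\calR\calI(M)\to\calI\calR(L)\xrightarrow{\epsilon_L}L$ factors as this isomorphism followed by the monomorphism $\calI(M)\hookrightarrow L$. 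Hence $\calI\calR\calI(M)\to\calI\calR(L)$ is automatically injective, so the bottom row is in fact short exact without any exactness assumption on $\calI$. The five lemma then applies exactly as you wish, and the lemma holds in the generality in which it is stated. Your part (b) matches the paper's argument: apply $\calR$ to the four-term sequence for $\ker\epsilon_Y$ and $\coker\epsilon_Y$, deduce from $\calR(\epsilon_Y)$ being an isomorphism that these have vanishing image under $\calR$, hence vanishing class in $[\calA]$, hence vanishing class in $[\calA']$ by the hypothesis, hence are zero by finite length. Your caveat that the last step uses finite length rather than mere artinian-ness is a fair point and applies to the paper's own wording as well; in the paper's use-cases the categories are module categories over finite-dimensional algebras, so finite length is automatic.
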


\vspace{.5mm}

\begin{proof}
The functor $\calI$ is a right exact, hence $\calI\calR$ is also right exact.
Given an exact sequence $0\to \calI(M)\to L\to \calI(M')\to 0$ in $\calA'$ with
$M,M'\in\calA$, we obtain a commutative diagram whose rows are exact sequences
$$\xymatrix{
0\ar[r] & \calI(M)\ar[r] & L\ar[r] & \calI(M')\ar[r] & 0 \\
 & \calI\calR\calI(M)\ar[r]\ar[u] & \calI\calR(L)\ar[r]\ar[u] & \calI\calR\calI(M')\ar[r]\ar[u]& 0.
}$$
The vertical maps are given by the counit $\calI\calR\to 1$.
Since $\calI$ is fully faithful, the unit $1\to \calR\calI$ is an isomorphism.
Thus, the left and right vertical maps are invertible.
It follows that the two sequences are actually isomorphic,
hence $\Im(\calI)$ is extension-closed. This proves part $(a)$.

To prove $(b)$, since $1\simeq \calR\calI$, it is enough to check that the counit  is an isomorphism 
$\calI\calR\to 1$.
Since $\calR$ is exact and since $\calR\calI\calR\iso \calR$ by adjunction,
for each $M\in\calA$ the kernel and the cokernel of $\calI\calR(M)\iso M$ are killed by $\calR$. Hence their
classes in the Grothendieck groups are 0. Hence they are both 0.
\end{proof}

\vspace{2mm}

\begin{cor}
The full subcategory $\Im(\calI)$ of $\calC_{\geqslant\varkappa}$
is extension-closed and $\calI,$ $\calR$ induce inverse equivalences  $\frakg\MMod\simeq\Im(\calI)$.
\end{cor}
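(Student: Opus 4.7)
The plan is to derive this corollary as a direct consequence of Lemma \ref{le:extclosed} applied to $\calA = \frakg\MMod$ and $\calA' = \calC_{\geqslant\varkappa}$, together with the previous lemma, which shows that $\calI$ is fully faithful (so the unit $\eta : 1 \to \calR\calI$ is invertible) and that $\calR$ is exact. One technical point to keep in mind: the categories $\frakg\MMod$ and $\calC_{\geqslant\varkappa}$ are not artinian, so Lemma \ref{le:extclosed}(b) as stated does not apply on the nose. Fortunately, the corollary does not require it: the extension-closedness follows from the diagram-chase of Lemma \ref{le:extclosed}(a), which makes no use of artinianity, and the equivalence claim is formal from full faithfulness.

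First I would establish the extension-closedness of $\Im(\calI)$ in $\calC_{\geqslant\varkappa}$. Given a short exact sequence $0 \to \calI(M) \to L \to \calI(M') \to 0$ in $\calC_{\geqslant\varkappa}$, apply the exact functor $\calR$ to obtain an exact sequence $0 \to M \to \calR(L) \to M' \to 0$ in $\frakg\MMod$, then apply the right exact functor $\calI$ to get a commutative ladder with upper row $\calI\calR\calI(M) \to \calI\calR(L) \to \calI\calR\calI(M') \to 0$ and lower row the original short exact sequence, connected by the counit $\epsilon : \calI\calR \to 1$. The two outer vertical maps are isomorphisms: by the zig-zag identity $\epsilon_{\calI(M)} \circ \calI(\eta_M) = \id_{\calI(M)}$, and $\calI(\eta_M)$ is invertible since $\eta_M$ is. A short chase in this ladder (using exactness of the top row at its middle and right terms and of the bottom row everywhere) forces $\epsilon_L : \calI\calR(L) \to L$ to be an isomorphism as well, so $L \simeq \calI\calR(L) \in \Im(\calI)$.

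For the second assertion, that $\calI$ and $\calR$ induce inverse equivalences $\frakg\MMod \simeq \Im(\calI)$, the argument is purely formal. The preceding lemma shows that $\calI$ is fully faithful, so it induces an equivalence of $\frakg\MMod$ onto its essential image $\Im(\calI) \subset \calC_{\geqslant\varkappa}$. The quasi-inverse is provided by $\calR$ restricted to $\Im(\calI)$: the unit $\eta_M : M \to \calR\calI(M)$ is an isomorphism for every $M \in \frakg\MMod$, and the counit $\epsilon_{\calI(M)} : \calI\calR\calI(M) \to \calI(M)$ is an isomorphism by the zig-zag identity together with the invertibility of $\calI(\eta_M)$.

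The main (and only) obstacle is the cosmetic mismatch with Lemma \ref{le:extclosed}, whose hypothesis of artinianity fails for the categories at hand. This is why I would not invoke part (b) verbatim; instead, I would either restate its proof with the artinian hypothesis removed in the relevant step, or simply note that the corollary claims equivalence \emph{onto the image} rather than onto all of $\calC_{\geqslant\varkappa}$, which is automatic from full faithfulness.
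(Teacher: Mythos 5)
Your proof is correct and follows the route the paper implicitly intends (the corollary carries no written proof and is meant to follow from the two preceding lemmas). Your observation about the artinianity hypothesis is a legitimate refinement: the proof of Lemma \ref{le:extclosed}(a) is a diagram chase (essentially the snake lemma applied to the two isomorphic outer verticals given by the counit) that never uses artinianity, and the equivalence onto $\Im(\calI)$ is a strictly weaker claim than Lemma \ref{le:extclosed}(b) and so follows formally from the full faithfulness of $\calI$ together with the zig-zag identities, without invoking Grothendieck groups.
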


\vspace{.5mm}

Let $\frakt$, $\frakt'$ be the Cartan subalgebras of $\frakg$, $\frakg'$.
Set $P_K=\frakt^*$, $P'_K=(\frakt')^*$.
We abbreviate $\scrO=\scrO_{K}(N)$ and $\scrO'=\scrO_{K}(N+1)$.
Given $\lambda\in P_K$, let
$M(\lambda)=M(\lambda)_{K}$ be the corresponding Verma module in $\scrO$.
For $\lambda'\in P'_K$, we define
$M(\lambda')\in\scrO'$ similarly.

We have $\calI(M(\lambda))\simeq M(\lambda'),$
where $\lambda'=\lambda+\varkappa\epsilon_{N+1}$.
Thus, we have  $\calR M(\lambda')\simeq \calR\calI(M(\lambda))\simeq M(\lambda).$
We deduce that $\calI,$ $\calR$ are inverse equivalences between the
category of $\Delta$-filtered $\frakg$-modules in $\scrO$ and the category 
of $\frakg'$-modules which are extensions of objects
$M(\lambda')$ with $\lambda'\in P_K+\varkappa\epsilon_{N+1}$.


Now, fix $d,\nu,\nu',\tau_K,\tau'_K$ as in Proposition \ref{prop:shift}.
Put $\varkappa=\tau_{2,K}+N$.
We abbreviate $\scrO^\nu=\scrO^\nu_{K}(N)$ and $\scrO^{\nu'}=\scrO^{\nu'}_{K}(N+1)$.
Write also $A=A_{K,\tau}^\nu(N)$ and $A'=A^{\nu'}_{K,\tau'}(N+1)$.
Let $\varpi$, $\varpi'$ be the maps \eqref{varpi} associated with the parabolic categories
$\scrO^\nu$, $\scrO^{\nu'}$.
For $\lambda\in P^\nu_K$, $\lambda'\in P^{\nu'}_K$ let
$M(\lambda)_\nu$, $M(\lambda')_{\nu'}$ be the parabolic
Verma modules $M(\lambda)_{K,\nu}$, $M(\lambda')_{K,\nu'}$ in $\scrO^\nu$, $\scrO^{\nu'}$.

Consider the sets of weights
$\calE(d)=\{\varpi(\lambda)\,;\,\lambda\in\scrP^2_d\}$ and
$\calE'(d)=\{\varpi'(\lambda)\,;\,\lambda\in\scrP^2_d\}$ in
$P^\nu_K$, $P^{\nu'}_K$ respectively.
Since $\nu_1,\nu_2\geqslant d$, we have
an isomorphism of posets
$\calE(d)\to\calE'(d)$ such that $\lambda\mapsto \lambda'=\lambda+\varkappa\epsilon_{N+1}$.


Let $\calQ:\scrO\to\scrO^\nu$ be the functor sending a module to its largest quotient
in $\scrO^\nu$. This is the left adjoint to the inclusion functor $\scrO^\nu\to\scrO$.
We define $\calQ':\scrO'\to\scrO^{\nu'}$ in the same way.

\vspace{2mm}

\begin{lemma}
The functors $\calQ'\calI$, $\calR$ induce inverse equivalences of highest categories
$A\{d\}\simeq A'\{d\}$. 
\end{lemma}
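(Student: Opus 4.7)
The plan is to invoke Lemma~\ref{deltaequivalence}: since both $A\{d\}$ and $A'\{d\}$ are highest weight categories, it suffices to exhibit an exact functor that restricts to an equivalence of the subcategories of $\Delta$-filtered objects. The preceding paragraphs already supply inverse equivalences between the $\Delta$-filtered part of $\scrO$ and the extension-closed subcategory of $\scrO'$ consisting of iterated extensions of Verma modules $M(\mu')$ with $\mu'\in P_K+\varkappa\epsilon_{N+1}$, implemented by $\calI$ and $\calR$; the task is to descend this equivalence to the parabolic setting.

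The crucial step is to identify the two functors on standard modules. For $\lambda\in\calE(d)$ and $\lambda'=\lambda+\varkappa\epsilon_{N+1}\in\calE'(d)$, I would establish the isomorphisms $\calR(M(\lambda')_{\nu'})\simeq M(\lambda)_\nu$ and $\calQ'\calI(M(\lambda)_\nu)\simeq M(\lambda')_{\nu'}$. For the first, I would present $M(\lambda')_{\nu'}$ as the quotient of $M(\lambda')$ by the parabolic null relations associated with the simple reflections of $W_{\nu'}$ that are not in $W_\nu$, apply the exact functor $\calR$, and observe that $\calR$ kills every Verma summand $M(w\bullet\lambda')$ whose highest weight falls outside $P_K+\varkappa\epsilon_{N+1}$; the surviving relations match exactly those defining $M(\lambda)_\nu$ as a quotient of $M(\lambda)$. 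For the second, the natural surjection $\calI(M(\lambda)_\nu)\twoheadrightarrow M(\lambda')_{\nu'}$ factors through the reflection $\calQ'$ because $M(\lambda')_{\nu'}\in\scrO^{\nu'}$; conversely, $M(\lambda')_{\nu'}$ is the universal $\nu'$-parabolic quotient generated by a highest weight vector of weight $\lambda'$, which forces it to coincide with $\calQ'\calI(M(\lambda)_\nu)$.

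With these two computations in hand, $\calR$ is exact and sends $(A'\{d\})^\Delta$ into $(A\{d\})^\Delta$, while $\calQ'\calI$ restricted to $(A\{d\})^\Delta$ takes standards to standards. The unit $1\to\calR\calQ'\calI$ and the counit $\calQ'\calI\calR\to 1$ are isomorphisms on standard modules by the above, hence isomorphisms on every $\Delta$-filtered object by the five lemma together with induction on the length of the filtration. This produces an equivalence $(A\{d\})^\Delta\simeq(A'\{d\})^\Delta$, and Lemma~\ref{deltaequivalence} then upgrades it to an equivalence of highest weight categories $A\{d\}\simeq A'\{d\}$. An inspection of the construction shows that these equivalences are compatible with the morphisms $\varphi^s_{K,d},\varphi^{s'}_{K,d}$ and with $\Phi_{K,d}^s,\Phi_{K,d}^{s'}$ (as stated in the surrounding Proposition~\ref{prop:shift}), since $\calR$ intertwines the corresponding $\frakgl$-tensor product functors on the ambient categories.

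The main obstacle is verifying the exactness of $\calQ'\calI$ on $(A\{d\})^\Delta$, since $\calQ'$ is only right exact in general. I would handle this by transferring through the adjunction $\Hom_{\scrO^{\nu'}}(\calQ'L,M)\simeq\Hom_{\scrO'}(L,M)$ for $M\in\scrO^{\nu'}$, combined with the full faithfulness of $\calI$ and the exactness of $\calR$ from the ambient equivalence; this reduces exactness of $\calQ'\calI$ on the $\Delta$-filtered subcategory to exactness of $\calR$ on its image, which is already established. A secondary technical point is controlling the surviving $W_{\nu'}$-relations after applying $\calR$, and I would handle this by a case analysis on how each simple reflection of $W_{\nu'}$ either preserves or breaks the $e_{N+1}$-weight $\varkappa$ condition.
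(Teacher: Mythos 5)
Your identification of the two key isomorphisms $\calR(M(\lambda')_{\nu'})\simeq M(\lambda)_\nu$ and $\calQ'\calI(M(\lambda)_\nu)\simeq M(\lambda')_{\nu'}$ via the parabolic presentation of $M(\lambda')_{\nu'}$ is correct and matches the paper. The unit/counit being isomorphisms on standard objects also follows, since $\End(\Delta)=K$ and the unit is nonzero by the triangle identity. The gap is in the upgrade from standard objects to all of $(A\{d\})^\Delta$: the ``five lemma together with induction on the length of the filtration'' does not close. Given $0\to M_1\to M_2\to M_3\to 0$ in $(A\{d\})^\Delta$ with $\eta_{M_1},\eta_{M_3}$ isomorphisms, right exactness of $\calQ'\calI$ only gives $\calR\calQ'\calI M_1\to\calR\calQ'\calI M_2\to\calR\calQ'\calI M_3\to 0$; the diagram chase then shows $\eta_{M_2}$ is \emph{surjective}, but injectivity of $\eta_{M_2}$ reduces to injectivity of $\calR\calQ'\calI(M_1\hookrightarrow M_2)$, and by naturality that is equivalent to injectivity of $\eta_{M_2}|_{M_1}$ --- which is exactly what you are trying to prove. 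The argument is circular.

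Your proposed remedy (``transferring through the adjunction $\Hom_{\scrO^{\nu'}}(\calQ'L,M)\simeq\Hom_{\scrO'}(L,M)$\dots reduces exactness of $\calQ'\calI$ to exactness of $\calR$ on its image'') does not obviously break this circularity: applying the exact $\calR$ to the right-exact sequence for $\calQ'\calI$ still leaves you needing to know that the kernel of $\calQ'\calI M_1\to\calQ'\calI M_2$ is killed by $\calR$, and there is no a priori reason for that. What is actually needed is an independent proof that the unit $1\to\calR\calQ'\calI$ is an isomorphism on all of $A\{d\}$. The paper does this by a genuinely different maneuver: it introduces the auxiliary right-exact functor $\calS$ (quotient by the largest submodule on which $e_{N+1}$ misses the eigenvalue $\varkappa$), proves directly that $1\simeq\calR\calS\calI$ on $\scrO$ (because $\calS\calI$ factors through $\calC_{\geqslant\varkappa}$ and $\calR$ kills $\calC_{>\varkappa}$), establishes an epimorphism $\calS\calI\twoheadrightarrow\calQ'\calI$ on $A\{d\}$ by a weight argument, and concludes that the unit $1\to\calR\calQ'\calI$ is surjective; it is then split injective on $\Im(\calR)$ by the adjunction triangle identity, and $\calR$ is essentially surjective by $1\simeq\calR\calS\calI$, which gives the isomorphism everywhere. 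Lemma~\ref{le:extclosed}(b) (not Lemma~\ref{deltaequivalence}) then finishes the proof. This $\calS$-trick is an irreducible extra idea; without it, or some equivalent, your induction cannot get started.

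A minor secondary point: the compatibility with $\varphi^s_{K,d}$, $\Phi^s_{K,d}$ and the $\frakgl$-tensor functors belongs to the \emph{following} lemma in the paper; the lemma you are proving asserts only the equivalence of highest weight categories, so that part of your argument is out of scope here, though not wrong.
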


\vspace{.5mm}

\begin{proof}
Let $\lambda\in P^\nu_K$ and $\lambda'=\lambda+\varkappa\epsilon_{N+1}$.
Assume $\lambda'\in P^{\nu'}_K$. Let $\{\alpha_i\,;\,i\in I_\nu\}$ be the set of simple roots in $\Pi^+_\nu$.
There is an exact sequence
$$\bigoplus_{i\in I_{\nu'}}M(s_i\bullet\lambda')\to M(\lambda')\to
M(\lambda')_{\nu'}\to 0.$$
We have $s_i\bullet\lambda{\not\in} P^\nu_K$ for $i\in I_\nu$, hence
$\calQ M(s_i\bullet\lambda)=0$. 
So, for $i\neq n$ we have
$$\calQ\calR M(s_i\bullet\lambda')\simeq \calQ\calR M(s_i\bullet\lambda+\varkappa\epsilon_{N+1})\simeq
\calQ\calR\calI M(s_i\bullet\lambda)\simeq
\calQ M(s_i\bullet\lambda)=0.$$
On the other
hand, we have $\calR M(s_N\bullet\lambda')=0$ because
$M(s_N\bullet\lambda')\in\calC_{>\varkappa}$. 
Since $\calQ\calR$ is right exact, this yields an isomorphism
$\calQ\calR M(\lambda')_{\nu'}\simeq
\calQ\calR M(\lambda').$
Note that $\calR$ restricts to a functor $\scrO^{\nu'}\cap\calC_{\geqslant\varkappa}\to\scrO^\nu$.
We deduce that
$$\calR M(\lambda')_{\nu'}\simeq \calQ\calR M(\lambda')_{\nu'}\simeq 
\calQ\calR M(\lambda')\simeq \calQ M(\lambda)\simeq
M(\lambda)_\nu.$$
Thus, $\calR$ restricts to an exact functor $A'\{d\}^\Delta\to A\{d\}^\Delta$.
Since $A'\{d\}^\Delta$ contains a progenerator for $A'\{d\}$,
$\calR$ is right exact and $A\{d\}$ is preserved under taking quotients, 
we deduce that $\calR$ restricts to an exact functor $A'\{d\}\to A\{d\}.$ 
For a future use,  note also that $\calR$ yields an isomorphism
$[A'\{d\}]\to[A\{d\}]$.

Let $\calS$ be the endofunctor of $\scrO'$ sending a module to the quotient
by its largest submodule on which $e_{N+1}$ doesn't have the eigenvalue $\varkappa$.
Let us consider the functor $\calS\calI$ on $\scrO$.
It is right exact and takes values in $\calC_{\geqslant\varkappa}$.
For $N\in\scrO$, the module $\calS\calI(N)$ is the quotient of $\calI(N)$
by its largest submodule contained in $\calC_{>\varkappa}$.
Since $\calR$ is exact and vanishes on $\calC_{>\varkappa}$, we deduce that
$1\simeq \calR\calI\simeq \calR\calS\calI$ on $\scrO$.

Next, for $\lambda\in\calE(d)$
the counit $\calI\calR\to 1$ yields a map 
$\calI M(\lambda)_\nu\to M(\lambda')_{\nu'}$
which is obviously surjective. Let $M$ be its kernel.
Applying the exact functor $\calR$ to the exact sequence
$0\to M\to \calI M(\lambda)_\nu\to M(\lambda')_{\nu'}\to 0$ yields the exact sequence
$0\to \calR(M)\to M(\lambda)_\nu\to M(\lambda)_\nu\to 0.$
We deduce that $\calR(M)=0$. 
Since $M\in\calC_{\geqslant\varkappa}$, this implies that $M\in\calC_{>\varkappa}$. 
Thus, applying the right exact functor $\calS$ to the exact sequence above
yields the isomorphism
$\calS\calI M(\lambda)_\nu\simeq \calS M(\lambda')_{\nu'}.$
Now, the constituents of $M(\lambda')_{\nu'}$ have a highest weight of the
form $\mu'$ for some $\mu\in\frakt^*$, because $M(\lambda')_{\nu'}\in A'\{d\}$.
Hence, the only submodule of $M(\lambda')_{\nu'}$ contained in $\calC_{>\varkappa}$ is 0.
So $\calS M(\lambda')_{\nu'}\simeq 
M(\lambda')_{\nu'}$, hence
$\calS\calI M(\lambda)_\nu\simeq M(\lambda')_{\nu'}$.

Now, consider an exact sequence $0\to M_1\to M\to M_2\to 0$ in $A\{d\}^\Delta$.
Since $\calS\calI$ is right exact, we have an exact sequence
$\calS\calI(M_1)\to \calS\calI(M)\to \calS\calI(M_2)\to 0$. 
By induction on the length of a $\Delta$-filtration, we have $\calS\calI(M_1),\calS\calI(M_2)\in A'\{d\}$.
Thus, the image of the map $\calS\calI(M_1)\to \calS\calI(M)$ lies in $A'\{d\}$, hence
$\calS\calI(M)\in A'\{d\}$. We deduce that
$\calS\calI(A\{d\}^\Delta)\subset A'\{d\}$.
Since $A\{d\}^\Delta$ contains a progenerator for $A\{d\}$,
$\calS\calI$ is right exact and $A'\{d\}$ is preserved under taking quotients, 
we deduce that $\calS\calI$ restricts to a functor $A\{d\}\to A'\{d\}$. 

Finally, let us consider the functor $\calQ'\calI$.
Since $\calR$ takes $\scrO^{\nu'}\cap\calC_{\geqslant\varkappa}$ to $\scrO^\nu$, the functor
$\calQ'\calI:\scrO^{\nu}\to\scrO^{\nu'}\cap\calC_{\geqslant\varkappa}$ is left adjoint to $\calR$.
So $\calQ'\calI$ is right exact and we have an exact sequence
$$\bigoplus_{i\in I_\nu}M(s_i\bullet\lambda)\to M(\lambda)\to
M(\lambda)_\nu\to 0.$$
Since $s_i\bullet\lambda'{\not\in} P^{\nu'}_K$ for $i\in I_\nu$, we have
$\calQ'M(s_i\bullet\lambda')=0$, hence
$\calQ'\calI M(s_i\bullet\lambda)\simeq \calQ' M(s_i\bullet\lambda')=0.$
We deduce
that 
$$
\calQ'\calI M(\lambda)_\nu\simeq \calQ'\calI M(\lambda)\simeq \calQ'M(\lambda')\simeq
M(\lambda')_{\nu'}.$$
Therefore, since $\calQ'\calI$ is right exact and $\calQ'\calI M(\lambda)_\nu\simeq M(\lambda')_{\nu'}$, 
the same argument  as for $\calS\calI $, see above,
implies that $\calQ'\calI$ restricts to a functor $A\{d\}\to A'\{d\}$ which is left adjoint to $\calR$.

Next, we compare the functors $\calQ'\calI,$ $\calS\calI$ on $A\{d\}.$
For each $N\in A\{d\}$ we write $\calS\calI(N)=\calI(N)/L$ and $\calQ'\calI(N)=\calI(N)/M$. 
Since $d<\nu'_2=\nu_2+1$ and $\calQ'\calI(N)\in A'\{d\}$, the constituents of
$\calQ'\calI (N)$  are in $\calC_{\geqslant\varkappa}\setminus\calC_{>\varkappa}.$
Hence, the constituents of $I(N)$ which are in $\calC_{>\varkappa}$ are contained in $M$.
Since $L\in\calC_{>\varkappa}$, we deduce that $L\subset M$.
Thus we have an epimorphism $\calS\calI \to \calQ'\calI$ on $A\{d\}$.
Hence, since $\calR$ is exact, the isomorphism $1\to \calR\calS\calI$ and the unit $1\to \calR\calQ'\calI$ 
yield a commutative triangle
$$\xymatrix{1\ar[r]^-\sim\ar[dr]&\calR\calS\calI\ar@{>>}[d]\\&\calR\calQ'\calI,}$$
from which we deduce that the unit is surjective.
Now, by adjunction, composing the unit and counit gives the identity
$\calR\to \calR\calQ'\calI\calR\to \calR.$ Hence the unit is injective, hence is an isomorphism, on $\Im(\calR)$.
But, since $1\simeq \calR\calS\calI$, the functor $\calR:A'\{d\}\to A\{d\}$ is essentially surjective.
We deduce that $1\simeq \calR\calQ'\calI$ on $A\{d\}$.

Therefore, the functor $\calR:A'\{d\}\to A\{d\}$ is exact and
yields an isomorphism
$[A'\{d\}]\to[A\{d\}]$, while $\calQ'\calI :A\{d\}\to A'\{d\}$ is a fully
faithful left adjoint. Hence, Lemma \ref{le:extclosed} shows that 
$\calQ'\calI ,$ $\calR$ are inverse equivalences of categories.
\end{proof}

Recall the set $I=\{\tau_{K,1},\dots,\tau_{K,\ell}\}+\bbZ$.

\vspace{2mm}

\begin{lemma}
The functors $\calQ'\calI$, $\calR$ between
$A\{d\}$, $A'\{d\}$ commute with
$E_i, F_i,X,T$ (whenever $E_i$, $F_i,$ $i\in I$, make sense).
\end{lemma}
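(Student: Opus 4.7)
The plan is to establish natural isomorphisms $\calR F'\simeq F\calR$ and $\calQ'\calI F\simeq F'\calQ'\calI$ (and similarly for $E$, $X$, $T$), from which commutation with $E_i$ and $F_i$ will follow since these are the generalized $i$-eigenspaces of $X$. Since $\calR$ and $\calQ'\calI$ are inverse equivalences by the previous lemma, it is enough to construct one direction: once $\calR f'\simeq f\calR$ (intertwining $X$ and $T$) is proved on the relevant subcategory, the $E$-side follows by taking right adjoints, using that $(f',e')$ is biadjoint on $\scrO^{\nu'}$ (tensoring with the natural module $V'$ of $\frakg'$ and its dual) and that $\End(f^d)\simeq\End(e^d)^{\op}$ is induced by adjunction. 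So the real content is on the $f$-side.

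The key observation is that $V'$, restricted to $\frakg$, decomposes as $V'=V\oplus K v_{N+1}$, with $e_{N+1}$ acting by $0$ on $V$ and by $1$ on $v_{N+1}$. Hence for any $M'\in\calC_{\geqslant\varkappa}$,
\[
(M'\otimes V')[\varkappa]=M'[\varkappa]\otimes V\,\oplus\, M'[\varkappa-1]\otimes K v_{N+1}=M'[\varkappa]\otimes V,
\]
since $M'[\varkappa-1]=0$, giving a natural isomorphism $\calR(M'\otimes V')\simeq\calR(M')\otimes V$. Next I verify the inclusion $A'\{d\}\subset\calC_{\geqslant\varkappa}$: from $\varpi'(\lambda)=\lambda+\rho_{\nu'}+\tau'-\rho$, the entries of $\varpi'(\lambda)$ in the second block are of the form $\lambda^2_i+\varkappa$ with $\lambda^2_i\geqslant 0$; the Levi $\frakm_{\nu'}$ permutes these entries within the block, and subtracting a root from $\Pi^+\setminus\Pi^+_{\nu'}$ either leaves the $e_{N+1}$-weight unchanged or raises it by $1$, so all weights of $M(\varpi'(\lambda))_{\nu'}$ lie in $\varkappa+\bbN$, and the same persists after tensoring with any power of $V'$.

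To show this isomorphism intertwines $X$ and $T$, one observes that $X$ acts on $f'(M')$ by the Casimir $\omega_{N+1}$, while on $f\calR(M')$ it acts by $\omega_N$. The difference is
\[
\omega_{N+1}-\omega_N=\sum_{j=1}^{N+1}e_{N+1,j}\otimes e_{j,N+1}+\sum_{i=1}^{N}e_{i,N+1}\otimes e_{N+1,i}.
\]
Applied to $m\otimes v_k$ with $m\in M'[\varkappa]$ and $k\leqslant N$, the first sum vanishes because $e_{j,N+1}v_k=0$, and the second contributes $e_{k,N+1}(m)\otimes v_{N+1}$; but $e_{k,N+1}$ has $e_{N+1}$-weight $-1$, so $e_{k,N+1}(m)\in M'[\varkappa-1]=0$. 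Hence $\omega_{N+1}$ and $\omega_N$ coincide on the $\varkappa$-eigenspace, intertwining $X$. For $T=1\otimes\omega$ acting on $f'^{\,2}$, the analogous weight count gives $(M'\otimes V'\otimes V')[\varkappa]=M'[\varkappa]\otimes V\otimes V$; since $\omega_{N+1}$ preserves $V\otimes V\subset V'\otimes V'$ and acts there by the flip $v_a\otimes v_b\mapsto v_b\otimes v_a$ (exactly as $\omega_N$ does), $T$ is intertwined. Combined with the adjunction reduction of the first paragraph, this establishes commutation with all of $E_i, F_i, X, T$.

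The main obstacle is the explicit Casimir computation above: every compatibility reduces to showing that $\omega_{N+1}-\omega_N$ annihilates the $\varkappa$-eigenspace, and this uses both the asymmetric decomposition $V'=V\oplus Kv_{N+1}$ (so the potentially obstructing term $e_{k,N+1}(m)\otimes v_{N+1}$ lands in the $v_{N+1}$-summand) and the vanishing $M'[\varkappa-1]=0$ from $M'\in\calC_{\geqslant\varkappa}$. Verifying $A'\{d\}\subset\calC_{\geqslant\varkappa}$ is straightforward bookkeeping with the parabolic weight formula.
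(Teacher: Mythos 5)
Your proof is correct and follows essentially the same route as the paper: reduce to $\calR$ and to $f$, use the restriction of the natural module $V'|_{\frakg}=V\oplus Kv_{N+1}$ together with $M'[\varkappa-1]=0$ to get $\calR\circ f'\simeq f\circ\calR$, and then check the Casimir. The only cosmetic difference is that you verify the intertwining of $X$ by explicitly expanding $\omega_{N+1}-\omega_N$ and showing it annihilates $M'[\varkappa]\otimes V_N$, whereas the paper packages the same computation as the identity $p\circ\omega_{N+1}\circ i=\omega_N$ (which, combined with the observation that $\omega_{N+1}$ commutes with $e_{N+1}$ and hence preserves the $\varkappa$-eigenspace, gives the same conclusion).
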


\vspace{.5mm}

\begin{proof}
Since $\calQ'\calI$, $\calR$ are inverse equivalences, it is enough to consider the case of $\calR$.
Next, since $(E,F)$ is an adjoint pair, by unicity of the left adjoint, it is enough to consider the
case of the functor $F$.
Let $V_N=\bigoplus_{i=1}^NK v_i$.
Let $M\in\frakg'\MMod$.

If $M\in\calC_{\geqslant\varkappa},$ then
$V_{N+1}\otimes_K M\in\calC_{\geqslant\varkappa}$ and the decomposition
$V_{N+1}=V_N\oplus K e_{N+1}$ yields an isomorphism
$\calR(V_{N+1}\otimes_K M)=V_N\otimes_K \calR(M)$, because $\Ker_M(e_{N+1}-\varkappa+1)=0$.
So, we have an isomorphism of functors 
$\calR\circ f\simeq f\circ \calR:\calC_{\geqslant\varkappa}\to\frakg\MMod$.
Since $\calR$ takes $A'\{d\}$ to $A\{d\}$, and since $f$ preserves the categories $A,A'$,
this yields an isomorphism of functors 
$\calR\circ f\simeq f\circ \calR:A'\{d\}\to A\{d+1\}$.
We deduce that the functors
$F_i\{d\}:A'\{d\}\to A'\{d+1\}$ and $F_i\{d\}:A\{d\}\to A\{d+1\}$
are intertwined by $\calR$ whenever they are defined (i.e., if $i\in I\setminus\{\varkappa-N+1\}$).

Let $i:V_N\otimes M\to V_{N+1}\otimes M$ be the canonical inclusion and 
$p:V_{N+1}\otimes M\to V_N\otimes M$ be the canonical projection.
We have $p\circ \omega_{N+1}\circ i=\omega_N$. It follows that
the action of $X$ commutes with the isomorphism
$\calR\circ f\iso f\circ \calR$.
It is clear that the induced isomorphism $\calR\circ f^2\iso f^2 \circ R$
commutes with the action of $T$. 
\end{proof}

This finishes the proof of Proposition \ref{prop:shift}.
\end{proof}

Now, we can prove Proposition \ref{prop:4.25}.

\begin{proof} [Proof of  Proposition \ref{prop:4.25}]
We may assume that  $\tau_{K,1}-\tau_{K,2}\in\bbZ_{<0}$.
Set $\nu'_1=\nu_1$, $\tau'_{K,1}=\tau_{K,1}$, $\nu'_2=\nu_2+\tau_{K,2}-\tau_{K,1}$ 
and $\tau'_{K,2}=\tau'_{K,1}$. Recall that $s=\nu+\tau$ and $s'=\nu'+\tau'$.
By Proposition \ref{prop:shift}, there is an equivalence of highest weight 
categories
$\Upsilon:A^\nu_{K,\tau}\{d\}\to A^{\nu'}_{K,\tau'}\{d\}$ which intertwines the morphisms
$\varphi^s_{K,d}$, $\varphi^{s'}_{K,d}$ and the functors $\Phi_{K,d}^s$, $\Phi_{K,d}^{s'}$.
In particular, we have $\Upsilon(T_{K,d})=T_{K,d}$, see the proof of Proposition \ref{prop:shift}.
Now, we can apply Propositions \ref{prop:isomBK}
to $A^{\nu'}_{K,\tau'}\{d\}$, because $\tau'_{K,1}=\tau'_{K,2}$. 
This proves the proposition.
\end{proof}

\vspace{2mm}

\begin{rk}
Under the hypothesis in Proposition \ref{prop:4.25}, the tuple $(E,F,X,T)$ is a pre-categorical action on
$A^\nu_{K,\tau}$.
\end{rk}

\vspace{3mm}

\subsection{The categories $A$ and $\scrO$ of a pseudo-Levi subalgebra}\label{sec:notation4}
Fix a pair of distinct elements $u,v\in [1,\ell]$. 
We will represent an $(\ell-1)$-tuple $a$ as a collection of elements
$a_\bullet$, $a_p$ with $p\in[1,\ell]\setminus\{u,v\}$.
If $a$ is an $\ell$-tuple of elements of a ring we write 
$a_\circ=(a_u,a_v)$ and $a_\bullet=a_u+a_v$.
Finally, we consider the positive root system
$\Pi_{\nu,u,v}^+=\Pi^+\cap\Pi_{\nu,u,v}$ with
$\Pi_{\nu,u,v}=\big\{\alpha_{k,l}\,;\, p_k=p_l\,\text{or}\,(p_k,p_l)=(u,v),(v,u)\big\}.$

We will be interested by two types of pseudo-Levi subalgebras of $\frakg_R$ :

\begin{itemize}
\item first, we have  the  Lie subalgebra
$\frakm_{R,\nu}$ associated with
$\Pi_{\nu}$,
\item next, we have the Lie subalgebra
$\frakm_{R,\nu,u,v}$ associated with $\Pi_{\nu,u,v}$. 
\end{itemize}

To each of these Lie algebras we associate a module category.
To do so, fix a composition $\gamma_p$ of $\nu_p$ for each $p.$ 

First, for each tuple $a=(a_p)\in\bbN^\ell$ 
we write $P\{a\}=\{\lambda\in P\,;\,\langle\lambda,\det_p\rangle=a_p,\,\forall p\}$ and
$P^\nu\{a\}=P^\nu\cap P\{a\}$.
Consider the categories of 
$\frakm_{R,\nu}$-modules given by (the tensor product is over $R$)
\begin{equation}\label{4.5}\scrO^\gamma_{R,\tau}(\nu)=
\bigotimes_{p=1}^\ell\scrO^{\gamma_p}_{R,\tau_p}(\nu_p),
\qquad\scrO^\gamma_{R,\tau}(\nu)\{a\}=
\bigotimes_{p=1}^\ell\scrO^{\gamma_p}_{R,\tau_p}(\nu_p)\{a_p\}.
\end{equation}

Next, for each tuple $a=(a_\bullet,a_p)\in\bbN^{\ell-1}$, we set
$P\{a\}=\{\lambda\in P\,;\,\langle\lambda,\det_\bullet\rangle=a_\bullet,\,
\langle\lambda,\det_p\rangle=a_p\}$ and
$P^\nu\{a\}=P^\nu\cap P\{a\}$.
Consider the categories of 
$\frakm_{R,\nu,u,v}$-modules  given by
\begin{equation}\label{2blocs}
\scrO^\gamma_{R,\tau}(\nu,u,v)=
\scrO^{\gamma_\circ}_{R,\tau_\circ}(\nu_\bullet)
\otimes_R\bigotimes_{p\neq u,v}
\scrO^{\gamma_p}_{R,\tau_p}(\nu_p),
\end{equation}
\begin{equation}\label{2blocsd}
\scrO^\gamma_{R,\tau}(\nu,u,v)\{a\}=
\scrO^{\gamma_\circ}_{R,\tau_\circ}(\nu_\bullet)\{a_\bullet\}
\otimes_R\bigotimes_{p\neq u,v}
\scrO^{\gamma_p}_{R,\tau_p}(\nu_p)\{a_p\}.
\end{equation}

We will be mainly interested by the two extreme cases where
$\gamma_p=(\nu_p)$ for each $p$, or where $\gamma_p=(1^{\nu_p})$ for each $p$.
In the first case, we get the categories
$\scrO^\nu_{R,\tau}(\nu)$, $\scrO^\nu_{R,\tau}(\nu,u,v)$, in the second one we get the categories
$\scrO_{R,\tau}(\nu)$, $\scrO_{R,\tau}(\nu,u,v).$

We will also use highest weight subcategories
$A^\nu_{R,\tau}(\nu)\subset\scrO^\nu_{R,\tau}(\nu)$ and
$A^\nu_{R,\tau}(\nu, u,v)\subset\scrO^\nu_{R,\tau}(\nu,u,v)$
which are defined as in Definition \ref{prop:Z+}. 
They decompose in a similar way as in \eqref{4.5}-\eqref{2blocsd}.
We will write
$\Delta(A^\nu_{R,\tau}(\nu))=\{\Delta(\lambda)_{R,\tau}\,;\,\lambda\in\scrP^\nu\}$
and
$\Delta(A^\nu_{R,\tau}(\nu,u,v))=\{\Delta(\lambda)_{R,\tau}\,;\,\lambda\in\scrP^\nu\},$
hoping it will not create any confusion.

Using \eqref{4.5}, \eqref{2blocs} and the pre-categorification $(e,f,X,T)$ on $\scrO^{\nu}_{R,\t}$
introduced in \S \ref{sec:4.5},
we define 
a pre-categorification $(e,f,X,T)$ on $\scrO^\nu_{R,\tau}(\nu)$, $\scrO^\nu_{R,\tau}(\nu,u,v)$
such that, in both cases, the functors $e,$ $f$ are the direct sums of the functors $e,$ $f$
of each of the factors.

Next, using the canonical embeddings we define
tuples $(E,F,X,T)$ on
$A^\nu_{R,\tau}(\nu)$ and
$A^\nu_{R,\tau}(\nu, u,v)$ as in \S \ref{sec:catA}.

\vspace{3mm}

\section{The category $\bfO$}
\label{sec:parabolic}

Fix integers $\ell,N\geqslant 1$ and fix a composition $\nu\in\scrC^\ell_{N,+}$.
Recall that $\frakg_R=\frakg\frakl_{R,N}$.

Let $R$ be a deformation ring.
Thus, we have elements $\kappa_R\in R^\times$ and $\tau_{R,p}\in R$ for $p\in[1,\ell]$.
For each $p,$ we define $s_{R,p}\in R$ by
$s_{R,p}=\nu_p+\t_{R,p}.$

We may abbreviate $\kappa=\kappa_R$, $s_p=s_{R,p}$ and $\tau_p=\tau_{R,p}$.

\vspace{.5mm}

\subsection{Analytic algebras}\label{sec:analytic}
Fix an integer $d\geqslant 1$.

Fix a compact polydisc $D\subset\bbC^d.$ 
Here, we view $\bbC^d$ as a Stein analytic space.
By an \emph{analytic algebra} we'll mean
the localization $R$ of the ring of germs of holomorphic functions on $D$
with respect to some multiplicative subset. 
See \cite{A}, \cite{GR} for more details on analytic algebras.
The following properties hold
\begin{itemize}
\item $R$ is a noetherian regular ring of dimension $d$,
\item $R$ is a UFD, hence every height 1 prime ideal is principal,
\item for any maximal ideal $\frakm\in\frakM$, the localization $R_\frakm$ of $R$
is a henselian local $\bbC$-algebra. 
\end{itemize}

Since $R$ is an analytic algebra, 
for any entire function $f=\sum_{n\in\bbN}a_nz^n$ on $\bbC$ and for any $x\in R,$
the series $\sum_{n\in\bbN}a_nx^n$ is convergent and defines an element $f(x)$ in $R$.
In particular, we have a well-defined element $\exp(x)\in R$. Analogously, for any analytic function
$f: [0,1]\to M_n(R)$ and for any $v\in R^n$, there is a unique analytic function
$v(t)$ on $[0,1]$ with values in $R^n$ such that $v(0)=v$ and $dv(t)/dt=f(t)v(t)$.

An \emph{analytic deformation ring} is an analytic algebra $R$ which is also a deformation ring.
Then, we may view $\kappa_R,$ $\tau_{R,p}$ as germs of holomorphic functions on $D$. 
We will \emph{always} assume that
$\kappa_R(D)\subset\bbC\setminus\bbR_{\geqslant 0}$. 
Thus, for any closed point $R\to\bbC$ 
the element $\kappa_\bbC$ belongs to $\bbC\setminus\bbR_{\geqslant 0}$.

Note that if $R$ is an analytic algebra of dimension $\geqslant 2,$ 
then we can always choose some deformation parameters $\kappa_R,$ $\tau_{R,p}$ 
such that $R$ is in general position.

For an analytic deformation ring $R$ we write
$q_R=\exp(-2\pi\sqrt{-1}/\kappa_R)$ and $Q_{R,p}=q_R^{s_p}=\exp(-2\pi\sqrt{-1}s_{R,p}/\kappa_R).$
We may abbreviate
$q=q_R,$ $Q_p=Q_{R,p}$ and $\kappa=\kappa_R$.

\vspace{3mm}

\subsection{Affine Lie algebras}

\subsubsection{Notations}\label{sec:affine}
Let $L\frakg_R=\frakg\otimes R[t,t^{-1}]$ and
let $\bfg'_R$ be the Kac-Moody central extension of $L\frakg_R$ by $R$.
Let $\bfone$ be the canonical central element 
and let $\partial$ be the derivation of $\bfg'_R$ acting as $t\partial_t$ on $L\frakg_R$
and acting trivially on $\bfone$.

Put $\bfg_R=R\partial\oplus\bfg'_R$ and
$\bft_R=R\partial\oplus R\bfone\oplus\frakt_R.$
Let $\bfb_R,\bfp_{R,\nu}\subset\bfg_R$
be the preimages of $\frakb_R$ and $\frakp_{R,\nu}$ under
the projection 
$R\partial\oplus R\bfone\oplus(\frakg\otimes R[t])\to\frakg_R$.
The element $c=\kappa_R-N$ of $R$ is called the \emph{level}.
Consider the $R$-algebras
$\bfg_{R,\kappa}=U(\bfg_R)/(\bfone-c)$
and
$\bfg'_{R,\kappa}=U(\bfg'_R)/(\bfone-c).$
For $d\in\bbN$ we set
$\bfg_{R,\geqslant d}=\frakg\otimes t^dR[t],$
$\bfg'_{R,+}=R\bfone\oplus\bfg_{R,\geqslant 0}$ and
$\bfg_{R,+}=R\partial\oplus \bfg'_{R,+}.$

For a $\bfg'_{R,+}$-module $M$ of level $c$
we consider the induced module
$
\Indc_R(M)=\bfg'_{R,\kappa}\otimes_{U(\bfg'_{R,+})}M.
$
We can view a $\frakg_R$-module as a
$\bfg'_{R,+}$-module of level $c$ where $\bfg_{R,\geqslant 1}$ acts trivially.
Write again $\Indc_R(M)$ for the corresponding induced module.

For $d\geqslant 1$ let $Q_{R,d}\subset\bfg_{R,\kappa}$ be the $R$-submodule spanned by the
products of $d$ elements of $\bfg_{R,\geqslant 1}$. Set
$Q_{R,0}=R$. Given a $\bfg_{R,\kappa}$-module $M,$ let
$M(d),\,M(-d)\subset M$
be the annihilator of $Q_{R,d}$ and of $Q_{R,-d}={}^\sharp Q_{R,d}$ respectively.
Set
$M(\infty)=\bigcup_{d\in\bbN} M(d)$
and
$M(-\infty)=\bigcup_{d\in\bbN} M(-d)$.
Note that $M(d)$ is a
$\bfg_{R,+}$-submodule of $M$ and that $M(\infty)$,
$M(-\infty)$ are $\bfg_R$-submodules of $M$. 

A $\bfg_{R,\kappa}$-module $M$ is \emph{smooth} if $M=M(\infty)$ and if $M$ 
is flat over $R$. 
Let $\scrS_{R,\kappa}$ be the category of the smooth $\bfg_{R,\kappa}$-modules.

For each $\xi\in\frakg$ and $r\in\bbZ$, let
$\xi^{(r)}$ be the element $\xi\otimes t^r$.
For  each $s\in\bbZ,$ the \emph{Sugawara operator} $\frakL_s$ is the formal sum
\begin{equation*}
\frakL_s={1\over 2\kappa}\sum_{r\geqslant -s/2}\sum_{i,j=1}^Ne_{i,j}^{(-r)}e_{j,i}^{(r+s)}
+{1\over 2\kappa}\sum_{r<-s/2}\sum_{i,j=1}^Ne_{i,j}^{(r+s)}e_{j,i}^{(-r)}
\end{equation*}
It lies in a completion of $\bfg_{R,\kappa}$ and it satisfies the relation
$[\frakL_s,\xi^{(r)}]=-r\xi^{(r+s)}$.
The affine Casimir element is $\afcas=\partial+\frakL_0$.

If $R=\bbC$ we'll drop the subscript $R$ everywhere from the notation.

\vspace{3mm}

\subsubsection{Affine root systems}
The elements of $\bft_R$ and
$\widehat P_R=\bft^*_R$ are called  \emph{affine coweights} and  \emph{affine weights}
respectively. Let $\widehat\Pi$
be the set of roots of $\bfg_R$ and let $\widehat\Pi^+$ be
the set of roots of $\bfb_R$.
We will call an element of $\widehat\Pi$ an
\emph{affine root}.  Let $\widehat\Pi_{re}$ be the system
of \emph{real roots}. The set of simple roots in $\widehat\Pi^+$ is
$\{\alpha_0,\alpha_1,\dots,\alpha_{N-1}\}.$
Let $\check\alpha\in\bft_R$ be the affine coroot associated
with the real affine root $\alpha$.

Let 
$(\bullet:\bullet):\widehat P_R\times\bft_R\to R$
be the canonical pairing.
Let $\delta$, $\Lambda_0$, $\tilde\rho$ be the affine weights given by
$(\delta:\partial)=(\Lambda_0:\bfone)=1,$
$(\Lambda_0:R\partial\oplus\frakt_R)=(\delta:\frakt_R\oplus R\bfone)=0$
and $\tilde\rho=\rho+N\Lambda_0$.
We will use the identification 
$\widehat P_R=R\delta\oplus P_R\oplus R\Lambda_0=R\times P_R\times R$
given by
$\alpha_i\mapsto(0,\alpha_i,0)$ if $i\neq 0,$
$\Lambda_0\mapsto (0,0,1)$ and $\delta\mapsto(1,0,0)$.

Let 
$\langle\bullet :\bullet \rangle:\widehat P_R\times\widehat P_R\to R$
be the non-degenerate symmetric bilinear form given by
$(\lambda:\check\alpha_i)=
2\langle\lambda:\alpha_i\rangle/\langle\alpha_i:\alpha_i\rangle$
and
$(\lambda:\bfone)=\langle\lambda:\delta\rangle$.
It yields an isomorphism $\nu:\frakt_R\to\frakt^*_R$.
Using it we identify $\check\alpha$
with an element of $\widehat P_R$
for any $\alpha\in\widehat\Pi_{re}$.

Let
$\widehat W=W\ltimes\bbZ\Pi$ be the affine Weyl group and
let $s_i=s_{\alpha_i}$ 
be the simple affine reflections relatively to $\alpha_i$.
The group
$\widehat W$ acts on $\widehat P_R$. For
$x\in\frakt_R$ 
let $T_x\in\End(\widehat P_R)$ be the operator given by
$$T_x(\lambda) = \lambda + \langle \lambda :\bfone\rangle\,\nu(x)-\big(\langle\lambda,x\rangle+
(\nu(x):\nu(x))\,\langle\lambda:\bfone\rangle/2\big)\,\delta.$$
The action of the reflection with respect to
the affine real root $\alpha+r\delta$, with $\alpha\in\Pi$ and $r\in\bbZ$, is given by
$s_{\alpha+r\delta}=s_\alpha\circ T_{r\check\alpha}$.
The {\it $\bullet$-action} of $\widehat W$  is given by
$w\bullet\mu=w(\mu+\tilde\rho)-\tilde\rho$ for each
$\lambda\in P_R$ and $\mu\in\widehat P_R$.
Two weights in $\widehat P^\nu_R$ are \emph{linked} if they belong to the same orbit of the $\bullet$-action.

The set of \emph{integral affine weights} is
$\widehat P=\bbZ\delta+P+\bbZ\Lambda_0.$
Replacing $P$ by $P^\nu$ in the definitions above we get the corresponding
sets of integral $\nu$-dominant affine weights $\widehat P^\nu$. 
We define the set $\widehat P^\nu_R\subset \widehat P_R$ 
of $\nu$-dominant affine weights in the obvious way.
To  $\lambda\in P^\nu_R$ we set
$z_\lambda=-\langle\lambda:2\rho+\lambda\rangle/2\kappa$
and we associate the affine weight
$\widehat\lambda=(z_\lambda,\lambda,c)\in\widehat P^\nu_R.$
For $w\in W$, $x\in\bbZ\Pi$ and $\lambda\in P_R$ we have
$w\bullet\widehat\lambda=\widehat{w\bullet\lambda}$ and
$T_x\bullet\widehat\lambda=\widehat{\lambda+\kappa x}.$

\vspace{3mm}

\subsection{The category $\bfO$}

\subsubsection{Definition}

A $\bft_R$-module $M$ is called a \emph{weight $\bft_R$-module} if it is a direct sum
of the \emph{weight submodules}
$M_\lambda=\{m\in M\,;\,xm=\lambda(x)m,\,x\in\bft_R\}$ with $\lambda\in\widehat P_R$.

Let $\bfO^{\nu,\kappa}_R$ be the $R$-linear abelian category of finitely generated $\bfg_{R,\kappa}$-modules $M$ 
such that $M$ is a weight $\bft_R$-module, the $\bfp_{R,\nu}$-action on $M$ is locally finite over $R$, and the 
highest weight of any subquotient of $M$ is of the form $\widehat\lambda$ with $\lambda\in P_R^\nu$.


For  each $\mu\in\widehat P^\nu_R,$ let
$M(\mu)_{R,\nu}$ be the parabolic Verma module with the highest weight $\mu$.
For $\lambda\in P_R^\nu$ we have
$M(\widehat\lambda)_{R,\nu}=\scrI\!nd(M(\lambda)_{R,\nu}).$
Here $\partial,\bfone$ 
act on $M(\lambda)_{R,\nu}$ by multiplication by $z,c$ respectively.
If $R=K$ is a field, let $L(\mu)_K$ denote the top of $M(\mu)_{K,\nu}$.
For $\lambda\in P^\nu_R$ we abbreviate $\bfM(\lambda)_{R,\nu}=M(\widehat\lambda)_{R,\nu}$
and $\bfL(\lambda)_K=L(\widehat\lambda)_K$.

 If $\nu$ is regular,
we write $\bfO_R=\bfO^{\nu,\kappa}_R$ and $\bfM(\lambda)_R=\bfM(\lambda)_{R,\nu}$. 
If $\frakp_\nu=\frakg$ we write $\bfO^{+,\kappa}_R=\bfO^{\nu,\kappa}_R$ and $\bfM(\lambda)_{R,+}=\bfM(\lambda)_{R,\nu}$. 
If $R=\bbC$ we omit the subscript $\bbC$ from the notation.

Let $\bfO^{\nu,\kappa,f}_R\subset \bfO^{\nu,\kappa}_R$ be the full subcategory
consisting of the modules whose weight spaces are free of finite rank over $R$.
Let $\bfO^{\nu,\kappa,\Delta}_R\subset\bfO^{\nu,\kappa,f}_R$ be the full extension closed additive subcategory
generated by the parabolic Verma modules. The category $\bfO^{\nu,\kappa,\Delta}_R$
consists of the modules $M\in\bfO^{\nu,\kappa,f}_R$ such that $\Bbbk M\in\bfO^{\nu,\kappa,\Delta}_\Bbbk$ 
for each $\Bbbk\in\frakM$.

Given $\t\in P_R$ as in \S \ref{sec:liealgebras},
let $\bfO^{\nu,\kappa}_{R,\t}\subset\bfO^{\nu,\kappa}_R$ be the full subcategory consisting of the modules  $M$
such that the highest weight of any subquotient of $M$ is of the form 
$\widehat{\lambda+\tau}$ with $\lambda\in P^\nu$.
We set $\bfO^{\nu,\kappa,\Delta}_{R,\t}=\bfO^{\nu,\kappa}_{R,\t}\cap\bfO^{\nu,\kappa,\Delta}_{R}$.
If $R=\bbC$ or $\tau=0$  we drop the subscripts
$R$ or $\tau$ from the notation.

\vspace{2mm}

\begin{rk} 
\label{rk:gradings}
The operator
$\afcas$ acts locally nilpotently on any module of $\bfO^{\nu,\kappa}$.
Replacing this condition by \emph{$\afcas$ is locally finite} yields a bigger category
which decomposes as the direct sum
$\bigoplus_{a\in\bbZ}\bfO^{\nu,\kappa}[a]$,
where
$\bfO^{\nu,\kappa}[a]$ consists of the modules such that
$\afcas-a$ is locally nilpotent.

More generally, for each $d\in\bbZ$, we may consider the category
$\bfO^{\nu,\kappa}_{R,\tau}[a]\{d\}$ which consists of the modules 
whose subquotients have highest weights of the form
$(z_{\lambda+\tau}+a,\lambda+\tau,c)$ with
$\lambda\in P^\nu\{d\}.$
Here, we set $P\{d\}=\{\lambda\in P\,;\,\langle\lambda,\det\rangle=d\}$
and $P^\nu\{d\}=P^\nu\cap P\{d\}$.
To insist on the rank of $\frakg\frakl_{N}$
we may write $\bfO_{R,\tau}^\nu(N)=\bfO^{\nu,\kappa}_{R,\tau}$. 
We will use similar notation for all related categories, e.g.,
we may write
$\bfO^{\nu,\kappa}_{R,\tau}(N)[a]\{d\}=\bfO^{\nu,\kappa}_{R,\tau}[a]\{d\}$.
\end{rk}

\vspace{2mm}

\begin{rk} 
\label{rk:soergel}
In \cite{KL} the authors set $R=\bbC$ and consider a category $\bfO'$
of $\bfg'$-modules, rather than $\bfg_{\kappa}$-modules as above. 
Forgetting the $\partial$-action gives an equivalence $\bfO^{+,\kappa}\to\bfO'$. 
A quasi-inverse takes a $\bfg'$-module $M$ to itself, with the action of $\partial$ 
equal to the semi-simplification of $-\frakL_0$. See \cite[prop.~8.1]{S1} for details.

More generally, forgetting the $\partial$-action gives again an equivalence from
$\bfO^{\nu,\kappa}_R$ to a category of $\bfg'_R$-modules, and we may identify both categories. In particular,
for $M\in\scrO_R^\nu$ we can view the $\bfg'_{R,\kappa}$-module
$\Indc_R(M)$ as an object of $\bfO^{\nu,\kappa}_R$.

We will use this identification without further comments whenever it is necessary.
\end{rk}

\vspace{3mm}

\subsubsection{Basic properties} 
\label{sec:basic}
Let $R$ be either a field or a local ring. 

Let $e=-\kappa_\Bbbk,$ where $\kappa_\Bbbk$ is the residue class 
of $\kappa_R$. \emph{We will always assume that $e$ is a positive integer}.

For a $\bfg_{R,\kappa}$-module $M$ we set
\begin{itemize}
\item ${}^\sharp M=M$ with the $\bfg_R$-action twisted by the automorphism
$\sharp$ such that $\xi^{(r)}\mapsto(-1)^r\xi^{(-r)}$ and $\bfone\mapsto-\bfone,$

\item ${}^\dag\! M=M$ with the $\bfg_R$-action twisted by the automorphism 
$\dag$ such that $\xi^{(r)}\mapsto-{}^t\xi^{(r)}$ and $\bfone\mapsto\bfone,$

\item $M^*$ is the $R$-dual of $M$
with the $\bfg_R$-action given by 
$(\xi^{(r)}\varphi,m)=-(\varphi,\xi^{(r)}m)$ and
$(\bfone\varphi,m)=-(\varphi,\bfone m).$
\end{itemize}

We define the $\bfg_{R,\kappa}$-modules $DM$, $\scrD M$ by
$DM=({}^\sharp M^*)(\infty)$ and $\scrD M={}^\dag\! DM.$

\vspace{2mm}

\begin{lemma}\label{lem:duality}
The functor $D$ is a duality on
$\bfO^{+,f}_R$ 
and $\scrD$ is a duality on $\bfO^{\nu,\kappa,f}_R$. 
Both commute with base change.
\end{lemma}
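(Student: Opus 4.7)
The plan is to give an explicit description of $DM$ and $\scrD M$ as graded duals of $M$ equipped with twisted actions, and then read off all the claims from that description.

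First I would prove the canonical identification $({}^\sharp M^*)(\infty)\simeq\bigoplus_\lambda M_\lambda^*$ inside $M^*$. Note that $\sharp(Q_{R,d})=Q_{R,-d}$, so $({}^\sharp M^*)(\infty)$ consists of $\varphi\in M^*$ such that $\varphi$ vanishes on $Q_{R,-d}M$ for some $d$. On the one hand, $\bfg_{R,\leqslant -1}$ strictly lowers the $\partial$-weight, so a functional supported on finitely many weight spaces is killed by $Q_{R,-d}$ once $d$ exceeds the $\partial$-range of its support. On the other hand, weights of subquotients of $M$ have a common level $c$ and are bounded above in the weight order, so for each $d$ only finitely many weights of $M$ have $\partial$-coordinate above a fixed bound; this forces any $\varphi\in ({}^\sharp M^*)(d)$ to be supported on finitely many weight spaces. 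Under this identification, the $\bft_R$-weight of $M_{(z,\mu,c)}^*$ in $DM$ is $(z,-\mu,c)$, while in $\scrD M={}^\dag DM$ it becomes $(z,\mu,c)$ since $\dag$ negates $\frakt_R$ and cancels the sign flip introduced by $\sharp$.

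Next I would verify $DM\in\bfO^{+,f}_R$ for $M\in\bfO^{+,f}_R$, and $\scrD M\in\bfO^{\nu,\kappa,f}_R$ for $M\in\bfO^{\nu,\kappa,f}_R$. Weight spaces are free of finite rank, since dualization of such $R$-modules preserves this. The highest-weight form of subquotients is preserved: for $\scrD$ the weight multiset of $M$ is preserved exactly; for $D$ on $\bfO^{+,f}_R$ the $\frakt$-coordinate is negated, which preserves the weight set by $W$-symmetry (as $\frakg$-integrability implies $W$-invariance of the weight set). Local finiteness of the parabolic action transfers: $\sharp$ acts trivially on $\frakg\otimes 1$, so $\frakg$-integrability passes to $DM$ (the contragredient of a locally finite action is locally finite); composing with $\dag$ in the parabolic case handles $\bfp_{R,\nu}$ correctly, as the weights of $\scrD M$ are bounded above in the affine order so $U(\bfp_{R,\nu})\varphi$ is supported on finitely many weight spaces. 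The delicate point is finite generation of $DM$ or $\scrD M$ over $U(\bfg_R)$: for parabolic Vermas $\bfM(\lambda)_{R,\nu}$ the dual is generated by its top weight space $V(\lambda)_{R,\nu}^*$ by the standard BGG-type argument, and a general $M$ fits into a surjection from a finite direct sum of parabolic Vermas (using local finiteness of $\bfp_{R,\nu}$ to choose a generating set supported on finitely many $\bfp$-stable finite-rank summands), so that its dual embeds into the dual of a finitely generated module.

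Involutivity $D^2\simeq\mathrm{id}$ and $\scrD^2\simeq\mathrm{id}$ follows from the canonical isomorphism $M_\lambda\iso(M_\lambda^*)^*$ (valid since $M_\lambda$ is finitely generated free over $R$), together with $\sharp^2=\dag^2=\mathrm{id}$ and the commutativity $\sharp\dag=\dag\sharp$ (a direct check on the generators $\xi^{(r)}$ and $\bfone$). Exactness is automatic because $\Hom_R(\bullet,R)$ is exact on finitely generated projectives. For base change along $R\to S$, each weight space $M_\lambda$ being finitely generated free gives $(SM)_\lambda=S\otimes_R M_\lambda$ and $S\otimes_R M_\lambda^*=(S\otimes_R M_\lambda)^*$ canonically; assembling these weight by weight yields $S\otimes_R DM\iso D(SM)$ and the analogous statement for $\scrD$. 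The main obstacle is the finite generation of the dual as a $U(\bfg_R)$-module, for which the weight-space description alone is insufficient; the highest-weight structure must enter through the reduction to parabolic Vermas outlined above.
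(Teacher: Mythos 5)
Your approach is the same as the paper's in outline: identify $DM$ and $\scrD M$ with the graded dual $\bigoplus_\lambda M_\lambda^*$ (equivalently, the $\frakm_\nu$-finite part $M^\circledast$ of $M^*$) equipped with twisted actions, then read off the required properties. The paper itself is terse here — it states the identification $\scrD M = {}^{\dag\sharp}M^\circledast$, observes that $\dag\sharp$ takes $\bfb_R$ to its opposite, then calls the rest "the usual BGG duality" resp.\ "the duality introduced in [KL]", and cites [KL, lem.~8.16] for base change. You attempt to verify those facts directly rather than citing, which is legitimate but is also where a real gap appears.

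The gap is in the $\partial$-coordinate for $D$ on $\bfO^{+,f}_R$. You compute the weight of the functional dual to $M_{(z,\mu,c)}$ in $DM=({}^\sharp M^*)(\infty)$ to be $(z,-\mu,c)$, i.e.\ the $\partial$-coordinate is unchanged while the $\frakt$-coordinate is negated (this does follow from $\sharp(\partial)=-\partial$). But the category $\bfO^{+,\kappa}_R$ requires the highest weight of every subquotient to be of the form $\widehat\nu=(z_\nu,\nu,c)$, and the top of $D\bfM(\lambda)_{R,+}$ then sits at $(z_\lambda,-w_0\lambda,c)$, which is $\widehat{-w_0\lambda}$ only when $z_\lambda=z_{-w_0\lambda}$ — false unless $|\lambda|=0$ for $\frakgl_N$. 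Your appeal to $W$-invariance of the weight set does not touch this: it concerns the $\frakt$-coordinate, whereas the problem is that the naive $\sharp$-twist of the $\partial$-action disagrees with the Sugawara normalization $-\frakL_0^{\mathrm{ss}}$ by a locally constant (but non-constant) shift. The paper sidesteps this exactly via Remark~\ref{rk:soergel}: one should define $D$ on the $\bfg'_R$-module category $\bfO'$ (as [KL] does), where $\partial$ does not appear, and transport it back to $\bfO^{+,\kappa}_R$ by re-imposing $\partial=-\frakL_0^{\mathrm{ss}}$. Note that $\scrD$ does not suffer from this issue, since the $\frakt$-coordinate is preserved, so the computed $\partial$-coordinate $z_\mu$ matches $\widehat\mu$ automatically; your argument for $\scrD$ is sound. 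A secondary point: the "other direction" of the finite-support argument (that $\varphi$ killed by $Q_{R,-d}$ has finite support) needs finite generation of $M$ — one must observe that $M=U(\bfg_{R,\leqslant-1})M(d')$, hence $M/Q_{R,-d}M$ has only finitely many weights — and not merely the weight structure of $M$ as you state; the phrase "only finitely many weights of $M$ have $\partial$-coordinate above a fixed bound" is correct but does not by itself imply the claim.
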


\vspace{.5mm}

\begin{proof}
For any $M\in\bfO^{\nu,\kappa}_R,$ 
the $R$-module $D M$ consists of those linear forms in $M^*$
which vanish on $Q_{R,-d}M$ for some $d\geqslant 1$.
Hence, we have $\scrD M={}^{\dag\sharp} M^\circledast$, where
$M^\circledast$ is the set of $\frakg_{\nu}$-finite elements of $M^*$.
Since the automorphism $\dag\sharp$ takes the Borel subalgebra $\bfb_R\subset\bfg_R$
to its opposite, the functor $\scrD$ preserves $\bfO^{\nu,\kappa,f}_R$.  It is the usual BGG duality,
which fixes the simple objects when $R$ is a field.

For any $M\in\bfO^{+}_R,$ the $R$-module $D M$ consists of those linear forms in $M^*$
which vanish on $Q_{R,-d}M$ for some $d\geqslant 1$, 
we have $D M={}^\sharp M^\circledast$, where
$M^\circledast$ is the set of $\frakg$-finite elements of $M^*$.
The functor $D$ preserves $\bfO^{+,f}_R$.  It is the duality introduced in \cite{KL},
which does not fix the simple objects when $R$ is a field.

For the second claim we must prove that for any $S$-point $R\to S$
we have
$D(SM)=SD(M)$ and $S\scrD(N)=\scrD(SN)$ for each
$M\in\bfO^{+,f}_R$, $N\in\bfO^{\nu,\kappa,f}_R$. The proof is the same as in Lemma \cite[lem.~8.16]{KL}.
\end{proof}

A \emph{generalized Weyl module}  is a module in $\bfO_R^{\nu,\kappa,f}$ of the form
$\Indc_R(M),$ where $M$ is a $\bfg_{R,+}$-module with a finite
filtration by $\bfg_{R,+}$-submodules such that the
subquotients are annihilated by $Q_{R,1}$ and lie in
$\scrO_R^\nu$ as $\frakg_R$-modules. 

\vspace{2mm}

\begin{lemma}\label{lem:O0}
A $\bfg_{R,\kappa}$-module which is free over $R$ belongs to
$\bfO^{\nu,\kappa,f}_R$ if and only if it is a quotient of a
generalized Weyl module of $\bfO^{\nu,\kappa,f}_R$.
\qed
\end{lemma}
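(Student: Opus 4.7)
The plan is to prove the two implications separately. For ``if'', a quotient of a module in $\bfO_R^{\nu,\kappa,f}$ that is $R$-free inherits the four defining properties---finite generation over $\bfg_{R,\kappa}$, weight structure, local finiteness of the $\bfp_{R,\nu}$-action, and highest weights of the required shape---and its weight spaces remain $R$-free, being $R$-projective direct summands of the original free weight spaces and hence free over the local ring $R$; so it suffices to handle the ``only if'' direction.

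For ``only if'', let $M\in\bfO_R^{\nu,\kappa,f}$ be $R$-free, and choose weight vector generators $v_1,\dots,v_n$ of $M$ over $\bfg_{R,\kappa}$; by smoothness each $v_i$ lies in some $M(d_i)$, and we set $d=\max_id_i$. By local finiteness of the $\bfp_{R,\nu}$-action, $W=\sum_iU(\bfp_{R,\nu})v_i$ is a $\bfp_{R,\nu}$-stable submodule of $M(d)$ finitely generated over $R$. Moreover $\bfp_{R,\nu}$ preserves each $M(j)$---its factors $R\partial$, $R\bfone$, and $\frakp_{R,\nu}$ commute with $Q_{R,j}$ modulo $Q_{R,j}$ itself, while $\bfg_{R,\geqslant 1}$ sends $M(j)$ into $M(j-1)\subseteq M(j)$---so the pieces $W^{(j)}=W\cap M(j)$ form a finite filtration of $W$ by $\bfp_{R,\nu}$-submodules on whose successive quotients $\bfg_{R,\geqslant 1}$ acts by zero.

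Form the $\bfg_{R,+}$-module $M_0=U(\bfg_{R,+})\otimes_{U(\bfp_{R,\nu})}W$ with the $\bfg_{R,+}$-submodule filtration $M_0^{(j)}=U(\bfg_{R,+})\otimes_{U(\bfp_{R,\nu})}W^{(j)}$. Writing $\bar{\frakn}\subset\frakg_R$ for the nilradical opposite to $\frakp_{R,\nu}$, the decomposition $\bfg_{R,+}=\bfp_{R,\nu}\oplus\bar{\frakn}$ together with PBW identifies $M_0^{(j)}/M_0^{(j-1)}$ with the parabolic induction $U(\frakg_R)\otimes_{U(\frakp_{R,\nu})}(W^{(j)}/W^{(j-1)})$, on which $\bfg_{R,\geqslant 1}$ acts by zero (since it is a Lie ideal of $\bfp_{R,\nu}$ acting trivially on the top, a relation preserved by the PBW expansion). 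The top $W^{(j)}/W^{(j-1)}$ is $R$-finitely generated with finite weight support and inherits the locally finite $\frakp_{R,\nu}$-action from $M$, so the induced module has weight support bounded above, the positive nilradical of $\frakp_{R,\nu}$ acts locally nilpotently, and the induced module lies in $\scrO_R^\nu$. Therefore $\Indc_R(M_0)$ is a generalized Weyl module, and the natural $\bfg_{R,+}$-equivariant map $M_0\to M$, $u\otimes w\mapsto u\cdot w$, extends by adjunction to a $\bfg_{R,\kappa}$-equivariant morphism $\Indc_R(M_0)\to M$ whose image contains the $v_i$ and hence equals $M$.

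The main technical difficulty is two-fold: verifying that each subquotient $M_0^{(j)}/M_0^{(j-1)}$ really lies in $\scrO_R^\nu$---a PBW computation combined with the bracket estimate $[\bfg_{R,\geqslant 1},\bar{\frakn}]\subseteq\bfg_{R,\geqslant 1}$ to ensure $\bfg_{R,\geqslant 1}$ acts trivially on the induced module---and ensuring that $\Indc_R(M_0)$ actually lies in $\bfO_R^{\nu,\kappa,f}$ rather than merely $\bfO_R^{\nu,\kappa}$, which requires the weight spaces of $M_0$ to be free over $R$; this last point is arranged by enlarging $W$ inside the $R$-free module $M$ so that each intersection $W\cap M_\mu$ is an $R$-free direct summand of the $R$-free weight space $M_\mu$, using Kaplansky's theorem over the local ring $R$.
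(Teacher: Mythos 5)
Since the paper gives no proof of this lemma (it is marked $\qed$), your argument can only be judged on its own terms.

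Your overall strategy is right and, for the most part, sound: taking $W=\sum_i U(\bfp_{R,\nu})v_i\subseteq M(d)$, filtering by $W^{(j)}=W\cap M(j)$, inducing to $M_0=U(\bfg_{R,+})\otimes_{U(\bfp_{R,\nu})}W$, identifying the subquotients with $U(\frakg_R)\otimes_{U(\frakp_{R,\nu})}(W^{(j)}/W^{(j-1)})$ killed by $\bfg_{R,\geqslant 1}$, and producing the surjection $\Indc_R(M_0)\twoheadrightarrow M$ by adjunction, is exactly the shape this argument should have. The verification that $\bfp_{R,\nu}$ preserves each $M(j)$ and that $\bfg_{R,\geqslant 1}$ is a Lie ideal killing the subquotients is correct (though the ideal property you actually need is in $\bfg_{R,+}$, not just $\bfp_{R,\nu}$, so that $\frakI\cdot U(\bfg_{R,+})\subseteq U(\bfg_{R,+})\frakI$ gives triviality of the $\frakI$-action on the induced module).

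There are two places where the justification is off. In the ``if'' direction you say the weight spaces of the quotient $M$ are free because they are ``$R$-projective direct summands of the original free weight spaces'': the surjection $N_\mu\twoheadrightarrow M_\mu$ does not split a priori, so this is circular. The correct reason is simply that the hypothesis makes $M$ itself $R$-free, hence each $M_\mu$, being an $R$-module direct summand of $M$, is projective and hence free over the local ring; finite rank follows from $M_\mu$ being a quotient of the finite-rank $N_\mu$.

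The more serious gap is in the ``only if'' direction, in the step you yourself flag: you propose to enlarge $W$ so that each $W\cap M_\mu$ becomes a \emph{direct summand} of $M_\mu$ and then invoke Kaplansky. But a finitely generated submodule of a free module over a two-dimensional regular local ring need not be contained in any proper free direct summand (take $R=\bbC[[x,y]]$, $M_\mu=R^2$, $W_\mu=R\cdot(x,y)$: the only direct summand of $R^2$ containing $(x,y)$ is $R^2$ itself, and enlarging every weight space to all of $M_\mu$ in general destroys $\bfp_{R,\nu}$-stability and $R$-finite generation). So the mechanism does not work as stated, and Kaplansky is not the relevant tool. What does work is \emph{saturation}: replace $W$ by $W^{\mathrm{sat}}=\{v\in M\,;\,rv\in W\text{ for some }r\in R\setminus\{0\}\}$. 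This is still $\bfp_{R,\nu}$-stable, still contained in $M(d)$ (using that $M$ is torsion-free), has the same finite weight support as $W$, hence is $R$-finite, and each $W^{\mathrm{sat}}_\mu = M_\mu\cap KW_\mu$ is the kernel of a map from the free module $M_\mu$ to a finitely generated torsion-free (hence embeddable in a free) module, therefore reflexive, therefore free over the regular local $R$ of dimension $\leqslant 2$. This last fact --- finitely generated reflexive implies projective in dimension $\leqslant 2$ --- is precisely the one the paper invokes in the proof of Lemma \ref{lem:fg}(b), and it is the right replacement for your ``direct summand + Kaplansky'' step.
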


\vspace{.5mm}

\begin{rk}
The functors $M\mapsto {}^\dag\! M, {}^\sharp M, M^*$ commute with each other and we have
a canonical isomorphism of $\bfg_R$-modules $({}^\sharp M)(\infty)={}^\sharp(M(-\infty)).$
\end{rk}

\vspace{2mm}

\begin{rk}
We define the involution $\dag$ on $\frakg_R$-modules  
and the dualities $\scrD$ on $\scrO^\nu$ and $D$ on
$\scrO^+$ in a similar way as above. We have 
a canonical $\bfg_{R,\kappa}$-module isomorphism
${}^\dag\!\Indc_R(M)=\Indc_R({}^\dag\! M).$
\end{rk}

\vspace{2mm}

For  each $\beta\in \widehat P^\nu_R,$ 
the \emph{truncated category} $\lub \bfO^{\nu,\kappa}_R$ is the 
Serre subcategory of $\bfO^{\nu,\kappa}_R$ consisting of the modules whose 
simple subquotients have a
highest weight in $\beta-\bbN\widehat\Pi^+$. 
The following hold, see e.g.,  \cite{F1}, \cite{F2}, \cite[sec.~3, 7]{S1} for more details.

\vspace{2mm}

\begin{prop} 
(a)  $\bfO^{\nu,\kappa}_R$ is 
the direct limit of the subcategories $\lub \bfO^{\nu,\kappa}_R$,

(b)  $\lub \bfO^{\nu,\kappa}_R$ is 
a highest weight $R$-category with
$\Delta(\lub \bfO^{\nu,\kappa,\Delta}_R)=\lub \bfO^{\nu,\kappa}_R\cap \Delta(\bfO^{\nu,\kappa}_R)$,

(c) for $\beta\leqslant\gamma$ the obvious inclusion
$\lub \bfO^{\nu,\kappa}_R\subset {}^\gamma \bfO^{\nu,\kappa}_R$ 
preserves the tilting modules and commutes with taking extensions.
\qed
\end{prop}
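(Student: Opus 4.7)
The plan is to prove the three parts in order, following the standard strategy for parabolic Kac-Moody category $\scrO$ but adapted to the deformation-ring setting. Much of this follows the template of Fiebig and Shan, so I would rely heavily on those references for the purely combinatorial lemmas and concentrate the work on the parts that need the local ring $R$.

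For (a), I would argue as follows. An object $M\in\bfO^{\nu,\kappa}_R$ is finitely generated, so pick generators $m_1,\dots,m_k$ of weights $\mu_1,\dots,\mu_k\in\widehat P^\nu_R$. Every simple subquotient of $M$ has highest weight equal to a weight of $M$, hence lies in $\bigcup_i(\mu_i-\bbN\widehat\Pi^+)$. Choose any $\beta$ with $\mu_i\le\beta$ for all $i$; then $M\in\lub\bfO^{\nu,\kappa}_R$. The fact that every morphism between truncated modules is already a morphism in the truncated category is immediate from fullness of the inclusion. This gives $\bfO^{\nu,\kappa}_R=\varinjlim_\beta \lub\bfO^{\nu,\kappa}_R$.

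For (b), the heart of the matter is to show that the indexing poset $\Lambda^\beta\subset\widehat P^\nu_R\cap(\beta-\bbN\widehat\Pi^+)$ of weights supporting simples in $\lub\bfO^{\nu,\kappa}_R$ is finite after restricting to any single linkage class. This uses our standing assumption $\kappa_\Bbbk\in\bbC\setminus\bbR_{\ge 0}$, i.e.\ negative level: at negative level each $\widehat W$-orbit for the dot action on integral weights intersects $\beta-\bbN\widehat\Pi^+$ in a finite set, and the $R$-deformation preserves the linkage combinatorics by the construction of $\tau_{R,p}$. Once this is in hand, one builds projective objects by the usual truncation trick: starting from $\bfM(\lambda)_{R,\nu}$ with $\lambda\in\Lambda^\beta$, kill all subquotients whose highest weight leaves $\beta-\bbN\widehat\Pi^+$ to obtain a module $P^\beta(\lambda)$ carrying a finite filtration by parabolic Verma modules $\bfM(\mu)_{R,\nu}$ with $\mu\ge\lambda$, $\mu\in\Lambda^\beta$. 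The standard base-change argument (Proposition~\ref{prop:introhw}, plus the fact that $\bfM(\lambda)_{R,\nu}$ is $R$-free by Lemma~\ref{lem:O0}) transfers the highest weight structure from the residue field $\Bbbk$ to $R$, and the equality $\Delta(\lub\bfO^{\nu,\kappa,\Delta}_R)=\lub\bfO^{\nu,\kappa}_R\cap\Delta(\bfO^{\nu,\kappa}_R)$ is then tautological.

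For (c), note that $\lub\bfO^{\nu,\kappa}_R$ is by definition a Serre subcategory of ${}^\gamma\bfO^{\nu,\kappa}_R$, so the inclusion is fully faithful and exact and thus identifies $\Ext^1$ computed on either side: this is what is meant by ``commutes with taking extensions''. For tilting objects, the indecomposable tilting $T^\gamma(\lambda)$ for $\lambda\in\Lambda^\beta$ has a $\Delta$-filtration (resp.\ $\nabla$-filtration) whose subquotients $\bfM(\mu)_{R,\nu}$ (resp.\ $\scrD\bfM(\mu)_{R,\nu}$) satisfy $\mu\le\lambda\le\beta$, so $T^\gamma(\lambda)\in\lub\bfO^{\nu,\kappa}_R$. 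Using the characterization of tilting modules via $\Ext^1$-vanishing against standards and the agreement of $\Ext^1$ just noted, $T^\gamma(\lambda)$ is tilting in $\lub\bfO^{\nu,\kappa}_R$ as well.

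The main obstacle is the finiteness statement underlying (b): one must argue that at negative level the linkage class of any weight meets $\beta-\bbN\widehat\Pi^+$ in a finite set, even after $R$-deformation. Over $\bbC$ this is in Fiebig; the deformation-ring version follows because the residue field is assumed to satisfy the negative level condition and the deformation only adds a bounded perturbation $\tau$ to the weights, preserving the essential finiteness. The remaining verifications are routine applications of the base-change results of Section~2.
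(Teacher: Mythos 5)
The paper gives no proof for this proposition --- it cites Fiebig and Soergel and terminates with \verb|\qed| --- so your job was to reconstruct the standard argument, which you have done correctly and in line with those references. Your part (b) correctly identifies the key finiteness input (at negative level a $\widehat W$-linkage class meets $\beta-\bbN\widehat\Pi^+$ in a finite set; this is in Fiebig and underlies the existence of projectives), and your appeal to the $R$-to-$\Bbbk$ base change via \cite[thm.~4.15]{R1} is the mechanism the paper itself invokes elsewhere. Part (c) is correctly reduced to $\lub\bfO^{\nu,\kappa}_R$ being a highest weight subcategory of ${}^\gamma\bfO^{\nu,\kappa}_R$ whose indexing poset is an ideal.

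One small gap in part (a), worth flagging even though it is inherited from the way the paper phrases the statement: from finitely many generator weights $\mu_1,\dots,\mu_k$ one cannot always find a single $\beta\in\widehat P^\nu_R$ with $\mu_i\leqslant\beta$ for all $i$, since $\beta-\mu_i\in\bbN\widehat\Pi^+$ forces $\mu_i-\mu_j\in\bbZ\widehat\Pi$, which fails across cosets of the affine root lattice. The correct reading is that $M$ first decomposes into its components indexed by cosets of $\bbZ\widehat\Pi$ (equivalently, linkage blocks), and each such component lies in some $\lub\bfO^{\nu,\kappa}_R$; the ``direct limit'' is then a union over the (non-directed) family of truncations, or a filtered colimit after fixing a coset. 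This is a standard caveat and does not affect the downstream use of the proposition, but your proof as written would only apply verbatim to a single coset-block.
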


\vspace{.5mm}

In particular, we'll regard the tilting modules as objects of 
$\bfO^{\nu,\kappa,\Delta}_R$, although $\bfO^{\nu,\kappa}_R$ is not a highest weight $R$-category.

Next, from Proposition \ref{prop:introhw} we deduce that
the $R$-category $\bfO^{\nu,\kappa}_R$ is Hom-finite and
that for any local $S$-point $R\to S$ the base change 
preserves the tilting modules.
Further, if $M,$ $N$ are tilting,
then $\Hom_{\bfg_R}(M,N)$ is free over $R$ and the canonical map
$S\Hom_{\bfg_R}(M,N)\to\Hom_{\bfg_S}(SM,SN)$
is invertible.

We call  $\bfO^{+,\kappa}_R$ the \emph{Kazhdan-Lusztig category} of $\frakg_R$, i.e.,
the affine parabolic category O 
associated with the standard maximal parabolic in $\bfg_R$, see \cite{KL}.

\vspace{3mm}

\subsubsection{The linkage principle and the highest weight order on $\bfO$}
\label{sec:linkage}
Assume that $R$ is a local ring.
Let us recall the partial order on $\widehat P^\nu_R$ given in \cite{VV}.

First, to each
$\widehat\lambda=(z,\lambda,c)$ in $\widehat P_R,$
we associate its \emph{integral affine root system} which is given by
$\widehat\Pi(\widehat\lambda)
=\{\alpha\in\widehat \Pi\,;\,\langle\widehat\lambda:\alpha\rangle_\Bbbk\in\bbZ\}.$
Since $\widehat\Pi(\widehat\lambda)=\widehat\Pi(0,\lambda,c)$, we may write
$\widehat\Pi(\lambda,c)$ for $\widehat\Pi(\widehat\lambda)$.

Now, given 
$\widehat\lambda,\widehat\lambda'\in\widehat P^\nu_R$, we write
$\widehat\lambda\Uparrow\widehat\lambda'$
if and only if there are
$\beta\in\widehat\Pi(\widehat\lambda')$, $w\in W_\nu$ such that 
$\beta\notin\Pi_\nu$
and
$\widehat\lambda=ws_\beta\bullet\widehat\lambda'\in\widehat\lambda'-\widehat\Pi^+$ 
modulo $\frakm\,\widehat P_R$.

\vspace{2mm}

\begin{df}
$(a)$ The \emph{linkage ordering} is the partial order $\leqslant_\ell$
on $\widehat P^\nu_R$ is the
transitive and reflexive closure of the relation $\Uparrow$.
For $\lambda,\lambda'\in P^\nu_R$ we abbreviate
$\lambda\leqslant_\ell\lambda'$ if and only if
$\widehat\lambda\leqslant_\ell \widehat\lambda'$.
So, we may view $\leqslant_\ell$ as a partial order on $P^\nu_R$.

$(b)$ The \emph{BGG ordering} $\BGG$ on $P^\nu_R$ is the smallest partial order such that
$\lambda\BGG\lambda'$ if 
$[\bfM(\lambda')_{\Bbbk,\nu}:\bfL(\lambda)_\Bbbk]\neq 0.$
\end{df}

\vspace{.5mm}

\begin{rk}
The definition of $\leqslant_\ell$ is motivated by the following remark :
the parabolic version of the Jantzen formula in \cite{KK} for the determinant of the Shapovalov form
of a parabolic Verma module in $\bfO^{\nu,\kappa}_\Bbbk$ implies that $\leqslant_\ell$ refines $\BGG$.
The BGG order induces an highest weight order on 
${}^\beta\bfO^{\nu,\kappa}_R$ for each $\beta$.
Hence $\leqslant_\ell$ induces also an highest weight order on 
${}^\beta\bfO^{\nu,\kappa}_R$ for each $\beta$.
\end{rk}

\vspace{2mm}

\begin{rk} The partial orders $\leqslant_\ell$, $\BGG$ on $P^\nu_R$ can be viewed as partial orders on
$\scrP^\nu$ under the inclusion $\varpi$. They depend on $\Bbbk$. 
To avoid any confusion we may say that these partial orders are
\emph{relative to the field $\Bbbk$}.
\end{rk}

\vspace{2mm}

\begin{rk}
If $\frakp_\nu=\frakb$,
then $\leqslant_\ell$ coincides with $\BGG$ by \cite{KK}.
\end{rk}

\vspace{3mm}

\subsection{The categorical action on $\bfO$}
\label{sec:categorification}
\emph{From now on,
unless specified otherwise, we'll assume 
that $R$ is a regular local analytic deformation ring of dimension $\leqslant 2$.
}

First, let us briefly recall the main properties of the Kazhdan-Lusztig tensor product $\dot\otimes_R$, 
see \S \ref{sec:KLotimes,II}.
Details will be given in Propositions \ref{prop:TPR1}, \ref{prop:adjoints},
\ref{prop:monoidalR}  and \ref{prop:psi-s}.

Recall that $V_R$ is the natural representation of $\frakg_R$, and that
the modules $\bfV_R,\bfV_R^*\in\bfO^{+,\kappa,\Delta}_R$ are given by
$\bfV_R=\Indc_R(V_R)$, $\bfV_R^*=\Indc_R(V_R^*)$.
We have exact endofunctors $e$, $f$ on $\bfO^{\nu,\kappa,\Delta}_R$ given by
$e(M)=M\dot\otimes_R\bfV_R^*$ and $f(M)=M\dot\otimes_R\bfV_R$.
The functors $e$, $f$  preserve the tilting modules.
If $R=K$ is a field then $e$, $f$ extend to biadjoint endofunctors of $\bfO^{\nu,\kappa}_K$.

Since $R$ is an analytic algebra, the element $q_R=\exp(-2\pi\sqrt{-1}/\kappa_R)$ of $R$ is well-defined and
the operator $\exp(2\pi\sqrt{-1}\frakL_0)$ acts on any module $M\in\bfO^{\nu,\kappa}_R$.
Let $X$ be  the endomorphism of the functor $f$ 
which acts  on $f(M)$  by the operator
$\exp(-2\pi\sqrt{-1}\frakL_0)\,\big(\!\exp(2\pi\sqrt{-1}\frakL_0)\dot
\otimes_R\exp(2\pi\sqrt{-1}\frakL_0)\big),$
see \eqref{balancing}, \eqref{XT-KL}.
Let $T$ be the endomorphism
of $f^2$ defined in \eqref{XT-KL}.
By Remark \ref{rem:3.3} the endomorphisms $X$, $T$ can be viewed as endomorphisms of $e$, $e^2$.

Now, let $R=K$ be a field. 
Let $\t\in P_K$ be as in \S \ref{sec:liealgebras}.
Set $I=\{\t_{K,1},\t_{K,2},\dots,\t_{K,\ell}\}+\bbZ+\kappa_K\bbZ$.
Write $i\sim j$ if $i-j\in\kappa_K\bbZ$.
Put $\scrI=I/\!\sim$. 
We will identify $q_K^i$ with the element $i/\sim$ in $\scrI$.

For each $i\in K$ let $f_i$, $e_i$ be the generalized
$q_K^i$-eigenspace and  $q_K^{-(N+i)}$-eigenspace of $X$ acting on $f$ and $e$. 
The functors $e_i$, $f_i$ are biadjoint, see \cite[rem.~7.22]{CR}.
The action of $e_i$, $f_i$ on parabolic Verma modules can be computed explicitly.
Recall that for $\lambda, \mu\in P^\nu_K$  we write 
$\lambda \overset{i}\to\mu$ if $\mu+\rho$ is obtained from 
$\lambda+\rho$ by replacing an entry equal to $i$ by $i+1$.

\vspace{2mm}

\begin{lemma}\label{lem:pasbete}

(a) For each $\lambda\in P^\nu_K,$ the module $f_i(\bfM(\lambda)_{K,\nu})$ has a filtration with sections of the form 
$\bfM(\mu)_{K,\nu}$, one for each $\mu$ such that $\lambda \overset{j}\to\mu$ for some 
$j\in K$ with $i\sim j$,

(b)  for each $\lambda\in P^\nu_K,$ the module $e_i(\bfM(\lambda)_{K,\nu})$ has a filtration with sections of the form 
$\bfM(\mu)_{K,\nu}$, one for each $\mu$ such that $\mu \overset{j}\to\lambda$ for some
$j\in K$ with $i\sim j$,

(c)  $e$, $f$ are exact endofunctors of  $\bfO^{\nu,\kappa}_{K,\tau}$,

(d) 
$e=\bigoplus_{i\in \scrI}e_i$ and $f=\bigoplus_{i\in \scrI}f_i$
on $\bfO^{\nu,\kappa}_{K,\tau}$.
\end{lemma}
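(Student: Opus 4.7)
My approach would be to reduce all four claims to calculations on parabolic Verma modules, using exactness of $f$ and $e$ together with the fact that any module in $\bfO^{\nu,\kappa}_{K,\tau}$ is a quotient of a generalized Weyl module (Lemma \ref{lem:O0}). For (a), I would first establish that the Kazhdan-Lusztig tensor product $\bfM(\lambda)_{K,\nu} \dot\otimes_K \bfV_K$ is compatible with induction from $\scrO^\nu_K$, so that applying $\Indc_K$ to the $\Delta$-filtration of $M(\lambda)_{K,\nu} \otimes_K V_K$ supplied by Proposition \ref{prop:actiontf}(a) yields a $\bfDelta$-filtration of $f(\bfM(\lambda)_{K,\nu})$ whose sections are the parabolic Verma modules $\bfM(\mu)_{K,\nu}$, one for each way of adding a box to $\lambda$ (viewed through $\varpi$). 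This compatibility is a structural property of the deformed Kazhdan-Lusztig tensor product developed in Section \ref{sec:KL}.

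The heart of the proof is then to compute the action of $X$ on each such section. Using the explicit formula \eqref{XT-KL} together with the balancing factor \eqref{balancing}, and the fact that $\exp(2\pi\sqrt{-1}\frakL_0)$ acts on a parabolic Verma module by a single scalar (the $\partial$-grading contributes integer shifts that disappear under $\exp(2\pi\sqrt{-1}\cdot)$), a direct calculation should yield $X|_{\bfM(\mu)_{K,\nu}} = q_K^j$, where $j$ is the shifted content of the added box, equivalently the entry of $\lambda + \rho$ that is being incremented. Since $f_i$ is by construction the generalized $q_K^i$-eigenspace of $X$ acting on $f$, and $q_K^i = q_K^j$ exactly when $i \sim j$, grouping the sections by class modulo $\kappa_K \bbZ$ yields statement (a). Part (b) is obtained by the parallel argument, replacing $\bfV_K$ by $\bfV_K^*$ and Proposition \ref{prop:actiontf}(a) by Proposition \ref{prop:actiontf}(b).

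For (c), the preamble to the lemma already states that $e$ and $f$ are biadjoint endofunctors of $\bfO^{\nu,\kappa}_K$; a functor that admits both a left and a right adjoint is automatically both right and left exact, hence exact. For (d), by Lemma \ref{lem:O0} every object of $\bfO^{\nu,\kappa}_{K,\tau}$ is a quotient of a generalized Weyl module, which in turn admits a $\bfDelta$-filtration by parabolic Verma modules with highest weights in $\varpi(\scrP^\nu) + \tau$. By part (a) the eigenvalues of $X$ on $f$ applied to any such Verma lie in $\{q_K^i : i \in I\}$, so exactness of $f$ propagates this bound to the whole category, and the generalized eigenspace decomposition gives $f = \bigoplus_{i \in \scrI} f_i$; the same reasoning applies to $e$.

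The main obstacle is the eigenvalue computation in the second paragraph. It hinges on the precise normalization of the balancing factor \eqref{balancing} and on the compatibility of $\dot\otimes_K$ with $\Indc_K$ at the level of $\bfDelta$-filtrations, both of which are properties of the deformed Kazhdan-Lusztig tensor product developed in Section \ref{sec:KL}. The cleanness of the final formula $X|_{\bfM(\mu)_{K,\nu}} = q_K^j$ relies on a careful interaction between the Sugawara operators entering $X$ and the way $\Indc_K$ builds parabolic Verma modules from ordinary ones.
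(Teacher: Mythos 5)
Your treatment of parts (a), (b), and (d) follows essentially the same route as the paper: reduce to parabolic Verma modules, transfer the $\Delta$-filtration from $\scrO^\nu_K$ via the compatibility of $\dot\otimes_K$ with $\Indc_K$ (this is exactly the content of Proposition~\ref{prop:adjoints}(d)), and compute the eigenvalue of $X$ on each section using the balancing identity \eqref{balancing} and the action of $\exp(2\pi\sqrt{-1}\frakL_0)$ by a scalar on a parabolic Verma. (One small slip: the highest weights arising in a $\Delta$-filtration of a generalized Weyl module in $\bfO^{\nu,\kappa}_{K,\tau}$ lie in $\widehat{P^\nu+\tau}$, not in $\varpi(\scrP^\nu)+\tau$; the latter parametrizes the subcategory $\bfA^{\nu,\kappa}_{K,\tau}$.)

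There is a genuine gap in part (c). You argue that $e$, $f$ are exact because they are biadjoint on $\bfO^{\nu,\kappa}_K$, but that is already stated in Proposition~\ref{prop:adjoints}(c) and is not the content of (c). The substantive assertion in (c) is that $e$ and $f$ \emph{preserve} the full subcategory $\bfO^{\nu,\kappa}_{K,\tau}\subset\bfO^{\nu,\kappa}_K$, i.e.\ that they are \emph{endofunctors of} $\bfO^{\nu,\kappa}_{K,\tau}$. This does not follow from exactness or adjointness; it requires the observation (used implicitly but never stated in your argument for (d)) that for $\lambda,\mu\in P^\nu_K$ with $\lambda\overset{i}\to\mu$ one has $\lambda\in P^\nu+\tau$ if and only if $\mu\in P^\nu+\tau$. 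Combined with parts (a), (b) and exactness, this shows that $e$, $f$ take a generalized Weyl module in $\bfO^{\nu,\kappa}_{K,\tau}$ back into $\bfO^{\nu,\kappa}_{K,\tau}$, and hence, by Lemma~\ref{lem:O0} and right-exactness, that they preserve the whole of $\bfO^{\nu,\kappa}_{K,\tau}$. As written, your proof of (c) proves a weaker statement and the preservation step is missing.
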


\begin{proof}
Propositions \ref{prop:TPR1}, \ref{prop:adjoints} imply that 
$f(\bfM(\lambda)_{K,\nu})$ has a filtration (not necessarily unique)
whose associated graded consists of the sum of the 
modules $\bfM(\mu)_{K,\nu}$ such that $\lambda\overset{i}\to\mu$ for some $i\in K$. 

Next, the same proof as in \cite[prop.~2.7]{KL}, using the formula
$\frakL_0=\cas/2\kappa+\sum_{r>0}\sum_{i,j=1}^Ne_{ij}^{(-r)}e_{ji}^{(r)}/\kappa,$
shows that the operator
$\exp(2\pi\sqrt{-1}\frakL_0)$ acts on $\bfM(\mu)_{K,\nu}$ by the 
scalar $\exp(-2\pi\sqrt{-1}z'_\mu)$ for any $\mu\in P^\nu_K$, where
$-z'_\mu=\langle\mu:2\rho+(N-1)\det+\mu\rangle/2\kappa$.

Using this, a direct computation shows that any subquotient of $f(\bfM(\lambda)_{K,\nu})$ 
which is isomorphic to 
$\bfM(\mu)_{K,\nu}$, for some affine weight $\mu$ such that
$\lambda\overset{i}\to\mu,$ belongs to the generalized eigenspace of 
$X(\bfM(\lambda)_{K,\nu})$ with eigenvalue $q_K^i$. 
This proves $(a)$.

The discussion above implies that $f=\bigoplus_{i\in K}f_i$, as endofunctors of $\bfO^{\nu,\kappa,\Delta}_K$. 
We deduce that $f=\bigoplus_{i\in K}f_i$ on $\bfO^{\nu,\kappa}_K$, because
$f$ is exact and any object in $\bfO^{\nu,\kappa}_K$ is a quotient of
an object in $\bfO^{\nu,\kappa,\Delta}_K$.
We prove that $e=\bigoplus_{i\in K}e_i$ in a similar way.

For $\lambda, \mu\in P^\nu_K$ such that $\lambda\overset{i}\to\mu$ for some $i\in K$, 
we have $\lambda\in P^\nu+\tau$ if and only if $\mu\in P^\nu+\tau.$
By Lemma \ref{lem:pasbete}, we deduce that $e$, $f$ restrict to exact endofunctors on 
$\bfO^{\nu,\kappa}_{K,\tau}$. 
Note that $e_i$, $f_i$ act by zero on $\bfO^{\nu,\kappa}_{K,\tau}$ whenever $i\not\in I$. 
This proves $(d)$.
\end{proof}

\vspace{2mm}

Now, we define an $\fraksl_{\!\scrI}$-categorical action on $\bfO^{\nu,\kappa}_{K,\tau}$.
For each
$\lambda\in P+\tau$ we write
$m_i(\lambda)=\#\{k\in[1,N]\,;\, q^{\langle\lambda+\rho,\epsilon_k\rangle}_K=i\}$
and
$\wt(\lambda)=\sum_{i\in\scrI}\big(m_i(\lambda)-m_{iq}(\lambda)\big)\Lambda_i.$
For $\beta\in X_{\!\scrI}$ 
let $\bfO^{\nu,\kappa}_{K,\tau,\beta}\subset\bfO^{\nu,\kappa}_{K,\tau}$ be the Serre subcategory generated by  
the modules $\bfL(\lambda)_{K}$ with $\sum_{i\in\scrI}m_i(\lambda)\,\epsilon_i=\beta$. 
\vspace{2mm}

\begin{claim} \label{claim:blocks} For
$\lambda,\mu\in P\{d\}+\tau$ we have

$\widehat\lambda$, $\widehat\mu$ are linked $\iff$
$m_i(\lambda)=m_i(\mu)$ for all $i\in\scrI$ $\iff$ $\wt(\lambda)=\wt(\mu)$.
\end{claim}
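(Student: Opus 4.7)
The plan is to prove the two equivalences (i) $\Leftrightarrow$ (ii) and (ii) $\Leftrightarrow$ (iii) separately. Set $n_i = m_i(\lambda) - m_i(\mu)$ for $i \in \scrI$; condition (ii) says $n \equiv 0$.

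The implication (ii) $\Rightarrow$ (iii) is immediate from the definition of $\wt$. For the converse, the $\bbZ$-linear independence of $\{\Lambda_i\}_{i \in \scrI}$ in $P_{\!\scrI}$ translates $\wt(\lambda) = \wt(\mu)$ into $n_i = n_{iq}$ for all $i$, so $n$ is constant on each connected component $\scrI_p$ of the quiver $\scrI$. When $\scrI_p \simeq A_\infty$ the finite support of $n$ forces that constant to vanish. When $\scrI_p \simeq A_{e-1}^{(1)}$ is a cycle of length $e$, observe that $\sum_{i \in \scrI_p} m_i(\lambda) = \#\{k : q_K^{(\lambda+\rho)_k} \in \scrI_p\}$ depends only on $\nu$ and on the class of $\tau$ in $K/(\bbZ + \kappa_K\bbZ)$; since $\lambda, \mu \in P\{d\} + \tau$ lie over the same $\tau$, this sum is equal for both. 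Hence $\sum_{i\in\scrI_p} n_i = e c_p = 0$ where $c_p$ is the common value of $n$ on $\scrI_p$, giving $c_p = 0$.

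For (i) $\Rightarrow$ (ii), it suffices to verify that a single step $\widehat\mu \Uparrow \widehat\lambda$ preserves the multi-set $\{q_K^{(\lambda+\rho)_k}\}_{k \in [1,N]}$, which records exactly the numbers $m_i$. The $W_\nu$-factor permutes coordinates within $\nu$-blocks and therefore leaves the multi-set invariant, so I reduce to analysing $s_\beta\bullet$ for an integral real root $\beta = \alpha + r\delta$ with $\alpha = \epsilon_a - \epsilon_b$. Using $\beta^\vee = \alpha^\vee + r\bfone$ together with $\langle \widehat\lambda + \tilde\rho, \bfone\rangle = \kappa$, a direct computation shows
\[
(s_\beta\bullet\lambda+\rho)_a = (\lambda+\rho)_b - r\kappa_K, \qquad (s_\beta\bullet\lambda+\rho)_b = (\lambda+\rho)_a + r\kappa_K,
\]
with all other entries unchanged. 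Since $q_K^{\kappa_K} = 1$, the residues at positions $a$ and $b$ are merely transposed, proving the claim.

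For the converse (ii) $\Rightarrow$ (i), suppose the multi-sets $\{q_K^{(\lambda+\rho)_k}\}$ and $\{q_K^{(\mu+\rho)_k}\}$ coincide. Pick $\sigma \in \frakS_N$ with $(\lambda+\rho)_{\sigma^{-1}(k)} \equiv (\mu+\rho)_k \pmod{\kappa_K\bbZ}$ and write $(\sigma\bullet\lambda)_k - \mu_k = \kappa_K y_k$ with $y \in \bbZ^N$. Using $(\sigma\bullet\lambda)_k = (\lambda+\rho)_{\sigma^{-1}(k)} - \rho_k$ one checks $\sum_k (\sigma\bullet\lambda)_k = \sum_k \lambda_k = \sum_k \mu_k$ (the hypothesis $\lambda,\mu\in P\{d\}+\tau$ enters here), so $\sum_k y_k = 0$ and $y$ lies in the root lattice $\bbZ\Pi$; hence $\widehat\mu = (T_{-y}\sigma)\bullet\widehat\lambda$. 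The main obstacle is to factor $w = T_{-y}\sigma$ into a chain of $\Uparrow$-steps, i.e., to show that the subgroup generated by reflections $s_\beta$ with $\beta \in \widehat\Pi(\widehat\lambda_j)\setminus\Pi_\nu$ together with $W_\nu$ acts transitively on the fiber of the multi-set-of-residues map. This is a standard property of the integral affine Weyl group, and the parabolic condition $\beta \notin \Pi_\nu$ is arranged by absorbing moves internal to a $\nu$-block into the $W_\nu$-factors.
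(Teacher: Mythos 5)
The paper states Claim \ref{claim:blocks} with no accompanying proof, so there is no argument of the authors to compare against. Your proof of (iii) $\Leftrightarrow$ (ii) and of (i) $\Rightarrow$ (ii) is correct; in particular, the explicit residue computation for $s_{\alpha+r\delta}\bullet$ combined with $q_K^{\kappa_K}=1$ is exactly the right observation, and your use of $\lambda,\mu\in P\{d\}+\tau$ to obtain both $\sum_k y_k=0$ (so that $y\in\bbZ\Pi$) and the equality of componentwise residue counts in the cyclic case is exactly what is needed.

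The final paragraph, however, rests on a misreading of ``linked.'' Section 5.2.2 defines two weights of $\widehat P^\nu_R$ to be linked precisely when they lie in the same $\widehat W$-orbit under the $\bullet$-action; the relation $\Uparrow$ and its closure $\leqslant_\ell$ (the linkage ordering of \S\ref{sec:linkage}) are a different and finer notion. So once you have exhibited $\widehat\mu=(T_{-y}\sigma)\bullet\widehat\lambda$ with $T_{-y}\sigma\in\widehat W$, the implication (ii) $\Rightarrow$ (i) is already proved and there is no remaining obstacle. Moreover, the fact you invoke as ``standard'' is false in general: if $\tau_{K,u}-\tau_{K,v}\notin\bbZ+\kappa_K\bbZ$ for all $u\neq v$, then $\widehat\Pi(\widehat\lambda)=\Pi_\nu$ (compare the computation in the proof of Proposition \ref{prop:preliminaries}(b)), so the subgroup generated by $W_\nu$ together with the $s_\beta$, $\beta\in\widehat\Pi(\widehat\lambda)\setminus\Pi_\nu$, reduces to $W_\nu$. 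That group cannot act transitively on the fiber of the residue map, which contains weights differing by nonzero elements of $\kappa_K\bbZ\Pi$. Delete the last paragraph and the proof is complete.
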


\vspace{.5mm}

Hence, we have a decomposition
$\bfO^{\nu,\kappa}_{K,\tau}=\bigoplus_{\beta\in X_{\!\scrI}}\bfO^{\nu,\kappa}_{K,\tau,\beta}$
by the linkage principle. 

\vspace{2mm}

\begin{prop}\label{prop:5.14}
The tuple $(e,f,X,T)$, together with the decomposition of
$\bfO^{\nu,\kappa}_{K,\tau}$ above, is an $\fraksl_{\!\scrI}$-categorification on $\bfO_{K,\tau}^{\nu,\kappa}$.
\end{prop}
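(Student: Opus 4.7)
Most of the pieces are already in place; the proposition is essentially a matter of assembling them and verifying integrability. Checking Definition \ref{df:cat1} amounts to the following items: (i) $(e,f)$ is an adjoint pair of exact endofunctors and $e$ is a right adjoint of $f$; (ii) the tuple $(X,T)$ induces an affine Hecke algebra action $\phi_{f^d}:\bfH_{K,d}\to \End(f^d)$; (iii) the decompositions $f=\bigoplus_i f_i$, $e=\bigoplus_i e_i$ hold over $\scrI$; (iv) the weight shifts $f_i(\bfO^{\nu,\kappa}_{K,\tau,\beta})\subset \bfO^{\nu,\kappa}_{K,\tau,\beta-\alpha_i}$ and dually for $e_i$; (v) the induced action on $[\bfO^{\nu,\kappa}_{K,\tau}]$ is integrable.

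Items (i) and (ii) are essentially formal consequences of the Kazhdan--Lusztig monoidal structure on $\bfO^{\nu,\kappa}_K$. First, Propositions \ref{prop:TPR1} and \ref{prop:adjoints} (and the biadjointness statement in \S \ref{sec:categorification}) yield that $e,f$ are exact biadjoint endofunctors; in particular $e$ is right adjoint to $f$. The endomorphism $X$ is built from $\exp(2\pi\sqrt{-1}\frakL_0)$ via formula \eqref{XT-KL}, and $T$ is built from the braiding of the $\dot\otimes_K$-monoidal structure on $\bfO^{+,\kappa}_K$. Granting the monoidal properties supplied in Propositions \ref{prop:monoidalR} and \ref{prop:psi-s}, the standard verification that the $R$-matrix together with the balancing element gives an affine Hecke algebra action (compare \cite[\S 7.5]{CR}) produces the required homomorphism $\phi_{f^d}:\bfH_{K,d}\to\End(f^d)$.

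Items (iii) and (iv) were prepared in Lemma \ref{lem:pasbete}: the decomposition $f=\bigoplus_{i\in K} f_i$ (and analogously for $e$) holds by the argument via $\Delta$-filtered modules, and on $\bfO^{\nu,\kappa}_{K,\tau}$ only the $f_i$ with $i\in I$ act nontrivially, which descends to a decomposition indexed by $\scrI=I/\!\sim$ once one checks that generalized eigenvalues of $X$ on sections $\bfM(\mu)_{K,\nu}$ depend only on $i$ modulo $\kappa_K\bbZ$; this is a direct consequence of the formula for the $\frakL_0$-eigenvalue recalled in the proof of Lemma \ref{lem:pasbete}. For (iv), Lemma \ref{lem:pasbete}(a)--(b) states that $f_i\bfM(\lambda)_{K,\nu}$ is filtered by $\bfM(\mu)_{K,\nu}$ where $\mu+\rho$ is obtained from $\lambda+\rho$ by replacing some entry $j\sim i$ by $j+1$; in terms of the multiplicities $m_i(\lambda)$ this transforms $\wt(\lambda)=\sum_i m_i(\lambda)\varepsilon_i$ into $\wt(\lambda)-\varepsilon_i+\varepsilon_{iq}=\wt(\lambda)-\alpha_i$. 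Dually for $e_i$. Combining this with Claim \ref{claim:blocks} (which identifies the block decomposition with the $\wt$-grading) gives the inclusions $f_i(\bfO^{\nu,\kappa}_{K,\tau,\beta})\subset \bfO^{\nu,\kappa}_{K,\tau,\beta-\alpha_i}$ and $e_i(\bfO^{\nu,\kappa}_{K,\tau,\beta})\subset \bfO^{\nu,\kappa}_{K,\tau,\beta+\alpha_i}$.

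The remaining, and main, point is item (v): integrability. Here the strategy is to identify $[\bfO^{\nu,\kappa}_{K,\tau}]$ as an $\fraksl_{\!\scrI}$-module with a tensor product of wedge powers of the natural representation, exactly as in the non-affine case Proposition \ref{prop:actiontf}(d). The parabolic Verma modules $\{[\bfM(\lambda)_{K,\nu}]\,;\,\lambda\in P^\nu+\tau\}$ form a basis of $[\bfO^{\nu,\kappa}_{K,\tau}]$, and Lemma \ref{lem:pasbete}(a),(b) together with the $\sim$-equivalence computed above show that the action of $e_i,f_i$ on this basis coincides exactly with the action of the Chevalley generators of $\fraksl_{\!\scrI}$ on the tensor product $\bigotimes_{p=1}^\ell \bigwedge^{\nu_p}(V_{\scrI})$, where the $p$-th factor corresponds to the $\nu_p$ coordinates of $\lambda+\rho$ lying in $J_p^\nu$ and carrying the residues through $q_K^{\bullet}$. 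Since each $\bigwedge^{\nu_p}(V_{\scrI})$ is an integrable $\fraksl_{\!\scrI}$-module, so is their tensor product, which gives the integrability of $[\bfO^{\nu,\kappa}_{K,\tau}]$.

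The step I expect to be the most delicate is the verification of the affine Hecke algebra relations in (ii): it requires the braiding on $\bfO^{+,\kappa}_K$ to interact correctly with the balancing element used in \eqref{balancing}. Once the monoidal technology of Section \ref{sec:KL} is granted, this is an application of the standard ``$R$-matrix plus grading operator produces $\bfH_{K,d}$''-machinery, but it is the one point where the Kazhdan--Lusztig formalism is used in an essential way rather than just as a black box.
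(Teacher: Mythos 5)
Your proof is correct and follows the same logical skeleton as the paper's (which is quite terse: weight shift from Lemma \ref{lem:pasbete}, the Hecke-algebra map from Proposition \ref{prop:psi-s}, and a ``direct computation'' for the $\fraksl_{\!\scrI}$-representation structure). Your contribution is to flesh out that direct computation, in particular the integrability, by identifying $[\bfO^{\nu,\kappa}_{K,\tau}]$ with a tensor product of exterior powers, mirroring Proposition \ref{prop:actiontf}(d) in the finite case. Two small points worth tightening. First, in item (v) you write the module as $\bigotimes_{p}\bigwedge^{\nu_p}(V_{\scrI})$; it should be $\bigotimes_p\bigwedge^{\nu_p}(V_I)$ (as in Proposition \ref{prop:actiontf}(d)), with the $\fraksl_{\!\scrI}$-action on $V_I$ via the projection $I\to\scrI$. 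The entries of $\lambda+\rho$ within a block $J^\nu_p$ are distinct as elements of $I$, but two of them may project to the same element of $\scrI$, so a literal wedge over $V_{\scrI}$ would annihilate the corresponding basis vector even though the parabolic Verma module is nonzero. With $V_I$ the identification is a bijection of bases and each $e_i$, $f_i$ is visibly locally nilpotent (each squares to zero on $V_I$), giving integrability. Second, in item (iv) you write $\wt(\lambda)=\sum_i m_i(\lambda)\varepsilon_i$; strictly speaking, in the paper's notation this is $\beta$ (the index of the block), while $\wt(\lambda)$ denotes $\sum_i(m_i(\lambda)-m_{iq}(\lambda))\Lambda_i\in P_{\!\scrI}$. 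This does not affect the computation, since the shift $\beta\mapsto\beta\mp\alpha_i$ for $f_i$, $e_i$ is exactly what is needed.
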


\vspace{.5mm}

\begin{proof}
By Lemma \ref{lem:pasbete} we have $e_i(\bfO_{K,\tau,\beta}^{\nu,\kappa})\subset\bfO_{K,\tau,\beta+\epsilon_i-\epsilon_{qi}}^{\nu,\kappa}$
and
$f_i(\bfO_{K,\tau,\beta}^{\nu,\kappa})\subset\bfO_{K,\tau,\beta-\epsilon_i+\epsilon_{qi}}^{\nu}.$
Further, a direct computation using Lemma \ref{lem:pasbete} shows that the operators $e_{i}$, $f_{i}$ with 
$i\in\scrI$ yield a representation of $\fraksl_{\!\scrI}$ on 
$[\bfO^{\nu,\kappa}_{K,\t}]$ such that $[\bfM(\lambda)_{K,\nu}]$ is a weight vector of weight $\wt(\lambda)$. 
The rest follows from Lemma \ref{lem:pasbete} and 
Proposition \ref{prop:psi-s}.
\end{proof}

\vspace{3mm}

\subsection{The category $\bfA$ and the functor $\Psi$}
\label{sec:BKKB}
Let $R$ be either a field or a local deformation ring. We have the following basic fact.

\vspace{2mm}

\begin{lemma}[\cite{VV}]\label{lem:ideal}
The map $\varpi$ identifies $\scrP^\nu$ with an ideal in $P^\nu$ 
for the partial orders $\leqslant_\ell$ or $\BGG$ relative to $\Bbbk$.
\end{lemma}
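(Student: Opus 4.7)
The plan is to argue directly for the linkage order $\leqslant_\ell$, since the remark following its definition records that $\leqslant_\ell$ refines $\BGG$; hence any ideal for $\leqslant_\ell$ is automatically an ideal for $\BGG$. By transitivity of $\leqslant_\ell$, it suffices to check the single-step statement: if $\lambda\in\scrP^\nu$ and $\mu\in P^\nu_R$ satisfies $\mu\Uparrow\varpi(\lambda)$, then $\mu\in\varpi(\scrP^\nu)$.

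Next, I would record the following concrete characterization, working modulo $\frakm$ (so that $\tau\equiv 0$ and $\kappa\equiv -e$ in $\Bbbk$ with $e\in\bbZ_{>0}$): a $\nu$-dominant $\mu\in P+\tau$ lies in $\varpi(\scrP^\nu)$ if and only if the block minima $(\mu+\rho)_{j_p}$ reduce modulo $\frakm$ to integers $\geqslant 1$ for every $p\in[1,\ell]$. This follows at once from $\varpi(\lambda)+\rho=\lambda+\rho_\nu+\tau$, $(\rho_\nu)_{j_p}=1$, $\lambda_{j_p}\geqslant 0$, and the fact that for $\mu$ $\nu$-dominant the smallest entry of $\mu+\rho$ in block $p$ is indeed $(\mu+\rho)_{j_p}$.

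The single step $\mu=ws_\beta\bullet\varpi(\lambda)$ then splits into two cases according to whether the integral root $\beta\in\widehat\Pi(\widehat{\varpi(\lambda)})\setminus\Pi_\nu$ is finite or properly affine. If $\beta\in\Pi$, we have $\beta=\alpha_{k,l}$ with $p_k\ne p_l$, and after reducing modulo $\frakm$ the hypothesis $\tau_{\Bbbk,u}-\tau_{\Bbbk,v}\notin\bbN^\times$ of Definition-Proposition \ref{prop:Z+} is trivially satisfied since both $\tau_{\Bbbk,\bullet}$ vanish; the proof there applies verbatim to yield $\mu\in\varpi(\scrP^\nu)$. If instead $\beta=\alpha_{k,l}+r\delta$ with $r\neq 0$, I use the decomposition $s_\beta=s_{\alpha_{k,l}}\circ T_{r\check\alpha_{k,l}}$ and $T_x\bullet\widehat\xi=\widehat{\xi+\kappa x}$ to rewrite
\[
\mu=w\,s_{\alpha_{k,l}}\bullet\bigl(\varpi(\lambda)+\kappa r(e_k-e_l)\bigr).
\]
Reading off coordinates of $\mu+\rho$, the only entries affected relative to those of $\varpi(\lambda)+\rho$ are those moved into or out of blocks $p_k,p_l$, and these shift by $\pm\kappa r\equiv\mp er\pmod{\frakm}$. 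The linkage positivity $\widehat{\varpi(\lambda)}-\widehat\mu\in\bbN\widehat\Pi^+$, read through the simple affine positive roots (in particular $\widehat\alpha_0=\delta-\theta$), yields integer inequalities between the swapped entries that, combined with $\lambda_{j_p}\geqslant 0$, give the required bound $(\mu+\rho)_{j_p}\geqslant 1$ modulo $\frakm$ for every $p$.

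The main obstacle is the properly affine case $r\neq 0$: one must carefully track the $\delta$-coefficients in the expansion of $\widehat{\varpi(\lambda)}-\widehat\mu$ as a non-negative combination of simple affine positive roots, and translate these into integer inequalities on the block entries of $\mu+\rho$. I expect the bookkeeping to be directly analogous to the finite case treated in Definition-Proposition \ref{prop:Z+}, with the shift $\kappa r$ playing the role of an additional integer translation and the sign $\kappa_\Bbbk=-e<0$ ensuring that the positivity in $\widehat\Pi^+$ orients the resulting inequalities in the correct direction.
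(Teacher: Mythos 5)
Your proposal takes a genuinely different route from the paper, which simply observes that $\tau_\Bbbk=0$, $\kappa_\Bbbk=-e$ and then cites \cite[prop.~A.6.1]{VV}; you instead attempt a direct, self-contained argument from the definitions. The overall architecture is sound: reduce to a single $\Uparrow$-step, characterize $\varpi(\scrP^\nu)$ by the block minima $(\mu+\rho)_{j_p}$ reducing to integers $\geqslant 1$ modulo $\frakm$, and split on whether the integral root $\beta$ is finite or properly affine. Case~1 is handled correctly: for $\beta\in\Pi$ the $\delta$-coordinate of $\widehat{\varpi(\lambda)}-\widehat\mu$ vanishes (the finite $\bullet$-action preserves $z_\lambda$), so the affine positivity collapses to $\varpi(\lambda)-\mu\in\bbN\Pi^+$ and the proof of Definition--Proposition~\ref{prop:Z+} applies verbatim, with its hypothesis vacuous mod $\frakm$.

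The gap is Case~2, which is the only place the affine structure genuinely enters, and you leave it at ``I expect the bookkeeping to be directly analogous.'' That expectation is right, but as written there is no proof, and the sign analysis is not a mechanical restatement of the finite case. Concretely what is missing: normalize $\beta=\alpha_{k,l}+r\delta\in\widehat\Pi^+$ so that $r>0$ (note $\alpha_{k,l}$ may then have either sign, unlike D--P~\ref{prop:Z+} which fixes $k<l$, so one must also treat $p_k>p_l$); observe that the $\delta$-coefficient of $\widehat{\varpi(\lambda)}-\widehat\mu$ equals $nr$ where $n=\langle\widehat{\varpi(\lambda)}+\tilde\rho:\check\beta\rangle$, so positivity gives $n\geqslant 0$, i.e.\ mod $\frakm$ the inequality $(\varpi(\lambda)+\rho)_k-(\varpi(\lambda)+\rho)_l-er\geqslant 0$; compute that $s_\beta$ replaces the entries at positions $k$, $l$ of $\varpi(\lambda)+\rho$ (mod $\frakm$) by $(\varpi(\lambda)+\rho)_l+er$ and $(\varpi(\lambda)+\rho)_k-er$ respectively, while $w\in W_\nu$ merely sorts within blocks and so preserves block minima; and conclude that both block minima stay $\geqslant 1$, since $(\varpi(\lambda)+\rho)_l+er\geqslant 1+e$ trivially and $(\varpi(\lambda)+\rho)_k-er\geqslant(\varpi(\lambda)+\rho)_l\geqslant 1$ by the displayed inequality. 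None of these steps appears in your text; in particular the crucial inequality coming from the reflection coefficient $n$ is never identified, and without it there is no argument.
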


\vspace{.5mm}

\begin{proof}
It is enough to consider the case of the ordering $\leqslant_\ell$, because it refines $\BGG$.
Since $R$ is a local deformation ring with residue field $\Bbbk$, 
we have $\tau_\Bbbk=0$ and $\kappa_\Bbbk=-e$.
Then, the claim follows from \cite[prop.~A.6.1]{VV}. 
\end{proof}

\vspace{2mm}

For each $\lambda\in\scrP^\nu$, we abbreviate
$\pmb\Delta(\lambda)_{R,\t}=\bfM(\varpi(\lambda))_{R,\nu}$ where
$\varpi$ is the application defined in \eqref{varpi}. 
Following \cite{BK1}, \cite{VV} we introduce the abelian $R$-category
$\bfA^{\nu,\kappa}_{R,\tau}\subset \bfO^{\nu,\kappa}_{R,\tau}$ which is the Serre $R$-linear subcategory
generated by $\{\pmb\Delta(\lambda)_{R,\tau}\,;\,\lambda\in\scrP^\nu\}$.

Since $\tau_\Bbbk=0$, by  Lemma \ref{lem:ideal}, 
$\bfA^{\nu,\kappa}_{\Bbbk}=\bfA^{\nu,\kappa}_{\Bbbk,\tau}$ is a highest weight $\Bbbk$-category.
Using \cite[thm.~4.15]{R1},  this implies that 
$\bfA^{\nu,\kappa}_{R,\tau}$ is a highest weight $R$-category
such that 
$\Delta(\bfA^{\nu,\kappa}_{R,\tau})=\{\pmb\Delta(\lambda)_{R,\tau}\,;\,\lambda\in\scrP^\nu\}.$
The highest weight order on
$\bfA^{\nu,\kappa}_{R,\tau}$ is given by the partial order $\leqslant_\ell$ or $\BGG$ on $\scrP^\nu$ 
relative to $\Bbbk$.

We will write $\bfL(\lambda)$, $\bfP(\lambda)_{R,\tau}$, $\bfT(\lambda)_{R,\tau}$ respectively for the simple, projective, tilting objects associated with 
$\pmb\Delta(\lambda)_{R,\tau}$.
Let $\bfA^{\nu,\kappa,\Delta}_{R,\tau}=(\bfA^{\nu,\kappa}_{R,\tau})^\Delta$ be the full exact subcategory of 
$\Delta$-filtered objects.
For each $d\in\bbN$, let 
$\bfA^{\nu,\kappa}_{R,\tau}\{d\}\subset\bfA^{\nu,\kappa}_{R,\tau}$ be the highest weight
subcategory generated by
$\Delta(\bfA^{\nu,\kappa}_{R,\tau}\{d\})=\{\pmb\Delta(\lambda)_{R,\tau}\,;\,\lambda\in\scrP^\nu_d\}.$

Now, assume that $R$ is analytic of dimension $\leqslant 2$.
By Lemma \ref{lem:pasbete} the endofunctor $f$ of $\bfO^{\nu,\kappa,\Delta}_R$ maps 
$(\bfA^{\nu,\kappa}_{R,\tau}\{d\})^\Delta$ to $(\bfA^{\nu,\kappa}_{R,\tau}\{d+1\})^\Delta$.
We define inductively an object $\bfT_{R,\tau}^{\nu,\kappa}\{d\}$ in $\bfA^{\nu,\kappa}_{R,\tau}\{d\}$ 
by setting
$\bfT_{R,\tau}^{\nu,\kappa}\{0\}=\pmb\Delta(\emptyset)_{R,\t}$ and
$\bfT_{R,\tau}^{\nu,\kappa}\{d\}=f(\bfT_{R,\tau}^{\nu,\kappa}\{d-1\}).$
We will abbreviate
$\bfT_{R,d}=\bfT_{R,\tau}^{\nu,\kappa}\{d\}$ to unburden the notation.
To avoid any confusion we may write
$\bfT_{R,\tau}^{\nu,\kappa}(N)\{d\}=\bfT_{R,\tau}^{\nu,\kappa}\{d\}$ and
$\bfT_{R,d}(N)=\bfT_{R,d}$.
\vspace{2mm}

\begin{lemma}
\label{lem:5.18}
(a) 
We have  $\Bbbk\bfT_{R,d}=\bfT_{\Bbbk,d}$.

(b)
The module $\bfT_{R,d}$ is tilting in $\bfA^{\nu,\kappa}_{R,\tau}.$
\end{lemma}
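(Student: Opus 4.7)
The plan is to prove parts (a) and (b) simultaneously by induction on $d$, treating the base case $d = 0$ carefully and then leveraging the properties of $f$ in the inductive step.

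For the base case $d = 0$, the module $\bfT_{R,0} = \pmb\Delta(\emptyset)_{R,\tau}$ is a parabolic Verma module $\bfM(\varpi(\emptyset))_{R,\nu}$. Part (a) is immediate, since the construction of parabolic Verma modules by parabolic induction is compatible with base change. For part (b), I would first establish that $\varpi(\emptyset)$ is the minimum element of $\varpi(\scrP^\nu)$ with respect to $\leqslant_\ell$ relative to $\Bbbk$. The key observation is that, in the decomposition $\widehat P_R = R\delta\oplus P_R\oplus R\Lambda_0$, the difference $\widehat{\varpi(\lambda)} - \widehat{\varpi(\emptyset)}$ has $P_R$-component $\lambda$, whose sum of $\epsilon_k$-coefficients is $|\lambda|$. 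Since every element of $\bbN\widehat\Pi^+$ has zero $\epsilon_k$-coefficient sum, and $|\lambda|$ is a positive integer (hence a unit in $R$) for $\lambda\neq\emptyset$, one cannot have $\widehat{\varpi(\emptyset)} - \widehat{\varpi(\lambda)}\in\bbN\widehat\Pi^+$ modulo $\frakm\widehat P_R$ unless $\lambda = \emptyset$. It follows that $\pmb\Delta(\emptyset)_{\Bbbk,\tau} = \bfL(\emptyset)_\Bbbk = \bfT(\emptyset)_\Bbbk$ is tilting in $\bfA^{\nu,\kappa}_\Bbbk$. Since $\pmb\Delta(\emptyset)_{R,\tau}$ is $R$-free and its reduction is tilting, Proposition \ref{prop:introhw}(c) applied to the highest weight $R$-category $\bfA^{\nu,\kappa}_{R,\tau}$ concludes that $\bfT_{R,0}$ is tilting.

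For the inductive step, assume (a) and (b) hold for $d-1$. For part (a), I would invoke the base-change compatibility of the Kazhdan-Lusztig tensor product, which is one of the principal outputs of Section \ref{sec:KL} (notably Proposition \ref{prop:TPR1}), giving $\Bbbk f(M)\simeq f(\Bbbk M)$ for $M\in\bfO^{\nu,\kappa,\Delta}_R$. Combining with the inductive hypothesis yields
\begin{equation*}
\Bbbk\bfT_{R,d} = \Bbbk f(\bfT_{R,d-1}) \simeq f(\Bbbk\bfT_{R,d-1}) = f(\bfT_{\Bbbk,d-1}) = \bfT_{\Bbbk,d}.
\end{equation*}
For part (b), I would use the property stated at the beginning of \S \ref{sec:categorification} that $f$ preserves tilting modules in $\bfO^{\nu,\kappa,\Delta}_R$, so that $\bfT_{R,d} = f(\bfT_{R,d-1})$ is tilting in $\bfO^{\nu,\kappa,\Delta}_R$. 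To upgrade tiltingness to the subcategory $\bfA^{\nu,\kappa}_{R,\tau}$, Lemma \ref{lem:pasbete}(a) shows that the $\Delta$-filtration of $f(\bfT_{R,d-1})$ is indexed by $\varpi(\scrP^\nu_d)$ (since $f$ applied to a parabolic Verma produces a filtration by Vermas obtained by adding a single box), and applying the duality $\scrD$ handles the $\nabla$-filtration. Since $\scrP^\nu$ is an ideal in $P^\nu$ by Lemma \ref{lem:ideal}, the module $\bfT_{R,d}$ lies in $\bfA^{\nu,\kappa}_{R,\tau}$ and is tilting there.

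The main obstacle is the two properties of $f$ used above: base-change compatibility and preservation of tilting modules. Both require the careful analytic deformation-theoretic construction of the Kazhdan-Lusztig tensor product over a regular local analytic ring of dimension at most two, which is the subject of Section \ref{sec:KL}; without those technical results, the induction does not even get started. A secondary but more self-contained subtlety is the minimality argument in the base case, which, although short, is where the particular normalization of $\varpi$ and the structure of the affine positive root cone enter decisively.
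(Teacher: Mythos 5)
Your proof is structurally the same as the paper's: an induction on $d$ in which the base case $\bfT_{R,0}=\pmb\Delta(\emptyset)_{R,\tau}$ is shown tilting via Proposition \ref{prop:introhw}(c) by checking the residue, and the inductive step combines base-change compatibility of $f$ with the fact that $f$ preserves tilting modules (Lemma \ref{lem:A19}). The paper's proof is just the two-sentence compression of yours, citing Lemma \ref{lem:A20} for part (a).

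One citation should be corrected. You invoke Proposition \ref{prop:TPR1} for the isomorphism $\Bbbk f(M)\simeq f(\Bbbk M)$ and phrase it for $M\in\bfO^{\nu,\kappa,\Delta}_R$. But Proposition \ref{prop:TPR1}(b) only asserts compatibility of $\dot\otimes_R$ with \emph{flat} base change, and $R\to\Bbbk$ is not flat. The result you actually need is Proposition \ref{prop:adjoints}(b) (equivalently Lemma \ref{lem:A20} in the appendix), which is proved precisely and only for $M\in\bfO^{\nu,\kappa,\tilt}_R$ — the proof there uses that coinvariant spaces of tilting modules are free over $R$ of constant rank (Lemma \ref{lem:A10}), which is exactly what lets non-flat specialization commute with $f$. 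This is also what forces the simultaneous induction you already set up: at each step you need tiltingness of $\bfT_{R,d-1}$ from the inductive hypothesis before you can apply the base-change isomorphism. So your induction is structured correctly; just note that the base-change result is for tilting modules, not arbitrary $\Delta$-filtered ones.

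A minor stylistic point: the paper's justification of the base case ($\bfT_{\Bbbk,0}$ is $\Delta$-filtered and simple, hence tilting) is terser than your explicit verification that $\varpi(\emptyset)$ is minimal in $\scrP^\nu$ for $\leqslant_\ell$, but the two are equivalent; your version just makes explicit why the standard module is simple.
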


\begin{proof}
Part $(a)$ follows from Lemma \ref{lem:A20}.
To prove $(b)$, note first that
$\bfT_{R,0}$ is tilting by Proposition \ref{prop:introhw}, because
$\Bbbk\bfT_{R,0}=\bfT_{\Bbbk,0}$ is $\Delta$-filtered and simple.
Since the functor $f$ preserves the tilting modules of
$\bfO^{\nu,\kappa}_{R,\tau}$ by Lemma \ref{lem:A19}, we deduce that $\bfT_{R,d}$ is  tilting. 
\end{proof}

\vspace{2mm}

By Proposition \ref{prop:psi-s}, we have an $R$-algebra homomorphism 
\begin{equation}\label{eq:psi-s}
\psi_{R,d}^s:\bfH^{s}_{R,d}\to\End_{\bfA^{\nu,\kappa}_{R,\t}}\big(\bfT_{R,d}\big)^\op
\end{equation}
and a functor
$$\Psi_{R,d}^s=\Hom_{\bfA_{R,\tau}^{\nu,\kappa}}(\bfT_{R,d},\bullet):
\bfA_{R,\tau}^{\nu,\kappa}\{d\}\to \bfH^{s}_{R,d}\mmod.$$
The main result of the section is Theorem \ref{thm:isom}. To prove it, we will study in the subsequent subsections some properties of $\psi_{R,d}^s$ and $\Psi_{R,d}^s$ when localized to codimension one.

\vspace{2mm}

\begin{rk}\label{rk:dfT}
Since $\bfT_{R,d}$ is tilting, it is uniquely determined by its specialization $\Bbbk\bfT_{R,d}=\bfT_{\Bbbk,d}.$
If  $R$ is a regular local ring of dimension $>2$, then we may define
$\bfT_{R,d}$ as the unique module in $\bfA^{\nu,\kappa}_{R,\tau}\{d\}$ (up to isomorphism)
which specializes to $\bfT_{\Bbbk,d}$.
We do not know how to define either $\psi_{R,d}^s$ or $\Psi_{R,d}^s$ if $\dim R>2$.
\end{rk}

\vspace{2mm}

\begin{rk}\label{rem:5.15}
For each $p\in[1,\ell],$ let $\lambda_p\in\scrP^\ell_1$ be the $\ell$-partition with $(1)$ on the $p$-th component and $\emptyset$ elsewhere. 
The proof of Lemma \ref{lem:pasbete} implies that the module $\bfT_{K,1}$ has a
$\Delta$-filtration with sections of the form $\pmb\Delta(\lambda)_{K,\tau}$
with $\lambda\in\scrP^\ell_1$, and that the operator
$X\in \End(\bfT_{K,1})$ has the eigenvalue $q_K^{s_p}$ on $\pmb\Delta(\lambda_p)_{K,\tau}$.
\end{rk}

\vspace{3mm}

\subsection{The affine Lie algebra of a pseudo-Levi subalgebra}
\label{sec:affine-levi}
Consider the root system 
$
\widehat\Pi_\nu=
\{\alpha+r\delta\,;\,\alpha\in\Pi_\nu,\,r\in\bbZ\}
\cup\{r\delta\,;\,r\in\bbZ^\times\}.
$
Let $\bfm_{R,\nu}$ be the Lie subalgebra of  $\bfg_R$
spanned by $\bft_R$ and the root subspaces associated with
$\widehat\Pi_\nu$.
We may view $\bfm_{R,\nu}$ as the affine Kac-Moody algebra associated 
with the pseudo-Levi subalgebra
$\frakm_{R,\nu}$ of $\frakg_R$. 
We define the associative
$R$-algebra $\bfm_{R,\nu,\kappa}$ in the same way 
as we defined $\bfg_{R,\kappa}$ in \S \ref{sec:affine}.

The Weyl group of $\widehat\Pi_\nu$ is the subgroup
$\widehat W_\nu$ of $\widehat W$ generated by the affine reflections
$s_\beta$ with $\beta\in\widehat\Pi_\nu$. Thus, we have 
$\widehat W_\nu=\{w T_x\,;\,w\in W_\nu,\,x\in\bbZ\Pi_\nu\}.$

Set 
$\bfb_{R,\nu}=\bfm_{R,\nu}\cap\bfb_{R}$.
The category $\bfO_R^\kappa(\nu)$ consists of the finitely generated
$\bfm_{R,\nu,\kappa}$-modules which are weight $\bft_R$-modules with a locally finite
action of $\bfb_{R,\nu}$ (over $R$), and such that the highest weight of any constituent 
is of the form
$\widehat\lambda$ with $\lambda\in P_R$. 
The decomposition $\frakm_{R,\nu}=\bigoplus_{p=1}^\ell\frakg\frakl_{R,\nu_p}$ yields an equivalence
$\bfO_R^\kappa(\nu)=\bigotimes_{p=1}^\ell \bfO_R^\kappa(\nu_p)$, here the tensor product is over $R$.

Given a tuple $\gamma=(\gamma_p)$ of
compositions of the $\nu_p$'s, let 
$\bfO^{\gamma,\kappa}_R(\nu)\subset \bfO_R^\kappa(\nu)$ be the subcategory which is identified under  the equivalence
$\bfO_R^\kappa(\nu)=\bigotimes_{p=1}^\ell \bfO_R^\kappa(\nu_p)$
with the category
$\bigotimes_{p=1}^\ell \bfO^{\gamma_p,\kappa}_R(\nu_p).$
Given a deformation parameter $\tau$ and a tuple $a\in\bbN^\ell,$ 
we also consider the categories
$\bfO^{\gamma,\kappa}_{R,\t}(\nu)=\bigotimes_{p=1}^\ell \bfO_{R,\tau_p}^{\gamma_p,\kappa}(\nu_p)$ 
and
$\bfO_{R,\tau}^{\gamma,\kappa}(\nu)\{a\}=\bigotimes_{p=1}^\ell \bfO_{R,\tau_p}^{\gamma_p,\kappa}(\nu_p)\{a_p\}.$
Setting $\gamma_p=(\nu_p)$ for each $p$, 
we get the \emph{Kazhdan-Lusztig category} 
$\bfO^{+,\kappa}_R(\nu)=\bfO^{\nu,\kappa}_R(\nu)$ of the Lie algebra $\bfm_{R,\nu}$.
Let $\bfO^{+,\kappa}_R(\nu)\{a\}\subset \bfO^{+,\kappa}_R(\nu)$ be the full subcategory defined in the similar way.

To avoid confusions, we may set
$\bfA_{R,\tau}^{\nu,\kappa}(N)=\bfA_{R,\tau}^{\nu,\kappa}$ if $\frakg=\frakgl_N$.
Then, we define
$\bfA^{+,\kappa}_{R,\tau}(\nu)\subset \bfO^{+,\kappa}_{R,\tau}(\nu)$ 
to be the subcategory isomorphic to
$\bigotimes_{p=1}^\ell\bfA^{\nu_p,\kappa}_{R,\tau_p}(\nu_p)$.

As above, we
drop the subscripts $R,$ $\tau$ if $R=\bbC$ or $\tau=0$.

\vspace{3mm}

\subsection{Reductions to codimension one}

\subsubsection{Preliminaries}
\label{sec:preliminaries}
For each $z\in\bbZ$ and $u,v\in[1,\ell]$ we write
$f_{u,v,z}(\tau_R,\kappa_R)=\tau_{R,u}-\tau_{R,v}-z\,\kappa_R$
and $f_{u,v}(\tau_R)=\tau_{R,u}-\tau_{R,v}.$

\vspace{2mm}

\begin{df}
We will say that the deformation ring $R$ is \emph{generic}
if $f_{u,v,z}(\tau_R,\kappa_R)\neq b$ for any tuple $(u,v,z,b)$ with $u<v$ and $z,b\in\bbZ$,
and that it is \emph{subgeneric} if $\kappa_R\notin\bbQ$ and
$f_{u,v,z}(\tau_R,\kappa_R)= b$ for a unique tuple $(u,v,z,b)$ as above (with $u<v$).
\end{df}

\vspace{.5mm}

\begin{rk}
If $R$ is a local deformation ring, i.e., if
$\tau_{\Bbbk,p}=0$ and $\kappa_{\Bbbk}=-e$ with $e\in\bbN^\times$,
then for each $\frakp\in\frakP$ such that
$f_{u,v,z}(\tau_{\Bbbk_\frakp},\kappa_{\Bbbk_\frakp})=b$ we have also
$b=z\, e$.
\end{rk}

\vspace{2mm}

Now, assume that $R$ is a local deformation ring.
Then, the category $\bfA^{\nu,\kappa}_{R,\tau}$ is a highest weight $R$-category by \S \ref{sec:BKKB},
either for the partial order $\leqslant_\ell$ or $\BGG$ relative to $\Bbbk$ by 
Lemma \ref{lem:ideal}.
In other words, the highest weight order on $\bfA^{\nu,\kappa}_{R,\tau}$ is induced from the
highest weight order on $\bfA^{\nu,\kappa}_{\Bbbk}$ via  base change, which yields a canonical
bijection $\Delta(\bfA^{\nu,\kappa}_{R,\tau})\to\Delta(\bfA^{\nu,\kappa}_{\Bbbk}).$

By base change again, these highest weight orders on $\bfA^{\nu,\kappa}_{R,\tau}$
induce highest weight orders on $\bfA^{\nu,\kappa}_{R_\frakp,\tau}$ 
and $\bfA^{\nu,\kappa}_{\Bbbk_\frakp,\tau}$ for each $\frakp\in\frakP$.
Note that $R_\frakp$ is a local ring, but may not be a local deformation ring because
$\tau_{\Bbbk_\frakp,p}$ may be $\neq 0$.
So, we have the posets isomorphisms
$$\xymatrix{\Delta(\bfA^{\nu,\kappa}_{\Bbbk})&\Delta(\bfA^{\nu,\kappa}_{R,\tau})\ar[l]_-{\otimes\Bbbk}
\ar[r]^-{\otimes R_\frakp}&\Delta(\bfA^{\nu,\kappa}_{R_\frakp,\tau})\ar[r]^-{\otimes\Bbbk_\frakp}
&\Delta(\bfA^{\nu,\kappa}_{\Bbbk_\frakp,\tau})}.$$

We will reduce the study of $\bfA^{\nu,\kappa}_{R,\tau}$ to the study of  
$\bfA^{\nu,\kappa}_{\Bbbk_\frakp,\tau}$
for $\frakp\in\frakP_1$.
We will say that \emph{$\frakp$ is generic} if $\Bbbk_\frakp$ is generic and that
\emph{$\frakp$ is subgeneric} if $\Bbbk_\frakp$ is subgeneric.

\vspace{2mm}

\begin{rk}\label{rem:5.29}
Let $R$, $\scrI$ be as in \S \ref{sec:categorification}.
If $R$ is subgeneric then 
each component $\scrI_p$ is a quiver of type $A_\infty$,
while if $R$  is generic then $\Omega=[1,\ell]$ (i.e., the quiver $\scrI$ has exactly $\ell$ components).
\end{rk}

\vspace{2mm}

In order to use the Kazhdan-Lusztig tensor product, we'll be mainly interested by the case where
$R$ is either a field or a regular local deformation ring of dimension $\leqslant 2$.
Note that if $R$ has dimension 2, then we can always choose it in such a way that it is in general position.

The following basic fact is important for the rest of the paper. 

\vspace{2mm}

\begin{prop}\label{prop:preliminaries} 
Assume that $R$ is a local deformation ring in general position.

(a)
If $\frakp\in\frakP_1$ then $\frakp$ is either generic or subgeneric.

(b)
The $K$-category $\bfA^{\nu,\kappa}_{K,\tau}$ is split semi-simple.

(c)
If $R$ is analytic,
then the condition \eqref{(A)} holds in the fraction field $K$.

(d) If $\nu_p\geqslant d$ for all $p$, then the map $\psi^s_{K,d}: \bfH^s_{K,d}\to \End_{\bfA^{\nu,\kappa}_{K,\tau}}(\bfT_{K,d})^\op$ in \eqref{eq:psi-s}
is an isomorphism of $K$-algebras. The functor $\Psi^s_{K,d}$ is an equivalence of categories and it maps $\pmb\Delta(\lambda)_{K,\tau}$ to $S(\lambda)^{s,q}_K$.
\end{prop}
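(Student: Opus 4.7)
The plan is to handle (a) from the definition of ``general position'' and the UFD property, derive (b) and (c) by checking that general position excludes the relevant vanishing conditions, and deduce (d) from the resulting semi-simplicity via a dimension count and an injectivity argument based on $X$-eigenvalues.

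For (a), since an analytic algebra is a UFD, any $\frakp\in\frakP_1$ is principal, say $\frakp=(\pi)$. The general-position hypothesis makes the family $\{f_{u,v,z}(\tau_R,\kappa_R)-b,\ \kappa_R-c\}$ pairwise coprime, so at most one element is divisible by $\pi$. A direct inspection of the three cases (none, or only some $\kappa_R-c$, or exactly one $f_{u,v,z}-b$ divisible) shows that $\frakp$ is generic or subgeneric in the sense of the definition.

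For (b), I would check that the linkage order on $\varpi(\scrP^\nu)\subset P^\nu_K$ is trivial. If $\widehat{\varpi(\mu)}=ws_\beta\bullet\widehat{\varpi(\lambda)}$ with $\beta=\alpha_{k,l}+r\delta\in\widehat\Pi(\widehat{\varpi(\lambda)})\setminus\widehat\Pi_\nu$ and $p_k\neq p_l$, then the integrality condition $\langle\varpi(\lambda)+\tilde\rho:\beta\rangle\in\bbZ$ reduces to $f_{p_k,p_l,-r}(\tau_K,\kappa_K)\in\bbZ$, which is excluded by general position. Hence standards coincide with simples in $\bfA^{\nu,\kappa}_{K,\tau}$, and $\End(\pmb\Delta(\lambda)_{K,\tau})=K$ gives split semi-simplicity. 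Part (c) is a direct computation: from $q_K=\exp(-2\pi\sqrt{-1}/\kappa_K)$ and $Q_{K,p}=q_K^{s_p}$, a vanishing $1+q_K+\cdots+q_K^{i-1}=0$ would force $\kappa_K\in\bbQ$, and $q_K^rQ_{K,u}-Q_{K,v}=0$ would force $f_{u,v,z}(\tau_K,\kappa_K)\in\bbZ$ for some $z$; both are excluded by general position.

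For (d), combining (b)--(c) with \S\ref{sec:3.4} yields split semi-simplicity of both $\bfA^{\nu,\kappa}_{K,\tau}\{d\}$ and $\bfH^s_{K,d}$. Since $\nu_p\geqslant d$, we have $\scrP^\nu_d=\scrP^\ell_d$. Iterating Lemma \ref{lem:pasbete} from $\bfT_{K,0}=\pmb\Delta(\emptyset)_{K,\tau}$ yields $[\bfT_{K,d}]=\sum_{\lambda\in\scrP^\ell_d}m_\lambda\,[\pmb\Delta(\lambda)_{K,\tau}]$, where $m_\lambda$ is the number of standard Young $\ell$-tableaux of shape $\lambda$, equal to $\dim_K S(\lambda)^{s,q}_K$ by Tits' deformation theorem. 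Thus $\dim_K\End(\bfT_{K,d})^{\op}=\sum_\lambda m_\lambda^2=\ell^d d!=\dim_K\bfH^s_{K,d}$, and $\bfT_{K,d}$ is a projective generator of $\bfA^{\nu,\kappa}_{K,\tau}\{d\}$. The main obstacle is proving injectivity of $\psi^s_{K,d}$, which upgrades the dimension match to an isomorphism. For $d=1$, Remark \ref{rem:5.15} shows $X$ acts on the standard summands of $\bfT_{K,1}$ with pairwise distinct eigenvalues $Q_{K,1},\dots,Q_{K,\ell}$ (distinctness coming from general position), giving injectivity. For general $d$, I would induct using Lemma \ref{lem:pasbete} to track the simultaneous eigenvalues of $X_1,\dots,X_d$ on each summand of $\bfT_{K,d}$: the eigenvalues on $\pmb\Delta(\lambda)_{K,\tau}^{\oplus m_\lambda}$ are indexed via shifted residues by the $m_\lambda$ standard Young $\ell$-tableaux of shape $\lambda$, matching the residue pattern characterizing $S(\lambda)^{s,q}_K$ in the semi-simple case. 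Hence no primitive central idempotent of $\bfH^s_{K,d}$ can act as zero on $\bfT_{K,d}$, giving injectivity. Then $\Psi^s_{K,d}$ is an equivalence, and $\Psi^s_{K,d}(\pmb\Delta(\lambda)_{K,\tau})\simeq S(\lambda)^{s,q}_K$ follows from uniqueness of Specht modules (Lemma \ref{le:Korder}), both sides being simple of dimension $m_\lambda$ with matching characters on the polynomial subalgebra generated by $X_1,\dots,X_d$.
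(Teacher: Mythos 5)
Your proof of (a)--(c) follows essentially the same route as the paper: (a) from the UFD property plus pairwise coprimality (at most one element of the family can be divisible by the generator of $\frakp$); (b) by computing the integral affine root system $\widehat\Pi(\tau_K,\kappa_K)=\Pi_\nu$, so linkage classes are singletons; (c) by noting that any failure of \eqref{(A)} would force $\kappa_K\in\bbQ$ or $f_{u,v,z}(\tau_K,\kappa_K)\in\bbZ$, both excluded by general position. Be aware in (a) that your middle case (``only some $\kappa_R-c$ divisible'') still yields a generic $\frakp$, since the definition of generic only constrains the $f_{u,v,z}$'s; this is implicit in your write-up but worth stating.

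For (d) the overall strategy coincides with the paper's --- injectivity via the eigenvalues of the $X_i$'s, surjectivity by a dimension count, identification of $\Psi^s_{K,d}(\pmb\Delta(\lambda)_{K,\tau})$ with $S(\lambda)^{s,q}_K$ by block considerations --- but your dimension count is more self-contained and is a genuine variant. You compute $\dim_K\End_{\bfA^{\nu,\kappa}_{K,\tau}}(\bfT_{K,d})=\sum_\lambda m_\lambda^2=\ell^d d!$ directly from the multiplicities and the group-order identity for $\Gamma_d$, whereas the paper transports the question to the finite type case via Proposition \ref{prop:adjoints}(d) and then invokes the degenerate Brundan--Kleshchev isomorphism (Proposition \ref{prop:isomBK}(a)). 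Your route avoids that detour, at the price of explicitly identifying $m_\lambda=\dim\scrX(\lambda)_\bbC=\dim_K S(\lambda)^{s,q}_K$. Your injectivity step (``I would induct \dots'') is left informal; the paper's formula \eqref{eq:haha} is the clean way to make it precise: for each $\bfi\in\scrI^\alpha$ with $\nu_p\geqslant d$ the summand $f_\bfi(\bfT_{K,0})$ is exactly the single standard $\pmb\Delta(\lambda)_{K,\tau}$ with $\sum_i n_i(\lambda)\alpha_i=\alpha$ (addable boxes have distinct residues under general position), so $\psi^s_{K,d}(1_\alpha)$ is a nonzero projection, and injectivity of $\psi^s_{K,d}$ follows because each block of a split semi-simple algebra is a simple algebra on which a nonzero morphism is automatically injective.
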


\vspace{.5mm}

\begin{proof}
Since $R$ is a UFD and $\frakp$ has height 1,  we have $\frakp=R g$ for some irreducible element $g\in R$.
Now, if $f_{u,v,z}(\tau_{\Bbbk_\frakp},\kappa_{\Bbbk_\frakp})=b$ and $\kappa_{\Bbbk_\frakp}=c$ 
for some $u\neq v$,  $z,b\in\bbZ$ and $c\in\bbQ$ then
$g$ must be a unit of $R$ because $R$ is in general position. 
This is a contradiction. 
For the same reason, we may have $f_{u,v,z}(\tau_{\Bbbk_\frakp},\kappa_{\Bbbk_\frakp})=b$
for at most one tuple $(u,v,z,b)$.
Therefore, if $\frakp$ is not generic, then we have $\kappa_{\Bbbk_\frakp}\notin\bbQ$.
Part $(a)$ is proved. 

Part $(b)$ follows from the linkage principle. 
More precisely, recall that for $k\in J^\nu_p$ we set $p_k=p$.
Then, since $R$ is in general position, we have
\begin{eqnarray*}
\widehat\Pi(\tau_K,\kappa_K)&=&\{\beta\in\widehat\Pi\,;\, \langle (0,\t_K,\kappa_K):\beta\rangle\in\bbZ\},\\
&=&\{\alpha_{k,l}+z\delta\,;\, f_{p_k,p_l,a}(\tau_{K},\kappa_{K})\in\bbZ\},\\
&=&\Pi_\nu.
\end{eqnarray*}
Thus, the linkage classes are reduced to points, because two $\nu$-dominant weights which are 
$W_\nu$-conjugate under the $\bullet$-action are equal. Hence
$\bfA^{\nu,\kappa}_{K,\tau}$ is split semi-simple.

Part $(c)$ is obvious, 
because $q_K=\exp(-2\pi\sqrt{-1}/\kappa_K)$, $Q_{K,p}=\exp(-2\pi\sqrt{-1}s_p/\kappa_K),$
$\kappa_K\notin\bbQ$ and $(s_{K,u}-s_{K,v}+\kappa_K\bbZ)\cap\bbZ=\emptyset$ 
for each $u\neq v$.

Let us prove part $(d)$. As a finite dimensional split semi-simple $K$-algebra, the center of 
$\bfH^s_{K,d}$ is spanned by the primitive central idempotents. These idempotents are of the form 
$1_\alpha=\sum_{\bfi\in\scrI^\alpha}1_\bfi$ where $\alpha\in Q^+$ has height $d$, see \S 
\ref{sec:3.4}. For each nonzero $1_\alpha,$ there is a unique $\ell$-partition $\lambda$ of $d$ such 
that $\sum_{i\in K}n_i(\lambda)\alpha_i=\alpha$. From Lemma 
\ref{lem:pasbete} we deduce that, if $\nu_p\geqs d$ for all $p,$ then
\begin{equation}\label{eq:haha}
f_\bfi(\bfT_{K,0})=\pmb\Delta(\lambda)_{K,\tau},\quad\forall\ \bfi\in\scrI^\alpha.
\end{equation} 
Since $\psi^s_{K,d}(1_\alpha)$ is the projection from $\bfT_{K,d}$ onto its direct summand
$\bigoplus_{\bfi\in\scrI^\alpha}f_\bfi(\bfT_{K,0})$, the latter is nonzero whenever $1_\alpha$ is 
nonzero. So, the map $\psi^s_{K,d}$ is injective. To prove that it is an isomorphism, we are reduced 
to check the following.

\vspace{2mm}

\begin{claim} $\bfH^s_{K,d}$ and 
$\End_{\bfA^{\nu,\kappa}_{K,\tau}}(\bfT_{K,d})^\op$ have the same dimension over $K$.
\end{claim}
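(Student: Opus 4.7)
The plan is to compute both $\dim_K \bfH^s_{K,d}$ and $\dim_K \End_{\bfA^{\nu,\kappa}_{K,\tau}}(\bfT_{K,d})$ explicitly, and to observe that both equal $|\Gamma_d| = \ell^d d!$.

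For the Hecke side I will invoke Proposition \ref{prop:preliminaries}(c), which gives condition \eqref{(A)} in $K$ and hence the split semi-simplicity of $\bfH^s_{K,d}$. Its simple modules are the Specht modules $S(\lambda)^{s,q}_K$ indexed by $\lambda \in \scrP^\ell_d$; each $S(\lambda)^{s,q}_R$ is free over $R$ and specializes at $q \to 1$, $Q_p \to \zeta^{p-1}$ to the irreducible $\Gamma_d$-module $\scrX(\lambda)_\bbC$, so $\dim_K S(\lambda)^{s,q}_K = \dim_\bbC \scrX(\lambda)_\bbC =: f^\lambda$, the number of standard $\ell$-tableaux of shape $\lambda$. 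Wedderburn then yields
\[
\dim_K \bfH^s_{K,d} \;=\; \sum_{\lambda \in \scrP^\ell_d}(f^\lambda)^2 \;=\; |\Gamma_d| \;=\; \ell^d d!.
\]

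For the endomorphism algebra I will use Proposition \ref{prop:preliminaries}(b): $\bfA^{\nu,\kappa}_{K,\tau}$ is split semi-simple, so every $\pmb\Delta(\mu)_{K,\tau}$ with $\mu \in \scrP^\nu$ is simple with $\End = K$, and distinct labels give non-isomorphic modules. In this regime the $\Delta$-filtration of Lemma \ref{lem:pasbete}(a) splits, and $f\bigl(\pmb\Delta(\lambda)_{K,\tau}\bigr)$ is the direct sum, with multiplicity one, of those $\pmb\Delta(\mu)_{K,\tau}$ obtained from $\lambda$ by adding a single box. Starting from $\bfT_{K,0} = \pmb\Delta(\emptyset)_{K,\tau}$ and iterating $d$ times, using the hypothesis $\nu_p \geq d$ to guarantee $\scrP^\nu_d = \scrP^\ell_d$ so that every shape is reachable, the multiplicity of $\pmb\Delta(\mu)_{K,\tau}$ in $\bfT_{K,d}$ equals the number of chains $\emptyset = \lambda^{(0)} \subset \lambda^{(1)} \subset \cdots \subset \lambda^{(d)} = \mu$ obtained by successive single-box additions, which is precisely $f^\mu$. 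Consequently
\[
\bfT_{K,d} \,\cong\, \bigoplus_{\mu \in \scrP^\ell_d} \pmb\Delta(\mu)_{K,\tau}^{\oplus f^\mu}, \qquad \End_{\bfA^{\nu,\kappa}_{K,\tau}}(\bfT_{K,d})^\op \,\cong\, \bigoplus_{\mu} \Mat_{f^\mu}(K),
\]
whose $K$-dimension is $\sum_\mu (f^\mu)^2 = \ell^d d!$, matching the Hecke side.

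The argument is almost entirely formal once the two semi-simplicity statements of Proposition \ref{prop:preliminaries} are in hand; the only part requiring a genuine bookkeeping step is the multiplicity identification in the second paragraph, which proceeds by a direct induction on $d$ using the branching rule of Lemma \ref{lem:pasbete}(a), combined with the observation that distinct addable boxes to $\lambda$ produce distinct labels $\mu$ (hence non-isomorphic simple summands).
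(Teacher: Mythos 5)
Your proof is correct, but it takes a different route from the paper's. The paper invokes Proposition \ref{prop:adjoints}(d), which says that $\Indc_R$ induces an isomorphism $[\scrO^{\nu,\Delta}_K]\to[\bfO^{\nu,\kappa,\Delta}_K]$ intertwining $e,f$; since both $A^\nu_{K,\tau}$ and $\bfA^{\nu,\kappa}_{K,\tau}$ are split semi-simple, the endomorphism algebras of $T_{K,d}$ and $\bfT_{K,d}$ are determined by the Grothendieck-group classes, which agree, so $\dim_K\End_{\bfA^{\nu,\kappa}_{K,\tau}}(\bfT_{K,d})=\dim_K\End_{A^\nu_{K,\tau}}(T_{K,d})$; it then cites the Brundan--Kleshchev isomorphism $\varphi^s_{K,d}\colon H^s_{K,d}\iso\End_{A^\nu_{K,\tau}}(T_{K,d})^\op$ from Proposition \ref{prop:isomBK}(a) for the finite-type side, and concludes since $\dim_K\bfH^s_{K,d}=\dim_K H^s_{K,d}=\ell^d d!$. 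You instead compute both dimensions independently as $\sum_{\mu\in\scrP^\ell_d}(f^\mu)^2=\ell^d d!$: the Hecke side by Wedderburn plus the identification $\dim_K S(\lambda)^{s,q}_K=\dim_\bbC\scrX(\lambda)_\bbC$, and the endomorphism side by iterating the branching rule of Lemma \ref{lem:pasbete}(a) in the split semi-simple situation (where the $\Delta$-filtrations split and the $\nu_p\geq d$ hypothesis guarantees all shapes are reachable) to obtain $\bfT_{K,d}\cong\bigoplus_\mu\pmb\Delta(\mu)_{K,\tau}^{\oplus f^\mu}$. Your argument is more self-contained in that it does not need Proposition \ref{prop:isomBK} or the Grothendieck-group comparison of \ref{prop:adjoints}(d), trading these structural inputs for an explicit tableau count; the paper's route is shorter in text because it leans on previously established heavy machinery. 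Both are valid.
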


\vspace{.5mm}

To prove the claim, by Proposition \ref{prop:adjoints}, it is enough to check that the $K$-algebras
$H^s_{K,d}$ and 
$\End_{A^{\nu}_{K,\tau}}(T_{K,d})^\op$ have the same dimension. 
This follows from Proposition \ref{prop:isomBK}.

Next, since the $K$-algebra
$\bfA^{\nu,\kappa}_{K,\tau}$ is split semi-simple by part $(b)$, the standard modules 
$\pmb\Delta(\lambda)_{K,\tau},$ with $\lambda\in\scrP^\nu_d,$ form a complete set of 
indecomposable projective 
modules in $\bfA^{\nu,\kappa}_{K,\tau}$. So, formula \eqref{eq:haha} implies that 
$\bfT_{K,d}=\bigoplus_{\bfi\in\scrI^d}f_\bfi(\bfT_{K,0})$ is a projective generator in 
$\bfA^{\nu,\kappa}_{K,\tau}$. So $\Psi^s_{K,d}$ is an equivalence.
Since the unique simple and projective module in the block $\bfH^s_{K,\alpha}$
is the Specht module $S(\lambda)^{s,q}_{K}$, where $\lambda$ is as in
\eqref{eq:haha}, 
we deduce that
$\Psi^s_{K,d}(\Delta(\lambda)_{K,\tau})=S(\lambda)^{s,q}_{K}$.
\end{proof}

\vspace{3mm}

\subsubsection{The reduction to the finite type with $\ell=2$} \label{sec:red1}

For each tuple $a\in\bbN^{\ell-1},$ let $\bfO^{\nu,\kappa}_{R,\t}\{a\}\subset\bfO^{\nu,\kappa}_{R,\t}$ be the full 
subcategory 
consisting of the modules whose simple subquotients have a highest weight of the form
$\widehat{\lambda+\t_\Bbbk}$ with $\lambda\in P^\nu\{a\}.$ 
Set $\bfA^{\nu,\kappa}_{R,\t}\{a\}=\bfO^{\nu,\kappa}_{R,\t}\{a\}\cap\bfA^{\nu,\kappa}_{R,\t}$.
We define
$\bfO^{\nu,\kappa}_{\Bbbk,\t}\{a\}$ and $\bfA^{\nu,\kappa}_{\Bbbk,\t}\{a\}$ in the obvious way.

Let $p\mapsto p^o$ be the permutation of $[1,\ell]$ such that $p^{o}=\ell+1-p$.
Let $k\mapsto k^o$ be the unique permutation of $[1,N]$ which is 
blockwise increasing and which takes
the block $J^\nu_p$ to $J^{\nu^o}_{p^o}$. Applying this permutation
to the entries of a weight $\lambda\in P^\nu_R$ yields a weight $\lambda^o\in P^{\nu^o}_R$. 

Assume that $R$ is a local ring with a subgeneric residue field.
Let $h=(u,v,z)$ be the unique triple such that $u< v$ and 
$f_{u,v,z}(\tau_\Bbbk,\kappa_\Bbbk)=z\,e.$
Given a tuple $\varkappa=\varkappa_R\in R^\ell$, let  $\varkappa_\Bbbk\in \Bbbk^\ell$ be its residue class.
Assume that $f_{u,v}(\varkappa_\Bbbk)=ze$.
We will identify $\varkappa_R$ with the weight 
$\sum_p\varkappa_{R,p}\,\det_p\in P_R$.

If $z\leqslant 0,$ we abbreviate
$\scrO^{\nu}_{R,h}\{a\}=
\scrO^{\nu}_{R,\varkappa}(\nu,u,v)\{a\}$.
If $z>0$, we write $\scrO^{\nu}_{R,h}\{a\}=\scrO^{\nu^o}_{R,\varkappa^o}(\nu^o,v^o,u^o)\{a^o\}.$
See \S \ref{sec:notation4} for the notation.
We define $A^{\nu}_{R,h}\{a\}$ in the same manner. 
For each $d\in\bbN,$ we write $A^{\nu}_{R,h}\{d\}=\bigoplus_a A^{\nu}_{R,h}\{a\},$
where $a$ runs over the set of all $(\ell-1)$-compositions of $d$.
Depending on the sign of $z$, 
we write $M(\lambda)_{R,h}$ for $M(\lambda+\varkappa)_{R,\nu}$ or 
$M(\lambda^o+\varkappa^o)_{R,\nu}$,
and $\Delta(\lambda)_{R,h}$ for $\Delta(\lambda)_{R,\varkappa}$ or 
$\Delta(\lambda^o)_{R,\varkappa^o}$.

\vspace{2mm}

\begin{prop}\label{prop:equivredI}
(a) We have
$\bfO_{R,\t}^{\nu,\kappa}\simeq\bigoplus_{a\in\bbN^{\ell-1}}\bfO^{\nu,\kappa}_{R,\t}\{a\},$
$\bfO_{\Bbbk,\t}^{\nu,\kappa}\simeq\bigoplus_{a\in\bbN^{\ell-1}}\bfO^{\nu,\kappa}_{\Bbbk,\t}\{a\}.$

(b) There are equivalences of highest weight $R$-categories 
$\scrQ_R:\bfO_{R,\t}^{\nu,\kappa}\{a\}\to\scrO^{\nu}_{R,h}\{a\}$ and of highest weight $\Bbbk$-categories
$\scrQ_\Bbbk:\bfO_{\Bbbk,\t}^{\nu,\kappa}\{a\}\to\scrO^{\nu}_{\Bbbk,h}\{a\},$
such that 
$\Bbbk\scrQ_R(M)=\scrQ_\Bbbk(\Bbbk M)$ for each $M\in\bfO_{R,\t}^{\nu,\kappa}\{a\}$
and $\scrQ_R(\bfM(\lambda+\tau_R)_{R,\nu})=M(\lambda)_{R,h}$ for 
each $\lambda\in P^\nu$.

(c)  The equivalences in (b) restrict to equivalences of highest weight categories
$\scrQ_R: \bfA_{R,\tau}^{\nu,\kappa}\{a\}\to A^{\nu}_{R,h}\{a\}$
and $\scrQ_\Bbbk: \bfA_{\Bbbk,\tau}^{\nu,\kappa}\{a\}\to A^{\nu}_{\Bbbk,h}\{a\}.$
In particular $\scrQ_R(\pmb\Delta(\lambda)_{R,\tau})=\Delta(\lambda)_{R,h}$ for all $\lambda$.
\end{prop}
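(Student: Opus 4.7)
The plan is to deduce (a) from a direct analysis of the integral affine root system at the subgeneric residue field $\Bbbk$, to establish (b) by invoking Fiebig's theorem \cite{F2} to identify the blocks of $\bfO^{\nu,\kappa}_{\Bbbk,\tau}\{a\}$ with blocks of the parabolic category $\scrO$ for the pseudo-Levi $\frakm_{\Bbbk,\nu,u,v}$, then to deform the equivalence from $\Bbbk$ to $R$ through the highest weight structure, and finally to deduce (c) by tracking the labels of standard objects through the equivalence.

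For (a): since $R$ is subgeneric, the only integral affine roots not in $\Pi_\nu$ at the residue field are of the form $\pm\alpha_{k,l}+r\delta$ with $\{p_k,p_l\}=\{u,v\}$ and $r$ determined by $z$. Every reflection in such a root, and every reflection in a Levi root, preserves both $a_\bullet := a_u+a_v$ and each $a_p$ for $p\neq u,v$. The linkage principle therefore gives the claimed direct sum decompositions, both over $\Bbbk$ and (by lifting central idempotents through the local ring $R$) over $R$. For (b), I would first construct $\scrQ_\Bbbk$: the integral affine Weyl group $\widehat W(\tau_\Bbbk,\kappa_\Bbbk)$ acts on the linkage classes through the finite Weyl group of the pseudo-Levi $\frakm_{\Bbbk,\nu,u,v}$, and these are the very Coxeter data that govern $\scrO^\nu_{\Bbbk,h}\{a\}$. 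Fiebig's theorem \cite{F2} then yields an equivalence of highest weight $\Bbbk$-categories matching parabolic Verma modules of the same highest weight (modulo the built-in translations by $\tau_\Bbbk$ on one side and by $\varkappa$ on the other). The involution $p\mapsto p^o$ is forced when $z>0$ because the positive integral affine roots then point \emph{downwards} relative to the standard positive system of $\frakg_\Bbbk$, so one must swap $u\leftrightarrow v$ to re-orient them; this is the only genuine case split. To deform $\scrQ_\Bbbk$ to $\scrQ_R$, I would lift a projective generator of $\bfO^{\nu,\kappa}_{\Bbbk,\tau}\{a\}$ to an object of $\bfO^{\nu,\kappa}_{R,\tau}\{a\}$ projective over $R$, similarly lift a projective generator on the $\scrO$-side, and then lift the $\Bbbk$-algebra isomorphism of their endomorphism rings to an $R$-algebra isomorphism (both endomorphism algebras being $R$-free with isomorphic specialisations, by Proposition~\ref{prop:introhw}). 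Proposition~\ref{claim:eqHWC}(b) then upgrades the resulting Morita equivalence to an equivalence of highest weight $R$-categories, and $\Bbbk\scrQ_R=\scrQ_\Bbbk$ holds by construction.

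Part (c) is then formal: the highest weight ideals $\{\pmb\Delta(\lambda)_{R,\tau}\}_{\lambda\in\scrP^\nu}$ and $\{\Delta(\lambda)_{R,h}\}_{\lambda\in\scrP^\nu}$ are interchanged by $\scrQ_R$ since it maps $\bfM(\lambda+\tau_R)_{R,\nu}$ to $M(\lambda)_{R,h}$, and $\scrQ_R$ preserves the highest weight orders (both induced from $\leqslant_\ell$); hence it restricts to the asserted equivalence, sending $\pmb\Delta(\lambda)_{R,\tau}$ to $\Delta(\lambda)_{R,h}$. The main obstacle is step (b): Fiebig's theorem must be matched to our specific parametrisation so that parabolic Verma modules correspond on the nose, and the $p\mapsto p^o$ twist when $z>0$ requires careful bookkeeping of the two positive systems being compared. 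Lifting $\scrQ_\Bbbk$ from $\Bbbk$ to $R$ also uses essentially that the endomorphism algebra of a projective generator deforms flatly and is determined, up to isomorphism, by its $\Bbbk$-specialisation, which is why the hypothesis that $R$ is a local deformation ring (so that Proposition~\ref{prop:introhw} applies on both sides) is indispensable.
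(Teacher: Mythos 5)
Your outline for parts (a) and (c), and for the construction of $\scrQ_\Bbbk$ via Fiebig's theorem including the $p\mapsto p^o$ twist for $z>0$, tracks the paper quite closely. However, your proposed mechanism for deforming $\scrQ_\Bbbk$ to $\scrQ_R$ contains a genuine gap.

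You claim you can ``lift the $\Bbbk$-algebra isomorphism of the endomorphism rings to an $R$-algebra isomorphism (both endomorphism algebras being $R$-free with isomorphic specialisations).'' But two finite free $R$-algebras having isomorphic fibres over the residue field $\Bbbk$ are not, in general, isomorphic as $R$-algebras, nor does a given $\Bbbk$-algebra isomorphism automatically lift — this is precisely the domain of non-uniqueness of deformations. Neither Henselian idempotent lifting nor Proposition~\ref{prop:introhw} gives such a lift: Proposition~\ref{prop:introhw} only controls flat base change of Hom-spaces for a single category, it does not produce an isomorphism between two a priori unrelated deformations of the same $\Bbbk$-algebra. The paper resolves this differently: it observes that Fiebig's proof of \cite[thm.~11]{F2} already operates over a deformation ring, via a Soergel-type functor identifying the endomorphism ring of a projective generator, block by block, with sections of the structure algebra of a moment graph; since the two categories being compared have the same integral Coxeter data, they yield the \emph{same} moment graph (after base change of deformation rings to $R$), and it is this canonical identification over $R$ that produces $\scrQ_R$ with $\Bbbk\scrQ_R=\scrQ_\Bbbk$. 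In other words, the $R$-level isomorphism is not lifted from $\Bbbk$ but is constructed directly at the $R$-level, and the specialisation statement is a consequence, not an input. Without this, your step from $\scrQ_\Bbbk$ to $\scrQ_R$ does not go through. A smaller point: Fiebig's equivalence a priori matches \emph{simple} objects, not parabolic Verma modules; the paper then invokes Proposition~\ref{claim:eqHWC} to upgrade to an equivalence of highest weight categories sending $\bfM(\mu+\tau_R)_{R,\nu}\mapsto M(\mu+\varkappa_R)_{R,\nu}$, which is the step you appear to be eliding when you say Fiebig ``matches parabolic Verma modules of the same highest weight.''
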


\vspace{.5mm}

\begin{proof}
For $k\in J^\nu_p$ we set $p_k=p$.
Since $\Bbbk$ is subgeneric, the integral root system 
$\widehat\Pi(\tau_\Bbbk,\kappa_\Bbbk)$ is given by
\begin{eqnarray*}
\widehat\Pi(\tau_\Bbbk,\kappa_\Bbbk)&=&
\{\beta\in\widehat\Pi\,;\, \langle (0,\t_\Bbbk,\kappa_\Bbbk):\beta\rangle\in\bbZ\},\\
&=&\{\alpha_{k,l}-r\delta\,;\, f_{p_k,p_l,r}(\tau_\Bbbk,\kappa_\Bbbk)\in\bbZ\},\\
&=&\Pi_\nu\cup\{\pm(\alpha_{k,l}-z\delta)\,;\, p_k=u,\,p_l=v\}.
\end{eqnarray*}
Therefore,  the linkage principle yields a decomposition
$\bfO_{\Bbbk,\t}^{\nu,\kappa}=\bigoplus_{a\in\bbN^{\ell-1}}\bfO^{\nu,\kappa}_{\Bbbk,\t}\{a\}.$
This decomposition holds over $R$ by Proposition \ref{prop:introhw}.
This proves part $(a)$.

The set $\widehat\Pi(\tau_\Bbbk, \kappa_\Bbbk)$ is a Coxeter system whose set of positive roots is
$\widehat\Pi(\tau_\Bbbk, \kappa_\Bbbk)^+=
\widehat\Pi(\tau_\Bbbk, \kappa_\Bbbk)\cap\widehat\Pi^+$,
see \cite[sec.~2.2]{KT}.
The set
$\Pi_{\nu,u,v}=\Pi_\nu\cup\{\pm\alpha_{k,l}\,;\, p_k=u,\,p_l=v\}$
is also a Coxeter system with positive roots 
$\Pi_{\nu,u,v}^+=\Pi_{\nu,u,v}\cap \Pi^+$.

If $z\leqslant  0$ then
$\widehat\Pi(\tau_\Bbbk, \kappa_\Bbbk)^+=
\Pi_\nu^+\cup\{\alpha_{k,l}-z\delta\,;\, p_k=u,\,p_l=v\}.$
Fix an integral coweight $\check\chi$ such that $\alpha_{k,l}(\check\chi)=-z$ if $p_k=u$, $p_l=v$ and
$\alpha_{k,l}(\check\chi)=-z$ if $p_k=p_l.$
The conjugation by $\check\chi$ yields a bijection
$\varphi:\widehat\Pi(\tau_\Bbbk,\kappa_\Bbbk)\iso\,\Pi_{\nu,u,v}$ such that $\alpha+r\delta\mapsto\alpha$
for all $\alpha,r$.
It maps positive roots to positive ones.

If $z> 0$ then
$\widehat\Pi(\tau_\Bbbk, \kappa_\Bbbk)^+=
\Pi_\nu^+\cup\{-\alpha_{k,l}+z\delta\,;\, p_k=u,\,p_l=v\}.$
The permutation $k\mapsto k^o$ of $[1,N]$ induces a bijection
$\Pi_{\nu,u,v}\iso\,\Pi_{\nu^{o},v^{o},u^{o}}$, $\alpha\mapsto\alpha^o$.
The bijection
$\varphi:\widehat\Pi(\tau_\Bbbk,\kappa_\Bbbk)\iso\,\Pi_{\nu^o,v^o,u^o}$ such that $\alpha+r\delta\mapsto\alpha^o$
identifies the subsets of positive roots in both sides.

In both cases the map $\varphi$ is an isomorphism of Coxeter systems.
Now, for each weight $\lambda\in P^\nu+\rho$ we consider the sets of roots 
$\widehat\Pi[\lambda+\tau_\Bbbk, \kappa_\Bbbk]
=\{\beta\in\widehat\Pi\,;\, 
\langle (0,\lambda+\tau_\Bbbk,\kappa_\Bbbk):\beta\rangle=0\}$ and
$\Pi_{\nu,u,v}[\lambda+\varkappa_\Bbbk]=
\{\alpha\in\Pi_{\nu,u,v}\,;\, \langle\lambda+\varkappa_\Bbbk\,:\,\alpha\rangle=0\}$.
Since $\Bbbk$ is subgeneric, we have
\begin{eqnarray*}
\widehat\Pi[\lambda+\tau_\Bbbk, \kappa_\Bbbk]
&=&\{\alpha_{k,l}-r\delta\,;\, f_{p_k,p_l,r}(\tau_\Bbbk,\kappa_\Bbbk)=-\langle\lambda\,:\,\alpha_{k,l}\rangle\},\\
&=&\{\alpha\in\Pi_\nu\,;\, \langle\lambda\,:\,\alpha\rangle=0\}\cup\\
&&\cup\{\pm(\alpha_{k,l}-z\delta)\,;\, p_k=u,\,p_l=v,\,\langle\lambda\,:\,\alpha_{k,l}\rangle=-z\,e\},\\
&=&\Pi_\nu[\lambda]\cup
\{\pm(\alpha_{k,l}-z\delta)\,;\, p_k=u,\,p_l=v,\,\langle\lambda+\varkappa_\Bbbk\,:\,\alpha_{k,l}\rangle=0\},\\
&=&\Pi_\nu[\lambda+\varkappa_\Bbbk]\cup
\{\pm(\alpha_{k,l}-z\delta)\,;\, p_k=u,\,p_l=v,\,\alpha_{k,l}\rangle\in\Pi_{\nu,u,v}[\lambda+\varkappa_\Bbbk]\}.
\end{eqnarray*}

If $z\leqslant  0$ then 
$\varphi\big(\widehat\Pi[\lambda+\tau_\Bbbk, \kappa_\Bbbk])=
\Pi_{\nu,u,v}[\lambda+\varkappa_\Bbbk].$
Therefore, by \cite[thm.~11]{F2}, there is an equivalence
of $\Bbbk$-categories
$\scrQ_\Bbbk:\bfO_{\Bbbk,\t}^\kappa\{a\}\to \scrO_{\Bbbk,\varkappa}(\nu,u,v)\{a\}$
such that
$\bfL(\mu+\tau_\Bbbk)_{\Bbbk}\mapsto L(\mu+\varkappa_\Bbbk)_\Bbbk$
for each $\mu\in P\{a\}$.
The proof of loc. ~cit.~is given by constructing an analogue of Soergel's functor which identifies, block by block, 
the endomorphism rings of projective generators of $\bfO_{R,\t}^\kappa\{a\}$ and $\scrO_{R,\varkappa}(\nu,u,v)\{a\}$
with the endomorphism ring of the same sheaf over a moment graph
(modulo a base change of deformation rings, from a localization of the functions ring of the Cartan subalgebras of $\bfg$ and $\frakm(\nu,u,v)$ to $R$).
This construction yields indeed an equivalence of abelian $R$-categories
$\scrQ_R:\bfO_{R,\t}^\kappa\{a\}\to \scrO_{R,\varkappa}(\nu,u,v)\{a\}$
such that $\Bbbk\scrQ_R(M)=\scrQ_\Bbbk(\Bbbk M)$ for any $M\in \bfO_{R,\t}^\kappa\{a\}$.

If $z\geqslant 0$ then 
$\varphi\big(\widehat\Pi[\lambda+\tau_\Bbbk, \kappa_\Bbbk])=
\big(\Pi_{\nu,u,v}[\lambda+\varkappa_\Bbbk]\big)^o
=\Pi_{\nu^o,v^o,u^o}[\lambda^o+\varkappa^o_\Bbbk].$
For $-\alpha_{k,l}+z\delta\in\widehat \Pi(\tau_\Bbbk,\kappa_\Bbbk)^+,$ we also have
$$
\aligned
\langle \lambda^o+\varkappa^o_\Bbbk:h(-\alpha_{k,l}+z\delta)\rangle
&=\langle \lambda^o+\varkappa^o_\Bbbk:-\alpha_{k^o,l^o}\rangle\\
&=-\langle \lambda,\alpha_{k,l}\rangle-z\,e,\\
&=-\langle \lambda:\alpha_{k,l}\rangle-\tau_{K,u}+\tau_{K,v}+z\,\kappa_\Bbbk\\
&=\langle (0,\lambda+\tau_\Bbbk,\kappa_\Bbbk):-\alpha_{k,l}+z\delta\rangle.
\endaligned$$
Thus, by \cite[thm.~11]{F2} and the discussion above, we have
equivalences of categories 
$$\scrQ_R:\bfO_{R,\t}^\kappa\{a\}\to \scrO_{R,\varkappa}(\nu^o,v^o,u^o)\{a^o\},\quad 
\scrQ_\Bbbk:\bfO_{\Bbbk,\t}^\kappa\{a\}\to \scrO_{\Bbbk,\varkappa}(\nu^o,v^o,u^o)\{a^o\}$$
such that $\Bbbk\scrQ_R(M)=\scrQ_\Bbbk(\Bbbk M)$ and
$\bfL(\mu+\tau_\Bbbk)_{\Bbbk}\mapsto L(\mu^o+\varkappa^o_\Bbbk)_\Bbbk$ for each $\mu\in P\{a^o\}$.

Now, we can prove part $(b)$. To simplify, we assume $z\leqs 0$. 
The case $z>0$ is proved in a similar way. 

First, note that $\scrQ_\Bbbk$ restricts to an equivalence of abelian categories 
$\bfO_{\Bbbk,\t}^{\nu,\kappa}\{a\}\to \scrO_{\Bbbk,\varkappa}^\nu(\nu,u,v)\{a\}$. We denote it again by $\scrQ_\Bbbk$.
Since $\bfO_{R,\t}^{\nu,\kappa}\{a\}$ and $\scrO_{R,\varkappa}^\nu(\nu,u,v)\{a\}$
are the full subcategories of $\bfO_{R,\t}^\kappa\{a\}$ and $\scrO_{R,\varkappa}(\nu,u,v)\{a\},$
respectively, consisting of the modules whose simple subquotients have a highest weight 
of the form $\widehat{\lambda+\tau_\Bbbk}$ and $\lambda+\varkappa_\Bbbk$ respectively,
with $\lambda\in P^\nu$, we deduce that 
$\scrQ_{R}$ restricts to an equivalence of abelian $R$-categories
$\bfO^{\nu,\kappa}_{R,\t}\{a\}\to \scrO_{R,\varkappa}^\nu(\nu,u,v)\{a\}.$

Next, since $\scrQ_\Bbbk(\bfL(\mu+\tau_\Bbbk)_{\Bbbk,\nu})=L(\mu+\varkappa_\Bbbk)_{\Bbbk,\nu}$ 
for each $\mu\in P^\nu$,
the functor $\scrQ_R$ is an equivalence of highest weight 
$R$-categories such that 
$\scrQ_R(\bfM(\mu+\tau_R)_{R,\nu})=M(\mu+\varkappa_R)_{R,\nu}$ 
for each $\mu\in P^\nu$
by Proposition \ref{claim:eqHWC}.

Parts $(b)$ and $(c)$ are proved.
\end{proof}

\vspace{2mm}

\begin{rk} We do not know how to choose the equivalence of categories $\scrQ_R$ in such a way that
it intertwines the endofunctors $e,f$ of $\bfO$ and $\scrO$. We will not need this.
\end{rk}

\vspace{2mm}

In the rest of this section, to unburden the notation, assume that $z\leqslant 0$. 
The case $z>0$ is completely similar.

Fix a $(\ell-1)$-composition $a=(a_\bullet,a_p)$ of the positive integer $d$. Then, we have the tilting module
$T_{R,a_\bullet}(\nu_\bullet)\in A^{\nu_\circ}_{R,\varkappa_\circ}(\nu_\bullet)$ and, for each $p\neq u,v$,
the tilting module
$T_{R,a_p}(\nu_p)\in A^{\nu_p}_{R}(\nu_p)$.
Recall that $\nu_\circ=(\nu_u,\nu_v)$, $\varkappa_\circ=(\varkappa_u,\varkappa_v)$ and $\nu_\bullet=\nu_u+\nu_v$.
Note that, since $f_{u,v}(\varkappa)=ze\leqslant 0$, the (two-blocks) category $A^{\nu_\circ}_{R,\varkappa_\circ}(\nu_\bullet)$
satisfies the assumptions in Proposition \ref{prop:4.25}.
Let $T_{R,h,d}\in A^{\nu}_{R,h}\{d\}$ be the tilting module 
which is identified, under the equivalence \eqref{2blocs},
with the direct sum of the modules 
$T_{R,a_\bullet}(\nu_\bullet)\otimes
\bigotimes_{p\neq u,v}T_{R,a_p}(\nu_p),$
where the sum runs over the set of all $(\ell-1)$-compositions 
$a$ of $d$. We also write
$T_{\Bbbk,h,d}=\Bbbk T_{R,h,d}\in A^{\nu}_{\Bbbk,h}\{d\}.$

Now, let $R$ be either a field or a regular local deformation ring of dimension $2$.
Assume further that $R$ is analytic and in general position.

The category $\bfA_{K,\tau}^{\nu,\kappa}$ is split semi-simple.
We have defined the module $\bfT_{R,d}\in\bfA^{\nu,\kappa}_{R,\tau},$ the
$R$-algebra homomorphism 
$\psi^s_{R,d}:\bfH^s_{R,d}\to\End_{\bfA^{\nu,\kappa}_{R,\tau}}(\bfT_{R,d})^\op,$
and the functor
$\Psi_{R,d}^s : \bfA^{\nu,\kappa}_{R,\tau}\{d\}\to \bfH^s_{R,d}\mmod$.
By base-change, we get $\bfT_{R_\frakp,d},$ $\psi^s_{R_\frakp,d}$ and $\Psi^s_{R_\frakp,d}$
for each $\frakp\in\frakP$, see Remark \ref{rk:4.7}.

\vspace{2mm}

\begin{lemma}\label{lem:isom333}
Assume that $\frakp\in\frakP_1$ is subgeneric.
Then, we have an isomorphism
$\scrQ_{R_\frakp}(\bfT_{R_\frakp,d})=T_{R_\frakp,h,d}$.
\end{lemma}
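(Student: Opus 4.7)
My plan is to identify both sides as tilting modules in $A^{\nu}_{R_\frakp,h}\{d\}$ with identical $\Delta$-characters, and then invoke the unicity of tilting modules to conclude.

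First I would check that both sides are tilting. The left-hand side is tilting because $\bfT_{R_\frakp,d}$ is tilting in $\bfA^{\nu,\kappa}_{R_\frakp,\tau}$ by Lemma~\ref{lem:5.18}(b), and $\scrQ_{R_\frakp}$ is an equivalence of highest weight categories by Proposition~\ref{prop:equivredI}(c). The right-hand side is tilting by construction, as a direct sum of external tensor products of modules $T_{R_\frakp,a_\bullet}(\nu_\bullet)$ and $T_{R_\frakp,a_p}(\nu_p)$ which are tilting in their respective factor categories by Proposition~\ref{prop:4.25} and Proposition~\ref{prop:isomBK}(b). Since a tilting module in a highest weight category over the local ring $R_\frakp$ is uniquely determined, up to isomorphism, by the multiplicities of its $\Delta$-sections (which, via Proposition~\ref{prop:introhw}(c), reduces to the classical statement over the residue field $\Bbbk_\frakp$), it suffices to show that the two Grothendieck classes agree.

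To compute $[\bfT_{R_\frakp,d}]$, I would proceed inductively using $\bfT_{R_\frakp,d}=f(\bfT_{R_\frakp,d-1})$ together with the $R_\frakp$-analogue of Lemma~\ref{lem:pasbete}(a) obtained from the $\Delta$-filtration of the Kazhdan--Lusztig tensor product of a parabolic Verma with $\bfV_R$ (the deformed results postponed to Section~8). This shows that $f(\pmb\Delta(\lambda)_{R_\frakp,\tau})$ admits a $\Delta$-filtration with exactly one section $\pmb\Delta(\mu)_{R_\frakp,\tau}$ for each $\ell$-partition $\mu\in\scrP^\nu$ obtained from $\lambda$ by adding a single box. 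Iterating yields
\[
[\bfT_{R_\frakp,d}] = \sum_{\lambda\in\scrP^\nu_d} N(\lambda)\,[\pmb\Delta(\lambda)_{R_\frakp,\tau}],
\]
where $N(\lambda)$ counts the chains $\emptyset=\lambda^{(0)}\to\cdots\to\lambda^{(d)}=\lambda$ in $\scrP^\nu$ adding one box at each step, i.e.\ the number of standard $\ell$-tableaux of shape $\lambda$. Since $\scrQ_{R_\frakp}(\pmb\Delta(\lambda)_{R_\frakp,\tau})=\Delta(\lambda)_{R_\frakp,h}$ by Proposition~\ref{prop:equivredI}(c), this determines the class of the left-hand side.

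The same iterated box-adding computation, performed now inside the pseudo-Levi category and exploiting the splitting $V=V_\bullet\oplus\bigoplus_{p\neq u,v}V_p$ of the natural $\frakm_{R_\frakp,\nu,u,v}$-module, should yield the character of $T_{R_\frakp,h,d}$. After summing the contributions from the individual factor tilting modules over all $(\ell-1)$-compositions $a$ of $d$ appearing in its defining direct sum, one should recover $\sum_\lambda N(\lambda)\,[\Delta(\lambda)_{R_\frakp,h}]$. The main obstacle I expect is precisely this combinatorial matching: one must verify that the blockwise description of $T_{R_\frakp,h,d}$, once the multinomial factor for distributing the successive boxes between the combined $(u,v)$-block and the remaining single-block factors is accounted for, reproduces the global count $N(\lambda)$ of standard $\ell$-tableaux — reconciling the blockwise picture on the finite side with the globally-ordered iteration $\bfT_{R_\frakp,d}=f^d(\pmb\Delta(\emptyset))$ on the affine side. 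Once this equality of $\Delta$-characters is in hand, unicity of tilting modules forces the isomorphism.
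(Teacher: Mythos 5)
Your overall strategy --- identify both sides as tilting modules in $A^\nu_{R_\frakp,h}\{d\}$ and invoke the uniqueness of tilting modules with a given $\Delta$-character --- is exactly the paper's. The two arguments diverge in how they establish that the $\Delta$-characters agree, and this divergence is where your acknowledged ``main obstacle'' lies and where the paper's route is more robust.

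Rather than computing both sides explicitly in terms of tableau counts, the paper's proof shows that the isomorphism $\scrQ_{\Bbbk_\frakp}$ on (complexified) Grothendieck groups \emph{intertwines the linear operators $e$, $f$ on both sides}: this follows because $\scrQ_{\Bbbk_\frakp}$ preserves the classes of standard modules and because the $\Delta$-filtration of $f(\text{Verma})$ is governed by the same explicit box-adding rule on both sides (Lemma \ref{lem:pasbete} for the affine category, Proposition \ref{prop:actiontf} for the finite one). Once intertwining is known, the equality $[\scrQ_{\Bbbk_\frakp}(\bfT_{\Bbbk_\frakp,d})]=[T_{\Bbbk_\frakp,h,d}]$ is \emph{tautological}: both sides are $f^d$ applied to matching $d=0$ cases, and $\scrQ_{\Bbbk_\frakp}(\bfT_{\Bbbk_\frakp,0})=T_{\Bbbk_\frakp,h,0}$ since both are the parabolic Verma for $\emptyset$. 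One then invokes the usual fact that two tilting modules with the same class are isomorphic, and lifts from $\Bbbk_\frakp$ to $R_\frakp$ via Proposition \ref{prop:introhw}. No explicit tableau enumeration is needed.

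Your plan, by contrast, unfolds this into an explicit combinatorial identity, and you correctly sense the danger: the displayed description of $T_{R,h,d}$ as a direct sum over $(\ell-1)$-compositions $a$ of the factor tilting modules $T_{R,a_\bullet}(\nu_\bullet)\otimes\bigotimes_p T_{R,a_p}(\nu_p)$, taken at face value, would give a $\Delta$-character that is \emph{missing} the multinomial multiplicities $\binom{d}{a_\bullet,a_p,\ldots}$, and hence would not match $\sum_\lambda N(\lambda)[\Delta(\lambda)]$. The resolution is that $T_{R,h,d}$ is to be understood as $f^d(T_{R,h,0})$, where $f$ is the direct sum of the factorwise categorification functors on the pseudo-Levi category; this is the module appearing in the homomorphism $\varphi^s_{R_\frakp,d}$ of Proposition \ref{prop:redI} and in the paper's proof of the present lemma. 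With this reading the $a$-block of $T_{R,h,d}$ already carries the multinomial multiplicity, so the two $\Delta$-characters coincide with no further combinatorial work. Adopting the paper's intertwining argument both sidesteps the tableau enumeration and makes the appearance of those multiplicities transparent; as written, your proof leaves the gap you yourself flagged, and this gap does not close under the literal ``unweighted direct sum'' interpretation.
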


\vspace{.5mm}

\begin{proof}
The module $\scrQ_{R_\frakp}(\bfT_{R_\frakp,d})$ is tilting,
because $\scrQ_{R_\frakp}$ is an equivalence of highest weight categories.
Since $\bfT_{R_\frakp,0}$ and $T_{R_\frakp,h,0}$ are
parabolic Verma modules, we have $\scrQ_{R_\frakp}(\bfT_{K,0})=T_{R_\frakp,h,0}.$

Next, the functor $\scrQ_{\Bbbk_\frakp}$ induces an isomorphism of the 
(complexified) Grothendieck groups
$[\bfO_{\Bbbk_\frakp,\t}^{\nu,\kappa}\{a\}]\to[\scrO^{\nu}_{\Bbbk_\frakp,h}\{a\}]$ such that
$\scrQ_{\Bbbk_\frakp} ([\bfL(\lambda+\tau_{\Bbbk_\frakp})_{\Bbbk_\frakp}])=
[L(\lambda+\varkappa_{\Bbbk_\frakp})_{\Bbbk_\frakp}].$
Since it also preserves the classes of the standard modules,
the explicit formulae in 
Lemma \ref{lem:pasbete} imply that $\scrQ_{\Bbbk_\frakp}:[\scrO^{\nu}_{\Bbbk_\frakp,h}\{a\}]\to[\bfO_{\Bbbk_\frakp,\t}^{\nu,\kappa}\{a\}]$ 
commutes with the action of the operators $e,$ $f$
on both sides.

Since $\bfT_{\Bbbk_\frakp,d}=f^d(\bfT_{\Bbbk_\frakp,0})$ and 
$T_{\Bbbk_\frakp,h,d}=f^d(T_{\Bbbk_\frakp,h,0}),$
we deduce that 
$[\scrQ_{\Bbbk_\frakp}(\bfT_{\Bbbk_\frakp,d})]=[T_{\Bbbk_\frakp,h,d}]$ in 
$[\scrO^\nu_{\Bbbk_\frakp,h}]$.
Therefore, we have
$\scrQ_{\Bbbk_\frakp}(\bfT_{\Bbbk_\frakp,d})=T_{\Bbbk_\frakp,h,d}$ 
because two tilting modules are isomorphic if they have the same class in the Grothendieck group.
Since $\scrQ_{R_\frakp}(\bfT_{R_\frakp,d})$ is tilting and 
$\Bbbk_\frakp\scrQ_{R_\frakp}(\bfT_{R_\frakp,d})=\scrQ_{\Bbbk_\frakp}(\bfT_{\Bbbk_\frakp,d})$, 
by Proposition \ref{prop:introhw}$(b)$ the isomorphism over $\Bbbk_\frakp$ can be lift to an isomorphism 
$\scrQ_{R_\frakp}(\bfT_{R_\frakp,d})=T_{R_\frakp,h,d}$.
\end{proof}

\vspace{2mm}

\begin{prop}\label{prop:redI}
Let $\frakp\in\frakP_1$ be subgeneric.
Assume that $\nu_p\geqslant d$ for all $p$.
Then,

(a) $\bfT_{R_\frakp,d}$ is projective in $\bfA_{R_\frakp,\tau}^{\nu,\kappa}$, 

(b) $\psi^s_{R_\frakp,d}$ is an isomorphism 
$\bfH^{s}_{R_\frakp,d}\to\End_{\bfA_{R_\frakp,\tau}^{\nu,\kappa}}(\bfT_{R_\frakp,d})^\op$,

(c)  
$\Psi_{R_\frakp,d}^s$ is fully faithful on $(\bfA_{R_\frakp,\tau}^{\nu,\kappa}\{d\})^\Delta$ and 
$(\bfA_{R_\frakp,\tau}^{\nu,\kappa}\{d\})^\nabla$.
\end{prop}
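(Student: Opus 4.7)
The strategy is to transfer everything across the equivalence $\scrQ_{R_\frakp}$ of Proposition \ref{prop:equivredI} and then reduce to the two factor cases already handled by Propositions \ref{prop:isomBK} and \ref{prop:4.25}. Assume without loss of generality that $z\leqslant 0$; the other case is symmetric under $o$. By Proposition \ref{prop:equivredI} and Lemma \ref{lem:isom333}, $\scrQ_{R_\frakp}$ is an equivalence of highest weight $R_\frakp$-categories $\bfA_{R_\frakp,\tau}^{\nu,\kappa}\{d\}\iso A_{R_\frakp,h}^\nu\{d\}$ sending $\bfT_{R_\frakp,d}$ to $T_{R_\frakp,h,d}$, so the module, endomorphism algebra and Hom functor all translate faithfully.

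Next, the target category decomposes, blockwise by $(\ell-1)$-compositions $a$ of $d$, as
\begin{equation*}
A^\nu_{R_\frakp,h}\{a\}\simeq A^{\nu_\circ}_{R_\frakp,\varkappa_\circ}(\nu_\bullet)\{a_\bullet\}\otimes_{R_\frakp}\bigotimes_{p\neq u,v}A^{\nu_p}_{R_\frakp}(\nu_p)\{a_p\},
\end{equation*}
with $T_{R_\frakp,h,d}$ decomposing as the corresponding direct sum of external tensor products. At the residue field $\Bbbk_\frakp$: since $\nu_p\geqs d\geqs a_p$ and $\varkappa_{\Bbbk_\frakp,u}-\varkappa_{\Bbbk_\frakp,v}=ze\in\bbZ_{\leqs 0}$, the two-block factor satisfies the hypotheses of Proposition \ref{prop:4.25}, while each single-block ($\ell=1$) factor satisfies those of Proposition \ref{prop:isomBK} vacuously. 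Applying these over $\Bbbk_\frakp$, and taking tensor products, gives (a$'$) $T_{\Bbbk_\frakp,h,d}$ is projective, (b$'$) $\varphi^s_{\Bbbk_\frakp,d}: H^s_{\Bbbk_\frakp,d}\iso\End(T_{\Bbbk_\frakp,h,d})^\op$, and (c$'$) the top of $T_{\Bbbk_\frakp,h,d}$ is exactly the set of simples embedding into parabolic Vermas.

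To lift to $R_\frakp$: for (a), $T_{R_\frakp,h,d}$ is $R_\frakp$-projective as a tilting module, so Proposition \ref{prop:introhw}(c) upgrades projectivity from $\Bbbk_\frakp$ to $R_\frakp$. Transferring via $\scrQ_{R_\frakp}$ proves (a). For (b), Proposition \ref{prop:isomHecke}(b), applied blockwise, gives an algebra isomorphism $\alpha_{R_\frakp}:\bfH^s_{R_\frakp,d}\iso H^s_{R_\frakp,d}$, and the composition $\varphi^s_{R_\frakp,d}\circ\alpha_{R_\frakp}$ (transferred through $\scrQ_{R_\frakp}$) yields a candidate isomorphism into $\End(\bfT_{R_\frakp,d})^\op$. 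Over the fraction field $K$, both $\psi^s_{K,d}$ and this composition are isomorphisms into the same split semisimple endomorphism algebra (Proposition \ref{prop:preliminaries}(d)); moreover both source and target are free $R_\frakp$-modules of the same rank (the target by Proposition \ref{prop:introhw}(d) applied to the tilting module $\bfT_{R_\frakp,d}$). Hence $\psi^s_{R_\frakp,d}$ is injective, and surjectivity follows by Nakayama once $\psi^s_{\Bbbk_\frakp,d}$ is shown to be an isomorphism. The latter follows from the rank equality and the analogous statement for $\varphi^s_{\Bbbk_\frakp,d}$, together with the compatibility of $\psi^s$ and $\varphi^s\circ\alpha$ on the canonical affine Hecke generators. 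For (c), by Corollary \ref{co:char01faith} it suffices to show $\Psi^s_{R_\frakp,d}$ is $0$-faithful; by Proposition \ref{prop:0to1} this reduces to $0$-faithfulness over $\Bbbk_\frakp$, which follows from (a$'$), (b$'$), (c$'$) transferred via $\scrQ_{\Bbbk_\frakp}$. Fully-faithfulness on $\nabla$-filtered objects then follows via Lemma \ref{le:hwdual}.

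The main obstacle is the compatibility assertion in (b): namely that the affine Hecke action $\psi^s$ on $\bfA$, built from the operator $\exp(2\pi\sqrt{-1}\frakL_0)$, matches the degenerate Hecke action $\varphi^s$ on $A$ built from the Casimir $\omega$, once the two sides are identified via the abstract equivalence $\scrQ_{R_\frakp}$ (which, as noted after Proposition \ref{prop:equivredI}, is not known to intertwine the translation functors $f$ on either side). The reconciliation is forced through the analytic identification of Proposition \ref{prop:isomHecke}, combined with the rigidity of algebra maps from $\bfH^s_{R_\frakp,d}$ into a free $R_\frakp$-module whose generic specialization is already pinned down by Proposition \ref{prop:preliminaries}(d).
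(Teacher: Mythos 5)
Your outline follows the paper's overall architecture for (a), but (b) contains a genuine gap, and your route to (c) drifts from what actually works.

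For (b), you claim that showing $\psi^s_{\Bbbk_\frakp,d}$ is an isomorphism "follows from the rank equality and the analogous statement for $\varphi^s_{\Bbbk_\frakp,d}$, together with the compatibility of $\psi^s$ and $\varphi^s\circ\alpha$ on the canonical affine Hecke generators." But that compatibility is precisely what is \emph{not} available: the equivalence $\scrQ_{R_\frakp}$ of Proposition~\ref{prop:equivredI} is built from Fiebig's combinatorial description and, as the remark after that proposition explicitly warns, it is not known to intertwine $e,f$ — so there is no reason the two Hecke actions agree on the generators $X_j, T_i$ after transporting across $\scrQ$. The phrase "rigidity of algebra maps" does not fill this hole: rank equality plus agreement over $K$ after composing with some (a priori unknown) automorphism does not pin down either map. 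What is needed, and what the paper actually does, is to form the algebra endomorphism $\theta_{R_\frakp}=\alpha^{-1}_{R_\frakp}\circ(\varphi^s_{R_\frakp,d})^{-1}\circ\beta_{R_\frakp}\circ\psi^s_{R_\frakp,d}$ of the symmetric algebra $\bfH^s_{R_\frakp,d}$, observe that $\theta_K$ is an automorphism acting as the identity on $K_0(\bfH^s_{K,d})$ (this comparison on standard modules uses $\Psi^s_{K,d}(\pmb\Delta(\lambda))\simeq S(\lambda)^{s,q}_K$, $\Phi^s_{K,d}(\Delta(\lambda))\simeq S(\lambda)^s_K$ and $\alpha_K(S(\lambda)^s_K)\simeq S(\lambda)^{s,q}_K$), and then invoke Proposition~\ref{prop:inj} for symmetric algebras to conclude $\theta_{R_\frakp}$ is an automorphism. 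This is the crucial step you are missing; your last paragraph senses the obstacle but names the wrong resolution.

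For (c), two issues. First, the faithful transfer across $\scrQ_{R_\frakp}$ itself requires the observation that $\psi^s_{R_\frakp,d}$ and $\varphi^s_{R_\frakp,d}$ (viewed inside the same endomorphism algebra via $\beta_{R_\frakp}$) differ by an automorphism of $\bfH^s_{R_\frakp,d}$, which leaves faithfulness unaffected; this needs (b) first and needs to be stated. Second, your move from $\Delta$-faithfulness to $\nabla$-faithfulness via Lemma~\ref{le:hwdual} is not the correct mechanism: that lemma relates a cover of $\scrC$ to a cover of $\scrC^*$, not the same cover restricted to $\Delta$- vs.\ $\nabla$-filtered objects. The paper instead uses the Ringel self-duality of $A^\nu_{\Bbbk_\frakp,h}\{d\}$ (from \cite{BK6}, after Proposition~\ref{prop:shift}) together with Lemma~\ref{le:hwcoverRingel} to carry $\Delta$-faithfulness over $\Bbbk_\frakp$ to $\nabla$-faithfulness over $\Bbbk_\frakp$, and only then applies Proposition~\ref{prop:0to1} to lift both to $R_\frakp$.
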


\vspace{.5mm}

\begin{proof}
Since $\Bbbk_\frakp$ is subgeneric, we may fix $u,v,z$ as above. So, we have
$f_{u,v,z}(\tau_{\Bbbk_\frakp},\kappa_{\Bbbk_\frakp})=z\,e.$
Hence, by Proposition \ref{prop:equivredI} and
Lemma \ref{lem:isom333}, there is an equivalence of highest weight $R_\frakp$-categories
$\scrQ_{R_\frakp}:\bfA_{R_\frakp,\tau}^{\nu,\kappa}\{d\}\to 
A^\nu_{R_\frakp,h}\{d\}$ taking
$\pmb\Delta(\lambda)_{R_\frakp,\tau}$ to $\Delta(\lambda)_{R_\frakp,\varkappa}$
and
$\bfT_{R_\frakp,d}$ to $T_{R_\frakp,h,d}$.
By base change, it specializes to 
an equivalence of highest weight $\Bbbk_\frakp$-categories
$\scrQ_{\Bbbk_\frakp}:\bfA_{\Bbbk_\frakp,\tau}^{\nu,\kappa}\{d\}\to 
A^\nu_{\Bbbk_\frakp,h}\{d\}.$

Recall that $\nu_\circ=(\nu_u,\nu_v)$ and $\nu_\bullet=\nu_u+\nu_v$.
To unburden the notation, we may identify the highest weight $R_\frakp$-categories 
$A^\nu_{R_\frakp,h}\{d\}$  and
$A^{\nu_\circ}_{R_\frakp,\varkappa_\circ}(\nu_\bullet)\{d\}$ via the equivalence
\eqref{2blocs}. The later is a particular case of the categories which have been studied in \S \ref{sec:2blocs}.
Note that we have $\varkappa_{\Bbbk_\frakp,u}-\varkappa_{\Bbbk_\frakp,v}=ze\notin\bbN^\times$.
Thus, Proposition \ref{prop:4.25}$(c)$  implies that $T_{\Bbbk_\frakp,h,d}$ is projective.
Hence, part $(a)$ follows from Proposition \ref{prop:introhw} 
and Lemma \ref{lem:isom333}.

To prove $(b)$
we use Proposition \ref{prop:inj}. Let us check the assumptions. First, 
the fraction field of $R_\frakp$ is $K$.
Since $R$ is in general position, 
the $K$-algebra $\bfH^s_{K,d}$ is split semi-simple.
Next, by \cite[thm.~3.30]{DJM}, the decomposition map
$K_0(\bfH^s_{K,d})\to K_0(\bfH^s_{\Bbbk_\frakp,d})$ is surjective.

Now, let us construct an endomorphism $\theta_{R_\frakp}$
of $\bfH^s_{R_\frakp,d}$.
By Remark \ref{rk:4.7}, we have a pre-categorification
$(E,F,X,T)$ on $A_{R_\frakp,h}^{\nu}$. Let
$\varphi^s_{R_\frakp,d}:
H^s_{R_\frakp,d}
\to\End_{A^{\nu}_{R_\frakp,h}}(T_{R_\frakp,h,d})^\op$
be the corresponding $R_\frakp$-algebra homomorphism.
It is an isomorphism by Proposition \ref{prop:4.25} and the Nakayama's lemma.
Next, by Proposition \ref{prop:isomHecke}, we have an $R_\frakp$-algebra isomorphism
$\alpha_{R_\frakp}:\bfH^s_{R_\frakp,d}\to H^s_{R_\frakp,d}.$
Since 
$\scrQ_{R_\frakp}(\bfT_{R_\frakp,d})=T_{R_\frakp,h,d},$
by functoriality, we have an isomorphism
$\beta_{R_\frakp}:
\End_{\bfA_{R_\frakp,\tau}^{\nu,\kappa}}(\bfT_{R_\frakp,d})^\op
\to
\End_{A^{\nu}_{R_\frakp,h}}(T_{R_\frakp,h,d})^\op.$
We set $\theta_{R_\frakp}=
\alpha^{-1}_{R_\frakp}\circ(\varphi_{R_\frakp,d}^s)^{-1}\circ\beta_{R_\frakp}\circ\psi_{R_\frakp,d}^s$ 
and we write $\theta_K=K\theta_{R_\frakp}$.

To prove $(b)$, we must check that $\theta_{R_\frakp}$ is invertible.
By Proposition \ref{prop:inj}, this follows from the following.

\vspace{2mm}

\begin{claim}\label{claim:5.41}
The endomorphism $\theta_K$ of $\bfH^s_{K,d}$ is an automorphism and it yields the identity on the Grothendieck group.
\end{claim}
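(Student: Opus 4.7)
The plan is to verify both assertions by tracing the Specht modules through the four isomorphisms that constitute $\theta_K$.

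First I would check that $\theta_K$ is an automorphism of $\bfH^s_{K,d}$. Since $R$ is in general position, the fraction field $K$ is generic in the sense of \S \ref{sec:preliminaries}, so Proposition~\ref{prop:preliminaries}(d) yields that $\psi^s_{K,d}$ is an isomorphism and that $\Psi^s_{K,d}$ is an equivalence sending $\pmb\Delta(\lambda)_{K,\tau}$ to $S(\lambda)^{s,q}_K$. The equivalence $\scrQ_{R_\frakp}$ of Proposition~\ref{prop:equivredI}, localized to $K$, combined with Lemma~\ref{lem:isom333} (which identifies $\bfT_{K,d}$ with $T_{K,h,d}$), provides the isomorphism $\beta_K$. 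The map $\varphi^s_{K,d}$ is an isomorphism: via the tensor decomposition \eqref{2blocs}, $A^\nu_{K,h}$ splits as a two-block factor tensor product with $\ell-2$ Levi factors, to which Propositions~\ref{prop:4.25}(a) and~\ref{prop:isomBK}(a) respectively apply (both hypotheses are met because $R$ is in general position). Finally, $\alpha_K$ is an isomorphism by Proposition~\ref{prop:isomHecke}(b). Hence $\theta_K$ is a composite of four isomorphisms, and so is an automorphism.

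Next I would prove that $\theta_K$ acts as the identity on the Grothendieck group. By Proposition~\ref{prop:preliminaries}(b)--(c) the algebra $\bfH^s_{K,d}$ is split semi-simple with simples the Specht modules $\{S(\lambda)^{s,q}_K\}_{\lambda\in\scrP^\ell_d}$. An algebra automorphism of a split semi-simple algebra induces the identity on $K_0$ precisely when its pullback fixes the isomorphism class of every simple; it therefore suffices to show $\theta_K^{\ast}(S(\lambda)^{s,q}_K)\cong S(\lambda)^{s,q}_K$ for each $\lambda\in\scrP^\ell_d$.

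The verification proceeds by tracking $\lambda$ through the four factors of $\theta_K$. The equivalence $\Psi^s_{K,d}$ identifies $S(\lambda)^{s,q}_K$ with $\pmb\Delta(\lambda)_{K,\tau}$. The localized equivalence $K\scrQ_{R_\frakp}\colon \bfA^{\nu,\kappa}_{K,\tau}\{d\}\iso A^\nu_{K,h}\{d\}$ sends $\pmb\Delta(\lambda)_{K,\tau}$ to $\Delta(\lambda)_{K,h}$ by Proposition~\ref{prop:equivredI}(c), and the induced isomorphism on the endomorphism algebras of $\bfT_{K,d}$ and $T_{K,h,d}$ is exactly $\beta_K$. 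Next, the $q$-Schur functor $\Phi^s_{K,d}$ attached to the pseudo-Levi sends $\Delta(\lambda)_{K,h}$ to the degenerate Specht module $S(\lambda)^s_K$; this uses Proposition~\ref{prop:isomBK}(c) on each Levi factor together with Proposition~\ref{prop:4.25} on the two-block factor of \eqref{2blocs}. Finally, the Brundan--Kleshchev isomorphism $\alpha_K$ identifies $S(\lambda)^s_K$ with $S(\lambda)^{s,q}_K$ by Proposition~\ref{prop:isomHecke}(b). Chaining the four identifications proves $\theta_K^{\ast}(S(\lambda)^{s,q}_K)\cong S(\lambda)^{s,q}_K$, as required.

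The main obstacle is the compatibility of the $\ell$-partition labeling across the four categories involved. In particular, $A^\nu_{R,h}$ is built from the shifted parameter $\varkappa$ rather than $\tau$, so one must verify that the Specht module $S(\lambda)^s_K$ produced by the middle two isomorphisms really corresponds to the same partition $\lambda$ that indexes $\pmb\Delta(\lambda)_{K,\tau}$ on the $\bfH^s$-side. This matching relies on the fact that each of the equivalences $\Psi^s_{K,d}$, $K\scrQ_{R_\frakp}$ and $\Phi^s_{K,d}$ is constructed so as to preserve the $\ell$-partition indexing, transparently so at the level of standards and Spechts but needing to be tracked carefully through the pseudo-Levi passage.
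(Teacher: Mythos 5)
Your argument is correct and follows the same strategy as the paper: verify that each of the four constituents of $\theta_K$ is an isomorphism, then chase the Specht modules $S(\lambda)^{s,q}_K$ around the composite (the paper packages this as a commutative square on $\Irr$, but the content is identical). The only small caveat is your citation of Proposition~\ref{prop:isomBK}(c) for the two-block factor: over $K$ that factor's parameters may coincide (after the shift of Proposition~\ref{prop:shift} one deliberately arranges $\tau'_{K,1}=\tau'_{K,2}$), which is outside the hypotheses of (c); the Specht-module identification there comes instead from semi-simplicity over $K$ and the uniqueness of Specht modules, as used in Proposition~\ref{prop:preliminaries}(d). This does not affect the validity of your argument.
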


\vspace{.5mm}

Now, we prove the claim.
Since $R$ is in general position, by Proposition \ref{prop:preliminaries},
the $K$-algebra morphisms $\psi^s_{K,d}:\bfH^s_{K,d}\to
\End_{\bfA_{K,\tau}^{\nu,\kappa}}(\bfT_{K,d})^\op$ is an
 isomorphism. Hence $\theta_K$ is an automorphism. 
 
Consider the 
the equivalences of categories
$\Psi_{K,d}^s : \bfA^{\nu,\kappa}_{K,\tau}\{d\}\to \bfH^s_{K,d}\mmod$
and
$\Phi_{K,d}^s : A^{\nu}_{K,h}\{d\}\to H^s_{K,d}\mmod$
induced by $\psi_{K,d}^s$ and $\varphi_{K,d}^s$.
The corresponding maps
between isomorphism classes of simple modules fit into the commutative square
$$\xymatrix{
\Irr(\bfA^{\nu,\kappa}_{K,\tau}\{d\})\ar[r]^-{\Psi_{K,d}^s}\ar[d]_-{\scrQ_K}&\Irr(\bfH^s_{K,d})\\
\Irr(A^{\nu}_{K,h}\{d\})\ar[r]^-{\Phi_{K,d}^s}
&\Irr(H^s_{K,d})\ar[u]_-{\alpha_K},
}$$
because we have
$\Psi_{K,d}^s(\pmb\Delta(\lambda)_{K,\tau})=S(\lambda)_K^{s,q}$,
$\Phi_{K,d}^s(\Delta(\lambda)_{K,\varkappa})=S(\lambda)_K^s$ 
by Propositions \ref{prop:preliminaries}$(d)$, \ref{prop:isomBK}$(d)$, 
and we have
$\alpha_K(S(\lambda)_K^s)=S(\lambda)_K^{s,q}$,
$\scrQ_K(\pmb\Delta(\lambda)_{K,\tau})=\Delta(\lambda)_{K,\varkappa}$.
This implies that  
$\theta_K$ is identity on
the Grothendieck group. The claim is proved.

Finally, let us prove part $(c)$. 
Let
$\varphi^s_{R_\frakp,d}$,
$\beta_{R_\frakp}$
and $\alpha_{R_\frakp}$
be as above.
Then, we can view $\varphi^s_{R_\frakp,d}$ as an isomorphism
$\bfH^s_{R_\frakp,d}
\to
\End_{\bfA_{R_\frakp,\tau}^{\nu,\kappa}}(\bfT_{R_\frakp,d})^\op.$
We don't know whether $\psi^s_{R_\frakp,d}=\varphi^s_{R_\frakp,d}$.
However, since they are both invertible, they differ obviously by an automorphism of
$\bfH^s_{R_\frakp,d}$.
Thus, the equivalence 
$\scrQ_{R_\frakp}$ 
intertwines the functors $\Psi_{R_\frakp,d}^s$ and 
$\Phi_{R_\frakp,d}^s,$
up to a twist by an automorphism of
$\bfH^s_{R_\frakp,d}$.
Therefore, it is enough to prove that
$\Phi_{R_\frakp,d}^s$ is fully faithful on $(A_{R_\frakp,h}^{\nu}\{d\})^\Delta$ and 
$(A_{R_\frakp,h}^{\nu}\{d\})^\nabla$.

By Proposition \ref{prop:4.25},
a simple module of $A^\nu_{\Bbbk_\frakp,h,d}$ is a submodule
of a parabolic Verma module if and only if it lies in the top of $T_{\Bbbk_\frakp,h,d}$.
Thus, the functor
$\Phi_{\Bbbk_\frakp,d}^s$ is  faithful on $(A_{\Bbbk_\frakp,h}^{\nu}\{d\})^\Delta.$
By \cite[cor.~4.18]{BK6}, the category $A_{\Bbbk_\frakp,h}^{\nu}\{d\}$
is Ringel self-dual, i.e., we have an equivalence $A_{\Bbbk_\frakp,h}^{\nu}\{d\}\simeq (A_{\Bbbk_\frakp,h}^{\nu}\{d\})^\diamond.$
Therefore, by Lemma \ref{le:hwcoverRingel}, the functor
$\Phi_{\Bbbk_\frakp,d}^s$ is also faithful on $(A_{\Bbbk_\frakp,h}^{\nu}\{d\})^\nabla.$
Note that \cite{BK6} considers the category $A^\nu$ without any shift $\varkappa$, but our situation reduces to this one by Proposition \ref{prop:shift}.
Now, part $(c)$ follows from Proposition \ref{prop:0to1}.
\end{proof}

\vspace{2mm}

\begin{rk}
If $\nu_u-\nu_v\not\in\bbZ\,e$ for all $u\neq v$, then
$\Psi_{R_\frakp,d}^s$ is a 1-faithful highest weight cover.
\end{rk}

\vspace{3mm}

\subsubsection{The reduction to $\ell=1$}\label{sec:red2}
Assume that the deformation ring $R$ is a local ring with a generic residue field $\Bbbk$.
We have the following lemma.

\vspace{2mm}

\begin{lemma}
\label{lem:reduction2}
For  $\lambda,\lambda'\in P^\nu$, if
${\lambda+\tau_\Bbbk}\leqslant_\ell {\lambda'+\tau_\Bbbk}$ then
$\widehat{\lambda+\tau_\Bbbk}\in \widehat W_\nu\bullet\widehat{\lambda'+\tau_\Bbbk}.$
\end{lemma}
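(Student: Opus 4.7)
The plan is to show that for the generic residue field $\Bbbk$, every elementary step in the definition of $\leqslant_\ell$ moves by an element of $\widehat W_\nu$; the lemma then follows by induction on the length of an $\Uparrow$-chain witnessing $\widehat{\lambda+\tau_\Bbbk}\leqslant_\ell\widehat{\lambda'+\tau_\Bbbk}$. The pivot of the argument is the claim that for any $\mu\in P$ the integral real affine root system of $\widehat{\mu+\tau_\Bbbk}$ is contained in $\widehat\Pi_\nu$.

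First I would establish this claim by direct computation. For a real affine root $\alpha_{k,l}+r\delta$, one has
\[
\langle\widehat{\mu+\tau_\Bbbk}:\alpha_{k,l}+r\delta\rangle = (\mu_k-\mu_l)+(\tau_{\Bbbk,p_k}-\tau_{\Bbbk,p_l})+r(\kappa_\Bbbk-N).
\]
Since $\mu\in\bbZ^N$ and $r,N\in\bbZ$, membership in $\bbZ$ is equivalent to $\tau_{\Bbbk,p_k}-\tau_{\Bbbk,p_l}+r\kappa_\Bbbk\in\bbZ$. When $p_k\neq p_l$, this equality (after possibly swapping $p_k,p_l$ to normalize the indices) contradicts the definition of a generic residue field. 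Hence $p_k=p_l$, which places $\alpha_{k,l}\in\Pi_\nu$ and therefore $\alpha_{k,l}+r\delta\in\widehat\Pi_\nu$.

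Second, I would record the standard equivariance $\widehat\Pi(w\bullet\widehat\eta)=w\bigl(\widehat\Pi(\widehat\eta)\bigr)$ for every $w\in\widehat W$, which follows from $w\bullet\widehat\eta+\tilde\rho=w(\widehat\eta+\tilde\rho)$ and the observation that $\langle\tilde\rho:\alpha\rangle\in\bbZ$ for every real affine root $\alpha$. Since $\widehat W_\nu$ is generated by reflections in roots of $\widehat\Pi_\nu$, it stabilizes $\widehat\Pi_\nu$.

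Third, I would run the induction. Given a chain $\widehat{\lambda+\tau_\Bbbk}=\widehat{\mu_0}\Uparrow\widehat{\mu_1}\Uparrow\cdots\Uparrow\widehat{\mu_n}=\widehat{\lambda'+\tau_\Bbbk}$, I claim by descending induction on $i$ that $\widehat{\mu_i}\in\widehat W_\nu\bullet\widehat{\lambda'+\tau_\Bbbk}$. The case $i=n$ is trivial. For the inductive step, write $\widehat{\mu_i}=w\bullet\widehat{\lambda'+\tau_\Bbbk}$ with $w\in\widehat W_\nu$; by the first paragraph and the equivariance,
\[
\widehat\Pi(\widehat{\mu_i})\cap\widehat\Pi_{\mathrm{re}} = w\bigl(\widehat\Pi(\widehat{\lambda'+\tau_\Bbbk})\cap\widehat\Pi_{\mathrm{re}}\bigr)\subseteq w(\widehat\Pi_\nu)=\widehat\Pi_\nu.
\]
Since $\widehat{\mu_{i-1}}=w_is_{\beta_i}\bullet\widehat{\mu_i}$ with $\beta_i\in\widehat\Pi(\widehat{\mu_i})$ a real root (imaginary roots are not real, so cannot be $\beta_i$), we conclude $\beta_i\in\widehat\Pi_\nu$, hence $s_{\beta_i}\in\widehat W_\nu$. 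Together with $w_i\in W_\nu\subseteq\widehat W_\nu$, this gives $\widehat{\mu_{i-1}}\in\widehat W_\nu\bullet\widehat{\lambda'+\tau_\Bbbk}$. Taking $i=0$ yields the lemma. The only real subtlety is ensuring the equivariance of $\widehat\Pi$ under the dot action, which I view as the main bookkeeping step rather than a conceptual obstacle.
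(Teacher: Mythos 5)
Your proof is correct and follows essentially the same route as the paper's: reduce to a single $\Uparrow$-step and show by the genericity computation that the integral real affine roots lie in $\widehat\Pi_\nu$, so the reflection lands in $\widehat W_\nu$. The only cosmetic difference is that you invoke dot-action equivariance of $\widehat\Pi(-)$ to propagate the containment along the chain, whereas the paper uses the simpler observation that $\widehat\Pi(\widehat{\mu+\tau_\Bbbk})=\widehat\Pi(\tau_\Bbbk,\kappa_\Bbbk)$ is independent of $\mu\in P$; both are immediate and give the same conclusion.
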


\vspace{.5mm}

\begin{proof}
By an easy induction we may assume that there are elements
$\beta\in\widehat\Pi(\tau_\Bbbk,\kappa_\Bbbk)\setminus\Pi_\nu$ and $w\in W_\nu$ with 
$\widehat{\lambda+\tau_\Bbbk}=ws_\beta\bullet\widehat{\lambda'+\tau_\Bbbk}.$
We have
$$\aligned
\widehat\Pi(\tau_\Bbbk,\kappa_\Bbbk)\subset\widehat\Pi_\nu
&\iff
\langle (0,\tau_\Bbbk,\kappa_\Bbbk):\beta\rangle\notin\bbZ,\quad
\forall\beta\in\widehat\Pi\setminus\widehat\Pi_\nu,\\
&\iff
\langle \tau_\Bbbk:\alpha\rangle+r\kappa\not\in\bbZ,
\quad\forall\alpha\in\Pi\setminus\Pi_\nu,
\quad\forall r\in\bbZ,\\
&\iff
\Bbbk\ \text{is\ generic}.
\endaligned
$$
Thus $\beta\in\widehat\Pi_\nu$, hence $ws_\beta\in\widehat W_\nu$.
\end{proof}

\vspace{2mm}

For $a\in\bbN^\ell$ let
$\bfO^{\nu,\kappa}_{R,\t}\{a\}\subset\bfO^{\nu,\kappa}_{R,\t}$ be the full subcategory 
of the modules whose simple subquotients have a highest weight of the form
$\widehat{\lambda+\t_\Bbbk}$ with
$\lambda\in P^\nu\{a\}.$

\vspace{2mm}

\begin{prop}
\label{prop:reduction2}
(a) We have
$\bfO_{R,\t}^{\nu,\kappa}=\bigoplus_{a\in\bbN^\ell}\bfO^{\nu,\kappa}_{R,\t}\{a\}$ and
$\bfO_{\Bbbk,\t}^{\nu,\kappa}=\bigoplus_{a\in\bbN^\ell}\bfO^{\nu,\kappa}_{\Bbbk,\t}\{a\}$. 

(b) There are equivalences of highest weight $R$-categories 
$\scrQ_R:\,\bfO_{R,\t}^{\nu,\kappa}\{a\}\to \bfO^{+,\kappa}_R(\nu)\{a\}$ and of 
highest weight $\Bbbk$-categories 
$\scrQ_\Bbbk:\,\bfO_{\Bbbk,\t}^{\nu,\kappa}\{a\}\to \bfO^{+,\kappa}_\Bbbk(\nu)\{a\}$
such that $\Bbbk\scrQ_R(M)=\scrQ_\Bbbk(\Bbbk M)$ and
$\scrQ_R(\bfM(\lambda+\tau)_{R,\nu})=\bfM(\lambda)_{R,+}$.

(c) The equivalences in (b) restricts to equivalences of highest weight categories
$\scrQ_R:\bfA_{R,\t}^{\nu,\kappa}\to \bfA^{+,\kappa}_R(\nu)$
and $\scrQ_\Bbbk:\bfA_{\Bbbk,\t}^{\nu,\kappa}\to \bfA^{+,\kappa}_\Bbbk(\nu)$. In particular,
we have $\scrQ_R(\pmb\Delta(\lambda)_{R,\tau})=\pmb\Delta(\lambda)_R$ for all $\lambda$.
\end{prop}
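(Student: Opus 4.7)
The plan is to follow the same strategy as in Proposition~\ref{prop:equivredI}, replacing the subgeneric computation of integral affine root systems by its generic counterpart, which is provided by Lemma~\ref{lem:reduction2}.

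For part~$(a)$, Lemma~\ref{lem:reduction2} shows that two linked $\nu$-dominant weights $\widehat{\lambda+\tau_\Bbbk}$ and $\widehat{\lambda'+\tau_\Bbbk}$ differ by the $\bullet$-action of an element of $\widehat W_\nu$. Every real root of $\widehat\Pi_\nu$ has the form $\alpha_{k,l}+r\delta$ with $p_k=p_l$ and therefore pairs trivially with each $\det_p$, so the $\widehat W_\nu\bullet$-orbit of a weight preserves the tuple $a=(\langle\lambda,\det_p\rangle)_p$. The linkage principle then yields the block decomposition over $\Bbbk$, and since $R$ is local henselian the corresponding central idempotents lift to $R$, giving the decomposition over $R$.

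For part~$(b)$, the integral affine root system on the $\bfg$-side is $\widehat\Pi(\tau_\Bbbk,\kappa_\Bbbk)=\widehat\Pi_\nu$, which coincides, as a root system with its natural positive part, with the affine root system of the Kac-Moody algebra $\bfm_\nu$. The crucial check is that, under the identification $\lambda+\tau_\Bbbk\leftrightarrow\lambda$, the subsystems $\widehat\Pi[\lambda+\tau_\Bbbk,\kappa_\Bbbk]$ and $\widehat\Pi_\nu[\lambda,\kappa_\Bbbk]$ of roots pairing to zero with the two weights agree; this follows from $\langle\tau_\Bbbk:\alpha\rangle=0$ for every $\alpha\in\Pi_\nu$, since $\tau_\Bbbk=\sum_p\tau_{\Bbbk,p}\det_p$ and each $\det_p$ is orthogonal to $\Pi_\nu$. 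The Soergel/moment-graph construction invoked in the proof of Proposition~\ref{prop:equivredI}, combined with Fiebig's theorem~\cite[thm.~11]{F2}, then produces an equivalence of abelian $\Bbbk$-categories $\scrQ_\Bbbk$ sending $\bfL(\lambda+\tau_\Bbbk)_\Bbbk$ to $\bfL(\lambda)_\Bbbk$, together with a compatible equivalence $\scrQ_R$ of abelian $R$-categories satisfying $\Bbbk\scrQ_R=\scrQ_\Bbbk\Bbbk$. Proposition~\ref{claim:eqHWC} upgrades these to equivalences of highest weight categories, and inspection of highest weights shows that $\scrQ_R(\bfM(\lambda+\tau)_{R,\nu})=\bfM(\lambda)_{R,+}$.

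Part~$(c)$ is then immediate: by definition, $\bfA^{\nu,\kappa}_{R,\tau}$ and $\bfA^{+,\kappa}_R(\nu)$ are the Serre subcategories generated by the families $\{\pmb\Delta(\lambda)_{R,\tau}\}_{\lambda\in\scrP^\nu}$ and $\{\pmb\Delta(\lambda)_R\}_{\lambda\in\scrP^\nu}$ respectively, and these correspond under $\scrQ_R$ by part~$(b)$. The main obstacle, exactly as in Proposition~\ref{prop:equivredI}, is to have Fiebig's moment-graph construction work simultaneously over $R$ and $\Bbbk$ in a compatible fashion; once the integral root systems have been matched as above, this is handled by the same argument already carried out in that earlier proof.
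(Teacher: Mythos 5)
Your proposal is correct and follows the same route as the paper's proof: genericity via Lemma~\ref{lem:reduction2} gives the block decomposition in~$(a)$, Fiebig's theorem~\cite[thm.~11]{F2} together with the deformed moment-graph argument already carried out in Proposition~\ref{prop:equivredI} produces the compatible equivalences $\scrQ_R$, $\scrQ_\Bbbk$ in~$(b)$ (the paper first does this for the regular category $\bfO^\kappa_{R,\tau}\{a\}$ and then restricts to the parabolic subcategory, a step you compress into the appeal to Proposition~\ref{prop:equivredI}), and Proposition~\ref{claim:eqHWC} upgrades these to equivalences of highest weight categories, from which $(c)$ is immediate. One small imprecision worth flagging: in the generic case one only has $\widehat\Pi(\tau_\Bbbk,\kappa_\Bbbk)\subset\widehat\Pi_\nu$ rather than equality when $\kappa_\Bbbk\notin\bbZ$; what Fiebig's theorem actually requires is that the \emph{integral} root systems on the two sides agree, and this holds because $\tau_\Bbbk$ is orthogonal to $\Pi_\nu$ as you observe, so the same integrality condition on $r\kappa_\Bbbk$ cuts out the same subsystem of $\widehat\Pi_\nu$ on both sides.
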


\vspace{.5mm}

\begin{proof} 
Since $\Bbbk$ is generic, the linkage principle and Lemma \ref{lem:reduction2} 
imply that if a parabolic Verma module in $\bfO_{\Bbbk,\t}^{\nu,\kappa}$
has a highest weight of the form $\widehat{\lambda+\t_\Bbbk}$ with
$\lambda\in P^\nu\{a\}$, then
any constituent has also a highest weight of the same form. So we have a decomposition 
$\bfO_{\Bbbk,\t}^{\nu,\kappa}=\bigoplus_{a\in\bbN^\ell}\bfO^{\nu,\kappa}_{\Bbbk,\t}\{a\}$. 
The decomposition over $R$ follows from Proposition \ref{prop:introhw}.
Part $(a)$ is proved.

For the same reason as above, we have $\bfO_{R,\t}^\kappa=\bigoplus_{a\in\bbN^\ell}\bfO_{R,\t}^\kappa\{a\}$,
where $\bfO_{R,\t}^\kappa\{a\}$ is the full subcategory 
of the modules whose simple subquotients have a highest weight of the form
$\widehat{\lambda+\t_\Bbbk}$ with $\lambda\in P\{a\}$.

Further, by \cite[thm.~11]{F2} there is an equivalence
of highest weight $\Bbbk$-categories
$\scrQ_\Bbbk:\bfO_{\Bbbk,\t}^\kappa\{a\}\to\bfO_{\Bbbk}^\kappa(\nu)\{a\}$
such that $\bfL(\lambda+\tau_\Bbbk)_{\Bbbk}\mapsto\bfL(\lambda)_{\Bbbk}$.
For the same reason as explained in the proof of Proposition \ref{prop:equivredI}, the proof of \cite[thm.~11]{F2} yields an equivalence 
$\scrQ_R:\bfO_{R,\t}^\kappa\{a\}\to\bfO_{R}^\kappa(\nu)\{a\}$ such that $\Bbbk\scrQ_R(M)=\scrQ_\Bbbk(\Bbbk M)$ for any $M\in\bfO_{R,\t}^\kappa\{a\}$.

Since $\lambda+\tau_\Bbbk$ is $\nu$-dominant if and only if $\lambda$ is $\nu$-dominant,
this equivalence restricts to an equivalence of abelian categories
$\bfO_{\Bbbk,\tau}^{\nu,\kappa}\{a\}\to\bfO^{+,\kappa}_{\Bbbk}(\nu)\{a\}$. We denote it again by $\scrQ_\Bbbk$.
Since $\bfO_{R,\tau}^{\nu,\kappa}\{a\}$ and $\bfO_{R}^{+,\kappa}(\nu)\{a\}$ are full subcategories of $\bfO_{R,\tau}^\kappa\{a\}$ and 
$\bfO^{\kappa}_{R}(\nu)\{a\}$
consisting of the modules whose simple subquotients have a highest weight of the form $\widehat{\lambda+\tau_\Bbbk}$ and $\widehat{\lambda}$, respectively, with 
$\lambda\in P^\nu\{a\}$, we deduce that $\scrQ_R$ restricts to an equivalence of abelian $R$-categories 
$\scrQ_R:\,\bfO_{R,\tau}^{\nu,\kappa}\{a\}\to\bfO^{+,\kappa}_{R}(\nu)\{a\}$. 
Since $\scrQ_\Bbbk(\bfL(\lambda+\tau_\Bbbk)_{\Bbbk})=\bfL(\lambda)_{\Bbbk}$ for all $\lambda\in P^\nu\{a\}$, 
by Proposition \ref{claim:eqHWC} we deduce that $\scrQ_\Bbbk$ and $\scrQ_R$ are indeed equivalences of highest weight 
categories and that
$\scrQ_R(\bfM(\lambda+\tau)_{R,\nu})=\bfM(\lambda)_{R,+}$. This proves parts $(b)$, $(c)$.
\end{proof}

\vspace{2mm}

Now, let $R$ be either a field or a regular local deformation ring of dimension $2$.
Assume further that $R$ is analytic and in general position.

Consider the Kazhdan-Lusztig category $\bfO^{+,\kappa}_R(\nu_p)$ of $\frakgl_{R,\nu_p}$. 
The equivalence of categories $\bfO_R^\kappa(\nu)=\bigotimes_{p=1}^\ell \bfO_R^\kappa(\nu_p)$ yields an equivalence of categories 
$\bfO^{+,\kappa}_R(\nu)=\bigotimes_{p=1}^\ell\bfO^{+,\kappa}_R(\nu_p)$

Let $\bfV(\nu_p)\in\bfO^{+,\kappa}_R(\nu_p)$ be the module induced from the natural representation of $\frakgl_{R,\nu_p}$. 
The endofunctor $f_p=\bullet\,\dot\otimes_R \bfV(\nu_p)$ of $\bfO^{+,\kappa}_R(\nu_p)$ extends to an 
endofunctor of $\bfO^{+,\kappa}_R(\nu)$ in the obvious way. We denote it again by $f_p$. Let $f=\bigoplus_{p=1}^\ell f_p$.

We set $\bfT_{R,0}(\nu)=\bigotimes_{p=1}^\ell\bfT_{R,0}(\nu_p)$. 
For each $d\in\bbN,$ we consider the tilting module 
$\bfT_{R,d}(\nu)=f^d(\bfT_{R,0}(\nu))$ in $\bfO^{+,\kappa}_R(\nu)$.
We have introduced a module $\bfT_{R,d}$  in 
$\bfO^{\nu,\kappa}_{R,\tau}.$

By base change, for each $\frakp\in\frakP$, we get the modules
$\bfT_{R_\frakp,d}\in \bfO^{\nu,\kappa}_{R_\frakp,\tau}$ and $\bfT_{R_\frakp,d}(\nu)\in \bfO^{+,\kappa}_{R_\frakp}(\nu).$
The same proof as in Lemma \ref{lem:isom333} yields the following.

\vspace{2mm}

\begin{lemma}\label{lem:Q} 
Assume that $\frakp\in\frakP_1$ is generic. Then, we have an isomorphism 
$\scrQ_{R_\frakp}(\bfT_{R_\frakp,d})=\bfT_{R_\frakp,d}(\nu).$
\end{lemma}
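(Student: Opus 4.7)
The plan is to mirror the proof of Lemma \ref{lem:isom333}, replacing Proposition \ref{prop:equivredI} by Proposition \ref{prop:reduction2}. First, the equivalence of highest weight categories
$\scrQ_{R_\frakp}:\bfA_{R_\frakp,\tau}^{\nu,\kappa}\to \bfA^{+,\kappa}_{R_\frakp}(\nu)$
provided by Proposition \ref{prop:reduction2}(c) sends tilting modules to tilting modules and standard modules to standard modules. Since $\bfT_{R_\frakp,0}=\pmb\Delta(\emptyset)_{R_\frakp,\tau}$ and $\bfT_{R_\frakp,0}(\nu)=\bigotimes_{p=1}^{\ell}\pmb\Delta(\emptyset)_{R_\frakp}(\nu_p)$ coincide under the identification of \eqref{varpi}, the case $d=0$ is immediate: $\scrQ_{R_\frakp}(\bfT_{R_\frakp,0})=\bfT_{R_\frakp,0}(\nu)$. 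In general, $\scrQ_{R_\frakp}(\bfT_{R_\frakp,d})$ is tilting in $\bfA^{+,\kappa}_{R_\frakp}(\nu)$.

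The next step is to work over the residue field $\Bbbk_\frakp$. By base change and Proposition \ref{prop:introhw}, it suffices to prove the analogous isomorphism $\scrQ_{\Bbbk_\frakp}(\bfT_{\Bbbk_\frakp,d})\simeq\bfT_{\Bbbk_\frakp,d}(\nu)$ and then lift. Since two tilting modules in a highest weight category over a field are isomorphic iff they have the same class in the Grothendieck group, I am reduced to showing
\[
[\scrQ_{\Bbbk_\frakp}(\bfT_{\Bbbk_\frakp,d})]=[\bfT_{\Bbbk_\frakp,d}(\nu)]
\]
in $[\bfO^{+,\kappa}_{\Bbbk_\frakp}(\nu)]$. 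For this, observe that $\scrQ_{\Bbbk_\frakp}$ induces an isomorphism of Grothendieck groups which sends standard classes to standard classes. Combined with the explicit formulas of Lemma \ref{lem:pasbete} describing the action of the endofunctor $f$ on parabolic Verma modules (and the fact that, in the generic case, the weight-shifting arrows $\lambda\overset{i}\to\mu$ on both sides match under $\scrQ_{\Bbbk_\frakp}$ because of Remark \ref{rem:5.29} and Proposition \ref{prop:reduction2}), one obtains that $\scrQ_{\Bbbk_\frakp}$ commutes with $f$ at the level of Grothendieck groups.

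Since $\bfT_{\Bbbk_\frakp,d}=f^d(\bfT_{\Bbbk_\frakp,0})$ and $\bfT_{\Bbbk_\frakp,d}(\nu)=f^d(\bfT_{\Bbbk_\frakp,0}(\nu))$, the equality of classes follows by induction on $d$ from the $d=0$ case. Hence $\scrQ_{\Bbbk_\frakp}(\bfT_{\Bbbk_\frakp,d})\simeq\bfT_{\Bbbk_\frakp,d}(\nu)$. Finally, since $\scrQ_{R_\frakp}(\bfT_{R_\frakp,d})$ and $\bfT_{R_\frakp,d}(\nu)$ are both tilting modules in $\bfA^{+,\kappa}_{R_\frakp}(\nu)$ whose specializations at $\Bbbk_\frakp$ are isomorphic, Proposition \ref{prop:introhw}(b) allows us to lift the isomorphism to $R_\frakp$. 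The main (mild) obstacle is step three: verifying that $\scrQ_{\Bbbk_\frakp}$ intertwines the $f$-action at the Grothendieck group level, which requires matching the combinatorics of the residue arrows $\lambda\overset{i}\to\mu$ in $\bfO^{\nu,\kappa}_{\Bbbk_\frakp,\tau}$ and $\bfO^{+,\kappa}_{\Bbbk_\frakp}(\nu)$ via the identification provided by Proposition \ref{prop:reduction2}.
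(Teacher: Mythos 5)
Your proof is correct and takes essentially the same approach as the paper: the paper's own proof of this lemma is just the statement "The same proof as in Lemma \ref{lem:isom333} yields the following," and your write-up faithfully adapts that argument, replacing Proposition \ref{prop:equivredI} by Proposition \ref{prop:reduction2} and $T_{R_\frakp,h,d}$ by $\bfT_{R_\frakp,d}(\nu)$. The key steps (tilting preservation under $\scrQ_{R_\frakp}$, the $d=0$ base case, commutation with $f$ at the level of Grothendieck groups via Lemma \ref{lem:pasbete}, determination of tilting modules by their classes, and lifting via Proposition \ref{prop:introhw}(b)) all match.
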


\vspace{.5mm}

On the other hand, for each $a\in\bbN^{\ell},$ we set $\bfH^\ell_{R,a}=\bigotimes_{p=1}^\ell \bfH^+_{R,a_p}$.
By base change, it yields the $R_\frakp$-algebra $\bfH^\ell_{R_\frakp,a}$.

\vspace{2mm}

\begin{lemma}
Let $\frakp\in\frakP_1$ be generic. Then, we have an $R_\frakp$-algebra isomorphism
\begin{equation}\label{eq:plane2}
\bfH^s_{R_\frakp,d}=\bigoplus_{a\in\scrC^\ell_d}\Mat_{\frakS_d/\frakS_a}\bigl(\bfH^\ell_{R_\frakp,a}\bigr).
\end{equation}
\end{lemma}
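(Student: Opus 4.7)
The plan is to build, for each composition $a\in\scrC^\ell_d$, a set of mutually orthogonal idempotents in $\bfH^s_{R_\frakp,d}$ indexed by minimal coset representatives $w\in\frakS_d/\frakS_a$, show that their sum is central, and then use the affine-Hecke intertwiners to assemble the corresponding block into a matrix algebra over $\bfH^\ell_{R_\frakp,a}$.

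First I would invoke Remark \ref{rem:5.29}: since $\frakp$ is generic, the quiver $\scrI$ decomposes as a disjoint union $\scrI=\bigsqcup_{p=1}^\ell\scrI_p$ with each $\scrI_p$ of type $A_\infty$. By Proposition \ref{prop:isomHecke}$(a)$ the algebra $\bfH^s_{R_\frakp,d}$ contains orthogonal idempotents $1_\bfi$ for each tuple $\bfi\in\scrI^d$, and the central idempotents $1_\alpha=\sum_{\bfi\in\scrI^\alpha}1_\bfi$ are indexed by $\alpha\in Q^+_{\scrI}$ of height $d$. Since $Q^+_{\scrI}=\bigoplus_p Q^+_{\scrI_p}$, every $\alpha$ decomposes as $\alpha=\alpha_1+\cdots+\alpha_\ell$ and has a well-defined \emph{shape} $a(\alpha)=(|\alpha_1|,\ldots,|\alpha_\ell|)\in\scrC^\ell_d$. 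Grouping blocks by shape gives $\bfH^s_{R_\frakp,d}=\bigoplus_{a}\bfH^s_{R_\frakp,d,a}$ with $\bfH^s_{R_\frakp,d,a}=\bigoplus_{a(\alpha)=a}\bfH^s_{R_\frakp,\alpha}$, and the claim reduces to constructing isomorphisms $\bfH^s_{R_\frakp,d,a}\iso\Mat_{\frakS_d/\frakS_a}(\bfH^\ell_{R_\frakp,a})$ for each $a$.

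Fix $a$, and for each minimal-length representative $w\in\frakS_d/\frakS_a$ let $\phi^w_a:[1,d]\to[1,\ell]$ send $r$ to the index $p$ for which $w^{-1}(r)$ lies in the $p$-th standard block of $[1,d]$. Put $e^w_a=\sum_{\bfi:\,i_r\in\scrI_{\phi^w_a(r)}\forall r}1_\bfi$. These idempotents are pairwise orthogonal and $\sum_w e^w_a$ is the identity of $\bfH^s_{R_\frakp,d,a}$. In the corner $e^{\mathrm{id}}_a\bfH^s_{R_\frakp,d,a}e^{\mathrm{id}}_a$, the generators $T_r$ with $\phi^{\mathrm{id}}_a(r)\ne\phi^{\mathrm{id}}_a(r+1)$ vanish (they would shift a tuple to one of a different shape), while the remaining $T_r$'s and $X_r^{\pm 1}$'s factor along the blocks $[a_1+\cdots+a_{p-1}+1,a_1+\cdots+a_p]$ and present exactly $\bigotimes_p\bfH^+_{R_\frakp,a_p}=\bfH^\ell_{R_\frakp,a}$.

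To promote this corner identification to the full matrix structure, I would exploit the Lusztig intertwiners $\pmb\tau_r=(T_r+1)(X_r-X_{r+1})/(X_r-q_{R_\frakp}X_{r+1})-1$, which satisfy $\pmb\tau_r\cdot 1_\bfi=1_{s_r\bfi}\cdot\pmb\tau_r$ and are invertible on a weight space $1_\bfi$ when neither $X_r-X_{r+1}$ nor $X_r-q_{R_\frakp}X_{r+1}$ vanishes there. Genericity is used precisely here: whenever $i_r\in\scrI_p$ and $i_{r+1}\in\scrI_{p'}$ with $p\ne p'$, both denominators are units in $R_\frakp$, since the $\scrI_p$'s are disjoint under the $q_{R_\frakp}^{\bbZ}$-action. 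I would attach to each minimal coset representative $w$ a product $\xi_w$ of such $\pmb\tau_r$'s (over a reduced expression of $w$) satisfying $\xi_w e^{\mathrm{id}}_a\xi_w^{-1}=e^w_a$ and $\xi_{ww'}=\xi_w\xi_{w'}$ whenever $\ell(ww')=\ell(w)+\ell(w')$; the $\xi_{w_1}(e^{\mathrm{id}}_a\bfH^s_{R_\frakp,d,a}e^{\mathrm{id}}_a)\xi_{w_2}^{-1}$ for $w_1,w_2\in\frakS_d/\frakS_a$ then assemble into the matrix algebra $\Mat_{\frakS_d/\frakS_a}(\bfH^\ell_{R_\frakp,a})$. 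A rank comparison using Proposition \ref{prop:preliminaries}$(c)$ over the fraction field $K$ (where both sides become sums of full matrix algebras of equal dimension $d!\,\ell^d$) verifies the $R_\frakp$-algebra map is an isomorphism. The main obstacle will be establishing the cocycle property $\xi_{ww'}=\xi_w\xi_{w'}$ and the orthogonality relations among the $e^w_a$: this requires careful control of the braid relations for the $\pmb\tau_r$'s on the relevant weight spaces, and repeated appeals to genericity to ensure that the rational denominators appearing in nested compositions of intertwiners remain invertible throughout.
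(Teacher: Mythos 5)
Your approach via Lusztig's intertwiners $\pmb\tau_r$ is genuinely different from the paper's proof, which works instead with the intertwiners $\tau_{\bfi,l}$ of the cyclotomic quiver Hecke algebra $\bfR^s_{K,d}$ after transporting through the Brundan--Kleshchev/Rouquier isomorphism $\bfR^s_{K,d}\simeq\bfH^s_{K,d}$. The KLR choice is cleaner precisely because of what you flag as the "main obstacle": for KLR generators the quadratic relation reads $\tau_{\bfi,l}^2=Q_{i_l,i_{l+1}}(x_l,x_{l+1})\,1_\bfi$, and when $i_l$, $i_{l+1}$ lie in different components of the quiver one has $Q_{i_l,i_{l+1}}=1$, so $\tau_{\bfi,l}$ is literally an involution on that weight space and the braid relations degenerate to pure Coxeter relations, giving the cocycle property for free. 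Lusztig's $\pmb\tau_r$ satisfy a quadratic relation whose scalar depends on the eigenvalues of $X_r,X_{r+1}$, so assembling $\xi_w$ from a reduced word requires keeping track of normalization factors weight space by weight space; this is doable under genericity but is not automatic, and you would also need to verify that the rational expressions defining $\pmb\tau_r$, once multiplied by the appropriate idempotents $1_\bfi$, actually land in $\bfH^s_{R_\frakp,d}$ rather than only in $\bfH^s_{K,d}$ (the paper avoids this by using elements that are polynomial by construction).

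The last step has a genuine gap. A rank comparison over the fraction field $K$ shows that your $R_\frakp$-algebra homomorphism $\bigoplus_a\Mat_{\frakS_d/\frakS_a}(\bfH^\ell_{R_\frakp,a})\to\bfH^s_{R_\frakp,d}$ becomes an isomorphism after $\otimes_{R_\frakp}K$, but this only means the determinant of the underlying map of free $R_\frakp$-modules is a nonzero element of $R_\frakp$, not that it is a unit; compare multiplication by a uniformizer on $R_\frakp$, which is a $K$-isomorphism but not an $R_\frakp$-isomorphism. What you need is the isomorphism at the \emph{residue} field, i.e., the Dipper--Mathas Morita decomposition $\bfH^s_{\Bbbk_\frakp,d}=\bigoplus_a\Mat_{\frakS_d/\frakS_a}(\bfH^\ell_{\Bbbk_\frakp,a})$, together with the observation that your constructed map reduces to it modulo $\frakm_\frakp$. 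Nakayama's lemma then gives surjectivity, and injectivity follows since both sides are free $R_\frakp$-modules of the same rank. This is exactly the strategy the paper uses: it first records the residue-field isomorphism, then builds an $R_\frakp$-lift and invokes Nakayama, rather than passing through $K$.
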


\vspace{.5mm}

\begin{proof} 
Let $I=\{\t_{R_\frakp,1},\t_{R_\frakp,2},\dots,\t_{R_\frakp,\ell}\}+\bbZ+\kappa_{R_\frakp}\bbZ$ and
$\scrI=\scrI_{R_\frakp}=I/\!\!\sim$. 
Let $\scrI_{\Bbbk_\frakp}$ be the image of $\scrI$ in the residue field $\Bbbk_\frakp$.
Since $\frakp$ is generic, the quiver $\scrI_{\Bbbk_\frakp}$ has exactly $\ell$ components 
given by $\scrI_{\Bbbk_\frakp,p}=(\t_{\Bbbk_\frakp,p}+\bbZ+\kappa_{\Bbbk_\frakp}\bbZ)/\!\!\sim$ with $p\in[1,\ell]$.
Hence, the quiver $\scrI_{R_\frakp}$ has also $\ell$ components $\scrI_1=\scrI_{R_\frakp,1},\dots,\scrI_\ell=\scrI_{R_\frakp,\ell}$
which specialize to $\scrI_{\Bbbk_\frakp,1},\dots,\scrI_{\Bbbk_\frakp,\ell}$ respectively.

For each tuple $\bfp=(p_1,p_2,...,p_d)$ in $[1,\ell]^d,$
we consider the idempotent in $\bfH^s_{\Bbbk_\frakp,d}$ given by
$1_\bfp=\sum_\bfi 1_\bfi$, where $\bfi=(i_1,i_2,\dots,i_d)$ runs over the set 
$\scrI_{\Bbbk_\frakp,\bfp}=\prod_{r=1}^d\scrI_{\Bbbk_\frakp,p_r}$
and $1_\bfi$ is as in \S \ref{sec:3.4}.
Note that, although there may be an infinite number of such tuples $\bfi$,
this sum contains only a finite number of non zero terms. 
Next, for each $a\in\scrC^\ell_d$, we define a central idempotent  $1_{(a)}$ in $\bfH^s_{\Bbbk_\frakp,d}$ by
$1_{(a)}=\sum_{\bfp\in(a)}1_\bfp,$ where $a$ is identified with the tuple $(1^{a_1}2^{a_2}\dots \ell^{a_\ell})$ and $(a)$ is the set of all permutations of $a$
in $[1,\ell]^d$.
Then, we
write $\bfH^s_{\Bbbk_\frakp,(a)}=1_{(a)}\bfH^s_{\Bbbk_\frakp,d}.$

It is well-known that there are $\Bbbk_\frakp$-algebra isomorphisms
$\bfH^s_{\Bbbk_\frakp,d}=\bigoplus_{a\in\scrC^\ell_d}\bfH^s_{\Bbbk_\frakp,(a)},$
$\bfH^\ell_{\Bbbk_\frakp,a}=1_a\,\bfH^s_{\Bbbk_\frakp,d}1_a$
and
$\bfH^s_{\Bbbk_\frakp,(a)}=\Mat_{\frakS_d/\frakS_a}\bigl(\bfH^\ell_{\Bbbk_\frakp,a}\bigr),$
where $\frakS_a=\frakS_{a_1}\times\cdots\times\frakS_{a_\ell}$,
see \cite{DM}.
We must prove that the isomorphism 
$\bigoplus_{a\in\scrC^\ell_d}\Mat_{\frakS_d/\frakS_a}\bigl(\bfH^\ell_{\Bbbk_\frakp,a}\bigr)\to\bfH^s_{\Bbbk_\frakp,d}$
lifts to an isomorphism of $R_\frakp$-algebras.
To do that, by the Nakayama's lemma, it is enough to prove that this isomorphism
 lifts to an $R_\frakp$-algebra homomorphism 
 $\bigoplus_{a\in\scrC^\ell_d}\Mat_{\frakS_d/\frakS_a}\bigl(\bfH^\ell_{R_\frakp,a}\bigr)\to\bfH^s_{R_\frakp,d}.$

First, by Proposition \ref{prop:isomHecke}, for each tuple $\bfi\in(\scrI_{\Bbbk_\frakp})^d$, the
sum $1_\bfi=\sum_{\bfi'} 1_{\bfi'}$ over all elements $\bfi'\in\scrI^d$ whose residue class is equal to $\bfi,$
is an idempotent in the $R_\frakp$-subalgebra $\bfH^s_{R_\frakp,d}$ of $\bfH^s_{K,d}.$
Therefore, for each tuple $\bfp\in [1,\ell]^d,$
the idempotent $1_\bfp\in\bfH^s_{K,d}$ given by
$1_\bfp=\sum_{\bfi'} 1_{\bfi'}$, where $\bfi'$ runs over the set 
$\scrI_{\bfp}=\prod_{r=1}^d\scrI_{p_r},$ belongs also to the $R_\frakp$-subalgebra $\bfH^s_{R_\frakp,d}$ and it specializes to the
idempotent $1_\bfp\in\bfH^s_{\Bbbk_\frakp,d}$ given above.
In particular, for each $a\in\scrC^\ell_d$, the idempotent  in $\bfH^s_{K,d}$ given by $1_{(a)}=\sum_{\bfp\in(a)}1_\bfp$
belongs indeed to $\bfH^s_{R_\frakp,d}$ and it specializes to the
idempotent $1_{\{a\}}\in\bfH^s_{\Bbbk_\frakp,d}$ given above.
Further, setting $\bfH^s_{R_\frakp,(a)}=1_{(a)}\bfH^s_{R_\frakp,d},$ we get $R_\frakp$-algebra isomorphisms
$\bfH^s_{R_\frakp,d}=\bigoplus_{a\in\scrC^\ell_d}\bfH^s_{R_\frakp,(a)}$ and
$\bfH^\ell_{R_\frakp,a}=1_a\,\bfH^s_{R_\frakp,d}1_a.$
 
 Now, we construct an $R_\frakp$-algebra homomorphism 
 $\bigoplus_{a\in\scrC^\ell_d}\Mat_{\frakS_d/\frakS_a}\bigl(1_a\bfH^s_{R_\frakp,d}1_a\bigr)\to\bfH^s_{R_\frakp,d}$
 which lifts the isomorphism over the residue field $\Bbbk_\frakp$ mentioned above.
 
To do that, it is convenient to use the formalism of \emph{quiver Hecke algebras}.
Let $\bfR^s_{K,d}$ be the 
\emph{cyclotomic quiver Hecke algebra} of rank $d$ associated with $s$.
It is the $K$-algebra generated by elements
$1_{\bfi},$ $x_{\bfi,k},$ $\tau_{\bfi,l}$  with
$\bfi\in \scrI^d,$ $k\in[1,d]$ and $l\in[1,d)$,
subject to the relations in \cite[sec.~3.2.1]{R2} associated with the quiver $\scrI$ and to the cyclotomic relations given by
$(x_{\bfi,1})^{\sharp\{p\,;\,q^{s_p}=i_1\}}=0$ for all $\bfi$'s.
Note that the $K$-algebra $\bfR^s_{K,d}$ is finite dimensional, and that we have $1_\bfi=0$ except for a finite number of $\bfi$'s.

By \cite{BK3}, \cite{R2}
we have a $K$-algebra isomorphism
$\bfR^s_{K,d}=\bfH^s_{K,d}$ which identifies the idempotents $1_\bfi,$ $\bfi\in\scrI^d$, from both sides.
In particular, for each integer $l\in[1,d)$ and each $d$-tuple $\bfp$ such that $p_l\neq p_{l+1}$,
the element $\tau_{\bfp,l}=\sum_{\bfi\in\scrI_\bfp}\tau_{\bfi,l}$ in
$\bfR^s_{K,d}$ can be viewed as an element of $\bfH^s_{K,d}$ which belongs to
$\bfH^s_{R_\frakp,d}$ and which satisfies the relations $\tau_{s_l(\bfp),l}\,\tau_{\bfp,l}=1_\bfp$
and $\tau_{\bfp,l}\,\tau_{s_l(\bfp),l}=1_{s_l(\bfp)}.$

Next, let $w\in \frakS_d.$ Assume that $w$ is of minimal length in its right 
$\frakS_a$-coset. Fix a reduced decomposition $w=s_{r_m}\cdots s_{r_2} s_{r_1}$.
Consider the elements $\tau_{w,a}$ and $\bar\tau_{a,w}$ of $\bfH^s_{R_\frakp,d}$ given by
$\tau_{w,a}=\tau_{s_{r_m},s_{r_{m-1}}\dots s_{r_1}(a)}\cdots 
\tau_{s_{r_2},s_{r_1}(a)}\tau_{s_{r_1},a}$ and
$\bar\tau_{a,w}=\tau_{s_{r_1},s_{r_{2}}\dots s_{r_m}w(a)}\cdots 
\tau_{s_{r_{m-1}},s_{r_{m}}w(a)}\tau_{s_{r_m},w(a)}$.
We have  $\bar\tau_{a,w}\,\tau_{w,a}=1_a$ and $\tau_{w,a}\,\bar\tau_{a,w}=1_{w(a)}.$

The expected map
$\bigoplus_{a\in\scrC^\ell_d}\Mat_{\frakS_d/\frakS_a}\bigl(1_a\bfH^s_{R_\frakp,d}1_a\bigr)\to\bfH^s_{R_\frakp,d}$ takes
the square matrix $(x_{v(a),w(a)})_{v,w}$ in $\Mat_{\frakS_d/\frakS_a}\bigl(1_a\bfH^s_{R_\frakp,d}1_a\bigr)$ 
with $x_{v(a),w(a)}\in 1_a\bfH^s_{R_\frakp,d}1_a$ and $v,w\in\frakS_d$ as above to the sum
$\sum_{v,w}\tau_{w,a}\,x_{v(a),w(a)}\,\bar\tau_{v,a}$.
\end{proof}

\vspace{2mm}

Now, given $\bfp=(p_1,p_2,...,p_d)$ in $[1,\ell]^d,$
we write $f_\bfp=f_{p_1}f_{p_2}...f_{p_d}$ and $\bfT_{R,\bfp}(\nu)=f_\bfp(\bfT_{R,0}(\nu))$. 
We have an isomorphism $\bfT_{R,d}(\nu)=\bigoplus_\bfp\bfT_{R,\bfp}(\nu)$.

Recall that we identify a composition $a=(a_1,\ldots, a_\ell)$ in $\scrC_d^{\ell}$ with the $\ell$-tuple $(1^{a_1}2^{a_2}....\ell^{a_\ell})$. 
Then, we have an isomorphism $\bfT_{R,a}(\nu)=\bigotimes_{p=1}^\ell\bfT_{R,a_p}(\nu_p).$

Next, consider the action of the symmetric group $\frakS_d$ on $[1,\ell]^d$ by permutation. 
Each orbit contains a unique element given by a composition $a\in \scrC_d^{\ell}$. 
Let $(a)$ denote this orbit. We have a bijection $\frakS_d/\frakS_a\overset{\sim}\to\{a\}$ given by $w\mapsto w(a)$, 
where $\frakS_a$ is the stabilizer of $a$. 
We write $\bfT_{R,(a)}(\nu)=\bigoplus_{\bfp\in\{a\}}\bfT_{R,\bfp}(\nu)$. 

For each $\bfp\in(a)$ we have a canonical isomorphism  $\bfT_{R,\bfp}(\nu)=\bfT_{R,a}(\nu)$. 
Therefore, we have  $\bfT_{R,(a)}(\nu)=\bigoplus_{w\in\frakS_d/\frakS_a}\bfT_{R,a}(\nu)$. We deduce that
$$\End_{\bfA^{+,\kappa}_{R}(\nu)}(\bfT_{R,(a)}(\nu))=\Mat_{\frakS_d/\frakS_a}\bigl(\End_{\bfA^{+,\kappa}_{R}(\nu)}(\bfT_{R,a}(\nu))\bigr).$$
Next, recall that $\bfO^{+,\kappa}_R(\nu)=\bigoplus_{a\in\scrC^\ell_d}\bfO^{+,\kappa}_R(\nu)(a)$ and that
$\bfT_{R,\bfp}(\nu)\in\bfO^{+,\kappa}_R(\nu)\{a\}$ if and only if $\bfp\in(a)$.
Therefore, we have 
\begin{eqnarray}\label{eq:plane1}
\End_{\bfA^{+,\kappa}_{R}(\nu)}(\bfT_{R,d}(\nu))&=&\bigoplus_{a\in\scrC^\ell_d}\End_{\bfA^{+,\kappa}_{R}(\nu)}(\bfT_{R,(a)}(\nu))\nonumber\\
&=&\bigoplus_{a\in\scrC^\ell_d}\Mat_{\frakS_d/\frakS_a}\bigl(\End_{\bfA^{+,\kappa}_{R}(\nu)}(\bfT_{R,a}(\nu))\bigr).
\end{eqnarray}

For each $p\in[1,\ell],$  the $\bfg_{R,\nu_p}$-module $\bfT_{R,a_p}(\nu_p)\in\bfA^{+,\kappa}_R(\nu_p)$
gives rise to an $R$-algebra homomorphism $\bfH^+_{R,a_p}\to\End_{\bfA^{+,\kappa}_R(\nu_p)}(\bfT_{R,a_p}(\nu_p))^\op$ given by \eqref{eq:psi-s}. 
Taking the tensor product , we get an $R$-algebra homomorphism $\bfH^\ell_{R,a}\to \End_{\bfA^{+,\kappa}_{R}(\nu)}(\bfT_{R,a}(\nu))^\op$. 

Now, assume that $\frakp\in\frakP_1$ is generic.
Combining the $R$-algebra homomorphism above with base change, \eqref{eq:plane2} and  \eqref{eq:plane1}, 
we get an $R_\frakp$-algebra homomorphism
$\psi_{R_\frakp,d}^+(\nu) :
\bfH^s_{R_\frakp,d}\to\End_{\bfA^{+,\kappa}_{R_\frakp}(\nu)}(\bfT_{R_\frakp,d}(\nu))^\op$.
Further, the composition with $\psi_{R_\frakp,d}^+(\nu)$ yields a functor
$\Psi_{R_\frakp,d}^+(\nu)=\Hom_{\bfA^{+,\kappa}_{R_\frakp}(\nu)}\big(\bfT_{R_\frakp,d}(\nu),\bullet\big): 
\bfA^{+,\kappa}_{R_\frakp}(\nu)\{d\}\to \bfH^s_{R_\frakp,d}\mmod.$

\vspace{2mm}

\begin{lemma}\label{lem:levelone}
Let $\frakp\in\frakP_1$ be generic. The following holds

(a)
$\bfT_{R_\frakp,d}(\nu)$ is projective in $\bfA^{+,\kappa}_{R_\frakp}(\nu)\{d\}$,

(b) 
$\psi_{R_\frakp,d}^+(\nu)$ is an isomorphism
$\bfH^s_{R_\frakp,d}\to\End_{\bfm_{R_\frakp,\nu}}(\bfT_{R_\frakp,d}(\nu))^\op$,

 (c) 
 $\Psi^+_{R_\frakp,d}(\nu)$ is fully faithful on
 $(\bfA^{+,\kappa}_{R_\frakp}(\nu)\{d\})^\Delta$ and $(\bfA^{+,\kappa}_{R_\frakp}(\nu)\{d\})^\nabla$.
\end{lemma}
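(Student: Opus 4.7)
The plan is to reduce, via the tensor product structure forced by the genericity of $\frakp$, to the case $\ell = 1$. By Proposition \ref{prop:reduction2} the category $\bfA^{+,\kappa}_{R_\frakp}(\nu)$ is the $\ell$-fold tensor product $\bigotimes_{p=1}^\ell \bfA^{+,\kappa}_{R_\frakp}(\nu_p)$, and the tilting module $\bfT_{R_\frakp,d}(\nu)$ decomposes accordingly as $\bigoplus_{a \in \scrC^\ell_d} \bfT_{R_\frakp,(a)}(\nu)$ with $\bfT_{R_\frakp,a}(\nu) = \bigotimes_p \bfT_{R_\frakp,a_p}(\nu_p)$, each $\frakS_d/\frakS_a$-orbit contributing its multiplicity. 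Combining this decomposition with the matrix-algebra isomorphism \eqref{eq:plane2} for $\bfH^s_{R_\frakp,d}$ and its counterpart \eqref{eq:plane1} for $\End_{\bfA^{+,\kappa}_{R_\frakp}(\nu)}(\bfT_{R_\frakp,d}(\nu))$, Morita invariance (for the matrix factor) and K\"unneth-type arguments for $\Ext$-groups over a tensor product (for the tensor factor) reduce all three claims (a), (b), (c) to the corresponding statements for each single factor $\bfT_{R_\frakp,a_p}(\nu_p)$ inside $\bfA^{+,\kappa}_{R_\frakp}(\nu_p)\{a_p\}$, paired with the level-one affine Hecke algebra $\bfH^+_{R_\frakp,a_p}$.

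For the resulting $\ell = 1$ problem I would argue as follows. The base module $\bfT_{R_\frakp,0}(\nu_p) = \pmb\Delta(\emptyset)_{R_\frakp}$ is both tilting and projective, since $\emptyset$ is the minimal element of the relevant highest-weight poset, so $P(\emptyset) = \Delta(\emptyset) = T(\emptyset)$. The functor $f$ admits a biadjoint $e$ on $\bfO^{+,\kappa}_{K}(\nu_p)$ (Proposition \ref{prop:adjoints}), so $f$ preserves projectives over the generic fiber; combining with Proposition \ref{prop:introhw}(c) and induction on $a_p$ then yields that $\bfT_{R_\frakp,a_p}(\nu_p) = f^{a_p}(\bfT_{R_\frakp,0}(\nu_p))$ is projective, which gives (a). For (b), the generic split semisimplicity of $\bfH^+_{K,a_p}$ (Proposition \ref{prop:preliminaries}(c)) together with the level-one version of Proposition \ref{prop:preliminaries}(d) gives the isomorphism at the $K$-fiber, and Proposition \ref{prop:inj}, applied to the symmetric $R_\frakp$-algebra $\bfH^+_{R_\frakp,a_p}$, lifts this to an $R_\frakp$-algebra isomorphism once one checks that the induced endomorphism of Grothendieck groups is the identity on Specht modules. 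Finally, (c) follows from Corollary \ref{cor:2.16} and Proposition \ref{prop:0to1}: the projectivity in (a) combined with the algebra isomorphism in (b) gives $0$-faithfulness on $\Delta$-filtered modules at the $\Bbbk_\frakp$-fiber, and the $\nabla$-filtered side is handled by passing to the Ringel dual and using generic semisimplicity over $K$.

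The main obstacle is part (a) in the $\ell = 1$ case, namely showing that the Kazhdan-Lusztig-tensor-product functor $f$ preserves projective objects inside the truncated highest-weight subcategory $\bfA^{+,\kappa}_{R_\frakp}(\nu_p)$, not merely inside the ambient $\bfO^{+,\kappa}_{R_\frakp}(\nu_p)$. While $f$ is exact and admits a biadjoint generically, its compatibility with the \emph{restriction} to $\bfA$ — which is cut out by a rather subtle positivity condition on weights — needs to be tracked carefully through the standard-module filtrations given by Lemma \ref{lem:pasbete}. The cleanest path is to first verify preservation of projectives at the $\Bbbk_\frakp$-fiber, where one can use classical Kazhdan-Lusztig/affine Hecke duality for $\frakgl_{\nu_p}$ at a negative integral level, and then lift to $R_\frakp$ using Proposition \ref{prop:introhw}(c); all subsequent assertions then fall into place.
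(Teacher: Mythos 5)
Your overall strategy is aligned with the paper's, and you correctly identify that the crux is the Kazhdan--Lusztig equivalence at the $\Bbbk_\frakp$-fiber when $\kappa_{\Bbbk_\frakp}$ is a negative integer. However, there is a genuine gap in your argument for part (c), and a secondary misstep in part (b).

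For (c), you write that ``the projectivity in (a) combined with the algebra isomorphism in (b) gives $0$-faithfulness on $\Delta$-filtered modules at the $\Bbbk_\frakp$-fiber.'' This is false: projectivity of $\bfT$ and the algebra isomorphism only tell you that $\Psi^+_{\Bbbk_\frakp,d}(\nu)$ is a quotient functor (i.e.\ a highest weight cover in the sense of \S 2.4.1), not that it is $(-1)$- or $0$-faithful. The faithfulness on $\nabla$-filtered modules of the $q$-Schur functor at a root of unity is a genuine theorem: what the paper uses is Donkin's argument that every Weyl module embeds into $\bfS^s_{\Bbbk_\frakp,d}\,e$ (equivalently, that the ``tensor space'' $\bfS^s_{\Bbbk_\frakp,d}\,e$ is a faithful module containing all Weyl modules as submodules), which gives $(-1)$-faithfulness on $(\bfS^s_{\Bbbk_\frakp,d}\mmod)^\nabla$; then Ringel self-duality of $\bfS^s_{\Bbbk_\frakp,d}$ transports this to the $\Delta$-filtered side, and Proposition \ref{prop:0to1} lifts to $R_\frakp$. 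Without Donkin's embedding argument, the proof of (c) does not close.

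For (b), your plan to invoke Proposition \ref{prop:inj} requires a reference endomorphism of $\bfH^+_{R_\frakp,a_p}$ to compare $\psi^+_{R_\frakp,d}(\nu)$ against, but you have not produced one --- Proposition \ref{prop:inj} cannot be applied before you already know $\End_{\bfm_{R_\frakp,\nu}}(\bfT_{R_\frakp,d}(\nu))^\op \simeq \bfH^+_{R_\frakp,a_p}$. The paper instead establishes the isomorphism directly at the $\Bbbk_\frakp$-fiber (as the classical double centralizer property for $q$-Schur algebras) and then applies Nakayama's lemma; Proposition \ref{prop:inj} is reserved for the next step, Proposition \ref{prop:red2}, where it compares $\psi^s_{R_\frakp,d}$ against the known isomorphism furnished by the present lemma.

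Your decomposition via \eqref{eq:plane1}, \eqref{eq:plane2} and the reduction to the single-factor case is correct and is what the paper does; and your acknowledgment that part (a) requires Kazhdan--Lusztig duality at the $\Bbbk_\frakp$-fiber rather than a biadjointness argument at $K$ (where the category is semisimple and the assertion is vacuous) is the right diagnosis. The paper makes this precise by identifying $\bfA^{+,\kappa}_{\Bbbk_\frakp}(\nu)\{d\}$ with the module category of a $q$-Schur algebra and reading off projectivity of the tensor space; your sketch of (a) should be upgraded to this.
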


\vspace{.5mm}

\begin{proof}
We have an equivalence of highest weight categories $\bfO^{+,\kappa}_{\Bbbk_\frakp}(\nu)=\bigotimes_{p=1}^\ell\bfO^{+,\kappa}_{\Bbbk_\frakp}(\nu_p)$ 
and each factor $\bfO^{+,\kappa}_{\Bbbk_\frakp}(\nu_p)$ is a copy of the Kazhdan-Lusztig 
category. Therefore $\bfO^{+,\kappa}_{\Bbbk_\frakp}(\nu)$
is equivalent to a category of modules over a quantum group
by \cite{KL}. Hence $\bfA^{+,\kappa}_{\Bbbk_\frakp}(\nu)\{d\}$ is equivalent to the module category of a $q$-Schur algebra (with $\ell=1$)
as a highest weight category,
and this equivalence takes $\Psi^+_{\Bbbk_\frakp,d}(\nu)$ to the q-Schur functor.

Hence, some standard facts on $q$-Schur algebras imply that
$\bfT_{\Bbbk_\frakp,d}(\nu)$ is projective in $\bfA^{+,\kappa}_{\Bbbk_\frakp}(\nu)\{d\}$, proving part $(a)$, and that the 
$\Bbbk_\frakp$-algebra homomorphism $\bfH^s_{\Bbbk_\frakp,d}\to\End_{\bfm_{\Bbbk_\frakp,\nu}}(\bfT_{\Bbbk_\frakp,d}(\nu))^\op$ is an isomorphism, 
proving part $(b)$ by Nakayama's lemma and 
\eqref{eq:plane1}, \eqref{eq:plane2}, see e.g.,  \cite{R1}.

Now, we concentrate on part $(c)$.
A standard argument due to Donkin implies that the q-Schur functor 
$\Xi^s_{\Bbbk_\frakp,d}$ is faithful on $(\bfS^s_{\Bbbk_\frakp,d}\mmod)^\nabla$ for $\ell=1$.
More precisely, recall that $\Xi^s_{\Bbbk_\frakp,d}=\Hom_{\bfS^s_{\Bbbk_\frakp,d}}(\bfS^s_{\Bbbk_\frakp,d}\,e,\bullet)$ for some idempotent 
$e\in\bfS^s_{\Bbbk_\frakp,d}.$
Recall also that the $\bfS^s_{\Bbbk_\frakp,d}$-module $\bfS^s_{\Bbbk_\frakp,d}\,e$ is faithful and that
any Weyl module embeds in $\bfS^s_{\Bbbk_\frakp,d}\,e$, see e.g., \cite[p.~188]{Martin}.
Thus, the claim follows from \cite[thm.~4.5.5]{Martin}.
So, from the equivalence above, we deduce that $\Psi^+_{\Bbbk_\frakp,d}(\nu)$ is faithful on
$(\bfA^{+,\kappa}_{\Bbbk_\frakp}(\nu)\{d\})^\nabla$.
Since the q-Schur algebra is Ringel self-dual, we deduce that
$\Psi^+_{\Bbbk_\frakp,d}(\nu)$ is also faithful on
$(\bfA^{+,\kappa}_{\Bbbk_\frakp}(\nu)\{d\})^\Delta$.
Therefore, the part $(c)$ of the lemma follows from Proposition \ref{prop:0to1}.
\end{proof}

\vspace{2mm}

We can now prove the main result of this section.
Recall that we have introduced a module $\bfT_{R,d}$  in 
$\bfA^{\nu,\kappa}_{R,\tau},$ an
$R$-algebra homomorphism 
$\psi^s_{R,d}:\bfH^s_{R,d}\to\End_{\bfA^{\nu,\kappa}_{R,\tau}}(\bfT_{R,d})^\op,$
and a functor
$\Psi_{R,d}^s : \bfA^{\nu,\kappa}_{R,\tau}\{d\}\to\bfH^s_{R,d}\mmod$.

By base-change, we get $\bfT_{R_\frakp,d},$ $\psi^s_{R_\frakp,d}$ and $\Psi^s_{R_\frakp,d}$ for each $\frakp\in\frakP$, see Remark \ref{rk:4.7}.
Recall also that, since $R$ is in general position,  the $K$-category $\bfA_{K,\tau}^{\nu,\kappa}$ is split semi-simple and 
condition \eqref{(A)} holds in $K$.

\vspace{2mm}

\begin{prop}\label{prop:red2}
Let $\frakp\in\frakP_1$ be generic.
Assume that $\nu_p\geqslant d$ for all  $p$.
Then

(a) $\bfT_{R_\frakp,d}$ is projective in $\bfA_{R_\frakp,\tau}^{\nu,\kappa}$, 

(b) $\psi^s_{R_\frakp,d}$ is an isomorphism 
$\bfH^{s}_{R_\frakp,d}\to\End_{\bfA_{R_\frakp,\tau}^{\nu,\kappa}}(\bfT_{R_\frakp,d})^\op$,

(c) $\Psi_{R_\frakp,d}^s$ is fully faithful on $(\bfA_{R_\frakp,\tau}^{\nu,\kappa}\{d\})^\Delta$ and $(\bfA_{R_\frakp,\tau}^{\nu,\kappa}\{d\})^\nabla$.
\end{prop}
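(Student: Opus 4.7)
The proof follows the same template as Proposition~\ref{prop:redI}, but using Proposition~\ref{prop:reduction2} in place of Proposition~\ref{prop:equivredI} and Lemma~\ref{lem:levelone} in place of Proposition~\ref{prop:4.25}. The starting point is the equivalence of highest weight $R_\frakp$-categories
\begin{equation*}
\scrQ_{R_\frakp}:\bfA^{\nu,\kappa}_{R_\frakp,\tau}\{d\}\iso\bfA^{+,\kappa}_{R_\frakp}(\nu)\{d\}
\end{equation*}
which, by Lemma~\ref{lem:Q}, sends $\bfT_{R_\frakp,d}$ to $\bfT_{R_\frakp,d}(\nu)$. Since the latter is projective in $\bfA^{+,\kappa}_{R_\frakp}(\nu)\{d\}$ by Lemma~\ref{lem:levelone}(a) and $\scrQ_{R_\frakp}$ is an equivalence of highest weight categories, part $(a)$ is immediate.

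For part $(b)$, the equivalence $\scrQ_{R_\frakp}$ together with the chosen isomorphism of Lemma~\ref{lem:Q} yields, by functoriality, an isomorphism
\begin{equation*}
\gamma_{R_\frakp}:\End_{\bfA^{\nu,\kappa}_{R_\frakp,\tau}}(\bfT_{R_\frakp,d})^\op\iso\End_{\bfA^{+,\kappa}_{R_\frakp}(\nu)}(\bfT_{R_\frakp,d}(\nu))^\op.
\end{equation*}
Combined with the isomorphism $\psi^+_{R_\frakp,d}(\nu)$ of Lemma~\ref{lem:levelone}(b), this allows us to define an $R_\frakp$-algebra endomorphism
\begin{equation*}
\theta_{R_\frakp}=(\psi^+_{R_\frakp,d}(\nu))^{-1}\circ\gamma_{R_\frakp}\circ\psi^s_{R_\frakp,d}
\end{equation*}
of $\bfH^s_{R_\frakp,d}$. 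Since $\bfH^s_{R_\frakp,d}$ is symmetric and, as $R$ is in general position, condition \eqref{(A)} holds in $K$ (Proposition~\ref{prop:preliminaries}$(c)$), so that $\bfH^s_{K,d}$ is split semi-simple, Proposition~\ref{prop:inj} reduces the invertibility of $\theta_{R_\frakp}$ to showing that $\theta_K$ is an automorphism inducing the identity on $K_0(\bfH^s_{K,d})$. The invertibility of $\theta_K$ is Proposition~\ref{prop:preliminaries}$(d)$ combined with Lemma~\ref{lem:levelone}$(b)$. To obtain the identity on $K_0$, I will verify that the equivalences $\Psi^s_{K,d}$ and $\Psi^+_{K,d}(\nu)\circ\scrQ_K$ both send $\pmb\Delta(\lambda)_{K,\tau}$ to modules with the same class in $K_0(\bfH^s_{K,d})$, namely the Specht module $S(\lambda)^{s,q}_K$, exactly as in the commutative square appearing in the proof of Proposition~\ref{prop:redI}; this uses Proposition~\ref{prop:preliminaries}$(d)$ on one side and the explicit description of the level-one $q$-Schur functor together with the decomposition \eqref{eq:plane2} on the other.

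For part $(c)$, $\Psi^+_{\Bbbk_\frakp,d}(\nu)$ is fully faithful on $(\bfA^{+,\kappa}_{\Bbbk_\frakp}(\nu)\{d\})^\Delta$ and on $(\bfA^{+,\kappa}_{\Bbbk_\frakp}(\nu)\{d\})^\nabla$ by Lemma~\ref{lem:levelone}$(c)$. Since $\psi^s_{\Bbbk_\frakp,d}$ and $\gamma_{\Bbbk_\frakp}^{-1}\circ\psi^+_{\Bbbk_\frakp,d}(\nu)$ differ by an automorphism of $\bfH^s_{\Bbbk_\frakp,d}$ (both are isomorphisms by $(b)$ and Nakayama), the equivalence $\scrQ_{\Bbbk_\frakp}$ intertwines $\Psi^s_{\Bbbk_\frakp,d}$ with $\Psi^+_{\Bbbk_\frakp,d}(\nu)$ up to twist by a Hecke-algebra automorphism, so the fully-faithfulness transfers. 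Finally, $\Psi^s_{K,d}$ is an equivalence by Proposition~\ref{prop:preliminaries}$(d)$, hence $1$-faithful, so Proposition~\ref{prop:0to1} applied to $\Psi^s_{R_\frakp,d}$ (and to its dual/Ringel-dual, via Lemmas~\ref{le:hwdual} and~\ref{le:hwcoverRingel}, to handle the $\nabla$ side) upgrades the $\Bbbk_\frakp$-level fully-faithfulness to $R_\frakp$.

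The main obstacle is the $K_0$-triangle in part $(b)$: one must match carefully the image of $\pmb\Delta(\lambda)_{K,\tau}$ under two \emph{a priori} different composite equivalences, which requires tracing through the block decomposition \eqref{eq:plane2} and the Morita equivalence identifying blocks of $\bfH^s$ with matrix algebras over tensor products of affine Hecke algebras, and checking that the identifications are compatible with the equivalence coming from Fiebig's theorem \cite{F2} used in Proposition~\ref{prop:reduction2}.
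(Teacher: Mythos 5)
Your proposal is correct and follows essentially the same approach as the paper: reduce to the $\ell = 1$ case via the equivalence $\scrQ_{R_\frakp}$ from Proposition~\ref{prop:reduction2} and Lemma~\ref{lem:Q}, import the level-one results from Lemma~\ref{lem:levelone}, and establish part $(b)$ by constructing the automorphism $\theta_{R_\frakp}$ and invoking Proposition~\ref{prop:inj} together with the Grothendieck-group argument from Proposition~\ref{prop:redI}. The only notable difference is a small redundancy in your treatment of part $(c)$: Lemma~\ref{lem:levelone}$(c)$ is already stated over $R_\frakp$, not merely over the residue field $\Bbbk_\frakp$, so the detour through $\Bbbk_\frakp$ and the subsequent invocation of Proposition~\ref{prop:0to1} simply unrolls the proof of that lemma rather than adding anything; once $(b)$ gives that $\psi^s_{R_\frakp,d}$ and $\gamma_{R_\frakp}^{-1}\circ\psi^+_{R_\frakp,d}(\nu)$ differ by an automorphism of $\bfH^s_{R_\frakp,d}$, the fully-faithfulness transfers directly at the $R_\frakp$ level.
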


\vspace{.5mm}

\begin{proof}
Assuming part $(b)$, the Proposition \ref{prop:reduction2} and Lemma \ref{lem:levelone} imply parts $(a)$ and $(c)$.
Let us prove $(b)$. 

The proof is similar to the proof of Proposition \ref{prop:redI}. It is based on Proposition \ref{prop:inj}. Recall that $\bfH^s_{K,d}$ is a split semi-simple 
$K$-algebra, and by \cite[thm.~3.30]{DJM}, that the decomposition map $K_0(\bfH^s_{K,d})\to K_0(\bfH^s_{\Bbbk_\frakp,d})$ is surjective.
We construct an endomorphism $\theta_{R_\frakp}$
of $\bfH^s_{R_\frakp,d}$ as follows. 
By Lemma \ref{lem:levelone}, we have an isomorphism $\psi_{R_\frakp,d}^+(\nu):\bfH^s_{R_\frakp,d}\to\End_{\bfm_{R_\frakp,\nu}}(\bfT_{R_\frakp,d}(\nu))^\op$.
Next, by Proposition \ref{prop:reduction2} and Lemma \ref{lem:Q},
we have an equivalence of categories
$\scrQ_{R_\frakp}:\bfO_{R_\frakp,\t}^{\nu,\kappa}\{a\}\to \bfO_{R_\frakp}^{+,\kappa}(\nu)\{a\}$ which maps
$\bfT_{R_\frakp,d}$ to $\bfT_{R_\frakp,d}(\nu).$
By functoriality, it induces an isomorphism
$\beta_{R_\frakp}:
\End_{\bfA_{R_\frakp,\tau}^{\nu,\kappa}}(\bfT_{R_\frakp,d})^\op
\to
\End_{\bfA_{R_\frakp}^{+,\kappa}(\nu)}(\bfT_{R_\frakp,d}(\nu))^\op.$
We set $\theta_{R_\frakp}=
(\varphi_{R_\frakp,d}^s)^{-1}\circ\beta_{R_\frakp}\circ\psi_{R_\frakp,d}^s$. 
The same proof as in Proposition \ref{prop:redI} implies that the map $\theta_K=K\theta_{R_\frakp}$ induces the identity on the Grothendieck group.
So $\theta_{R_\frakp}$ is an automorphism by Proposition \ref{prop:inj}.
This implies that $\psi^s_{R_\frakp,d}$ is an isomorphism.
\end{proof}

\vspace{2mm}

\begin{rk}
If $q_{\Bbbk_\frakp}\neq 1$ then $\Psi_{R_\frakp,d}^s$ is a 1-faithful highest weight cover.
\end{rk}

\vspace{3mm}

\subsection{The category $\bfA$ as a highest weight cover}
\label{sec:refined}
Let $R$ be a local analytic deformation ring of dimension 2 in general position. Let $\kappa_{\Bbbk}=-e$ and $s_{R}=\nu+\tau_{R}$.
Recall the module $\bfT_{R,d}\in\bfA^{\nu,\kappa}_{R,\tau}$ and the functor 
$\Psi^s_{R,d}:\bfA^{\nu,\kappa}_{R,\tau}\to \bfH^{s}_{R,d}\mmod$.

The first main result of this paper is the following theorem.
\vspace{2mm}

\begin{thm}\label{thm:isom}
Assume that $\nu_p\geqslant d$ for all $p$.

(a) The map $\psi^s_{R,d}:\bfH^{s}_{R,d}\to\End_{\bfA^{\nu,\kappa}_{R,\tau}}\big(\bfT_{R,d}\big)^\op$ 
is an $R$-algebra isomorphism. 

(b) The module $\bfT_{R,d}$ is projective in $\bfA^{\nu,\kappa}_{R,\tau}$.

(c) The functor $\Psi^s_{R,d}$ is fully faithful on $\bfA^{\nu,\kappa,\Delta}_{R,\tau}$ and $\bfA^{\nu,\kappa,\nabla}_{R,\tau}$.
\end{thm}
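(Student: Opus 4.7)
The plan is to reduce everything to height one and apply the machinery already set up. By Lemma \ref{lem:5.18} the module $\bfT_{R,d}$ is tilting in $\bfA^{\nu,\kappa}_{R,\tau}$, so Proposition \ref{prop:introhw}(d) makes $\End_{\bfA^{\nu,\kappa}_{R,\t}}(\bfT_{R,d})$ an $R$-projective, and thus free, module; moreover Proposition \ref{prop:preliminaries}(d) identifies it with $\bfH^s_{R,d}$ over the fraction field $K$, so the two $R$-algebras in the statement of (a) are free of the same finite rank. By Proposition \ref{prop:preliminaries}(a), every $\frakp\in\frakP_1$ is either generic or subgeneric, so Propositions \ref{prop:redI} and \ref{prop:red2} together furnish all three assertions (a), (b), (c) after base-changing to $R_\frakp$ for each $\frakp\in\frakP_1$.

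First I would prove (a). With $R$-bases of $\bfH^s_{R,d}$ and $\End_{\bfA^{\nu,\kappa}_{R,\t}}(\bfT_{R,d})^{\op}$ fixed, the determinant of $\psi^s_{R,d}$ is a well-defined element of $R$. It is a unit in $K$ by Proposition \ref{prop:preliminaries}(d), and a unit in every $R_\frakp$ with $\frakp\in\frakP_1$ by Propositions \ref{prop:redI}(b) and \ref{prop:red2}(b); equivalently it is not contained in any height-one prime of $R$. Since the analytic local ring $R$ is a UFD (\S \ref{sec:analytic}), every nonunit is divisible by some prime element, whose principal ideal is a height-one prime. Therefore $\det(\psi^s_{R,d})$ is a unit of $R$, and $\psi^s_{R,d}$ is an isomorphism.

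Next I would deduce the $\nabla$-filtered half of (c) from Corollary \ref{cor:2.16}: the hypotheses apply because $R$ is regular and $\bfT_{R,d}$ is tilting, and the required fully faithfulness of $R_\frakp\Psi^s_{R,d}$ on $R_\frakp\bfA^{\nu,\kappa,\nabla}_{R,\tau}$ for each $\frakp\in\frakP_1$ is exactly Propositions \ref{prop:redI}(c) and \ref{prop:red2}(c). Together with (a)---which identifies $\End_{\bfA^{\nu,\kappa}_{R,\t}}(\bfT_{R,d})^{\op}$ with the symmetric, hence Frobenius, algebra $\bfH^s_{R,d}$---this allows Lemma \ref{le:projfromFrob} to be applied, giving (b). Once (b) is established, $\Psi^s_{R,d}=\Hom_{\bfA^{\nu,\kappa}_{R,\t}}(\bfT_{R,d},\bullet)$ is a quotient functor; by Corollary \ref{co:char01faith} the remaining assertion of (c)---fully faithfulness on $\bfA^{\nu,\kappa,\Delta}_{R,\t}$---is equivalent to the $0$-faithfulness of $\Psi^s_{R,d}$, which by Lemma \ref{prop:RR} reduces to the $0$-faithfulness of each $R_\frakp\Psi^s_{R,d}$, again provided by Propositions \ref{prop:redI}(c) and \ref{prop:red2}(c) together with Corollary \ref{co:char01faith}.

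The substantial work has really been done in the preceding two subsections, where the height-one statements are obtained by quite different routes: a reduction via Fiebig's moment-graph equivalence to the two-block finite-type category $A^\nu_{R_\frakp,h}$ in the subgeneric case, and a reduction to the Kazhdan--Lusztig category of the pseudo-Levi $\bfm_{R_\frakp,\nu}$---hence ultimately to $q$-Schur algebras---in the generic case. Within the globalization argument above, the only non-formal point is (a): a cokernel vanishing in codimension one is not enough since $\dim R=2$ allows torsion supported at the closed point, and it is the UFD structure of the analytic local ring that makes the determinant trick succeed.
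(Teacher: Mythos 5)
Your proof is correct and reduces to height-one primes in essentially the same way as the paper's. The one genuine variation is in part (a): the paper invokes the fact that a finitely generated projective module over a regular domain equals the intersection of its localizations at height-one primes (Bourbaki, Alg. comm. VII \S 4), and deduces surjectivity by pulling back through $\psi_K^{-1}$ and checking containment at each $R_\frakp$; you instead fix bases, observe the two algebras are free of equal finite rank, and run the determinant argument through the UFD structure of $R$. The two arguments are avatars of the same underlying codimension-one principle, but yours is a bit more self-contained (it isolates the precise commutative-algebra input as the statement that a nonunit in a UFD generates a height-one prime) while the paper's phrasing fits more uniformly with the reflexivity/localization language used in the rest of \S 5.8. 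Your final remark is well taken: an arbitrary $R$-module homomorphism whose cokernel vanishes at every height-one prime need not be surjective over a two-dimensional $R$, and it is precisely the free-of-equal-rank structure (via the determinant) or the reflexivity of projective modules (via Bourbaki) that rules out torsion concentrated at the closed point. The handling of (b) via Lemma \ref{le:projfromFrob}, and of both halves of (c) via Corollary \ref{cor:2.16} and Lemma \ref{prop:RR} together with Corollary \ref{co:char01faith}, matches the paper exactly; you have correctly made explicit the translation between ``fully faithful on $\Delta$-filtered objects'' and ``$0$-faithful'' that the paper leaves implicit.
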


\vspace{1mm}

\begin{proof}
First, by Proposition \ref{prop:preliminaries},  the category $\bfA_{K,\tau}^{\nu,\kappa}$ 
is split semi-simple and 
condition \eqref{(A)} holds in the fraction field $K$.

To prove part $(a)$, observe that since $\bfT_{R,d}$ is tilting,  
the $R$-module $\End_{\bfA^{\nu,\kappa}_{R,\tau}}(\bfT_{R,d})$ is projective.
Since $\bfH^s_{R,d}$ is also projective over $R$, we have
$$\bfH^s_{R,d}=\bigcap_{\frakp\in\frakP_1} R_\frakp \bfH^s_{R,d},\qquad
\End_{\bfA^{\nu,\kappa}_{R,\tau}}(\bfT_{R,d})=
\bigcap_{\frakp\in\frakP_1} R_\frakp\End_{\bfA^\nu_{R,\tau}}(\bfT_{R,d}),$$
see [Bourbaki, \emph{Alg\`ebre commutative}, \text{ch.}~VII, \S 4, $\text{n}^\circ 2$].
Next, we have
$R_\frakp \bfH^s_{R,d}=\bfH^s_{R_\frakp,d}$ and
$R_\frakp\End_{\bfA^{\nu,\kappa}_{R,\tau}}(\bfT_{R,d})=
\End_{\bfA^{\nu,\kappa}_{R_\frakp,\tau}}(\bfT_{R_\frakp,d})$ for each $\frakp\in\frakP$.
Thus, it is enough to prove that the map $\psi_{R_\frakp,d}^s$ is invertible for each $\frakp\in\frakP_1$.
By Proposition \ref{prop:preliminaries}, the prime $\frakp$ is
generic or subgeneric. Thus part $(a)$ follows from 
Proposition \ref{prop:redI} and Proposition \ref{prop:red2}. 

\smallskip

Now, let us prove that $\Psi^s_{R,d}$ is fully faithful on $\bfA^{\nu,\kappa,\nabla}_{R,\tau}$. Since $\bfT_{R,d}$ is tilting, by Corollary \ref{cor:2.16}
it is enough to check that
$\Psi^s_{R_\frakp,d}$ is fully faithful on 
$(\bfA^{\nu,\kappa}_{R_\frakp,\tau}\{d\})^\nabla$ for $\frakp\in\frakP_1$. This has already been proved in Propositions \ref{prop:redI} and \ref{prop:red2}.

\smallskip
As a consequence, the tilting module $\bfT_{R,d}$ is projective by Lemma \ref{le:projfromFrob}, because the algebra $\End_{\bfA^{\nu,\kappa}_{R,\tau}}(\bfT_{R,d})$ being isomorphic to $\bfH^s_{R,d}$ is symmetric. Part $(b)$ is proved.
\smallskip

We deduce that $\Psi^s_{R,d}$ is quotient functor. Therefore by Lemma \ref{prop:RR} it is fully faithful over $\bfA^{\nu,\kappa,\Delta}_{R,\tau}$ if 
$\Psi^s_{R_\frakp,d}$ is fully faithful on $(\bfA^{\nu,\kappa}_{R_\frakp,\tau}\{d\})^\Delta$ for $\frakp\in\frakP_1$. 
Again, this has been proved in Propositions \ref{prop:redI} and \ref{prop:red2}. The theorem is proved.
\end{proof}

\medskip

The following corollary is a straightforward consequence of the theorem by specializing to the residue field, see also \cite{L3}.

\vspace{2mm}

\begin{cor}
Assume that $\nu_p\geqslant d$ for all $p$.

(a) The map $\psi^\nu_{\Bbbk,d}:\bfH^{\nu}_{\Bbbk,d}\to\End_{\bfA^{\nu,-e}_\Bbbk}\big(\bfT_{\Bbbk,d}\big)^\op$ 
is a $\Bbbk$-algebra isomorphism. 

(b) The module $\bfT_{\Bbbk,d}$ is projective in $\bfA^{\nu,-e}_\Bbbk$.
\end{cor}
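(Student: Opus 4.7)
The plan is to deduce the corollary directly from Theorem \ref{thm:isom} by a specialization argument. First I would choose a regular local analytic deformation ring $R$ of dimension $2$ in general position whose residue field is $\Bbbk$; such an $R$ exists by the remark at the end of \S \ref{sec:analytic}. Since $R$ is a local deformation ring, we have $\tau_{\Bbbk,p}=0$ and $\kappa_\Bbbk=-e$, so that $s_{\Bbbk,p}=\nu_p$ for every $p$, giving the identifications $\Bbbk\bfH^s_{R,d}=\bfH^{\nu}_{\Bbbk,d}$ and $\Bbbk\bfA^{\nu,\kappa}_{R,\tau}=\bfA^{\nu,-e}_\Bbbk$. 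Applying Theorem \ref{thm:isom} over $R$, the map $\psi^s_{R,d}$ is an $R$-algebra isomorphism and $\bfT_{R,d}$ is projective in $\bfA^{\nu,\kappa}_{R,\tau}$.

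For part $(b)$, since $\bfT_{R,d}$ is tilting (Lemma \ref{lem:5.18}(b)) it is projective over $R$, and by Theorem \ref{thm:isom}(b) it is also projective in $\bfA^{\nu,\kappa}_{R,\tau}$. Proposition \ref{prop:introhw}(c) then implies that $\Bbbk\bfT_{R,d}$ is projective in $\bfA^{\nu,-e}_\Bbbk$. Combined with the identification $\Bbbk\bfT_{R,d}=\bfT_{\Bbbk,d}$ provided by Lemma \ref{lem:5.18}(a), this gives claim $(b)$.

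For part $(a)$, I would apply the base-change functor $\Bbbk\otimes_R(-)$ to the isomorphism $\psi^s_{R,d}$. On the source, $\Bbbk\bfH^s_{R,d}=\bfH^\nu_{\Bbbk,d}$ as noted above. On the target, since $\bfT_{R,d}$ is projective in $\bfA^{\nu,\kappa}_{R,\tau}$ and $R$-projective, Proposition \ref{prop:introhw}(b) applied to $M=N=\bfT_{R,d}$ yields $\Bbbk\End_{\bfA^{\nu,\kappa}_{R,\tau}}(\bfT_{R,d})=\End_{\bfA^{\nu,-e}_\Bbbk}(\bfT_{\Bbbk,d})$. The specialization of $\psi^s_{R,d}$ coincides with $\psi^\nu_{\Bbbk,d}$ by naturality of the construction of \eqref{eq:psi-s} in Proposition \ref{prop:psi-s} under base change, so $\psi^\nu_{\Bbbk,d}$ is an isomorphism. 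There is no substantial obstacle: the argument is purely a matter of tracking the two base-change compatibilities (of the endomorphism algebra and of the homomorphism $\psi$), both of which have been set up in \S \ref{sec:BKKB} and \S \ref{ss:notation}.
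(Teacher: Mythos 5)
Your argument is correct and follows exactly the route the paper indicates (the paper's own "proof" is the one-line remark that the corollary is a straightforward consequence of Theorem \ref{thm:isom} by specializing to the residue field). You have filled in the specialization details carefully: part (b) via Proposition \ref{prop:introhw}(c) together with Lemma \ref{lem:5.18}(a), and part (a) via Proposition \ref{prop:introhw}(b) applied to the projective tilting object $\bfT_{R,d}$ (note that Proposition \ref{prop:monoidalR}(b) is the precise reference for the compatibility of $\psi$ with base change that you invoke under the word "naturality").
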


\vspace{2mm}

\begin{rk}
The module $\bfT_{\Bbbk,d}$ may not be projective in $\bfO^{\nu,\kappa}_\Bbbk$.
\end{rk}

\begin{rk} \label{rk:projective}
Let $R$ be any local deformation ring. 
Assume that $\nu_p\geqslant d$ for each $p$.
From Theorem \ref{thm:isom}(b), Proposition \ref{prop:introhw} and Remark \ref{rk:dfT}
we deduce that $\bfT_{R,d}$ is well-defined and is projective in $\bfA^{\nu,\kappa}_{R,\tau}$.
\end{rk}

\vspace{3mm}

\subsection{The functor $F$ and induction}\label{ss:Finduction}

In \S \ref{sec:catA} we defined a pre-categorical action $(E,F,X,T)$ on $A_{R,\tau}^{\nu,\kappa}$.
Now, we define a tuple $(E,F,X,T)$ on
$\bfA_{R,\tau}^{\nu,\kappa}$ in the following way.
Let $h:\bfA_{R,\tau}^{\nu,\kappa}\to\bfO^{\nu,\kappa}_{K,\tau}$
be the canonical embedding.
Consider the endofunctors $E,F$ of $\bfA_{R,\t}^{\nu,\kappa}$ given by
$E=h^* e h,$ $F=h^* f h$. 
Since $f$ preserves the subcategory $\bfA_{R,\t}^{\nu,\kappa},$ we have
$F=h^!fh=f|_{\bfA_{R,\t}^{\nu,\kappa}}$.
In particular, the functor $F$ is exact, $(E,F)$ is an adjoint pair and 
we have $E(\bfA^{\nu,\kappa}_{R,\tau}\{d+1\}^\Delta)\subset (\bfA^{\nu,\kappa}_{R,\tau}\{d\})^\Delta$.
Then, we define
$X\in\End(F)=\End(f)$ and $T\in\End(F^2)=\End(f^2)$ as in Proposition \ref{prop:5.14}.

Let $d$, $k$ be positive integers such that $d+k\leqslant\nu_p$ for all $p$.
In  this section we compare the functors $F^k:\bfA^{\nu,\kappa}_{R,\tau}\{d\}\to\bfA^{\nu,\kappa}_{R,\tau}\{d+k\}$ and
$\Ind_d^{d+k}=\bfH^s_{R,d+k}\otimes_{\bfH^s_{R,d}}\bullet:\bfH^s_{R,d}\mmod\to\bfH^s_{R,d+k}\mmod$.

By definition $F^k(\bfT_{R,d})=\bfT_{R,d+k}$ and we have a commutative diagram
\[\xymatrix{\bfH^s_{R,d}\ar[rr]^{\psi^s_{R,d}\qquad\qquad}_{\sim\qquad\qquad}\ar@{^{(}->}[d] 
&&\End_{\bfA^{\nu,\kappa}_{R,\tau}}(\bfT_{R,d})^{\op}\ar[d]^{F^k}\\
\bfH^s_{R,d+k}\ar[rr]^{\psi^s_{R,d+k}\qquad\qquad}_{\sim\qquad\qquad}&&\End_{\bfA^{\nu,\kappa}_{R,\tau}}(F^k\bfT_{R,d})^{\op}.\\}\]
Therefore, we have a morphism of functors on $\bfA^{\nu,\kappa}_{R,\tau}\{d\}$
\begin{eqnarray*}
\vartheta_k: \quad\Ind_d^{d+k}\Psi^s_{R,d}&\simeq&\Hom_{\bfA^{\nu,\kappa}_{R,\tau}}(\bfT_{R,d+k},F^k\bfT_{R,d})
\otimes_{\End_{\bfA^{\nu,\kappa}_{R,\tau}}(\bfT_{R,d})^{\op}}\Hom_{\bfA^{\nu,\kappa}_{R,\tau}}(\bfT_{R,d},\bullet)\\
&\simeq&\Hom_{\bfA^{\nu,\kappa}_{R,\tau}}(E^k\bfT_{R,d+k},\bfT_{R,d})\otimes_{\End_{\bfA^{\nu,\kappa}_{R,\tau}}(\bfT_{R,d})^{\op}}
\Hom_{\bfA^{\nu,\kappa}_{R,\tau}}(\bfT_{R,d},\bullet)\\
&\to&\Hom_{\bfA^{\nu,\kappa}_{R,\tau}}(E^k\bfT_{R,d+k},\bullet)\\
&\simeq&\Hom_{\bfA^{\nu,\kappa}_{R,\tau}}(\bfT_{R,d+k},F^k\bullet)\\
&=&\Psi^s_{R,d+k}F^k,
\end{eqnarray*}
where the map in the third line is given by composition.

\medskip
\begin{lemma}
\label{lem:phiFcom}
Assume that $d+k\leqslant\nu_p$ for all $p$. Then $\vartheta_k: \Ind_d^{d+k}\Psi^{s}_{R,d}\to\Psi^{s}_{R,d+k} F^k$ is an isomorphism.
\end{lemma}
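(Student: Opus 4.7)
The plan is to show $\vartheta_k(M)$ is an isomorphism first for $M=\bfT_{R,d}$ and then to extend to arbitrary $M$ by exactness together with a $\Delta$-filtration reduction. On $M=\bfT_{R,d}$, the isomorphisms $\psi^s_{R,d}$ and $\psi^s_{R,d+k}$ supplied by Theorem~\ref{thm:isom}(a), combined with the defining identity $F^k\bfT_{R,d}=\bfT_{R,d+k}$, make $\vartheta_k(\bfT_{R,d})$ the canonical multiplication map $\bfH^s_{R,d+k}\otimes_{\bfH^s_{R,d}}\bfH^s_{R,d}\iso\bfH^s_{R,d+k}$, which is obviously an isomorphism.

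Both sides of $\vartheta_k$ are exact functors: $\Psi^s_{R,d}$ and $\Psi^s_{R,d+k}$ are exact because $\bfT_{R,d}$ and $\bfT_{R,d+k}$ are projective by Theorem~\ref{thm:isom}(b); $F=f|_{\bfA}$ is exact by construction; and $\Ind_d^{d+k}$ is exact since $\bfH^s_{R,d+k}$ is free as a left $\bfH^s_{R,d}$-module. Consequently the class of $M$ for which $\vartheta_k(M)$ is an isomorphism is closed under finite direct sums, direct summands, and under taking a third term of any short exact sequence whose other two terms already lie in the class. Taking a projective presentation $\bfP_1\to\bfP_0\to M\to 0$ and using that each indecomposable projective $\bfP(\lambda)_{R,\tau}$ is $\Delta$-filtered, the five-lemma then reduces the verification to showing $\vartheta_k(\pmb\Delta(\lambda)_{R,\tau})$ is an isomorphism for every $\lambda\in\scrP^\nu_d$.

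For such standards, both sides of $\vartheta_k(\pmb\Delta(\lambda)_{R,\tau})$ are $R$-projective of equal rank: they become isomorphic after base change to the fraction field $K$ by the split semi-simplicity of $\bfA^{\nu,\kappa}_{K,\tau}$ (Proposition~\ref{prop:preliminaries}(b),(d)), and both are $R$-projective by Proposition~\ref{prop:introhw}(d). Nakayama's lemma therefore reduces everything to verifying the statement after specialising to the residue field $\Bbbk$. This residue-field check is the hard part, because $\bfT_{\Bbbk,d}$ need not be a generator of $\bfA^{\nu,\kappa}_\Bbbk\{d\}$, so a direct resolution by copies of $\bfT_{\Bbbk,d}$ is not available. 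I would handle it by matching two filtrations: by Lemma~\ref{lem:pasbete}, $F^k\pmb\Delta_\Bbbk(\lambda)$ is $\Delta$-filtered with sections $\pmb\Delta_\Bbbk(\mu)$ indexed by the $k$-box additions $\lambda\to\mu$, and applying $\Psi^s_{\Bbbk,d+k}$, which is fully faithful on $\bfA^{\nu,\kappa,\Delta}_\Bbbk$ by Theorem~\ref{thm:isom}(c), converts this into a Specht filtration of $\Psi^s_{\Bbbk,d+k}(F^k\pmb\Delta_\Bbbk(\lambda))$; meanwhile $\Ind_d^{d+k}(S(\lambda)^{s,q}_\Bbbk)$ admits the matching Specht filtration via the classical branching rule for cyclotomic Hecke algebras. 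Naturality of $\vartheta_k$, together with the knowledge that $K\vartheta_k$ is already an isomorphism, then forces it to induce isomorphisms on the corresponding graded pieces and concludes the proof.
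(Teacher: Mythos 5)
Your overall architecture differs substantially from the paper's, and the final step contains a genuine gap that your proposed method does not close.

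The paper does not reduce to the residue field at all. It keeps everything over $R$, observes that it suffices to prove the isomorphism on $(\bfA^{\nu,\kappa}_{R,\tau}\{d\})^\Delta$, and rewrites the composition map
\begin{equation*}
\Hom(E^k\bfT_{R,d+k},\bfT_{R,d})\otimes_{\End(\bfT_{R,d})^{\op}}\Hom(\bfT_{R,d},M)\longrightarrow\Hom(E^k\bfT_{R,d+k},M)
\end{equation*}
through the $0$-faithful functor $\Psi^s_{R,d}$. The whole lemma is then reduced to the single statement that $\Psi^s_{R,d}(E^k\bfT_{R,d+k})$ is a projective $\bfH^s_{R,d}$-module: once that holds, the tensor--hom map $\Hom_B(P,B)\otimes_B N\to\Hom_B(P,N)$ is an isomorphism for formal reasons. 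That projectivity is established by a short chain of identifications using full faithfulness of $\Hom(\bullet,\bfT_{R,d})$ on $\Delta$-filtered objects (coming from Theorem~\ref{thm:isom}(c) applied to $\nabla$-filtered objects via the self-duality $\scrD\bfT_{R,d}\simeq\bfT_{R,d}$), the adjunction $(E,F)$, and finally the Frobenius property of $\bfH^s_{R,d+k}$ to conclude $\Psi^s_{R,d}(E^k\bfT_{R,d+k})\simeq\bfH^s_{R,d+k}$ as $\bfH^s_{R,d}$-modules.

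Your reduction to standards and then to $\Bbbk$ is a legitimate strategy in principle, and the Nakayama step is phrased correctly (iso over $\Bbbk$ implies surjective, then rank count finishes). But the last paragraph does not actually establish that $\Bbbk\vartheta_k(\pmb\Delta_\Bbbk(\lambda))$ is an isomorphism. Knowing that both $\Psi^s_{\Bbbk,d+k}(F^k\pmb\Delta_\Bbbk(\lambda))$ and $\Ind_d^{d+k}(S(\lambda)^{s,q}_\Bbbk)$ have Specht filtrations with the same multiset of subquotients gives equality of composition multiplicities, not an isomorphism, and says nothing yet about the specific map $\Bbbk\vartheta_k$. The appeal to ``naturality'' does not help: the $\Delta$-filtration of $F^k\pmb\Delta_\Bbbk(\lambda)$ is not induced from a filtration of the functor $F^k$, so naturality in $M$ provides no constraint once $M$ is fixed, and neither is it shown that $\Bbbk\vartheta_k$ respects the two filtrations, nor that it restricts to isomorphisms on graded pieces. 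The fact that $K\vartheta_k$ is an isomorphism is also unusable here: that is exactly the generic-fiber information that becomes unavailable after specialising to $\Bbbk$, which is why the check over $\Bbbk$ is hard, as you yourself observe when noting that $\bfT_{\Bbbk,d}$ need not be a generator. To repair this you would need to produce, for each standard, a concrete comparison of the two objects compatible with the map $\vartheta_k$, which is essentially re-deriving the projectivity of $\Psi^s_{R,d}(E^k\bfT_{R,d+k})$ in disguise. The paper's duality/Frobenius argument is not an alternative to your approach: it is the missing ingredient.
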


\begin{proof}
It is enough to prove that $\vartheta_k$ is an isomorphsim of functors on $( \bfA^{\nu,\kappa}_{R,\tau}\{d\})^\Delta$.
We must prove that the map
$$\Hom_{\bfA^{\nu,\kappa}_{R,\tau}}(E^k\bfT_{R,d+k},\bfT_{R,d})\otimes_{\End_{\bfA^{\nu,\kappa}_{R,\tau}}(\bfT_{R,d})^{\op}}
\Hom_{\bfA^{\nu,\kappa}_{R,\tau}}(\bfT_{R,d},M)\to\Hom_{\bfA^{\nu,\kappa}_{R,\tau}}(E^k\bfT_{R,d+k},M)$$
given by composition is an isomorphism for each $M\in( \bfA^{\nu,\kappa}_{R,\tau}\{d\})^\Delta$.

Since $\Psi^s_{R,d}$ is $0$-faithful, $E(\bfA^{\nu,\kappa}_{R,\tau}\{d+1\}^\Delta)\subset (\bfA^{\nu,\kappa}_{R,\tau}\{d\})^\Delta$
and $\Psi^s_{R,d}(\bfT_{R,d})\simeq\bfH^s_{R,d}$ as $(\bfH^s_{R,d},\bfH^s_{R,d})$-bimodules, the left hand side is isomorphic to
$$\Hom_{\bfH^s_{R,d}}(\Psi^s_{R,d}(E^k\bfT_{R,d+k}),\bfH^s_{R,d})\otimes_{\bfH^s_{R,d}}\Hom_{\bfH^s_{R,d}}(\bfH^s_{R,d},\Psi^s_{R,d}(M))$$
and the right hand side is isomorphic to
$\Hom_{\bfH^s_{R,d}}(\Psi^s_{R,d}(E^k\bfT_{R,d+k}),\Psi^s_{R,d}(M)).$
Hence, we are reduced to prove that the natural map
\begin{multline*}
\Hom_{\bfH^s_{R,d}}(\Psi^s_{R,d}(E^k\bfT_{R,d+k}),\bfH^s_{R,d})\otimes_{\bfH^s_{R,d}}\Hom_{\bfH^s_{R,d}}(\bfH^s_{R,d},\Psi^s_{R,d}(M))
\to\\
\to\Hom_{\bfH^s_{R,d}}(\Psi^s_{R,d}(E^k\bfT_{R,d+k}),\Psi^s_{R,d}(M))
\end{multline*}
given by composition is an isomorphism. We claim that $\Psi^s_{R,d}(E^k\bfT_{R,d+1})\simeq \bfH^s_{R,d+k}$ as $\bfH^s_{R,d}$-modules. 
Thus it is a projective $\bfH^s_{R,d}$-module, and the isomorphism follows.

To prove the claim,
note that since $\Psi^s_{R,d}=\Hom_{\bfA^{\nu,\kappa}_{R,\tau}}(\bfT_{R,d},\bullet)$ is fully faithful on 
$(\bfA^{\nu,\kappa}_{R,\tau}\{d\})^\nabla$, using the duality $\scrD$ on $\bfA^{\nu,\kappa}_{R,\tau}\{d\}$ and the fact that 
$\scrD(\bfT_{R,d})\simeq\bfT_{R,d}$, we deduce that the contravariant functor 
$\Hom_{\bfA^{\nu,\kappa}_{R,\tau}}(\bullet,\bfT_{R,d}): \bfA^{\nu,\kappa}_{R,\tau}\{d\}\to( \bfH^{s}_{R,d})^\op\mmod$ is fully faithful on 
$(\bfA^{\nu,\kappa}_{R,\tau}\{d\})^\Delta$. Therefore, we have isomorphisms
\begin{eqnarray*}
\Psi^s_{R,d}(E^k\bfT_{R,d+k})&\simeq&\Hom_{\bfA^{\nu,\kappa}_{R,\tau}}(\bfT_{R,d},E^k\bfT_{R,d+k})\\
&\simeq&
\Hom_{(\bfH^s_{R,d})^\op}\big(\Hom_{\bfA^{\nu,\kappa}_{R,\tau}}(E^k\bfT_{R,d+k},\bfT_{R,d}),
\bfH^s_{R,d}\big)\\
&\simeq&\Hom_{(\bfH^{s}_{R,d})^\op}\big(\Hom_{\bfA^{\nu,\kappa}_{R,\tau}}(\bfT_{R,d+k},F^k\bfT_{R,d}),\bfH^s_{R,d}\big)\\
&\simeq&\Hom_{(\bfH^{s}_{R,d})^\op}\big(\bfH^s_{R,d+k},\bfH^s_{R,d}\big).\end{eqnarray*}
Finally, since $\bfH^{s}_{R,d+k}$ is a Frobenius algebra, 
there is an isomorphism of $\bfH^s_{R,d}$-modules 
$\bfH^{s}_{R,d+k}\simeq \Hom_{(\bfH^s_{R,d})^\op}\big(\bfH^s_{R,d+k},\bfH^s_{R,d}\big).$
The claim is proved.
\end{proof}

\medskip

\begin{rk}\label{rk:commXXX}
Recall that $X$ acts on $\Ind_d^{d+1}=\bfH^s_{R,d+1}\otimes_{\bfH^s_{R,d}}\bullet$ by right multiplication by $X_{d+1}$ on $\bfH^s_{R,d+1}$, and 
the action of $X$ on $E$ is the transposition of its action on $F$ under the adjunction, see Remark \ref{rem:3.3}.
Hence, it follows from the definition of $\vartheta_1$ that it intertwines the action of $X$ on $\Ind_d^{d+1}$ and on $F$, i.e., we have 
\begin{equation*}
\vartheta_1\circ(\Psi^s_{R,d+1}X)=(X\Psi^s_{R,d})\circ\vartheta_1.
\end{equation*}
Similarly we have
\begin{equation*}
\vartheta_2\circ(\Psi^s_{R,d+2}T)=(T\Psi^s_{R,d})\circ\vartheta_2,
\end{equation*}
for the action of $T$ on $\Ind^{d+2}_d$ and on $F^2$.
\end{rk}

\vspace{3mm}

\vspace{3mm}

\section{The category $\bfA$ and CRDAHA's}


\vspace{3mm}

\subsection{Reminder on rational DAHA's}

\subsubsection{Definition of the category $\calO$}

Let $R$ be a local ring with residue field $\bbC$.
Let  $W$ be a complex reflection group,
let $\frakh$ be the reflection representation of $W$ over $R$ and
let $S$ be the set of pseudo-reflections in $W$.
Let $\calA$ be the set of reflection hyperplanes in $\frakh$. We write $\frakh_{reg}=\frakh\backslash\bigcup_{H\in\calA} H$.

Let $c:S\to R$ be a map that is constant on the
$W$-conjugacy classes.
The \emph{RDAHA (=rational double affine Hecke algebra)} attached to $W$ with parameter $c$ is the
quotient $H_c(W,\frakh)_R$ of the smash product of $R W$ and the tensor algebra of
$\frakh\oplus\frakh^*$ by the relations
$$[x, x' ] = 0, \quad [y, y' ] = 0, \quad
[y, x] = \langle x, y \rangle-\sum_{s\in S}c_s\langle\alpha_s,y\rangle
\langle x,\check\alpha_s\rangle s,$$
for all $x, x'\in\frakh^*$, $y, y'\in\frakh$.
Here $\langle\bullet,\bullet\rangle$
is the canonical pairing between $\frakh^*$ and $\frakh$,
the element $\alpha_s$ is a generator of
$\Im(s|_{\frakh^*}-1)$ and $\check\alpha_s$ is the generator of
$\Im(s|_{\frakh}-1)$
such that $\langle\alpha_s , \check\alpha_s\rangle = 2$.

Let $R[\frakh]$, $R[\frakh^*]$ be the subalgebras of $H_c(W,\frakh)_R$ generated by $\frakh^*$ and $\frakh$ respectively.
The category $\calO$ of $H_c(W,\frakh)_R$ is the full subcategory of the category of
$H_c(W,\frakh)_R$-modules consisting of objects that are finitely generated as
$R[\frakh]$-modules and $\frakh$-locally nilpotent, see \cite[\S~ 3]{GGOR}. We denote it by $\calO_c(W,\frakh)_R$. 
It is a highest weight $R$-category.
The standard modules are labeled
by the set $\Irr(\bbC W)$ of isomorphism classes of irreducible $W$-modules.
The standard module associated with
$E\in\Irr(\bbC W)$ is the induced module
$\Delta(E)_R=\Ind^{H(W)_R}_{W\ltimes R[\frakh^*]}(RE).$
Here $RE$ is regarded as a $W\ltimes R[\frakh^*]$-module such that 
$\frakh\subset R[\frakh^*]$ acts by zero.
Let $L(E)$ be the unique simple quotient of $\Delta(E)_R$, and let $P(E)_R$
be the projective cover of $\Delta(E)_R$.

By \cite[\S~4.2.1]{GGOR} there is a functor
$$(\bullet)^\vee:\calO_c(W,\frakh)_R\to\calO_{c^\vee}(W,\frakh^*)^{\op}_R,$$
which is an equivalence over the subcategories of modules in $\calO_c(W,\frakh)_R$, $\calO_{c^\vee}(W,\frakh^*)^{\op}_R$ that are free over $R$.
Here $c^\vee:S\to R$ is defined by $c^\vee(s)=c(s^{-1})$.
For any $E\in\Irr(\bbC W)$ we write $E^\vee=\Hom_R(E,R)$. We have $\Delta(E)_R^\vee\simeq \nabla(E^\vee)_R$ and
$\nabla(E)_R^\vee\simeq \Delta(E^\vee)_R$. 

\medskip

\subsubsection{The $\KZ$-functor}

Let $R$ be an analytic regular local ring. There is a quotient functor
$$\KZ_R:\calO_c(W,\frakh)_R\to \bfH (W,\frakh)_R\mmod$$
defined in \cite[\S~5.3]{GGOR}, where $\bfH (W,\frakh)_R$ is the Hecke algebra associated with $W$ and a parameter which depends on $c$. 
Note that loc.~cit.~uses regular complete local rings, but the same construction can be done for analytic ones.

\begin{prop}
\label{prop:ffKZ}
The functor $\KZ_R$ is $0$-faithful.
\end{prop}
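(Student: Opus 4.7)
The plan is to reduce to the residue field case via the deformation criterion of Proposition~\ref{prop:0to1} and then invoke the classical $0$-faithfulness of $\KZ$ over $\bbC$ established in \cite{GGOR}.

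First, I would verify compatibility with base change: the canonical identification $\bbC\KZ_R \simeq \KZ_\bbC$ as functors on $\calO_c(W,\frakh)_\bbC$. This follows from the construction of $\KZ_R$ as ``localize a module of $\calO$ to $\frakh_{\mathrm{reg}}$, then take horizontal sections of the resulting regular-singular $W$-equivariant flat connection.'' Both operations commute with specialization at the maximal ideal of $R$ because $R$ is analytic regular local and because the Dunkl connection is given by formulas that deform flatly in the parameter $c$; the horizontal-section functor commutes with base change since $R$ is analytic, so monodromies can be computed by solving ODEs in $R$. Given this compatibility, Proposition~\ref{prop:0to1} (applied with $d=-1$, which holds trivially for the quotient functor $\KZ_R$) reduces the statement to showing that $\KZ_\bbC$ is $0$-faithful.

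By Corollary~\ref{co:char01faith} and Lemma~\ref{lem:B2}, the $0$-faithfulness of $\KZ_\bbC$ amounts to the vanishing
\begin{equation*}
\Ext^i_{\calO_c(W,\frakh)_\bbC}(N,\Delta(E)_\bbC)=0,\qquad i=0,1,
\end{equation*}
for every $E\in\Irr(\bbC W)$ and every $N\in\calO_c(W,\frakh)_\bbC$ with $\KZ_\bbC(N)=0$. The kernel of $\KZ_\bbC$ is the Serre subcategory of $\bbC[\frakh]$-torsion modules, i.e.\ those set-theoretically supported on the reflection arrangement $\bigcup_{H\in\calA}H$. The $\Hom$-vanishing is then immediate, since $\Delta(E)_\bbC$ is free over $\bbC[\frakh]$, hence torsion-free.

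The $\Ext^1$-vanishing is the nontrivial content of the statement, and is the main obstacle; I would appeal to the argument of \cite[\S 5]{GGOR}. The key input there is that the localization functor to $\frakh_{\mathrm{reg}}$ admits a right adjoint which is fully faithful on $\calO^\Delta$, so that any extension $0\to\Delta(E)_\bbC\to X\to N\to 0$ that becomes trivial after restriction to $\frakh_{\mathrm{reg}}$ already splits in $\calO_c(W,\frakh)_\bbC$; equivalently, $\Delta(E)_\bbC$ admits a finite resolution by modules that are acyclic against torsion objects. Combining this classical input with the deformation-theoretic reduction above gives the proposition.
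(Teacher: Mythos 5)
Your proposal misapplies Proposition~\ref{prop:0to1} in a way that leaves a real gap, and the gap is exactly the part you flag as ``the main obstacle.''

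Recall the two clauses of Proposition~\ref{prop:0to1}: (i) if $\Bbbk F$ is $d$-faithful, then $F$ is $d$-faithful; (ii) if \emph{in addition} $KF$ is $(d+1)$-faithful, then $F$ is $(d+1)$-faithful. The paper applies this with $d=-1$: one checks that $\bbC\KZ$ is $(-1)$-faithful (the easy part) and that $K\KZ$ is $0$-faithful (automatic because the generic fiber is semisimple, so $K\KZ$ is an equivalence). Your parenthetical, ``applied with $d=-1$, which holds trivially for the quotient functor $\KZ_R$, reduces the statement to showing $\KZ_\bbC$ is $0$-faithful,'' conflates the residue field $\Bbbk=\bbC$ with the fraction field $K$. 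Applying clause (ii) with $d=-1$ requires $0$-faithfulness of $K\KZ$, not of $\bbC\KZ$; and $(-1)$-faithfulness of $\bbC\KZ$ is \emph{not} trivial — it is precisely the content that must be verified, namely that $\Hom_{\calO}(M,N)=0$ for $M$ torsion and $N\in\calO^\Delta$.

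What you actually try to do is establish $0$-faithfulness of $\KZ_\bbC$ directly, i.e.\ clause (i) with $d=0$. That is a strictly harder statement: it needs the vanishing of $\Ext^1_{\calO_\bbC}(N,\Delta(E))$ for torsion $N$, and this $\Ext^1$-vanishing is exactly what the deformation argument exists to circumvent. Your sketch — ``any extension $0\to\Delta(E)_\bbC\to X\to N\to 0$ that becomes trivial after restriction to $\frakh_{\mathrm{reg}}$ already splits in $\calO$'' — asserts the conclusion rather than proving it; the extension always becomes trivial on $\frakh_{\mathrm{reg}}$ because $N$ is torsion, but that gives no splitting in $\calO$. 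There is no such statement in \cite[\S 5]{GGOR} that applies unconditionally. The correct argument only needs the much softer input over $\bbC$: by \cite[thm.~5.14]{GGOR} the functor $\KZ_\bbC$ is the Serre quotient by $\calO_{tor}$, and by \cite[prop.~2.21]{GGOR} a $\Delta$-filtered object is free over $\bbC[\frakh]$, hence torsion-free, hence receives no nonzero map from a torsion module. That is $(-1)$-faithfulness of $\KZ_\bbC$, and combined with the generic semisimplicity it yields $0$-faithfulness of $\KZ_R$ via the second clause of Proposition~\ref{prop:0to1}. Your long discussion of base-change compatibility is not the problem (it is essentially automatic since $\KZ_R$ is represented by a projective object, and representability commutes with specialization), but it is also not where the proof happens.
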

\begin{proof}
By Proposition \ref{prop:0to1} it is enough to prove that over the residue field $\bbC$ the functor $\KZ$ is $(-1)$-faithful. 
In other words, we must prove that $\KZ$ is faithful on $\calO_c(W,\frakh)^\Delta_\bbC$.

Write $\calO=\calO_c(W,\frakh)_\bbC$, let $\calO_{tor}\subset\calO$ be the full subcategory consisting of the objects $M$ such that 
$M\otimes_{\bbC[\frakh]}\bbC[\frakh_{reg}]=0$. By \cite[thm.~5.14]{GGOR}, the functor $\KZ$ is isomorphic to the quotient functor 
$\calO\to\calO/\calO_{tor}$. A $\Delta$-filtered object $M$ is free over $\bbC[\frakh]$ by \cite[prop.~2.21]{GGOR}, 
so it has no torsion submodules. Therefore, the map
$\Hom_{\calO}(M,N)\to\Hom_{\calO}(\KZ(M),\KZ(N))$
is injective for each $M, N\in\calO^\Delta$. We are done.
\end{proof}

\medskip

\subsubsection{Induction and restriction functors}

A \emph{parabolic subgroup} $W'\subset W$ is the stabilizer of some point $b\in\frakh$. It is a complex reflection group with the set of 
reflections $S'=S\cap W$ and with reflection representation $\frakh/\frakh^{W'}$, where $\frakh^{W'}$ is the subspace of points fixed by $W'$.
Bezrukavnikov and Etingof \cite{BE} defined parabolic induction and restriction functors
$$\OInd_{W'}^W:\calO_{c}(W',\frakh/\frakh^{W'})_R\to\calO_c(W,\frakh)_R,\quad
\ORes_{W'}^W:\calO_c(W,\frakh)_R\to\calO_{c}(W',\frakh/\frakh^{W'})_R.$$
Here we view $c$ as a parameter for $W'$ by identifying it with its restriction to $S'$.
In loc.~cit.~the authors work over a field. The definition is the same over a ring $R$. 
The functor $\OInd_{W'}^W$ is left adjoint to $\ORes_{W'}^W$, and both functors are exact.
In particular $\OInd_{W'}^W$ maps projective objects to projective objects.

Let $R$ be an analytic regular local ring. By \cite[thm.~2.1]{S} we have isomorphisms of functors
\begin{equation}\label{eq:commm}
\KZ_R\circ\ORes_{W'}^{W}\simeq\HRes_{W'}^{W}\circ\KZ_R,\quad \KZ_R\circ\OInd_{W'}^{W}\simeq\HInd_{W'}^{W}\circ\KZ_R,
\end{equation}
where $\ORes_{W'}^{W}$ and $\HInd_{W'}^{W}$ refer to the restriction and induction functors for Hecke algebras $\bfH(W',\frakh/\frakh^{W'})_R\hookrightarrow\bfH(W,\frakh)_R$, see loc.~cit.~for more details. Again, in loc.~cit.~we work over a field, but the same proof works over $R$.

We will be mainly interested in the case where $W'=W_H$ is the pointwise stabilizer of a hyperplane $H$. We will abbreviate 
$\calO(W_H)_R=\calO_c(W_H,\frakh/H)_R$ and $\OInd_H=\OInd_{W_H}^W$.
\medskip

\subsubsection{Support of modules}
Let $R$ be a local ring with residue field $\Bbbk$. We abbreviate $\calO_R=\calO_c(W,\frakh)_R$. If $R=K$ is a field, let $\mathrm{Ch}(M)$ denote 
\emph{the characteristic variety} of $M$ as defined
in \cite[\S~4.3.4]{GGOR}. It is a closed subvariety of $\frakh\oplus\frakh^*$. Recall the notation $\mathrm{lcd}_{\calO}$ and $\mathrm{rcd}_{\calO}$
from \eqref{eq:lcdrcd}. 

\begin{lemma}
\label{lem:lcdrcd}
Assume $R=K$ is a field.
For any $M\in\calO_K$ we have $$\mathrm{lcd}_{\calO_K}(M)=\mathrm{rcd}_{\calO_K}(M)=\dim\frakh-\dim\mathrm{Ch}(M).$$
\end{lemma}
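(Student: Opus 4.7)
The proof naturally splits into two parts: establishing (i) the equality $\mathrm{lcd}_{\calO_K}(M) = \mathrm{rcd}_{\calO_K}(M)$, and (ii) the identification of this common value with $\dim\frakh - \dim\mathrm{Ch}(M)$.

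For (i), the plan is to use the contragredient duality $(\bullet)^\vee : \calO_c(W,\frakh)_K \to \calO_{c^\vee}(W,\frakh^*)_K^{\op}$ recalled from \cite[\S~4.2.1]{GGOR}, which is an equivalence on the subcategories of modules free over the base. This duality sends standard modules to costandard modules (and vice versa), hence tilting modules to tilting modules between the two Cherednik categories. Moreover, a good filtration on $M$ induces one on $M^\vee$ whose associated graded has support of the same dimension, so $\dim\mathrm{Ch}(M) = \dim\mathrm{Ch}(M^\vee)$. It follows that $\mathrm{rcd}_{\calO_c(W,\frakh)_K}(M) = \mathrm{lcd}_{\calO_{c^\vee}(W,\frakh^*)_K}(M^\vee)$, reducing (i) to (ii).

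For (ii), I would first extend the earlier lemma for simple modules to arbitrary $M \in \calO_K$: since each tilting $T(\mu)$ is $\Delta$-filtered while each $\Delta(\mu)$ embeds into $T(\mu)$ with $\Delta$-filtered cokernel whose parts have smaller weight, long exact sequences plus induction on the highest weight yield
$$\mathrm{lcd}_{\calO_K}(M) = \min\bigl\{i \ge 0 \,;\, \Ext^i_{\calO_K}(M, \Delta(E)) \neq 0 \text{ for some } E \in \Irr(\bbC W)\bigr\}.$$
Next, I would equip $M$ with a good filtration, so that $\operatorname{gr} M$ is a finitely generated $\bbC[\frakh \oplus \frakh^*]$-module with support of dimension $\dim\mathrm{Ch}(M)$. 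Since $\Delta(E) \cong \bbC[\frakh] \otimes_\bbC E$ as a $\bbC[\frakh]$-module with trivial $\frakh \subset H_c$-action at the associated graded level, a PBW-type (Koszul) resolution of $M$ by $(W \ltimes \bbC[\frakh^*])$-induced modules produces a spectral sequence
$$E_2^{p,q} = \Ext^p_{\bbC[\frakh]}(\operatorname{gr} M, \bbC[\frakh]) \otimes E \;\Longrightarrow\; \Ext^{p+q}_{\calO_K}(M, \Delta(E)).$$
Since $\bbC[\frakh]$ is regular commutative, the minimal non-vanishing $p$ equals $\codim_{\bbC[\frakh]}(\supp \operatorname{gr} M) = \dim\frakh - \dim\mathrm{Ch}(M)$.

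The main obstacle will be the degeneration step: showing that the minimal total degree of non-vanishing $E_\infty^{p,q}$ coincides with that of $E_2^{p,q}$. This should follow from the Auslander-regularity of $H_c(W,\frakh)$ proved by Etingof-Ginzburg, which ensures that the grade of a finitely generated module is insensitive to the passage to the associated graded. A cleaner alternative route is to deduce the result directly from the Auslander-Gorenstein identity $\mathrm{grade}_{H_c}(M) = 2\dim\frakh - \dim\mathrm{Ch}(M)$, combined with an Eckmann-Shapiro-type reduction exploiting the induced structure $\Delta(E) = H_c \otimes_{W \ltimes \bbC[\frakh^*]} E$, which accounts for the shift of $\dim\frakh$ relating $\mathrm{grade}_{H_c}$ to $\mathrm{lcd}_{\calO}$ and simultaneously bypasses any explicit filtration comparison.
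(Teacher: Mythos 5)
Your reduction (i) is essentially the paper's argument: the paper also invokes the contragredient duality $(\bullet)^\vee$, citing \cite[prop.~4.7]{GGOR} for the isomorphism $\Ext^i_{\calO_K}(T,M)\simeq\Ext^i_{\calO^\vee_K}(M^\vee,T^\vee)$ (hence $\mathrm{lcd}_{\calO_K}(M)=\mathrm{rcd}_{\calO^\vee_K}(M^\vee)$) and \cite[lem.~5.2]{GGOR} for $\dim\mathrm{Ch}(M)=\dim\mathrm{Ch}(M^\vee)$. Where you genuinely diverge is part (ii): the paper simply cites \cite[cor.~4.14]{GGOR} for the identity $\mathrm{rcd}_{\calO_K}(M)=\dim\frakh-\dim\mathrm{Ch}(M)$, whereas you propose to reconstruct it from scratch via a good filtration, a Koszul-type resolution, and Auslander regularity of $H_c$. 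Your observation that the earlier lemma (replacing tilting by standard objects in the $\min$) extends from $L(\lambda)$ to arbitrary $M$ is correct — the paper's proof never uses simplicity — so the reduction of $\mathrm{lcd}_{\calO_K}(M)$ to $\min\{i;\Ext^i_{\calO_K}(M,\Delta(E))\neq 0\}$ is sound.

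The main soft spot in your part (ii) is the direction of the Eckmann--Shapiro step. Because $\Delta(E)=H_c\otimes_{W\ltimes\bbC[\frakh^*]}E$ is an \emph{induced} module and $H_c$ is free over $W\ltimes\bbC[\frakh^*]$, the adjunction gives a clean identification of $\Ext^i(\Delta(E),M)$ with $\Ext^i$ over the smaller algebra — i.e., the object entering $\mathrm{rcd}$, not the $\Ext^i(M,\Delta(E))$ entering $\mathrm{lcd}$ that your chosen formula requires. To repair this you would either have to work with a coinduced presentation of the costandard objects $\nabla(E)$, or instead prove the $\mathrm{rcd}$-form of (ii) and then deduce $\mathrm{lcd}$ by the duality — which reverses your order of logic and makes your proposal structurally identical to the paper's, with the citation of \cite[cor.~4.14]{GGOR} replaced by a from-scratch verification. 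The degeneration of your spectral sequence at the level of the minimal nonvanishing total degree also needs a careful argument (it is the Auslander--Gorenstein / grade-codimension identity, and this is precisely the content GGOR package in cor.~4.14). In sum, your strategy is viable as a self-contained alternative, but it reproves a cited result with details left unresolved, whereas the paper's proof is a four-sentence citation chain.
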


\begin{proof}
The equality $\dim\mathrm{Ch}(M)=\dim\frakh-\mathrm{rcd}_{\calO}(M)$ is proved in \cite[cor.~4.14]{GGOR}. Further, the proof of \cite[lem.~5.2]{GGOR} 
yields $\dim\mathrm{Ch}(M)=\dim\mathrm{Ch}(M^\vee)$. 
This implies that $\mathrm{rcd}_{\calO}(M)=\mathrm{rcd}_{\calO}(M^\vee)$. 
On the other hand,
by \cite[prop.~4.7]{GGOR}, if $T$ is a tilting generator of 
$\calO_K$ then $T^\vee$ is a tilting generator of $\calO^\vee_K$ and
$\Ext^i_{\calO_K}(T,M)\simeq \Ext^i_{\calO^\vee_K}(M^\vee,T^\vee)$. 
We deduce that $\mathrm{lcd}_{\calO_K}(M)=\mathrm{rcd}_{\calO_K}(M^\vee)=\mathrm{rcd}_{\calO_K}(M)$.
\end{proof}

\smallskip

\begin{lemma}
\label{lem:indsupp}
For $E\in\Irr(\bbC W)$ we have $\mathrm{rcd}_{\calO_\Bbbk}(L(E))\leqslant 1$ if and only if there exist $H\in\calA$ and 
$P\in\calO(W_H)^\proj_R$ such that 
$P(E)$ is a direct summand of $\OInd_H(P)$. 
\end{lemma}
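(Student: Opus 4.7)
The plan is to translate the codimension-one condition on $L(E)$ into a nonvanishing statement for the Bezrukavnikov--Etingof restriction $\ORes_H$ and then transport the latter across the adjunction $(\OInd_H,\ORes_H)$. Specialising to the residue field $\Bbbk=\bbC$, Lemma \ref{lem:lcdrcd} gives $\mathrm{rcd}_{\calO_\Bbbk}(L(E))=\dim\frakh-\dim\mathrm{Ch}(L(E))$, and unwinding the definition of $\mathrm{Ch}$ in \cite[\S 4.3.4]{GGOR} identifies this with the codimension in $\frakh$ of the support of $L(E)$ as a coherent $\Bbbk[\frakh]$-module. Supports of simples in $\calO$ are unions of parabolic strata $\frakh^{W'}$, so $\mathrm{rcd}_{\calO_\Bbbk}(L(E))\le 1$ is equivalent to the existence of $H\in\calA$ whose generic stratum $\frakh^{W_H}\subset H$ meets $\supp L(E)$. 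By the Bezrukavnikov--Etingof support criterion (\cite{BE}), this is in turn equivalent to the existence of $H\in\calA$ with $\ORes_H(L(E))\neq 0$.

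For ($\Leftarrow$), assume $P(E)$ is a direct summand of $\OInd_H(P)$ for some $P\in\calO(W_H)^\proj_R$. Tensoring with $\Bbbk$ over $R$ and composing the projection onto $\Bbbk P(E)$ with the surjection $\Bbbk P(E)\twoheadrightarrow L(E)$ yields a nonzero morphism $\OInd_H(\Bbbk P)\to L(E)$. By the adjunction $(\OInd_H,\ORes_H)$ this corresponds to a nonzero morphism $\Bbbk P\to \ORes_H(L(E))$, hence $\ORes_H(L(E))\neq 0$, so by the preceding paragraph $\mathrm{rcd}_{\calO_\Bbbk}(L(E))\le 1$.

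For ($\Rightarrow$), the preceding paragraph provides $H\in\calA$ with $\ORes_H(L(E))\neq 0$. Let $P_\Bbbk\in\calO(W_H)^\proj_\Bbbk$ be the projective cover of a simple quotient of $\ORes_H(L(E))$; then $\Hom(P_\Bbbk,\ORes_H(L(E)))\neq 0$, so by adjunction $\Hom(\OInd_H(P_\Bbbk),L(E))\neq 0$. Because $\OInd_H$ is exact and left adjoint to the exact functor $\ORes_H$, it preserves projectives, so $\OInd_H(P_\Bbbk)$ is a projective object of $\calO_\Bbbk$ that surjects onto $L(E)$; its Krull--Schmidt decomposition therefore contains $\Bbbk P(E)$ as a direct summand. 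It remains to lift this decomposition from $\Bbbk$ to $R$: lift $P_\Bbbk$ to $P\in\calO(W_H)^\proj_R$ (as in Proposition \ref{prop:introhw}), and lift the idempotent in $\End_{\calO_\Bbbk}(\OInd_H(P_\Bbbk))$ projecting onto $\Bbbk P(E)$ to an idempotent in $\End_{\calO_R}(\OInd_H(P))$ using that $R$ is henselian and this endomorphism algebra is a finite $R$-algebra. The corresponding direct summand of $\OInd_H(P)$ is a projective object of $\calO_R$ specialising to $\Bbbk P(E)$, hence is isomorphic to $P(E)$.

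The only non-formal ingredient is the support criterion $\ORes_H(L)\neq 0\Leftrightarrow \supp L\supset \frakh^{W_H}$ (combined with the identification of $\mathrm{rcd}$ with a codimension through Lemma \ref{lem:lcdrcd}); once that is granted, both directions are routine adjunction and idempotent-lifting arguments. The main obstacle is thus ensuring that the support-codimension dictionary used above is correctly extracted from the cited references.
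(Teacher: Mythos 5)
Your argument follows the same skeleton as the paper's (Lemma \ref{lem:lcdrcd} to turn the codimension bound into a support statement, the Bezrukavnikov--Etingof criterion to turn support into nonvanishing of $\ORes_H$, and the adjunction $(\OInd_H,\ORes_H)$ to finish), and it is correct. The one place where you take a longer route than necessary is the forward direction: you pass to $\Bbbk$, find $\Bbbk P(E)$ as a summand of $\OInd_H(P_\Bbbk)$, and then lift both the projective and the idempotent back to $R$. This extra step is avoidable. Since $L(E)$ is an object of $\calO_R$ via $R\to\Bbbk$, one can run the adjunction over $R$ directly: $\ORes_H(L(E))\neq 0$ if and only if $\Hom_{\calO(W_H)_R}(P,\ORes_H(L(E)))\neq 0$ for some $P\in\calO(W_H)_R^{\proj}$, which by adjunction equals $\Hom_{\calO_R}(\OInd_H(P),L(E))$, and since $\OInd_H(P)$ is a projective object of $\calO_R$ with $P(E)$ the projective cover of $L(E)$ there, this Hom is nonzero precisely when $P(E)$ is a direct summand of $\OInd_H(P)$ (Krull--Schmidt over the local ring $R$). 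This sidesteps the need to invoke commutation of $\OInd_H$ with base change and Hensel lifting of idempotents, both of which are true but are extra things to check.

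Two smaller remarks. First, the assertion that $\OInd_H$ preserves projectives follows from it being left adjoint to an exact functor; the exactness of $\OInd_H$ itself, which you also invoke, is not what is used there. Second, the paper cites \cite[prop.~2.2]{SV} for the equivalence $W'\subset W_H\Leftrightarrow\ORes_H(L(E))\neq 0$ (together with \cite[thm.~6.8]{G} for the fact that $\mathrm{Ch}(L(E))=\frakh^{W'}\oplus\{0\}$ for a single parabolic $W'$, rather than a union of strata as you phrase it); your reference to \cite{BE} is pointing at essentially the same circle of ideas, but you should be explicit that the characteristic variety of a simple is a single $\frakh^{W'}\oplus\{0\}$, since that is what makes the codimension-one condition equivalent to $W'$ being contained in some $W_H$.
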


\begin{proof}
By \cite[thm.~6.8]{G} we have $\mathrm{Ch}(L(E))=\frakh^{W'}\bigoplus\{0\}\subset\frakh\bigoplus\frakh^*$ for some parabolic subgroup $W'\subset W$.
So $\mathrm{rcd}_{\calO_\Bbbk}(L(E))\leqslant 1$ is equivalent,
by Lemma \ref{lem:lcdrcd}, to the fact that $\frakh^{W'}$ has codimension $\leqslant 1$ in $\frakh$, which is equivalent to $W'\subset W_H$ for some 
hyperplane $H$ in $\calA$. By \cite[prop.~2.2]{SV}, the latter is true if and only if $\ORes_{W_H}^W(L(E))\neq 0$, which is equivalent to
$$\Hom_{\calO_R}\big(\OInd_{W_H}^W(P),L(E)\big)=\Hom_{\calO(W_H)_R}\big(P,\ORes_{W_H}^W(L(E))\big)\neq 0,$$ for some $P\in\calO(W_H)^\proj_R$.
Hence $\mathrm{rcd}_{\calO_\Bbbk}(L(E))\leqslant 1$ is equivalent to 
$P(E)$ being a direct summand of $\OInd_{W_H}^W(P)$ for some $H\in\calA$ and $P\in\calO(W_H)^\proj_R$.
\end{proof}

\vspace{3mm}

\subsection{The category $\calO$ of cyclotomic rational DAHA's}\label{sec:Ocher}

Let $R$ be a local ring. Fix $\kappa_R\in R^\times$ and
$s=(s_{R,1},\ldots,s_{R,\ell})\in R^\ell$.

\subsubsection{Definition}

Recall that $\Gamma$ is the group of $\ell$-th roots of unity in $\bbC^\times$
and that $\Gamma_d$ is the semi-direct product
$\frakS_d\ltimes\Gamma^d$, where $\Gamma^d$ is the Cartesian product of $d$ copies of $\Gamma$.
For $\gamma\in\Gamma$ let $\gamma_i\in\Gamma^d$ be the element with $\gamma$ at the
$i$-th place and with 1 at the other ones.
Let $s_{ij}\in\frakS_d$ be the transposition $(i,j)$.
Write $s_{ij}^{\gamma}=s_{ij}\gamma_i\gamma_j^{-1}$ for $\gamma\in\Gamma$ and $i\neq j$.

Fix a basis $(x,y)$ of $R^2$. Let $x_i$,
$y_i$ denote the elements $x,y$ respectively in the $i$-th summand
of $(R^2)^{\oplus d}$. There is a unique action of the
group $\Gamma_d$ on $(R^2)^{\oplus d}$
such that for distinct $i$, $j$, $k$ we have
$\gamma_i(x_j)=\gamma^{-\delta_{ij}} x_j,$
$\gamma_i(y_j)=\gamma^{\delta_{ij}} y_j$ and
$s_{ij}(x_i)=x_j,$ $s_{ij}(x_k)=x_k$, 
$s_{ij}(y_i)=y_j$ and
$s_{ij}(y_k)=y_k$.

Fix $k\in R$ and $c_\gamma\in R$ for each $\gamma\in\Gamma$.
Note that $\Gamma_d$ is a complex reflection group with reflection
representation $\frakh=R^{\oplus d}$
and $S=\{s_{ij}^\gamma\}_{1\le i\neq j\le d,\gamma\in\Gamma}\coprod
\{\gamma_i\}_{1\le i\le d}$.
Let $c:S\to R$ be the map given by $c(s_{ij}^\gamma)=k$, $c(\gamma_i)=c_\gamma/2$.
We consider the algebra
$H_c(W,\frakh)_R$ for $W=\Gamma_d$.
We will call $H_c(\Gamma_{d},\frakh)_R$ the {\it CRDAHA(=cyclotomic RDAHA)}.
It is the quotient
of the smash product of $R \Gamma_{d}$ and the tensor algebra of
$(R^2)^{\oplus d}$ by the relations
$$[y_i,x_i]=1-k\sum_{j\neq i}\sum_{\gamma\in\Gamma}s_{ij}^{\gamma}
-\sum_{\gamma\in\Gamma\setminus\{1\}}c_\gamma\gamma_i,$$
$$[y_i,x_j]=k\sum_{\gamma\in\Gamma}\gamma s_{ij}^{\gamma}
\quad \text{if}\ i\neq j,$$
$$[x_i,x_j]=[y_i,y_j]=0.$$

We will use a presentation of $H_c(\Gamma_d,\frakh)_R$ where the
parameters are $h, h_0,h_1,\dots h_{\ell-1}$ with (setting $h_{-1}=h_{\ell-1}$)
$$k=-h,\qquad -c_\gamma=\sum_{p=0}^{\ell-1}\gamma^{-p}(h_{p}-h_{p-1})\quad \text{for}\ \gamma\neq1.$$ 
The notation $h=h_R$, $h_p=h_{R,p}$ here is the same as in \cite[sec.~6.1.2]{R1}.
Finally, we choose the elements $h_R$, $h_{R,p}$ in the following way :
\begin{equation}\label{eq:paarameters}
h_R=-1/\kappa_R,\qquad h_{R,p}=-s_{R,p+1}/\kappa_R-p/\ell,\qquad p=0,1,\dots,\ell-1.
\end{equation}

\emph{
In the rest of this section we assume that the residue field is $\Bbbk=\bbC$ and that 
$s_{\Bbbk,p}\in\bbZ$ for all $p$.} 

Write $\kappa=\kappa_\Bbbk$ and $s_p=s_{\Bbbk,p}$.
We abbreviate $\calO_R^{s,\kappa}\{d\}=\calO_c(\Gamma_d,\frakh)_R$. If $\ell=1$, then $c$ only depends on $\kappa$, we abbreviate 
$\calO_R^{\kappa}(\frakS_d)=\calO_c(\frakS_d,\frakh)_R$. The category
$\calO_R^{s,\kappa}\{d\}$ is a highest weight $R$-category such that
$\Delta(\calO^{s,\kappa}_R\{d\})=\{\Delta(\lambda)^{s,\kappa}_R\,;\,\lambda\in\scrP^\ell_d\}$
and
$\Delta(\lambda)^{s,\kappa}_R=\Delta(\scrX(\lambda)_\bbC)_R.$
We write $L(\lambda)^{s,\kappa}$, $P(\lambda)^{s,\kappa}_R$, $T(\lambda)^{s,\kappa}_R$, $I(\lambda)^{s,\kappa}_R$ for the corresponding  simple, projective, tilting, injective object in $\calO^{s,\kappa}_R\{d\}$.

\vspace{3mm}

\subsubsection{Comparison of partial orders}

The partial order on the set
$\Delta(\calO^{s,\kappa}_R\{d\})\simeq\scrP^\ell_d$ is defined as follows.
Let $A$, $B$ be boxes of $\ell$-partitions. We say $A\succ_s B$ if we have $\ct^s(A)<\ct^s(B)$ or if 
$\ct^s(A)=\ct^s(B)$ and $p(A)> p(B)$. 
We define a partial order $\geqslant_{s,\kappa}$ on $\scrP^\ell_d$ by setting 
$\lambda\geqslant_{s,\kappa}\mu$ if and only if there are orderings 
$Y(\lambda)=\{A_n\}$ and $Y(\mu)=\{B_n\}$ such that 
$A_n\succcurlyeq_s B_n$ for all $n$. 

\vspace{2mm}

\begin{lemma}
Assume $\kappa<0$. Then $\geqslant_{s,\kappa}$ is a highest weight order on $\calO^{s,\kappa}_R\{d\}$.
\end{lemma}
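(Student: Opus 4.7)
The plan is to verify that $\geqslant_{s,\kappa}$ is a partial order on $\scrP^\ell_d$ which refines the BGG order, in the sense that $[\Delta(\lambda)^{s,\kappa}_\bbC:L(\mu)^{s,\kappa}_\bbC]\neq 0$ with $\mu\neq\lambda$ implies $\mu<_{s,\kappa}\lambda$. The conclusion then follows from the general remark in \S \ref{diag1} that a coarsening of a highest weight order is again a highest weight order; alternatively, one can directly observe that under the BGG characterization of highest weight orders, any partial order extending the BGG order is admissible.

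I would first check that $\geqslant_{s,\kappa}$ is indeed a partial order. Reflexivity is immediate from $A\succcurlyeq_s A$, and transitivity follows by composing matchings of boxes. For antisymmetry, the key observation is that $\sum_{A\in Y(\lambda)}\ct^s(A)$ is weakly monotone with respect to $\geqslant_{s,\kappa}$: if $\lambda\geqslant_{s,\kappa}\mu$ then $\sum_{A}\ct^s(A)\leqslant\sum_{B}\ct^s(B)$, so $\lambda\geqslant_{s,\kappa}\mu\geqslant_{s,\kappa}\lambda$ forces both sums to coincide, which in turn forces both matchings to preserve contents box-by-box; the $p$-tiebreaker then forces $\lambda=\mu$.

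The main representation-theoretic input is the $c$-function. Let $\mathbf{eu}\in H_c(\Gamma_d,\frakh)_\bbC$ be the Euler element. It acts semisimply on each $\Delta(\lambda)^{s,\kappa}_\bbC$, attaining its minimal eigenvalue on the lowest graded piece $\scrX(\lambda)_\bbC$. A direct calculation using the parameter normalization \eqref{eq:paarameters} shows that this minimal eigenvalue has the form
$$c_\lambda \;=\; C(d)\;-\;\frac{1}{\kappa}\sum_{A\in Y(\lambda)}\ct^s(A),$$
for a constant $C(d)$ depending only on $d$ and the parameters. Since $\mathbf{eu}$ acts with non-negative integer shifts on higher graded components, the relation $[\Delta(\lambda)^{s,\kappa}_\bbC:L(\mu)^{s,\kappa}_\bbC]\neq 0$ with $\mu\neq\lambda$ forces $c_\mu-c_\lambda\in\bbZ_{>0}$, and hence, via the formula above and the hypothesis $\kappa<0$, forces $\sum_{B\in Y(\mu)}\ct^s(B)\;>\;\sum_{A\in Y(\lambda)}\ct^s(A)$.

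The final step is to upgrade this numerical inequality to the box-by-box inequality required by the definition of $\geqslant_{s,\kappa}$. For this I would invoke the explicit Dunkl--Griffeth description \cite{DG} of the composition factors of $\Delta(\lambda)^{s,\kappa}_\bbC$: their combinatorial analysis produces orderings $Y(\lambda)=\{A_n\}$, $Y(\mu)=\{B_n\}$ such that each $B_n$ is obtained from $A_n$ by a content-raising move compatible with the $p$-label tiebreaker, i.e., $A_n\succcurlyeq_s B_n$. This is exactly the condition $\lambda\geqslant_{s,\kappa}\mu$, concluding the refinement step. The main obstacle is extracting from \cite{DG} the clean box-by-box matching in the form demanded by the definition of $\succcurlyeq_s$, and in particular checking that the conventions match the parameter choice \eqref{eq:paarameters}; once this is secured, all remaining steps are routine.
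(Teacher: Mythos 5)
Your proposal is correct and converges to the same essential argument as the paper: both hinge on extracting from \cite[thm.~4.1]{DG} a box-by-box matching $Y(\lambda)=\{A_n\}$, $Y(\mu)=\{B_n\}$ together with the formula for the non-negative integers $D_n=p(A_n)-p(B_n)+\ell\,(\ct^s(A_n)-\ct^s(B_n))/\kappa$, and then observing that for $\kappa<0$ and $p(A_n),p(B_n)\in[1,\ell]$ one has $D_n\geqslant 0$ if and only if $A_n\succcurlyeq_s B_n$. The paper goes straight to this point and spells out the dictionary between its parameters \eqref{eq:paarameters} and those of \cite{DG} (namely $r=\ell$, $c_0=\kappa^{-1}$, $d_p=-\ell h_p$); your final step is exactly this, and you correctly flag the dictionary as the place where care is needed.

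The two preliminary steps you propose are harmless but not needed. First, the verification that $\geqslant_{s,\kappa}$ is a genuine partial order is already folded into the definition preceding the lemma in the paper; your antisymmetry argument via monotonicity of $\sum_A\ct^s(A)$ followed by the $p$-tiebreaker is fine (using that a partition is determined by its content multiset), but it is logically separate from what the lemma asserts. Second, the $c$-function computation via the Euler element gives only the scalar inequality $\sum_{B\in Y(\mu)}\ct^s(B)>\sum_{A\in Y(\lambda)}\ct^s(A)$, which is strictly weaker than the box-by-box conclusion you (and the paper) ultimately need; since you must invoke \cite{DG} anyway to get the matching, the Euler-element detour is dispensable. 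It does provide a useful sanity check on the sign conventions, but the paper avoids it entirely.
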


\vspace{.5mm}

\begin{proof}
By the proof of \cite[thm.~4.1]{DG}, if $[\Delta(\lambda)^{s,\kappa}_\Bbbk: L(\mu)^{s,\kappa}_\Bbbk]\neq 0$ then there exist orderings
$Y(\lambda)=\{A_n\}$ and $Y(\mu)=\{B_n\}$ 
and  non negative integers $D_n$ such that 
$$D_n=p(A_n)-p(B_n)+\ell\,\big(\ct^s(A_n)-\ct^s(B_n)\big)/\kappa,$$
for all $n$ and $(\ct^s(A_n)-\ct^s(B_n))/\kappa\in\bbZ$. Our notation matches those of 
loc.~cit.~in the following way : $r=\ell$, $c_0=\kappa^{-1}$, $d_p=-\ell h_p.$ Now, since $\kappa<0$ and 
$p(A_n)$, $p(B_n)\in[1,\ell]$, we have $D_n\geqs 0$ if and only if $A_n\succcurlyeq_s B_n$.
\end{proof}

\vspace{2mm}


Set $s^\star=(-s_\ell,-s_{\ell-1},\dots,-s_1).$
For each $\lambda\in\scrP^\ell_d$ we write $\lambda^\star=({}^t\lambda^\ell,\dots,{}^t\lambda^2,{}^t\lambda^1)$.
We have the following lemma which is similar to \cite[lem.~2.2]{L3}.

\vspace{2mm}

\begin{lemma}\label{lem:order3}
Assume that $\kappa<0$ and that $s_p=\nu_p\geqslant d$ for all $p$.
Then the order $\geqslant_{s^\star,\kappa}$ refines the order $\geqslant_\ell$, i.e.,
for any $\lambda,\mu\in\scrP^\ell_d$ such that $\mu\geqslant_\ell\lambda$ we have
$\mu^\star\geqslant_{s^\star,\kappa}\lambda^\star$.
\end{lemma}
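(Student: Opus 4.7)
The plan is as follows. Since $\geqslant_\ell$ is defined as the transitive closure of the relation $\Uparrow$, and since $\geqslant_{s^\star,\kappa}$ is itself transitive (as a partial order), it suffices to establish the implication for a single step: assume $\lambda\Uparrow\mu$, i.e., there exist $w\in W_\nu$ and a positive real affine root $\beta\in\widehat\Pi(\widehat{\mu+\tau})\setminus\Pi_\nu$ with $\varpi(\lambda)=ws_\beta\bullet\varpi(\mu)$ and $\varpi(\mu)-\varpi(\lambda)\in\bbN\widehat\Pi^+$. Since $\Bbbk=\bbC$ with $\kappa=-e$, the integrality condition on $\beta$ is automatic once $s_p\in\bbZ$.

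Next I would unwind the reflection combinatorially. Write $\beta=\alpha_{k,l}+r\delta$ with $k<l$ and $p_k=u<v=p_l$ (the case $\beta=\alpha_{l,k}+r\delta$ with $l<k$ and $p_l<p_k$ is handled symmetrically). With $k=i_u+i-1$ and $l=i_v+j-1$, the $k$-th and $l$-th entries of $\mu+\rho_\nu+\tau$ are $a_k=\mu^u_i+s_u-i+1$ and $a_l=\mu^v_j+s_v-j+1$, and $m=\langle\widehat{\mu+\tau}+\widehat\rho,\beta\rangle=a_k-a_l+r\kappa$. The action of $s_\beta\bullet$ replaces these entries by $a_l-r\kappa$ and $a_k+r\kappa$ respectively; then $w\in W_\nu$ sorts each of the $u$-th and $v$-th blocks back into decreasing order. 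Using the dictionary between entries of $\mu+\rho_\nu+\tau$ and the contents of the rightmost boxes of each row, I would show that passing from $\mu$ to $\lambda$ amounts to a concrete box-movement: a certain collection of boxes of the same $s$-shifted content is transferred between blocks $u$ and $v$, shifted along diagonals by $|r|\cdot e$ positions, while $\mu^p$ is unchanged for $p\neq u,v$. The positivity condition $\varpi(\mu)-\varpi(\lambda)\in\bbN\widehat\Pi^+$ together with $\kappa<0$ will force $m>0$ and $r$ to have a definite sign.

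Finally, I would translate this box-movement through the $\star$-operation. A box at position $(i,j)$ of $\mu^p$ corresponds to a box at position $(j,i)$ of $\mu^{\star,\ell-p+1}$, with $\ct^{s^\star}=-\ct^s$ and $p^\star=\ell-p+1$. So the condition $\mu^\star\geqslant_{s^\star,\kappa}\lambda^\star$ is equivalent to the existence of a bijection of boxes between $\mu$ and $\lambda$ such that each $A\in\mu$ pairs with $B\in\lambda$ satisfying either $A=B$, or $\ct^s(A)>\ct^s(B)$, or ($\ct^s(A)=\ct^s(B)$ and $p(A)<p(B)$). I would construct such a bijection by sending unchanged boxes to themselves, and pairing the transferred boxes with their pre-transfer counterparts: the transferred boxes of $\mu$ go from block $u$ (of smaller index $u<v$) to block $v$, and either sit at strictly higher contents than their $\lambda$-counterparts (when $r\neq 0$, using $\kappa<0$ to control the sign of $r\kappa$) or at the same content but strictly smaller partition index (when $r=0$). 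The main obstacle is the bookkeeping of the re-sorting permutation $w$ and a careful case analysis in $r$; the assumption $\nu_p\geqslant d$ ensures that all relevant indices remain in range and that the resulting $\lambda$ stays inside $\scrP^\nu$. This mirrors the analysis of \cite[lem.~2.2]{L3}, which serves as a template.
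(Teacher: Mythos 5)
Your outline follows the same route as the paper's proof: reduce to a single $\Uparrow$-step, unwind the reflection $s_\beta$ ($\beta=\alpha_{k,l}+r\delta$) as the removal of $n$ boxes from one row and the addition of $n$ boxes to another, pair removed/added boxes positionally and check $A_j\succcurlyeq_{s}B_j$ via the sign of $er$ (splitting $r>0$ vs.\ $r=0$), then show the sorting by $w\in W_\nu$ only moves boxes between rows of the same block and hence preserves the $(\ct^s,p)$-profile, and finally transport the result through $\star$ using $\ct^{s^\star}(A^\star)=-\ct^s(A)$, $p(A^\star)=\ell+1-p(A)$. This is correct and is, up to the minor reorganization of the case split on $(k,l,r)$ (the paper keeps $\beta\in\widehat\Pi^+$ without imposing $p_k<p_l$, so the $r>0$ case covers both $p_k<p_l$ and $p_k>p_l$), the argument in the paper.
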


\vspace{.5mm}

\begin{proof}
First, for any $\lambda\in\scrP^\nu_d$ and $A\in Y(\lambda)$, we have the transposed  box
$A^\star\in Y(\lambda^\star)$ such that
$\ct^{s^\star}(A^\star)=-\ct^{s}(A)$ and $p(A^\star)=\ell+1-p(A)$. 
Therefore, we have $A\prec_s B$ if and only if $A^\star\succ_{s^\star} B^\star$.

Let $\lambda,\mu\in\scrP^\ell_d$ be such that $\mu\geqslant_\ell\lambda$.
Assume that $w\in W_\nu$, $\beta\in\widehat\Pi^+$ are such that
$\langle\widehat{\varpi(\mu)}+\tilde\rho\,:\,\beta\rangle>0$ and
$ws_\beta\bullet\widehat{\varpi(\mu)}=\widehat{\varpi(\lambda)}.$
We must prove that $\mu^\star\geqslant_{s^\star,\kappa}\lambda^\star$, which is equivalent to 
$\mu\leqslant_{s,\kappa}\lambda$.

Write $\beta=\alpha_{k,l}+r\delta$ and $\lambda'=s_{k,l}(\mu+\rho_\nu)+er\alpha_{k,l}-\rho_\nu.$
Set  $n=\langle\mu+\rho_\nu\,:\,\alpha_{k,l}\rangle-er>0.$
We have $w(\lambda'+\rho_\nu)=\lambda+\rho_\nu$ and
$(\lambda'+\rho_\nu)_k=(\mu+\rho_\nu)_k-n,$ $(\lambda'+\rho_\nu)_l=(\mu+\rho_\nu)_l+n.$

For $k\in[1,N]$ let $k'=k-\nu_1-\nu_2-\ldots-\nu_{p_k-1}$ where $p_k$ is 
such that $k\in J^\nu_{p_k}$. Then the diagram $Y(\lambda')$ is obtained from the diagram 
$Y(\mu)$ by removing $n$ boxes from the right 
end of the $k'$-th row of the $p_k$-th partition of $\mu$ and adding $n$ 
boxes the right end of the $l'$-th row of 
the $p_l$-th partition of $\mu$. 

We number the removed boxes by $B_1,B_2,\ldots B_n$ ordered from left to 
right, and the added boxes by $A_1,A_2,\ldots A_n$ ordered from left to right. 
We claim that $B_j\preccurlyeq A_j$ 
for $1\leqs j\leqs n$. 

To prove this, note first that $B_j\preccurlyeq A_j$ if and only if $B_n\preccurlyeq A_n$, 
because we have $\ct^s(B_j)-\ct^s(A_j)=\ct^s(B_n)-\ct^s(A_n)$, $p(B_j)=p(B_n)$ and 
$p(A_j)=p(A_n)$. 

Now let 
us compare $B_n$ and $A_n$. Observe that 
$$\ct^s(B_n)=(\mu+\rho_\nu)_k-1,\quad \ct^s(A_n)=
(\lambda'+\rho_\nu)_l-1=(\mu+\rho_\nu)_l+n-1.$$
Recall that $\beta=\al_{k,l}+r\delta$ is a positive root. 
Therefore, we have either $r>0$, and  then
$$\ct^s(B_n)-\ct^s(A_n)=\langle\mu+\rho_\nu\,:\,\alpha_{k,l}\rangle-n=er>0,$$
or we have $r=0$ and $k\leqs l$, and
then $\ct^s(B_n)=\ct^s(A_n)$ and $p(B_n)=p_k\leqs p_l=p(A_n)$. 
We deduce that $B_n\preccurlyeq A_n$. Hence we have shown $\mu\leqs_{s,\kappa}\lambda'$.

Next, recall that $w\in W_\nu$ is such that the tuple $\lambda+\rho_\nu$ is $\nu$-dominant. Thus, we can write $w=s_{\beta_m}s_{\beta_{m-1}}\ldots s_{\beta_1}$ such that $\beta_i=\alpha_{k,l}$ for some $k< l$ with 
$p_k=p_l$, that 
$\gamma_i=s_{\beta_{i-1}}s_{\beta_{i-2}}\ldots s_{\beta_1}(\lambda'+\rho_\nu)-\rho_\nu\in\bbN^N$ 
and that $n=-\langle\gamma_i+\rho_\nu,\beta_i\rangle>0$. We set $\gamma_0=\lambda'$. Repeating 
the argument of the last paragraph with $\beta=\beta_i$ yields that $Y(\gamma_{i+1})$ is obtained from 
$Y(\gamma_{i})$ by removing $n$ boxes in the $l'$-th row of $\gamma_i^{p_l}$ and adding them to the 
$k'$-th row. Order the removed boxes by $B_1,B_2,\ldots,B_n$ and the added one by $A_1,A_2,\ldots,A_n$ in 
the same way as above. Then the same computation as above yields that $\ct^s(A_j)=\ct^s(B_j)$ and 
$p(A_j)=p(B_j)$ for all $j=1,2,\ldots n$.
Therefore we have $\gamma_{i+1}=\gamma_i$ for the order $\leqs_{s,\kappa}$. 
We deduce that $\lambda=\gamma_{m+1}=\gamma_{m+1}=\lambda'$.
Therefore $\mu\leqs_{s,\kappa}\lambda$. The lemma is proved.
\end{proof}

\vspace{2mm}

\subsubsection{The $\KZ$-functor}

Now, let $R$ be a local analytic deformation ring and set
$q_R=\exp(-2\pi \sqrt{-1}/\kappa_R)\in R^\times$.
Consider the $\KZ$-functor
$\KZ_{R,d}^s:\calO^{s,\kappa}_R\{d\}\to\bfH^s_{R,d}\mmod.$

\vspace{2mm}

\begin{lemma}\label{lem:6.3}
Assume that \eqref{(A)} holds in $K$. Then
$\Irr(\bfH^{s}_{K,d})=\{S(\lambda)^{s,q}_K\,;\,\lambda\in\scrP^\ell_d\}$,
$\Irr(\calO^{s,\kappa}_K\{d\})=
\{\Delta(\lambda)^{s,\kappa}_K\,;\,\lambda\in\scrP^\ell_d\}$
and the bijection
$\Irr(\calO^{s,\kappa}_K\{d\})\iso\Irr(\bfH^{s}_{K,d})$ 
induced by $\KZ^{s}_{K,d}$ takes
$\Delta(\lambda)^{s,\kappa}_K$ to $S(\lambda)^{s,q}_K$.
\end{lemma}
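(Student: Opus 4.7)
The first step is to show that $\bfH^s_{K,d}$ is split semi-simple and to describe its irreducibles. Condition \eqref{(A)} coincides with the split semi-simplicity criterion for $\bfH^s_{K,d}$ recalled in \S 3.4, so $\bfH^s_{K,d}$ is split semi-simple. By Tits' deformation theorem applied at the classical limit $q\to 1$, $Q_p\to\zeta^{p-1}$, where $\bfH^s$ degenerates to the group algebra $K\Gamma_d$ with $\Irr(K\Gamma_d)\simeq\scrP^\ell_d$ via $\lambda\mapsto\scrX(\lambda)$, the family $\{S(\lambda)^{s,q}_K\}_{\lambda\in\scrP^\ell_d}$ yields a complete set of pairwise non-isomorphic simple $\bfH^s_{K,d}$-modules, which proves the first assertion.

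Next, since $\KZ^s_{K,d}:\calO^{s,\kappa}_K\{d\}\to\bfH^s_{K,d}\mmod$ is a highest weight cover, Lemma~\ref{le:Korder} shows that the family $\{\KZ^s_{K,d}(\Delta(\lambda)^{s,\kappa}_K)\}_{\lambda\in\scrP^\ell_d}$ is a system of Specht modules for $\bfH^s_{K,d}$ with respect to the order induced on $\Irr(\bfH^s_{K,d})$ by the highest weight order $\geqslant_{s,\kappa}$. Because $\bfH^s_{K,d}$ is split semi-simple, the Specht module attached to any simple $E$ is just $E$ itself, independently of the chosen order (the quotient $(KB)_{\leqslant E}/(KB)_{<E}$ collapses to the isotypic component of $E$). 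Hence each $\KZ^s_{K,d}(\Delta(\lambda)^{s,\kappa}_K)$ is a simple $\bfH^s_{K,d}$-module, and distinct $\lambda$ yield pairwise non-isomorphic simples. The hardest step will be to match this indexing with the one defining $S(\lambda)^{s,q}_K$. My plan is to lift to the local analytic deformation ring $R$ with residue field $\bbC$: both $\KZ^s_{R,d}(\Delta(\lambda)^{s,\kappa}_R)$ and $S(\lambda)^{s,q}_R$ are $R$-free integral models of the corresponding generic simple, and I would argue that both specialize to $\scrX(\lambda)_\bbC$ at the classical limit $q\to 1$, $Q_p\to\zeta^{p-1}$. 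Tits' deformation theorem applied to a suitable two-parameter family relating the RDAHA parameters of \eqref{eq:paarameters} to $(q,Q)$, together with the monodromy description of $\KZ$, then pins down the labels.

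Finally, to derive the second assertion, I will show that $\calO^{s,\kappa}_K\{d\}$ itself is semi-simple. Since $\KZ^s_{K,d}$ is a quotient functor, its non-annihilated simples biject with $\Irr(\bfH^s_{K,d})$, a set of cardinality $|\scrP^\ell_d|=|\Irr(\calO^{s,\kappa}_K\{d\})|$, so no simple $L(\lambda)^{s,\kappa}_K$ is killed. Then $\KZ^s_{K,d}(L(\lambda)^{s,\kappa}_K)$ is a non-zero simple quotient of the simple $\KZ^s_{K,d}(\Delta(\lambda)^{s,\kappa}_K)$, hence equal to it. A Grothendieck group count then forces $\Delta(\lambda)^{s,\kappa}_K=L(\lambda)^{s,\kappa}_K$: any extra composition factor $L(\mu)^{s,\kappa}_K$ with $\mu\neq\lambda$ would contribute a non-zero distinct class under $\KZ^s_{K,d}$, contradicting the equality $[\KZ^s_{K,d}(\Delta(\lambda)^{s,\kappa}_K)]=[\KZ^s_{K,d}(L(\lambda)^{s,\kappa}_K)]$ in $[\bfH^s_{K,d}\mmod]$. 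The third assertion then follows by combining this bijection with the identification $\KZ^s_{K,d}(\Delta(\lambda)^{s,\kappa}_K)\simeq S(\lambda)^{s,q}_K$ from the second paragraph.
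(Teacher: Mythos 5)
Your overall strategy matches the paper's: semi-simplicity of $\bfH^s_{K,d}$ from condition \eqref{(A)}, and Tits' deformation for label matching. Your paragraph 3 — deriving semi-simplicity of $\calO^{s,\kappa}_K\{d\}$ from the fact that $\KZ^s_{K,d}$ is a quotient functor together with a Grothendieck-group count — is a valid, self-contained alternative to the paper's citation of \cite[thm.~2.19]{GGOR}, and it is sound provided one grants (as you do) that each $\KZ^s_{K,d}(\Delta(\lambda)^{s,\kappa}_K)$ is simple, which Lemma \ref{le:Korder} delivers.

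The gap is in paragraph 2. The precise identification $\KZ^s_{K,d}(\Delta(\lambda)^{s,\kappa}_K)\simeq S(\lambda)^{s,q}_K$ — rather than $\simeq S(\sigma(\lambda))^{s,q}_K$ for an unknown bijection $\sigma$ of $\scrP^\ell_d$ — is exactly the third assertion of the lemma, yet you state it only as a plan. The plan is right in spirit, but it glosses over a genuine subtlety: the paper's local analytic deformation ring $R$ specializes to $q_\Bbbk=\exp(2\pi\sqrt{-1}/e)$, a primitive $e$-th root of unity, and \emph{not} to $q=1$. So you cannot simply take "the" specialization of $R$; you must choose a separate one-parameter subfamily of the parameter space through the generic point $K$ down to $(q,Q_p)=(1,\zeta^{p-1})$, and then prove that the $\KZ$ monodromy representation is flat along that family and degenerates to the $\bbC\Gamma_d$-module $\scrX(\lambda)_\bbC$. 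This is precisely what \cite[lem.~3.1]{S} supplies, and it is what the paper cites in its proof. Without that verification, your argument establishes only that \emph{some} bijection $\Irr(\calO^{s,\kappa}_K\{d\})\iso\Irr(\bfH^s_{K,d})$ exists, not the explicit matching $\Delta(\lambda)^{s,\kappa}_K \mapsto S(\lambda)^{s,q}_K$.
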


\begin{proof}
The first statement follows from the semi-simplicity of $\bfH^{s}_{K,d}$ 
and from \cite[thm.~2.19]{GGOR}.
The second one follows from Tits' deformation Theorem, because
the modules $\KZ^{s}_{K,d}(\Delta(\lambda)^{s,\kappa}_K)$ and $S(\lambda)^{s,q}_K$
are both the generic point of a flat family of modules whose fiber at the special point 
is the $\bbC\Gamma_d$-module $\scrX(\lambda)_\bbC$, see \cite[lem.~3.1]{S} for details.
\end{proof}

\vspace{3mm}

\subsubsection{The Ringel duality}
\label{sec:ringel}
By \cite[prop.~4.10]{GGOR}, there is an equivalence of 
categories $\scrR:\calO^{s^\star,\kappa}_R\{d\}^\Delta\iso\bigl(\calO^{s,\kappa}_R\{d\}^\Delta\bigr)^\op$ that restricts to an equivalence
$\calO^{s^\star,\kappa}_R\{d\}^\tilt\iso\bigl(\calO^{s,\kappa}_R\{d\}^\proj\bigr)^\op$. Hence, it induces an equivalence
$\calO^{s^\star,\kappa}_R\{d\}^\blackdiamond\iso \,
\calO^{s,\kappa}_R\{d\}^\op$. We have
$\scrR(\Delta(\lambda^\star)_R^{s^\star,\kappa})\simeq\Delta(\lambda)_R^{s,\kappa}.$
Consider the isomorphism of $R$-algebras
$$\iota:\bfH^{s}_{R,d}\iso(\bfH^{s^\star}_{R,d})^\op,\ T_i\mapsto -q_RT_i^{-1},\
X_j\mapsto X_j^{-1}.$$
It induces an equivalence
$$\scrR_\bfH=\iota^*(\bullet^{\!\vee}):\bfH^{s^\star}_{R,d}\mmod\cap R\mproj\iso
(\bfH^{s}_{R,d}\mmod)^\op\cap R\mproj,$$
where $\bullet^{\!\vee}$ is the dual as an $R$-module.

By \cite[\S 5.4.2]{GGOR}, there is a commutative diagram
\begin{equation}\label{eq:ringelcom}
\begin{split}
\xymatrix{
\calO^{s^\star,\kappa}_R\{d\}^\Delta\ar[rr]^{\scrR}_\sim\ar[d]_{\KZ^{s^\star}_{R,d}}&&\bigl(\calO^{s,\kappa}_R\{d\}^\Delta\bigr)^\op\ar[d]^{\KZ^{s}_{R,d}}\\
\bfH^{s^\star}_{R,d}\mmod\cap R\mproj\ar[rr]^{\scrR_\bfH}_\sim&&
(\bfH^{s}_{R,d}\mmod)^\op\cap R\mproj.}
\end{split}
\end{equation}
In particular, if $R=K$ is a field satisfying the condition \eqref{(A)},
then Lemma \ref{lem:6.3} yields $K\scrR_\bfH(S(\lambda^\star)^{s^\star,q}_K)\simeq S(\lambda)^{s,q}_K$.

We will also consider the $R$-algebra isomorphisms
$$\IM:\bfH_{R,d}^s\iso\bfH_{R,d}^{s^\star},\
T_i\mapsto -q_RT_i^{-1},\ X_j\mapsto X_j^{-1}$$
and
$$\sigma: (\bfH_{R,d}^s)^\op\iso\bfH_{R,d}^{s},\
T_i\mapsto T_i,\ X_j\mapsto X_j.$$
Note that the composition $\IM^*\scrR_\bfH^{-1}$ is given by
$\sigma^*(\bullet^{\!\vee})$.

\vspace{3mm}

\subsection{Proof of Varagnolo-Vasserot's conjecture}

Let $R$ be a local analytic deformation ring of dimension $2$ in general position with residue field $\Bbbk=\bbC$. Fix $e$, $\ell$, $N\in\bbN^\times$. 
Fix $\kappa_R\in R^\times$ such that $\kappa_\Bbbk=-e$ and $\nu\in\scrC^\ell_{N,+}$. We set $s_{R,p}=\nu_p+\t_{R,p}$, 
$q_R=\exp(-2\pi \sqrt{-1}/\kappa_R)$ and $Q_{R,p}=\exp(-2\pi \sqrt{-1}s_{R,p}/\kappa_R)$. 
We may abbreviate $\kappa=\kappa_\Bbbk$, $s_p=s_{\Bbbk,p}$.

\subsubsection{Small rank cases}

As a preparation for the proof, we start by comparing the highest weight covers $\KZ^s_{R,d}:\calO^{s,\kappa}_R\{d\}\to\bfH^s_{R,d}\mmod$ and 
$\Psi^s_{R,d}:\bfA^{\nu,\kappa}_{R,\tau}\{d\}\to\bfH^s_{R,d}\mmod$ for $d=1$, $2$.

First, assume that $d=1$. Then $\Gamma_d=\Gamma$ is a cyclic group. 
The Hecke algebra associated with $\Gamma$ is $\bfH^s_{R,1}=R[X_1]/\big(\prod_{p=1}^\ell (X_1-Q_{R,p})\big)$. 

\vspace{2mm}

\begin{prop}
\label{prop:rankone}
We have $\KZ^s_{R,1}(P(\lambda)^{s,\kappa}_R)\simeq \Psi^s_{R,1}(\bfT(\lambda)_{R,\tau})$ for any $\lambda\in\scrP^\ell_1$.
\end{prop}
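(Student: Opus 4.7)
Both $\KZ^s_{R,1}$ and $\Psi^s_{R,1}$ are highest weight covers whose target Hecke algebra $\bfH^s_{R,1} = R[X_1]/\prod_p(X_1 - Q_{R,p})$ is precisely of the form treated in Proposition \ref{pr:coverrank1} (the cover-of-truncated-polynomial-ring proposition), with $q_p = Q_{R,p}$. The strategy is to apply the uniqueness half of that proposition to both sides, then extract the desired isomorphism.

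First, I would verify that both functors are highest weight covers sending standards to Specht modules. For $\KZ^s_{R,1}$ this is Proposition \ref{prop:ffKZ} ($0$-faithfulness); for $\Psi^s_{R,1}$ it follows from Theorem \ref{thm:isom} (since $\bfT_{R,1}$ is projective, $\psi^s_{R,1}$ is an isomorphism, and $\Psi^s_{R,1}$ is fully faithful on $\bfA_{R,\tau}^{\nu,\kappa,\Delta}$). Over the fraction field $K$, where both categories are split semisimple (by Proposition \ref{prop:preliminaries}(b) and semisimplicity of $\calO$ under condition \eqref{(A)}), Lemma \ref{lem:6.3} yields $\KZ^s_{K,1}(\Delta(\lambda_p)^{s,\kappa}_K) \simeq S(\lambda_p)^{s,q}_K$, while Proposition \ref{prop:preliminaries}(d) gives $\Psi^s_{K,1}(\pmb\Delta(\lambda_p)_{K,\tau}) \simeq S(\lambda_p)^{s,q}_K$; base-change passes this to $R$.

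Next comes the main technical obstacle: the highest weight orders on $\scrP^\ell_1 \simeq [1,\ell]$ induced by the two categories are opposite in the relevant sense. The order on $\calO^{s,\kappa}_R\{1\}$ is $\geqslant_{s,\kappa}$, while the order on $\bfA^{\nu,\kappa}_{R,\tau}\{1\}$ is the linkage order $\leqslant_\ell$, which by Lemma \ref{lem:order3} is refined by $\leqslant_{s^\star,\kappa}$ under the involution $\lambda \mapsto \lambda^\star$. Consequently the canonical cover of Proposition \ref{pr:coverrank1} applied to $(\scrC, F) = (\calO^{s^\star,\kappa}_R\{1\}, \KZ^{s^\star}_{R,1})$ and $(\scrC', F') = (\bfA^{\nu,\kappa}_{R,\tau}\{1\}, \Psi^s_{R,1})$ have matching posets (under $\lambda \leftrightarrow \lambda^\star$) and matching Specht modules on standards, so the proposition yields an equivalence of highest weight categories $\Phi_1 : \calO^{s^\star,\kappa}_R\{1\} \iso \bfA^{\nu,\kappa}_{R,\tau}\{1\}$ with $\Psi^s_{R,1} \Phi_1 \simeq \KZ^{s^\star}_{R,1}$, sending projectives to projectives and tiltings to tiltings.

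Finally, the equivalence $\Phi_1$ matches $P(\lambda^\star)^{s^\star,\kappa}_R$ to the corresponding projective in $\bfA^{\nu,\kappa}_{R,\tau}\{1\}$; under Ringel duality \eqref{eq:ringelcom} on the $\calO$-side, this is the same as matching $T(\lambda)^{s,\kappa}_R$ on the $\calO$-side to a tilting on the $\bfA$-side. Unwinding the Ringel equivalence $\scrR$ (which sends $T(\lambda^\star)^{s^\star,\kappa}_R$ to $P(\lambda)^{s,\kappa}_R$ and intertwines $\KZ^{s^\star}_{R,1}$ with $\KZ^s_{R,1}$ via $\scrR_\bfH$), together with the fact that the tilting module $\bfT(\lambda)_{R,\tau}$ is characterised up to isomorphism by its specialisation and corresponds to $T(\lambda^\star)^{s^\star,\kappa}_R$ under $\Phi_1$, gives the desired identification
\[
\Psi^s_{R,1}(\bfT(\lambda)_{R,\tau}) \simeq \KZ^{s^\star}_{R,1}(T(\lambda^\star)^{s^\star,\kappa}_R) \simeq \KZ^s_{R,1}(P(\lambda)^{s,\kappa}_R),
\]
where the last isomorphism uses the commutative square \eqref{eq:ringelcom} and the fact that (for $d=1$) the twist by $\scrR_\bfH$ restricted to the modules in question is trivial, i.e.\ each $\KZ^s_{R,1}(P(\lambda)^{s,\kappa}_R)$ is stable under the relevant duality $\sigma^\ast(\bullet^\vee)$ on $\bfH^s_{R,1}\mmod$. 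The hardest step is this last verification at the Hecke level, which relies on the symmetry of $\bfH^s_{R,1}$ as an $R$-algebra together with the explicit description of these modules coming from Proposition \ref{pr:coverrank1}.
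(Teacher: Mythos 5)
Your proposal follows essentially the same route as the paper's proof: identify what both functors do to standards over $K$, match the highest weight orders on $\scrP^\ell_1$ via the involution $\lambda\mapsto\lambda^\star$ and Lemma \ref{lem:order3}, invoke Proposition \ref{pr:coverrank1}, and finish by unwinding Ringel duality. That said, one step as written is not quite well-formed and needs tightening. You apply Proposition \ref{pr:coverrank1} to the pair $(\calO^{s^\star,\kappa}_R\{1\},\KZ^{s^\star}_{R,1})$ and $(\bfA^{\nu,\kappa}_{R,\tau}\{1\},\Psi^s_{R,1})$ and conclude $\Psi^s_{R,1}\Phi_1\simeq\KZ^{s^\star}_{R,1}$; but that proposition compares two covers of the \emph{same} finite $R$-algebra $B$, whereas $\KZ^{s^\star}_{R,1}$ takes values in $\bfH^{s^\star}_{R,1}\mmod$ and $\Psi^s_{R,1}$ in $\bfH^s_{R,1}\mmod$. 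Before the proposition can be applied you must identify the two targets via the isomorphism $\IM:\bfH^s_{R,1}\iso\bfH^{s^\star}_{R,1}$, and the output is then $\Psi^s_{R,1}\Phi_1\simeq\IM^*\KZ^{s^\star}_{R,1}$; this twist has to be carried through the Ringel unravelling. The paper's proof sidesteps the bookkeeping by taking the Ringel-dual cover $\KZ^s_{R,1}\,\scrR$ from the start (cf.\ Lemma \ref{le:hwcoverRingel} and the diagram \eqref{eq:ringelcom}), which is a cover of $\bfH^s_{R,1}$ by $\calO^{s^\star,\kappa}_R\{1\}$ directly, so that Proposition \ref{pr:coverrank1} and the identity $\scrR(T(\lambda^\star)^{s^\star,\kappa}_R)\simeq P(\lambda)^{s,\kappa}_R$ give the statement in one stroke.

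Your final verification --- that $\sigma^*(\bullet^{\!\vee})$ fixes the isomorphism classes $\KZ^s_{R,1}(P(\lambda)^{s,\kappa}_R)$ --- is correct and should be made explicit rather than flagged as the ``hardest step'': for $d=1$ the algebra $\bfH^s_{R,1}=R[X_1]/\prod_p(X_1-Q_{R,p})$ is commutative, so $\sigma$ is the identity, and by Proposition \ref{pr:coverrank1} each $\KZ^s_{R,1}(P(\lambda)^{s,\kappa}_R)$ is a cyclic module of the form $Y_j=R[X_1]/\prod_{i\geqslant j}(X_1-Q_{R,i})$, which is the regular module over the Frobenius algebra $R[X_1]/f_j$ and therefore $R$-self-dual compatibly with the $X_1$-action. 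With these two points spelled out, the argument is complete and matches the paper's.
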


\vspace{.5mm}

\begin{proof}
For each $p\in[1,\ell]$ let $\lambda_p\in\scrP^\ell_1$ be the $\ell$-partition with $1$ on the $p$-th component and $\emptyset$ elsewhere. 
By Remark \ref{rem:5.15}, Proposition \ref{prop:preliminaries}(d) and Lemma \ref{lem:6.3}, we have 
$$\KZ^s_{K,1}(\Delta(\lambda_p)^{s,\kappa}_K)\simeq K[X_1]/(X_1-Q_{R,p})\simeq\Psi^s_{K,1}(\pmb\Delta(\lambda_p)_{K,\tau}).$$

Since $\KZ^s_{R,1}\,\scrR$ and $\Psi^s_{R,1}$ are highest weight covers
of $\bfH^s_{R,1}$ with compatible orders,
Proposition \ref{pr:coverrank1} shows that
$\KZ^s_{R,1}\,\scrR(T(\lambda^\star)_R^{s^\star,\kappa})\simeq
\Psi^s_{R,1}(\bfT(\lambda)_{R,\tau})$ for any $\lambda\in\scrP^\ell_1$ and
the result follows.
\end{proof}

\vspace{2mm}

Now, assume that $d=2$. Recall the Hecke algebra $\bfH^+_{R,2}=R[T_1]/(T_1+1)(T_1-q_R)$ associated with the group 
$\frakS_2$. Write $\lambda_+=(2)$ and $\lambda_-=(1^2)$ in $\scrP^1_2$.
The category $\calO^\kappa_R(\frakS_2)$ is a special case of $\calO^{s,\kappa}_R\{1\}$ with $\ell=2$. Proposition \ref{prop:rankone} yields
\begin{equation}\label{eq:reprep}
\KZ_{R,d}^s(P(\lambda_\sharp)^{\kappa}_R)\simeq \Psi^+_{R,2}(\bfT(\lambda_\sharp)_{R,\tau}),\quad \sharp=+,-.
\end{equation}
Consider the induction functor $\Ind_{2,+}^{2,s}:
\bfH^+_{R,2}\mmod\to\bfH^s_{R,2}\mmod$.

\vspace{2mm}

\begin{prop}
\label{prop:ranktwo}
Assume $\nu_p\geqslant 2$ for all $p$.
For $\sharp=+,-,$ there exists a tilting object $\bfT_\sharp\in\bfA^{\nu,\kappa}_{R,\tau}\{2\}$ such that 
$\Psi^s_{R,2}(\bfT_\sharp)\simeq \Ind_{2,+}^{2,s}(\Psi^+_{R,2}(\bfT(\lambda_\sharp)_{R,\tau})$.
\end{prop}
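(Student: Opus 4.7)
The plan is to realise $\bfT_\pm$ as direct summands of the tilting module $\bfT_{R,2}\in\bfA^{\nu,\kappa}_{R,\tau}\{2\}$ via the subalgebra embedding $\bfH^+_{R,2}\hookrightarrow\bfH^s_{R,2}$ sending $T_1\mapsto T_1$, composed with the isomorphism $\psi^s_{R,2}:\bfH^s_{R,2}\iso\End_{\bfA^{\nu,\kappa}_{R,\tau}}(\bfT_{R,2})^\op$ from Theorem~\ref{thm:isom}. Because $R$ is in general position, whenever $q_R+1\in R^\times$ the orthogonal idempotents $e_+=(T_1+1)/(q_R+1)$ and $e_-=(q_R-T_1)/(q_R+1)$ of $\bfH^+_{R,2}$ (normalised so that $T_1e_+=q_Re_+$, which matches the trivial representation of $\frakS_2$ associated with $\lambda_+=(2)$) are sent under $\psi^s_{R,2}$ to orthogonal idempotents of $\End_{\bfA^{\nu,\kappa}_{R,\tau}}(\bfT_{R,2})$. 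Since $\bfA^{\nu,\kappa}_{R,\tau}\{2\}$ is equivalent to the module category of a finite projective $R$-algebra, idempotents split and one obtains a decomposition $\bfT_{R,2}=\bfT_+\oplus\bfT_-$. Each $\bfT_\sharp$ is then a direct summand of a tilting object and is therefore itself tilting.

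To identify $\Psi^s_{R,2}(\bfT_\sharp)$, observe that by Theorem~\ref{thm:isom} the functor $\Psi^s_{R,2}$ sends $\bfT_{R,2}$ to the regular left $\bfH^s_{R,2}$-module, so a direct chase through the adjunction identifies $\Psi^s_{R,2}(\bfT_\sharp)$ with $\bfH^s_{R,2}\cdot e_\sharp$ as a left $\bfH^s_{R,2}$-module. On the Hecke side, \eqref{eq:reprep} together with the standard labelling of the $\frakS_2$-CRDAHA category (under which $P((2))_R^\kappa$ and $P((1^2))_R^\kappa$ correspond to the projective summands $\bfH^+_{R,2}e_\pm$ of the regular module) yields $\Psi^+_{R,2}(\bfT(\lambda_\sharp)_{R,\tau})\simeq\bfH^+_{R,2}e_\sharp$. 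Inducing then gives $\Ind_{2,+}^{2,s}(\bfH^+_{R,2}e_\sharp)=\bfH^s_{R,2}\otimes_{\bfH^+_{R,2}}\bfH^+_{R,2}e_\sharp=\bfH^s_{R,2}e_\sharp$, which matches the previous computation.

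The principal obstacle is the exceptional case $q_R+1\notin R^\times$, occurring precisely when $e=2$: here the idempotents above do not live in $\bfH^+_{R,2}$ and $\bfH^+_{\bbC,2}\simeq\bbC[T_1]/(T_1+1)^2$ is local, so the decomposition degenerates upon reduction to the residue field. The cleanest remedy is to perform the splitting over the fraction field $K$ (where $q_K+1$ is always a unit by general position), obtaining tilting summands $\bfT_\sharp^K$ of $\bfT_{K,2}$, and then lift them back to direct summands $\bfT_\sharp$ of $\bfT_{R,2}$ by appealing to the unicity of tilting modules given in Proposition~\ref{prop:introhw}$(b),(c)$; the target isomorphism on $\Psi^s$-images is inherited from the $K$-level computation, since both sides are $R$-projective by Theorem~\ref{thm:isom} and therefore determined by their generic fibres.
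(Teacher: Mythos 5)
Your argument in the case $q_R+1\in R^\times$ (equivalently $e>2$) is correct and is essentially the same as the paper's: $\bfT^+_{R,2}$ splits into the two indecomposable tiltings, and Krull--Schmidt plus $0$-faithfulness of $\Psi^s_{R,2}$ identify the images. The problem is the case $e=2$, which is exactly where the proposition has content, and your proposed remedy does not work.

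You write that one can split $\bfT_{K,2}\simeq\bfT_+^K\oplus\bfT_-^K$ over $K$ and then ``lift them back to direct summands $\bfT_\sharp$ of $\bfT_{R,2}$ by appealing to the unicity of tilting modules.'' No such lift exists. A direct-sum decomposition of $\bfT_{R,2}$ is governed by idempotents in $\End_{\bfA^{\nu,\kappa}_{R,\tau}}(\bfT_{R,2})^\op\simeq\bfH^s_{R,2}$, and when $q_R+1$ is a non-unit the idempotents $e_\pm=(T_1\pm\,\cdots)/(q_R+1)$ that split $\bfT_{K,2}$ do not lie in the $R$-form. Concretely, specialization at $\Bbbk$ gives the indecomposable $\bfH^+_{\Bbbk,2}\simeq\Bbbk[T_1]/(T_1+1)^2$, hence $\bfT^+_{\Bbbk,2}=\bfT(\lambda_+)_\Bbbk$ is an indecomposable tilting with both $\Delta(\lambda_+)$ and $\Delta(\lambda_-)$ in its filtration. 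Since the Krull--Schmidt decomposition of $\bfT_{R,2}$ agrees with that of $\bfT_{\Bbbk,2}$ (Proposition~\ref{prop:introhw}), the generic-fibre decomposition refines the $R$-decomposition rather than the other way around; nothing ``lifts.'' Your own observation that ``the decomposition degenerates upon reduction to the residue field'' already shows this cannot work. Moreover, Proposition~\ref{prop:introhw}(b),(c) records base-change of $\Hom$-spaces and detection of tilting by its $\Bbbk$-fibre, not a lifting statement for decompositions, and two $R$-projective modules with isomorphic $K$-fibres need not be isomorphic, so the closing sentence of your argument is also unsound.

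The object $\bfT_-$ one actually needs when $e=2$ is \emph{not} a direct summand of $\bfT_{R,2}$: the paper constructs it as the image of the non-idempotent endomorphism $T_1-q_R$ of $\bfT_{R,2}$ (equivalently $\bfT_{R,0}\dot\otimes_R\bfT(\lambda_-)_R$, using exactness of $\bfT_{R,0}\dot\otimes_R\bullet$), and then checks it is tilting by showing it is free over $R$ with $\Bbbk$-fibre $\bfT_{\Bbbk,0}\dot\otimes_\Bbbk\bfT(\lambda_-)_\Bbbk$, invoking Proposition~\ref{prop:TP2bis} and Proposition~\ref{prop:introhw}(c). That extra step is the heart of the matter and is missing from your proposal.
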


\vspace{.1mm}

\begin{proof}
By Theorem \ref{thm:isom}(a), the module $\Psi^{s}_{R,2}(\bfT_{R,2})$ is the regular representation of $\bfH^{s}_{R,2}$. 
Write $\bfT^+_{R,2}=\bfV^{\dot\otimes 2}_R$. We have $\Psi^{+}_{R,2}(\bfT^+_{R,2})\simeq \bfH^+_{R,2}$.
Thus, there is an isomorphism of 
$\bfH^{s}_{R,2}$-modules
\begin{equation}\label{eq:iso+s2}
\Psi^{s}_{R,2}(\bfT_{R,2})\simeq \Ind_{2,+}^{2,s}(\Psi^{+}_{R,2}(\bfT^+_{R,2})).
\end{equation}

\vspace{2mm}

If $e>2$ then $\kappa_\Bbbk\neq -2$, hence $\bfH^{+}_{\Bbbk,2}$ is semi-simple and
$\bfT^+_{R,2}\simeq\bfT(\lambda_+)_{R,\tau}\bigoplus\bfT(\lambda_-)_{R,\tau}$.
Since $\Psi^s_{R,2}$ is $0$-faithful, it maps indecomposable factors of $\bfT_{R,2}$ to indecomposable $\bfH^s_{R,2}$-modules. 
So, the proposition follows from \eqref{eq:iso+s2} and the Krull-Schmidt theorem.

Now, assume that $e=2$, then $q_\Bbbk=-1$. The indecomposable tilting modules in $\bfA^{+,\kappa}_{R,\tau}\{2\}$ are 
$\bfT(\lambda_+)_R=\bfT^+_{R,2}$ and $\bfT(\lambda_-)_R=\pmb\Delta(\lambda_-)_R$. We need to prove the proposition for $\bfT(\lambda_-)_R$.
We have $\Psi^+_{R,2}(\bfT(\lambda_-)_R)\simeq R[T_1]/(T_1+1)$.
Consider the action of $\bfH^{+}_{R,2}$ on $\bfT^+_{R,2}$. Then $\bfT(\lambda_-)_R$ is
the image of $T_1-q_R$ acting on $\bfT^+_{R,2}$.
Since the functor $\bfT_{R,0}\dot\otimes_R\,\bullet$ is exact, we deduce that 
$\bfT_{R,0}\dot\otimes_R\bfT(\lambda_-)_R$ is the image of $T_1-q_R$ acting on 
$\bfT_{R,2}$. By consequence, $\Psi^s_{R,2}\big(\bfT_{R,0}\dot\otimes_R\bfT(\lambda_-)_R\big)$ is the image of the right multiplication by $T_1-q_R$ on 
$\Psi^s_{R,2}(\bfT_{R,2})=\bfH^s_{R,2}$. Therefore, we have
\begin{equation}\label{eq:eqeqeq}
\Psi^s_{R,2}\big(\bfT_{R,0}\,\dot\otimes_R\bfT(\lambda_-)_R\big)\simeq\Ind_{2,+}^{2,s}\big(\Psi^+_{R,2}(\bfT(\lambda_-)_R)\big).
\end{equation}

We claim that $\bfT_{R,0}\dot\otimes_R\bfT(\lambda_-)_R$ is tilting in $\bfA^{\nu,\kappa}_{R,\tau}\{2\}$. 
Indeed, by Proposition \ref{prop:monoidalR}, the specialization map $\End_{\bfA^{\nu,\kappa}_{R,\tau}}(\bfT_{R,2})\to
\End_{\bfA^{\nu,\kappa}_{\Bbbk,\tau}}(\bfT_{\Bbbk,2})$ takes $T_1-q_R$ to $T_1-q_\Bbbk$. Since
$\bfT_{R,0}\dot\otimes_R\bfT(\lambda_-)_R$ is free over $R$ by Lemma \ref{lem:A16} and since it is the image of the operator
$T_1-q_R:\bfT_{R,2}\to\bfT_{R,2}$, 
the image of $T_1-q_\Bbbk:\bfT_{\Bbbk,2}\to\bfT_{\Bbbk,2}$ is $\Bbbk(\bfT_{R,0}\dot\otimes_R\bfT(\lambda_-)_R)$. 
The same argument as above implies that
$\bfT_{\Bbbk,0}\dot\otimes_\Bbbk\bfT(\lambda_-)_\Bbbk$ is also the image of the operator
$T_1-q_\Bbbk:\bfT_{\Bbbk,2}\to\bfT_{\Bbbk,2}$. 
We deduce that there is an isomorphism
$\Bbbk(\bfT_{R,0}\dot\otimes_R\bfT(\lambda_-)_R)\simeq \bfT_{\Bbbk,0}\dot\otimes_\Bbbk\bfT(\lambda_-)_\Bbbk$. 
Since
$\bfT_{\Bbbk,0}\dot\otimes_\Bbbk\bfT(\lambda_-)_\Bbbk$ is tilting
by Proposition \ref{prop:TP2bis}, the claim follows from Proposition \ref{prop:introhw}(c).
The proposition is proved.
\end{proof}

\vspace{3mm}

\subsubsection{Proof of the main theorem}
\label{sec:comparison2}
We can now prove Conjecture \cite[conj.~8.8]{VV}.

\vspace{2mm}
 
 \begin{thm}\label{thm:main3}
Assume that $\nu_p\geqslant d$ for each $p$.
Then, we have an equivalence of highest weight categories 
$\Upsilon^{\nu,-e}_d:\bfA^{\nu,-e}\{d\}\iso\,\calO^{\nu^\star,-e}\{d\}$ such that 
$\Upsilon^{\nu,-e}_d(\pmb\Delta(\lambda))\simeq \Delta(\lambda^\star)^{\nu^\star,-e}$ and 
$\Psi^{\nu}_d\simeq\IM^*\KZ^{\nu^\star}_d\Upsilon^{\nu,-e}_d$.
 \end{thm}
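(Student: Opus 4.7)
The plan is to apply Proposition~\ref{prop:key} to the two highest weight covers of $\bfH^\nu_{R,d}\mmod$
$$\Psi^\nu_{R,d}:\bfA^{\nu,\kappa}_{R,\tau}\{d\}\to\bfH^\nu_{R,d}\mmod,\qquad \IM^*\KZ^{\nu^\star}_{R,d}:\calO^{\nu^\star,\kappa}_R\{d\}\to\bfH^\nu_{R,d}\mmod,$$
working over a $2$-dimensional local analytic deformation ring $R$ in general position with residue field $\bbC$, and then to specialize at $\Bbbk=\bbC$ using Proposition~\ref{prop:introhw}(c). The base algebra $\bfH^\nu_{R,d}$ is symmetric (hence Frobenius) by \S\ref{sec:3.4}, and $K\bfH^\nu_{R,d}$ is split semi-simple by Proposition~\ref{prop:preliminaries}(c).

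First I would check that both covers are fully faithful on $\Delta$- and $\nabla$-filtered objects. For $\Psi^\nu_{R,d}$ this is Theorem~\ref{thm:isom}(c). For $\KZ^{\nu^\star}_{R,d}$ the $\Delta$-side is Proposition~\ref{prop:ffKZ}, and the $\nabla$-side is obtained by transposing through the Ringel-duality diagram \eqref{eq:ringelcom}, which converts $\nabla$-faithfulness for $\KZ^{\nu^\star}$ into $\Delta$-faithfulness of the Ringel-dual $\KZ^\nu$; the twist by $\IM^*$ is a Morita autoequivalence of $\bfH^\nu_{R,d}\mmod$ and preserves these properties. Next I would match labels and check order compatibility. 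By Lemma~\ref{lem:6.3} and Proposition~\ref{prop:preliminaries}(d) one has $K\Psi^\nu(\pmb\Delta(\lambda)_{K,\tau})\simeq S(\lambda)^{\nu,q}_K$ and $K\KZ^{\nu^\star}(\Delta(\lambda^\star)^{\nu^\star,\kappa}_K)\simeq S(\lambda^\star)^{\nu^\star,q}_K$; since $\IM$ identifies $S(\lambda^\star)^{\nu^\star,q}_K$ with $S(\lambda)^{\nu,q}_K$, the bijection between the indexing sets induced by the two covers is $\lambda\leftrightarrow\lambda^\star$. Lemma~\ref{lem:order3} then asserts that the Dunkl--Griffeth order $\geqslant_{\nu^\star,\kappa}$ governing $\calO^{\nu^\star,\kappa}$ refines the linkage order $\geqslant_\ell$ governing $\bfA^{\nu,\kappa}$ under this bijection, verifying the order-refinement hypothesis of Proposition~\ref{prop:key}.

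To verify condition~(a) of Proposition~\ref{prop:key}, take $\lambda^\star$ with $\mathrm{lcd}_{\bbC\calO^{\nu^\star,\kappa}}(L(\lambda^\star))\leqslant 1$. By Lemmas~\ref{lem:lcdrcd} and~\ref{lem:indsupp}, $P(\lambda^\star)^{\nu^\star,\kappa}$ is a direct summand of $\OInd_H(P')$ for some reflection hyperplane $H\subset\frakh$ and some $P'\in\calO(W_H)^{\proj}$; for $W=\Gamma_d$ the stabilizer $W_H$ is either $\Gamma$ or $\frakS_2$. The intertwiner \eqref{eq:commm} identifies $\KZ^{\nu^\star}(P(\lambda^\star))$ with a summand of $\HInd_{W_H}(\KZ(P'))$. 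One then invokes Proposition~\ref{prop:rankone} (in the $\Gamma$ case) or Proposition~\ref{prop:ranktwo} (in the $\frakS_2$ case) to rewrite $\KZ(P')$ as $\Psi^{\bullet}$ applied to a projective/tilting in $\bfA$ of rank $1$ or $2$, and Lemma~\ref{lem:phiFcom} then transports this through $F^{d-1}$ (resp.\ $F^{d-2}$) to realize $\IM^*\KZ^{\nu^\star}(P(\lambda^\star))$ as $\Psi^\nu_{R,d}$ applied to a projective of $\bfA^{\nu,\kappa}_{R,\tau}\{d\}$, projectivity being supplied by Theorem~\ref{thm:isom}(b). The injective case is the mirror statement via the BGG-type duality on $\calO$.

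With all hypotheses checked, Proposition~\ref{prop:key} produces an equivalence of highest weight $R$-categories $\Upsilon_R:\bfA^{\nu,\kappa}_{R,\tau}\{d\}\iso\calO^{\nu^\star,\kappa}_R\{d\}$ satisfying $\IM^*\KZ^{\nu^\star}_{R,d}\Upsilon_R\simeq\Psi^\nu_{R,d}$ and $\Upsilon_R(\pmb\Delta(\lambda)_{R,\tau})\simeq\Delta(\lambda^\star)^{\nu^\star,\kappa}_R$; specializing at $\bbC$ gives $\Upsilon^{\nu,-e}_d$. The main obstacle is the verification of condition~(a): although the target modules are clearly projective $\bfH$-modules, one must exhibit \emph{specific} projective preimages in $\bfA^{\nu,\kappa}_{R,\tau}\{d\}$, and this is the reason for the small-rank matchings of Propositions~\ref{prop:rankone}--\ref{prop:ranktwo} together with the compatibility of $F^k$ with induction from parabolic Hecke subalgebras encoded in Lemma~\ref{lem:phiFcom}.
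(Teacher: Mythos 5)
Your proposal correctly identifies the overall strategy (Proposition~\ref{prop:key} applied to the pair $(\IM^*\KZ^{\nu^\star},\Psi^\nu)$ over a two-dimensional analytic deformation ring in general position), and your verification of the Frobenius property, the $K$-semisimplicity, the order compatibility via Lemma~\ref{lem:order3}, and the bifaithfulness on $\Delta$- and $\nabla$-filtered objects all match the paper. However, your verification of the projective-comparison hypothesis of Proposition~\ref{prop:key} has a genuine gap.

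You chose to verify condition~(a), which demands $F(P(\lambda^\star))\in F'(\scrC^{\prime\proj})$ and $F(I(\lambda^\star))\in F'(\scrC^{\prime\inj})$. But what the small-rank comparison actually produces is a tilting preimage, not a projective one. Proposition~\ref{prop:rankone} identifies $\KZ^s_{R,1}$ of a projective with $\Psi^s_{R,1}$ applied to the tilting $\bfT(\lambda)_{R,\tau}$, and similarly Proposition~\ref{prop:ranktwo} produces a tilting $\bfT_\sharp$. After pushing through $F^{d-1}$ (resp.\ $F^{d-2}$) with Lemma~\ref{lem:phiFcom}, the object $F^{d-k}(\bfT)$ is still only tilting, since by Proposition~\ref{prop:adjoints}(a) the functor $f$ preserves tiltings, \emph{not} projectives. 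You cite Theorem~\ref{thm:isom}(b) for projectivity, but that theorem only asserts that the specific module $\bfT_{R,d}=f^d(\pmb\Delta(\emptyset)_{R,\tau})$ is projective; it says nothing about $F^{d-1}$ applied to an indecomposable summand $\bfT(\lambda_p)$ which need not be a direct summand of $\bfT_{R,1}$ (e.g.\ when $\pmb\Delta(\lambda_p)$ fails to be tilting over the residue field). The paper avoids this precisely by checking condition~(b) instead: there the target is $F'(\scrC^{\prime\tilt})$, which is what $F^{d-k}(\bfT)$ lands in.

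There is a second, related slip: you apply Proposition~\ref{prop:rankone} (resp.\ \ref{prop:ranktwo}) directly to $\KZ^{\nu^\star}(P')$, but these propositions compare $\KZ^s$ with $\Psi^s$ for the \emph{same} multicharge $s$. In the theorem $F=\IM^*\KZ^{\nu^\star}$ while $F'=\Psi^\nu$, so one must first convert $\KZ^{\nu^\star}$ of a tilting into $\KZ^\nu$ of a projective using the Ringel-duality diagram \eqref{eq:ringelcom} and the identity $\IM^*\scrR_\bfH^{-1}=\sigma^*(\bullet^\vee)$, as the paper does with the chain $F(T(\lambda)^{s^\star,\kappa}_R)\simeq\IM^*\KZ^{s^\star}_{R,d}(\scrR^{-1}(P(\lambda)^{s,\kappa}_R))\simeq\sigma^*(\KZ^s_{R,d}(P(\lambda)^{s,\kappa}_R)^\vee)$. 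Once you perform this Ringel pass, the natural target becomes $\scrC^{\prime\tilt}$, which is exactly condition~(b), and the argument closes cleanly with tilting preimages. So the two gaps are really one: condition~(a) is the wrong choice here because the tilting/Ringel-duality machinery produces tiltings, not projectives, in $\bfA^{\nu,\kappa}_{R,\tau}\{d\}$.
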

 
\vspace{.5mm}

\begin{proof}
Let $R$ be a local analytic deformation ring of dimension 2 in general position with residue field $\Bbbk=\bbC$. Assume that $\kappa_\Bbbk=-e$. 
Set $s_{R,p}=\nu_p+\t_{R,p}$.

Let $\scrC=\calO^{s^\star,\kappa}_R\{d\}$ and  $\scrC'=\bfA^{\nu,\kappa}_{R,\tau}\{d\}$. We consider the highest weight covers
$F=\IM^*\KZ_{R,d}^{s^\star}:\scrC\to \bfH^{s}_{R,d}\mmod$ and
$F'=\Psi^s_{R,d}:\scrC'\to \bfH^{s}_{R,d}\mmod.$
We claim that they satisfy the conditions in Proposition \ref{prop:key}, so the theorem holds.
Let us check these conditions.

First, $\bfH^{s}_{R,d}$ is Frobenius. Since $R$ is in general position, the condition \eqref{(A)} 
holds in $K$, hence $\bfH^{s}_{K,d}$ is semi-simple.

We have $KF(\Delta(\lambda^\star)^{s^\star,\kappa}_K)=S(\lambda)^{s,q}_K$ by Lemma \ref{lem:6.3} and \S \ref{sec:ringel}, and we have
and $KF'(\pmb\Delta(\lambda)_{K,\tau})=S(\lambda)^{s,q}_K$ by Proposition \ref{prop:preliminaries}. So the order on 
$\Irr(\bfH^{s}_{K,d})$ induced by $(\scrC,F)$ refines the order induced by $(\scrC',F')$ by Lemma \ref{lem:order3}.

Since $\IM^*$ is an equivalence, by Proposition \ref{prop:ffKZ} the functor $F$ is fully faithful on $\scrC^\Delta$. 
Hence it is also fully faithful on $\scrC^\nabla$, by \eqref{eq:ringelcom} and \cite[\S 4.2.1]{GGOR}. 
Theorem \ref{thm:isom}(c) gives the fully faithfulness of $F'$ on ${\scrC'}^\Delta$ and ${\scrC'}^\nabla$.

It remains to check that $F(T(\lambda)^{s^\star,\kappa}_R)\in F'({\calC'}^\tilt)$ for all $\lambda\in\scrP^\ell_d$ such that 
$\mathrm{lcd}_{\Bbbk\scrC^\diamond}(L^\diamond(\lambda))\leqslant 1$ or $\mathrm{rcd}_{\Bbbk\scrC^\diamond}(L^\diamond(\lambda))\leqslant 1$. 
Recall from \S \ref{sec:ringel} that $\scrC^\diamond\simeq\scrC^\blackdiamond\simeq\calO^{s,\kappa}_R\{d\}^\op$ and 
$L^\diamond(\lambda)$ corresponds to $L(\lambda^\star)^{s,\kappa}$. 
By Lemma \ref{lem:lcdrcd}, we have
$$\mathrm{rcd}_{\scrC^\diamond}(L^\diamond(\lambda))=
\mathrm{lcd}_{\calO^{s,\kappa}_k\{d\}}(L(\lambda^\star)^{s,\kappa})=\mathrm{rcd}_{\calO^{s,\kappa}_k\{d\}}(L(\lambda^\star)^{s,\kappa})=
\mathrm{lcd}_{\scrC^\diamond}(L^\diamond(\lambda)).$$
We have
$$F(T(\lambda)^{s^\star,\kappa}_R)\simeq
\IM^*\KZ_{R,d}^{s^\star}(\scrR^{-1}(P(\lambda)^{s,\kappa}_R))\simeq
\sigma^*(\KZ_{R,d}^s(P(\lambda)^{s,\kappa}_R)^\vee).$$

By Lemma \ref{lem:indsupp} and the Krull-Schmidt theorem, it is enough to prove that for any reflection hyperplane $H$ of $\Gamma_d$
and any $P\in\calO(W_H)^{\proj}_R$, we have
$$\sigma^*(\KZ_{R,d}^s(\OInd_{W_H}^{\Gamma_d}(P))^\vee)
\in F'({\scrC'}^\tilt).$$
Since $\sigma^*(\bullet^{\!\vee})$ commutes with induction functors and
fixes isomorphism classes of $R$-free $\bfH_{R,1}^s$-modules and
$\bfH_{R,2}^+$-modules, we deduce from (\ref{eq:commm}) that
$$\sigma^*(\KZ_{R,d}^s(\OInd_{W_H}^{\Gamma_d}(P))^\vee)\simeq
\KZ_{R,d}^s(\OInd_{W_H}^{\Gamma_d}(P)).$$

There are two possibilities for $H$:
\begin{itemize}
\item either $H$ is conjugate to $\ker(\gamma_i-1)$ for some $i\in[1,d]$. Then $W_H\simeq\Gamma$. 
We identify $\calO(W_H)_R\simeq\calO^{s,\kappa}_R\{1\}$ and $\OInd_{W_H}^{\Gamma_d}\simeq\OInd_{\Gamma_1}^{\Gamma_d}$. 
By Proposition \ref{prop:rankone}, for any projective $P\in\calO^{s,\kappa}_R\{1\}$, there exists $\bfT\in\bfA^{\nu,\kappa}_{R,\tau}\{1\}^\tilt$ such that 
$\KZ_{R,1}^s(P)\simeq\Psi^s_{R,1}(\bfT)$. By \eqref{eq:commm}, we have 
$\KZ_{R,d}^s\,\OInd_{W_H}^{\Gamma_d}\simeq\Ind_1^d\,\KZ_{R,1}^s$. 
Using Lemma \ref{lem:phiFcom}, this yields
$\KZ_{R,d}^s(\OInd_{W_H}^{\Gamma_d}(P))\simeq \Ind_1^d(\Psi_{R,1}^s(\bfT))\simeq\Psi^s_{R,d}(F^{d-1}(\bfT)).$
The module $F^{d-1}(\bfT)$ is tilting by Proposition \ref{prop:adjoints}(a), 
so $\KZ_{R,d}^s(\OInd_{W_H}^{\Gamma_d}(P))\in F'({\calC'}^\tilt)$;

\vspace{1mm}
\item or $H$ is conjugate to $\ker(s_{ij}^\gamma-1)$ for some $\gamma\in\Gamma$ and $i\neq j$. 
Then $W_H\simeq\frakS_2$ and $\calO(W_H)_R\simeq\calO^{\kappa}_R(\frakS_2)$. 
By \eqref{eq:commm}, we have 
$\KZ_{R,d}^s(\OInd_{W_H}^{\Gamma_d}(P))\simeq\Ind_{2,+}^{d,s}(\KZ(P))$.
By \eqref{eq:reprep} and Proposition \ref{prop:ranktwo} there exists $\bfT\in\bfA^{\nu,\kappa}_{R,\tau}\{2\}^\tilt$ such that $\Psi^s_{R,2}(\bfT)\simeq\Ind_{2,+}^{2,s}(\KZ(P))$.
Using Lemma \ref{lem:phiFcom}, this yields $$\Ind_{2,+}^{d,s}(\KZ(P))\simeq \Ind_{2}^d(\Psi^s_{R,2}(\bfT))\simeq \Psi^s_{R,d}(F^{d-2}(\bfT)).$$
Since $F^{d-2}(\bfT)$ is tilting, we have $\KZ_{R,d}^s(\OInd_{W_H}^{\Gamma_d}(P))\in F'({\scrC'}^\tilt)$.
\end{itemize}
We have checked that $(\scrC,F)$, $(\scrC',F')$ satisfy all the conditions in Proposition 
\ref{prop:key}, the theorem is proved.
\end{proof}

\vspace{2mm}

\begin{rk}
In \cite[(8.2)]{VV} the parameters of the CRDAHA are chosen in a different way.
More precisely, the symbol $h_p$ in \cite{VV} corresponds to
our parameter $h_p-h_{p-1}$. Further, the parameters
$(h,h_p)$ are specialized to $(-1/e,s_{p+1}/e-p/\ell)$ in \cite{VV} instead of $(1/e,s_{p+1}/e-p/\ell)$ as above.
\end{rk}

\vspace{3mm}

\subsubsection{Proof of the main theorem for irrational levels}
\label{sec:comparison1}

Let $\kappa\in\bbC\setminus\bbQ$.
We will prove the following result, which was conjectured in 
\cite[rem.~8.10$(b)$]{VV}, as a degenerate analogue of \cite[conj.~8.8]{VV}.
If $\nu$ is dominant, a proof was given in \cite[thm.~6.9.1]{GL}.

\vspace{2mm}

\begin{thm}\label{thm:comparisonI} Assume that $\kappa\in\bbC\setminus\bbQ$ and that $\nu_p\geqslant d$ for each $p$.
Then, we have an equivalence of highest weight categories 
$\Upsilon^{\nu,\kappa}_d: A^\nu\{d\}\iso\calO^{\nu^\star,\kappa}\{d\}$ 
such that $\Upsilon^{\nu,\kappa}_d(\Delta(\lambda))\simeq\Delta(\lambda^\star)^{\nu^\star,\kappa}$ and 
$\Phi_{d}^\nu\simeq\IM^*\,\KZ_{d}^{\,\nu^\star}\Upsilon^{\nu,\kappa}_d$.
\end{thm}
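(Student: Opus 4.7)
My plan is to mirror the proof of Theorem \ref{thm:main3} in the (easier) degenerate setting, with the affine parabolic category $\bfA^{\nu,\kappa}_{R,\tau}$ replaced by its finite-type counterpart $A^\nu_{R,\tau}$, the functor $\Psi^s_{R,d}$ replaced by $\Phi^s_{R,d}$, and with the affine and degenerate cyclotomic Hecke algebras identified block-by-block via the isomorphism $\alpha_R$ of Proposition \ref{prop:isomHecke}(b). I would choose a regular local analytic deformation ring $R$ of dimension $2$ in general position with residue field $\bbC$, satisfying $\kappa_\bbC=\kappa$ and $s_{R,p}=\nu_p+\tau_{R,p}$, $\tau_\bbC=0$, and consider the two highest weight covers of $\bfH^s_{R,d}$
\[
F=\IM^*\circ \KZ^{s^\star}_{R,d}:\scrC=\calO^{s^\star,\kappa}_R\{d\}\longrightarrow \bfH^s_{R,d}\mmod,\qquad F'=\alpha_R^*\circ \Phi^s_{R,d}:\scrC'=A^\nu_{R,\tau}\{d\}\longrightarrow \bfH^s_{R,d}\mmod.
\]
Applying Proposition \ref{prop:key} to this pair produces an equivalence of highest weight $R$-categories $\scrC'\iso \scrC$ sending $\Delta(\lambda)_{R,\tau}$ to $\Delta(\lambda^\star)^{s^\star,\kappa}_R$ and intertwining $F'$ with $F$. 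Specializing to the closed point $\bbC$ (where $\tau=0$ and $\alpha_\bbC$ is the implicit identification between $H^\nu$ and $\bfH^\nu$) then yields $\Upsilon^{\nu,\kappa}_d$ with the required compatibility $\Phi^\nu_d\simeq \IM^*\KZ^{\nu^\star}_d\Upsilon^{\nu,\kappa}_d$.

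Most hypotheses of Proposition \ref{prop:key} transfer verbatim from the proof of Theorem \ref{thm:main3}: $\bfH^s_{R,d}$ is symmetric, $\bfH^s_{K,d}$ is split semi-simple since $R$ is in general position (Proposition \ref{prop:preliminaries}(c), whose proof does not use the rationality of $\kappa_\bbC$), and Lemma \ref{lem:order3} gives the required order compatibility on $\Irr(\bfH^s_{K,d})$ since both covers send standard objects to Specht modules. Full faithfulness of $F$ on $\scrC^\Delta$ and $\scrC^\nabla$ follows from Proposition \ref{prop:ffKZ} together with the Ringel diagram \eqref{eq:ringelcom}. The image condition $F(T(\lambda))\in F'(\scrC'^{\tilt})$ in the small-codimension regime is handled exactly as in the proof of Theorem \ref{thm:main3}: Lemma \ref{lem:indsupp} reduces it to induction from reflection subgroups $W_H\simeq\Gamma$ or $W_H\simeq \frakS_2$, and one invokes the intertwining \eqref{eq:commm} combined with the analogues of Propositions \ref{prop:rankone} and \ref{prop:ranktwo} for the pair $(A,\Phi)$ in place of $(\bfA,\Psi)$, both of which follow from the same arguments since these arguments are insensitive to whether $\kappa_\bbC$ is rational.

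The key remaining step, particular to this theorem, is to establish for $(A^\nu_{R,\tau}\{d\},\Phi^s_{R,d})$ the direct analogue of Theorem \ref{thm:isom}: projectivity of $T_{R,d}$, the isomorphism $\varphi^s_{R,d}:H^s_{R,d}\iso \End_{A^\nu_{R,\tau}}(T_{R,d})^\op$, and full faithfulness of $\Phi^s_{R,d}$ on both $(A^\nu_{R,\tau}\{d\})^\Delta$ and $(A^\nu_{R,\tau}\{d\})^\nabla$. By Lemma \ref{prop:RR} and Corollary \ref{cor:2.16} this reduces to the localizations $R_\frakp$ for $\frakp\in\frakP_1$, and here the irrationality $\kappa_\bbC\notin\bbQ$ combined with the general position of $R$ forces any subgeneric condition $f_{u,v,z}(\tau,\kappa)=b$ at $\frakp$ to have $z=0$. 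Consequently every subgeneric prime reduces to a two-block situation directly covered by Proposition \ref{prop:4.25}, while generic primes are covered by Proposition \ref{prop:isomBK}(c); the $0$-faithfulness and projectivity follow as in Propositions \ref{prop:redI} and \ref{prop:red2}, with the simplification that we stay inside finite-type parabolic categories throughout and no Fiebig-type equivalence is required.

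The main obstacle I anticipate is precisely this codimension-one analysis: one must check that at a subgeneric prime $\frakp$ the block decomposition of $A^\nu_{R_\frakp,\tau}\{d\}$ is the direct finite-type analogue of Proposition \ref{prop:equivredI}, so that each summand falls under Proposition \ref{prop:4.25} and the tilting module $T_{R_\frakp,d}$ goes to the expected tilting object of the two-block category. Once this block decomposition is in place, the rest of the verification of Proposition \ref{prop:key} transfers from the rational-level case of Theorem \ref{thm:main3} without modification.
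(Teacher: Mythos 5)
Your overall strategy matches the paper's proof exactly: choose a suitable local deformation ring $R$ with residue field $\bbC$ and $\kappa_\bbC=\kappa$ irrational, verify the analogue of Theorem \ref{thm:isom} for $(A^\nu_{R,\tau}\{d\},\Phi^s_{R,d})$ by localizing at height-one primes, and then apply Proposition \ref{prop:key} to compare the two covers $\IM^*\KZ^{s^\star}_{R,d}$ and $\Phi^s_{R,d}$ of $\bfH^s_{R,d}\mmod$ (via the identification $\alpha_R$ of Proposition \ref{prop:isomHecke}). Your observation that irrationality of $\kappa_\bbC$ together with $\tau_\bbC=0$ forces every subgeneric condition $f_{u,v,z}(\tau_{\Bbbk_\frakp},\kappa_{\Bbbk_\frakp})=b$ to have $z=b=0$ is correct and is exactly what the paper exploits (this is the content of the remark that $\tau_{\Bbbk_\frakp,u}-\tau_{\Bbbk_\frakp,v}\notin\bbZ^\times$ for all $\frakp$), and the rest of the Proposition \ref{prop:key}-verification (Frobenius property, order compatibility via Lemma \ref{lem:order3}, rank-one and rank-two computations for the tilting/projective comparison) transfers as you describe.

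However, your assertion that ``no Fiebig-type equivalence is required'' is not justified and conflicts with what the paper actually does. Even with $z=0$, at a subgeneric prime $\frakp$ you are comparing the parabolic category $\scrO^{\nu}_{\Bbbk_\frakp,\tau}\{a\}$ (a parabolic $\scrO$ for $\frakgl_N$ whose deformation parameters satisfy $\tau_{\Bbbk_\frakp,u}=\tau_{\Bbbk_\frakp,v}$ and are otherwise generic) with the two-block category $\scrO^\nu_{\Bbbk_\frakp}(\nu,u,v)\{a\}$, which is a parabolic $\scrO$ for the pseudo-Levi $\frakm_{\nu,u,v}$ with different deformation parameters. These two highest weight categories share the same Coxeter combinatorics, but identifying them as highest weight categories is a genuine theorem, not a tautology. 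The paper's Claim \ref{claim:6.17} carries out precisely this identification and explicitly invokes Fiebig's theorem \cite[thm.~11]{F2}; staying ``inside finite-type parabolic categories'' does not remove the need for this step, since Fiebig's result applies equally well to finite type and is what relates the two deformed parabolic categories in question. You correctly flag the block decomposition as ``the main obstacle,'' but then dismiss the very tool the paper uses to overcome it. To close the gap you would need either to retain the Fiebig argument (as the paper does) or substitute an explicit translation/Soergel-type combinatorial equivalence over the field $\Bbbk_\frakp$ — neither is supplied in the proposal.

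One further minor discrepancy: the paper takes $R$ to be the formal completion of $\bbC[\bbC^{\ell+1}]$ at $(\kappa,0,\dots,0)$, which has dimension $\ell+1$; there is no need to restrict to dimension 2 here, since the Kazhdan-Lusztig tensor product (the only reason for the dimension bound in the affine setting) plays no role. Your dimension-2 restriction is harmless but unnecessary.
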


\vspace{.5mm}

Let $R$ be the completion at $(\kappa,0,\dots,0)$ of the ring of 
polynomials on $\bbC^{\ell+1}$.
It is a local deformation ring such that
$\kappa_R,\tau_{R,1},\dots,\tau_{R,\ell}$ are the standard coordinates.
The residue field is $\Bbbk=\bbC$ and 
we have $\kappa_\Bbbk=\kappa$, $\tau_{\Bbbk,p}=0$.
Further, for each $u$, $v$ and each $\frakp\in\frakP$, we have $\tau_{\Bbbk_\frakp,u}-\tau_{\Bbbk_\frakp,v}\not\in\bbZ^\times$.
We set $s_{R,p}=\nu_p+\t_{R,p}.$ 
Now, we consider the functor
$\Phi_{R,d}^s: A^\nu_{R,\tau}\{d\}\to H^s_{R,d}\mmod$ given in \S \ref{sec:catA}. 

\vspace{2mm}

\begin{lemma}\label{lem:6.333}
The functor $\Phi_{R,d}^s$ is a highest weight cover. It is fully faithful on 
$(A^{\nu}_{R,\tau}\{d\})^\Delta$ and $(A^{\nu}_{R,\tau}\{d\})^\nabla$.
\end{lemma}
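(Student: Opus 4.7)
The plan is to follow the proof of Theorem \ref{thm:isom} in this simpler (non-affine, integrally-deformed) setting, the main input at each height-one prime being Propositions \ref{prop:isomBK} and \ref{prop:4.25} rather than the Kazhdan-Lusztig analysis of \S \ref{sec:red1}--\S \ref{sec:red2}. At the generic fibre $K = \Frac(R)$ the parameters satisfy $\tau_{K,u} - \tau_{K,v} \notin \bbZ$ for $u \neq v$, so Proposition \ref{prop:isomBK}(c) already gives that $\Phi^s_{K,d}$ is an equivalence of split semi-simple categories sending $\Delta(\lambda)_{K,\tau}$ to $S(\lambda)^s_K$. The task is therefore to propagate this information from the generic fibre to all of $R$ by a codimension-one argument.

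First I would check that $T_{R,d}$ is tilting in $A^\nu_{R,\tau}\{d\}$: the module $\Delta(\emptyset)_{R,\tau}$ is tilting because $\emptyset$ is minimal in $\scrP^\nu$, and $f = \bullet \otimes_R V_R$ preserves both $\Delta$- and $\nabla$-filtrations, sending labels in $\scrP^\nu_{d-1}$ to labels in $\scrP^\nu_d$ by Proposition \ref{prop:actiontf}. Then Corollary \ref{cor:2.16} reduces the $\nabla$-fully faithfulness of $\Phi^s_{R,d}$ to the analogous statement for the localisation $R_\frakp \Phi^s_{R_\frakp,d}$ at each $\frakp \in \frakP_1$. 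At such a $\frakp$, we have $\tau_{\Bbbk_\frakp,u} - \tau_{\Bbbk_\frakp,v} \notin \bbZ^\times$, so Propositions \ref{prop:shift} and \ref{prop:4.25}, applied block by block according to the equivalence classes in $[1,\ell]$ induced by the $\tau_{\Bbbk_\frakp,p}$, and combined with the Ringel self-duality of $A^\nu_{\Bbbk_\frakp,\tau}\{d\}$ used at the end of the proof of Proposition \ref{prop:redI}, show that $\Phi^s_{\Bbbk_\frakp,d}$ is fully faithful on $\nabla$-filtered objects. Proposition \ref{prop:0to1} then lifts this from $\Bbbk_\frakp$ to $R_\frakp$.

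Since $H^s_{R,d}$ is symmetric (in particular Frobenius), Lemma \ref{le:projfromFrob} combined with Lemma \ref{le:hwcoverRingel} upgrades the tilting module $T_{R,d}$ to a projective module, so $\Phi^s_{R,d}$ is an honest quotient functor. The algebra isomorphism $\varphi^s_{R,d}$ then follows by the same intersection-over-$\frakP_1$ argument as in the proof of Theorem \ref{thm:isom}(a): both $H^s_{R,d}$ and $\End_{A^\nu_{R,\tau}}(T_{R,d})$ are finitely generated projective $R$-modules (the latter by Proposition \ref{prop:introhw}(d), using tiltingness), and so we reduce to checking the isomorphism at each $\frakp \in \frakP_1$, which is handled by the block-extended version of Proposition \ref{prop:4.25}(a) together with Proposition \ref{prop:inj} (which tracks the automorphism ambiguity exactly as in the proof of Proposition \ref{prop:redI}). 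This secures the highest weight cover property. The $\Delta$-fully faithfulness then follows from Proposition \ref{prop:RR}: since $\Phi^s_{R,d}$ is now a quotient functor, it suffices to verify $0$-faithfulness at each $\frakp \in \frakP_1$, which is a further consequence of Proposition \ref{prop:4.25} plus Ringel duality and Proposition \ref{prop:0to1}.

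The main difficulty I expect is the block-extension of Propositions \ref{prop:isomBK} and \ref{prop:4.25} from $\ell = 2$ (or fully generic $\tau$) to an arbitrary residue field $\Bbbk_\frakp$ where coincidences $\tau_{\Bbbk_\frakp,u} = \tau_{\Bbbk_\frakp,v}$ may occur: the blocks of $A^\nu_{\Bbbk_\frakp,\tau}$ are governed by the equivalence classes among the $\tau_{\Bbbk_\frakp,p}$, and each block must be handled via the shift equivalence of Proposition \ref{prop:shift} to reduce to the case of equal parameters, where the classical Brundan-Kleshchev results apply directly. Once this per-block reduction is in place, every statement needed at a height-one prime follows and the rest of the argument is routine.
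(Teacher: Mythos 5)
Your strategy is the right one and matches the paper's: reduce to codimension one using Corollary \ref{cor:2.16}, Lemma \ref{prop:RR} and Proposition \ref{prop:0to1}, handle each height-one prime via the $\ell\leqslant 2$ results, and close with the intersection argument plus Lemma \ref{le:projfromFrob}. However there are two gaps.

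First, and most substantively, the ``block by block'' reduction at a subgeneric height-one prime $\frakp$ (where $\tau_{\Bbbk_\frakp,u}=\tau_{\Bbbk_\frakp,v}$ for exactly one pair $u\neq v$) is the heart of the proof, and it cannot be carried by Proposition \ref{prop:shift} alone. That proposition is intrinsically an $\ell=2$ statement; to use it one first needs to know that the relevant block of $\scrO^\nu_{\Bbbk_\frakp,\tau}$ is equivalent, as a highest weight category, to a tensor product of a two-block category with semi-simple factors $\scrO^{\nu_p}_{\Bbbk_\frakp}(\nu_p)$ for $p\neq u,v$, i.e.\ to $\scrO^\nu_{\Bbbk_\frakp}(\nu,u,v)\{a\}$. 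This is a comparison of categories $\scrO$ for two different reductive Lie algebras ($\frakgl_N$ and the pseudo-Levi $\frakm_{\nu,u,v}$), and is precisely what Claim \ref{claim:6.17} in the paper provides, using Fiebig's theorem \cite[thm.~11]{F2} (Soergel's combinatorial description of $\scrO$ would serve equally well in this finite-dimensional setting). Your ``equivalence classes in $[1,\ell]$'' picture is the correct intuition, but identifying the blocks of a linkage class is not the same as exhibiting an equivalence with a smaller category; without the Fiebig/Soergel step the rest of your per-prime argument (including the application of Proposition \ref{prop:4.25} and Ringel self-duality) has nothing to stand on. Once Claim \ref{claim:6.17} is invoked, the subgeneric case is the $\ell=2$ argument you describe, and the generic case collapses to $\ell=1$, where $\scrO^{+}(\nu)$ is semi-simple.

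Second, a logical ordering issue: you conclude that $T_{R,d}$ is projective via Lemma \ref{le:projfromFrob} before establishing the algebra isomorphism $\varphi^s_{R,d}$, but Lemma \ref{le:projfromFrob} needs $\End_{A^\nu_{R,\tau}}(T_{R,d})^\op$ to be Frobenius, which you only know via $\varphi^s_{R,d}\colon H^s_{R,d}\iso\End(T_{R,d})^\op$. The correct order (as in the proof of Theorem \ref{thm:isom}) is: tiltingness of $T_{R,d}$ gives projectivity of $\End(T_{R,d})$ over $R$ (Proposition \ref{prop:introhw}(d)), so the intersection-over-$\frakP_1$ argument for the algebra isomorphism goes through immediately; then $\nabla$-fully-faithfulness via Corollary \ref{cor:2.16}; then the symmetric property plus Lemma \ref{le:projfromFrob} gives projectivity of $T_{R,d}$; and only then do Lemma \ref{prop:RR} and Proposition \ref{prop:0to1} upgrade to a $0$-faithful highest weight cover.
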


\vspace{.5mm}

The proof is by reduction to codimension one, and is very similar to the proof of Theorem \ref{thm:isom}. We will be sketchy. 

We say that a prime ideal $\frakp\in\frakP$ is \emph{generic} if 
$\tau_{\Bbbk_\frakp,u}\neq\tau_{\Bbbk_\frakp,v}$ for each $u\neq v$,
and that it is
\emph{subgeneric} if there is a unique pair $u\neq v$ such that 
$\tau_{\Bbbk_\frakp,u}=\tau_{\Bbbk_\frakp,v}$.

\vspace{2mm}

\begin{claim}\label{claim:6.17}  For each $\frakp\in\frakP_1$ the following hold.

(a)  $\frakp$ is either generic or subgeneric,

(b) if $\frakp$ is generic, then there is an equivalence of highest weight categories
$\scrQ_{\Bbbk_\frakp}:\scrO^\nu_{\Bbbk_\frakp,\tau}\{a\}\to \scrO^+_{\Bbbk_\frakp}(\nu)\{a\},$

(c)  if  $\frakp$ is subgeneric with
$\tau_{\Bbbk_\frakp,u}=\tau_{\Bbbk_\frakp,v}$ and $u\neq v$, then
there is an equivalence of highest weight categories
$\scrQ_{\Bbbk_\frakp}:\scrO^\nu_{\Bbbk_\frakp,\tau}\{a\}\to 
\scrO^\nu_{\Bbbk_\frakp}(\nu,u,v)\{a\}$.
\end{claim}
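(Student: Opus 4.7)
The plan is to follow closely the strategy of Proposition \ref{prop:equivredI}, translated from the affine to the finite (non-affine) setting. Part (a) is an arithmetic statement about the local ring $R$; parts (b) and (c) are applications of Fiebig's block equivalence theorem, whose key input is the integral root system of a $\nu$-dominant weight.

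For (a), I would first note that $R$ is a regular local Noetherian ring of dimension $\ell+1$ and hence a UFD, so every height-one prime is principal: $\frakp=(g)$. The key arithmetic observation is that for any nonzero integer $z$, the element $\tau_{R,u}-\tau_{R,v}-z$ has nonzero image $-z$ in the residue field $\bbC$, and is therefore a unit in $R$. Consequently, the only relation of the form $\tau_{\Bbbk_\frakp,u}-\tau_{\Bbbk_\frakp,v}\in\bbZ$ that can hold in $\Bbbk_\frakp$ is $\tau_{\Bbbk_\frakp,u}=\tau_{\Bbbk_\frakp,v}$, equivalently $\tau_{R,u}-\tau_{R,v}\in\frakp$. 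Second, the elements $\tau_{R,u}-\tau_{R,v}$ indexed by distinct unordered pairs $\{u,v\}$ are non-associate irreducible linear forms in the $\tau_{R,p}$'s, hence pairwise coprime, so the principal prime $\frakp$ can contain at most one of them. Combining these two observations yields (a).

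For (b) and (c), I would follow the template of the proof of Proposition \ref{prop:equivredI} and invoke Fiebig's theorem \cite[thm.~11]{F2}, which identifies deformed blocks of BGG-type categories whose integral Weyl groups are isomorphic as Coxeter systems acting on the given weight. The essential computation is that of the integral root set
\[
\{\alpha\in\Pi\,;\,\langle\lambda+\tau_{\Bbbk_\frakp},\alpha\rangle\in\bbZ\}
\]
for $\lambda\in P^\nu$. By (a), this set equals $\Pi_\nu$ in the generic case and equals $\Pi_{\nu,u,v}$ in the subgeneric case (with the unique pair $u\neq v$ such that $\tau_{\Bbbk_\frakp,u}=\tau_{\Bbbk_\frakp,v}$). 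The linkage principle then yields the block decomposition $\scrO^\nu_{\Bbbk_\frakp,\tau}=\bigoplus_a\scrO^\nu_{\Bbbk_\frakp,\tau}\{a\}$ in the generic case, and Fiebig's theorem identifies each summand with the corresponding block of $\scrO^+_{\Bbbk_\frakp}(\nu)\{a\}$. In the subgeneric case, $\Pi_{\nu,u,v}$ is precisely the root system of the pseudo-Levi subalgebra $\frakm_{\Bbbk_\frakp,\nu,u,v}$, and Fiebig's theorem gives the analogous equivalence with $\scrO^\nu_{\Bbbk_\frakp}(\nu,u,v)\{a\}$.

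The main obstacle I anticipate is the same one handled in the proof of Proposition \ref{prop:equivredI}: one must verify that the Coxeter-system isomorphism between the integral root system and the target root system sends positive roots to positive roots, so that Fiebig's abelian equivalence upgrades to an equivalence of highest weight categories. This is somewhat easier here than in Proposition \ref{prop:equivredI} because no imaginary roots $r\delta$ intervene. Once the abelian equivalence is established and is shown to send each simple module $L(\lambda+\tau_{\Bbbk_\frakp})_{\Bbbk_\frakp}$ to its expected target, Proposition \ref{claim:eqHWC} will upgrade it to the required equivalence of highest weight categories.
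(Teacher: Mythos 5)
Your proposal is correct and follows the same approach the paper intends. The paper's own proof of the claim is a one-line delegation — ``Part $(a)$ is easy. Parts $(b)$, $(c)$ are proved as in Propositions \ref{prop:equivredI}, \ref{prop:reduction2}, using \cite[thm.~11]{F2}'' — and your argument fills in exactly those details.

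For (a): your reduction to the UFD property of $R$, the observation that $\tau_{R,u}-\tau_{R,v}-z$ is a unit for $z\in\bbZ^\times$ (since its residue is $-z\neq 0$), and the pairwise non-associateness of the irreducible linear forms $\tau_{R,u}-\tau_{R,v}$ is precisely the ``easy'' argument. Note that the role of $\kappa$ disappears here: the paper's definitions of generic and subgeneric in this section depend only on the $\tau$'s because $\kappa_{\Bbbk}$ is irrational and the finite category $\scrO^\nu_{\Bbbk_\frakp,\tau}$ is $\kappa$-independent, so no coincidences of the form $\tau_u-\tau_v\in\bbZ\kappa+\bbZ$ can arise from a height-one prime.

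For (b), (c): the integral root system computation you describe is the correct input to \cite[thm.~11]{F2}. Two small notational caveats are worth recording when filling in details. First, the set should be written $\{\alpha\in\Pi\,;\,\langle\tau_{\Bbbk_\frakp}\,:\,\alpha\rangle\in\bbZ\}$ (or equivalently with a $\rho$-shift and coroot, which changes nothing since $\lambda\in P^\nu$ is integral); this yields $\Pi_\nu$ in the generic case and $\Pi_{\nu,u,v}$ in the subgeneric case, as you state. Second, you should make explicit that the linkage principle first yields the block decomposition in \emph{both} cases (for (c) as well as (b), with $a\in\bbN^{\ell-1}$ because the operator $s_{\alpha_{k,l}}$ for $p_k=u$, $p_l=v$ only preserves $\det_u+\det_v$), and that the Fiebig equivalence over $\Bbbk_\frakp$ is then restricted to the parabolic Serre subcategory before being upgraded by Proposition \ref{claim:eqHWC}, exactly as in Proposition \ref{prop:equivredI}. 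You correctly observe that the positivity check on the Coxeter isomorphism is trivial here (the isomorphism is the identity on $\Pi_{\nu,u,v}$ resp.\ $\Pi_\nu$, with no imaginary-root twist needed), which is indeed the simplification over the affine case.

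One point you might add: in case (b) both sides are in fact semisimple (the only roots contributing to linkage lie in $\Pi_\nu$, and $\nu$-dominant weights in a single $W_\nu$-orbit coincide), so the equivalence can be seen directly; Fiebig's theorem is only genuinely needed for the subgeneric case (c).
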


\vspace{.5mm}

\begin{proof}
Part $(a)$ is easy.
Parts $(b)$, $(c)$ are proved as in Propositions \ref{prop:equivredI}, \ref{prop:reduction2}, using
\cite[thm.~11]{F2}. 
The details are left to the reader.
\end{proof}

\vspace{2mm}

\begin{proof}[Proof of Lemma \ref{lem:6.333}]
Now, the module $\scrQ_{\Bbbk_\frakp}(T_{\Bbbk_\frakp,d})$ 
can be identified explicitly, using the same argument as in the proof of
Lemmas \ref{lem:isom333}, \ref{lem:Q}.
Indeed, it is enough to check that
$\scrQ_{\Bbbk_\frakp}$ takes a parabolic Verma module to a parabolic Verma module with the same 
highest weight and that the induced linear map
$[\scrO^\nu_{\Bbbk_\frakp,\tau}]\to 
[\scrO^\nu_{\Bbbk_\frakp}(\nu,u,v)]$
commutes with the linear operators induced by the
categorification functors $e,f$.

Using the same argument as in the proof of Theorem \ref{thm:isom}, we only need to prove the lemma for 
$\Phi^s_{\Bbbk_\frakp,d}$ and $\frakp\in\frakP_1$. Hence, by Claim \ref{claim:6.17}, we are reduced to the case $\ell=1$ or $2$.
If $\ell=1$ everything is obvious, because the category $\scrO^+(\nu)$ is semi-simple.
If $\ell=2$, we may assume $\tau_{\Bbbk_\frakp}=0$, the result follows from Proposition \ref{prop:isomBK} and the last paragraph of the proof of Proposition \ref{prop:redI}.
\end{proof}

\vspace{3mm}

\begin{proof}[Proof of Theorem \ref{thm:comparisonI}]
Consider the highest weight cover $\IM^*\,\KZ_{R,d}^{s^\star}:\calO^{s^\star,\kappa}_R\{d\}\to\bfH^{s}_{R,d}\mmod$. 
Since the $R$-algebras $\bfH^{s}_{R,d}$ and $H^{s}_{R,d}$ 
are isomorphic by Proposition \ref{prop:isomHecke},
we can regard $\IM^*\,\KZ_{R,d}^{s^\star}$ and $\Phi_{R,d}^s$ as highest weight covers
of the category $\bfH^{s}_{R,d}\mmod.$
We claim that they satisfy the conditions in Proposition \ref{prop:key}, so the theorem follows. Let us check the conditions.

First, $\bfH^{s}_{R,d}$ is Frobenius, and $\bfH^{s}_{R,d}$ is semi-simple because \eqref{(A)} holds obviously in $K$.
The compatibilities of orders is again given by Lemma \ref{lem:order3}.

Since $\IM^*$ is an equivalence, $\IM^*\,\KZ_{R,d}^{s^\star}$ is fully faithful on $\Delta$- and $\nabla$-filtered objects
by Proposition \ref{prop:ffKZ} and \eqref{eq:ringelcom}.
The corresponding property for $\Phi^s_{R,d}$ follows from Lemma \ref{lem:6.333}.

It remains to check that $\IM^*\,\KZ_{R,d}^{s^\star}(T(\lambda)^{s^\star,\kappa}_R)\in \Phi_{R,d}^s((A^\nu_{R,\tau}\{d\})^\tilt)$ 
for all $\lambda\in\scrP^\ell_d$ such that 
$\mathrm{lcd}_{\Bbbk\calO^{s^\star,\kappa}_R\{d\}^\diamond}(L^\diamond(\lambda))\leqslant 1$ or $\mathrm{rcd}_{\Bbbk\calO^{s^\star,\kappa}_R\{d\}^\diamond}(L^\diamond(\lambda))\leqslant 1$. 
The proof is the same as in Theorem \ref{thm:main3}. Details are left to the reader.
\end{proof}

\vspace{3mm}

\vspace{3mm}

\section{Consequences of the main theorem}

\subsection{Reminder on the Fock space}\label{ss:Fock}
Let $R$, $q_R$,  $\scrI=\scrI(q)$ and
$Q_{R,1},Q_{R,2},\dots,Q_{R,\ell}$ be as in \S \ref{sec:quivers}.
Consider the dominant weight in $P=P_\scrI$ given by
$\Lambda^Q=\sum_{p=1}^\ell\Lambda_{Q_{p}}.$
Note that
$\Lambda^Q=\sum_{p\in \Omega}\Lambda_p,$
with
$\Lambda_{p}=\sum_{u\,;\,Q_{u}\equiv Q_{p}}\Lambda_{Q_{u}}.$
Let $s=(s_1,\dots,s_\ell)$ be as in \S \ref{sec:combinatorics}. 
Then, we may write $\Lambda^s=\Lambda^Q$.

The \emph{Fock space of multi-charge} $s$ is the vector space
$\bfF(\Lambda^s)=\bigoplus_{\lam\in\scrP^\ell}\bbC\,|\lam,s\rangle.$
We will abbreviate $\Lambda=\Lambda^s$.
We will call $\{|\lambda,s\rangle\,;\,\lambda\in\scrP^\ell\}$ the \emph{standard monomial basis} of 
$\bfF(\Lambda)$.

There is an integrable representation of $\fraksl_{\!\scrI}$ on $\bfF(\Lambda)$ given by
\begin{equation}\label{eq:EFaction}
F_i(|\lam,s\rangle)=\sum_{q\text{-}\!\res^s(\mu-\lam)=i}|\mu,s\rangle,\qquad
E_i(|\lam,s\rangle)=\sum_{q\text{-}\!\res^s(\lam-\mu)=i}|\mu,s\rangle.
\end{equation}
Let $n_i(\lam)$ be the
number of boxes of residue $i$ in $\lam$.
To avoid any confusion we may write $n^s_i(\lam)=n^Q_i(\lam)=n_i(\lam)$.
Each basis vector $|\lam,s\rangle$ is a weight vector of weight
$\bfwt(|\lambda,s\rangle)=\Lambda-\sum_{i\in\scrI}n_i(\lambda)\,\alpha_i.$

The $\Lambda$-weight space of $\bfF(\Lambda)$ has
dimension one and is spanned by the element $|\emptyset,s\rangle$.
The $\fraksl_{\!\scrI}$-submodule $\bfL(\Lambda)\subset\bfF(\Lambda)$ generated by
$|\emptyset,s\rangle$ is the simple module of highest weight $\Lambda$.
It decomposes as the tensor product 
$\bfL(\Lambda)=\bigotimes_{p\in \Omega}\bfL(\Lambda_{p})$,
where $\bfL(\Lam_{p})$ is the simple $\fraksl_{\scrI_p}$-module 
of highest weight $\Lambda_{p}$. 

\vspace{2mm}

\begin{rk}
Assume that the quiver $\scrI(q)$ is the disjoint union of $\ell$ components 
of type $A_\infty$. Then, we have $\bfF(\Lambda)=\bfL(\Lambda)=\bigotimes_{p=1}^\ell\bfL(\Lambda_{p})$.
\end{rk}

\vspace{.5mm}

\begin{rk}\label{rk:weights}
The weight $\wt(\lambda)$ associated with the element $\lambda\in P^\nu+\tau$
should not be confused with the weight $\bfwt(|\lambda,s\rangle)$ above, which is associated with the
$\ell$-partition $\lambda\in\scrP^\ell$. 
The former has the level 0 while the latter has the level $\ell$.
We have
$\wt(\varpi(\lambda))=\bfwt(|\lambda,s\rangle)-\sum_{p=1}^\ell\Lambda_{\tau_p}$
mod $\bbZ\,\delta.$

Indeed, the equation above holds for 
$\lambda=\emptyset$. Thus, it is proved by induction
using the following equivalences for $\lambda,\mu\in\scrP^\nu$, see \S \ref{ss:Fock},
$$\aligned
\varpi(\lambda)\overset{i}\to\varpi(\mu)
&\Leftrightarrow  q_K\text{-}\!\res^s(\mu-\lambda)=q_K^i,\\
&\Rightarrow
\wt(\varpi(\lambda))-\wt(\varpi(\mu))=
\bfwt(|\lambda,s\rangle)-\bfwt(|\mu,s\rangle)=\alpha_i.
\endaligned
$$
\end{rk}

\vspace{3mm}

\subsection{Rouquier's conjecture}
Let $K=\bbC$.
Fix integers $e,\ell\geqslant 1$ and fix $s=(s_1,\dots,s_\ell)\in\bbZ^\ell$.
Set $\Lambda=\Lambda^s$.
Set $I=\bbZ$ and $\scrI=I/e\bbZ$.
So, we have $\fraksl_\scrI=\widehat\fraksl_e$ and
the Fock space $\bfF(\Lambda)$ is an integrable $\widehat\fraksl_e$-module.
Consider the \emph{Uglov's canonical}
bases $\{\calG^\pm(\lambda,s)\,;\,\lambda\in\scrP^\ell\}$ 
of $\bfF(\Lambda)$ introduced in \cite[sec.~4.4]{U}.

Set $\calO^{s^\star,-e}=\bigoplus_{d\in\bbN}\calO^{s^\star,-e}\{d\}$.
We identify the complexified Grothendieck group
$[\calO^{s^\star,-e}]$ with $\bfF(\Lambda)$ via the linear map
$\theta:[\calO^{s^\star,-e}]\iso \bfF(\Lambda)$ such that
$[\Delta(\lambda^\star)^{s^\star,-e}]\mapsto|\lambda,s\rangle.$

Since the category $\calO^{s^\star,-e}$ is preserved under the substitution
$s\mapsto (1+s_1,1+s_2,\dots, 1+s_\ell)$
we may assume that $s_p=\nu_p\geqslant d$ for each $p$. 
Set $\bfA^{\nu,-e}=\bigoplus_{d\in\bbN}\bfA^{\nu,-e}\{d\}$

\smallskip
The following result has been conjectured by Rouquier \cite[sec.~6.5]{R1}. 

\vspace{2mm}

\begin{thm}\label{thm:rouquier}
We have $\theta([T(\lambda^\star)^{{s^\star,-e}}])=\calG^+(\lambda,s)$ and $\theta([L(\lambda^\star)^{{s^\star,-e}}])=\calG^-(\lambda,s)$.
\end{thm}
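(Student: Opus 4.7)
The plan is to deduce the theorem from Theorem \ref{thm:main3}, combined with the Kazhdan–Lusztig theory governing the affine parabolic category $\bfO^{\nu,-e}$ at negative level and Uglov's identification of his canonical bases with the relevant parabolic Kazhdan–Lusztig polynomials at $v=1$. Since $\calO^{s^\star,-e}$ is invariant under simultaneous translation $s_p \mapsto s_p+1$, I first reduce to the case where $s_p=\nu_p\geqslant d$ for all $p$. Then Theorem \ref{thm:main3} provides an equivalence of highest weight categories
\[
\Upsilon^{\nu,-e}_d\colon \bfA^{\nu,-e}\{d\}\;\iso\;\calO^{\nu^\star,-e}\{d\},\qquad \pmb\Delta(\lambda)\mapsto \Delta(\lambda^\star)^{\nu^\star,-e}.
\]
Because such an equivalence preserves tilting and simple objects (with their labels), we have $\Upsilon(\bfT(\lambda))\simeq T(\lambda^\star)^{\nu^\star,-e}$ and $\Upsilon(\bfL(\lambda))\simeq L(\lambda^\star)^{\nu^\star,-e}$. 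Hence the statement translates, via the isomorphism $\theta$, into the two identities
\[
\theta'([\bfT(\lambda)])=\calG^+(\lambda,s),\qquad \theta'([\bfL(\lambda)])=\calG^-(\lambda,s),
\]
where $\theta'\colon [\bfA^{\nu,-e}]\iso\bfF(\Lambda)$ sends $[\pmb\Delta(\lambda)]$ to $|\lambda,s\rangle$.

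Next I would identify the two sides as elements of $\bfF(\Lambda)$ computed from the same parabolic KL data. On the categorical side, $\bfA^{\nu,-e}$ is a highest weight subcategory of $\bfO^{\nu,-e}$ whose poset $\scrP^\nu$ is an ideal (Lemma \ref{lem:ideal}), so multiplicities $[\bfT(\lambda):\pmb\Delta(\mu)]$ and $[\pmb\Delta(\mu):\bfL(\lambda)]$ coincide with the corresponding ones in $\bfO^{\nu,-e}$. These are given, by Kashiwara–Tanisaki, by the values at $v=1$ of inverse parabolic Kazhdan–Lusztig polynomials for $(\widehat W,\widehat W_\nu)$ of affine type $A_{N-1}^{(1)}$. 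Moreover, the $\fraksl_{\!\scrI}$-categorical action constructed in Proposition \ref{prop:5.14}, combined with Remark \ref{rk:weights}, shows that $\theta'$ is an isomorphism of integrable $\widehat{\fraksl}_e$-modules sending $[\pmb\Delta(\lambda)]$ to $|\lambda,s\rangle$, so the calculation takes place inside $\bfF(\Lambda)$.

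On the combinatorial side, Uglov characterises $\calG^\pm(\lambda,s)$ as the unique bar-invariant vectors in $\bfF(\Lambda)$ with prescribed unitriangularity in the standard basis, and proves that the transition matrices are exactly the same parabolic affine type~$A$ KL polynomials (with $\calG^-$ obtained from the dual basis and matching the transposition $\lambda\mapsto\lambda^\star$, $s\mapsto s^\star$). Matching these two computations gives both identities simultaneously.

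The main technical obstacle is the dictionary of conventions: one must carefully align the parametrisation $\lambda\leftrightarrow\varpi(\lambda)\leftrightarrow \widehat{\varpi(\lambda)}$ of highest weights in $\bfO^{\nu,-e}$ with the indexing of $\widehat W_\nu$-orbits on $\widehat P^\nu$ used in the Kashiwara–Tanisaki formula, and with Uglov's combinatorial recipe in terms of shifted residues of boxes. The duality $\lambda\mapsto\lambda^\star$ coming from Ringel duality of $\calO^{s,-e}$ (cf.\ \S\ref{sec:ringel}) and from the passage between $\calG^+$ and $\calG^-$ must be tracked through both pictures; verifying that these two dualities agree is the delicate point. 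Once the bookkeeping is reconciled, the equalities $\theta([T(\lambda^\star)^{\nu^\star,-e}])=\calG^+(\lambda,s)$ and $\theta([L(\lambda^\star)^{\nu^\star,-e}])=\calG^-(\lambda,s)$ follow directly.
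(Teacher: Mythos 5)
Your overall strategy — transfer to $\bfA^{\nu,-e}$ via Theorem \ref{thm:main3}, then compare the $\widehat{\fraksl}_e$-module structures on $[\bfA^{\nu,-e}]$ and on $\bfF(\Lambda)$ — is the same starting point as the paper's. But the key steps of the paper's argument are absent from your sketch, and you would have to re-derive them or fill a genuine gap.

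For the simple modules, the paper does not redo the Kashiwara--Tanisaki plus Uglov matching you propose; it simply cites \cite[prop.~8.2]{VV}, which already asserts $[\bfL(\lambda)]=\sum_\mu c^-_{\lambda,\mu}(s)\,[\pmb\Delta(\mu)]$ in $[\bfA^{\nu,-e}]$. Your plan to align the parametrisation $\lambda\leftrightarrow\widehat{\varpi(\lambda)}$ with Uglov's box-residue combinatorics and verify a bar-invariance characterisation is essentially a reproof of that proposition; it is feasible but substantially more work than the citation, and you yourself flag the conventions as ``the delicate point.''

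More seriously, your handling of the tilting case is not actually an argument. You assert that once the bookkeeping is reconciled ``both identities follow directly,'' but the $\Delta$-multiplicities of a tilting module are not literally KL decomposition numbers, nor their transpose, and there is no single ``dictionary'' that gives $\calG^+$ and $\calG^-$ simultaneously. The paper gets from the simple case to the tilting case through a precise chain that you do not mention: Brauer reciprocity $(\bfP(\lambda):\pmb\Delta(\mu))=[\pmb\Delta(\mu):\bfL(\lambda)]$ converts decomposition numbers into projective $\Delta$-multiplicities; Uglov's identity \cite[thm.~5.15]{U} $d^-_{\lambda,\mu}(s)=c^+_{\lambda^\star,\mu^\star}(s^\star)$ (where $(d^-)$ is the transpose inverse of $(c^-)$) identifies these with the $\calG^+$ coefficients in starred parameters; and finally the Ringel equivalence $\scrR:\calO^{s^\star,-e,\Delta}\iso\calO^{s,-e,\Delta,\op}$ of \S\ref{sec:ringel} converts projectives in $\calO^{s,-e}$ to tiltings in $\calO^{s^\star,-e}$. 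Without naming Brauer reciprocity and Uglov's $d^-=(c^+)^{\mathrm{T}}$-relation, the jump from the $\calG^-$ to the $\calG^+$ statement is unjustified. You do mention Ringel duality, but only as a bookkeeping item to ``track,'' not as the pivot it actually is.
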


\vspace{.5mm}

\begin{proof}
Let $c^\pm_{\lambda,\mu}(s)\in\bbZ$ be such that
$\calG^\pm(\lambda,s)=\sum_{\mu} c^\pm_{\lambda,\mu}(s)\,|\mu,s\rangle.$

Let $\bfF(\Lambda)\{d\}\subset\bfF(\Lambda)$ be the subspace spanned by the set $\{|\lambda,s\rangle\,;\,\lambda\in\scrP^\ell_d\}$.
Assume that $\nu_p\geqslant d$ for each $p$.
We identify the complexified Grothendieck group
$[\bfA^{\nu,-e}\{d\}]$ with $\bfF(\Lambda)\{d\}$ via the linear map such that
$[\pmb\Delta(\lambda)]\mapsto|\lambda,s\rangle$.

Let $\bfL(\lambda)$ be the top of $\pmb\Delta(\lambda)$
in $\bfA^{\nu,-e}\{d\}$.
By \cite[prop.~8.2]{VV}, we have
$[\bfL(\lambda)]=\sum_\mu c^-_{\lambda,\mu}(s)\,[\pmb\Delta(\mu)]$
in $[\bfA^{\nu,-e}\{d\}]$. Therefore, the isomorphism
$[\bfA^{\nu,-e}\{d\}]\iso\bfF(\Lambda)\{d\}$
maps $[\bfL(\lambda)]$ to $\calG^-(\lambda,s)$. 

Since the 
equivalence of categories $\bfA^{\nu,-e}\{d\}\iso\calO^{s^\star,-e}\{d\}$ in Theorem \ref{thm:main3}
maps $\bfL(\lambda)$ to $L(\lambda^\star)^{s^\star,-e}$ and since 
the isomorphism $[\calO^{s^\star,-e}\{d\}]\iso\bfF(\Lambda)\{d\}$ is the composition of the map
$[\calO^{s^\star,-e}\{d\}]\iso[\bfA^{\nu,-e}\{d\}]$ induced by the inverse of the equivalence with 
the isomorphism $[\bfA^{\nu,-e}\{d\}]\iso\bfF(\Lambda)\{d\}$ above, we 
deduce that the map $[\calO^{s^\star,-e}\{d\}]\iso\bfF(\Lambda)\{d\}$
takes $[L(\lambda^\star)^{s^\star,-e}]$ to $\calG^-(\lambda,s)$.

Next, let $\bfP(\lambda)$ be the projective cover of $\pmb\Delta(\lambda)$
in $\bfA^{\nu,-e}\{d\}$. By the Brauer reciprocity we have 
$(\bfP(\lambda):\pmb\Delta(\mu))=[\pmb\Delta(\mu):\bfL(\lambda)]$. Therefore we have 
$[\bfP(\lambda)]=\sum_{\mu}d^-_{\lambda,\mu}(s)[\pmb\Delta(\mu)]$
in $[\bfA^{\nu,-e}\{d\}],$ where the matrix $\big(d^-_{\lambda,\mu}(s)\big)$ 
is the transpose of the inverse matrix of 
$\big(c^-_{\lambda,\mu}(s)\big)$. 

By \cite[thm.~5.15]{U}, we have
$d^-_{\lambda,\mu}(s)=c^+_{\lambda^\star,\mu^\star}(s^\star)$. 
Using the equivalence of categories $\bfA^{\nu,-e}\{d\}\iso\calO^{s^\star,-e}\{d\},$ we get
$[P(\lambda^\star)^{s^\star,-e}]=
\sum_{\mu}c^+_{\lambda^\star,\mu^\star}(s^\star)[\Delta(\mu^\star)^{s^\star,-e}]$
in $[\calO^{s^\star,-e}\{d\}]$. By removing $\star$ everywhere, we get
the following equality in $[\calO^{s,-e}\{d\}]$
\begin{equation}\label{PP}
[P(\lambda)^{s,-e}]=\sum_{\mu}c^+_{\lambda,\mu}(s)[\Delta(\mu)^{s,-e}].
\end{equation}

Next, by \S \ref{sec:ringel} we have the equivalence $\scrR: \calO^{s^\star,-e,\Delta}\{d\}\iso\,
\calO^{s,-e,\Delta}\{d\}^\op$ such that $\Delta(\lambda^\star)^{s^\star,-e}\mapsto\Delta(\lambda^{s,-e})$
and $T(\lambda^\star)^{s^\star,-e}\mapsto P(\lambda)^{s,-e}$. 
The inverse of $\scrR$ yields an isomorphism of 
Grothendieck groups
$[\calO^{s,-e}\{d\}]\iso[\calO^{s^\star,-e}\{d\}]$ such that
$[\Delta(\lambda)^{s,-e}]\mapsto [\Delta(\lambda^\star)^{s^\star,-e}]$
and
$[P(\lambda)^{s,-e}]\mapsto [T(\lambda^\star)^{s^\star,-e}].$
The image of the equality \eqref{PP} under this isomorphism gives the identity
$[T(\lambda^\star)^{s^\star,-e}]=\sum_{\mu}c^+_{\lambda,\mu}(s)[\Delta(\mu^\star)^{s^\star,-e}]$
in $[\calO^{s^\star,-e}\{d\}]$.
We deduce that the isomorphism
$[\calO^{s^\star,-e}\{d\}]\iso\bfF(\Lambda)\{d\}$ maps the element
$[T(\lambda^\star)^{s^\star,-e}]$ to 
$\sum_{\mu}c^+_{\lambda,\mu}(s)|\mu,s\rangle=\calG^+(\lambda,s)$. We are done.
\end{proof}

\vspace{3mm}

\subsection{The category $\calO$ of CRDAHA's is Koszul}
Recall that $\scrI\simeq[0,e)$ and that $\Lambda_i,$ $\alpha_i$ are the fundamental weights and the simple roots of $\widehat{\fraks\frakl}_e$.
For $t=(t_1,\dots,t_e)\in\bbZ^e$
let $\calO^{s}_t\subset\calO^{s,-e}$ be the Serre subcategory generated by the modules
$\Delta(\lambda)^{s,-e}$ such that the following condition holds
\begin{equation}\label{7.1}
\Lambda^s-\sum_{i=1}^{e-1}(n^s_i(\lambda)-n^s_0(\lambda))\,\alpha_i
=\sum_{i=1}^{e-1}(t_{i}-t_{i+1})\Lambda_i+(\ell+t_e-t_1)\Lambda_0.
\end{equation}

Set $|s|=s_1+\cdots+s_\ell$ and $|t|=t_1+\cdots+t_e$. From \eqref{7.1} we get that
$|t|=|s|$ modulo $\bbZ\,e$. Hence, up to translating the $t_i$'s 
simultaneously by the same integer,
we may assume that $t\in\bbZ^e(|s|)$. 
Note that the left hand side of \eqref{7.1} is equal 
to  $\bfwt(|\lambda,s\rangle)$ modulo $\bbZ\,\delta$.

Since the category $\calO^s_t$ is preserved under the substitutions
$s\mapsto (1+s_1,1+s_2,\dots, 1+s_\ell)$ and $t\mapsto (1+t_1,1+t_2,\dots, 1+t_\ell),$ we may assume that 
$s=\nu^\star$, $t=\mu^\star$ for some
compositions $\nu\in\scrC^\ell_d$, $\mu\in\scrC^e_d$ such that $\mu_i,\nu_p\geqslant d$ for each $i,p$. 

The following result has been conjectured by
Chuang and Miyachi \cite[conj.~6]{CM}.

\vspace{2mm}

\begin{thm}\label{thm:chuang-miyachi}
The category  $\calO^s_t$ is (standard) Koszul and its Koszul dual coincides with the Ringel dual
of $\calO^{t^\star}_{s^\star}$.
\end{thm}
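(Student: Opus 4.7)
The plan is to transport the Koszul property from the affine parabolic category $\bfO$ (known to be Koszul by \cite{SVV}) to $\calO^{s,-e}$ via the equivalence of Theorem \ref{thm:main3}, and then identify the Koszul dual through a rank-level duality on the Fock space.

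First, I would use Theorem \ref{thm:main3} to translate the statement into one about $\bfA^{\nu,-e}$. The equivalence $\Upsilon^{\nu,-e}:\bfA^{\nu,-e}\{d\}\iso\calO^{\nu^\star,-e}\{d\}$ sends $\pmb\Delta(\lambda)$ to $\Delta(\lambda^\star)^{\nu^\star,-e}$, so the Serre subcategory $\calO^{s}_t$ with $s=\nu^\star$, $t=\mu^\star$ corresponds to a Serre subcategory $\bfA^{\nu}_t\subset\bfA^{\nu,-e}$ generated by the $\pmb\Delta(\lambda)$ whose $\ell$-partitions $\lambda$ satisfy the weight condition equivalent to \eqref{7.1}. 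By Remark \ref{rk:weights}, this condition says exactly that $\pmb\Delta(\lambda)$ lies in a prescribed weight subspace under the $\widehat\fraksl_e$-categorification of \S \ref{sec:categorification}, so $\bfA^{\nu}_t$ is the sum of the blocks of $\bfA^{\nu,-e}$ lying in a fixed $\widehat\fraksl_e$-weight.

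Next, I would exploit that $\bfA^{\nu,-e}$ is a highest weight subcategory of the affine parabolic category $\bfO^{\nu,-e-N}$, which is a sum of truncations $\lub\bfO^{\nu,-e-N}$ each Koszul by \cite{SVV}. Because $\bfA^{\nu}_t$ is cut out of $\bfO^{\nu,-e-N}$ by the ideal condition from Lemma \ref{lem:ideal} together with a weight-space condition, and both of these conditions are compatible with the Koszul grading (they correspond to fixing affine-weight components), the Koszul property passes to $\bfA^{\nu}_t$. The standard Koszulity (i.e.\ that the standard modules admit linear projective resolutions) is inherited in the same way from the fact that the $\pmb\Delta(\lambda)$'s are precisely the parabolic Verma modules in $\bfO^{\nu,-e-N}$ indexed by the $\scrP^\nu$-ideal.

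For the identification of the Koszul dual, the crucial input is the rank-level duality for parabolic affine category $\bfO$ at negative level: the Koszul dual of the block of $\bfO^{\nu,-e-N}$ of weight $\beta$ is equivalent to a block of the analogous category for $\widehat\fraksl_\ell$ at level $-\ell-M$, parabolic type $\mu\in\scrC^e$, with the roles of $(\ell,s)$ and $(e,t)$ interchanged. On the Grothendieck-group side, the Uglov bases $\calG^\pm(\lambda,s)$ of $\bfF(\Lambda^s)$ are interchanged, up to transposition, with the Uglov bases $\calG^\mp({}^t\!\lambda,t)$ of $\bfF(\Lambda^t)$ under level-rank duality \cite{U}. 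Combining Theorem \ref{thm:rouquier} (which identifies simples and tiltings with $\calG^-$ and $\calG^+$ respectively), the general fact that Koszul duality exchanges projectives with irreducible linear complexes of injectives and Ringel duality exchanges tiltings with projectives, this rank-level identification sends $[L(\lambda^\star)^{s^\star,-e}]\leftrightarrow [T(\lambda)^{t,-\ell}]$, matching precisely the statement that the Koszul dual of $\calO^{s}_t$ is the Ringel dual of $\calO^{t^\star}_{s^\star}$.

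The main obstacle will be making the last step rigorous: extracting from \cite{SVV} a block-by-block description of Koszul duality on $\bfO$ and checking that under the Fock-space rank-level duality it matches the combinatorics of $s\leftrightarrow t$, $\ell\leftrightarrow e$ and transposition of $\ell$-partitions used in the Chuang--Miyachi statement. Once the Koszul dual is explicitly identified at the level of weight data, comparing with the Ringel-dual side of $\calO^{t^\star}_{s^\star}$ (again via Theorems \ref{thm:main3} and \ref{thm:rouquier}) will conclude the proof.
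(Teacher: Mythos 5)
The paper's proof is two lines: it transports the statement to $\bfA^{\nu,-e}\simeq\calO^{s,-e}$ via Theorem~\ref{thm:main3} (as you do) and then cites \cite[thm.~B.4]{SVV}, which simultaneously asserts that the highest weight category $\bfA^{\nu,-e}$ is standard Koszul \emph{and} that the Koszul dual of the block $\bfA^\nu_\mu$ is the Ringel dual of $\bfA^\mu_\nu$. Your proposal replaces that single citation with two hand-made arguments, and both have genuine gaps.

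The first gap is in your claim that Koszulity ``passes'' to $\bfA^\nu_t$ because it is cut out of the ambient parabolic affine $\bfO$ by an ideal condition plus a weight-space condition. The weight-space condition (fixing the $\widehat{\fraksl}_e$-weight) does single out a union of blocks, and a direct summand of a Koszul category is Koszul. But the ideal condition from Lemma~\ref{lem:ideal} is \emph{not} a block decomposition; it carves out a proper highest weight \emph{subcategory}, and Koszulity does not in general descend to highest weight subcategories of a Koszul highest weight category. That is exactly the nontrivial content of \cite[thm.~B.4]{SVV}: proving that $\bfA^{\nu,-e}$ (not merely $\bfO^{\nu,-e}$) is standard Koszul requires its own analysis. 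Your ``compatible with the Koszul grading'' is doing a lot of work that is not justified, and without citing the result you are trying to re-derive, this step fails.

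The second gap is that the rank-level / Uglov-basis argument only matches Grothendieck-group classes: it can tell you which category the Koszul dual \emph{ought} to be, but it does not produce an equivalence of categories. You acknowledge this yourself (``the main obstacle will be making the last step rigorous''), but the obstacle is not a technicality to be smoothed over — producing the Koszul-dual equivalence block-by-block, and matching it with the Ringel dual, is again precisely what \cite[thm.~B.4]{SVV} supplies. Without that input the combinatorial matching of $\calG^\pm$ under level-rank duality is suggestive but not a proof. In short: your outline reduces to the paper's proof only once you invoke \cite[thm.~B.4]{SVV} in both places where you currently hand-wave, at which point the Fock-space detour becomes unnecessary.
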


\vspace{.5mm}

\begin{proof}
Theorem \ref{thm:main3} yields an equivalence $\bfA^{\nu,-e}\simeq\calO^{s,-e}$. 
Let $\bfA^{\nu}_\mu\subset\bfA^{\nu,-e}$ be the image of $\calO^{s}_t$ by this equivalence. 
By \cite[thm.~B.4]{SVV}, the highest weight category $\bfA^{\nu,-e}$
is (standard) Koszul, and the Koszul dual of $\bfA^{\nu}_\mu$ is equivalent to the Ringel dual of $\bfA^\mu_\nu$.
The theorem follows.
\end{proof}

\vspace{3mm}

\subsection{Categorical actions on $\bfA$}\label{sec:7.4}
Recall that $\scrI=\bbZ/e\bbZ.$
By \cite[thm.~5.1]{S}, there is an $\fraksl_{\!\scrI}$-categorical action $(E,F,X,T)$ on $\calO^{s,-e}$ with 
$E=\bigoplus_{d\in\bbN}\ORes_d^{d+1}$, $F=\bigoplus_{d\in\bbN}\OInd_d^{d+1}$, and
such that the functor $\KZ^s=\bigoplus_{d\in\bbN}\KZ_{d}^s$ is a morphism of
$\fraksl_{\!\scrI}$-categorifications $\calO^{s,-e}\to\scrL(\Lambda^s)_\scrI.$
In this section we construct a similar $\fraksl_{\!\scrI}$-categorification for the category $\bfA$.

\smallskip

Let $R=\bbC$, $\tau=0$, and let $\nu\in\scrC^\ell_{N,+}$. Assume that $d\leqs\nu_p$ for all $p$. Recall the tuple $(E,F,X,T)$ on $\bfA^{\nu,-e}$ from Section \ref{ss:Finduction}. Let
$\Upsilon_d=\Upsilon^{\nu,-e}_d:\bfA^{\nu,-e}\{d\}\iso\calO^{\nu^\star,-e}\{d\}$ 
be the equivalence in Theorem \ref{thm:main3}.
We have the following.

\vspace{2mm}

\begin{lemma}\label{lem:cor}
Assume that $d+1\leqs\nu_p$ for all $p$.
Then, the functors $F:\bfA^{\nu,-e}\{d\}\to\bfA^{\nu,-e}\{d+1\}$ and $E:\bfA^{\nu,-e}\{d+1\}\to\bfA^{\nu,-e}\{d\}$ are biadjoint.
Further, there are isomorphisms of functors 
$\OInd_{d}^{d+1}\,\Upsilon_d\simeq \Upsilon_{d+1}\, F$ and $\ORes_{d}^{d+1}\,\Upsilon_{d+1}\simeq \Upsilon_d\, E,$
which intertwines $X\Upsilon_d$ with $\Upsilon_{d+1}\IM(X)$, and $T\Upsilon_d$ with $\Upsilon_{d+2}\IM(T)$.
\end{lemma}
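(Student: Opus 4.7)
The plan is to transport the biadjoint pair $(\OInd_d^{d+1},\ORes_d^{d+1})$ from $\calO^{\nu^\star,-e}$ across the equivalence $\Upsilon$ and identify the transported functors with $(F,E)$. Concretely, set $F'=\Upsilon_{d+1}^{-1}\,\OInd_d^{d+1}\,\Upsilon_d$ and $E'=\Upsilon_d^{-1}\,\ORes_d^{d+1}\,\Upsilon_{d+1}$. Biadjointness of $(\OInd,\ORes)$ is part of the $\fraksl_\scrI$-categorical action on $\calO^{\nu^\star,-e}$ furnished by \cite[thm.~5.1]{S}, so both $(E',F')$ and $(F',E')$ are adjoint pairs on $\bfA^{\nu,-e}$. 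Since $(E,F)$ is already adjoint by construction, uniqueness of left adjoints will force $E\simeq E'$ once $F\simeq F'$ is known; the transported adjunction $(F',E')$ then supplies the missing adjunction $(F,E)$.

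To prove $F\simeq F'$, I first compare the composites $\Psi^s_{d+1}F$ and $\Psi^s_{d+1}F'$. Lemma~\ref{lem:phiFcom} gives $\Psi^s_{d+1}F\simeq\Ind_d^{d+1,s}\,\Psi^s_d$. On the other side, substituting $\Psi^s_d\simeq\IM^*\,\KZ^{\nu^\star}_d\,\Upsilon_d$ from Theorem~\ref{thm:main3} and the intertwining $\KZ^{\nu^\star}_{d+1}\,\OInd_d^{d+1}\simeq\HInd_d^{d+1,s^\star}\,\KZ^{\nu^\star}_d$ from \eqref{eq:commm} reduces the identification $\Psi^s_{d+1}F\simeq\Psi^s_{d+1}F'$ to the functorial identity $\Ind_d^{d+1,s}\,\IM^*\simeq\IM^*\,\HInd_d^{d+1,s^\star}$. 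The latter is straightforward: the algebra isomorphism $\IM:\bfH^s_{R,d+1}\iso\bfH^{s^\star}_{R,d+1}$ restricts to $\IM$ on the subalgebra $\bfH^s_{R,d}\subset\bfH^s_{R,d+1}$, so pullback along $\IM$ intertwines the two induction functors.

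Lifting the isomorphism $\Psi^s_{d+1}F\simeq\Psi^s_{d+1}F'$ to $F\simeq F'$ uses that $\Psi^s_{d+1}$ is fully faithful on $\Delta$-filtered objects by Theorem~\ref{thm:isom}(c). Both $F$ and $F'$ are exact and carry projective objects to $\Delta$-filtered ones: $F$ by Lemma~\ref{lem:pasbete}, and $F'$ because $\OInd$ preserves projectives (being left adjoint to an exact functor). Consequently the isomorphism restricted to projectives lifts uniquely to $F|_{(\bfA^{\nu,-e}\{d\})^\proj}\simeq F'|_{(\bfA^{\nu,-e}\{d\})^\proj}$, and then extends to an isomorphism of exact functors $F\simeq F'$ via projective presentations.

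For the intertwining of $X$ and $T$: by Remark~\ref{rk:commXXX}, $X\in\End(F)$ corresponds under $\Psi^s$ to right multiplication by $X_{d+1}$ on $\Ind_d^{d+1,s}$, while the $X$-endomorphism of $\OInd_d^{d+1}$ in the categorical action of \cite{S} corresponds under $\KZ^{\nu^\star}$ to right multiplication by $X_{d+1}$ on $\HInd_d^{d+1,s^\star}$. The twist $\IM$, sending $X_{d+1}\mapsto X_{d+1}^{-1}$ and $T_i\mapsto -qT_i^{-1}$, is precisely what intervenes in the chain of isomorphisms above, producing the stated compatibility of $X\Upsilon_d$ with $\Upsilon_{d+1}\,\IM(X)$; the same argument applied to $F^2$ and the intertwining $\vartheta_2$ of Remark~\ref{rk:commXXX} handles $T$. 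The main obstacle will be the consistent bookkeeping of this $\IM$-twist through the chain of functorial isomorphisms, but all individual ingredients are already at hand.
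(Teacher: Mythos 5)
Your overall strategy (conjugate by $\Upsilon$, compare the compositions with the quotient functor, lift back) mirrors the paper's, and the use of Lemma~\ref{lem:phiFcom}, \eqref{eq:commm}, $\IM$, and the uniqueness of adjoints for $E$ is all on track. But there is a genuine gap in the lifting step.

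You want to promote the isomorphism $\Psi^s_{d+1}F\simeq\Psi^s_{d+1}F'$ to $F\simeq F'$ by invoking ``full faithfulness of $\Psi^s_{d+1}$ on $\Delta$-filtered objects'', citing Theorem~\ref{thm:isom}(c). That statement, however, holds over the two-dimensional deformation ring $R$ in general position, \emph{not} over the residue field $\Bbbk=\bbC$ where this lemma lives (the preamble to \S\ref{sec:7.4} fixes $R=\bbC$, $\tau=0$). The corollary following Theorem~\ref{thm:isom} that survives specialization to $\Bbbk$ gives only parts (a) and (b), not (c). Equivalently, since $\Psi^\nu_d\simeq\IM^*\KZ^{\nu^\star}_d\Upsilon^{\nu,-e}_d$ by Theorem~\ref{thm:main3}, the faithfulness properties of $\Psi^\nu_d$ over $\bbC$ are exactly those of $\KZ$ over $\bbC$, and the proof of Proposition~\ref{prop:ffKZ} establishes only $(-1)$-faithfulness of $\KZ$ over the residue field, deducing $0$-faithfulness only after base-changing to $R$ via Proposition~\ref{prop:0to1}. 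So ``lifts uniquely'' is not justified: full faithfulness on $\scrC^\Delta$ is exactly the $0$-faithfulness you do not have.

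The paper sidesteps this by constructing a natural transformation $\theta':F'\to\OInd_d^{d+1}$ explicitly, using only the highest weight cover property: since $\OInd_d^{d+1}$ preserves projectives and the unit $\eta\colon 1\to G_{d+1}\KZ_{d+1}$ is invertible on projectives, one composes $\eta$, $G_{d+1}\theta$ and $\eta^{-1}$ to get $\theta'$ on projectives with $\KZ_{d+1}\theta'=\theta$. One then shows $\theta'$ is an isomorphism by a two-step argument: injectivity follows from $(-1)$-faithfulness (if $F(M)=0$ and $N\in\scrC^\Delta$ then $\Hom(M,N)=0$, applied to $\ker\theta'$), and invertibility then follows because $[F'(P)]=[\OInd_d^{d+1}(P)]$ in the Grothendieck group by Lemma~\ref{lem:pasbete} and \cite[prop.~4.4(3)]{S}. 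Finally the isomorphism extends from projectives to all of $\calO\{d\}$ via \cite[lem.~1.2]{S}. If you replace your ``lifts uniquely'' step with this construction-plus-injectivity-plus-class-count argument, the rest of your plan (including the bookkeeping of the $\IM$-twist for $X$ and $T$, which is essentially correct) goes through.
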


\vspace{.1mm}

\begin{proof}
We abbreviate $\KZ_d=\KZ^{\nu}_d$, $\KZ_d^\star=\KZ^{\nu^\star}_d$, $\Psi_d=\Psi^{\nu}_d$, $\bfA=\bfA^{\nu,-e}$, $\bfO=\bfO^{\nu,-e}$  
and $\calO=\calO^{\nu^\star,-e}$.
By Theorem \ref{thm:main3}  we have
$\Psi_d\simeq\IM^*\KZ^\star_d\,\Upsilon_d$ on $\bfA\{d\}$.

Recall from Proposition \ref{prop:adjoints} that $e$, $f$ are biadjoint functors on $\bfO$. 
Let $F'=\Upsilon_{d+1}\, F\,\Upsilon_d^{-1}:\calO\{d\}\to\calO\{d+1\}$.
We claim that there is an isomorphism of functors $F'\simeq\OInd_d^{d+1}$. Let us prove it.

Since 
$\Psi_{d+1}\, F\simeq\Ind_d^{d+1}\, \Psi_d$ by Lemma \ref{lem:phiFcom}, 
and since $\IM^*$ commutes with the induction functor,
we have $\KZ_{d+1}F'\simeq\Ind_d^{d+1}\KZ_d$.
By \eqref{eq:commm}, we also have
$\KZ_{d+1} \OInd_d^{d+1}\simeq\Ind_d^{d+1}\, \KZ_d$. 
Hence, we get an isomorphism of functors
$$\theta:\KZ_{d+1} F'\iso\,\KZ_{d+1}\OInd_d^{d+1}.$$

The functor $\OInd_d^{d+1}$ maps projectives to projectives.
Let $G_d$ be the right adjoint to $\KZ_d$. Since $\KZ_d$ is a highest weight cover,
the unit $\eta:1\to G_d\,\KZ_d$ is invertible on projective modules.
Hence, the isomorphism $\theta$ yields an isomorphism of functors on projective modules
$G_{d+1}\,\KZ_{d+1}F'
\simeq G_{d+1}\,\KZ_{d+1}\OInd_d^{d+1}\simeq \OInd_d^{d+1}$.
Composing it with $\eta$, we get a morphism
$\theta':F'
\to \OInd_d^{d+1}$ on the projectives, such that $\KZ_{d+1}\theta'=\theta$.
Since $\KZ_{d+1}$ is $(-1)$-faithful, it follows from Lemma  \ref{lem:B2} and Remark \ref{rem:2.9} that $\theta'$ is injective, hence invertible because both terms coincide in
the Grothendieck group by Lemma \ref{lem:pasbete} and \cite[prop.~4.4(3)]{S}.
Thus, $\theta'$ is an isomorphism on the projective modules. 

Now, since $\Upsilon_{d+1}$, $\Upsilon_{d}$ are equivalences, both $F'$ and 
$\OInd_d^{d+1}$ are exact on $\calO\{d\}$.
Thus, $\theta'$ extends to an isomorphism of functors
$\theta':F'\iso\, \OInd_d^{d+1}$ on $\calO\{d\}$ such that $\KZ_{d+1}\theta'=\theta$
by \cite[lem.~1.2]{S}. 
The claim is proved.

Let $E':\bfA\{d+1\}\to\bfA\{d\}$ be the right adjoint of $F$. 
The uniqueness of right adjoints implies that $\Upsilon_d\, E'\, \Upsilon_{d+1}^{-1}\simeq\ORes^{d+1}_d$.
Now, since $\ORes_d^{d+1}$ is also left adjoint to $\OInd_d^{d+1}$ by \cite[prop.~2.9]{S} and since
$\Upsilon_d$ is an equivalence, we deduce that 
$E'$ is left adjoint to $F$, hence $E\simeq E'$ on 
$\bfA\{d+1\}$. 


Now, let $X_\bfH\in\End(\Ind_d^{d+1})$ and $T_\bfH\in\End(\Ind_d^{d+2})$ be as in Example \ref{ex:3.6}.
The isomorphism 
$\Ind_d^{d+1}\KZ_{d}\simeq\KZ_{d+1}\OInd_d^{d+1}$ in \eqref{eq:commm}
intertwines $X_\bfH^{-1}\KZ_{d}$ with $\KZ_{d+1} X^{-1}$.
The isomorphism 
$\Ind_d^{d+1}\Psi_{d}\simeq\Psi_{d+1} F$
in Lemma \ref{lem:phiFcom}
intertwines
$X_\bfH\Psi_d$ with $\Psi_{d+1}X$ by Remark \ref{rk:commXXX}.
Hence, $\theta$ intertwines  
$\KZ_{d+1}\Upsilon_{d+1}X\Upsilon_{d}^{-1}$ with $\KZ_{d+1} X^{-1}.$
We deduce that $\theta'$ intertwines $\Upsilon_{d+1}X\Upsilon_{d}^{-1}$ with $X^{-1}.$
The proof for $T$  is similar.
The lemma is proved.
\end{proof}

\medskip

For each $a\in\bbN,$ set $\nu+a=(\nu_1+a,\nu_2+a,...,\nu_\ell+a)$.

\vspace{2mm}

\begin{lemma}\label{lem:limit}
For any $d\in\bbN$ and any $a\leqslant a'\in\bbN$ such that $d\leqslant\nu_p+a$ for all $p$, there is an equivalence of highest weight categories 
$\Sigma=\Sigma^{a,a'}:\bfA^{\nu+a,-e}\{d\}\iso\bfA^{\nu+a',-e}\{d\}$ 
which maps $\pmb\Delta(\lambda)$ to $\pmb\Delta(\lambda)$, 
intertwines $(E,F,X,T)$ on both sides and such that $\Psi^{\nu+a}_d\simeq \Psi^{\nu+a'}_d\Sigma^{a,a'}$.
\end{lemma}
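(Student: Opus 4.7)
The plan is to use Theorem~\ref{thm:main3} together with the invariance of the CRDAHA under uniform shifts of the multi-charge. By transitivity of equivalences it suffices to construct $\Sigma^{a,a+1}$ and iterate; absorbing $a$ into $\nu$, I may assume $a=0$, $a'=1$, with $\nu_p\geqslant d$ for all $p$. Set $s=\nu^\star$ and $s'=(\nu+1)^\star=s+(1,1,\ldots,1)$.

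The crucial observation is that the parameters $(k,c_\gamma)$ of the CRDAHA $H_c(\Gamma_d,\frakh)$ are invariant under uniform shifts of $s$. Indeed, \eqref{eq:paarameters} gives $k=-h=1/e$ (independent of $s$) and $c_\gamma=-\sum_p\gamma^{-p}(h_p-h_{p-1})$ with $h_p=s_{p+1}/e-p/\ell$; replacing $s$ by $s'$ shifts each $h_p$ uniformly by $1/e$, hence leaves every difference $h_p-h_{p-1}$, and so every $c_\gamma$ with $\gamma\neq1$, unchanged. Consequently, the CRDAHA's attached to $s$ and to $s'$ are \emph{literally the same} algebra, and $\calO^{s,-e}\{d\}=\calO^{s',-e}\{d\}$ as highest weight categories. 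Moreover, for each $\mu\in\scrP^\ell_d$ the standard $\Delta(\mu)^{s,-e}=\Ind(\bbC\,\scrX(\mu))$ depends only on the algebra and on $\scrX(\mu)$, so $\Delta(\mu)^{s,-e}=\Delta(\mu)^{s',-e}$.

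Theorem~\ref{thm:main3} then supplies equivalences of highest weight categories $\Upsilon:\bfA^{\nu,-e}\{d\}\iso\calO^{s,-e}\{d\}$ and $\Upsilon':\bfA^{\nu+1,-e}\{d\}\iso\calO^{s',-e}\{d\}$, with $\Upsilon(\pmb\Delta(\lambda))\simeq\Delta(\lambda^\star)^{s,-e}$ and analogously for $\Upsilon'$. Under the identification above, I define $\Sigma:=(\Upsilon')^{-1}\circ\Upsilon$: this is an equivalence of highest weight categories, and $\Sigma(\pmb\Delta(\lambda))\simeq\pmb\Delta(\lambda)$ for every $\lambda$. By Lemma~\ref{lem:cor}, both $\Upsilon$ and $\Upsilon'$ intertwine the categorical data $(E,F,X,T)$ on $\bfA$ with the parabolic restriction/induction functors and the endomorphisms $\IM(X),\IM(T)$ on $\calO$; since these functors and endomorphisms on the two $\calO$-categories coincide under our identification, $\Sigma$ intertwines $(E,F,X,T)$.

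Finally, Theorem~\ref{thm:main3} gives $\Psi^{\nu}_d\simeq\IM^*\KZ^s_d\,\Upsilon$ and $\Psi^{\nu+1}_d\simeq\IM^*\KZ^{s'}_d\,\Upsilon'$. The KZ functor is defined intrinsically from the CRDAHA (via the monodromy of the Dunkl connection), so $\KZ^s_d$ and $\KZ^{s'}_d$ are the same functor out of the common category $\calO^{s,-e}\{d\}=\calO^{s',-e}\{d\}$, once the natural shift isomorphism $\bfH^{s'}_d\iso\bfH^s_d$, $X_j\mapsto q_\bbC^{-1}X_j$, $T_i\mapsto T_i$, is used to identify the target Hecke algebras (this matches the parameter shift $Q^{(\nu+1)}_p=q_\bbC\,Q^{(\nu)}_p$). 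Through this identification,
\[
\Psi^{\nu+1}_d\,\Sigma\;\simeq\;\IM^*\KZ^{s'}_d\,\Upsilon\;\simeq\;\IM^*\KZ^{s}_d\,\Upsilon\;\simeq\;\Psi^{\nu}_d,
\]
as required. The chief subtlety is the invariance of the CRDAHA under uniform shifts of $s$ in the second paragraph; once this is established, every other assertion follows formally from Theorem~\ref{thm:main3} and Lemma~\ref{lem:cor}.
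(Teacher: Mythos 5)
Your proof takes exactly the paper's route: you observe that the CRDAHAs for $(\nu+a)^\star$ and $(\nu+a')^\star$ coincide (because a uniform shift of $s$ leaves $h$ and all the differences $h_p-h_{p-1}$, hence $k$ and the $c_\gamma$, unchanged), so the $\calO$'s literally agree, and you define $\Sigma = (\Upsilon')^{-1}\Upsilon$, invoking Lemma~\ref{lem:cor} for the intertwining; this is precisely what the paper does. Your extra paragraphs make explicit two points the paper elides — the $h_p$ computation and the identification of the target Hecke algebras — which is a useful amplification, though the explicit shift isomorphism $\bfH^{s'}_d\iso\bfH^{s}_d$ should send $X_j\mapsto q_\bbC\,X_j$ rather than $q_\bbC^{-1}X_j$ (since $Q^{s'}_p=q_\bbC Q^{s}_p$, one needs $\prod(qX_1-qQ^s_p)=q^\ell\prod(X_1-Q^s_p)=0$ to hold in $\bfH^s_d$); this sign slip does not affect the substance of the argument.
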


\vspace{.5mm}

\begin{proof}
The CRDAHA's associated with $(\nu+a)^\star$ and $(\nu+a')^\star$ are the same. 
Hence, we have $\calO^{(\nu+a)^\star,-e}\{d\}=\calO^{(\nu+a')^\star,-e}\{d\}$. 
We define $\Sigma^{a,a'}=(\Upsilon^{\nu+a',-e}_d)^{-1}\,\Upsilon^{\nu+a,-e}_d$. 
By Lemma \ref{lem:cor}, the functor $\Sigma$ intertwines $(E,F)$ on both sides.
\end{proof}

\vspace{2mm}

For each $d$, we define the category $\widetilde\bfA^{\nu,-e}\{d\}$ as the limit of the inductive system of categories
$(\bfA^{\nu+a,-e}\{d\},\Sigma^{a,a'})_{a,a'\in\bbN}$. 
We have an equivalence of highest weight categories 
$\tilde{\Upsilon}^{\nu,-e}_d:\widetilde\bfA^{\nu,-e}\{d\}\iso\calO^{\nu^\star,-e}\{d\}$
and a  highest weight cover $\tilde{\Psi}^{\nu}_d:\widetilde\bfA^{\nu,-e}\{d\}\to \bfH^{\nu}_d\mmod$. 
In particular, the blocks of $\widetilde\bfA^{\nu,-e}\{d\}$ are in bijection with the blocks of $\bfH^{\nu}_d\mmod$ via $\tilde{\Psi}^{\nu}_d$. 
For $\mu=\Lambda^\nu-\alpha$, let $\widetilde\bfA^{\nu,-e}_\mu$ be the block corresponding to $\bfH^\nu_{\alpha}\mmod$.

Now, let $\widetilde\bfA^{\nu,-e}=\bigoplus_{d\in \bbN}\widetilde\bfA^{\nu,-e}\{d\}$.  
The category $\widetilde\bfA^{\nu,-e}$ carries a pre-categorical action $(E,F,X,T)$ given by Lemma \ref{lem:limit}. 
The following is now obvious.

\vspace{2mm}

\begin{prop} The tuple $(E,F,X,T)$ and the decomposition $\widetilde\bfA^{\nu,-e}=\bigoplus_{\mu\in X_{\!\scrI}}\widetilde\bfA^{\nu,-e}_{\mu}$
define an $\fraksl_{\!\scrI}$-categorical action on $\tilde\bfA^{\nu,-e}$.
\end{prop}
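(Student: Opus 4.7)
The plan is to transport the $\fraksl_{\!\scrI}$-categorification on $\calO^{\nu^\star,-e}$ established in \cite[thm.~5.1]{S} through the equivalence $\tilde{\Upsilon}^{\nu,-e}=\bigoplus_d\tilde{\Upsilon}^{\nu,-e}_d$ built from Theorem \ref{thm:main3} by passage to the limit in $a$. Concretely, the pre-categorification $(E,F,X,T)$ is already in hand from Lemma \ref{lem:limit}, so it only remains to verify the eigenspace decomposition $F=\bigoplus_i F_i$, $E=\bigoplus_i E_i$, the integrability of the induced $[\widetilde{\bfA}^{\nu,-e}]$-representation, and the compatibility of the block decomposition $\widetilde{\bfA}^{\nu,-e}=\bigoplus_\mu\widetilde{\bfA}^{\nu,-e}_\mu$ with the $\fraksl_{\!\scrI}$-weight grading.

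First, I would recall that by \cite[thm.~5.1]{S} the category $\calO^{\nu^\star,-e}$ carries an $\fraksl_{\!\scrI}$-categorification with functors $(\ORes,\OInd)$ and endomorphisms $(X',T')$ coming from the $\KZ^{\nu^\star}$-compatible lifts of the affine Hecke generators, together with a decomposition into blocks indexed by $X_{\!\scrI}$ via central characters of $\bfH^\nu$. Next, by Lemma \ref{lem:cor} (applied in the inductive limit yielding $\tilde{\Upsilon}^{\nu,-e}$), this equivalence intertwines $F$ with $\OInd$, $E$ with $\ORes$, the endomorphism $X$ with $\IM(X')$, and $T$ with $\IM(T')$, where $\IM$ is the Hecke algebra isomorphism of \S\ref{sec:ringel}.

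Then I would analyse the $\IM$-twist. Since $\IM$ sends $X_j$ to $X_j^{-1}$, it identifies the generalised $q^i$-eigenspace of $X$ acting on $F$ with the generalised $q^{-i}$-eigenspace of $X'$ acting on $\OInd$. The map $i\mapsto -i$ is an automorphism of the cyclic quiver $\scrI\simeq\bbZ/e\bbZ$ and hence of the Dynkin diagram of $\fraksl_{\!\scrI}$; therefore twisting the categorification on $\calO^{\nu^\star,-e}$ by this diagram automorphism still yields an $\fraksl_{\!\scrI}$-categorification. This shows that $F=\bigoplus_{i\in\scrI}F_i$, $E=\bigoplus_{i\in\scrI}E_i$ on $\widetilde{\bfA}^{\nu,-e}$ and that the $F_i,E_i$ generate an integrable $\fraksl_{\!\scrI}$-action on $[\widetilde{\bfA}^{\nu,-e}]$.

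Finally, I would check that the block decomposition $\widetilde{\bfA}^{\nu,-e}=\bigoplus_{\mu\in X_{\!\scrI}}\widetilde{\bfA}^{\nu,-e}_\mu$ introduced right before the proposition (indexed via $\tilde{\Psi}^\nu$ by the central characters $\alpha\in Q^+_{\!\scrI}$ of $\bfH^\nu$) corresponds, under $\tilde{\Upsilon}^{\nu,-e}$ and the $\IM$-twist, to the weight-block decomposition of $\calO^{\nu^\star,-e}$. This follows from the intertwining $\tilde{\Psi}^\nu\simeq\IM^*\,\KZ^{\nu^\star}\tilde{\Upsilon}^{\nu,-e}$, because the relation $\Lambda^\nu-\alpha=\mu$ between central characters and weights used to label blocks of $\widetilde{\bfA}^{\nu,-e}$ matches, up to the $\IM$-induced diagram automorphism, the analogous labelling of blocks of $\calO^{\nu^\star,-e}$. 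Combining these three steps and applying Definition \ref{df:cat1}, whose axioms are preserved under equivalence, proves the proposition.

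The main obstacle will be bookkeeping the $\IM$-twist: the reader needs to be convinced that replacing $X$ by $\IM(X)$ consistently replaces the labels $i\in\scrI$ by $-i$ at the level of eigenspaces, roots, and fundamental weights, so that the $F_i$'s indeed shift $\fraksl_{\!\scrI}$-weights by $-\alpha_i$ and the $E_i$'s by $+\alpha_i$ with the chosen normalisation. Once this sign convention is locked in --- which reduces to the identity $\bfwt(|\lambda,s\rangle)^\star=\bfwt(|\lambda^\star,s^\star\rangle)$ up to the diagram involution --- integrability and the remaining axioms are formal consequences of the corresponding statements in \cite{S}.
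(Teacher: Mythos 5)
Your proposal is correct and follows essentially the same strategy as the paper: transport the $\fraksl_{\!\scrI}$-categorification on $\calO^{\nu^\star,-e}$ from \cite{S} through the equivalence $\tilde{\Upsilon}^{\nu,-e}$, reading the $\IM$-twist as the diagram automorphism $i\mapsto -i$ of $\scrI$. Your write-up is more explicit than the paper's (which simply invokes Lemma \ref{lem:pasbete} and \cite[prop.~4.4]{S} to match the Grothendieck-group operators and leaves the remaining axioms implicit), but the underlying argument is the same.
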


\vspace{.5mm}

\begin{proof}
We have $E=\bigoplus_{i\in \scrI}E_i$ and $F=\bigoplus_{i\in \scrI}F_i$, where $E_i$, $F_i$ are defined as in \S \ref{sec:categorification}.
By Theorem \ref{thm:main3}, the equivalence 
$\tilde{\Upsilon}^{\nu,-e}=\bigoplus_{d\in\bbN}\tilde{\Upsilon}^{\nu,-e}_d:\tilde\bfA^{\nu,-e}\iso\calO^{\nu^\star,-e}$
yields a linear isomorphism $[\tilde\bfA^{\nu,-e}]\iso[\calO^{\nu^\star,-e}]$ which 
maps $[\pmb\Delta(\lambda)]$ to $[\Delta(\lambda^\star)^{\nu^\star,-e}]$. Hence by Lemma \ref{lem:pasbete} and \cite[prop.~4.4]{S}, it
intertwines the operators $E_i,F_i$ on the left hand side 
with the operators the operators $E_{-i},F_{-i}$ on the right hand side.
Thus, the operators $E_{i}$, $F_{i}$ with 
$i\in\scrI$ yield a representation of $\fraksl_{\!\scrI}$ on 
$[\tilde\bfA^{\nu,-e}]$.
\end{proof}

\vspace{3mm}

\subsection{The category $\bfA$ and the cyclotomic q-Schur algebra}\label{sec:7.5}

Let $R=K$ be a field. The following propositions generalize some of the results in \cite{BK1} and follow from the arguments used in this paper.

\vspace{2mm}

\begin{prop}\label{prop:BKschur}
Let $\nu_p\geqslant d$ and $\tau_{K,u}-\tau_{K,v}\notin\bbN^\times$ for all $p$  and all $u<v$.
Set $s=\nu+\tau$.
Assume that $\nu$ is either dominant or anti-dominant.
Then, there is an equivalence of highest weight $K$-categories
$\scrG^s_{K,d}:A^\nu_{K,\tau}\{d\}\iso S^{s^\star}_{K,d}\mmod$ which intertwines the functors
$\Phi_{K,d}^s:A^\nu_{K,\tau}\{d\}\to H^s_{K,d}\mmod$ and
$\IM^*\,\Xi_{K,d}^{s^\star}:S^{s^\star}_{K,d}\mmod\to H^s_{K,d}\mmod$.
We have $\scrG^{s}_{K,d}(\Delta(\lambda)_{K,\tau})\simeq W(\lambda^\star)^{s^\star}_K$ for all $\lambda$'s.
\qed
\end{prop}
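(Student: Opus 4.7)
The plan is to apply Proposition \ref{prop:key} to the two highest weight covers
$(\scrC,F)=(A^\nu_{K,\tau}\{d\},\Phi^s_{K,d})$ and
$(\scrC',F')=(S^{s^\star}_{K,d}\mmod,\IM^*\Xi^{s^\star}_{K,d})$
of the category $H^s_{K,d}\mmod$, following the template used in the proof of Theorem \ref{thm:main3}. The target algebra $H^s_{K,d}$ is symmetric (hence Frobenius) by \S\ref{sec:3.4}, and by passing to a suitable deformation one may further arrange that after extension of scalars $KH^s$ is split semi-simple, which is what allows the comparison of the orderings on $\Irr(KH^s)$. The $\IM^*$-twist is introduced exactly so that Specht modules match across the two covers: $\IM^*S(\lambda^\star)_K^{s^\star}\simeq S(\lambda)_K^s$, mirroring the role played by $\IM^*$ in Theorem \ref{thm:main3}.

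First I would establish that both covers are $0$-faithful. For $F$, this reduces to the projectivity of $T_{K,d}$ in $A^\nu_{K,\tau}$ together with the algebra isomorphism $\varphi^s_{K,d}:H^s_{K,d}\iso \End(T_{K,d})^{\op}$: when $\tau_{K,u}-\tau_{K,v}\notin\bbZ^\times$, both are supplied by Proposition \ref{prop:isomBK}; the weaker hypothesis $\tau_{K,u}-\tau_{K,v}\notin\bbN^\times$ is handled by iterating Proposition \ref{prop:shift} blockwise across the consecutive pairs of components of $\nu$, which the dominance/anti-dominance hypothesis makes consistent. Fullness of $\Phi^s_{K,d}$ on $\Delta$- and $\nabla$-filtered objects then follows from the argument of Proposition \ref{prop:redI}(c) using Ringel self-duality of $A^\nu_{K,\tau}\{d\}$ (\cite{BK6}). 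For $F'$, the fullness on $\Delta$- and $\nabla$-filtered objects is a classical property of the cyclotomic $q$-Schur functor combined with the Ringel self-duality of $S^{s^\star}_{K,d}$ (degenerate analogue of \cite[prop.~4.3]{M3}).

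Next I would check the order-compatibility hypothesis. Both covers send standard objects to Specht modules, $F(\Delta(\lambda)_{K,\tau})\simeq S(\lambda)_K^s$ (degenerate analogue of Proposition \ref{prop:preliminaries}(d)) and $F'(W(\lambda^\star)_K^{s^\star})\simeq S(\lambda)_K^s$ by Lemma \ref{le:Korder} combined with the $\IM^*$-twist, so the two orderings induced on $\Irr(KH^s)$ are respectively the BGG order on $\scrP^\ell$ transported via $\varpi$ and the transpose-dominance order. The required refinement is the degenerate analogue of Lemma \ref{lem:order3}, whose proof adapts verbatim once one uses the integrality constraint imposed by $\tau_{K,u}-\tau_{K,v}\notin\bbN^\times$ together with dominance (or anti-dominance) of $\nu$.

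Finally I would verify condition (b) of Proposition \ref{prop:key}, exploiting the Ringel self-duality of $\scrC'$ so that it suffices to show $F$-images of tilting modules $T(\lambda)_{K,\tau}$ with $\mathrm{lcd}$ or $\mathrm{rcd}$ at most $1$ lie in $F'(\scrC'^{\tilt})$. Following Propositions \ref{prop:rankone} and \ref{prop:ranktwo}, the reduction proceeds by handling tiltings produced by parabolic induction along rank-one and rank-two reflection subgroups via the finite-type analogue of Lemma \ref{lem:phiFcom} (compatibility of $\Phi^s_{K,d}$ with induction $H^s_{K,d}\subset H^s_{K,d+k}$) and the classical commutation of the $q$-Schur functor with induction. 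The main obstacle will be isolating the correct degenerate analogues of $\IM^*$, $\sigma$ and the commutation square \eqref{eq:commm} in the Brundan--Kleshchev finite-type setting, and verifying that the shift argument of Proposition \ref{prop:shift} propagates across all consecutive block pairs when $\ell\geqslant 3$. Once Proposition \ref{prop:key} delivers the equivalence $\scrG^s_{K,d}$, the identification $\scrG^s_{K,d}(\Delta(\lambda)_{K,\tau})\simeq W(\lambda^\star)_K^{s^\star}$ is automatic from the coincidence of Specht-module images.
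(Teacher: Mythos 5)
The paper gives no proof of this proposition (it is marked $\qed$ after the statement, with a remark that it ``follows from the arguments used in this paper''), so there is no reference argument to compare against; still, the intended route is evident from the rest of \S\ref{sec:7.5}, \S\ref{sec:2blocs} and the proof of Theorem~\ref{thm:main3}. Your choice of Proposition~\ref{prop:key} as the engine is the right one, and your verifications of full faithfulness of both covers on $\Delta$- and $\nabla$-filtered objects (via Proposition~\ref{prop:redI}(c) and Ringel self-duality of $A^\nu_{K,\tau}\{d\}$ on one side, and via the classical tilting-module theory of \cite{M3} for the cyclotomic $q$-Schur functor, again with Ringel self-duality, on the other) match what the paper's ingredients support. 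The order-compatibility step via a degenerate analogue of Lemma~\ref{lem:order3} is likewise the right move, though you flag rather than supply it.

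The part of your plan that would not go through as written is the verification of condition (b) of Proposition~\ref{prop:key}. You propose to ``handle tiltings produced by parabolic induction along rank-one and rank-two reflection subgroups via the finite-type analogue of Lemma~\ref{lem:phiFcom},'' explicitly modelled on Propositions~\ref{prop:rankone} and~\ref{prop:ranktwo}. That machinery belongs to the CRDAHA setting of Theorem~\ref{thm:main3}: there the second cover is the KZ functor for a Cherednik category $\calO$, and the key step is Lemma~\ref{lem:indsupp}, which uses characteristic varieties of simple modules and the Bezrukavnikov--Etingof parabolic induction $\OInd_{W_H}^W$ along reflection hyperplanes, together with the compatibility \eqref{eq:commm} of $\KZ$ with $\OInd$. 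None of this is available for the cover $\IM^*\Xi^{s^\star}_{K,d}$ of $H^s_{K,d}\mmod$: the source category is $S^{s^\star}_{K,d}\mmod$, which is not the category $\calO$ of a rational Cherednik algebra, there is no support/characteristic variety theory giving Lemma~\ref{lem:indsupp}, and there is no $\OInd_H$ producing the projectives you would need. In the finite-type $q$-Schur setting the correct replacement is the explicit description of the tilting modules of $S^{s^\star}_{R,d}$ due to Mathas (\cite[prop.~3.1, 3.5, cor.~6.11, thm.~6.18]{M3}), which shows that every tilting $S^{s^\star}_{R,d}$-module arises from $H^{s^\star}_{R,d}\mmod$ via the right adjoint to $\Xi^{s^\star}_{R,d}$; combined with \cite[prop.~4.40]{R1} this yields the faithfulness properties needed and, together with the Ringel self-duality of the $q$-Schur algebra (\cite[prop.~4.3, cor.~7.2]{M3}), dispenses with condition (b) of Proposition~\ref{prop:key} altogether. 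This is the simplification that the paper's ``follows from the arguments used in this paper'' presupposes, and it is where your proposal, by importing the rank-$1$/rank-$2$ CRDAHA argument instead, would break down.

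Two smaller loose ends you flag but leave open would also need to be made explicit: the degenerate analogue of $\IM$ (here $t_i\mapsto -t_i$, $x_j\mapsto -x_j$, inducing $H^s_{R,d}\iso H^{s^\star}_{R,d}$), and the iteration of Proposition~\ref{prop:shift} to $\ell\geqslant 3$ (the paper only proves the $\ell=2$ case and merely asserts the general one). Neither is a fatal gap, but they cannot be dispatched by the phrase ``the dominance/anti-dominance hypothesis makes consistent'' without further work.
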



\vspace{2mm}

\begin{prop}
Let $\nu_p\geqslant d$ for all $p$.
Assume that $\nu$ is either dominant or anti-dominant. Set $s=\nu$.
Then, there is an equivalence of highest weight $K$-categories
$\scrG^s_{K,d}:\bfA^{\nu,\kappa}_{K}\{d\}\iso \bfS^{s^\star}_{K,d}\mmod$ which intertwines the functors
$\Phi_{K,d}^s:\bfA^{\nu,\kappa}_{K}\{d\}\to \bfH^s_{K,d}\mmod$ and
$\IM^*\,\Xi_{K,d}^{s^\star}:\bfS^{s^\star}_{K,d}\mmod\to \bfH^s_{K,d}\mmod$.
We have $\scrG^{s}_{K,d}(\pmb\Delta(\lambda)_{K})\simeq W(\lambda^\star)^{s^\star,q}_K$ for all $\lambda$'s.
\qed
\end{prop}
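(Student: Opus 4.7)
The plan is to mirror the proof of Theorem \ref{thm:main3}, replacing the pair $(\calO^{\nu^\star,-e},\KZ^{\nu^\star}_d)$ by $(\bfS^{s^\star}_{R,d}\mmod,\Xi^{s^\star}_{R,d})$. Since Proposition \ref{prop:key} needs $KB$ semi-simple, I would first lift to a $2$-dimensional local analytic deformation ring $R$ in general position whose residue field specializes to the given $K$ (with $\kappa_{\Bbbk}=\kappa_K$ and $s_{\Bbbk,p}=s_{K,p}$). Over $R$, the two categories $\scrC=\bfS^{s^\star}_{R,d}\mmod$ and $\scrC'=\bfA^{\nu,\kappa}_{R,\tau}\{d\}$ are highest weight $R$-categories labeled by $\scrP^\ell_d$ (the hypothesis $\nu_p\geqslant d$ guarantees $\scrP^\nu\supset\scrP^\ell_d$), and they come with highest weight covers $F=\IM^*\,\Xi^{s^\star}_{R,d}$ and $F'=\Psi^s_{R,d}$ of $\bfH^s_{R,d}\mmod$. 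The goal is to apply Proposition \ref{prop:key} to $(\scrC,F)$ and $(\scrC',F')$ and then specialize at the closed point.

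I would verify the hypotheses as follows. The algebra $\bfH^s_{R,d}$ is symmetric (hence Frobenius), and $\bfH^s_{K,d}$ is split semi-simple by Proposition \ref{prop:preliminaries}. Both $KF$ and $KF'$ send standard objects to Specht modules $S(\lambda^\star)^{s^\star,q}_K\simeq\IM^*(S(\lambda)^{s,q}_K)$ (by Lemma \ref{lem:6.3}'s analogue for the $q$-Schur functor, \S \ref{sec:CycloSchur}, and Proposition \ref{prop:preliminaries}(d)), so the two induced labelings of $\Irr(\bfH^s_{K,d})$ agree. The dominance order $\unlhd$ on $\scrP^\ell_d$ (the highest weight order on $\scrC$) refines the BGG order $\leqslant_\ell$ on $\scrP^\nu$ (the order on $\scrC'$); this uses $\nu_p\geqslant d$ together with the fact that, for $\nu$ dominant or anti-dominant, the explicit BGG-linkage arrow reduces to the dominance relation by the Jantzen sum formula, exactly as in Lemma \ref{lem:order3}. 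Full faithfulness of $F'$ on $\Delta$- and $\nabla$-filtered objects is Theorem \ref{thm:isom}(c); for $F$, the cyclotomic $q$-Schur functor is $1$-faithful by standard arguments (Donkin faithfulness on $\nabla$-filtered modules combined with the Ringel self-duality of $\bfS^{s^\star}_{R,d}$ cited in \S \ref{sec:CycloSchur}, together with Lemma \ref{le:hwcoverRingel} and Proposition \ref{prop:0to1}).

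The remaining condition is (a) of Proposition \ref{prop:key}: for each $\lambda$ with $\mathrm{lcd}_{\Bbbk\scrC}(L(\lambda))\leqslant 1$ or $\mathrm{rcd}_{\Bbbk\scrC}(L(\lambda))\leqslant 1$, the module $F(P(\lambda))$ (resp.\ $F(I(\lambda))$) should lie in $F'(\scrC'^{\proj})$ (resp.\ $F'(\scrC'^{\inj})$). I would proceed exactly as in the proof of Theorem \ref{thm:main3}: by Lemma \ref{lem:indsupp}'s analogue for $\bfS^{s^\star}$, such projectives are summands of modules induced from the rank $1$ or rank $2$ parabolic subalgebras of $\bfS^{s^\star}_{R,d}$, whose images under $\Xi^{s^\star}_{R,d}$ are induced Hecke modules $\Ind^{d,s}_{1,s}(M)$ or $\Ind^{d,s}_{2,+}(N)$ via the commutativity $\Xi^{s^\star}_{R,d}\circ\mathrm{Ind}=\mathrm{Ind}\circ\Xi^{s^\star}$. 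On the other side, by Lemma \ref{lem:phiFcom} the $\Psi^s_{R,d}$-image of $F^{d-k}$ applied to a tilting/projective in $\bfA^{\nu,\kappa}_{R,\tau}\{k\}$ produces exactly the same induced Hecke modules, and the rank $1$ and rank $2$ inputs are matched by Proposition \ref{prop:rankone} and Proposition \ref{prop:ranktwo}.

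The main obstacle will be the verification of condition (a): although the strategy is clear, one must carefully match, for $k\in\{1,2\}$, the rank $k$ projectives/tiltings in the cyclotomic $q$-Schur algebra with the corresponding $F$-images of $\pmb\Delta(\emptyset)$ in $\bfA^{\nu,\kappa}_{R,\tau}$, using the monoidal structure on $\bfO$ and the dominance hypothesis on $\nu$ to ensure that $\bfS^{s^\star}_{R,d}\mmod$ and $\bfA^{\nu,\kappa}_{R,\tau}\{d\}$ have parallel parabolic induction structures compatible under the Schur functors. Once this matching is in place, Proposition \ref{prop:key} yields an equivalence $\scrG^s_{R,d}:\bfA^{\nu,\kappa}_{R,\tau}\{d\}\iso\bfS^{s^\star}_{R,d}\mmod$ over $R$ intertwining $F$ and $F'$ and sending $\pmb\Delta(\lambda)_{R,\tau}$ to $W(\lambda^\star)^{s^\star,q}_R$; specializing at the closed point produces the desired equivalence $\scrG^s_{K,d}$.
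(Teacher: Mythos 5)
Your overall strategy — lift to a two\-dimensional analytic deformation ring in general position, apply Proposition~\ref{prop:key} to the two covers $\Psi^s_{R,d}$ and $\IM^*\Xi^{s^\star}_{R,d}$, then specialize — is the right one, and your verification of full faithfulness on $\Delta$- and $\nabla$-filtered objects (Theorem~\ref{thm:isom}(c) on one side; Ringel self-duality of the cyclotomic $q$-Schur algebra plus Proposition~\ref{prop:0to1} on the other) is exactly what the paper intends. The order compatibility is also fine.

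The gap is in condition (a)/(b) of Proposition~\ref{prop:key}. You invoke ``Lemma~\ref{lem:indsupp}'s analogue for $\bfS^{s^\star}$'', but no such analogue exists in the paper: Lemma~\ref{lem:indsupp} is specific to CRDAHA categories $\calO$ and its proof uses characteristic varieties and the Bezrukavnikov--Etingof parabolic induction/restriction functors, none of which are set up for cyclotomic $q$-Schur algebras here. Likewise you use a compatibility $\Xi^{s^\star}_{R,d}\circ\Ind\simeq\Ind\circ\Xi^{s^\star}$ for a parabolic induction on the $q$-Schur side, which is also not established in the paper. To fix this you should choose $\scrC=\bfA^{\nu,\kappa}_{R,\tau}\{d\}$ rather than $\scrC=\bfS^{s^\star}_{R,d}\mmod$ in Proposition~\ref{prop:key}: then the relevant $\lambda$ with small $\mathrm{lcd}/\mathrm{rcd}$ can be identified via Theorem~\ref{thm:main3} (transferring the question to $\calO^{\nu^\star,-e}$, where Lemma~\ref{lem:lcdrcd} and Lemma~\ref{lem:indsupp} apply directly), and then the proof of Theorem~\ref{thm:main3} already shows that for those $\lambda$ one has $\Psi^s_{R,d}(\bfT(\lambda))\simeq\KZ^{\nu^\star}_{R,d}(\OInd^{\Gamma_d}_{W_H}P)$, which via \eqref{eq:commm} is an induced module from rank one or rank two. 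One then needs only that these particular induced Hecke modules land in $\Xi^{s^\star}_{R,d}(\bfS^{s^\star,\tilt}_{R,d})$; here the key external ingredient is the theorem of Mathas (the references \cite{M3} cited in the paper for Proposition~\ref{prop:BKschur}) that under the present hypotheses every tilting module of the cyclotomic $q$-Schur algebra is a Young module, i.e.\ lies in the image of the right adjoint of $\Xi^{s^\star}$. That fact is what discharges condition (b), and it is this input from \cite{M3}, rather than any analogue of Lemma~\ref{lem:indsupp} for $q$-Schur algebras, that your argument is implicitly leaning on.
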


\vspace{3mm}

\section{The Kazhdan-Lusztig category}\label{sec:KL}
\label{sec:8}

Fix integers $\ell,N\geqslant 1$ and fix a composition $\nu\in\scrC^\ell_{N,+}$.
Let $R$ be a deformation ring. We may abbreviate $\kappa=\kappa_R$.

\vspace{3mm}

\subsection{Coinvariants}
\label{sec:coinvariants}
Fix a finite totally ordered set $A$.
Set $R^{A}=\bigoplus_{a\in A}R((t_a))$, where $t_a$ is a formal variable. 
Let $\bfg^{A}_{R}$ be the central extension of
$\frakg\otimes R^{A}$ by $R$ associated with the cocycle
$(\xi\otimes f,\zeta\otimes g)\mapsto \langle\xi:\zeta\rangle\sum_{a\in
A}\Res_{t_a=0}(gdf).$

Write $\bfone$ for the canonical central element
of $\bfg^{A}_{R}$, and let
$U(\bfg^{A}_R)\to\bfg^{A}_{R,\kappa}$
be the quotient of the enveloping algebra (over $R$)
by the two-sided ideal generated by $\bfone-c$.
By the symbol $\bigotimes_{R,a}$ 
we'll mean the (ordered) tensor product of 
$R$-modules with respect to the ordering of $A$.
Given $M_a\in\scrS_{R,\kappa}$ for each $a\in A$, the Lie algebra
$\bfg^{A}_{R,\kappa}$ acts naturally on $\bigotimes_{R,a}M_a$.

Let $C$ be a connected projective curve isomorphic to $\bbP^1$.
By a \emph{chart} on $C$ centered at $x$
we mean an automorphism $\gamma$ of $\bbP^1$ such that $\gamma(x)=0$.
We will say that $\gamma=\{\gamma_a\,;\,a\in A\}$ is an \emph{admissible system of charts} if the conditions
$(a)$, $(b)$ in \cite[sec.~13.1]{KL} hold.
Let $\eta_a=\gamma_a^{-1}$ denote the automorphism which is the inverse of $\gamma_a$ and
let $x_a$ be the center of $\gamma_a$. 
The $x_a$'s are distinct points of $C$. 
We write 
$C_\gamma=C\setminus\{x_a; a\in A\},$
$D_R=D_{R,\gamma}=R[C_\gamma]$ and
$\Gamma_{R}=\Gamma_{R,\gamma}=\frakg\otimes D_R.$

For $f\in D_R$ let ${}^a\!f\in R((t_a))$ be the power series expansion at 0 of the rational
function $f\circ\eta_a$ on $\bbP^1$.
Taking $f$ to the $A$-tuple ${}^A\!f=({}^a\!f)$ gives a $R$-algebra homomorphism 
$D_R\to R^{A}$ and a $R$-Lie algebra homomorphism 
$\Gamma_{R}\to\bfg^{A}_{R}$
by the residue theorem.

We can now define the sets of \emph{coinvariants}.

\vspace{2mm}

\begin{df} Given $N_a\in U(\frakg_R)\mmod$
and  $M_a\in\scrS_{R,\kappa}$ for $a\in A=[1,n]$, we set
$\langle N_1,\dots,N_n\rangle_R= H_0(\frakg_R,\Otimes_{R,a}N_a)$
and
$\llangle M_1,\dots,M_n\rrangle_R=
H_0(\Gamma_{R},\Otimes_{R,a}M_a).$
\end{df}

\vspace{2mm}

By \cite[sec.~13.3]{KL} the $R$-module $\llangle M_1,\dots,M_n\rrangle_R$
does not depend on the choice of the admissible system of charts, up to a canonical isomorphism.
Further, it only depends on the cyclic ordering of $A$, so that we have a canonical isomorphism
$\llangle M_1,\dots,M_n\rrangle_R=\llangle M_2,\dots,M_n,M_1\rrangle_R.$

\vspace{2mm}

\begin{lemma}
\label{lem:TP0}
(a) Taking coinvariants is a right exact functor. It commutes with base change, i.e., for any morphism of deformation rings
$R\to S$ the obvious map $\Otimes_{R,a}M_a\to \Otimes_{S,a}S M_a$
induces a $S$-module isomorphism 
$S\llangle M_1,\dots,M_n\rrangle_R=\llangle SM_1,\dots,SM_n\rrangle_S$.

(b)
Assume that for each $a$ there exists integers $d_a\geqslant 1$ such that $M_a(d_a)$ generates $M_a$ as a 
$\bfg_{R,\kappa}$-module. 
Then the inclusion
$\Otimes_{R,a}M_a(d_a)\to \Otimes_{R,a}M_a$
induces a surjective $R$-module homomorphism
$\langle M_{1}(d_1),\dots, M_n(d_n)\rangle_R\to\llangle M_{1},\dots,M_n\rrangle_R.$
In particular, given modules $M_a$ as above with $a\in A=[-m,n]$ such that
$M_0\in\bfO_R$ and $M_a\in\bfO^{+,\kappa}_R$ for $a\neq 0$, the $R$-module 
$\llangle M_{-m},\dots, M_n\rrangle_R$ is finitely generated.

(c) 
Assume that $M_a=\Indc_R(N_a)$ is a generalized Weyl module for each $a\in A$. Then the inclusion
$\Otimes_{R,a}N_a\to \Otimes_{R,a}M_a$
induces an isomorphism of $R$-modules
$\langle N_{1},\dots, N_n\rangle_R\to\llangle M_{1},\dots,M_n\rrangle_R.$

(d) If  $M_{1}=\bfM(0)_{R,+}$ the canonical inclusion
$\Otimes_{R,a\neq 1}M_a\to \Otimes_{R,a}M_a$
induces  an $R$-module isomorphism 
$\llangle M_2,\dots, M_n\rrangle_R\to\llangle M_1,M_{2},\dots, M_n\rrangle_R.$
\end{lemma}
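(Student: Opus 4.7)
The proof will proceed in four stages, one per part, with the technical core concentrated in a strong density input shared between (b)--(d). My plan is as follows.

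Part (a) is essentially formal. Right exactness of $\llangle\cdot\rrangle_R$ follows because $H_0(\Gamma_R,-)$ sends $M$ to the cokernel of $\Gamma_R\otimes M\to M$, and cokernels are right exact; the analogous statement in each variable comes from right exactness of tensor products. For base change, since $D_R=R[C_\gamma]=R\otimes_\bbC D_\bbC$, we have $S\otimes_R\Gamma_R=\Gamma_S$, and the identification $S\llangle M_1,\dots,M_n\rrangle_R=\llangle SM_1,\dots,SM_n\rrangle_S$ then follows from the compatibility of $\otimes_R S$ with multifold tensor products and with cokernels.

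For part (b), the input I will use is the classical ``partial fractions'' surjection $\Gamma_R\twoheadrightarrow\bigoplus_a\bigl(\frakg\otimes R((t_a))/R[[t_a]]\bigr)$, with kernel exactly the diagonal copy $\frakg\otimes R$; this is the statement that a rational function on $\bbP^1$ with poles only in $\{x_a\}$ is determined up to a constant by its principal parts. Granting this, surjectivity of the map in (b) will follow by induction on a ``depth'' of a tensor $v=v_1\otimes\cdots\otimes v_n$: whenever $v_a=\xi\cdot w$ for some $\xi$ of negative mode acting on $M_a$, lift $\xi$ to an element $\zeta\in\Gamma_R$ whose expansion ${}^a\zeta$ realises $\xi$ modulo $R[[t_a]]$ and whose expansions ${}^b\zeta$ at the other punctures lie in sufficiently deep $t_b R[[t_b]]$; then $\zeta\cdot v\equiv 0$ mod $\Gamma_R$, and $\zeta\cdot v-(\xi w)\otimes(\text{rest})$ consists of corrections of strictly smaller depth. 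Iteration reduces each $v_a$ into $M_a(d_a)$. The finite-generation statement will then follow from the fact that $M_a(d_a)$ for $a\neq0$ is a locally finite $\frakg_R$-module (since $M_a\in\bfO^{+,\kappa}_R$) with weight spaces finitely generated over $R$, while $M_0(d_0)$ inherits $\frakp_{R,\nu}$-local finiteness and finite generation from $M_0\in\bfO_R$; a standard weight-space argument then bounds the diagonal $\frakg_R$-coinvariants of $\bigotimes M_a(d_a)$ by finitely many weight spaces of $R$-finite rank.

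For part (c), I will use PBW to identify $M_a\cong S(\bfg^-_{R,a})\otimes_R N_a$ with $\bfg^-_{R,a}=\frakg\otimes t_a^{-1}R[t_a^{-1}]$, so $\bigotimes_{R,a}M_a$ is free over $\bigoplus_a\bfg^-_{R,a}$ with fibre $\bigotimes_{R,a}N_a$. Combining with the short exact sequence $0\to\frakg\otimes R\to\Gamma_R\to\bigoplus_a\bfg^-_{R,a}\to 0$ from the partial-fractions surjection, the depth-reduction argument of (b) shows that the $\Gamma_R$-coinvariants may be computed by first modding out the factorwise action of $\bigoplus_a\bfg^-_{R,a}$, producing $\bigotimes_{R,a}N_a$ by freeness, and then modding out the diagonal $\frakg_R$-action, giving $\langle N_1,\dots,N_n\rangle_R$. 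For part (d), since $\bfM(0)_{R,+}=\Indc_R(R_0)$ with $R_0$ the trivial one-dimensional $\frakg_R$-module and generator $1_0$ annihilated by both $\frakg_R$ and $\bfg_{R,\geqslant 1}$, the map in (d) sends $m_2\otimes\cdots\otimes m_n\mapsto 1_0\otimes m_2\otimes\cdots\otimes m_n$. Surjectivity comes from the same depth-reduction: any $v_1\otimes m$ is equivalent mod $\Gamma_R^{n\text{-punct}}$ to $1_0\otimes m'$ for a suitable $m'$. Injectivity will follow by decomposing $\Gamma_R^{n\text{-punct}}=\Gamma_R^{(n-1)\text{-punct}}+\bfg^-_{R,1}$ modulo $\frakg\otimes R$, and using the argument of (c) applied at the puncture $x_1$ to rewrite the action of $\bfg^-_{R,1}$ on $1_0\otimes(\text{rest})$ as an action of $\Gamma_R^{(n-1)\text{-punct}}$ on the remaining tensor factors.

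The principal obstacle in all of this will be verifying the depth-reduction mechanism uniformly and over an arbitrary deformation ring $R$: while the partial-fractions surjection is obtained by base change from $\bbC$, the inductive reductions in (b)--(d) require one to control, at each step, both the depth at the puncture being cleared and the depths induced on the other punctures, and to ensure the induction terminates while preserving the finite-generation estimates of (b). The rest of the argument is a bookkeeping exercise around this core mechanism.
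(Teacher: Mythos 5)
Your outline reconstructs the Kazhdan--Lusztig arguments to which the paper simply defers — for parts (b)--(d) the paper's proof is essentially a citation of \cite[prop.~9.12, 9.15, 9.18]{KL} together with the remark that those arguments extend from $R=\bbC$ to a general deformation ring — and your treatment of (a) is the same as the paper's. Two places, however, need correction.

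In the depth-reduction step for (b) you ask that ${}^b\zeta$ lie in arbitrarily deep $t_bR[[t_b]]$; this cannot in general be arranged jointly with a prescribed principal part at $x_a$. The partial-fractions surjection $\Gamma_R\twoheadrightarrow\bigoplus_a\frakg\otimes t_a^{-1}R[t_a^{-1}]$ has kernel $\frakg\otimes R$, so given the principal part at $x_a$ you may kill the principal parts at the other punctures and (using the constant kernel) adjust the $t_b^0$-terms, but you cannot impose higher-order vanishing. Fortunately mere regularity ${}^b\zeta\in R[[t_b]]$ is enough: an element of $\bfg_{R,\geqslant 0}$ sends $M_b(d)$ into $M_b(d)$ and an element of $\bfg_{R,\geqslant 1}$ sends $M_b(d)$ into $M_b(d-1)$, so the correction terms at $b\neq a$ do not increase depth, and the induction on total depth still terminates. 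The more serious issue is in your sketch of (c): $\bigoplus_a\bfg^-_{R,a}$ is a quotient $R$-module of $\Gamma_R$ by $\frakg\otimes R$, not a Lie subalgebra acting factorwise. Any lift to $\Gamma_R$ of a negative mode at $x_a$ necessarily acts on the other tensor slots by positive modes, and brackets of lifts at distinct punctures fall back into $\frakg\otimes R$ and into higher modes, so the quotient Lie bracket on $\Gamma_R/(\frakg\otimes R)$ is not the direct-sum bracket. Hence ``first mod out the factorwise $\bigoplus_a\bfg^-_{R,a}$-action, then the diagonal $\frakg_R$-action'' cannot be taken literally, and in particular the injectivity of $\langle N_1,\dots,N_n\rangle_R\to\llangle M_1,\dots,M_n\rrangle_R$ does not follow from PBW freeness alone — this is exactly what \cite[prop.~9.15]{KL} establishes by a filtration argument, and, as the paper notes, even that proof needs work to be extended past the restrictive hypotheses and past $\kappa\in\bbC$. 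Your argument for (d) relies on the same mechanism and inherits the same gap.
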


\vspace{.5mm}

\begin{proof}
Part $(a)$ is obvious. See, e.g.,  \cite[sec.~9.13]{KL}.
For part $(b)$ note that the $\frakg_R$-action on $M_a$ preserves the $R$-submodule
$M_a(d_a)$ for each $a\in A$. Then, the first claim follows from \cite[prop.~9.12]{KL}. 
The proof in loc.~cit.~is done under the hypothesis that $\kappa\in\bbC$.
It extends easily to the case of any $\kappa=\kappa_R\in R$.
The second claim follows from Lemma \ref{lem:O0}$(b)$, the first claim, part $(a)$ and from the fact that 
$\scrO_R$ is Hom finite (over $R$)
and that the tensor product maps $\scrO_R\times\scrO^+_R$ into $\scrO_R$.
Part $(c)$ is proved in \cite[prop.~9.15]{KL}. The proof in loc.~cit.~is done under
some restrictive conditions on the $\frakg_R$-modules $N_a$ and under the hypothesis that $\kappa\in\bbC$, but
it extends  to our setting. 
Part $(d)$ is proved in \cite[prop.~9.18]{KL}. The loc.~cit.~the proof is given
for $R=\bbC$, but it generalizes to our case.
\end{proof}

\vspace{2mm}

From now on, for $R=\bbC$ we drop the subscript $R$ from the notation.

\vspace{2mm}

\begin{ex}\label{ex:8.3}
Let $\bfone=\bfone_R$ denote the parabolic Verma module $\bfM(0)_{R,+}$. 
Assume that $R=K$ is a field and that $\kappa_K\notin\bbQ_{\geqslant 0}$.
Then, we have $\bfone=\bfL(0)_K,$ see e.g., \cite[prop.~2.12]{KL}. 
\end{ex}

\vspace{3mm}

\subsection{The monoidal structure on $\bfO$} 
\label{sec:monoidal}
Let $R=K$ be a field which is an analytic algebra.
Fix $\kappa_K$ in $K\setminus\bbQ_{\geqslant 0}$.

There is a  braided monoidal structure $(\bfO^{+,\kappa}_K,\dot\otimes_K,\bfa_K,\bfc_K)$
on $\bfO^{+,\kappa}_K$ with unit $\bfone$, see \cite{KL} and \S \ref{sec:TP2} below.

We have
a \emph{bimodule category} $(\bfO^{\nu,\kappa}_K,\dot\otimes_K,\bfa,\bfc)$ over
$(\bfO^{+,\kappa}_K,\dot\otimes_K,\bfa_K,\bfc_K)$. This means first that
$\bfO^{\nu,\kappa}_K$ is a left and right \emph{module category} over
$\bfO^{+,\kappa}_K$, see \cite{Os} and \S \ref{sec:TP2} for details,
and that the functors $\bfa_K$, $\bfc_K$
satisfy an analogue of the hexagon axiom that expresses the commutativity
of the left and right actions. 
More precisely,
\begin{itemize}
\item there are bilinear functors 
$\dot\otimes_K:\bfO^{+,\kappa}_K\times\bfO^{\nu,\kappa}_K\to\bfO^{\nu,\kappa}_K$ and
$\dot\otimes_K:\bfO^{\nu,\kappa}_K\times\bfO^{+,\kappa}_K\to\bfO^{\nu,\kappa}_K$
such that $V\dot\otimes_K\,\bullet$ and $\bullet\,\dot\otimes_K V$ are exact for each $V\in\bfO^{+,\kappa}_K$,
\vspace{1mm}
\item there are functorial (left and right) unit isomorphisms for $M\in\bfO^{\nu,\kappa}_K$,
\begin{equation*}
\bfone\dot\otimes_K M\to M,\qquad M\dot\otimes_K \bfone\to M,
\end{equation*}
\item there are functorial associativity isomorphisms
for $V_1,V_2\in\bfO^{+,\kappa}_K$, $M\in\bfO^{\nu,\kappa}_K$,
\begin{equation*}
\gathered
\bfa_{V_1,V_2,M}:(V_1\dot\otimes_K V_2)\dot\otimes_K M\to 
V_1\dot\otimes_K(V_2\dot\otimes_K M),\\
\bfa_{V_1,M,V_2}:(V_1\dot\otimes_K M)\dot\otimes_K V_2\to 
V_1\dot\otimes_K(M\dot\otimes_K V_2),\\
\bfa_{M,V_1,V_2}:(M\dot\otimes_K V_1)\dot\otimes_K V_2\to 
M\dot\otimes_K(V_1\dot\otimes_K V_2),
\endgathered
\end{equation*}
\item there are functorial commutativity isomorphisms
for $V\in\bfO^{+,\kappa}_K$, $M\in\bfO^{\nu,\kappa}_K$,
\begin{equation*}
\gathered
\bfc_{V,M}:V\dot\otimes_K M\to M\dot\otimes_K V,
\endgathered
\end{equation*}
\item $\bfone$, $\bfa$ satisfy the triangle axioms (left and right)
for $V\in\bfO^{+,\kappa}_K$, $M\in\bfO^{\nu,\kappa}_K$,
\begin{equation*}
\begin{split}
\xymatrix{(V\dot\otimes_K\bfone)\dot\otimes_K M\ar[rr]\ar[dr]&&&V\dot\otimes_K(\bfone\dot\otimes_K M)\ar[dl]\\
&V\dot\otimes_K M,&}
\end{split}
\end{equation*}
\item  $\bfa$ satisfies the pentagon axiom (left and right) for $V_1,V_2, V_3\in\bfO^{+,\kappa}$, $M\in\bfO^{\nu,\kappa}$,
\begin{equation*}
\begin{split}
\xymatrix{
V_1\dot\otimes_K((V_2\dot\otimes_K V_3)\dot\otimes_K M)\ar[rr]&&&V_1\dot\otimes_K(V_2\dot\otimes_K
(V_3\dot\otimes_K M))\\
(V_1\dot\otimes_K(V_2\dot\otimes_K V_3))\dot\otimes_K M\ar[u]&&&(V_1\dot\otimes_K V_2)
\dot\otimes_K(V_3\dot\otimes_K M)\ar[u]\\
&((V_1\dot\otimes_K V_2)\dot\otimes_K V_3)\dot\otimes_K M,\ar[ur]\ar[ul]&}
\end{split}
\end{equation*}
plus the diagrams obtained by cyclic permutation of $M,V_1,V_2,V_3,$
\item  $\bfa_K$, $\bfc_K$ satisfy the hexagon axiom for $V_1,V_2\in\bfO^{+,\kappa}_K$, $M\in\bfO^{\nu,\kappa}_K$,
\begin{equation*}
\begin{split}
\xymatrix{
&V_1\dot\otimes_K(M\dot\otimes_K V_2)\ar[r]&(M\dot\otimes_K V_2)\dot\otimes_K V_1\ar[dr]&\\
(V_1\dot\otimes_K M)\dot\otimes_K V_2\ar[ur]\ar[dr]&&&M\dot\otimes_K(V_2\dot\otimes_K V_1)\\
&(M\dot\otimes_K V_1)\dot\otimes_K V_2\ar[r]&M\dot\otimes_K(V_1\dot\otimes_K V_2),\ar[ur]}
\end{split}
\end{equation*}
plus the diagrams obtained by cyclic permutation of $M,V_1,V_2$.
\end{itemize}

\vspace{2mm}

\begin{rk}
The notion of \emph{bimodule functors}, and, in particular, of equivalence of bimodule
categories is defined in the obvious way. Generally one impose the functor
$\dot\otimes$ to be biexact. Our choice simplifies the exposition in the rest of the paper.
\end{rk}

\vspace{2mm}

Now, let us recall the construction of $\dot\otimes_K$.
Let $A=[-m,n]$ and fix an admissible system of charts $\gamma$.
Given a smooth $\bfg_\kappa$-module $M_a$ for each $a\in A$, we consider the functor
$M\mapsto\llangle M_{-m},\dots,M_n,DM\rrangle_K^*.$

\vspace{2mm}

\begin{prop} 
\label{prop:TP1}
Assume that $M_0\in\bfO^{\nu,\kappa}_K$  and that $M_a\in\bfO^{+,\kappa}_K$ for $a\neq 0$.

(a) There is a module 
$\dot\bigotimes_{K,a}M_a\in\bfO^{\nu,\kappa}_K$ such that,
for each $M\in\scrS_\kappa,$ we have
$\Hom_{\bfg_K}(\dot\Otimes_{K,a}M_a, M)=
\llangle M_{-m},\dots,M_n,DM\rrangle_K^*.$

(b) We have a functorial  isomorphism
$\llangle \dot\Otimes_{K,a}M_a, DM\rrangle_K
=\llangle M_{-m},\dots,M_{n},D M\rrangle_K.$
\end{prop}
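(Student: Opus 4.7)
The strategy is to construct $\dot\bigotimes_{K,a}M_a$ as a representing object for the functor
\[
\Phi:\scrS_{K,\kappa}\to K\text{-}\mathrm{Mod},\qquad M\mapsto\llangle M_{-m},\dots,M_n,DM\rrangle_K^*,
\]
and to verify that it lies in $\bfO^{\nu,\kappa}_K$. First I would note that, by Remark \ref{rk:soergel} and the construction of $\bfO^{\nu,\kappa}_K$, it suffices to work with the $\bfg'_K$-module structure; the action of $\partial$ is then recovered via $-\frakL_0$ on the image in the representable object.

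The first step is a finiteness reduction. Fix a truncation parameter $\beta\in\widehat P^\nu_K$ and consider $M\in\lub\bfO^{\nu,\kappa}_K$. Using Lemma \ref{lem:TP0}(b) applied to integers $d_a$ with $M_a(d_a)$ generating $M_a$, the coinvariants $\llangle M_{-m},\dots,M_n,DM\rrangle_K$ are a quotient of $\langle M_{-m}(d_{-m}),\dots,M_n(d_n),DM(d)\rangle_K$ for some $d$, hence finite-dimensional over $K$ because $\scrO^\nu_K$ and $\scrO^+_K$ are Hom-finite. So $\Phi$ restricted to $\lub\bfO^{\nu,\kappa}_K$ takes values in finite-dimensional vector spaces. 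Moreover, since $D$ is an exact functor that commutes with base change (Lemma \ref{lem:duality}) and $(DM)(d)$ is dual to $M/Q_{K,-d}M$, $\Phi$ is a left exact functor on $\scrS_{K,\kappa}$.

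Next I would prove representability. The natural candidate is obtained as follows: the module $V=\Otimes_{K,a}M_a$ is a $\bfg^{A}_{K,\kappa}$-module, and the homomorphism $\Gamma_K\hookrightarrow\bfg^{A}_{K,\kappa}$ makes it a $\Gamma_K$-module. One defines, for each truncation level $\beta$,
\[
N_\beta=V/\bigl(\Gamma_K\cdot V+V_{>\beta}\bigr),
\]
where $V_{>\beta}$ is the weight submodule with weights not $\le\beta$; one then shows that $N_\beta$ inherits a $\bfg_{K,\kappa}$-action via the residue-at-$x_0$ projection, lives in $\lub\bfO^{\nu,\kappa}_K$, and represents the truncation of $\Phi$ to $\lub\scrS_{K,\kappa}$. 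Passing to the direct limit $\dot\bigotimes_{K,a}M_a=\varinjlim_\beta N_\beta$ yields an object of $\bfO^{\nu,\kappa}_K$ representing $\Phi$. The key identity to check is that $\Hom_{\bfg_K}(N_\beta,M)\simeq \llangle M_{-m},\dots,M_n,DM\rrangle_K^*$ for $M\in\lub\scrS_{K,\kappa}$, which uses that $DM$ is characterized by $\Hom_K(DM,K)\simeq M(\infty)$ together with the residue theorem encoded in the definition of the central extension $\bfg^A_K$. This identification of $\Hom$ with a linear dual of coinvariants is the main technical obstacle: one must check compatibility of the $\Gamma_K$-action on $V$ with the $\bfg_K$-action at $x_0$ on $M$, which is where the central charge $\kappa$ and the cocycle defining $\bfg^A_K$ enter essentially.

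For part (b), I would unwind the defining property of part (a) applied to $M=\dot\bigotimes_{K,a}M_a$. The identity morphism on $\dot\bigotimes_{K,a}M_a$ corresponds, under
\[
\Hom_{\bfg_K}\bigl(\dot\Otimes_{K,a}M_a,\dot\Otimes_{K,a}M_a\bigr)\simeq\llangle M_{-m},\dots,M_n,D(\dot\Otimes_{K,a}M_a)\rrangle_K^*,
\]
to a distinguished linear form, hence to a canonical element of $\llangle M_{-m},\dots,M_n,D(\dot\Otimes_{K,a}M_a)\rrangle_K$. Dually, the unit of representability yields a canonical map between the two coinvariant spaces in (b). To see it is an isomorphism, I would apply Lemma \ref{lem:TP0}(d) together with cyclic symmetry of coinvariants to reduce both sides to the common quotient of $\Otimes_{K,a}M_a\otimes_K DM$ by the $\Gamma_K$-action (for a chart now including a marked point for the extra factor), the two coinvariant spaces being identified through the universal property of $\dot\Otimes_{K,a}M_a$ and the exactness properties of $D$ established in Lemma \ref{lem:duality}. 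This step is largely formal once the representability in (a) is in hand.
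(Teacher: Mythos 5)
Your overall strategy (construct a representing object for $M\mapsto\llangle M_{-m},\dots,M_n,DM\rrangle_K^*$ and check it lives in $\bfO^{\nu,\kappa}_K$) is the right one; the paper's proof of part (a) is in fact a citation to Yakimov~\cite[def.~1.2, thm.~1.6]{Y1}, whose result is exactly this representability statement once one observes that $\bfO^{\nu,\kappa}_K$ consists precisely of the finitely generated smooth $\bfg_{K,\kappa}$-modules $M$ with $M(d)\in\scrO^\nu_K$ for all $d$. Your part (b) argument, reducing the identification of coinvariant spaces to the universal property via the extra-slot trick and cyclic symmetry, is also in the spirit of \cite[sec.~7.10, 13.4]{KL}.

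The genuine gap is in your explicit candidate $N_\beta=V/\bigl(\Gamma_K\cdot V+V_{>\beta}\bigr)$. With $A=[-m,n]$ and $\Gamma_K=\frakg\otimes K[C_\gamma]$ the Lie algebra of $\frakg$-valued regular functions on the curve punctured at \emph{all} the points $x_a$, $a\in A$, the quotient $V/\Gamma_K V$ is just the coinvariant space $\llangle M_{-m},\dots,M_n\rrangle_K$ --- a finite-dimensional vector space with no residual $\bfg_{K,\kappa}$-action. There is no ``residue-at-$x_0$ projection'' that can produce a $\bfg_K$-action on this quotient, because $x_0$ is one of the punctures: any would-be action of $\xi\otimes f(t_0)$ comes from a function regular away from the punctures and hence lies inside $\Gamma_K$, which you have already killed. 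To obtain a $\bfg_K$-module structure one must add an \emph{extra} marked point $x_\heartsuit$ (not carrying any $M_a$), and the construction is not a quotient of $V$ but rather a completion followed by a restricted dual: filter $W=\Otimes_a M_a$ by the subspaces $G_d W$ where $G_d$ is spanned by products of $d$ elements of $\frakg\otimes\{f: f(x_\heartsuit)=0\}$, pass to the projective limit $\widehat W$, take the restricted (weightwise) dual $Z_\infty$ --- this is where the smooth $\bfg'_{K,\kappa}$-action appears, via expansion at $x_\heartsuit$ --- and finally set $\dot\Otimes_{K,a}M_a=DZ_\infty$. This is exactly the $R$-deformed construction worked out in Appendix A.2 of the paper (see the definition of $W_{R,d}$, $\widehat W_R$, $Z_{R,\infty}$ and the proof of Lemma \ref{lem:A.7}); your quotient $N_\beta$ only captures the first term $W/G_1W$ of the filtration and, crucially, fails to carry a $\bfg_K$-action, so the passage ``$\varinjlim_\beta N_\beta\in\bfO^{\nu,\kappa}_K$'' cannot go through.
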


\vspace{.5mm}

\begin{proof} 
It is easy to prove that $\bfO^{\nu,\kappa}_K$ is the category of the finitely generated smooth
$\bfg_{K,\kappa}$-modules $M$ such that $M(d)$ belongs to $\scrO^\nu_K$ for all $d\geqslant 1$.
Thus, part $(a)$ follows from \cite[def.~1.2, thm.~1.6]{Y1}.
Part $(b)$ is proved as in \cite[sec.~7.10, 13.4]{KL}.
\end{proof}

\vspace{2mm}

\begin{rk}
The $\bfg_{K,\kappa}$-module $\dot\bigotimes_{K,a}M_a$
does not depend on the choice of the admissible system of charts, up to a canonical isomorphism.
\end{rk}

\vspace{2mm}

Consider the charts $\gamma_{-1}$, $\gamma_0$, $\gamma_1$ centered at 
$1$, $\infty$, $0$ respectively, given in \cite[sec.~13.5]{KL}.  Let $A=[-1,0]$.
Then, Proposition \ref{prop:TP1} yields modules $V\dot\otimes_K M$ and
$M\dot\otimes_K V$ in $\bfO^{\nu,\kappa}_K$
for each $V\in\bfO^{+,\kappa}$, $M\in\bfO^{\nu,\kappa}_K$. 
The following is proved in \S \ref{sec:TP2}.

\vspace{2mm}

\begin{prop} \label{prop:TP2}
The functors $\dot\otimes_K:\bfO^{+,\kappa}_K\times\bfO^{\nu,\kappa}_K\to\bfO^{\nu,\kappa}_K$ and 
$\dot\otimes_K:\bfO^{\nu,\kappa}_K\times\bfO^{+,\kappa}_K\to\bfO^{\nu,\kappa}_K$ give
a bimodule category $(\bfO^{\nu,\kappa}_K,\dot\otimes_K,\bfa_K,\bfc_K)$ over 
the braided monoidal category $(\bfO^{+,\kappa}_K,\dot\otimes_K,\bfa_K,\bfc_K)$.
\qed
\end{prop}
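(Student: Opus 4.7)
The plan is to deduce Proposition \ref{prop:TP2} from a multi-point generalization of Proposition \ref{prop:TP1}, following closely the Kazhdan-Lusztig scheme \cite{KL} but with one factor allowed to live in the parabolic category $\bfO^{\nu,\kappa}_K$. First, for $A=[-m,n]$, an admissible system of charts $\gamma$, one distinguished position $a_0\in A$, and modules $M_{a_0}\in\bfO^{\nu,\kappa}_K$, $M_a\in\bfO^{+,\kappa}_K$ for $a\neq a_0$, I would repeat the representability argument of Proposition \ref{prop:TP1} to produce a module $\dot\Otimes_{K,a}M_a\in\bfO^{\nu,\kappa}_K$ representing $N\mapsto \llangle M_{-m},\dots,M_n,DN\rrangle^*_K$. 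The existence is again guaranteed by \cite[thm.~1.6]{Y1} once one checks that the functor is representable as a finitely generated smooth module, which is immediate from Lemma \ref{lem:TP0}$(b)$ together with the characterization of $\bfO^{\nu,\kappa}_K$ as the finitely generated smooth $\bfg_{K,\kappa}$-modules $M$ with $M(d)\in\scrO^\nu_K$ for all $d$.

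Next, I would establish independence of $\dot\Otimes_{K,a}M_a$ from the choice of admissible charts, up to a canonical isomorphism, exactly as in \cite[sec.~13.3]{KL}; the argument there is local near each $x_a$ and depends only on the existence of a suitable pairing with the duals $DN$, hence transfers verbatim to the mixed setting provided $a_0\in A$ is fixed. From this independence I would extract the associativity isomorphisms $\bfa_K$: the two tensor products $(V_1\dot\otimes_K V_2)\dot\otimes_K M$ and $V_1\dot\otimes_K(V_2\dot\otimes_K M)$ both represent the same triple-coinvariants functor $N\mapsto\llangle V_1,V_2,M,DN\rrangle^*_K$ (with the admissible charts centered at three distinct points), and the uniqueness of representing objects produces the comparison isomorphism. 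The same procedure, with two factors of $\bfO^{+,\kappa}_K$ on either side of $M$, yields $\bfa_{V_1,M,V_2}$ and $\bfa_{M,V_1,V_2}$. The commutativity isomorphisms $\bfc_{V,M}$ arise by monodromy: varying the chart centers $x_{-1},x_0$ along a path in $C^2\setminus\mathrm{diag}$ that exchanges them produces an isomorphism between $V\dot\otimes_K M$ and $M\dot\otimes_K V$, again because both sides represent coinvariants functors that are canonically identified by parallel transport along the path, as in \cite[sec.~18]{KL}.

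The unit isomorphisms follow directly from Lemma \ref{lem:TP0}$(d)$: taking $M_1=\bfone$ there gives $\llangle\bfone,M_2,\dots,M_n,DN\rrangle_K\iso \llangle M_2,\dots,M_n,DN\rrangle_K$, naturally in $N$, whence the representing objects agree. The triangle axioms then amount to the naturality of this identification with respect to the associativity maps, which is built into the four-point coinvariants model.

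The main technical step, and the one I expect to absorb the bulk of the work, is the verification of the pentagon and hexagon axioms. For the pentagon, both sides of each diagram are expressed as comparison isomorphisms between representing objects for a single four-point coinvariants functor $N\mapsto\llangle\bullet,\bullet,\bullet,\bullet,DN\rrangle^*_K$ (with exactly one entry in $\bfO^{\nu,\kappa}_K$), and all coherence morphisms are instances of chart independence; the pentagon then follows from the fact that in the space of admissible four-point systems of charts one can deform any bracketing path into any other. For the hexagon expressing the bimodule compatibility, the argument is the same but now the deformations are in a configuration space of three ordered points and involve braid moves; the relevant check is that the three ways of comparing the representing objects for $\llangle V_1,M,V_2,DN\rrangle^*_K$ under cyclic permutations agree, which is guaranteed by the cyclic invariance $\llangle M_1,\dots,M_n\rrangle_K=\llangle M_2,\dots,M_n,M_1\rrangle_K$ recalled after the definition of coinvariants. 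The hard part is ensuring that the parallel transport used to define $\bfc_K$ on mixed tensors is compatible with the one used on pure $\bfO^{+,\kappa}_K$ tensors, which reduces to checking that the Knizhnik-Zamolodchikov-type flat connection underlying the construction is the same on both categories; this is done by realizing both connections as restrictions of a single connection on the universal bundle of coinvariants over the configuration space, as in the appendix referred to in the paper.
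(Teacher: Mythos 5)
Your sketch follows the paper's general scheme for constructing the data $\bfa_K$, $\bfc_K$ (multi-point fusion, representing functors for coinvariants, chart-independence, the unit from Lemma \ref{lem:TP0}$(d)$), but it diverges at the crucial step of verifying the pentagon and hexagon axioms, and this is where there is a genuine gap.

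You argue that the pentagon and hexagon follow ``from the fact that in the space of admissible four-point systems of charts one can deform any bracketing path into any other,'' and for the hexagon ``the relevant check is that the three ways of comparing the representing objects \ldots\ agree, which is guaranteed by cyclic invariance.'' This is not a proof, and it does not reduce to anything established earlier. The associativity constraint $\bfa_{V_1,V_2,M}$ is not a chart-independence isomorphism between two presentations of the same coinvariants functor; it is built by composing two two-point fusions with the sewing theorem (the analogue of \cite[thm.~17.29]{KL}), and the sewing theorem is an analytic statement about flat sections of a KZ-type connection near the boundary of the configuration space, not a combinatorial re-parametrization. The pentagon compares two composites of sewing isomorphisms, and whether these composites agree is precisely the non-trivial content. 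Similarly, the braiding $\bfc$ involves an explicit analytic continuation through $\exp(\sqrt{-1}\pi\frakL_0)\exp(\frakL_1)$, and its compatibility with $\bfa$ does not follow from cyclic invariance of coinvariants, which only controls the shape of the representing functor for a \emph{fixed} chart system.

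The paper handles this step by a deformation to the Drinfeld category: it passes to $R_\infty=K[[\varpi]]$ with $\kappa_{R_\infty}=-1/\varpi$, constructs a braided/bimodule functor $\bfO^{+,\kappa,\Delta}_R\to\bfO^{+,\kappa,\Delta}_{R_\infty}$ and an equivalence $\bfO^{+,\kappa,\Delta}_{R_\infty}\simeq\scrO^{+,\Delta}_\infty$ with the Drinfeld category carrying the KZ associator (Lemma \ref{prop:Comp0}), where the axioms hold by Drinfeld's theorem, and then specializes back. This is the ingredient missing from your argument: the pentagon and hexagon are imported from a category where they are known, rather than deduced directly from topology of configuration spaces. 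If you wanted to pursue a direct configuration-space proof you would at minimum have to re-prove the compatibility of sewing with degeneration to the boundary of $\overline{\scrM}_{0,n}$, which is not lighter than the Drinfeld-category route and is not what the paper does.
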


\vspace{2mm}

Note that $\scrO^{+,\Delta}_K=\scrO^{+}_K$, because the category is semi-simple.
The tensor product equips $[\scrO^{+,\Delta}_K]$ with a commutative 
$\bbC$-algebra structure and $[\scrO^{\nu,\Delta}_K]$ with a bimodule structure over 
$[\scrO^{+,\Delta}_K]$.

\vspace{2mm}

\begin{prop} \label{prop:TP2bis}
(a) The functor $\dot\otimes_K$ preserves the $\Delta$-filtered modules.

(b) The functor $\dot\otimes_K$ preserves the tilting modules.

(c)  The functor $\dot\otimes_K$ is biexact on $\bfO^{+,\kappa,\Delta}_K\times\bfO^{\nu,\kappa,\Delta}_K$ and on
$\bfO^{\nu,\kappa,\Delta}_K\times\bfO^{+,\kappa,\Delta}_K$. It equips $[\bfO^{+,\kappa,\Delta}_K]$ with a commutative 
$\bbC$-algebra structure and $[\bfO^{\nu,\kappa,\Delta}_K]$ with a bimodule structure over 
$[\bfO^{+,\kappa,\Delta}_K]$.

(d) The functor $\scrI\!nd$ gives a $\bbC$-algebra isomorphism
$[\scrO^{+,\Delta}_K]\to[\bfO^{+,\kappa,\Delta}_K]$ and a module isomorphism
$[\scrO^{\nu,\Delta}_K]\to[\bfO^{\nu,\kappa,\Delta}_K]$.
\qed
\end{prop}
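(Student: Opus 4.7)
The plan is to deduce all four statements from a single key computation on parabolic Verma modules. The core observation is that every parabolic Verma $\bfM(\lambda)_{K,\nu} = \Indc_K(V(\lambda)_{K,\nu})$ is a generalized Weyl module, so by Lemma \ref{lem:TP0}(c) the coinvariants $\llangle \bfM(\lambda_1)_{K,+}, \ldots, \bfM(\lambda_n)_{K,+}, DM\rrangle_K$ reduce to ordinary $\frakg_K$-coinvariants $\langle V(\lambda_1)_K, \ldots, V(\lambda_n)_K, N \rangle_K$ of finite-dimensional $\frakg_K$-modules (where $N$ is obtained from $M$ by restriction). Combined with Proposition \ref{prop:TP1}, this will identify the fusion product of parabolic Vermas with a module built by induction from the tensor products $V(\lambda_i)_K$.

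For part (a), I would show that for $V = \bfM(\mu)_{K,+}$ and $M = \bfM(\lambda)_{K,\nu}$, the module $V \dot\otimes_K M$ carries a $\Delta$-filtration whose subquotients are the parabolic Vermas $\bfM(\xi)_{K,\nu}$ indexed (with multiplicities) by the Jordan–Hölder factors of the $\frakm_{K,\nu}$-module $V(\mu)_K \otimes V(\lambda)_K$. This is the classical propagation of vacua: one considers the sheaf of coinvariants over the configuration space of marked points on $\bbP^1$ and degenerates the two marked points together (fusion) versus apart (ordinary tensor product). Flatness of this sheaf in a neighborhood of the diagonal — which is the technical content postponed to Section \ref{sec:TP2} and the appendix — yields the $\Delta$-filtration on fusion, matching the decomposition of $V(\mu)_K \otimes V(\lambda)_K$ as an $\frakm_{K,\nu}$-module. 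The general case of (a) follows by induction on the length of a $\Delta$-filtration of the two factors, using the a priori exactness of $V \dot\otimes_K -$ and $- \dot\otimes_K V$ on $\bfO^{\nu,\kappa}_K$ from Proposition \ref{prop:TP2}.

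Part (b) then follows from (a) via the BGG duality $\scrD$ of Lemma \ref{lem:duality}. The construction of $\dot\otimes_K$ via coinvariants paired against $DM$ is manifestly compatible with $\scrD$: one gets a natural isomorphism $\scrD(V \dot\otimes_K M) \simeq \scrD V \dot\otimes_K \scrD M$ (with appropriate adjustments via the antiinvolution $\dag$). Since tilting modules are the intersection $\bfO^{\nu,\kappa,\Delta}_K \cap \bfO^{\nu,\kappa,\nabla}_K$ and $\scrD$ exchanges the two, (a) applied to both $M$ and $\scrD M$ implies (b). For part (c), biexactness on $\Delta$-filtered pairs is immediate from (a) together with the exactness of $V \dot\otimes_K -$ on $\bfO^{\nu,\kappa}_K$; the algebra and bimodule structures on Grothendieck groups are then well-defined. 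For part (d), the detailed form of the filtration produced in (a) shows that $[\bfM(\mu)_{K,+} \dot\otimes_K \bfM(\lambda)_{K,\nu}] = [\scrI\!nd(M(\mu)_K \otimes M(\lambda)_{K,\nu})]$ in $[\bfO^{\nu,\kappa,\Delta}_K]$, so $\scrI\!nd$ intertwines the two tensor structures at the level of classes; since $\scrI\!nd$ maps the basis of (parabolic) Verma classes bijectively onto itself, it is an isomorphism.

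The main obstacle, and the reason most of the technical work is postponed, is establishing the $\Delta$-filtration in (a): one has to build an explicit filtration on the fusion product of two parabolic Vermas whose subquotients are themselves parabolic Vermas in the correct multiplicities. This requires the full apparatus of sheaves of coinvariants over the moduli of pointed $\bbP^1$'s (following Kazhdan–Lusztig \cite{KL} and Yamada \cite{Y1}), proving local freeness and computing the fibers at both the fusion (coinciding) and the ordinary-tensor (separated) degenerations. Once this geometric input is in place, (b), (c) and (d) are formal consequences of (a), the existence of the BGG duality, and Lemma \ref{lem:TP0}(c).
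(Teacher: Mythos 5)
Your overall strategy is sound and shares the paper's two central inputs — the reduction of coinvariants of generalized Weyl modules to ordinary $\frakg_K$-coinvariants (Lemma~\ref{lem:TP0}(c)) and the representability of the tensor product (Proposition~\ref{prop:TP1}) — but the actual arguments you propose for (a) and (b) are genuinely different from the paper's, and one of them would require more work than you acknowledge.

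For (a), you propose to build an \emph{explicit} $\Delta$-filtration on the fusion of two parabolic Vermas by degenerating marked points on $\bbP^1$ and invoking flatness of the sheaf of coinvariants. The paper never constructs such a filtration. Instead it proves $\Delta$-filteredness by $\Ext^1$-vanishing: for $M_3 \in \Delta(\lub\bfO^{\nu,\kappa}_K)$ it shows $\Ext^1(M_1\dot\otimes_K M_2,\scrD M_3)=0$ by choosing a projective presentation $0\to Q\to P\to M_3\to 0$ and then reducing the required surjectivity to a \emph{numerical} identity on coinvariant spaces, which is the content of Lemma~\ref{lem:TP3}. That lemma computes $\dim\llangle M_1,M_2,{}^\dag M_3\rrangle_K$ by reducing to generalized Weyl modules and using left-exactness to promote an inequality to an equality. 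This is strictly less machinery than the propagation-of-vacua/flat-degeneration picture; the sheaf-over-configuration-space apparatus appears in the appendix only to construct the associativity constraint (Lemma~\ref{lem:a}), not to build $\Delta$-filtrations. Your route is morally right but would need you to actually develop the local-freeness statement, which the paper sidesteps.

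For (b), you invoke a natural isomorphism $\scrD(V\dot\otimes_K M)\simeq \scrD V\dot\otimes_K \scrD M$ and call it "manifestly compatible." This is not manifest: the representing object is defined by pairing against $DM$, and $\scrD={}^\dag D$ differs from $D$ by the involution $\dag$, which does not obviously interact with the coinvariant construction (indeed $D$ does \emph{not} preserve $\Delta$-filtrations or simple objects in the parabolic category — that is exactly why $\scrD$ is introduced). The paper avoids this entirely: it uses the rigidity/biadjunction of Lemma~\ref{lem:TP2a} to transport the $\Ext^1$-computation to a pairing of a $\Delta$-filtered object (obtained from (a)) against a tilting object, which vanishes for free. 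If you want to salvage the duality route, you would need a separate lemma establishing the $\scrD$-compatibility at the level of the coinvariants construction, including the $\dag$-bookkeeping; as written this is a gap.

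Two smaller remarks. Your description of the filtration's subquotients as indexed by the Jordan–Hölder factors of $V(\mu)_K\otimes V(\lambda)_K$ as an $\frakm_{K,\nu}$-module is not quite what Lemma~\ref{lem:TP3} gives: the multiplicities there are $(L(\lambda_1)\otimes M(\lambda_2)_\nu : M(\lambda_3)_\nu)$, i.e. $\Delta$-multiplicities in $\scrO^\nu_K$ of a finite-dimensional $\frakg_K$-module tensored with a parabolic Verma, not an $\frakm_{K,\nu}$-tensor-product decomposition. Finally, for (c) and (d) your argument matches the paper closely, with (d) following immediately from Lemma~\ref{lem:TP3} once (a) is in place.
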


\vspace{2mm}

\begin{rk} \label{rk:balancing} By \cite[prop.~31.2]{KL} the braided monoidal category 
$(\bfO^{+}_K,\dot\otimes_K,\bfa,\bfc)$ admits a \emph{balancing}. 
More precisely, we have
\begin{equation}\label{balancing}
\bfc^2=\exp(-2\pi\sqrt{-1}\frakL_0)\big(\exp(2\pi\sqrt{-1}\frakL_0)\dot\otimes_K\exp(2\pi\sqrt{-1}\frakL_0)\big).
\end{equation}
The proof in loc.~cit.~
implies that \eqref{balancing} holds also for tensor products of modules from
$\bfO^{\nu,\kappa}_K$ and $\bfO^{+,\kappa}_K$.
\end{rk}

\vspace{2mm}

\begin{rk} 
In \cite{KL}, \cite{Y1} the tensor product $\dot\otimes_K$ is defined over the field $K=\bbC$ for any
$\kappa\notin\bbQ_{\geqslant 0}$.
The same definition works equally well over any field $K$ containing $\bbC$ and for any
$\kappa\in K\setminus\bbQ_{\geqslant 0}$, see Remark \ref{rem:TPK} below.
If $K=\bbC$ we remove the subscript $K$ everywhere from the notation.
\end{rk}

\vspace{3mm}

\subsection{The monoidal structure on $\bfO$ over a ring}
\label{sec:KLotimes,II}
Let $R$ be either a field or a regular local ring of dimension $\leqslant 2$ 
with residue field $\Bbbk$.
Assume that $\kappa_\Bbbk=-e$ where $e$ is a positive integer.
The following is proved in \S \ref{sec:TPR1}.

\vspace{2mm}

\begin{prop}\label{prop:TPR1} 
(a) Assume that $A=[-m,n]$, $M_0\in \bfO^{\nu,\kappa,\Delta}_R$ and $M_a\in\bfO^{+,\kappa,\Delta}_R$ for $a\neq 0$.
Then, there is a module $\dot\Otimes_{R,a}M_a$
in $\bfO^{\nu,\kappa,\Delta}_R$ which is functorial in the $M_a$'s and such that
$\Hom_{\bfg_R}(\dot\Otimes_{R,a}M_a,M)=\llangle M_{-m},\dots,M_{n},D M\rrangle_R^*$
for each $M\in\bfO^{\nu,\kappa,f}_R$.

(b) The functor $\dot\otimes_R$ commutes with flat base change (of the ring $R$).
\qed
\end{prop}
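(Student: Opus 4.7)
The plan is to build $\dot\Otimes_{R,a} M_a$ by first handling generalized Weyl module inputs and then propagating along $\Delta$-filtrations, controlling $R$-flatness via base change to residue fields. First, suppose each $M_a = \Indc_R(N_a)$ is a generalized Weyl module with $N_0 \in \scrO^\nu_R$ and $N_a \in \scrO^+_R$ for $a \ne 0$. By Lemma \ref{lem:TP0}(c) the affine coinvariants reduce to the Lie--algebra coinvariants $\langle N_{-m},\ldots,N_n,DM\rangle_R$, and the representability construction underlying Proposition \ref{prop:TP1}(a) (due to Yamane) can be carried out verbatim over the base ring $R$: the representing object is obtained as an explicit subquotient of an induced module built on $\Otimes_{R,a} N_a$. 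The $R$-freeness of the $N_a$'s (coming from their $\Delta$-filtered structure in $\scrO$) ensures the output lies in $\bfO^{\nu,\kappa,f}_R$, and its specialization at any $\Bbbk \in \frakM$ recovers the known tensor product of Proposition \ref{prop:TP1}.

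For general $\Delta$-filtered inputs I would induct on the total $\Delta$-length of the tuple $(M_a)$. Each $M_a$ fits into a short exact sequence whose first term is a parabolic Verma module and whose cokernel is $\Delta$-filtered of strictly smaller length. Applying this slot by slot and using the right exactness of coinvariants (Lemma \ref{lem:TP0}(a)), the Yoneda lemma produces the tensor product as a cokernel in $\bfO^{\nu,\kappa}_R$ of a morphism between already--constructed tensor products; functoriality in the $M_a$'s is built in throughout. To see that the output is $\Delta$-filtered, I would specialize at each $\Bbbk \in \frakM$: Lemma \ref{lem:TP0}(a) identifies the specialization with the $\Bbbk$-tensor product, which is $\Delta$-filtered by Proposition \ref{prop:TP2bis}(a); Proposition \ref{prop:introhw}(c), applied inside a truncated subcategory $\lub\bfO^{\nu,\kappa}_R$, then lifts this pointwise statement to the required $R$-level $\Delta$-filtration.

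Part (b) is immediate once (a) is known. For a flat $R$-algebra $S$ and $N \in \bfO^{\nu,\kappa,f}_S$, base change of the defining adjunction together with Lemma \ref{lem:TP0}(a) yields
$$S\,\Hom_{\bfg_R}(\dot\Otimes_{R,a} M_a,\,N) \simeq S\,\llangle M_{-m},\ldots,M_n,DN\rrangle_R^* \simeq \llangle SM_{-m},\ldots,SM_n,DN\rrangle_S^*,$$
so $S\,\dot\Otimes_{R,a} M_a$ satisfies the universal property defining $\dot\Otimes_{S,a}(SM_a)$.

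The main obstacle is the first stage: adapting Yamane's representability argument from a field to the ring $R$, where potential $R$-torsion in the affine coinvariants has to be controlled. The $\Delta$-filtered hypothesis on the inputs is what makes this possible, since it forces every module involved in the Yamane construction (tensor products of the $N_a$'s, the induced modules, and the cokernels of the defining relations) to remain $R$-flat, so that the field-level argument transports cleanly. The remaining work is bookkeeping: ensuring that the chart data and the coinvariant identifications are compatible with base change at every step, which is why the detailed construction is naturally deferred to the appendix.
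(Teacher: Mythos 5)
Your plan of reducing to generalized Weyl modules along $\Delta$-filtrations and then propagating through cokernels sidesteps the genuine difficulty, which is to produce a representing object that is \emph{free over $R$}. This is precisely where the hypothesis that $R$ is regular of dimension $\leq 2$ enters, and your proposal never uses it. The paper's construction takes $\dot\Otimes_{R,a}M_a = DZ_{R,\infty}$, where $Z_{R,\infty}$ is the filtered union of the $R$-duals $Z_{R,d,\lambda}=(W_{R,d,\lambda})^*$ of the weight pieces of the filtration quotients of $W_R=\bigotimes_{R,a}M_a$. The crucial point is Lemma \ref{lem:fg}(b): each $Z_{R,d,\lambda}$ is finitely generated and \emph{reflexive} (being a dual), and over a regular local ring of dimension $\leq 2$ a finitely generated reflexive module is projective. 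This is what makes $Z_{R,\infty}$ flat, hence puts $DZ_{R,\infty}$ in $\bfO^{\nu,\kappa,f}_R$. Your claim that the $\Delta$-filtered hypothesis on the inputs "forces every module involved to remain $R$-flat" is not correct: coinvariants are cokernels, and the cokernel of a map of free modules over a two-dimensional regular local ring typically has torsion (e.g.\ $R\to R$, multiplication by a nonunit). So "carrying Yamane's argument out verbatim" glosses over exactly the step that the paper has to work for.

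There is a second gap in the way you propose to prove the output is $\Delta$-filtered. Appealing to Proposition \ref{prop:introhw}(c) is circular: that statement lets you detect $\scrC^\Delta$ from the residue field only once you already know the module is projective over $R$, which is the point in question. The paper instead verifies the Ext-vanishing criterion for $\Delta$-filteredness directly (Lemma \ref{lem:A16}), reducing an exactness statement about coinvariants to the closed and generic fibers of $\Spec(R)$ by the explicit dimension count of Lemma \ref{lem:TP3}.

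The argument you give for part (b) is essentially right and matches the paper's Lemma \ref{lem:A11}.
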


\vspace{2mm}

We abbreviate
$\bfV_R=\bfM(\epsilon_1)_{R,+}$ and $\bfV^*_R=\bfM(-\epsilon_N)_{R,+}$ 
in $\bfO^{+,\kappa}_R$. 

\vspace{2mm}

\begin{lemma}\label{lem:V}
(a) If $R=K$ is a field then $\bfV_K$, $\bfV_K^*$ are simple.

(b) The modules $\bfV_R$, $\bfV_R^*$ in $\bfO_R^+$ are tilting.

(c) We have $D\bfV_R={}^\dag\bfV_R=\bfV_R^*,$ $\bfV_R=\scrI\!nd(V_R)$ and $\bfV^*_R=\scrI\!nd(V^*_R)$.
\end{lemma}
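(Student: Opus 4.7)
The proof naturally divides into three parts, which I would establish in the order (c), (a), (b), since the later parts draw on the identifications proved first.

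For (c), the equalities $\bfV_R = \Indc_R(V_R)$ and $\bfV_R^* = \Indc_R(V_R^*)$ are immediate from the definitions in \S\ref{sec:affine}--\ref{sec:catO}: for the full parabolic case $\nu=(N)$ one has $\frakp_{R,\nu}=\frakg_R$, so $V(\epsilon_1)_{R,\nu}=V_R$ and $V(-\epsilon_N)_{R,\nu}=V_R^*$, and $\bfM(\widehat\lambda)_{R,+}=\Indc_R(V(\lambda)_{R,\nu})$ by construction (cf.\ Remark~\ref{rk:soergel}). The identity ${}^\dag\bfV_R=\bfV_R^*$ then follows by combining the intertwining ${}^\dag\Indc_R(M)=\Indc_R({}^\dag M)$ (the Remark after Lemma~\ref{lem:O0}) with the classical fact that the Chevalley anti-involution $\xi\mapsto -{}^t\xi$ identifies ${}^\dag V_R$ with $V_R^*$ as $\frakgl_{R,N}$-modules. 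For $D\bfV_R=\bfV_R^*$, I would argue via the PBW decomposition $\bfV_R\cong U(\frakg\otimes t^{-1}R[t^{-1}])\otimes_R V_R$: dualizing over $R$, twisting by $\sharp$ (which swaps the positive and negative loop halves), and then restricting to the $(\infty)$-smooth part yields $\Indc_R(V_R^\vee)=\bfV_R^*$. Equivalently, one can construct a non-degenerate $\bfg_R$-invariant pairing $\bfV_R^*\otimes_R\bfV_R\to R$ extending the natural pairing on top weight spaces, and check it induces an isomorphism on each weight component.

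For (a), the standing hypothesis on analytic deformation rings gives $\kappa_K\in K\setminus\bbR_{\geqslant 0}$, and since $R$ is in general position one may assume $\kappa_K\notin\bbQ$. I would then apply the (parabolic) Kac--Kazhdan determinant formula for the Shapovalov form on $\bfV_K=\bfM(\epsilon_1)_{K,+}$: any singular vector must come from a positive real affine root $\beta=\alpha_{i,j}+r\delta\notin\Pi_\nu=\Pi$, hence with $r\geqslant 1$, satisfying
\[
\langle \epsilon_1+\rho,\alpha_{i,j}\rangle + r\kappa_K = n
\]
for some $n\in\bbZ_{>0}$; or from an imaginary root $r\delta$, giving $r\kappa_K\in\bbZ$. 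Since $\langle\epsilon_1+\rho,\alpha_{i,j}\rangle\in\bbZ$ and $\kappa_K\notin\bbQ$, both conditions are impossible, so $\bfV_K$ is simple. Simplicity of $\bfV_K^*$ then follows from (c) together with the fact that the contravariant functor $D$ preserves simplicity on $\bfO^{+,f}_K$.

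For (b), tilting commutes with base change (Proposition~\ref{prop:introhw}(c)), so it suffices to check the tilting property over $\Bbbk$ together with $R$-projectivity of the relevant Hom spaces. The module $\bfV_R$ is itself a parabolic Verma module, hence trivially $\Delta$-filtered in every relevant truncation $\lub\bfO^{+,\kappa}_R$. For the $\nabla$-filtered condition I invoke (c) twice:
\[
\scrD\bfV_R={}^\dag D\bfV_R={}^\dag\bfV_R^*={}^\dag({}^\dag\bfV_R)=\bfV_R,
\]
using the involutivity of $\dag$. Since $\scrD$ is a contravariant duality on each truncated highest weight category $\lub\bfO^{+,\kappa}_R$ that fixes simples and consequently exchanges $\Delta$-filtrations with $\nabla$-filtrations, the self-duality $\scrD\bfV_R\simeq\bfV_R$ forces $\bfV_R$ to be $\nabla$-filtered as well. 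The same argument with the roles of $\bfV_R$ and $\bfV_R^*$ reversed shows $\bfV_R^*$ is tilting.

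The main obstacle is the identification $D\bfV_R=\bfV_R^*$ in part (c): the composition of $R$-dualization, the $\sharp$-twist, and the $(\infty)$-truncation interacts non-trivially with induction, and when $R$ is a ring rather than a field one must verify the $R$-freeness of the intermediate pieces of the PBW decomposition in order to reduce the comparison to a weight-by-weight check. Once this duality is in hand, parts (a) and (b) are clean applications of Kac--Kazhdan and BGG-duality, respectively.
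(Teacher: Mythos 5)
Your proof inverts the paper's logical order: you establish (c) first by a direct analysis of $D\bfV_R$, then deduce (b), whereas the paper proves (b) by a base-change argument (using (a) over the residue field) and then reads off (c) cheaply as a consequence. That inversion is legitimate in principle, but it forces you to carry the full burden of the identity $D\bfV_R\simeq\bfV_R^*$ over the ring $R$, which — as you acknowledge — is the hardest step. The paper bypasses this entirely: once $\bfV_\Bbbk$ is known to be simple and standard (hence tilting, hence $\scrD$-self-dual), the identity $\scrD\bfV_R\simeq\bfV_R$ follows at once from Proposition~\ref{prop:introhw}(c), and then $D\bfV_R={}^\dag\bfV_R=\Indc({}^\dag V_R)=\Indc(V_R^*)=\bfV_R^*$. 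So the paper's route is both shorter and avoids the direct PBW/pairing argument.

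The genuine gap is in your proof of (a). You write ``since $R$ is in general position one may assume $\kappa_K\notin\bbQ$'' and then run Kac--Kazhdan using that irrationality to kill all singular vector conditions. But the lemma lives in \S\ref{sec:KLotimes,II}, whose standing hypothesis is $\kappa_\Bbbk=-e$ with $e$ a \emph{positive integer}; when $R=K$ is a field the lemma statement therefore covers the case $K=\Bbbk$, $\kappa_K=-e\in\bbZ_{<0}$, which is exactly the case the paper uses in the proof of (b): it specializes to the residue field and invokes simplicity of $\bfV_\Bbbk$ with $\kappa_\Bbbk=-e$. At a negative integer level the Kac--Kazhdan equations $\langle\epsilon_1+\rho,\alpha\rangle+r\kappa=n$ \emph{do} admit integer solutions in general (take $\alpha=\theta$, $r=1$, $n=N-e$ when $e<N$), so your argument simply does not apply. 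The simplicity of $\bfV_K=\bfM(\epsilon_1)_{K,+}$ at negative rational level is a genuine (classical) fact — it is the statement that the Weyl module of the minuscule weight $\epsilon_1$ is always irreducible in the Kazhdan--Lusztig category, or equivalently that the natural representation of $U_q(\frakgl_N)$ is irreducible for every $q$ — and it requires a different argument from the one you gave. Your proof as written does not establish (a) in the case that actually matters, which in turn leaves the lemma, as you have organized it, without the input needed elsewhere in the paper (even though your deduction of (b) from (c) sidesteps (a)).
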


\vspace{.5mm}

\begin{proof} 
If $R=K$ is a field, then $\bfV_K$, $\bfV_K^*$ are simple, proving part $(a)$.
To prove $(b)$, note that under base change we get $\bfV_\Bbbk=\Bbbk\bfV_R$ and 
$\bfV_\Bbbk^*=\Bbbk\bfV_R^*$. 
The modules $\bfV_\Bbbk$ and 
$\bfV_\Bbbk^*$ are simple and standard.
Thus, they are both tilting.
Therefore $\bfV_R$, $\bfV_R^*$ are also tilting modules by Proposition \ref{prop:introhw}.
Part $(c)$ is clear, because $(b)$ implies that
$\scrD\bfV_R=\bfV_R$ and $\scrD\bfV_R^*=\bfV_R^*$.
\end{proof}

\vspace{2mm}

We define the endofunctors $e,$ $f$ of $\bfO^{\nu,\kappa,\Delta}_R$ by
$e=\bullet\,\dot\otimes_R\bfV^*_R$ and $f=\bullet\,\dot\otimes_R\bfV_R,$
and  the endofunctors $e,$ $f$ of $\scrO^{\nu,\Delta}_R$ by
$e=\bullet\,\otimes_RV^*_R$ and $f=\bullet\,\otimes_RV_R.$
The following is proved in \S \ref{sec:TPR1}.

\vspace{2mm}

\begin{prop}\label{prop:adjoints} 
(a) The functors $e$, $f$ are exact and preserve $\bfO^{\nu,\kappa,\tilt}_R$.

(b)  We have functorial isomorphisms
$\Bbbk e(M)\simeq e(\Bbbk M)$ and $\Bbbk f(M)\simeq f(\Bbbk M)$
for each $M\in\bfO^{\nu,\kappa,\tilt}_R$.

(c) If $R=K$ is a field then $e$, $f$ extend to exact biadjoint endofunctors on $\bfO^{\nu,\kappa}_K$.

(d) The functor $\Indc_R$ gives a $\bbC$-vector space isomorphism
$[\scrO^{\nu,\Delta}_R]\to[\bfO^{\nu,\kappa,\Delta}_R]$ which commutes with the $\bbC$-linear maps
$e,$ $f$.
\qed
\end{prop}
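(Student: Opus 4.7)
The proof will handle parts (a) and (b) jointly, then derive (c) and (d) from the construction. The key inputs are Lemma \ref{lem:V} (which identifies $\bfV_R = \Indc_R(V_R)$ and $\bfV_R^* = \Indc_R(V_R^*)$ as tilting modules), the universal property of $\dot\otimes_R$ in Proposition \ref{prop:TPR1}(a), the reduction of coinvariants of induced modules to coinvariants of the underlying $\frakg_R$-modules via Lemma \ref{lem:TP0}(c), and the biexactness and tilting preservation of $\dot\otimes$ on $\Delta$-filtered modules over a field (Proposition \ref{prop:TP2bis}).

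I will first establish (b) in the case where $M = \bfM(\lambda)_{R,\nu} = \Indc_R(M(\lambda)_{R,\nu})$ is a parabolic Verma module, by using Lemma \ref{lem:TP0}(c) to rewrite the coinvariants defining $M \dot\otimes_R \bfV_R$ in terms of $M(\lambda)_{R,\nu} \otimes_R V_R$, for which ordinary tensor products of $R$-projective $\frakg_R$-modules commute with base change; the universal property of Proposition \ref{prop:TPR1}(a) then produces a canonical comparison map $\Bbbk M \dot\otimes_\Bbbk \bfV_\Bbbk \to \Bbbk(M \dot\otimes_R \bfV_R)$ that one identifies as an isomorphism. The extension to an arbitrary tilting $M$ proceeds by induction along a $\Delta$-filtration: the short exact sequences appearing in such a filtration remain exact after $\Bbbk$-specialization (the filtration factors being $R$-free), and $\dot\otimes_R$ preserves $\Delta$-filtrations by Proposition \ref{prop:TPR1}(a), so the isomorphism propagates step by step. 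The same argument handles $e = \bullet\,\dot\otimes_R \bfV_R^*$ using $\bfV_R^* = \Indc_R(V_R^*)$.

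Part (a) then follows. Preservation of $\bfO^{\nu,\kappa,\tilt}_R$ is immediate: if $M \in \bfO^{\nu,\kappa,\tilt}_R$ then $\Bbbk M \in \bfO^{\nu,\kappa,\tilt}_\Bbbk$ by Proposition \ref{prop:introhw}(c); by (b), $\Bbbk(M \dot\otimes_R \bfV_R) \simeq \Bbbk M \dot\otimes_\Bbbk \bfV_\Bbbk$, which is tilting by Proposition \ref{prop:TP2bis}(b); since $M \dot\otimes_R \bfV_R$ is $R$-free (being $\Delta$-filtered), Proposition \ref{prop:introhw}(c) promotes the tilting property back to $R$. Exactness on $\bfO^{\nu,\kappa,\Delta}_R$ reduces, using (b) and the $R$-freeness of $\Delta$-filtered objects, to the biexactness of $\dot\otimes_\Bbbk$ on $\Delta$-filtered objects over the residue field (Proposition \ref{prop:TP2bis}(c)).

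For (c), Proposition \ref{prop:TP1} extends $\dot\otimes_K$ to all of $\bfO^{\nu,\kappa}_K$, exactness is built into the definition via the coinvariant formalism, and biadjointness of $(e,f)$ follows from Lemma \ref{lem:V}(c) identifying $\bfV_K^* = D\bfV_K$ together with the rigid structure on the Kazhdan-Lusztig monoidal category. For (d), the functor $\Indc_R$ sends parabolic Verma modules of $\scrO^\nu_R$ to their affine counterparts and so induces an isomorphism of Grothendieck groups; commutation with $e$ and $f$ is the Grothendieck-group shadow of the class-level identity $\Indc_R(N) \dot\otimes_R \Indc_R(V_R) \equiv \Indc_R(N \otimes_R V_R)$ read off from Lemma \ref{lem:TP0}(c). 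The main obstacle is part (b): since $R \to \Bbbk$ is not flat, Proposition \ref{prop:TPR1}(b) does not apply directly, and the argument must really exploit the induced structure of $\bfV_R$ in order to reduce affine coinvariants to finite-type ones where base change is manifest.
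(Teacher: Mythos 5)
Your high-level outline is in the right ballpark, but there are real gaps, and the logical order you choose is the reverse of what the argument actually supports.

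The paper proves (a) \emph{first}, and (a) is the load-bearing part. The route in the paper (Lemma \ref{lem:A19}) is: observe that $\bfV_R$, $\bfV_R^*$ are defined over the one-dimensional subring $S=\bbC[\kappa]\subset R$, use Lemma \ref{lem:A11} to reduce by flat base change to the case $\dim R=1$, and then invoke the rigidity of $\bfV$ and $\bfV^*$ in the Kazhdan--Lusztig monoidal category (\cite[prop.~31.3]{KL}). Rigidity gives biadjointness of $e$ and $f$ on $\bfO^{\nu,\kappa,\Delta}_R$, which yields exactness, and tilting preservation then follows as in Proposition \ref{prop:TP2bis}(b). This dimension-one reduction plus KL-rigidity step does not appear anywhere in your proposal, and I do not see how to dispense with it.

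For (b), the paper's proof (Lemma \ref{lem:A20}) uses the representing formula $\Hom_{\bfg_R}(\dot\Otimes_{R,a}M_a,N)=\llangle M_{-m},\dots,M_n,DN\rrangle^*_R$, then the crucial Lemma \ref{lem:A10}: for $\Delta$-filtered input the coinvariants $\llangle M_{-m},\dots,M_n,DN\rrangle_R$ are a \emph{free} $R$-module of finite type, so the $R$-dual commutes with $-\otimes_R\Bbbk$. The isomorphism $\Bbbk e(M)\simeq e(\Bbbk M)$ is then extracted from the agreement of $\Hom$'s into tilting $N$, which requires knowing from (a) that $e(M)$ is tilting (so that the objects are determined within the tilting subcategory). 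Your induction along a $\Delta$-filtration never touches the freeness issue, which is exactly where the argument is delicate since $R\to\Bbbk$ is not flat; and your starting case is also problematic, because Lemma \ref{lem:TP0}(c) requires \emph{all} the modules entering the coinvariants, including the test module $DN$ representing the variable in the $\Hom$ formula, to be generalized Weyl modules, which fails for arbitrary $N$. So the base case is not established.

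Finally, deducing (a) from (b) as you propose is circular: (b) as proven uses (a). Your deduction of tilting preservation from (b), Proposition \ref{prop:introhw}(c), and Proposition \ref{prop:TP2bis}(b) would be fine \emph{if} (b) were available independently, and your exactness reduction would similarly need (b) for $\Delta$-filtered objects (not just tiltings) together with preservation of $\Delta$-filtrations (Lemma \ref{lem:A16}, which you should at least cite). Parts (c) and (d) of your sketch are essentially correct and match the paper (Lemma \ref{lem:TP2a} for biadjointness over a field; Grothendieck-group comparison via Lemma \ref{lem:TP3} for (d)).
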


\vspace{2mm}

\begin{prop}\label{prop:monoidalR}
Assume that $R$ is a local analytic algebra.

(a) There is
a braided monoidal category $(\bfO^{+,\kappa,\Delta}_R,\dot\otimes_R,\bfa_R,\bfc_R)$ and
a bimodule category $(\bfO^{\nu,\kappa,\Delta}_R,\dot\otimes_R,\bfa_R,\bfc_R)$ over it.

(b) For each $M\in\bfO^{\nu,\kappa,\tilt}_R$ and each $d\geqslant 1$, we have a $\Bbbk$-algebra isomorphism
$\Bbbk\End_{\bfg_R}(f^d(M))\to\End_{\bfg_\Bbbk}(f^d(\Bbbk M))$
which commutes with the associativity and the commutativity constraints.
\qed
\end{prop}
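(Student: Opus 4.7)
The plan is to bootstrap from the field case handled by Proposition~\ref{prop:TP2} and the $R$-linear tensor product already built in Proposition~\ref{prop:TPR1}, exploiting base change and the rigidity afforded by $\Delta$-filtered modules over the regular local analytic ring $R$. Since every object appearing in part~(a) is $R$-free and base change to $K$ is faithful on Hom spaces between $\Delta$-filtered and $\nabla$-filtered modules (Proposition~\ref{prop:introhw}(b),(d)), structure maps defined over $K$ will be recovered over $R$ as soon as they preserve the appropriate integral forms.

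For part~(a), I will produce the associativity isomorphism $\bfa_R:(V_1\dot\otimes_R V_2)\dot\otimes_R M\iso V_1\dot\otimes_R(V_2\dot\otimes_R M)$ straight from the universal property of $\dot\otimes_R$ in Proposition~\ref{prop:TPR1}(a): both sides will represent the same functor $N\mapsto\llangle V_1,V_2,M,DN\rrangle_R^{*}$ once I establish, over $R$, the factorisation identity
\[
\llangle V_1\dot\otimes_R V_2,\,M,\,DN\rrangle_R
=\llangle V_1,\,V_2,\,M,\,DN\rrangle_R
=\llangle V_1,\,V_2\dot\otimes_R M,\,DN\rrangle_R,
\]
which is the $R$-linear analogue of Proposition~\ref{prop:TP1}(b). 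Geometrically this comes from collapsing two marked points on a rational component; over $K$ it is the content of \cite{KL}, and over $R$ it is inherited from the coinvariant construction together with the base-change and right-exactness properties of coinvariants recorded in Lemma~\ref{lem:TP0}. The commutativity constraint $\bfc_R$ will be produced in the same style from the intrinsic cyclic symmetry of coinvariants applied to the three-point configuration on $\bbP^1$ used to define $V\dot\otimes_R M$ and $M\dot\otimes_R V$, while the unit isomorphism $\bfone\dot\otimes_R M\iso M$ is furnished by Lemma~\ref{lem:TP0}(d).

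The coherence axioms (triangle, pentagon and hexagon) will then follow by rigidity. Each side of each diagram is an $R$-linear map between $R$-free modules of $\bfO^{\nu,\kappa,\Delta}_R$; the comparison map $\Hom_{\bfg_R}(X,Y)\to\Hom_{\bfg_K}(KX,KY)$ is injective whenever $X\in\bfO^{\nu,\kappa,\Delta}_R$ and $Y\in\bfO^{\nu,\kappa,\nabla}_R$ by Proposition~\ref{prop:introhw}(b),(d). Since the two compositions in each coherence diagram agree after base change to $K$ by Proposition~\ref{prop:TP2}, they already agree over $R$. This will close part~(a).

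For part~(b), the functor $f$ preserves tilting modules by Proposition~\ref{prop:adjoints}(a), so $f^d(M)$ is tilting whenever $M$ is; iterating Proposition~\ref{prop:adjoints}(b) then gives $\Bbbk f^d(M)\simeq f^d(\Bbbk M)$. Since Hom spaces between tilting $\bfg_R$-modules are $R$-free and commute with base change (as recalled in \S\ref{sec:basic}), one obtains the required algebra isomorphism $\Bbbk\End_{\bfg_R}(f^d(M))\iso\End_{\bfg_\Bbbk}(f^d(\Bbbk M))$. Compatibility with $\bfa_R$ and $\bfc_R$ is automatic, as these constraints are produced in part~(a) purely from the universal property of $\dot\otimes_R$ and the factorisation of coinvariants, both of which are natural in the base ring. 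The step I expect to be the main obstacle is precisely the $R$-linear factorisation identity underpinning associativity and commutativity: coinvariants are only right exact in general (Lemma~\ref{lem:TP0}), and upgrading the $K$-linear identity of \cite{KL} to a genuine $R$-module isomorphism on $\Delta$-filtered input demands careful control of how the coinvariant construction interacts with the $R$-flatness of weight subspaces, where the deformation-theoretic preparations of \S\ref{sec:basic} together with the truncation arguments there will do the real work.
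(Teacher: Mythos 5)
Your overall strategy lines up with the paper's: construct the constraints $\bfa_R$, $\bfc_R$ directly over $R$, then verify the coherence axioms by base change to $K$, using that maps between $R$-free objects inject into their $K$-forms (so the field case, Proposition~\ref{prop:TP2}, suffices). That reduction is exactly the paper's and is fine.

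Where your proposal has genuine gaps is in how the constraints are actually built and specialized. You attribute the $R$-linear factorisation identity $\llangle V_1\dot\otimes_R V_2,M,DN\rrangle_R=\llangle V_1,V_2,M,DN\rrangle_R$ to ``right-exactness and base-change of coinvariants recorded in Lemma~\ref{lem:TP0},'' but Lemma~\ref{lem:TP0} gives no such identity; the actual mechanism (Lemma~\ref{lem:a}) lets the system of charts vary, producing a bundle of coinvariants with an integrable $R$-linear connection, and then uses the hypothesis that $R$ is \emph{analytic} to obtain a flat section — this is an ODE-solving step, not a right-exactness statement, and it is precisely why the analytic hypothesis is in the proposition. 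Similarly, the braiding $\bfc_R$ is not the ``intrinsic cyclic symmetry of coinvariants'': cyclic invariance by itself would give a trivial swap. Lemma~\ref{lem:c} constructs $\bfc_{V,M}$ as the transpose of the $R$-linear map $x\otimes y\otimes z\mapsto\tau y\otimes\tau x\otimes\bar\tau z$ with $\tau=\exp(\sqrt{-1}\pi\frakL_0)\exp(\frakL_1)$, so the exponential of Sugawara operators (again relying on analyticity) is built into the definition and supplies the non-trivial twist.

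Finally, for part~(b) you declare compatibility with $\bfa_R$, $\bfc_R$ ``automatic'' because the constraints are natural in the base ring; but the specialization $R\to\Bbbk$ is a \emph{non-flat} quotient, so Lemma~\ref{lem:A11} does not apply. The paper handles this separately: the needed identifications come from the $R$-freeness of coinvariant spaces on tilting input (Lemma~\ref{lem:A10}, feeding into Lemma~\ref{lem:A20}), and then one must check explicitly that the base-change map on coinvariants intertwines $\Bbbk P_R$ with $P_\Bbbk$, which amounts to verifying that the exponential $\tau_R$ specializes to $\tau_\Bbbk$. That is a short but non-vacuous calculation; your proposal does not supply it.
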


\vspace{3mm}

\subsection{From $\bfO$ to the cyclotomic Hecke algebra}
\label{sec:endo-KL}
Let $R$ be a local analytic deformation ring of dimension $\leqslant 2$.
Set $v=v_R=\exp(-\sqrt{-1}\pi/\kappa_R)$, $q=q_R=v_R^{2}.$

The endomorphisms $X$, $T$ of the functors $f$, $f^2$ are given by
$X=\bfc_R\circ\bfc_R$ and $T=v_R\cdot\bfa_R^{-1}\circ(1\dot\otimes_R\bfc_R)\circ \bfa_R$, i.e.,
for each $M\in\bfO^{\nu,\kappa}_{R}$ we have
\begin{equation}\label{XT-KL}
\gathered
X_M=\bfc_{\bfV_R,M}\circ\bfc_{M,\bfV_R},\\
T_M=v_R\cdot\bfa^{-1}_{M,\bfV_R,\bfV_R}\circ(1_M\dot\otimes_R \bfc_{\bfV_R,\bfV_R})
\circ\bfa_{M,\bfV_R,\bfV_R}.\endgathered
\end{equation} 

Next, fix an integer $d\geqslant 1$ and
consider the endomorphisms of $f^d$ given by
$X_j=1^{d-j}X1^{j-1}$ and
$T_i=1^{d-i-1}T1^{i-1}$ with $j\in [1,d]$,  $i\in [1,d)$.

We can now prove the following.

\vspace{2mm}

\begin{prop}\label{prop:psi-s} 
(a) The endomorphisms $X_j$, $T_i$ 
yield an $R$-algebra homomorphism $\psi_{R,d}:\bfH_{R,d}\to\End(f^d)^\op$.

(b) The map $\psi_{R,d}$ gives an $R$-algebra homomorphism
$\psi_{R,d}^s:\bfH^{s}_{R,d}\to\End_{\bfg_R}(\bfT_{R,d})^\op.$
\end{prop}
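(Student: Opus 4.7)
The plan is to verify the defining relations of the affine Hecke algebra $\bfH_{R,d}$ for the operators $X_j, T_i$, and then to check that the extra cyclotomic relation $\prod_{p=1}^{\ell}(X_1-Q_{R,p})=0$ is satisfied on the module $\bfT_{R,d}$.

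For part $(a)$, I would derive the Hecke relations from the braided bimodule structure of Proposition \ref{prop:monoidalR}. Naturality of $\bfc_R$ and $\bfa_R$, together with the fact that $X$ and $T$ are supported on disjoint collections of tensor factors when their indices differ by more than $1$, yields the locality relations $X_iX_j=X_jX_i$ and $X_iT_j=T_jX_i$ for $|i-j|>1$. The braid relation $T_iT_{i+1}T_i=T_{i+1}T_iT_{i+1}$ is the Yang--Baxter identity obtained by iterating the hexagon axiom for $\bfc_R$ on $\bfV_R^{\dot\otimes 3}$. The compatibility $T_iX_iT_i=q_RX_{i+1}$ is a direct calculation using that $X$ is by definition the double braiding $\bfc^2$: conjugation of $\bfc^2$ on one slot by the swap $\bfc$ on the two adjacent slots produces $\bfc^2$ on the next slot, modulo the normalization constant $v_R^2=q_R$ built into $T$. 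The only nontrivial Hecke relation is the quadratic one $(T_i+1)(T_i-q_R)=0$, which reduces to showing $v_R\bfc_{\bfV_R,\bfV_R}$ has the two eigenvalues $-1$ and $q_R$ on $\bfV_R\dot\otimes\bfV_R$.

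To establish the quadratic relation, I would first pass to $K$: using Lemma \ref{lem:V} together with Proposition \ref{prop:TP2bis}$(d)$, $\bfV_K\dot\otimes\bfV_K$ corresponds via $\Indc_K$ to $V_K\otimes V_K$, which decomposes as $S^2V_K\oplus\wedge^2V_K$ as a $\frakg_K$-module. The balancing formula \eqref{balancing} of Remark \ref{rk:balancing} rewrites $\bfc_{\bfV_K,\bfV_K}^{\,2}$ in terms of $\exp(2\pi\sqrt{-1}\frakL_0)$, whose scalar action on each simple parabolic Verma summand is computed from the Casimir, producing the two eigenvalues $-v_K^{-1}$ and $v_K^{-1}q_K$ for $\bfc$. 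Descent from $K$ to $R$ is then automatic because $\bfT_{R,d}$ is a tilting, hence $R$-free, module (Proposition \ref{prop:adjoints}$(a)$, Lemma \ref{lem:5.18}), so $\End_{\bfg_R}(\bfT_{R,d})\hookrightarrow\End_{\bfg_K}(\bfT_{K,d})$ and every polynomial relation satisfied over $K$ lifts.

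For part $(b)$, I would show $\prod_p(X_1-Q_{R,p})=0$ acts as zero on $\bfT_{R,d}$. Since $X_1=1^{d-1}X$ acts on the inner $\bfV$-factor and $\bfT_{R,d}=f^{d-1}(\bfT_{R,1})$, functoriality reduces the claim to the corresponding identity on $\bfT_{R,1}=\bfT_{R,0}\dot\otimes\bfV_R$. The $R$-analogue of Lemma \ref{lem:pasbete}$(a)$, which follows from Proposition \ref{prop:TPR1} applied to the parabolic Verma $\bfT_{R,0}=\pmb\Delta(\emptyset)_{R,\tau}$ together with Proposition \ref{prop:TP2bis}$(a)$, exhibits a $\Delta$-filtration of $\bfT_{R,1}$ whose sections are exactly the $\pmb\Delta(\lambda_p)_{R,\tau}$ for $p=1,\ldots,\ell$. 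Using the balancing formula and the calculation in the proof of Lemma \ref{lem:pasbete}, the operator $X$ acts on the section $\pmb\Delta(\lambda_p)_{R,\tau}$ by the scalar $Q_{R,p}=\exp(-2\pi\sqrt{-1}s_{R,p}/\kappa_R)$, as recorded in Remark \ref{rem:5.15}. Consequently each factor $(X-Q_{R,p})$ strictly decreases the filtration, and the full product annihilates $\bfT_{R,1}$.

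The main obstacle will be the careful verification of the quadratic relation and of the eigenvalue identification on $\pmb\Delta(\lambda_p)_{R,\tau}$. Both rely on tracking the analytic function $\exp(-2\pi\sqrt{-1}/\kappa_R\,\cdot)$ across tensor factors via the balancing, and on ensuring that the resulting identities, verified over the generic point $K$, lift unambiguously to the $2$-dimensional regular ring $R$; this is where the hypothesis that $\bfT_{R,d}$ is tilting and $R$-free is crucial, together with the fact that $\dot\otimes_R$ commutes with base change (Proposition \ref{prop:TPR1}$(b)$).
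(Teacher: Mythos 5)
Your proposal is correct and follows essentially the same route as the paper: the Hecke relations are deduced from the braided bimodule axioms (braid relation from the hexagon, locality from naturality, $T_iX_iT_i = q_R X_{i+1}$ from the hexagon, and the quadratic relation from the fact that the Casimir $\omega$ acts with eigenvalues $\pm 1$ on $V_R\otimes_R V_R$), and the cyclotomic relation is reduced to the $\Delta$-filtration of $\bfT_{R,1}$ together with the eigenvalue computation of Remark \ref{rem:5.15}. The only presentational differences are minor: the paper states the $\omega$-eigenvalue fact directly over $R$ via \eqref{XT-KL} rather than descending from $K$ through the $S^2\oplus\wedge^2$ decomposition and the balancing formula, and for part (b) it passes to the fraction field $K$ (using $R$-freeness of $\bfT_{R,d}$) rather than tracking the $\Delta$-filtration over $R$ itself.
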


\vspace{.5mm}

\begin{proof}
The braid relations $T_iT_{i+1}T_i=T_{i+1}T_iT_{i+1}$ and $T_iT_j=T_jT_i$ if $|i-j|>1$
are well-known formal consequences of the axioms of a braided monoidal category. 

Next, consider the relation
$T_iX_{i}T_i=v_R^2X_{i+1}$.
The hexagon axiom yields the following relations
$$\gathered
\bfa_{\bfV_R,M,\bfV_R}\circ(\bfc_{M,\bfV_R}\dot\otimes_R 1_{\bfV_R})\circ T_M=
v_R\cdot\bfc_{f(M),\bfV_R},
\cr T_M\circ(\bfc_{\bfV_R,M}\dot\otimes_R 1_{\bfV_R})\circ\bfa_{\bfV_R,M,\bfV_R}^{-1}=
v_R\cdot\bfc_{\bfV_R,f(M)}.\endgathered$$ 
Therefore we have
$T_M\circ(X_M\dot\otimes_R 1_{\bfV_R})\circ T_M=v_R^2\cdot X_{f(M)}.$ 
We deduce that 
$(T_i)_M\circ(X_i)_M\circ(T_i)_M=v_R^2\cdot (X_{i+1})_M.$



Now, let us prove the relations
$X_iX_j=X_jX_i$ and $X_iT_j=T_jX_i$ for $i\neq j,j+1.$
We are reduced to check the 
relations
$(X_1)_M\circ(X_i)_M=(X_i)_M\circ(X_1)_M$
and $(T_i)_M\circ(X_1)_M=(X_1)_M\circ(T_i)_M$ for $i\neq 1.$
They follow from the functoriality of $\bfc$ and 
$\bfa$. Let us check the first one in details for $i=1$, $j=2$. The diagram
$$\xymatrix{
f(M)\dot\otimes_R\bfV_R\ar[r]^{X_M\dot\otimes 1}\ar[d]_{\bfc_{f(M),\bfV_R}}
&f(M)\dot\otimes_R\bfV_R\ar[d]^{\bfc_{f(M),\bfV_R}}\cr
\bfV_R\dot\otimes_Rf(M)\ar[r]^{1\dot\otimes
X_M}\ar[d]_{\bfc_{\bfV_R,f(M)}}
&\bfV_R\dot\otimes_Rf(M)\ar[d]^{\bfc_{\bfV_R,f(M)}}\cr
f(M)\dot\otimes_R\bfV_R\ar[r]^{X_M\dot\otimes 1}
&f(M)\dot\otimes_R\bfV_R}$$
is commutative because $X_M$ is an
endomorphism of $f(M)$ and $\bfc_R$ is a morphism of
functors. Now the composition of both vertical maps is equal to
$X_{f(M)}=(X_2)_M$, while the upper and the lower horizontal
maps are both equal to $(X_1)_M$. We are done.

To prove the Hecke relation $(T_i+1)(T_i-q_R)=0$, observe that the action of
$\omega$ on $V_R\otimes_R V_R$ is a diagonalizable operator
with eigenvalues $1$ and $-1$.
Thus, from \eqref{XT-KL} we get that
$(T_i-v_R^2)(T_i+1)=0$.

Finally, let us check the cyclotomic relation.
By \eqref{balancing}, \eqref{XT-KL}, the endomorphism
$(X_1)_{\bfT_{R,0}}$ of $\bfT_{R,d}$ is identified with the
endomorphism
$f^{d-1}(X_{\bfT_{R,0},\bfV_R})$ of $f^{d-1}(\bfT_{R,1})$,
where $X_{\bfT_{R,0},\bfV_R}$ 
is an operator on
$\bfT_{R,1}=\bfT_{R,0}\dot\otimes_R \bfV_R$.
We must prove that this operator satisfies the equation
$\prod_{p=1}^\ell(X_{\bfT_{R,0},\bfV_R}-q_R^{s_p})=0.$
We may assume that $R=K$ is a field. Then, the claim follows from Remark \ref{rem:5.15}.
\end{proof}

\vspace{3mm}

\appendix

\section{Proofs for \S \ref{sec:KL}}

\subsection{Proof of Propositions \ref{prop:TP2}, \ref{prop:TP2bis}}
\label{sec:TP2}
First, let us explain briefly the definition of the
bimodule category $(\bfO^{\nu,\kappa}_K,\dot\otimes_K,\bfa_K,\bfc_K)$ over $(\bfO^{+,\kappa}_K,\dot\otimes_K,\bfa_K,\bfc_K)$.

The bifunctor $\dot\otimes_K$ on $\bfO^{+,\kappa}_K$ is defined in \cite{KL}.
By \cite{Y1}, the same definition yields functors
$\bfO^{+,\kappa}_K\times\bfO^{\nu,\kappa}_K\to\bfO^{\nu,\kappa}_K$ and $\bfO^{\nu,\kappa}_K\times\bfO^{+,\kappa}_K\to\bfO^{\nu,\kappa}_K$.
The endomorphisms of functors $\bfa_K$, $\bfc_K$ are defined in \cite[sec.~14, 18]{KL}. 

Note that \cite{KL}, \cite{Y1} deal only with the field $K=\bbC$. For $K\neq\bbC$ a field containing $\bbC$, one define
$\dot\otimes_{K}$ as over $\bbC$, see Remark \ref{rem:TPK}.
Further, the morphisms $\bfa_K$, $\bfc_K$ 
are defined for $\bfO^{+,\kappa}$, 
but for  $\bfO^{\nu,\kappa}_K$ one can 
proceed in the same way. 

More precisely, since the spaces of coinvariants are finite dimensional 
by Lemma \ref{lem:TP0} and since $K$ is an analytic algebra,
the proof of \cite[thm.~17.29]{KL} works equally well in our case and
standard facts about linear ordinary differential equations yield a canonical isomorphism
$
\llangle V_1\dot\otimes_K M_1,V_2\dot\otimes_K DM_2\rrangle_K=
\llangle V_1, M_1,V_2, DM_2\rrangle_K$
for all
$M_1,M_2\in\bfO^{\nu,\kappa}_K$ and all $V_1,V_2\in\bfO^{+,\kappa}_K$.
Then, we define $\bfa_{V_1,M_1,V_2}$ using this isomorphism and
Proposition \ref{prop:TP1} as in \cite[sec.~18.2]{KL}. 
The other associativity constraints are constructed in the same way using the cyclic invariance of coinvariants.
The braiding is also defined as in \cite{KL}, since any module from
$\bfO^{\nu,\kappa}_K$ admits an action of the Sugawara operators. See the proof of Lemma \ref{lem:c} for details.

Now, we must check the axioms of a bimodule category.
The proofs are essentially the same as in \cite{KL}.
We will give a few details for the comfort of the reader.

The unit of $(\bfO^{+,\kappa}_K,\dot\otimes_K,\bfa_K,\bfc_K)$ is the module $\bfone$ from Example \ref{ex:8.3}. 
By \cite[sec.~31,32]{KL}, this braided monoidal category  is rigid
with the duality functor $D$.
This means that $D$ is exact and that for any $M\in\bfO^{+,\kappa}_K$ there are functorial morphisms 
$i_M:\bfone\to M\dot\otimes_K DM$ and
$e_M: DM\dot\otimes_K M\to\bfone$
such that  $DM\dot\otimes_K\,\bullet$ is left adjoint to $M\dot\otimes_K\,\bullet.$
Equivalently, the functor $\bullet\,\dot\otimes_K M$ is left adjoint to the functor $\bullet\,\dot\otimes_K DM.$
Since $D$ is an involution, the functors above are indeed biadjoint.

\vspace{2mm}

\begin{lemma}\label{lem:unit}
For $M\in\bfO^{\nu,\kappa}_K$ there are functorial isomorphisms
$\bfone\dot\otimes_K M\to M$ and $M\dot\otimes_K\bfone\to M$
which satisfy the triangle axioms.
\end{lemma}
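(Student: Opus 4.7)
The plan is to read off the unit isomorphisms from the universal property of $\dot\otimes_K$ in Proposition~\ref{prop:TP1}(a) together with the invariance of coinvariants under insertion of $\bfone$ recorded in Lemma~\ref{lem:TP0}(d), and then to deduce the triangle axioms from the same circle of ideas, using Proposition~\ref{prop:TP1}(b) to translate them into equalities of natural maps between spaces of coinvariants.

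Concretely, for the first isomorphism I would fix $M\in\bfO^{\nu,\kappa}_K$ and choose an admissible system of charts centered at $1,\infty,0$, placing $\bfone$ at $1$, $M$ at $0$, and a test module $N\in\scrS_{K,\kappa}$ at $\infty$. Proposition~\ref{prop:TP1}(a) identifies
$$\Hom_{\bfg_K}(\bfone\dot\otimes_K M,N)=\llangle\bfone,M,DN\rrangle_K^{*},$$
while Lemma~\ref{lem:TP0}(d) provides a canonical isomorphism $\llangle M,DN\rrangle_K\iso\llangle\bfone,M,DN\rrangle_K$ coming from the inclusion $M\otimes DN\hookrightarrow\bfone\otimes M\otimes DN$. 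Composing with the tautological identification $\Hom_{\bfg_K}(M,N)=\llangle M,DN\rrangle_K^{*}$ and invoking Yoneda gives a natural isomorphism $\bfone\dot\otimes_K M\iso M$; placing the factors at the reversed charts produces $M\dot\otimes_K\bfone\iso M$ by the same argument. Functoriality in $M$ is automatic from the functoriality of the universal property.

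For the triangle axioms I would pick $V\in\bfO^{+,\kappa}_K$ and apply Proposition~\ref{prop:TP1}(b) to rewrite both sides of, say, $(V\dot\otimes_K\bfone)\dot\otimes_K M\xrightarrow{\bfa_K}V\dot\otimes_K(\bfone\dot\otimes_K M)$ as the same four-point coinvariant space $\llangle V,\bfone,M,DN\rrangle_K^{*}$. Under this identification, the associator $\bfa_K$ is by construction the identity (it is exactly the comparison between two descriptions of the same four-point coinvariant), while each unit isomorphism is the identification of Lemma~\ref{lem:TP0}(d). The triangle axiom thus reduces to the tautology that the inclusion $V\otimes M\otimes DN\hookrightarrow V\otimes\bfone\otimes M\otimes DN$ does not depend on how one parenthesizes the three nontrivial factors, which is immediate since the tensor product of $K$-modules is strictly associative. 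The same argument handles the other two triangle diagrams $(V\dot\otimes_K M)\dot\otimes_K\bfone\to V\dot\otimes_K(M\dot\otimes_K\bfone)$ and $(\bfone\dot\otimes_K V)\dot\otimes_K M\to\bfone\dot\otimes_K(V\dot\otimes_K M)$.

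The one point requiring real care is the compatibility of the insertion isomorphism of Lemma~\ref{lem:TP0}(d) with the KZ-type horizontal trivializations used to define $\bfa_K$ in \S\ref{sec:KL}: I must know that when a marked point carrying $\bfone$ is slid along the relevant path, the Sugawara/KZ connection contributes no monodromy, so that the comparison maps used for $\bfa_K$ and for the unit isomorphism are compatible. This is the main technical check and is the analogue of the familiar fact that the vacuum vector is flat for the KZ connection; it follows from the normalization of the Sugawara operators together with $\frakL_0\bfone=0$, which collapses the relevant ODE to the trivial one. Once this is recorded, the two triangle axioms follow formally from the uniqueness clause of Proposition~\ref{prop:TP1}(a).
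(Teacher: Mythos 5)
Your construction of the two unit isomorphisms matches the paper's exactly: you read them off the universal property of Proposition~\ref{prop:TP1}(a) together with the vacuum-insertion isomorphism of Lemma~\ref{lem:TP0}(d), and functoriality is automatic. The route to the triangle axioms is genuinely different. The paper first invokes the Kelly-style reduction — the general triangle for $(V,\bfone,M)$ follows from the pentagon applied to $(V,\bfone,\bfone,M)$ together with the special triangle for $(\bfone,\bfone,M)$ — and then verifies only that special triangle via coinvariants, where the flat transport is easiest because all but one insertion is a vacuum. You instead attack the general triangle directly, identifying both legs with maps on $\llangle V,\bfone,M,DN\rrangle_K^{*}$. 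That can be made to work, but two points deserve more care than you give them. First, the parenthetical ``the associator $\bfa_K$ is by construction the identity'' is too strong as stated: $\bfa_K$ is defined via the flat trivialization from \cite[thm.~17.29]{KL}, not by a bare comparison of two presentations of the same vector space, so it only \emph{becomes} the identity after composing with the flat transport isomorphisms that identify each nested $\dot\otimes$ with the one-step coinvariant. Second, your reduction of the triangle to a ``tautology'' about parenthesizing $V\otimes\bfone\otimes M\otimes DN$ presupposes exactly the compatibility of the vacuum-insertion map with those flat trivializations for an arbitrary $V$ (not just $V=\bfone$); you correctly identify this as the crux and correctly attribute it to $\frakL_n\bfone=0$ for $n\geq -1$, but the argument as written declares the reduction before the compatibility is established. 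The paper's pentagon reduction buys you precisely the luxury of only ever having to check this compatibility in the two-vacuum configuration, where the connection is visibly trivial; your version trades that for having to verify it with one generic insertion. Both are defensible, but if you keep your route you should state the compatibility lemma explicitly and prove it, rather than fold it into a parenthetical.
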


\vspace{.5mm}

\begin{proof} 
The proof is the same as in \cite{KL}. We will be brief.
We define the unit isomorphism 
$\pmb\ell_M:\bfone\dot\otimes_K M\to M$
to represent the isomorphism of functors given as follows,
for each $M_1, M_2\in\bfO^{\nu,\kappa}_K$,
\begin{equation*}
\aligned
\Hom_{\bfg_K}(M_1,M_2)
&=\llangle M_1,D M_2\rrangle_K^*\cr
&=\llangle\bfone,M_1,D M_2\rrangle_K^*\cr
&=\Hom_{\bfg_K}(\bfone\dot\otimes_K M_1,M_2).
\endaligned
\end{equation*}
The second isomorphism is given Lemma \ref{lem:TP0}$(b)$, the other ones are as in
Proposition \ref{prop:TP1}. A similar construction yields the isomorphism 
$\pmb r_M:M\dot\otimes_K\bfone\to M.$ 

Now, it is enough to check the triangle axiom for
$V=\bfone\in\bfO^{+,\kappa}_K$ and $M\in\bfO^{\nu,\kappa}_K$ (then, the general version follows using the pentagon axiom
for the quadruple $V,\bfone,\bfone, M$). So we must check that the composition
\begin{equation*}
\xymatrix{(\bfone\dot\otimes_K\bfone)\dot\otimes M\ar[r]^{\bfa_{\bfone,\bfone,M}}&
\bfone\dot\otimes_K(\bfone\dot\otimes_K M)\ar[r]^-{1\dot\otimes_K \pmb\ell_M}&\bfone\dot\otimes_K M}
\end{equation*}
is given by the unit $\pmb r_\bfone:\bfone\dot\otimes_K\bfone\to\bfone.$ 
This follows from Proposition \ref{prop:TP1}$(b)$ and the invariance of coinvariants under cyclic permutation as in \cite[sec. ~18.2]{KL}.
This allows us to identify the morphism
\begin{equation*}
\llangle(\bfone\dot\otimes_K\bfone)\dot\otimes_K M,N\rrangle_K\to
\llangle\bfone\dot\otimes_K(\bfone\dot\otimes_K M),N\rrangle_K\to
\llangle\bfone\dot\otimes_KM,N\rrangle_K
\end{equation*}
induced by $\pmb\ell_M$, $\bfa_{\bfone,\bfone,M}$ with the morphism
$$\llangle(\bfone\dot\otimes_K\bfone)\dot\otimes_K M,N\rrangle_K\to
\llangle\bfone\dot\otimes_KM,N\rrangle_K$$
induced by $\pmb r_\bfone$.
\end{proof}

\vspace{2mm}

\begin{lemma}\label{lem:TP2a}
For $M\in\bfO^{+,\kappa}_K$ the functors $\bullet\,\dot\otimes_K M$ and $\bullet\,\dot\otimes_K DM$ on $\bfO^{\nu,\kappa}_K$
are exact and biadjoint  to each other.
The same holds for the functors $M\dot\otimes_K\,\bullet$ and $DM\dot\otimes_K\,\bullet$.
\end{lemma}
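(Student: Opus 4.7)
The plan is to deduce the lemma from the rigidity of $(\bfO^{+,\kappa}_K,\dot\otimes_K)$ established in \cite[sec.~31,32]{KL}: there are functorial morphisms $i_M:\bfone\to M\dot\otimes_K DM$ and $e_M:DM\dot\otimes_K M\to\bfone$ satisfying the snake identities, with $D^2\simeq\Id$. Since $\bfO^{\nu,\kappa}_K$ has been equipped in Proposition \ref{prop:TP2} with the structure of a bimodule category over $\bfO^{+,\kappa}_K$, the lemma should follow formally from this rigidity together with the associativity and unit coherences.

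Concretely, for $N\in\bfO^{\nu,\kappa}_K$ I would define the unit
\[
\eta_N:N\xrightarrow{\pmb r_N^{-1}}N\dot\otimes_K\bfone\xrightarrow{1_N\dot\otimes i_M}N\dot\otimes_K(M\dot\otimes_K DM)\xrightarrow{\bfa^{-1}_{N,M,DM}}(N\dot\otimes_K M)\dot\otimes_K DM
\]
and the counit
\[
\varepsilon_N:(N\dot\otimes_K DM)\dot\otimes_K M\xrightarrow{\bfa_{N,DM,M}}N\dot\otimes_K(DM\dot\otimes_K M)\xrightarrow{1_N\dot\otimes e_M}N\dot\otimes_K\bfone\xrightarrow{\pmb r_N}N,
\]
using the right unit isomorphism of Lemma \ref{lem:unit} and the associators from Proposition \ref{prop:TP2}. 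Verifying the two triangle identities $(\varepsilon\dot\otimes 1)\circ(1\dot\otimes\eta)=1$ and $(1\dot\otimes\varepsilon)\circ(\eta\dot\otimes 1)=1$ is then a diagram chase: after inserting the appropriate associators, the pentagon axiom of the bimodule structure, the triangle axiom relating $\pmb\ell,\pmb r$ to $\bfa$, and functoriality reduce each identity to the corresponding snake identity for $(i_M,e_M)$ acted on by $1_N$, which holds by rigidity in $\bfO^{+,\kappa}_K$. This shows that $\bullet\dot\otimes_K DM$ is right adjoint to $\bullet\dot\otimes_K M$.

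Next, applying the same construction with $M$ replaced by $DM$ and using the involutivity $D^2M\simeq M$ shows that $\bullet\dot\otimes_K M$ is also right adjoint to $\bullet\dot\otimes_K DM$, so the two functors are biadjoint. Exactness is then automatic: a functor with both a left and a right adjoint preserves all limits and colimits. The symmetric argument for the left action $M\dot\otimes_K\bullet$, $DM\dot\otimes_K\bullet$ is carried out identically, using $\pmb\ell_N$ and the associators $\bfa_{M,DM,N}$, $\bfa_{DM,M,N}$ in place of their right-handed counterparts.

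The main obstacle will be bookkeeping in the triangle-identity diagram chase: the bimodule setting involves three distinct flavours of associator ($\bfa_{V_1,V_2,N}$, $\bfa_{V,N,V'}$, $\bfa_{N,V_1,V_2}$), and one must track them carefully. Once the pentagon and triangle axioms recorded in Proposition \ref{prop:TP2} and Lemma \ref{lem:unit} are in place, however, the computation reduces to the classical proof of biadjointness in a rigid monoidal category, with $1_N$ simply riding along outside the rigidity data.
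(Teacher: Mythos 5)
Your proposal is correct and follows essentially the same route as the paper: the paper's one-line proof says precisely that the rigidity data $i_M$, $e_M$ for $M$ and $DM$ in $\bfO^{+,\kappa}_K$, together with the bimodule unit isomorphism of Lemma \ref{lem:unit} in place of the monoidal unit axiom, gives the adjunctions, with biadjointness from $D^2\simeq\Id$. Your expanded diagram chase and the observation that exactness follows from having both adjoints are faithful unpackings of this argument.
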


\vspace{.5mm}

\begin{proof}
If $\bfO^{\nu,\kappa}_K=\bfO^{+,\kappa}_K$ the lemma follows from the rigidity of $(\bfO^{+,\kappa}_K,\dot\otimes_K,\bfa_K,\bfc_K)$.
The general case is proved in the same way, using the rigidity of $M$, $DM$ in $\bfO^{+,\kappa}_K$ and Lemma
\ref{lem:unit} instead of the unit axiom of $(\bfO^{+,\kappa}_K,\dot\otimes_K,\bfa_K,\bfc_K)$.
\end{proof}

\vspace{2mm}

We can now prove Propositions \ref{prop:TP2} and \ref{prop:TP2bis}.

\begin{proof}[Proof of Proposition \ref{prop:TP2}]
We must check that $\bfa_K$, $\bfc_K$ satisfy the pentagon and the hexagon axioms.
These are proved as in Proposition \cite[prop.~31.2]{KL},
using an auxiliary module category called
the \emph{Drinfeld category}.

Set $R_\infty=K[[\varpi]]$ and $K_\infty=K((\varpi))$.
Put $\kappa_{R_\infty}=\kappa_{K_\infty}=-1/\varpi$.
Consider the elements $v=\exp(\sqrt{-1}\pi\varpi)$ and $q=v^{-2}$
in $R_\infty$. 
Whenever this makes sense we write
$v^z=\sum_{r\in\bbN}\varpi^r(\sqrt{-1}\pi z)^r/r!$.

The category $\scrO_\infty$
of \emph{deformation representations} of $\frakg$ consists of the
representations of $\frakg_{R_\infty}$ on topologically free $R_\infty$-modules $M$ such that
$M$ is a weight $\frakt_{R_\infty}$-module and
the weights of $M$ belong to a union of finitely many cones $\lambda-Q^+$
and the weight subspaces are free of finite type over $R_\infty$.

Following Drinfeld and \cite{EK}, \cite{KL}
we put on $\scrO_\infty$ a structure of a braided monoidal category
$(\scrO_\infty,\otimes_{R_\infty}, \bfa_{\infty}, \bfc_{\infty})$ where
$\otimes_{R_\infty}$ is the tensor product of $R_\infty$-modules and
$\bfa_\infty$ is the Knizhnik-Zamolodchikov associator, i.e.,
\begin{equation*}
 \bfa_\infty=\{\bfa_{M_1,M_2,M_3}:(M_1\otimes_{R_\infty} M_2)\otimes_{R_\infty} M_3\to
M_1\otimes_{R_\infty}(M_2\otimes_{R_\infty} M_3)\}
\end{equation*}
is defined in \cite[sec.~19.10]{KL}.
Note that we do not impose $M$ to be of finite rank over $R_\infty$.
However, since the weight subspaces of $M$ are free of finite type over $R_\infty$,
by standard facts about linear ordinary differential equations, the series
obtained by restricting $\bfa_\infty$ to a weight
subspace in the tensor product of three objects of $\scrO_\infty$ 
is well-defined. 
The braiding is given by the following formula, see \cite[sec.~19.12]{KL},
\begin{equation*}
 \bfc_\infty=\{\bfc_{M_1,M_2}=v^\omega \sigma:M_1\otimes_{R_\infty} M_2\to M_2\otimes_{R_\infty} M_1\}
\end{equation*}
where $\sigma$ flips the factors and $\omega$ is the Casimir element. 
Recall that
\begin{itemize}
\item the functor $\otimes_{R_\infty}$ is  $R_\infty$-bilinear and biexact, 
\item there is a unit object $\bfone$, which is simple  
(equal to $R_\infty$ with the trivial action),
with functorial unit isomorphisms
$\bfone\otimes_{R_\infty}M\to M$, $M\otimes_{R_\infty}\bfone\to M$,
\item the unit $\bfone$ and the functor $\bfa_\infty$ satisfy the triangle axiom,
\item the functor $\bfa_\infty$ satisfies the pentagon axiom, 
\item the functors $\bfa_\infty$ and $\bfc_\infty$ satisfy the hexagon axiom.
\end{itemize}

Restricting
the braided monoidal structure on $\scrO_\infty$ to  some parabolic subcategories, we
define in the obvious way 
\begin{itemize}
\item 
a braided monoidal category $(\scrO^+_\infty,\otimes_{R_\infty}, \bfa_\infty, \bfc_\infty)$
called the \emph{Drinfeld category,} which consists of
the modules which are free of finite rank over $R_\infty$,

\item
a bimodule category $(\scrO^\nu_\infty,\otimes_{R_\infty}, \bfa_\infty, \bfc_\infty)$ over 
$(\scrO^+_\infty,\otimes_\infty, \bfa_\infty, \bfc_\infty)$.
\end{itemize}

Now, we may assume that there is a local analytic deformation ring $R\subset R_\infty$ of dimension 1 with residue field $K$ such that
$\kappa_R=\kappa_{R_\infty}=-1/\varpi$ is the germ of an holomorphic function over some polydisc and
the inclusion  $R\subset R_\infty$ is given by the expansion at $\varpi=\infty$.
Further, the specialization map
$R\to K$ takes $\kappa_R$ to $\kappa_K$.
Since $R_\infty$ is flat over $R$, the base change yields an exact functor 
$\bfO_{R}^{\kappa,\Delta}\to\bfO_{R_\infty}^{\kappa,\Delta}$.

\vspace{2mm}

\begin{lemma}
\label{prop:Comp0}
(a) There is a faithful braided functor  and a faithful bimodule functor
$$\gathered
(\bfO^{+,\kappa,\Delta}_{R},\dot\otimes_R,\bfa_R,\bfc_R)
\to(\bfO^{+,\kappa,\Delta}_{R_\infty},\dot\otimes_{R_\infty},\bfa_{R_\infty},\bfc_{R_\infty}),\\
(\bfO^{\nu,\kappa,\Delta}_{R},\dot\otimes_R,\bfa_R,\bfc_R)
\to(\bfO^{\nu,\kappa,\Delta}_{R_\infty},\dot\otimes_{R_\infty},\bfa_{R_\infty},\bfc_{R_\infty}).
\endgathered$$

(b) There is a braided equivalence and a bimodule equivalence
$$\gathered
(\bfO^{+,\kappa,\Delta}_{R_\infty},\dot\otimes_{R_\infty},\bfa_{R_\infty},\bfc_{R_\infty})
\to(\scrO^{+,\Delta}_\infty,\otimes_{R_\infty},\bfa_\infty,\bfc_\infty),\\
(\bfO^{\nu,\kappa,\Delta}_{R_\infty},\dot\otimes_{R_\infty},\bfa_{R_\infty},\bfc_{R_\infty})
\to(\scrO^{\nu,\Delta}_\infty,\otimes_{R_\infty},\bfa_\infty,\bfc_\infty).
\endgathered$$

(c) The specialization gives a braided functor  and a bimodule functor
$$\gathered
(\bfO^{+,\kappa,\Delta}_{R},\dot\otimes_R,\bfa_R,\bfc_R)
\to(\bfO^{+,\kappa,\Delta}_K,\dot\otimes_K,\bfa_K,\bfc_K),\\
(\bfO^{\nu,\kappa,\Delta}_{R},\dot\otimes_R,\bfa_R,\bfc_R)
\to(\bfO^{\nu,\kappa,\Delta}_K,\dot\otimes_K,\bfa_K,\bfc_K).
\endgathered$$
\end{lemma}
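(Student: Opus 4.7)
The strategy is to compare three avatars of the tensor product on these categories: the Kazhdan-Lusztig/fusion construction $\dot\otimes_R$ defined via coinvariants on $\bbP^1$, its analytic counterpart $\dot\otimes_{R_\infty}$, and the Drinfeld product $\otimes_{R_\infty}$, which is the underlying $R_\infty$-module tensor product equipped with the KZ associator $\bfa_\infty$ and the braiding $\bfc_\infty = v^\omega\sigma$. Once we know these are compatibly identified, (a) and (c) are essentially base change statements, while (b) is the substantial content.

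For part (a), observe first that $R_\infty$ is flat over $R$ (it is obtained from $R$ by a localization/completion on the analytic deformation side, carrying $\kappa_R$ to $\kappa_{R_\infty} = -1/\varpi$). By Proposition \ref{prop:TPR1}(b) the functor $\dot\otimes_R$ commutes with flat base change, so the canonical map $R_\infty \dot\Otimes_{R,a} M_a \iso \dot\Otimes_{R_\infty,a} R_\infty M_a$ is an isomorphism, and faithfulness on the level of Hom-spaces follows from faithful flatness of $R_\infty/R$ restricted to the weight spaces, which are free over $R$ by Proposition \ref{prop:introhw}(d). The associativity and braiding constraints $\bfa_R$, $\bfc_R$ are constructed via canonical isomorphisms between spaces of coinvariants (the cyclic-invariance isomorphism of Proposition \ref{prop:TP1}(b) and the Sugawara-based transport used to define $\bfc$), and both of these operations commute with flat base change, so $\bfa_R\otimes R_\infty = \bfa_{R_\infty}$ and $\bfc_R\otimes R_\infty = \bfc_{R_\infty}$ on the nose.

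For part (b), this is the main theorem of the Kazhdan-Lusztig series in its parabolic form (and its extension to the bimodule setting over a two-sided category). The plan is to fix $M_0 \in \bfO^{\nu,\kappa,\Delta}_{R_\infty}$, $M_{\pm 1}\in\bfO^{+,\kappa,\Delta}_{R_\infty}$, and consider the $\bfg^{A}_{R_\infty,\kappa}$-coinvariants as a function of the position of the insertion points $z_{-1},z_0,z_1$ on $\bbP^1$: standard computations with the Sugawara operator show that they form a flat bundle for the KZ connection on the configuration space, with regular singularities along the diagonals. The KL fusion $\dot\otimes_{R_\infty}$ picks out sections with prescribed asymptotics, and the associator $\bfa_{R_\infty}$ and braiding $\bfc_{R_\infty}$ are computed via parallel transport between three-point and two-point configurations. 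Drinfeld's theorem on the KZ connection identifies this transport with the KZ associator $\bfa_\infty$ and with $v^\omega\sigma$, which gives a braided (resp.\ bimodule) equivalence once we check on parabolic Verma modules that the underlying $R_\infty$-module identifications $M_{-1}\dot\otimes_{R_\infty} M_0 \dot\otimes_{R_\infty} M_1 \iso M_{-1}\otimes_{R_\infty} M_0 \otimes_{R_\infty} M_1$ coming from Lemma \ref{lem:TP0}(c) are compatible with $\bfa$ and $\bfc$. This is exactly the passage handled in \cite[\S 19--27]{KL} in the triangular case, and the same analysis applies verbatim on the parabolic side because $\bfO^{\nu,\kappa}_{R_\infty}$ is a bimodule category over $\bfO^{+,\kappa}_{R_\infty}$ with the same type of KZ-style functional-equation description.

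Part (c) then follows by specialization: the residue field map $R\to K$ is a morphism of analytic deformation rings (but not flat), and the tilting/$\Delta$-filtered hypothesis ensures that the coinvariants of modules in $\bfO^{\nu,\kappa,\Delta}_R$ are free over $R$ of the correct rank, so Lemma \ref{lem:TP0}(a) gives $K\dot\Otimes_{R,a} M_a \iso \dot\Otimes_{K,a} KM_a$. Again $\bfa$ and $\bfc$ are defined via canonical maps of coinvariants that commute with arbitrary base change, which yields the desired braided and bimodule functor to $(\bfO^{\nu,\kappa,\Delta}_K,\dot\otimes_K,\bfa_K,\bfc_K)$. The main obstacle is genuinely part (b): making precise the identification of the KL associator with Drinfeld's KZ associator in the bimodule setting, which rests on the regular-singular analysis of the KZ equations carried out in \cite{KL} and reinterpreted through Lemma \ref{lem:TP0}(c) that turns fusion of generalized Weyl modules into the ordinary tensor product of their $\frakg_{R_\infty}$-tops.
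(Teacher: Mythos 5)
Your proposal is correct and takes essentially the same approach as the paper. The paper's proof of this lemma simply cites Lemma \ref{lem:A11} (flat base change) for (a), \cite[sec.~31]{KL} (comparison with the Drinfeld category via the KZ connection) for (b), and \cite[thm.~29.1]{KL} (specialization) for (c); your plan unpacks the content of exactly those citations, including the faithful-flatness argument for (a) and the freeness of coinvariants on $\Delta$-filtered modules for (c).
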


\vspace{.5mm}

\begin{proof}
Part $(b)$ is proved as in \cite[sec.~31]{KL}. Part $(a)$ follows from Lemma \ref{lem:A11} below.
Part $(c)$ is proved as in \cite[thm.~29.1]{KL}.
\end{proof}

\vspace{2mm}

We can now finish the proof of Proposition \ref{prop:TP2}.
Composing the functors in $(a)$, $(b),$ we get faithful functors
$(\bfO^{+,\kappa,\Delta}_{R},\dot\otimes_R,\bfa_R,\bfc_R)
\to(\scrO^{+,\Delta}_\infty,\otimes_{R_\infty},\bfa_\infty,\bfc_\infty)$
and 
$(\bfO^{\nu,\kappa,\Delta}_{R},\dot\otimes_R,\bfa_R,\bfc_R)
\to(\scrO^{\nu,\Delta}_\infty,\otimes_{R_\infty},\bfa_\infty,\bfc_\infty).$
This implies that $\bfa_R$, $\bfc_R$ satisfy the pentagon and the hexagon axioms.
Hence, from $(c)$, we deduce that 
$\bfa_K$, $\bfc_K$ also satisfy the pentagon and the hexagon axioms.
The details are left to the reader.
\end{proof}

\vspace{2mm}

\begin{rk} Since $R$ has dimension 1, the specialization functor in Lemma \ref{prop:Comp0}$(c)$ is constructed in \cite{KL}.
In Lemma \ref{lem:A20}, we'll consider specializations from regular local rings of dimension 2.
\end{rk}

\vspace{2mm}

\begin{proof}[Proof of Proposition \ref{prop:TP2bis}]
First, we prove part $(a)$.
Fix $M_1\in\bfO^{+,\kappa,\Delta}_K$ and $M_2\in\bfO^{\nu,\kappa,\Delta}_K$. 
The module $M_1\dot\otimes_K M_2$
belongs to the category $\bfO^{\nu,\kappa}_K$ by
Proposition \ref{prop:TP1}. We must prove that it lies in $\bfO^{\nu,\kappa,\Delta}_K$.

Fix $\beta$ such that $M_2$ and
$M_1\dot\otimes_K M_2$ belong to the Serre subcategory
$\lub\bfO^{\nu,\kappa}_K$ of $\bfO^{\nu,\kappa}_K$. Since $\lub\bfO^{\nu,\kappa}_K$ is
a highest weight category with a duality functor $\scrD$, it is enough to check that for
$M_3\in\Delta(\lub\bfO^{\nu,\kappa}_K)$ we have the equality
$\Ext^1_{\lub\bfO^{\nu,\kappa}_K}(M_1\dot\otimes_K M_2,\scrD M_3)=0.$

Fix an exact sequence
$0\to Q\to P\to M_3\to 0$
with $P$ a projective module in $\lub\bfO^{\nu,\kappa}_K$. Since $P$, $M_3$ have $\Delta$-filtrations, the module
$Q$ is again a $\Delta$-filtered object of $\lub\bfO^{\nu,\kappa}_K$. Since $P$ is projective, we have 
$\Ext^1_{\lub\bfO^{\nu,\kappa}_K}(M_1\dot\otimes_K
M_2,\scrD P)=0.$
Therefore, since $\scrD$ is exact and contravariant, the long exact
sequence of the Ext-group and Proposition \ref{prop:TP1}  yield a vector space exact sequence
\begin{equation*}
\gathered
0\to \llangle M_1,M_2,{}^\dag M_3\rrangle_K^*\to
\llangle M_1,M_2,{}^\dag P\rrangle_K^*\to \llangle M_1,M_2,{}^\dag Q\rrangle_K^*\to\\
\to\Ext^1_{\lub\bfO^{\nu,\kappa}_K}(M_1\dot\otimes_K M_2,\scrD M_3)\to 0.
\endgathered
\end{equation*}
Thus, we get the equality of dimensions
\begin{equation*}
\gathered
\dim\Ext^1_{\lub\bfO^{\nu,\kappa}_K}(M_1\dot\otimes_K M_2,\scrD M_3)=
\dim\llangle M_1,M_2,{}^\dag P\rrangle_K -\dim\llangle M_1,M_2,{}^\dag Q\rrangle_K-\\
-\dim\llangle
M_1,M_2,{}^\dag M_3\rrangle_K.
\endgathered
\end{equation*}
The right hand side is zero by the following lemma.

\vspace{2mm}

\begin{lemma}\label{lem:TP3}
For $M_2,M_3\in\bfO^{\nu,\kappa,\Delta}_K$ and 
$M_1\in\bfO^{+,\kappa,\Delta}_K$ we have
$$\gathered
\dim\llangle M_1,M_2,{}^\dag M_3\rrangle_K=\\
\sum
(M_1:\bfM(\lambda_1)_+)\,
(M_2:\bfM(\lambda_2)_\nu)\,
(M_3:\bfM(\lambda_3)_\nu)\,
(L(\lambda_1)\otimes M(\lambda_2)_\nu:M(\lambda_3)_\nu),
\endgathered$$
where the sum is over all  $\lambda_1\in P_K^+$ and $\lambda_2,\lambda_3\in P^\nu_K$.
\end{lemma}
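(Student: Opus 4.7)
I would prove the formula by reducing the affine coinvariant computation to a purely finite-dimensional one and then applying a standard multiplicity identity in parabolic category $\scrO^\nu$.

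\emph{Step 1: reduction to parabolic Verma modules.} Both sides of the asserted formula are additive under short exact sequences of $\Delta$-filtered objects in each of the three arguments. For the right-hand side this is immediate from the definition of the $\Delta$-multiplicities and from the fact that the Chevalley twist $\dag$ preserves both $\bfO^{\nu,\kappa,\Delta}_K$ and the class of induced (parabolic Verma) modules, sending ${}^\dag\!\bfM(\lambda)_\nu$ to $\scrI\!nd({}^\dag\!M(\lambda)_\nu)$. For the left-hand side the coinvariant functor is right exact by Lemma \ref{lem:TP0}(a), and comparing dimensions together with a descending induction along $\Delta$-filtrations of each $M_i$ reduces everything to the case $M_1=\bfM(\lambda_1)_{K,+}$, $M_2=\bfM(\lambda_2)_{K,\nu}$, $M_3=\bfM(\lambda_3)_{K,\nu}$. (The outstanding exactness on the left beyond mere right exactness follows from the computation for Verma modules in Step 3 below, since both sides will then define the same additive character function.)

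\emph{Step 2: from affine to finite-dimensional coinvariants.} With $M_i$ now parabolic Verma modules, all three arguments are generalized Weyl modules by the identity $\bfM(\lambda)_{R,\nu}=\scrI\!nd(M(\lambda)_{R,\nu})$ together with ${}^\dag\!\bfM(\lambda_3)_\nu=\scrI\!nd({}^\dag\!M(\lambda_3)_\nu)$. Applying Lemma \ref{lem:TP0}(c) yields a canonical isomorphism
\[
\llangle \bfM(\lambda_1)_{K,+},\,\bfM(\lambda_2)_{K,\nu},\,{}^\dag\!\bfM(\lambda_3)_{K,\nu}\rrangle_K
\;\iso\;\langle L(\lambda_1),\,M(\lambda_2)_{K,\nu},\,{}^\dag\!M(\lambda_3)_{K,\nu}\rangle_K,
\]
where $L(\lambda_1)=M(\lambda_1)_{K,+}$ is the finite-dimensional simple $\frakg_K$-module of highest weight $\lambda_1$. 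Thus the right-hand side is the space of $\frakg_K$-coinvariants in the threefold tensor product $L(\lambda_1)\otimes M(\lambda_2)_{K,\nu}\otimes{}^\dag\!M(\lambda_3)_{K,\nu}$, and the problem has been transported to the finite setting.

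\emph{Step 3: finite-dimensional multiplicity computation.} It then remains to establish
\[
\dim\bigl(L(\lambda_1)\otimes M(\lambda_2)_{K,\nu}\otimes{}^\dag\!M(\lambda_3)_{K,\nu}\bigr)_{\frakg_K}
\;=\;\bigl(L(\lambda_1)\otimes M(\lambda_2)_{K,\nu}:M(\lambda_3)_{K,\nu}\bigr).
\]
Because $L(\lambda_1)$ is finite-dimensional, the object $N:=L(\lambda_1)\otimes M(\lambda_2)_{K,\nu}$ lies in $\scrO^\nu_K$ and admits a $\Delta$-filtration by parabolic Verma modules $M(\mu)_{K,\nu}$. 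Additivity of coinvariants on $\Delta$-filtrations of $N$ reduces the claim to the identity
\[
\dim\bigl(M(\mu)_{K,\nu}\otimes{}^\dag\!M(\lambda_3)_{K,\nu}\bigr)_{\frakg_K}=\delta_{\mu,\lambda_3},
\]
together with the vanishing of higher Lie-algebra homology $H_{>0}(\frakg_K,M(\mu)_{K,\nu}\otimes{}^\dag\!M(\mu')_{K,\nu})=0$ needed to justify the reduction. This is the main obstacle. Since the Chevalley twist $\dag$ swaps $\frakb$ and $\frakb^-$, the module ${}^\dag\!M(\lambda_3)_{K,\nu}$ is an ``opposite'' parabolic Verma whose extremal weight is $\lambda_3$, and the pairing with $M(\mu)_{K,\nu}$ reduces to the Shapovalov-type pairing between a parabolic Verma and its Chevalley-twisted counterpart; this yields $\delta_{\mu,\lambda_3}$ by an explicit weight argument, while the higher homology vanishing follows by the standard $\Ext$-orthogonality of standard and costandard objects in $\scrO^\nu$. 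Combining the three steps gives the asserted formula.
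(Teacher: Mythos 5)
Your proposal shares the overall architecture of the paper's proof: reduce to parabolic Verma modules, pass to finite-dimensional coinvariants via Lemma \ref{lem:TP0}(c), and compute inside $\scrO^\nu_K$. Step 2 matches the paper's first case exactly, and Step 3 is a re-derivation (rather than a citation) of the finite-dimensional fact which the paper simply quotes from Donkin, namely $\dim\langle L(\lambda_1),M(\lambda_2)_\nu,{}^\dag M(\lambda_3)_\nu\rangle_K=(L(\lambda_1)\otimes M(\lambda_2)_\nu:M(\lambda_3)_\nu)$. That is fine in principle, though invoking ``vanishing of higher Lie-algebra homology $H_{>0}(\frakg_K,\cdot)$'' of these infinite-dimensional tensor products is not the right technical device: what is actually used, after rewriting $\langle N,{}^\dag M\rangle_K^*=\Hom_{\frakg_K}(N,\scrD M)$, is the $\Ext$-orthogonality $\Ext^{>0}_{\scrO^\nu_K}(\Delta,\nabla)=0$ of standards and costandards.

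The genuine gap is in Step 1. You want to pass from the parabolic Verma case to arbitrary $\Delta$-filtered $M_a$ by additivity, and your parenthetical justification is circular: the claim that $\dim\llangle\cdot,\cdot,{}^\dag\cdot\rrangle_K$ is additive on $\Delta$-filtrations is exactly the exactness statement at stake, and it cannot be deduced just from knowing the answer on Vermas. The coinvariant functor is only right exact (Lemma \ref{lem:TP0}(a)); the dual formulation from Proposition \ref{prop:TP1}, namely $\llangle M_1,M_2,{}^\dag M_3\rrangle_K^*=\Hom_{\bfg_K}(M_1\dot\otimes_K M_2,\scrD M_3)$, is only left exact. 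Making either one exact on $\Delta$-filtrations requires knowing $M_1\dot\otimes_K M_2\in\bfO^{\nu,\kappa,\Delta}_K$ -- but that is precisely what Lemma \ref{lem:TP3} is later used to establish in the proof of Proposition \ref{prop:TP2bis}(a), so you may not assume it here. The paper closes the circle with a two-sided squeeze that your outline omits: from left/right exactness alone one gets the inequality $\dim\llangle M_1,M_2,{}^\dag M_3\rrangle_K\leqslant d(M_1,M_2,{}^\dag M_3)$ for all $\Delta$-filtered $M_a$ (with $d$ the right-hand side of the lemma), then one proves equality when all $M_a$ are \emph{generalized Weyl modules}, and finally for arbitrary $M_a$ one chooses $0\to M_a^{(2)}\to M_a^{(1)}\to M_a\to 0$ with $M_a^{(1)}$ a generalized Weyl module and uses additivity of $d$ to force all the inequalities to be equalities. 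Without that squeeze your Step 1 does not go through.
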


\vspace{.5mm}

\begin{proof} 
Let $d(M_1,M_2,{}^\dag M_3)$ denote the right hand side in the equality of the lemma.

First, assume that
$M_3=\bfM(\lambda_3)_\nu,$
$M_2=\bfM(\lambda_2)_\nu$ and $M_1=\bfM(\lambda_1)_+$.
We have
${}^\dag M_3=\Indc({}^\dag M(\lambda_3)_\nu)$. Thus ${}^\dag M_3$ is again a
generalized Weyl module and Lemma \ref{lem:TP0}$(a)$ yields
\begin{equation*}
\aligned
\llangle M_1,M_2,{}^\dag M_3\rrangle_K &=\langle L(\lambda_1),
M(\lambda_2)_\nu,{}^\dag M(\lambda_3)_\nu\rangle_K,\cr
&=\Hom_{\frakg_K}(L(\lambda_1)\otimes_K M(\lambda_2)_\nu,\scrD
M(\lambda_3)_\nu)^*.
\endaligned
\end{equation*}
Since 
$L(\lambda_1)\otimes_K M(\lambda_2)_\nu\in\scrO^{\nu,\Delta}_K$, by \cite[prop.~A.2.2$(ii)$]{Do} we get
$\dim\,\llangle M_1,M_2,{}^\dag M_3\rrangle_K=
(L(\lambda_1)\otimes_K M(\lambda_2)_\nu: M(\lambda_3)_\nu).$

The same argument implies that
$\dim\llangle M_1,M_2,{}^\dag M_3\rrangle_K= d(M_1,M_2,{}^\dag M_3)$
if $M_1$, $M_2$, $M_3$ are
generalized Weyl modules. 

Now, we concentrate on the general case.
First, observe that using the \emph{third} construction of $\dot\otimes_K$ in 
\cite[sec.~6]{KL} it is easy to check that
$\dot\otimes_K$ is right biexact. 
Further, by Proposition \ref{prop:TP1}, we have
$\llangle M_1,M_2,{}^\dag M_3\rrangle_K^*=\Hom_{\bfg_K}(M_1\dot\otimes_K M_2,\scrD M_3).$
Thus the left hand side is left exact in each of its variables.
So, given exact sequences 
$M_a^{(2)}\to M_a^{(1)}\to M_a^{(3)}\to 0$ of $\Delta$-filtered modules with $a=1,2,3$, we have
\begin{equation}\label{ineq}
\dim\llangle M_1^{(1)},M_2^{(1)},{}^\dag M_3^{(1)}\rrangle_K\leqslant
\sum_{\alpha,\beta,\gamma=2,3}\dim\llangle M_1^{(\alpha)},M_2^{(\beta)},{}^\dag M_3^{(\gamma)}\rrangle_K.
\end{equation}
Using the first part of the proof (i.e., the case of generalized Weyl modules) and \eqref{ineq}, we get that
for any $M_2,M_3\in\bfO^{\nu,\kappa,\Delta}_K$ and $M_1\in\bfO^{+,\kappa,\Delta}_K$ we have
$$\dim\llangle M_1,M_2,{}^\dag M_3\rrangle_K\leqslant d(M_1,M_2,{}^\dag M_3).$$

To prove the equality, for each $a$ we fix an exact sequence 
$0\to M_a^{(2)}\to M_a^{(1)}\to M_a^{(3)}\to 0$ of $\Delta$-filtered modules such that 
$M_a^{(1)}$ is a generalized Weyl module and $M_a^{(3)}=M_a$. 
Clearly, such exact sequences always exist. Then, we have
$$\gathered
\dim\llangle M_1^{(1)},M_2^{(1)},{}^\dag M_3^{(1)}\rrangle_K=
d(M_1^{(1)},M_2^{(1)},{}^\dag M_3^{(1)}),\\
\dim\llangle M_1^{(\alpha)},M_2^{(\beta)},{}^\dag M_3^{(\gamma)}\rrangle_K\leqslant 
d(M_1^{(\alpha)},M_2^{(\beta)},{}^\dag M_3^{(\gamma)}),\quad\forall\alpha,\beta,\gamma.
\endgathered$$
Thus the equality follows from \eqref{ineq}.
\end{proof}

\vspace{2mm}

Next, we prove part $(b)$. Assume that $M_1\in\bfO^{+,\kappa}_K$ and 
$M_2\in\bfO^{\nu,\kappa}_K$ are tilting. We must check that $M_1\dot\otimes_K M_2$ is still tilting.
For $N\in\bfO^{\nu,\kappa,\Delta}_K$ we must prove that 
$\Ext^1_{\bfO^{\nu,\kappa}_K}(N,M_1\dot\otimes_K M_2)=0$.
Since the functors $\bullet\,\dot\otimes_K M_2$ and $\bullet\,\dot\otimes_K DM_2$ are exact 
and biadjoint by Lemma \ref{lem:TP2a}, we have
$\Ext^1_{\bfO^{\nu,\kappa}_K}(N,M_1\dot\otimes_K M_2)=\Ext^1_{\bfO^{\nu,\kappa}_K}(N\dot\otimes_K DM_2,M_1)$.
Since  $DM_2$ is $\Delta$-filtered, 
part $(a)$ yields
$\Ext^1_{\bfO^{\nu,\kappa}_K}(N\dot\otimes_K DM_2,M_1)=0$.

Finally, we prove parts $(c)$, $(d)$.
The functor $\dot\otimes_K$ is right biexact.
The same argument as above using Proposition \ref{prop:TP1} and Lemma \ref{lem:TP3}
implies that it is biexact on $\Delta$-filtered modules. More precisely, for each 
$M\in\bfO^{\nu,\kappa,\Delta}_K$ the functor $\Hom_{\bfg_K}(\bullet\,\dot\otimes\,\bullet,\scrD M)$ on
$\bfO^{+,\kappa,\Delta}_K\times\bfO^{\nu,\kappa,\Delta}_K$ is exact and $\bullet\,\dot\otimes_K\bullet$ takes values
in $\bfO^{\nu,\kappa,\Delta}_K$. Thus, if $0\to N_1\to N_2\to N_3\to 0$ is exact in 
$\bfO^{+,\kappa,\Delta}_K$ and if $N\in\bfO^{\nu,\kappa,\Delta}_K,$ then we have an exact sequence
$N_1\dot\otimes_K N\to N_2\dot\otimes_K N\to N_3\dot\otimes_K N\to 0$
and $(N_2\dot\otimes N:M_3)=(N_1\dot\otimes_K N:M_3)+(N_3\dot\otimes_K N:M_3)$ 
if $M_3$ is a parabolic Verma module. Thus, also
we have an exact sequence
$0\to N_1\dot\otimes_K N\to N_2\dot\otimes_K N\to N_3\dot\otimes_K N\to 0.$

Since it is exact, the tensor product $\dot\otimes_K$
factors to the Grothendieck groups $[\bfO^{+,\kappa,\Delta}_K]$ and $[\bfO^{\nu,\kappa,\Delta}_K]$.
The exact functor $\Indc(\bullet)$ gives $\bbC$-linear isomorphisms
$[\scrO^{+,\Delta}_K]\to[\bfO^{+,\kappa,\Delta}_K]$ and $[\scrO^{\nu,\Delta}_K]\to[\bfO^{\nu,\kappa,\Delta}_K],$
because the parabolic Verma modules form bases of the Grothendieck groups
of $\Delta$-filtered objects. The compatibility with the monoidal structures follows from
Proposition \ref{prop:TP1} and Lemma \ref{lem:TP3}.
\end{proof}

\vspace{3mm}

\subsection{Proof of Propositions \ref{prop:TPR1}, \ref{prop:adjoints}, \ref{prop:monoidalR}}
\label{sec:TPR1}
Let $A$ and the $\bfg_{R,\kappa}$-modules $M_a$, $a\in A$, be as in Proposition \ref{prop:TPR1}.
We must define the $\bfg_{R,\kappa}$-module $\dot\bigotimes_{R,a}M_a$.
The construction
is essentially the same as in \cite[sec.~29]{KL}.
However, our setting differs from that of \cite{KL} from several aspects
\begin{itemize}
\item the category $\bfO_R^{\nu,\kappa}$ is defined over a regular local ring $R$ of dimension $\leqslant 2$,
\item the modules $M_a$ do not all belong to the category $\bfO^{+,\kappa}_R$, 
\item the modules $M_a$ may not have integral weights,
\item we work with $\bfg_R$-modules, rather than $\bfg'_R$-modules.
\end{itemize}

The last point is easy to deal with :
we'll switch from $\bfg_R$-modules to $\bfg'_R$-modules as in Remark \ref{rk:soergel} without 
mentioning it explicitly.

First, assume that $R=\bbC$ and $M_a\in\scrS_\kappa$ for each $a$.
Then, the smooth $\bfg'_\kappa$-module $\dot\Otimes_{a}M_a$ is defined in \cite{KL}. 
If $M_a\in\bfO^{+,\kappa}$ for each $a$ then
$\dot\Otimes_{a}M_a\in\bfO^{+,\kappa}$ by \cite{KL}. 
If $M_a\in\bfO^{+,\kappa}$ for $a\neq 0$ and $M_0\in\bfO^{\nu,\kappa}$ then 
$\dot\Otimes_{a}M_a\in\bfO^{\nu,\kappa}$ by
\cite{Y1}, see \S \ref{sec:TP2}. 

Next, if $R=K$ is a field  and $M_a\in\scrS_\kappa$ for each $a$, then one can define
$\dot\Otimes_{K,a}M_a$ as over $\bbC$, see Remark \ref{rem:TPK} below.

Now, let $R$ be any commutative noetherian $\bbC$-algebra with 1. 
Set $A=[-m,n+1]$ and consider the subsets
$\spadesuit=[-m,n]$ and $\heartsuit=\{n+1\}$.
To simplify the notation we'll also write $\heartsuit$ for $n+1$. 

Recall that $\gamma_a$ is a chart on $C$ centered at $x_a$ for each $a\in A,$
and that $D_R=R[C\setminus\{x_a; a\in A\}].$
Let $\widehat\Gamma_{R}$ be the central extension of 
$\Gamma_R=\frakg\otimes D_{R}$ by $R$ associated with the cocycle
$(\xi_1\otimes f_1,\xi_2\otimes f_2)\mapsto\Res_{\gamma_\heartsuit=0}(f_2df_1).$

Set $\kappa=c+N$ and $\kappa'=-c+N$. The quotient by the ideal
$(\bfone-c)$ yields an algebra homomorphism
$U(\widehat\Gamma_{R})\to\Gamma_{R,\kappa}.$

\vspace{2mm}

\begin{lemma}\label{lem:map2}
(a) There is an $R$-algebra homomorphism
$\Gamma_{R,\kappa'}\to\bfg^{\spadesuit}_{R,\kappa}$ 
such that $\xi\otimes f\mapsto\xi\otimes {}^\spadesuit\! f.$

(b) There is an $R$-algebra homomorphism
$\bfg'_{R,\kappa'}\to\Gamma_{R,\kappa'}$ such that
$\xi\otimes f(t)\mapsto\xi\otimes f(\gamma_\heartsuit)$, and
an $R$-algebra homomorphism
$\Gamma_{R,\kappa'}\to\bfg^{\heartsuit}_{R,\kappa'}$ such that
$\xi\otimes f\mapsto\xi\otimes{}^\heartsuit\!f.$

(c) Composing the maps in $(b)$ we get an $R$-algebra embedding
$\bfg'_{R,\kappa'}\to\bfg^{\heartsuit}_{R,\kappa'}$ such that
$\xi\otimes f(t)\mapsto\xi\otimes f(t_\heartsuit).$
\end{lemma}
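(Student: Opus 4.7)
The plan is to verify each assertion at the level of the defining cocycles of the central extensions, then track what happens to the central element $\bfone$ to see that the maps descend to the appropriate quotients (i.e., respect the levels $\kappa$ and $\kappa'$). All three parts will reduce to the classical residue theorem on $\bbP^1$ plus a change of variables through the charts $\gamma_a$.

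For part (a), I would first define a Lie algebra map $\phi: \widehat\Gamma_R \to \bfg^{\spadesuit}_R$ at the level of central extensions by $\xi\otimes f\mapsto \xi\otimes{}^\spadesuit f$ and $\bfone\mapsto -\bfone$. The key check is that the cocycles match. The cocycle of $\bfg^\spadesuit_R$ pulled back to $\Gamma_R$ through $\bullet^\spadesuit$ at $(\xi_1\otimes f_1,\xi_2\otimes f_2)$ equals $\langle\xi_1:\xi_2\rangle\sum_{a\in\spadesuit}\Res_{t_a=0}({}^a f_2\,d{}^a f_1)$, and by the residue theorem applied to the meromorphic $1$-form $f_2\,df_1$ on $\bbP^1$ whose poles lie in $\{x_a : a\in A\}$, this sum equals $-\langle\xi_1:\xi_2\rangle\Res_{\gamma_\heartsuit=0}(f_2\,df_1)$, which is exactly the negative of the cocycle defining $\widehat\Gamma_R$. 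The sign is absorbed by $\phi(\bfone)=-\bfone$. Passing to enveloping algebras, $\phi(\bfone+c)=-\bfone+c=-(\bfone-c)$, so $\phi$ sends the ideal $(\bfone-c')=(\bfone+c)$ into $(\bfone-c)$ and therefore descends to the desired $R$-algebra homomorphism $\Gamma_{R,\kappa'}\to\bfg^\spadesuit_{R,\kappa}$.

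For part (b), both maps are constructed directly and preserve $\bfone$. For the first map, I would note that since $\gamma_\heartsuit$ is an automorphism of $\bbP^1$ whose pole lies outside $C_\gamma$ (by the admissibility of the system of charts, after shrinking if necessary), substitution $t\mapsto\gamma_\heartsuit$ defines an $R$-algebra embedding $R[t,t^{\pm 1}]\hookrightarrow D_R$, hence a Lie algebra map $L\frakg_R\to\Gamma_R$. The change of variable $t=\gamma_\heartsuit$ transforms $\Res_{t=0}(f_2(t)\,df_1(t))$ into $\Res_{\gamma_\heartsuit=0}(f_2(\gamma_\heartsuit)\,df_1(\gamma_\heartsuit))$, so the Kac--Moody cocycle of $\bfg'_R$ pulls back to the cocycle of $\widehat\Gamma_R$; thus $\bfone\mapsto\bfone$ gives a map of central extensions which descends modulo $(\bfone+c)$. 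For the second map, the cocycle of $\bfg^\heartsuit_R$ pulled back via $\bullet^\heartsuit$ at $(\xi_1\otimes f_1,\xi_2\otimes f_2)$ is $\langle\xi_1:\xi_2\rangle\Res_{t_\heartsuit=0}({}^\heartsuit f_2\,d{}^\heartsuit f_1)$, which by change of variable along $\eta_\heartsuit$ equals $\langle\xi_1:\xi_2\rangle\Res_{\gamma_\heartsuit=0}(f_2\,df_1)$, matching the cocycle of $\widehat\Gamma_R$.

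For part (c), the composition sends $\xi\otimes f(t)\mapsto \xi\otimes f(\gamma_\heartsuit)\mapsto \xi\otimes{}^\heartsuit(f(\gamma_\heartsuit))=\xi\otimes f(\gamma_\heartsuit\circ\eta_\heartsuit)=\xi\otimes f(t_\heartsuit)$, which is manifestly injective since it amounts to renaming the loop variable. The main subtlety in the whole lemma is the sign bookkeeping in part (a): one must recognise that the residue theorem naturally introduces a minus sign when we separate the distinguished point $\heartsuit$ from the remaining punctures, and that this sign is exactly what converts the level $\kappa'=-c+N$ on the source into the level $\kappa=c+N$ on the target; parts (b) and (c) then require no sign adjustment because the central element is preserved there.
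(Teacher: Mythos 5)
Your proposal is correct and follows essentially the same route as the paper. The paper declares part (a) ``standard'' and gives only the one-line construction of the ring homomorphisms for (b); you supply the residue-theorem computation and the sign bookkeeping ($\bfone\mapsto-\bfone$, so that the ideal $(\bfone+c)$ defining level $\kappa'$ maps into the ideal $(\bfone-c)$ defining level $\kappa$) that the paper leaves implicit, together with the cocycle verifications for (b) and (c) that are identical in spirit to what the paper does.
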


\vspace{0.5mm}

\begin{proof}
Part $(a)$ is standard.
To prove $(b),$ observe that
the chart $\gamma_\heartsuit$ can be regarded as an element in the subalgebra
$\{f\in D_R\,;\,f(x_\heartsuit)=0\}$.
Thus, we have an $R$-algebra homomorphism $R[t, t^{-1}]\to D_R$ such that 
$f(t)\mapsto f(\gamma_\heartsuit)$ and an $R$-algebra homomorphism
$D_R\to R((t_\heartsuit))$ such that $f\mapsto {}^\heartsuit\!f$.
\end{proof}

\vspace{2mm}

Now, for each $a\in\spadesuit$ we fix a smooth module $M_a\in\scrS_{R,\kappa}$ which
is a weight $\frakt_R$-module.
Set
$W_R=\Otimes_{R,a\in\spadesuit}M_a$.
Since the $M_a$'s are smooth, the $R$-module $W_R$
has a natural structure of $\bfg^{\spadesuit}_{R,\kappa}$-module. 
We view $W_R$ as a $\Gamma_{R,\kappa'}$-module via the map in Lemma \ref{lem:map2}$(a)$. 
Note that $W_R$ is a weight $\frakt_R$-module.

For $d\geqslant 1$ let $G_{R,d}$
be the $R$-submodule of
$\Gamma_{R,\kappa'}$
spanned by the products of $d$ elements in
$\frakg\otimes D_{R}^1$ with $D_{R}^1=\{f\in D_R\,;\,f(x_\heartsuit)=0\}.$
Note that $G_{R,d}$ is a weight $\frakt_R$-module for the adjoint action.
We have the following natural decreasing filtration of weight $\frakt_R$-modules
$W_R\supset G_{R,1} W_R\supset G_{R,2} W_R\supset \cdots.$
Consider the weight $\frakt_R$-module $W_{R,d}$ given by
$W_{R,d}=W_R/G_{R,d} W_R$. Let
$W_{R,d}=\bigoplus_{\lambda\in P_R} W_{R,d,\lambda}$
be its decomposition into the sum of its weight $R$-submodules.

The modules $W_{R,d}$ with $d\geqslant 1$ form a projective system.
The limit $\widehat W_R=\pro\, W_{R,d}$
in the category of weight $\frakt_R$-modules decomposes as the direct sum of weight $R$-submodules
$\widehat W_R=\bigoplus_{\lambda\in P_R}\widehat W_{R,\lambda},$
where $\widehat W_{R,\lambda}$ is the projective limit of $R$-modules $\pro\, W_{R,d,\lambda}.$

For each $\lambda\in P_R$ and each $d\geqslant 1$, we set $Z_{R,d,\lambda}=(W_{R,d,\lambda})^*$.
The $R$-modules $Z_{R,d,\lambda}$ with $d\geqslant 1$ form an inductive system of submodules
$Z_{R,1,\lambda}\subset Z_{R,2,\lambda}\subset\cdots.$
Consider the weight $\frakt_R$-module $Z_{R,\infty}$ given by
$Z_{R,\infty}=\bigoplus_{\lambda\in P_R} Z_{R,\infty,\lambda},$
where
$Z_{R,\infty,\lambda}=\ind\, Z_{R,d,\lambda}$.

From now on, we assume  that $R$ is a regular ring of dimension $\leqslant 2$.
Assume in addition that $M_0\in\bfO^{\nu,\kappa,f}_R$ and $M_a\in\bfO^{+,\kappa,f}_R$ for $a\neq 0$. 

\vspace{2mm}

\begin{lemma}\label{lem:fg}
(a) The $R$-module $W_{R,d,\lambda}$ is finitely generated.

(b) The $R$-module $Z_{R,d,\lambda}$ is finitely generated and projective.
\end{lemma}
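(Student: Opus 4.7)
The plan is to establish (a) by a truncation/filtration argument modeled on Lemma \ref{lem:TP0}(b), and then to deduce (b) as a purely homological consequence of (a) over the regular local ring $R$ of dimension at most $2$. For (a), since each $M_a$ is a finitely generated smooth $\bfg_{R,\kappa}$-module, we may choose for each $a\in\spadesuit$ an integer $d_a\geqslant 1$ such that $M_a = \bfg_{R,\kappa}\cdot M_a(d_a)$. The central claim is that the composition
$$
\Otimes_{R,a\in\spadesuit} M_a(d_a) \;\hookrightarrow\; W_R \;\twoheadrightarrow\; W_{R,d}
$$
is surjective on every $\frakt_R$-weight space.

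Granting this surjectivity, (a) follows: for $a\neq 0$ the $\frakt_R$-weights of $M_a\in\bfO^{+,\kappa,f}_R$ lie in a union of cones bounded above (and similarly for $M_0\in\bfO^{\nu,\kappa,f}_R$), so the set of tuples $(\lambda_a)_{a\in\spadesuit}$ with $\sum_a\lambda_a=\lambda$ and each $\lambda_a$ a $\frakt_R$-weight of $M_a(d_a)$ is finite. For each such tuple, $M_a(d_a)\cap (M_a)_{\lambda_a}$ is the direct sum of the $\bft_R$-weight subspaces of $M_a$ of $\frakt_R$-weight $\lambda_a$ whose $\partial$-eigenvalue lies in a bounded range controlled by $d_a$ and the highest weights of $M_a$; each such $\bft_R$-weight space is free of finite rank by the defining condition of $\bfO^{+,\kappa,f}_R$ (resp.\ $\bfO^{\nu,\kappa,f}_R$). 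Hence the weight-$\lambda$ piece of $\Otimes_a M_a(d_a)$ is a finite direct sum of tensor products of finitely generated free $R$-modules, therefore finitely generated, and $W_{R,d,\lambda}$ is a quotient of it. The surjectivity claim itself is the crux: it is to be obtained by a twisting argument in the spirit of \cite[prop.~9.12]{KL}. Given a pure tensor whose $a$-th factor is $(\xi\otimes t^k)m_a^{(0)}$ with $m_a^{(0)}\in M_a(d_a)$, one uses partial fractions to produce an element $\xi\otimes f\in\Gamma_{R,\kappa'}$ with $f\in D_R^1$ such that ${}^a f=t^k$ while the expansions ${}^{a'}\!f$ for $a'\in\spadesuit\setminus\{a\}$ are bounded below to a depth controlled by $d$ and the $d_{a'}$. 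The resulting relation in $G_{R,1}W_R$ allows one to replace the action of $\xi\otimes t^k$ on the $a$-th factor by a sum of actions of lowering elements on the other factors, at the cost of enlarging the $d_{a'}$; iterating over the PBW basis of $U(\bfg_{R,\kappa})$ and over products of depth $\leqslant d$ reduces any pure tensor to $\Otimes_a M_a(d_a)$ modulo $G_{R,d}W_R$.

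For (b), $Z_{R,d,\lambda}=(W_{R,d,\lambda})^*$ is finitely generated over the Noetherian ring $R$ as the dual of a finitely generated module. Choosing a presentation $R^m\to R^n\to W_{R,d,\lambda}\to 0$ by finitely generated free modules and dualizing yields the exact sequence $0\to Z_{R,d,\lambda}\to R^n\to R^m$. Let $N$ denote the cokernel of the last map, so that $Z_{R,d,\lambda}$ is a second syzygy of $N$. Since $R$ is regular of Krull dimension at most $2$, $\mathrm{pd}_R(N)\leqslant 2$; a comparison via Schanuel's lemma with a minimal free resolution $0\to P_2\to P_1\to P_0\to N\to 0$ then exhibits $Z_{R,d,\lambda}$ as stably free, and hence free since $R$ is local. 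Equivalently, Auslander--Buchsbaum applied to the second syzygy gives $\mathrm{depth}(Z_{R,d,\lambda})=\dim R$, forcing $\mathrm{pd}(Z_{R,d,\lambda})=0$. The main obstacle is the surjectivity statement underlying (a); everything else reduces to weight-space bookkeeping and standard homological algebra.
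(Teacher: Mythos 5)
Your argument is essentially correct and follows the same overall route as the paper. For (a) the paper invokes the equality $W_R = X_{R,d}W_R^1 + G_{R,d}W_R$ (with $X_{R,d}$ spanned by products of $<d$ elements of $\frakg\otimes\gamma_\heartsuit$), as in [KL, prop.~7.4]; your stated surjectivity of $\Otimes_a M_a(d_a)\to W_{R,d}$ is not literally true with the minimal $d_a$'s, but it is equivalent to the paper's equation after absorbing $X_{R,d}W_R^1$ into $\Otimes_a M_a(d_a')$ for enlarged $d_a'$ depending on $d$, which is exactly the step you flag as ``at the cost of enlarging the $d_{a'}$.'' It would be cleaner either to state the claim with the $X_{R,d}$ factor as in [KL], or to say explicitly that the $d_a$'s are to be re-chosen depending on $d$. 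For (b), where the paper directly cites that a finitely generated reflexive module over a regular ring of dimension $\leqslant 2$ is projective, you instead recognize $Z_{R,d,\lambda}$ as a second syzygy and apply Auslander--Buchsbaum; this is a perfectly valid alternative proof of the same standard fact, and it has the minor advantage of not requiring one to first verify that duals are reflexive.
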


\vspace{.5mm}

\begin{proof} 
Since $M_a$ belongs to $\bfO_R^\kappa,$ there is an integer $d_a\geqslant 0$ such that 
$M_a$ is generated by the $R$-submodule $M_a(d_a)$ as a
$\bfg_{R,\kappa}$-module.
Then, the same proof as in \cite[prop.~7.4]{KL} implies that  
\begin{equation}\label{fg}
W_R=X_{R,d}W_R^1+G_{R,d}W_R,\qquad
W_R^1=\Otimes_{R,a}M_a(d_a),
\end{equation}
where $X_{R,d}$ is the $R$-submodule of $\Gamma_{R,\kappa'}$ 
spanned by the product of $<\!d$ elements in
$\frakg\otimes \gamma_\heartsuit$. The right hand side of the first equality in 
\eqref{fg} is defined using the $\Gamma_{R,\kappa'}$-module
structure on $W_R$.

Now, since $M_a\in\bfO_R^\kappa$ and $R$ is noetherian, the weight $\frakt_R$-submodules of the $\frakt_R$-submodule $M_a(d_a)\subset M_a$ are
finitely generated over $R$. Indeed, the weight $\bft_R$-submodules of $M_a$ are finitely generated because $M_a\in\bfO_R^\kappa$, and each 
weight $\frakt_R$-submodule of $M_a(d_a)$ is contained in the sum of a finite number of weight $\bft_R$-submodules
of $M_a$ (because $M_a$ is flat over $R$ and the result is well-known over the fraction field $K$ of $R$).
Therefore, part $(a)$ of the lemma is an easy consequence of \eqref{fg}.

Since $R$ is a regular ring of dimension $\leqslant 2$,
any finitely generated reflexive $R$-module is projective.
Since it is the dual of a finitely generated $R$-module, the $R$-module $Z_{R,d,\lambda}$ is
finitely generated and reflexive.
Hence it is projective as an $R$-module for each $d$, $\lambda$.
\end{proof}

\vspace{2mm}

Under the previous hypothesis, we can now prove the following.

\vspace{2mm}

\begin{lemma} \label{lem:A.7}
(a) There is a natural representation of $\bfg'_{R,\kappa'}$ on $\widehat W_{R}$.

(b) There is a natural smooth representation of $\bfg'_{R,\kappa}$ on $Z_{R,\infty}$.
\end{lemma}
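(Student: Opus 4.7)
The plan is to follow the strategy of \cite[\S 7-9]{KL}, adapted to our setting of a regular local ring $R$ of dimension $\leqslant 2$ and to modules $M_a$ that may lie in the parabolic category $\bfO^{\nu,\kappa,f}_R$ rather than all in $\bfO^{+,\kappa,f}_R$. The essential input is already provided by Lemma \ref{lem:map2} (which gives the needed algebra maps), Lemma \ref{lem:fg} (which guarantees that the weight pieces $Z_{R,d,\lambda}$ are finitely generated projective), and formula \eqref{fg} from the proof of Lemma \ref{lem:fg} (which controls the $G_{R,d}$-filtration on $W_R$).

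For part $(a)$, the first step is to transport the existing $\Gamma_{R,\kappa'}$-action on $W_R$ to a $\bfg'_{R,\kappa'}$-action via the composition $\bfg'_{R,\kappa'}\to\Gamma_{R,\kappa'}$ of Lemma \ref{lem:map2}$(b)$, sending $\xi\otimes f(t)\mapsto\xi\otimes f(\gamma_\heartsuit)$. The key point is that $\gamma_\heartsuit\in D_R^1$, so $\bfg_{R,\geqslant 1}$ lands inside $\frakg\otimes D_R^1$ and therefore, in the enveloping algebra, $Q_{R,d}\subset U(\bfg'_{R,\kappa'})$ maps into the subspace $G_{R,d}\subset U(\Gamma_{R,\kappa'})$ for every $d\geqslant 1$. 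This compatibility shows that the $\bfg'_{R,\kappa'}$-action on $W_R$ preserves the decreasing filtration $\{G_{R,d}W_R\}$ modulo shift: more precisely, $Q_{R,d'}\cdot G_{R,d}W_R\subseteq G_{R,d+d'}W_R$. Hence for each fixed $d$, the quotient $W_{R,d}=W_R/G_{R,d}W_R$ is a module over the quotient $U(\bfg'_{R,\kappa'})/U(\bfg'_{R,\kappa'})\cdot Q_{R,d}$, and the natural projections $W_{R,d'}\to W_{R,d}$ for $d'\geqslant d$ are $\bfg'_{R,\kappa'}$-equivariant up to this shift. Taking the projective limit in the category of weight $\frakt_R$-modules and assembling the weight components (finitely generated projective by Lemma \ref{lem:fg}$(b)$) then produces the desired $\bfg'_{R,\kappa'}$-action on $\widehat W_R=\bigoplus_\lambda \pro W_{R,d,\lambda}$.

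For part $(b)$, the argument proceeds by duality. Dualizing each weight piece $W_{R,d,\lambda}$ gives $Z_{R,d,\lambda}$, which carries the transposed action; a standard sign twist at this step (coming from the residue in the cocycle defining $\Gamma_{R,\kappa'}$ versus $\bfg'_{R,\kappa}$, combined with the definition $\xi^{(r)}\mapsto -\xi^{(-r)}$ via $\sharp$) converts the level from $\kappa'$ to $\kappa$. The inclusion $Z_{R,d,\lambda}\hookrightarrow Z_{R,d+1,\lambda}$ is $\bfg'_{R,\kappa}$-equivariant, so one obtains a $\bfg'_{R,\kappa}$-action on the inductive limit $Z_{R,\infty,\lambda}$, and hence on $Z_{R,\infty}=\bigoplus_\lambda Z_{R,\infty,\lambda}$. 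Smoothness is then automatic from the construction: any $\phi\in Z_{R,\infty}$ lies in some $Z_{R,d,\lambda}$, i.e.\ annihilates $G_{R,d}W_R$; together with the compatibility $Q_{R,d'}\cdot W_R\subseteq G_{R,d'}W_R$ from the previous paragraph, this forces $\phi$ to be annihilated by $Q_{R,d}$, so $Z_{R,\infty}(\infty)=Z_{R,\infty}$.

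The main obstacle is the careful bookkeeping of the filtration-shift $Q_{R,d'}\cdot G_{R,d}W_R\subseteq G_{R,d+d'}W_R$ together with the fact that, in our more general setting, $M_0$ lies in the parabolic category $\bfO^{\nu,\kappa,f}_R$ rather than in $\bfO^{+,\kappa,f}_R$. One must check that the arguments of \cite[\S 7]{KL}, which use the generating-by-$M_a(d_a)$ property encoded in \eqref{fg}, go through unchanged once Lemma \ref{lem:fg} is established; this is exactly what lets the whole construction survive over a two-dimensional regular local base $R$. The projectivity of the $Z_{R,d,\lambda}$'s (again by Lemma \ref{lem:fg}$(b)$) is what guarantees that taking duals and limits interact cleanly with base change, so that no subtleties with torsion or completion spoil the compatibility with the $\bfg'_{R,\kappa}$-action in part $(b)$.
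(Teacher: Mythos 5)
Your strategy for part (a) has a gap at the key step. You observe that $Q_{R,d'}\cdot G_{R,d}W_R\subseteq G_{R,d+d'}W_R$ (which is correct, since $\gamma_\heartsuit\in D_R^1$), and from this you jump to "$W_{R,d}$ is a module over $U(\bfg'_{R,\kappa'})/U(\bfg'_{R,\kappa'})\cdot Q_{R,d}$" and then pass to the projective limit. But the inclusion you cite only constrains the \emph{positive} modes $Q_{R,d'}$, i.e.\ the image of $\bfg_{R,\geqslant 1}$. The algebra $\bfg'_{R,\kappa'}$ also contains the negative Fourier modes $\xi^{(-n)}=\xi\otimes t^{-n}$, whose images $\xi\otimes\gamma_\heartsuit^{-n}$ lie in $\Gamma_{R,\kappa'}\setminus G_{R,1}$ and do \emph{not} preserve $G_{R,d}W_R$. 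Consequently $W_{R,d}$ is not a $\bfg'_{R,\kappa'}$-module, and the action does not simply pass to the quotient; the claim as stated is false. What one needs, and what you do not prove, is the nontrivial shift estimate in the \emph{other} direction: $\xi\otimes\gamma_\heartsuit^{-n}\cdot G_{R,d+n}W_R\subseteq G_{R,d}W_R$, which requires a commutator argument inside $U(\Gamma_{R,\kappa'})$ using that bracketing an element of $\frakg\otimes D_R^{\geqslant 0}$ against $\frakg\otimes D_R^1$ lands back in $\frakg\otimes D_R^1$ (and that the relevant residues vanish). This is precisely the content of the Kazhdan--Lusztig construction via approximating sequences $g_d\in D_R$ (with ${}^\heartsuit g_d-f\in t_\heartsuit^d R[[t_\heartsuit]]$), which the paper invokes from \cite[sec.~4.9]{KL} and which your proposal omits entirely. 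Without it there is no well-defined action of $\xi^{(-n)}$ on $\widehat W_R$.

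A secondary issue in part (b): you say "the inclusion $Z_{R,d,\lambda}\hookrightarrow Z_{R,d+1,\lambda}$ is $\bfg'_{R,\kappa}$-equivariant", but a single weight space $Z_{R,d,\lambda}$ is not a $\bfg'_{R,\kappa}$-module (the Lie algebra action moves weights). The correct formulation is that the $\bfg'_{R,\kappa}$-action on $Z_{R,\infty}=\bigoplus_\lambda\ind Z_{R,d,\lambda}$ is the (restricted) transpose of the $\bfg'_{R,\kappa'}$-action on $\widehat W_R$ from part (a); and smoothness follows from $Z_{R,d}\subseteq Z_{R,\infty}(d)$, as the paper says. This part is fixable, but it rests on part (a), so the missing shift estimate is the real obstruction.
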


\vspace{.5mm}

\begin{proof}
The proof is adapted from \cite{KL}. We will be sketchy.
Recall that $W_R$ is a $\Gamma_{R,\kappa'}$-module.
The $\Gamma_{R,\kappa'}$-action does not  induce a $\Gamma_{R,\kappa'}$-action on $\widehat W_R$ in a natural way.
However,
under the second map in Lemma \ref{lem:map2}$(b),$ it
descends  to a representation of $\bfg^{\heartsuit}_{R,\kappa'}$ on $\widehat W_R$ as in \cite[sec.~4.9]{KL}.
More precisely, given $f(t_\heartsuit)$ in $t_\heartsuit^{-n}R[[t_\heartsuit]]$ for some $n\in\bbN$,
we fix a sequence of elements
$g_1, g_2,\dots$ in $D_R$ such that ${}^\heartsuit \!g_d-f(t_\heartsuit)\in t^d_\heartsuit R[[t_\heartsuit]]$ for each $d$,
and we define the action of $\xi\otimes f(t_\heartsuit)$ on the element $(w_d)\in \widehat W_R$, with
$w_d\in W_{R,d}$ and $d\geqslant 1$, by setting
$\xi\otimes f(t_\heartsuit)\cdot (w_d)=(\xi\otimes g_d\cdot w_{n+d}).$

Twisting this representation by the map $\bfg'_{R,\kappa'}\to\bfg^{\heartsuit}_{R,\kappa'}$
in Lemma \ref{lem:map2}$(c)$, we get a representation of
$\bfg'_{R,\kappa'}$ on $\widehat W_R$.
Taking its dual, we get a representation of $\bfg'_{R,\kappa}$ on $Z_{R,\infty}$. 
See \cite[sec.~6.3]{KL} for details.

The $R$-module $Z_{R,\infty}$ is flat, because the direct summand $Z_{R,\infty,\lambda}$ is the limit of 
the inductive system of flat submodules $(Z_{R,d,\lambda})$. 
To prove that it
is smooth, it is enough to check that $Z_{R,\infty}=Z_{R,\infty}(\infty)$. This is obvious, because we have
$Z_{R,d}\subset Z_{R,\infty}(d)$, where $Z_{R,d}=\bigoplus_\lambda Z_{R,d,\lambda}$.
\end{proof}

\vspace{2mm}

Now, we consider the behavior of $Z_{R,\infty}$ under flat base changes.

\vspace{2mm}

\begin{lemma}\label{lem:A7}
Let $S$ be a commutative noetherian $R$-algebra with 1 which is flat as an $R$-module.
Then, we have a canonical $\bfg'_{S,\kappa}$-module isomorphism $SZ_{R,\infty}=Z_{S,\infty}$.
\end{lemma}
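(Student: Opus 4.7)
The plan is to trace the construction of $Z_{R,\infty}$ stage by stage, checking that each operation involved commutes with the flat base change $R\to S$.

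First, since $W_R=\Otimes_{R,a\in\spadesuit}M_a$ is an iterated tensor product over $R$, the standard compatibility of tensor products with base change gives a canonical isomorphism $SW_R\iso W_S$ of weight $\frakt_S$-modules, compatible with the $\Gamma_{S,\kappa'}$-action inherited from Lemma~\ref{lem:map2}(a). Next, $G_{R,d}\subset\Gamma_{R,\kappa'}$ is spanned by products of $d$ elements of $\frakg\otimes D_R^1$, so $S\otimes_R G_{R,d}=G_{S,d}$ inside $\Gamma_{S,\kappa'}$. Applying the exact functor $S\otimes_R\bullet$ to $0\to G_{R,d}W_R\to W_R\to W_{R,d}\to 0$ yields a canonical $\Gamma_{S,\kappa'}$-equivariant isomorphism $SW_{R,d}\iso W_{S,d}$, which respects the weight decomposition since forming $\frakt_R$-eigenspaces on a weight $\frakt_R$-module commutes with flat base change.

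By Lemma~\ref{lem:fg}(a), each $W_{R,d,\lambda}$ is finitely generated over $R$, hence finitely presented since $R$ is Noetherian. Flatness of $S$ over $R$ then provides the standard canonical isomorphism
\[
S\otimes_R\Hom_R(W_{R,d,\lambda},R)\iso\Hom_S(SW_{R,d,\lambda},S),
\]
that is, $SZ_{R,d,\lambda}\iso$ the corresponding summand of $Z_{S,d}$. Summing over $\lambda\in P_R$ gives $SZ_{R,d}\iso Z_{S,d}$, and passing to the inductive limit in $d$ (which commutes with $S\otimes_R\bullet$) yields the desired canonical $S$-module isomorphism $SZ_{R,\infty}\iso Z_{S,\infty}$.

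It remains to verify that this isomorphism is $\bfg'_{S,\kappa}$-equivariant. Recall from the proof of Lemma~\ref{lem:A.7} that the action of $\xi\otimes f(t_\heartsuit)\in\bfg'_{R,\kappa'}$ on $(w_d)\in\widehat W_R$ is computed at each finite level $W_{R,n+d}\to W_{R,d}$ by choosing $g_d\in D_R$ with ${}^\heartsuit g_d-f(t_\heartsuit)\in t^d_\heartsuit R[[t_\heartsuit]]$ and applying $\xi\otimes g_d$; the $\bfg'_{R,\kappa}$-action on $Z_{R,\infty}$ is then obtained by $R$-dualization. Both operations are manifestly natural under base change (any choice of $(g_d)$ over $R$ furnishes an admissible choice over $S$ after applying $\chi:R\to S$), so equivariance follows. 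The sole substantive ingredient is the commutativity of $R$-duality with flat base change, which is precisely where finite generation (Lemma~\ref{lem:fg}(a)) enters; the remainder is bookkeeping.
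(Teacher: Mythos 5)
Your proof is correct and follows essentially the same route as the paper: base change commutes with the tensor products defining $W_{R,d,\lambda}$, flatness plus finite presentation (from Lemma~\ref{lem:fg} and the Noetherian hypothesis) gives the Hom-compatibility isomorphism, and inductive limits commute with $S\otimes_R\bullet$. Your extra paragraph verifying $\bfg'_{S,\kappa}$-equivariance is a welcome amplification of a point the paper leaves implicit, but the substance of the argument is the same.
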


\vspace{.5mm}

\begin{proof} 
Since taking tensor products is right exact,
we have a canonical $S$-module isomorphism $SW_{R,d,\lambda}=W_{S,d,\lambda}.$
Since $S$ is flat over $R$, for any $R$-modules $X,Y$
such that $X$ is finitely presented over $R$, the canonical homomorphism
$S\Hom_R(X,Y)\to\Hom_S(SX,SY)$ is an isomorphism. 
By Lemma \ref{lem:fg}, the $R$-module $W_{R,d,\lambda}$ is finitely generated.
Therefore, since direct limits commute with tensor products, we have
$$
SZ_{R,\infty}
=\bigoplus_\lambda\ind\, S\Hom_R(W_{R,d,\lambda},R)
=\bigoplus_\lambda\ind\, \Hom_S(W_{S,d,\lambda},S)
=Z_{S,\infty}.
$$
\end{proof}

\vspace{2mm}

\begin{rk} \label{rem:TPK}
The tensor product $\dot\bigotimes_{R,a}M_a$ is constructed for $R=\bbC$ and for any smooth modules
$M_a\in\scrS_\kappa$ in \cite[sec.~4]{KL}. It is called there the \emph{first construction}. 
If $R=K$ is a field (containing $\bbC$), one can define it as in loc. cit., 
because everything works equally well. 
We can also allow the modules $M_a$ to have non integral weights as in \cite{Y1}.

The smooth $\bfg'_\kappa$-module $DZ_{K,\infty}$ is precisely the one given by the 
\emph{third construction} in \cite[sec.~6]{KL}. If $M_a\in\bfO^{+,\kappa}_K$ for all $a,$ then there is an isomorphism 
of $\bfg'_\kappa$-modules
$DZ_{K,\infty}=\dot\Otimes_{K,a}M_a$
by \cite[thm.~7.9]{KL}. If $M_0\in\bfO^{\nu,\kappa}_K$ 
and $M_a\in\bfO^{+,\kappa}_K$ for $a\neq 0,$ then these modules are again isomorphic by \cite[prop.~5.8]{Y1}.

\end{rk}

\vspace{2mm}

We can now prove the following

\vspace{2mm}

\begin{lemma}\label{lem:A8} Assume that $R=K$ is a field.

(a) The $\bfg'_{K,\kappa}$-module $Z_{K,\infty}$ belongs to $\bfO^{\nu,\kappa}_K$.

(b) The Sugawara operator $\frakL_0$ preserves the finite dimensional
$K$-subspace $Z_{K,d,\lambda}$ of $Z_{K,\infty}$ for each $d,\lambda$.

(c) The characteristic polynomial of $\frakL_0$ on  $Z_{K,d,\lambda}$ is a product of linear 
factors with coefficients in $R$.
\end{lemma}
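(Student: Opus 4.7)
The plan is to prove parts (b) and (c) first, and then deduce (a) by combining them. The $\bfg'_{K,\kappa}$-action on $Z_{K,\infty}$ from Lemma \ref{lem:A.7}(b) will be upgraded to a $\bfg_{K,\kappa}$-action by setting $\partial$ equal to (minus) the semi-simplification of $\frakL_0$, following Remark \ref{rk:soergel}; for this to make sense one needs (b) and (c). Throughout, I freely use the identification $DZ_{K,\infty}\simeq\dot\Otimes_{K,a}M_a$ from Remark \ref{rem:TPK}, so that the Kazhdan-Lusztig tensor product itself is known to lie in $\bfO^{\nu,\kappa}_K$ by Proposition \ref{prop:TP1}(a).

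For part (b), I first observe that $\frakL_0$ commutes with $\frakt_K$, hence it preserves the weight decomposition, so the only content is that it preserves the filtration by $G_{R,d}W_R$ on the predual $\widehat W_K$. Under the embedding of Lemma \ref{lem:map2}(c), $G_{R,d}W_R$ is precisely the $K$-span of vectors obtained by applying a product of $d$ elements of $\bfg'_{K,\kappa'}$ of strictly positive $t_\heartsuit$-degree. The commutation relation $[\frakL_0,\xi^{(r)}]=-r\xi^{(r)}$, valid in a suitable completion, then gives $\frakL_0\cdot G_{R,d}W_R\subset G_{R,d}W_R$; passing to $K$-linear duals yields $\frakL_0(Z_{K,d,\lambda})\subset Z_{K,d,\lambda}$.

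For part (c), truncating $\dot\Otimes_{K,a}M_a\simeq DZ_{K,\infty}$ to a sufficiently large highest weight subcategory $\lub\bfO^{\nu,\kappa}_K$ yields a finite filtration whose subquotients are of the form $\bfM(\mu)_{K,\nu}$ with $\mu\in P^\nu_K$. On each such parabolic Verma module, $\frakL_0$ acts on a weight space by an eigenvalue of the form $-\langle\mu:2\rho+\mu\rangle/(2\kappa_K)+n$ with $n\in\bbN$, computed in the standard way from the Sugawara formula; these scalars manifestly lie in $K$. By duality the same list of eigenvalues governs $\frakL_0$ on $Z_{K,\infty}$, so its characteristic polynomial on the finite-dimensional space $Z_{K,d,\lambda}$ splits into linear factors over $K$.

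With (b) and (c) established, the semi-simple part $(\frakL_0)_{\mathrm{ss}}$ is well-defined on each $Z_{K,d,\lambda}$ and hence on $Z_{K,\infty}$, and one sets $\partial=-(\frakL_0)_{\mathrm{ss}}$. The remaining conditions for $Z_{K,\infty}\in\bfO^{\nu,\kappa}_K$ (finite-dimensional weight spaces, local finiteness of $\bfp_{K,\nu}$, highest weights in $P^\nu_K$) all transfer from the corresponding properties of $DZ_{K,\infty}$ via duality, while finite generation over $\bfg_{K,\kappa}$ follows from Lemma \ref{lem:O0} by exhibiting a surjection from a generalized Weyl module. The main obstacle lies in part (b): because $\frakL_0$ is an infinite sum acting only on the smooth completion $\widehat W_K$ via its normal-ordered form, one must carefully verify that the formal relation $[\frakL_0,\xi^{(r)}]=-r\xi^{(r)}$ extends to a genuine endomorphism preserving the $G_{R,d}$-filtration, including the scalar corrections arising from normal ordering---an exercise of the type carried out in \cite[sec.~12]{KL}.
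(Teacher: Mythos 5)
Your proposal reverses the paper's logical order and, while it leans on the same key external input (Yakimov's result via Remark \ref{rem:TPK} / Proposition \ref{prop:TP1} that $DZ_{K,\infty}\simeq\dot\Otimes_{K,a}M_a$ lies in $\bfO^{\nu,\kappa}_K$), this re-routing opens gaps that the paper's direct approach avoids. The paper proves (a) immediately by citing \cite{Y1}, proves (b) by the commutation relation $[\frakL_0,\xi^{(r)}]=-r\xi^{(r)}$ (as you do), and then gets (c) as an elementary consequence of (a). You instead want (b),(c)$\Rightarrow$(a).

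Two problems. First, in your argument for (c) you claim that truncating $\dot\Otimes_{K,a}M_a\simeq DZ_{K,\infty}$ produces ``a finite filtration whose subquotients are of the form $\bfM(\mu)_{K,\nu}$.'' This asserts a $\Delta$-filtration, which a general object of $\bfO^{\nu,\kappa}_K$ does not have (the $M_a$ are arbitrary objects of $\bfO^{\nu,\kappa}_K$, $\bfO^{+,\kappa}_K$, not assumed $\Delta$-filtered); Lemma \ref{lem:A16} proves a $\Delta$-filtration only when the inputs are $\Delta$-filtered. The eigenvalue conclusion is salvageable by passing to a composition series by simple objects $\bfL(\mu)_K$, but the stated step is wrong.

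Second, and more seriously, the deduction of (a) via ``transfer from $DZ_{K,\infty}$ by duality'' is not justified. The functor $D=({}^\sharp(\bullet)^*)(\infty)$ is the Kazhdan--Lusztig duality, which by Lemma \ref{lem:duality} is a duality on $\bfO^{+,\kappa,f}_R$ only; the BGG duality on $\bfO^{\nu,\kappa,f}_R$ is $\scrD={}^\dag D$. Since $\dag$ is the Cartan-type automorphism $\xi\mapsto-{}^t\xi$, it exchanges the standard parabolic $\frakp_\nu$ with its opposite, so $D={}^\dag\scrD$ does \emph{not} preserve $\bfO^{\nu,\kappa}_K$ when $\frakp_\nu\neq\frakg$. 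In particular, $DZ_{K,\infty}\in\bfO^{\nu,\kappa}_K$ does not immediately give $Z_{K,\infty}\in\bfO^{\nu,\kappa}_K$ by applying $D$ again — the $\bfp_{K,\nu}$-local-finiteness does not ``transfer via duality'' as you claim. This is precisely the non-trivial content that the citation of \cite{Y1} supplies; it cannot be replaced by a formal duality argument. Likewise your statement that ``the same list of eigenvalues governs $\frakL_0$ on $Z_{K,\infty}$ by duality'' overlooks that $\sharp$ sends $\bfone\mapsto-\bfone$ and conjugates the Sugawara operator, so the eigenvalue comparison between $Z_{K,\infty}$ and $DZ_{K,\infty}$ is not immediate either. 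Your identification of the issue in (b) (the need to control normal-ordering effects so that $\frakL_0$ preserves the $G_{R,d}$-filtration, cf.\ the remark after Lemma \ref{lem:A.10} using the derivation $\delta$ with $\delta(\gamma_\heartsuit)=\gamma_\heartsuit$) is the right concern, and matches the paper's proof of (b).
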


\vspace{.5mm}

\begin{proof} 
By Remark \ref{rem:TPK}, the $\bfg'_{K,\kappa}$-module $DZ_{K,\infty}$ is isomorphic to the
tensor product $\dot\Otimes_{K,a}M_a$ considered there.  Thus, part $(a)$ follows from \cite{Y1}.

Part $(b)$ is a standard computation using the relation $[\frakL_0,\xi^{(r)}]=-r\xi^{(r)}$ for each
$\xi\in\frakg$ and $r\in\bbZ$. 

Since $Z_{K,\infty}\in\bfO^{\nu,\kappa}_K$, part $(c)$ follows from 
elementary properties of the action of the Sugawara operator on objects of $\bfO^{\nu,\kappa}_K$.
\end{proof}

\vspace{2mm}

Now, we come back to the case where $R$ is a regular local ring of dimension $\leqslant 2$.

\vspace{2mm}

\begin{lemma} \label{lem:A.10}
There is a natural smooth representation of $\bfg_{R,\kappa}$ on $Z_{R,\infty}$.
\end{lemma}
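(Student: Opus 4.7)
The strategy is to extend the smooth $\bfg'_{R,\kappa}$-action on $Z_{R,\infty}$ provided by Lemma~\ref{lem:A.7}(b) to a $\bfg_{R,\kappa}$-action by defining the action of $\partial$. Since $\bfg_R = R\partial\oplus\bfg'_R$ with the only additional relations being $[\partial,\bfone]=0$ and $[\partial,\xi^{(r)}]=r\,\xi^{(r)}$, it suffices to produce an $R$-linear endomorphism of $Z_{R,\infty}$ satisfying these commutations with the existing affine action. Motivated by the Sugawara construction, I would set $\partial:=-\frakL_0$, so that the required commutation follows from the standard formal identity $[\frakL_0,\xi^{(r)}]=-r\,\xi^{(r)}$, valid in a suitable completion of $\bfg'_{R,\kappa}$.

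The core task is thus to exhibit $\frakL_0$ as a bona fide $R$-linear endomorphism of $Z_{R,\infty}$. First, the standing assumption $\kappa_R(D)\subset\bbC\setminus\bbR_{\geqslant 0}$ ensures $\kappa_R\in R^\times$, so the Sugawara formula has coefficients in $R$. Second, I would use the decomposition $Z_{R,\infty}=\bigoplus_\lambda\ind_d Z_{R,d,\lambda}$, with each $Z_{R,d,\lambda}$ finitely generated and projective over $R$ by Lemma~\ref{lem:fg}(b), and verify term-by-term that for every $v\in Z_{R,d,\lambda}$ only finitely many summands in the Sugawara sum are nonzero. The finiteness follows by dualizing the quotient $W_{R,d}=W_R/G_{R,d}W_R$: the operators $e_{ji}^{(r)}$ for $r$ sufficiently large relative to $d$ pair trivially with $Z_{R,d,\lambda}$, since the transposed action on $W_R$ lies in $G_{R,d}W_R$. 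Each surviving term $\tfrac{1}{2\kappa_R}e_{ij}^{(-r)}e_{ji}^{(r)}v$ is built from the $\bfg'_{R,\kappa}$-action of Lemma~\ref{lem:A.7}(b) and therefore lies in $Z_{R,\infty,\lambda}$; as a consistency check, Lemma~\ref{lem:A8}(c) already shows that the characteristic polynomial of $\frakL_0$ on the $K$-base change has coefficients in $R$, which is compatible with descent to the $R$-lattice.

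Once $\partial=-\frakL_0$ is defined, the verification of $[\partial,\xi^{(r)}]=r\,\xi^{(r)}$ is a direct consequence of the Sugawara commutation, and $[\partial,\bfone]=0$ holds since $\bfone$ acts on $Z_{R,\infty}$ as the scalar $c$. The resulting $\bfg_{R,\kappa}$-module remains smooth because $\partial$ preserves the depth filtration $Z_{R,\infty}(d)$ (as $\frakL_0$ does, by the commutation relations) and $Z_{R,\infty}$ is flat over $R$ by Lemma~\ref{lem:A.7}(b); naturality in the input modules $M_a$ follows from the functoriality of each step of the construction. The main obstacle I anticipate is the simultaneous finiteness and $R$-integrality of the Sugawara sum: I plan to handle this by unpacking the construction of $Z_{R,\infty,\lambda}$ as the direct limit of duals of the $W_{R,d,\lambda}$, so that the vanishing of $\bfg\otimes t^n R[[t]]$ acting on each fixed $Z_{R,d,\lambda}$ for $n$ large supplies simultaneously the required finiteness and the descent of the operator $\frakL_0$ to the $R$-form.
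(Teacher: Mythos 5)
Your proposal sets $\partial := -\frakL_0$, but this is not what the paper does and is the wrong choice: in the paper $\partial$ is defined as the \emph{semisimplification} of $-\frakL_0$, that is, $\partial$ acts by the scalar $-r$ on the generalized $r$-eigenspace ${}^r\!Z_{R,\infty}$ of $\frakL_0$. This distinction is not cosmetic. The operator $\frakL_0$ in general has nontrivial Jordan blocks, so with your definition $\partial$ would not act diagonalizably, $Z_{R,\infty}$ would not be a weight $\bft_R$-module, and consequently $DZ_{R,\infty}$ would fail to lie in $\bfO^{\nu,\kappa,f}_R$ — which is exactly what the very next lemma in the section is supposed to establish. (Compare Remark~\ref{rk:soergel}: the passage from $\bfg'$-modules to $\bfg_{\kappa}$-modules in the Kazhdan--Lusztig setting takes $\partial$ to be the semisimplification of $-\frakL_0$, not $-\frakL_0$ itself.) The commutation relations $[\partial,\xi^{(r)}]=r\xi^{(r)}$ survive semisimplification because $\xi^{(r)}$ shifts the generalized $\frakL_0$-eigenvalue by $-r$, so the paper's choice is still a $\bfg_{R,\kappa}$-action.

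The real content of the lemma, which your proposal demotes to a ``consistency check,'' is the assertion that the generalized eigenspace decomposition of $\frakL_0$ on $Z_{K,\infty}$ descends to a direct sum decomposition $Z_{R,\infty}=\bigoplus_r{}^r\!Z_{R,\infty}$ over $R$. For this one needs Lemma~\ref{lem:A8}(c) (the characteristic polynomial of $\frakL_0$ on each $Z_{K,d,\lambda}$ splits over $R$) together with the inclusion $Z_{R,\infty}\subset KZ_{R,\infty}=Z_{K,\infty}$ coming from $R$-flatness of $Z_{R,\infty}$. Conversely, the part your proof spends the most effort on — showing $\frakL_0$ is a well-defined $R$-linear operator on $Z_{R,\infty}$ — is essentially free, since any smooth $\bfg'_{R,\kappa}$-module already carries a canonical Sugawara $\frakL_0$-action; the paper takes this as given and does not rederive it. So you have done work where none was needed, and skipped the step (semisimplification via Lemma~\ref{lem:A8}(c)) that makes the construction compatible with the category $\bfO^{\nu,\kappa}_R$.
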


\vspace{.5mm}

\begin{proof}
Since the $\bfg'_{R,\kappa}$-module $Z_{R,\infty}$ is smooth, it is equipped with a canonical action of the 
Sugawara operator $\frakL_0$. For each $r\in R$ we set
$${}^r\!Z_{R,\infty}=\{v\in Z_{R,\infty}\,;\,(\frakL_0-r)^nv=0,\, n\gg 0\}.$$

Replacing $R$ by $K$ everywhere in the construction above, we get
the $\bfg'_{K,\kappa}$-module $Z_{K,\infty}$. 
Since the $\bfg'_{R,\kappa}$-module $Z_{R,\infty}$ is smooth, it is flat over $R$.
Thus, we have an obvious inclusion 
$Z_{R,\infty}\subset KZ_{R,\infty}=Z_{K,\infty}.$
Hence, by Lemma \ref{lem:A8}, we have a direct sum decomposition $Z_{R,\infty}=\bigoplus_{r}{}^r\!Z_{R,\infty}$.

Therefore, we can consider the $R$-linear operator $\partial$ on $Z_{R,\infty}$ which acts by multiplication with
$(-r)$ on the $R$-submodule ${}^r\!Z_{R,\infty}$.
It equips $Z_{R,\infty}$ with the structure of a smooth
$\bfg_{R,\kappa}$-module.
\end{proof}

\vspace{2mm}

\begin{rk} Under the identification of $Z_{R,\infty,\lambda}$ with the topological dual of the $R$-module $\widehat W_{R,\lambda}$, 
the $\frakL_0$-action on $Z_{R,\infty}$ is identified with 
the transpose of the operator $\widehat\frakL_0$ on $\widehat W_R$ 
defined in the following way.
Via the first map in Lemma \ref{lem:map2}$(b)$, we can view the Sugawara operator
$\frakL_0$, which lies in a completion of $\bfg'_{R,\kappa'}$,
as an element in a completion of $\widehat\Gamma_{R,\kappa'}$ such that
\begin{equation}\label{der}
[\frakL_0,\xi\otimes f]=\xi\otimes\delta(f),\qquad
\forall\xi\otimes f\in\Gamma_R.
\end{equation}
Here $\delta$ is the unique $R$-linear derivation of $D_R$ such that 
$\delta(\gamma_\heartsuit)=\gamma_\heartsuit$.
This implies that $\frakL_0$ defines a $R$-linear endomorphism
$(\frakL_{0,d})$ of the projective system $(W_{R,d})$ as in \cite[lem.~26.3]{KL}.
Thus, the projective limit $\widehat\frakL_0=\pro\,\frakL_{0,d}$ is
an operator on $\widehat W_R$.
\end{rk}

\vspace{2mm}

\begin{df} Assume that $R$ is a regular local ring of dimension $\leqslant 2$.
Then, we define the $\bfg_{R,\kappa}$-module
$\dot\Otimes_{R,a}M_a$ to be $DZ_{R,\infty}$. 
\end{df}

\vspace{.5mm}

Note that $\dot\Otimes_{R,a}M_a$ is smooth by Lemma \ref{lem:A.10} and by definition of the functor $D$.
We must prove that it satisfies the properties in Proposition \ref{prop:TPR1}.

First, note the following.

\vspace{2mm}

\begin{lemma}\label{lem:A11}
Let $S$ be a commutative noetherian $R$-algebra with 1 which is regular of dimension $\leqslant 2$ and which is flat as an $R$-module.
We have canonical $\bfg_{S,\kappa}$-module isomorphism
$S(\dot\Otimes_{R,a}M_a)=\dot\Otimes_{S,a}SM_a$.
\end{lemma}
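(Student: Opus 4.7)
The plan is to unwind the definition $\dot\Otimes_{R,a}M_a = DZ_{R,\infty}$ from Lemma \ref{lem:A.10} and verify compatibility of each ingredient with the flat base change $R \to S$. The core computation has already been carried out for the $\bfg'$-module structure in Lemma \ref{lem:A7}, which furnishes a canonical $\bfg'_{S,\kappa}$-module isomorphism $S Z_{R,\infty} \iso Z_{S,\infty}$; this rests on Lemma \ref{lem:fg} and the fact that $R$-duals of finitely presented $R$-modules commute with flat base change. What remains is to (i) promote this to a $\bfg_{S,\kappa}$-module isomorphism (i.e.\ match up the $\partial$-actions), and (ii) show that the duality $D = ({}^\sharp\bullet^*)(\infty)$ commutes with $S \otimes_R \bullet$ on $Z_{R,\infty}$.

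First, I would promote the Lemma \ref{lem:A7} isomorphism to a $\bfg_{S,\kappa}$-module isomorphism. By Lemma \ref{lem:A.10}, the action of $\partial$ on $Z_{R,\infty}$ is determined by the generalized eigenspace decomposition $Z_{R,\infty} = \bigoplus_r {}^r Z_{R,\infty}$ of the Sugawara operator $\frakL_0$ acting inside $KZ_{R,\infty} = Z_{K,\infty}$. By Lemma \ref{lem:A8}(b)--(c), $\frakL_0$ preserves each $Z_{K,d,\lambda}$ and has characteristic polynomial splitting into linear factors with coefficients in $R$; since the Sugawara formula is defined over $R$ and $Z_{R,d,\lambda} \subset Z_{K,d,\lambda}$ is an $R$-stable lattice, the generalized eigenspaces are defined at the $R$-level on each finitely generated projective piece $Z_{R,d,\lambda}$. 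Flat base change preserves kernels and generalized eigenspaces of $R$-linear operators between finitely generated projective modules, so one gets the matching decomposition of $Z_{S,d,\lambda}$, and the induced $\partial$-operators agree.

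Second, I would check that $D$ commutes with flat base change on $Z_{R,\infty}$. On the weight-$\lambda$ component, one has
\[
DZ_{R,\infty}\cap (\bullet)_{-\lambda}\;\simeq\;\bigcup_{d}\Hom_R(Z_{R,d,\lambda},R),
\]
the union being taken inside $\Hom_R(\widehat W_{R,\lambda},R)$. By Lemma \ref{lem:fg}, each $Z_{R,d,\lambda}$ is finitely generated projective over $R$, so $\Hom_R(Z_{R,d,\lambda},R) \otimes_R S \iso \Hom_S(Z_{S,d,\lambda},S)$. Passing to the filtered colimit in $d$ and then the direct sum over $\lambda$ gives a canonical isomorphism $S(DZ_{R,\infty}) \iso DZ_{S,\infty}$, which by construction is compatible with the $\bfg'_{S,\kappa}$- and $\partial$-actions identified in the previous step.

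The main obstacle is the first of the two steps above: verifying that the generalized eigenspace decomposition used to define $\partial$ descends to an $R$-linear decomposition and therefore behaves correctly under base change, even when the residue fields of $R$ and $S$ differ. Lemma \ref{lem:A8}(c) supplies the crucial input that the eigenvalues of $\frakL_0$ on each $Z_{K,d,\lambda}$ already lie in $R$, so the $K$-level decomposition is defined integrally and the projection onto each eigenspace is an $R$-linear idempotent on $Z_{R,d,\lambda}$; once this is in place, flat base change commutes formally with everything in sight and the remaining verifications are routine.
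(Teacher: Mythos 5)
Your argument follows the same route as the paper: step one is to cite Lemma \ref{lem:A7} for the isomorphism $SZ_{R,\infty}\simeq Z_{S,\infty}$, and step two is to observe that the duality $D$ commutes with flat base change, for which the paper simply invokes (the proof of) Lemma \ref{lem:duality}. Where you go further is in explicitly matching the $\partial$-actions via the integral eigenspace decomposition of $\frakL_0$ from Lemma \ref{lem:A8} and in unwinding the weight-space description of $DZ_{R,\infty}$ in terms of the finitely generated projective modules $Z_{R,d,\lambda}$; these steps are left implicit in the paper's two-line proof, and your filling them in is consistent with the intended argument.
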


\vspace{.5mm}

\begin{proof}
By Lemma \ref{lem:A7} we have $SZ_{R,\infty}=Z_{S,\infty}$.
Thus, the lemma follows from the proof of Lemma \ref{lem:duality}, which insures that $D$ commutes with base change.
\end{proof}

\vspace{2mm}

Next, we prove the following.

\vspace{2mm}

\begin{lemma}
We have $\dot\Otimes_{R,a}M_a\in\bfO^{\nu,\kappa,f}_R$ and
$\dot\Otimes_{R,a}$ is a right exact functor on $\bfO^{\nu,\kappa,f}_R$ and $\bfO^{+,\kappa,f}_R$.
\end{lemma}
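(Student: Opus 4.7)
The plan is to prove the two assertions -- membership of $\dot\bigotimes_{R,a}M_a$ in $\bfO^{\nu,\kappa,f}_R$ and right exactness of the functor in each variable -- in sequence, using the preceding lemmas of the appendix.

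For membership, the defining conditions of $\bfO^{\nu,\kappa,f}_R$ will be verified one by one. The weight $\bft_R$-module structure is built into the construction of $Z_{R,\infty}$ (via Lemmas \ref{lem:A.7} and \ref{lem:A.10}) and carried over by the $R$-dual implicit in $D$. Local finiteness of the $\bfp_{R,\nu}$-action, $\nu$-dominance of the highest weights of subquotients, and finite generation over $\bfg_{R,\kappa}$ will be inherited from the corresponding known statements over the fraction field $K$: by Lemma \ref{lem:A11}, we have $K(DZ_{R,\infty}) = DZ_{K,\infty}$, and the latter lies in $\bfO^{\nu,\kappa}_K$ by combining Lemma \ref{lem:A8}, Remark \ref{rem:TPK}, and the fact that $D$ preserves the relevant parabolic categories (Lemma \ref{lem:duality}). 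Provided each weight space of $DZ_{R,\infty}$ is free of finite rank over $R$, the integral module is a $\bfg_{R,\kappa}$-stable $R$-lattice in $DZ_{K,\infty}$, from which the properties above descend.

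The technical core is therefore the freeness of weight spaces. Fix a weight $\mu$. Because $(DZ_{K,\infty})_\mu$ is finite-dimensional over $K$ by Lemma \ref{lem:A8} and because of the $(\infty)$-condition built into the definition of $D$, there exists a single integer $d = d(\mu)$ such that every element of $(DZ_{K,\infty})_\mu$ annihilates $Q_{R,-d}Z_{K,\infty}$. Consequently, $(DZ_{R,\infty})_\mu$ identifies, via the $\sharp$-twist, with the weight-$\mu'$ part of $\Hom_R$ from a finitely generated quotient of $Z_{R,\infty}$ by $Q_{R,-d}Z_{R,\infty}$, which in turn is a quotient of $Z_{R,d',\mu'}$ for a suitable $d'$. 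By Lemma \ref{lem:fg}, $Z_{R,d',\mu'}$ is finitely generated and projective over $R$, hence so is its $R$-dual in the appropriate weight. Inside this projective $R$-module, $(DZ_{R,\infty})_\mu$ is a finitely generated reflexive submodule of the correct $K$-rank, and the regularity of $R$ in dimension at most two upgrades reflexivity to projectivity, hence to freeness since $R$ is local.

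For right exactness, given an exact sequence $0 \to M_a' \to M_a \to M_a'' \to 0$ in $\bfO^{+,\kappa,f}_R$ (or in $\bfO^{\nu,\kappa,f}_R$ when $a = 0$), the freeness of all factors over $R$ makes the tensor product $\bigotimes_R$ exact in the $a$-th slot; the passage to the quotient by $G_{R,d}W_R$ is right exact; and selection of weight spaces is exact. Dualizing over $R$ and taking the direct limit in $d$ then yields a left exact sequence $0 \to Z''_{R,\infty} \to Z_{R,\infty} \to Z'_{R,\infty}$ of smooth $\bfg'_{R,\kappa}$-modules. Applying $D$ and using that on the weight spaces which survive the $(\infty)$-condition the $R$-dual functor is exact (thanks to the freeness established above), combined with the fact that $(\infty)$ is compatible with the filtration by $Q_{R,-d}$, yields the desired right exact sequence $\dot\bigotimes M_a' \to \dot\bigotimes M_a \to \dot\bigotimes M_a'' \to 0$. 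The principal obstacle is the freeness of weight spaces, which requires both uniform control of the filtration index $d$ via the $K$-finiteness and a careful use of the regular-local-of-dimension-two hypothesis to pass from reflexivity to freeness; all subsequent assertions then follow formally.
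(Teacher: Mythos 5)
Your approach diverges from the paper's in a way that leaves a genuine gap in the membership part. The paper's key device is a $\bfg'_{R,\kappa}$-module embedding
\[
{}^\dag\! Z_{R,\infty}\;\hookrightarrow\;\scrD\,\scrI\!nd\bigl({}^\sharp W^1_R\bigr),
\]
obtained by dualizing the surjection $\scrI\!nd({}^\sharp W_R^1)\twoheadrightarrow{}^\sharp W_R$ that comes from the equality $W_R=X_{R,d}W_R^1+G_{R,d}W_R$ in Lemma \ref{lem:fg}. The target is a generalized Weyl module, hence already an object of $\bfO^{\nu,\kappa,f}_R$. The paper then proves a self-contained Claim: any $R$-flat submodule of an object of $\bfO^{\nu,\kappa,f}_R$ lies again in $\bfO^{\nu,\kappa,f}_R$. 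Crucially, the finite generation part of that Claim is \emph{not} deduced from the $K$-fibre; it follows from the fact that the truncated category $\lub\bfO^{\nu,\kappa}_R$ is noetherian (being equivalent to $A\mmod$ for a finite projective $R$-algebra $A$). Your proposal replaces this embedding by a direct verification of the axioms, and the place where this breaks down is precisely the step you dismiss as "descent": knowing that $DZ_{R,\infty}$ is a $\bfg_{R,\kappa}$-stable $R$-lattice in $DZ_{K,\infty}$ with free weight spaces does \emph{not} imply it is finitely generated over $\bfg_{R,\kappa}$, and the same doubt applies to the $\nu$-dominance of highest weights of arbitrary $R$-subquotients, which are not controlled by the $K$-fibre. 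These are exactly the points the noetherian argument is there to handle, and that argument requires a map into a known object of $\bfO^{\nu,\kappa,f}_R$.

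Your treatment of the weight-space freeness is also shakier than the paper's: the identification of $(DZ_{R,\infty})_\mu$ with a weight piece of $\Hom_R\bigl(Z_{R,\infty}/Q_{R,-d}Z_{R,\infty},R\bigr)$, and then with a sub of $(Z_{R,d',\mu'})^*$, is asserted rather than established, and it is not immediate that this intermediate module is finitely generated over $R$ (the colimit $Z_{R,\infty,\mu'}=\ind_d Z_{R,d,\mu'}$ need not stabilize over $R$ even if its $K$-fibre does). The paper sidesteps all of this: once ${}^\dag Z_{R,\infty}$ is known to lie in $\bfO^{\nu,\kappa,f}_R$, freeness of its weight spaces is part of the package. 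For the right-exactness statement your outline is closer in spirit to the paper's, but the final step ("applying $D$") tacitly needs the exactness of $\scrD$ on the category $\bfO^{\nu,\kappa,f}_R$, which in turn presupposes part (1); the paper makes this dependency explicit by writing the functor as $\scrD\circ\bigl((M_a)\mapsto{}^\dag Z_{R,\infty}\bigr)$ and invoking part (1) to justify applying $\scrD$. I would rebuild the argument around the embedding \eqref{inclusion} and the flat-submodule Claim; without them the proof is incomplete.
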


\vspace{.5mm}

\begin{proof}
The $\bfg'_{R,\kappa'}$-action on $W_R$ yields a representation of
$\bfg'_{R,\kappa}$ on ${}^\sharp W_R$.
Consider the $R$-submodule $W_R^1\subset W_R$ introduced in \eqref{fg}. 
We claim that ${}^\sharp W^1_R$ is a
$\bfg'_{R,+}$-submodule of ${}^\sharp W_R$.
Indeed, the element $\xi\otimes f(t)$ in $\bfg'_{R,\kappa}$ acts on ${}^\sharp W_R$ by
the operator
$\sum_{a\in\spadesuit} \xi\otimes{}^a\!f(-1/\gamma_\heartsuit).$
Further, for each $a\in\spadesuit$ the function $1/\gamma_\heartsuit$ is regular at $x_a$ and, thus,
since the system of charts is defined over $\bbC$, the expansion
${}^a(1/\gamma_\heartsuit)$ is a well-defined Laurent formal series in $\bbC[[t_a]]$.
Therefore we have ${}^a\!f(-1/\gamma_\heartsuit)\in R[[t_a]]$ for each $f\in R[t]$.

We deduce that there is a $\bfg'_{R,\kappa}$-homomorphism
\begin{equation}\label{map3}\scrI\! nd({}^\sharp W^1_R)\to{}^\sharp W_R.\end{equation}

Next, recall that the first map in Lemma \ref{lem:map2}$(b)$ yields a
$\bfg'_{R,\kappa'}$-action on $W_R$ and that $\bfg'_{R,\kappa'}$ acts on $\widehat W_R$
by Lemma \ref{lem:A.7}. By definition of the actions, the canonical $R$-module homomorphism
$W_R\to\widehat W_R$ is a $\bfg'_{R,\kappa'}$-module homomorphism.
Taking the dual of $W_R$ in the category of weight $\frakt_R$-modules, 
we get the $\bfg'_{R,\kappa}$-module $Z_R$ given by
\begin{equation}\label{Z}
Z_{R}=\bigoplus_{\lambda\in P_R} Z_{R,\lambda},\qquad
Z_{R,\lambda}=(W_{R,\lambda})^*.
\end{equation}

Twisting \eqref{map3} by $\sharp$ and taking its transpose, we get a $\bfg'_{R,\kappa}$-homomorphism
$Z_{R}\to {}^\sharp\!\scrI\!nd({}^\sharp W^1_R)^*.$
Since $Z_{R,\infty}\subset Z_R(\infty)$, 
this map restricts to a $\bfg'_{R,\kappa}$-homomorphism
\begin{equation}\label{inclusion}
{}^\dag\! Z_{R,\infty}\to \scrD\scrI\!nd({}^\sharp W^1_R).
\end{equation}
Using \eqref{fg} it is easy to see that the map \eqref{inclusion} is an inclusion.

\vspace{2mm}

\begin{claim}
Let $N\in\bfO^{\nu,\kappa,f}_R$ and let $M\subset N$ be a submodule which is flat as an $R$-module.
Then, we have $M\in\bfO^{\nu,\kappa,f}_R$.
\end{claim}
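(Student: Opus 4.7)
The plan is to verify the defining conditions of $\bfO^{\nu,\kappa,f}_R$ for $M$: finite generation as a $\bfg_{R,\kappa}$-module, the property of being a weight $\bft_R$-module with weight spaces free of finite rank over $R$, local finiteness of the $\bfp_{R,\nu}$-action over $R$, and the constraint that highest weights of subquotients have the form $\widehat\lambda$ with $\lambda\in P^\nu_R$. The last two conditions pass automatically to any $\bfg_R$-submodule of $N$ once $M$ is known to be a weight $\bft_R$-module. The substantive tasks are therefore establishing the weight-space decomposition together with freeness of each weight space, and deducing finite generation over $\bfg_{R,\kappa}$.

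For the weight-space decomposition, I would write $N=\bigoplus_\lambda N_\lambda$ and, given $m\in M\subset N$, decompose $m=\sum_\lambda m_\lambda$ as a finite sum in $N$; the task is to show $m_\lambda\in M$ for each $\lambda$. The delicate point is that over a local deformation ring two distinct weights in $\widehat P_R$ whose residues in $\widehat P_\bbC$ coincide cannot be separated by a single $h\in\bft_R$ with $R$-invertible denominator, so naive Lagrange interpolation fails. I would proceed in two stages. First, the weights appearing in $N$ lie in finitely many cosets of the integral lattice $P\subset\widehat P_R$, indexed by the finitely many highest weights of $N$ modulo $P$; using a suitable family of central elements of $U(\bfg_R)$ whose joint eigenvalues already separate the residue classes of these highest weights in $\widehat P_\bbC$ (and hence differ by units in $R$ across cosets), one can project $m$ coset by coset while staying inside $M$. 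Second, within a single coset any two distinct weights differ by a nonzero element of $P$, and one can pick $h\in\frakt_\bbC\subset\frakt_R$ with pairings against these differences lying in $\bbZ^\times\subset R^\times$, so a standard Lagrange interpolation in $h$ isolates each component $m_\lambda$. This weight-decomposition step is the main obstacle of the proof.

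Once $M=\bigoplus_\lambda M_\lambda$ is in place, the remaining points are routine. Each $M_\lambda=M\cap N_\lambda$ is an $R$-submodule of the free finite rank $R$-module $N_\lambda$, hence finitely generated since $R$ is noetherian; as a direct summand of the flat $R$-module $M$ it is itself flat, and Kaplansky's theorem over the local noetherian ring $R$ gives that it is free of finite rank. For finite generation over $\bfg_{R,\kappa}$, I would fix $\beta\in\widehat P^\nu_R$ with $N\in{}^\beta\bfO^{\nu,\kappa}_R$; by the earlier proposition on truncated categories this is a highest weight $R$-category equivalent to $A\mmod$ for a finite projective, hence noetherian, $R$-algebra $A$. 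With the weight structure of $M$ established, $M$ is identified with an $A$-submodule of the finitely generated $A$-module attached to $N$, and noetherianity of $A$ delivers finite generation of $M$ over $A$, equivalently over $\bfg_{R,\kappa}$, completing the verification.
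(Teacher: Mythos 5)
You correctly identify that passing the weight-space decomposition from $N$ to $M$ is the genuinely nontrivial point, and your observation that naive Lagrange interpolation fails when two weights in $\widehat P_R$ reduce to the same element of $\widehat P_\bbC$ is exactly right. The problem is that your first stage does not resolve it: if $\widehat\lambda_1-\widehat\lambda_2$ is a nonzero, non-integral element of $\frakm\widehat P_R$, then $\widehat\lambda_1$ and $\widehat\lambda_2$ lie in distinct cosets of the integral lattice yet have the same residue in $\widehat P_\bbC$, so \emph{every} eigenvalue difference — whether from $\bft_R$, from the Harish-Chandra centre, or from the Sugawara operator $\frakL_0$ — lands in $\frakm$, and no unit can be extracted. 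This is not just a defect of your argument: the claim as literally stated is false. Over a one-dimensional local deformation ring $R$, take $\lambda_1,\lambda_2\in P^\nu_R$ with $\widehat\lambda_1-\widehat\lambda_2\in\frakm\widehat P_R$ nonzero and non-integral, set $N=\bfM(\lambda_1)_{R,\nu}\oplus\bfM(\lambda_2)_{R,\nu}$, and let $M$ be the $\bfg_{R,\kappa}$-submodule generated by the sum $v_1+v_2$ of the two highest weight vectors. Then $M$ is a submodule of the free $R$-module $N$, hence flat, but $M\cap Rv_1$ is a proper ideal times $v_1$ while $v_1+v_2\in M$, so $M$ is not the direct sum of its weight subspaces.

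The paper's sentence ``so is also $M$'' makes the same unjustified leap, and your instinct to flag it is sound. What actually saves the argument is not a separation of cosets but simply that the claim is only invoked for $M={}^\dag\! Z_{R,\infty}$, which is \emph{constructed} as the direct sum $\bigoplus_\lambda Z_{R,\infty,\lambda}$ of its weight subspaces; the weight-module property is given, not deduced. Once that is granted, the remainder of your proof coincides with the paper's: each $M_\lambda=M\cap N_\lambda$ is finitely generated over the noetherian ring $R$, flat as a direct summand of the flat module $M$, hence free by Kaplansky; local finiteness of $\bfp_{R,\nu}$ and the constraint on highest weights of subquotients pass to submodules; and finite generation over $\bfg_{R,\kappa}$ follows from noetherianity of the truncated highest weight category $\lub\bfO^{\nu,\kappa}_R$. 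So the architecture of your argument is correct, but the two-stage coset-separation step should be replaced by the observation that, in the only application, $M$ is already known to be a weight submodule.
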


\vspace{.5mm}

To prove the claim, observe first that, since $N$ is a weight $\bft_R$-module 
with finitely generated weight subspaces over $R$, 
so is also $M$. Thus, since $M$ is flat and since any flat finitely generated $R$-module is free
(because $R$ is a noetherian local ring),  the $R$-module $M$ is indeed free. It is easy to check that $M$ 
satisfies the other axioms of the category $\bfO^{\nu,\kappa,f}_R$, except the fact that it is finitely generated.
For this last property, recall that for each $\beta$ the category $\lub\bfO^{\nu,\kappa}_R$ 
is a highest weight category over $R$. 
Since it is equivalent to the category 
of finitely generated modules over a finitely generated projective $R$-algebra, it is noetherian.
Therefore $M$ is finitely generated. 
The claim is proved.

Now, recall that ${}^\dag\! Z_{R,\infty}$ is flat over $R$ and that
$\scrI\!nd({}^\sharp W^1_R)$ is a generalized Weyl module of  
$\bfO^{\nu,\kappa,f}_R$.
Thus, the claim implies that ${}^\dag\! Z_{R,\infty}\in\bfO^{\nu,\kappa,f}_R$.
Hence $DZ_{R,\infty}\in\bfO^{\nu,\kappa,f}_R$. This proves the first part of the lemma.

To prove the second part, it is enough to observe that the functor
$(M_a)\mapsto {}^\dag\!Z_{R,\infty}$ is left exact, 
because it is the composition of a tensor product over $R$ of free
$R$-modules, of a dual over $R$ of free $R$-modules, and of the functor of taking smooth vectors (which is
left exact), and that $\scrD$ is an exact endofunctor of $\bfO^{\nu,\kappa,f}_R$.
\end{proof}

\vspace{2mm}

Now, we consider the functor represented by the module $\dot\Otimes_{R,a}M_a$. 
Recall that $A=[-m,n]$. The lemma below
gives a functorial isomorphism for $M\in\bfO^{\nu,\kappa,f}_R$
\begin{equation}\label{represent}
\Hom_{\bfg_R}(\dot\Otimes_{R,a}M_a,M)=\llangle M_{-m},\dots,M_{n},D M\rrangle_R^*.
\end{equation}

\vspace{2mm}

\begin{lemma} \label{lem:A14}
For $M,N\in\bfO^{\nu,\kappa,f}_R$ we have functorial $R$-module isomorphisms
\begin{equation*}
\gathered
\Hom_{\bfg_R}(N, M)=\llangle N, DM\rrangle_R^*,\quad
\llangle \dot\Otimes_{R,a}M_a, DM\rrangle_R^*
=\llangle M_{-m},\dots,M_{n},D M\rrangle_R^*.
\endgathered
\end{equation*}
\end{lemma}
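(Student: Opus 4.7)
The two isomorphisms together yield the universal property of $\dot\Otimes_{R,a} M_a$ asserted in Proposition \ref{prop:TPR1}(a), and I would prove them in the stated order.

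For the first isomorphism $\Hom_{\bfg_R}(N, M) = \llangle N, DM \rrangle_R^*$, my plan is to realize it as a Peter-Weyl duality for the two-point affine coinvariants on $\bbP^1$ with marked points at $0$ and $\infty$. The definition $DM = ({}^\sharp M^*)(\infty)$ provides a canonical evaluation pairing $M \otimes_R DM \to R$ satisfying the equivariance $\langle \xi^{(n)} m, v \rangle + \langle m, {}^\sharp(\xi^{(n)}) v \rangle = 0$. Given $\phi \in \Hom_{\bfg_R}(N, M)$, the linear form $(u, v) \mapsto \langle \phi(u), v \rangle$ on $N \otimes_R DM$ therefore annihilates the $\Gamma_R$-action -- a generator $\xi \otimes t^n$ acts as $\xi^{(n)}$ at $0$ and as ${}^\sharp(\xi^{(n)})$ at $\infty$ -- so it descends to an element of $\llangle N, DM \rrangle_R^*$. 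Conversely, contracting a $\Gamma_R$-invariant linear form on $N \otimes_R DM$ in the second argument yields a map $N \to (DM)^*$ whose image consists of smooth vectors, identified with $D(DM) \simeq M$ by the reflexivity of the smooth dual on $\bfO^{\nu,\kappa,f}_R$.

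For the second isomorphism, the plan is to combine the first one with the explicit construction $\dot\Otimes_{R,a} M_a = DZ_{R,\infty}$. Chaining gives
\[
\llangle DZ_{R,\infty}, DM \rrangle_R^* \simeq \Hom_{\bfg_R}(DZ_{R,\infty}, M) \simeq \Hom_{\bfg_R}(DM, Z_{R,\infty}),
\]
using reflexivity of $D$. Using the presentation $Z_{R,\infty} = \bigoplus_\lambda \ind\, (W_{R,d,\lambda})^*$ with $W_R = \Otimes_{R, a \in \spadesuit} M_a$, such a $\bfg_R$-morphism is equivalent to a bilinear form on $W_R \otimes_R DM$ vanishing on $G_{R,d} W_R \otimes_R DM$ (on each weight component, for $d$ large) and equivariant under the $\bfg'_{R,\kappa'}$-action built from the chart $\gamma_\heartsuit$ via Lemmas \ref{lem:map2} and \ref{lem:A.7}. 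The residue theorem $\sum_{a \in A} \Res_{x_a}(f\, dg) = 0$ for $f, g \in D_R$, built into the cocycle defining the map $\Gamma_R \to \bfg^A_R$, then matches this vanishing with the one cutting out $\llangle M_{-m}, \ldots, M_n, DM \rrangle_R^*$ inside $(W_R \otimes_R DM)^*$.

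The main obstacle will be verifying that the $\bfg'_R$-action on $Z_{R,\infty}$ built from the chart $\gamma_\heartsuit$ matches the $\Gamma_R$-action at $x_\heartsuit$ implicit in the multi-point coinvariants, and that the additional $\partial$-action defined via the Sugawara operator in Lemma \ref{lem:A.10} is also compatible. Over $\bbC$ this compatibility is essentially the content of \cite[\S 7-9]{KL}, extended to the parabolic setting in \cite{Y1}. Over our regular local ring $R$ of dimension $\leqslant 2$, the identification holds after passing to the fraction field $K$ by Lemma \ref{lem:A11}, and the relevant $R$-modules are free by Lemma \ref{lem:fg}, so the equality descends to $R$. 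The compatibility of the $\partial$-action follows from the commutation $[\frakL_0, \xi \otimes f] = \xi \otimes \delta(f)$ with $\delta$ the derivation $\gamma_\heartsuit \partial_{\gamma_\heartsuit}$, as noted after Lemma \ref{lem:A.10}.
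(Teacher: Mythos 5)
Your argument follows the same route as the paper's proof: the first isomorphism is the two-point Peter--Weyl duality (which the paper delegates to \cite[prop.~A.2.6]{VV} and \cite[prop.~2.31]{KL}, while you sketch it directly), and the second isomorphism is obtained, as in the paper, by chaining through $\Hom_{\bfg_R}(DZ_{R,\infty},M)\simeq\Hom_{\bfg_R}(DM,Z_{R,\infty})$ and then identifying the latter with the $\Gamma_R$-invariant forms on $W_R\otimes_RDM$ via the construction of $Z_R$ and $Z_{R,\infty}$. Your extra paragraph on compatibility of the $\bfg'_R$-action at $x_\heartsuit$ with the $\Gamma_R$-action and the Sugawara $\partial$-action is a reasonable expansion of a step the paper glides over, but it is not a different method; it is essentially Lemma \ref{lem:A.7} and the remark following Lemma \ref{lem:A.10}, and the descent from $K$ to $R$ you invoke is unnecessary since the $\bfg'_{R,\kappa}$-structure on $Z_{R,\infty}$ is defined over $R$ from the $\Gamma_{R,\kappa}$-action to begin with.
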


\vspace{.5mm}

\begin{proof} There is a natural $R$-module inclusion
$\Hom_{\bfg_R}(N, M)\to\Hom_{\bfg_R}(N\otimes_R DM,R)$,
because $M$, $N$ are weight $\bft_R$-modules whose weight subspaces are free $R$-modules
of finite type. We must prove that this inclusion is indeed an isomorphism. 
The proof is the same as in  \cite[prop.~A.2.6]{VV}, see also
\cite[prop.~2.31]{KL}. 

Next, by definition of coinvariants, we also
have a canonical $R$-module isomorphism
$\Hom_{\bfg_R}(N\otimes_R DM,R)\to\llangle N,DM\rrangle_R^*.$
This proves the first isomorphism in the lemma. 

Now, we concentrate on the second one.
Consider the $\Gamma_{R,\kappa}$-module
$Z_{R}$.
By construction, we have
$\Hom_R(DM,Z_R)=\Hom_R(W_R\otimes_RDM,R).$
Thus, we have also
$\Hom_{\Gamma_{R,\kappa}}(DM,Z_R)=\Hom_{\Gamma_R}(W_R\otimes_RDM,R).$
Thus, since $DM$ is smooth and $Z_{R,\infty}=Z_R(\infty)$,
the canonical inclusion
$\Hom_{\Gamma_{R,\kappa}}(DM,Z_{R,\infty})\subset\Hom_{\Gamma_{R,\kappa}}(DM,Z_R)$
is indeed an isomorphism. So we get an isomorphism
$\Hom_{\bfg_{R,\kappa}}(DM,Z_{R,\infty})=\llangle M_{-m},\dots,M_{n},D M\rrangle_R^*.$
Finally, since ${}^\dag\!Z_{R,\infty}$ belongs to $\bfO^{\nu,\kappa,f}_R,$ we have
$$\Hom_{\bfg_{R,\kappa}}(\dot\Otimes_{R,a}M_a,M)=
\Hom_{\bfg_{R,\kappa}}(DZ_{R,\infty}, M)=\Hom_{\bfg_{R,\kappa}}(DM,Z_{R,\infty}).$$
\end{proof}

\vspace{2mm}

Next, we consider the behavior of the tensor product $\dot\otimes_R$ on $\Delta$-filtered modules.
Assume that $M_0\in\bfO^{\nu,\kappa,\Delta}_R$ and $M_a\in\bfO^{+,\kappa,\Delta}_R$ for 
$a\neq 0$. 
First, note the following.

\vspace{2mm}

\begin{lemma}\label{lem:A10}
For each $M\in\bfO^{\nu,\kappa,\Delta}_R$ the $R$-module 
$\llangle M_{-m},\dots,M_{n},{}^\dag\! M\rrangle_R$ is free of finite type.
\end{lemma}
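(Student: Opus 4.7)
The plan is to show freeness of $P := \llangle M_{-m},\ldots,M_n,{}^\dag M\rrangle_R$ via a constant-rank criterion: over the noetherian local domain $R$, a finitely generated $R$-module is free of rank $n$ iff $\dim_K(KP) = \dim_\Bbbk(\Bbbk P) = n$. Indeed, a Nakayama lift of a $\Bbbk$-basis gives a surjection $R^n \twoheadrightarrow P$ whose kernel is torsion-free as a submodule of the free module $R^n$, and dies generically when the two ranks agree; torsion-freeness plus generic vanishing forces the kernel to be zero. Finite generation of $P$ is furnished by Lemma \ref{lem:TP0}(b): the same argument (generation of each module by a finite-rank subspace killed by $Q_{R,d}$) applies to ${}^\dag M$ since the $\dag$-twist preserves the degree filtration.

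To compute the two ranks, I would combine Lemma \ref{lem:TP0}(a) (base-change compatibility of coinvariants) with Lemma \ref{lem:A14}(b) to obtain, for each $F \in \{K,\Bbbk\}$,
\begin{equation*}
\dim_F(FP) = \dim_F \llangle \dot\Otimes_{F,a} FM_a,\,{}^\dag FM \rrangle_F,
\end{equation*}
using $\scrD M$ in place of $M$ in Lemma \ref{lem:A14}(b) together with the duality identity $D\scrD \simeq {}^\dag$, which follows from $\scrD = {}^\dag D$ and $\scrD^2 = \id$ on $\bfO^{\nu,\kappa,f}_R$ (granted by Lemma \ref{lem:duality}). Inserting the trivial module $\bfone \in \bfO^{+,\kappa,\Delta}_F$ does not alter the coinvariants by Lemma \ref{lem:TP0}(d); applying Lemma \ref{lem:TP3} with $M_1 = \bfone$, $M_2 = \dot\Otimes_{F,a} FM_a$, $M_3 = FM$, and simplifying via $(\bfone : \bfM(\lambda_1)_+) = \delta_{\lambda_1,0}$ together with $L(0)\otimes M(\lambda_2)_\nu \simeq M(\lambda_2)_\nu$ (which forces $(L(0)\otimes M(\lambda_2)_\nu : M(\lambda_3)_\nu) = \delta_{\lambda_2,\lambda_3}$), yields
\begin{equation*}
\dim_F(FP) = \sum_\lambda \bigl(\dot\Otimes_{F,a} FM_a : \bfM(\lambda)_\nu\bigr) \cdot \bigl(FM : \bfM(\lambda)_\nu\bigr).
\end{equation*}

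It then remains to observe that every $\Delta$-multiplicity appearing here is independent of $F \in \{K,\Bbbk\}$: the multiplicities $(FM : \bfM(\lambda)_\nu)$ are preserved under base change on $\Delta$-filtered objects by Proposition \ref{prop:introhw}(c), and, since $\dot\otimes_R$ commutes with flat base change by Proposition \ref{prop:TPR1}(b), so are the multiplicities $(\dot\Otimes_{F,a} FM_a : \bfM(\lambda)_\nu)$. Hence $\dim_K(KP) = \dim_\Bbbk(\Bbbk P)$, and $P$ is free of finite rank. The main technical point to verify is the chain of duality identifications needed to apply Lemma \ref{lem:A14}(b) with $\scrD M$—specifically that $\scrD M \in \bfO^{\nu,\kappa,f}_R$ and $D\scrD M \simeq {}^\dag M$ over $R$—but both are immediate from the definitions together with Lemma \ref{lem:duality}; the remainder is routine bookkeeping between the dimension formula and the base-change invariance of $\Delta$-multiplicities.
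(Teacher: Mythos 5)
Your proof follows the paper's criterion (finitely generated over the noetherian local domain $R$, show generic and residue ranks agree) and your detour through Lemma \ref{lem:A14}(b) plus Lemma \ref{lem:TP3} with $M_1 = \bfone$ is a valid and clean reformulation; the duality bookkeeping $D\scrD\simeq{}^\dag$ is also correct. The problem is the very last sentence: to conclude that $(\dot\Otimes_{F,a}FM_a : \bfM(\lambda)_\nu)$ is independent of $F\in\{K,\Bbbk\}$, you invoke Proposition \ref{prop:TPR1}(b), i.e.\ commutation of $\dot\otimes_R$ with \emph{flat} base change. But the specialization $R\to\Bbbk$ is not flat, so this only gives $\dot\Otimes_{K,a}KM_a \simeq K(\dot\Otimes_{R,a}M_a)$ and says nothing about $\dot\Otimes_{\Bbbk,a}\Bbbk M_a$ versus $\Bbbk(\dot\Otimes_{R,a}M_a)$. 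Indeed, the compatibility of $\dot\otimes_R$ with specialization to $\Bbbk$ is exactly the delicate point of this whole section; it is established for \emph{tilting} modules in Lemma \ref{lem:A20}, whose proof in turn relies on the present lemma, so using it here would be circular, and it is never established for general $\Delta$-filtered objects.

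The cleaner route — and the one the paper takes — is to avoid introducing $\dot\Otimes_{F,a}FM_a$ at all: apply the dimension formula of Lemma \ref{lem:TP3} (in its evident multi-fold form, which follows from the same associativity argument) directly to $\llangle FM_{-m},\dots,FM_n,{}^\dag FM\rrangle_F$. That formula depends only on the $\Delta$-multiplicities $(FM_a : \bfM(\mu))$ and $(FM : \bfM(\mu))$, which \emph{are} preserved under base change on $\Delta$-filtered objects (Proposition \ref{prop:introhw}(c)), and on tensor-product multiplicities for $\frakm_\nu$-modules computed over $\bbC$, which obviously do not see $F$. No statement about $\dot\otimes$ over $\Bbbk$ is needed. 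If you want to keep your reduction via Lemma \ref{lem:A14}(b), you must still justify the base-change invariance of $(\dot\Otimes_{F,a}FM_a : \bfM(\lambda)_\nu)$ by exactly this kind of direct computation with coinvariants (using $(\dot\Otimes_{F,a}FM_a:\bfM(\lambda)_\nu)=\dim_F\llangle FM_{-m},\dots,FM_n,{}^\dag\bfM(\lambda)_{F,\nu}\rrangle_F$), at which point the detour buys you nothing.
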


\vspace{.5mm}

\begin{proof}
Since this $R$-module is finitely generated by Lemma \ref{lem:TP0}, it is enough to check that its rank
is the same at the special point and at the generic point of $\Spec(R)$. 
By Lemma \ref{lem:TP0} we must check that
$$
\dim_\Bbbk\,\llangle \Bbbk M_{-m},\dots,\Bbbk M_{n},{}^\dag \Bbbk M\rrangle_\Bbbk=
\dim_K\,\llangle K M_{-m},\dots,K M_{n},{}^\dag\! K M\rrangle_K.
$$
For each $M\in\bfO^{\nu,\kappa,\Delta}_R$, $N\in\Delta(\bfO^{\nu,\kappa}_R)$, we have $(KM:KN)=(\Bbbk M:\Bbbk N).$
Therefore, the claim follows from Lemma \ref{lem:TP3}.
\end{proof}

\vspace{2mm}

Now, we can prove the following.

\vspace{2mm}

\begin{lemma}\label{lem:A16}
We have $\dot\Otimes_{R,a}M_a\in\bfO^{\nu,\kappa,\Delta}_R$.
\end{lemma}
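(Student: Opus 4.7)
The plan is to reduce the claim to the field case, Proposition \ref{prop:TP2bis}(a), via the base change criterion of Proposition \ref{prop:introhw}(c). By the previous lemma, $N := \dot\bigotimes_{R,a} M_a$ lies in $\bfO^{\nu,\kappa,f}_R$, hence is free as an $R$-module. I would fix $\beta\in\widehat P^\nu_R$ large enough so that both $N$ and $M_0$ belong to the truncated highest weight $R$-subcategory $\lub\bfO^{\nu,\kappa}_R$; then Proposition \ref{prop:introhw}(c) reduces the problem to showing that $\Bbbk N\in\bfO^{\nu,\kappa,\Delta}_\Bbbk$ for each closed point.

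The heart of the argument is the identification $\Bbbk N\simeq\dot\bigotimes_{\Bbbk,a}\Bbbk M_a$, after which Proposition \ref{prop:TP2bis}(a) applied over the field $\Bbbk$ (the assumption $\kappa_\Bbbk=-e\in\bbQ_{<0}$ places us in the regime $\Bbbk\setminus\bbQ_{\geqslant 0}$, and each $\Bbbk M_a$ is $\Delta$-filtered since $M_a$ is) produces the desired membership in $\bfO^{\nu,\kappa,\Delta}_\Bbbk$. Since $N=DZ_{R,\infty}$ by construction and $D$ commutes with base change (Lemma \ref{lem:duality}), the identification is reduced to the isomorphism $\Bbbk Z_{R,\infty}\simeq Z_{\Bbbk,\infty}$ of $\bfg_{\Bbbk,\kappa}$-modules. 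The finite generation of $W_{R,d,\lambda}$ (Lemma \ref{lem:fg}(a)) gives $\Bbbk W_{R,d,\lambda}\simeq W_{\Bbbk,d,\lambda}$ by right exactness, and the projectivity of $Z_{R,d,\lambda}=(W_{R,d,\lambda})^*$ over $R$ (Lemma \ref{lem:fg}(b)) allows one to commute dualization with the non-flat base change, yielding $\Bbbk Z_{R,d,\lambda}\simeq Z_{\Bbbk,d,\lambda}$; passing to the filtered colimit in $d$ (which commutes with tensor products) produces the required isomorphism, and the $\bfg$-action transports correctly because the constructions in Lemmas \ref{lem:A.7} and \ref{lem:A.10} are defined by $R$-linear operators respecting specialization.

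The main technical obstacle will be the non-flatness of $R\to\Bbbk$, which forbids a direct appeal to Lemma \ref{lem:A7}; the finiteness and projectivity results of Lemma \ref{lem:fg} are what make the by-hand base change possible. An alternative route, perhaps more in the spirit of the proof of Proposition \ref{prop:TP2bis}(a), would be to verify the $\Delta$-filtration directly over $R$: for a standard object $M\in\Delta(\lub\bfO^{\nu,\kappa}_R)$ and a resolution $0\to Q\to P\to M\to 0$ with $P$ projective, one applies $\Hom(N,\scrD(-))$, uses Lemma \ref{lem:A14} to rewrite the Hom-groups as duals of coinvariants, uses Lemma \ref{lem:A10} to see these coinvariants are free $R$-modules, and deduces $\Ext^1(N,\scrD M)=0$ by matching the alternating ranks against the vanishing of the corresponding $\Ext^1$ group after flat base change to the fraction field $K$.
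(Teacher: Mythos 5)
Your primary argument has a real gap at the step where you claim $\Bbbk Z_{R,d,\lambda}\simeq Z_{\Bbbk,d,\lambda}$ via the projectivity of $Z_{R,d,\lambda}$. Projectivity of the \emph{dual} $Z_{R,d,\lambda}=(W_{R,d,\lambda})^*$ does not allow one to commute the $R$-linear dual with the non-flat specialization $R\to\Bbbk$. The canonical map
$\Bbbk\otimes_R\Hom_R(W,R)\to\Hom_\Bbbk(\Bbbk\otimes_R W,\Bbbk)$
is not an isomorphism for a general finitely generated $W$: already over a discrete valuation ring with $W=R/\frakm$, the left-hand side is $0$ (since $\Hom_R(W,R)=0$) while the right-hand side is $\Bbbk$, yet $Z=0$ is trivially projective. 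What one would actually need is freeness of $W_{R,d,\lambda}$ itself, and Lemma \ref{lem:fg}(a) only gives finite generation. This is consistent with the paper's Lemma \ref{lem:A20} being stated only for tilting modules, and its proof going through the freeness of the \emph{coinvariants} (Lemma \ref{lem:A10}), not of the $Z_{R,d,\lambda}$.

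The alternative route you sketch in your last paragraph is in essence the paper's proof, and it is correct. One applies $\Hom(-,\scrD M)$ for $M$ standard and a projective cover $0\to Q\to P\to M\to 0$, uses \eqref{represent} and Lemma \ref{lem:A14} to rewrite the relevant Hom spaces as duals of coinvariants, invokes Lemma \ref{lem:A10} for freeness, right-exactness of coinvariants (Lemma \ref{lem:TP0}(a)) to get a right-exact sequence of free finite $R$-modules, and then deduces left-exactness. Your observation that one can conclude by passing to the fraction field $K$ alone is sound: the kernel of the first map in the right-exact sequence is a submodule of a free $R$-module, hence torsion-free, so its vanishing at the generic fiber forces it to be zero. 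The paper checks exactness at both $\Bbbk$ and $K$ (both checks resting on Lemma \ref{lem:TP3}), which is slightly redundant for this final step — though the two-fiber criterion is essential for establishing the freeness in Lemma \ref{lem:A10} in the first place.
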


\vspace{.5mm}

\begin{proof} Taking $\beta$ large enough we can assume that all modules belong to
the category $\lub\bfO^{\nu,\kappa,f}_R$. 
Since $\lub\bfO^{\nu,\kappa}_R$ is a highest weight category
over $R$, to prove that $\dot\Otimes_{R,a}M_a$ has a $\Delta$-filtration, 
it suffices to check that 
$\Ext^1_{\lub\bfO^{\nu,\kappa}_R}(\dot\Otimes_{R,a}M_a,M)=0$
for each $M\in\nabla(\lub\bfO^{\nu,\kappa}_R)$,
see \cite[lem.~4.21]{R1}.
Since the category $\lub\bfO^{\nu,\kappa}_R$ is preserved by taking extensions in 
$\bfO^{\nu,\kappa}_R$, we must check that
$\Ext^1_{\bfO^{\nu,\kappa}_R}(\dot\Otimes_{R,a}M_a,\scrD M)=0$
for each $M\in\Delta(\bfO^{\nu,\kappa}_R).$
To simplify the notation, we assume that $[-m,n]=[1,2]$.
By \eqref{represent}, it is enough to check that, given an exact sequence
$$0\to Q\to P\to M\to 0$$
with $P$ projective, the following left exact sequence of $R$-modules is indeed exact
\begin{equation*}\label{exactsequence2}
\gathered
0\to \llangle M_1,M_2,{}^\dag\! M\rrangle_R^*\to
\llangle M_1,M_2,{}^\dag\! P\rrangle_R^*\to \llangle M_1,M_2,{}^\dag\! Q\rrangle_R^*\to 0.
\endgathered
\end{equation*}
Note that $M_1,M_2,M_3,Q,P,M$ are $\Delta$-filtered. To prove the claim we may consider the right exact 
sequence of free $R$-modules of finite type
\begin{equation*}\label{exactsequence3}
\gathered
0\to \llangle M_1,M_2,{}^\dag\! Q\rrangle_R\to
\llangle M_1,M_2,{}^\dag P\rrangle_R\to \llangle M_1,M_2,{}^\dag\! M\rrangle_R\to 0.
\endgathered
\end{equation*}
We must prove that it is exact. To do so, it is enough to prove that it is exact after specialization
at the special point and at the generic point of $\Spec(R)$. Now, the sequences
\begin{equation*}
\gathered
0\to \llangle \Bbbk M_1,\Bbbk M_2,{}^\dag\Bbbk Q\rrangle_\Bbbk\to
\llangle \Bbbk M_1,\Bbbk M_2,{}^\dag\Bbbk P\rrangle_\Bbbk\to 
\llangle \Bbbk M_1,\Bbbk M_2,{}^\dag\Bbbk M\rrangle_\Bbbk\to 0,\\
0\to \llangle K M_1,K M_2,{}^\dag\!K Q\rrangle_K\to
\llangle K M_1,K M_2,{}^\dag\!K P\rrangle_K\to 
\llangle K M_1,K M_2,{}^\dag\!K M\rrangle_K\to 0
\endgathered
\end{equation*}
are both exact by Lemma \ref{lem:TP3}. Thus, the lemma follows from Lemma \ref{lem:TP0}.
\end{proof}

\vspace{2mm}

Next, we define the associativity and the commutativity constraints for $\dot\otimes_R$.
From now on we assume that $R$ is an analytic algebra.

\vspace{2mm}

\begin{lemma}\label{lem:a}
Assume that $V_1,V_2\in\bfO^{+,\kappa,f}_R$ and $M\in\bfO^{\nu,\kappa,f}_R$.
Then, there are functorial isomorphisms
$\bfa_{V_1,M,V_2}:(V_1\dot\otimes_R M)\dot\otimes_R V_2\to V_1\dot\otimes_R(M\dot\otimes_RV_2)$
\end{lemma}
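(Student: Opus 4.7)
The plan is to construct $\bfa_{V_1,M,V_2}$ by Yoneda, reducing the statement to a canonical identification at the level of coinvariants. By Lemma \ref{lem:A14}, for any $N \in \bfO^{\nu,\kappa,f}_R$ we have functorial identifications
\[
\Hom_{\bfg_R}\bigl((V_1\dot\otimes_R M)\dot\otimes_R V_2,\,N\bigr) = \llangle V_1\dot\otimes_R M,\, V_2,\, DN\rrangle_R^{*}
\]
and similarly $\Hom_{\bfg_R}(V_1\dot\otimes_R(M\dot\otimes_R V_2),\,N) = \llangle V_1,\, M\dot\otimes_R V_2,\, DN\rrangle_R^{*}$. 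Thus it suffices to produce a functorial $R$-module isomorphism between the two three-point coinvariant spaces $\llangle V_1\dot\otimes_R M,\,V_2,\,DN\rrangle_R$ and $\llangle V_1,\,M\dot\otimes_R V_2,\,DN\rrangle_R$, natural in all four entries.

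I would construct this as a composition of two isomorphisms, each identifying a three-point coinvariant space with the four-point coinvariant space $\llangle V_1,\,M,\,V_2,\,DN\rrangle_R$. Concretely, I fix four distinct points $x_1,x_0,x_2,x_\infty$ on $\bbP^1$ together with an admissible system of charts, insert $V_1,M,V_2,DN$ at the respective points, and let the position of $x_1$ (resp.\ $x_2$) vary along a real path towards $x_0$. As in \cite[thm.~17.29]{KL} and \cite[sec.~18.2]{KL}, the four-point coinvariants assemble into a family of $R$-modules over the configuration space equipped with the (formal) KZ connection, and the degeneration limits produce the three-point coinvariant spaces involving the tensor products $V_1\dot\otimes_R M$ and $M\dot\otimes_R V_2$ via the second isomorphism of Lemma \ref{lem:A14}. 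Parallel transport of horizontal sections between the two degeneration limits then defines the associator. The pentagon axioms, as well as the remaining associators $\bfa_{V_1,V_2,M}$ and $\bfa_{M,V_1,V_2}$, are obtained by cyclic permutation and by considering a fifth insertion point when needed, exactly as over $\bbC$.

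The main obstacle is to carry the Kazhdan-Lusztig ODE argument from $R = \bbC$ to our analytic deformation ring $R$ of dimension $\leqslant 2$. The key inputs are: the coinvariant spaces are finitely generated over $R$ (Lemma \ref{lem:TP0}(b)) and free of finite rank when the arguments are $\Delta$-filtered (Lemma \ref{lem:A10}), which is enough after reducing to $\Delta$-filtered modules via the representability of the tensor product; the KZ connection is a regular flat $R$-linear connection on the configuration space; and the standing assumption $\kappa_R(D)\subset\bbC\setminus\bbR_{\geqslant 0}$ together with the analyticity of $R$ guarantees that the KZ system admits convergent analytic solutions, so that parallel transport between the degeneration limits yields a genuine $R$-linear isomorphism. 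Flat base change compatibility (Lemma \ref{lem:A7}, Lemma \ref{lem:A11}) will then ensure that our associator specializes to the ones of Proposition \ref{prop:TP2} at the generic and special points, which, combined with Lemma \ref{prop:Comp0}, shows that the functorial properties and the pentagon/hexagon axioms pass from the Drinfeld category down to $\bfO^{\nu,\kappa,\Delta}_R$.
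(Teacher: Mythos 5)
Your proposal is correct and follows essentially the same route as the paper: reduce via Lemma \ref{lem:A14} and Yoneda to identifying coinvariant spaces, then let the insertion points vary over a configuration space, note that the resulting bundle of coinvariants carries an integrable $R$-linear KZ connection, and use the analyticity of $R$ (together with the hypothesis $\kappa_R(D)\subset\bbC\setminus\bbR_{\geqslant 0}$) to obtain flat sections, hence parallel transport isomorphisms between degeneration limits. The only cosmetic difference is the bookkeeping: the paper phrases the key KL-type identification as an isomorphism of two-point coinvariants $\llangle V_1\dot\otimes_R M,\,V_2\dot\otimes_R DN\rrangle_R \simeq \llangle V_1,M,V_2,DN\rrangle_R$ (and its cyclic rotation), and then uses the second isomorphism of Lemma \ref{lem:A14} plus cyclic invariance to convert; you instead factor each side directly through three-point coinvariants. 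These are interchangeable reformulations of the same degeneration argument, so no gap.
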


\vspace{.5mm}

\begin{proof}
We apply the same construction as in the case $R=\bbC$ in \cite[sec.~17,18]{KL}. 
We will be very brief.
We allow the system of charts $\gamma$ to vary in the set of $\bbC$-points
of an affine scheme $\scrV$.
Taking the coinvariants, we construct a bundle of $R$-modules of finite rank over
$\scrV$. This bundle is equipped with an integrable $R$-linear connection.
Since $R$ is an analytic algebra, it admits a flat section.
This section gives $R$-linear isomorphisms, see \cite[thm.~17.29]{KL},
\begin{equation*}
\gathered
\llangle V_1\dot\otimes_R M,V_2\dot\otimes_R DN\rrangle_R=
\llangle V_1, M,V_2, DN\rrangle_R,\\
\llangle DN\dot\otimes_RV_1,M\dot\otimes_R V_2\rrangle_R=
\llangle DN,V_1,M,V_2\rrangle_R
\endgathered
\end{equation*}
for each $M,N\in\bfO^{\nu,\kappa,f}_R$ and $V_1,V_2\in\bfO^{+,\kappa}_R$.

Using these isomorphisms and the invariance of coinvariants under cyclic permutation,
we get functorial isomorphisms
\cite[sec.~18.2]{KL} 
$$\xymatrix{
\llangle V_1\dot\otimes_R M,V_2\dot\otimes_R DN\rrangle_R\ar@{=}[r]\ar@{=}[d]&
\llangle (V_1\dot\otimes_R M)\otimes_RV_2,DN\rrangle_R,\\
\llangle DN\dot\otimes_R V_1,M\dot\otimes_R V_2\rrangle_R\ar@{=}[r]&
\llangle V_1\dot\otimes_R (M\otimes_RV_2),DN\rrangle_R.
}$$

Hence, from  \eqref{represent} we deduce a functorial isomorphism
\begin{equation*}
\gathered
\Hom_{\bfg_R}((V_1\dot\otimes_R M)\dot\otimes_RV_2,DN)=
\Hom_{\bfg_R}(V_1\dot\otimes_R (M\dot\otimes_RV_2),DN),
\endgathered
\end{equation*}
which yields a module isomorphism
$\bfa_{V_1,M,V_2}:(V_1\dot\otimes_R M)\dot\otimes_R V_2\to V_1\dot\otimes_R(M\dot\otimes_RV_2).$
\end{proof}

\vspace{2mm}

The isomorphisms $\bfa_{M,V_1,V_2}$ and $\bfa_{M,V_1,V_2}$ are constructed in a similar way. The details
are left to the reader. 

\vspace{2mm}

\begin{lemma} \label{lem:c}
Assume that $V\in\bfO^{+,\kappa,f}_R$ and $M\in\bfO^{\nu,\kappa,f}_R$.
Then, there is a functorial isomorphism
$\bfc_{V,M}:V\dot\otimes_R M\to M\dot\otimes_RV$.
 It represents the morphism of functors
$\llangle V,M,DN\rrangle_R\to\llangle M,V,DN\rrangle_R$ induced by the $R$-linear map
$V\otimes_RM\otimes_RDN\to M\otimes_RV\otimes_RDN$ such that
$x\otimes y\otimes z\mapsto\tau y\otimes\tau x\otimes \bar\tau z.$
Here, we set
$\tau=\exp(\sqrt{-1}\pi\frakL_0)\exp(\frak L_1)$ and $\bar\tau=\exp(-\sqrt{-1}\pi\frakL_0)\exp(\frak L_1).$
\end{lemma}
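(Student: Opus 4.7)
The plan is to mimic the construction of the braiding in \cite[sec.~18--19]{KL} on coinvariants and then use the representability statement \eqref{represent} together with Lemma \ref{lem:A14} to convert it into a morphism of $\bfg_R$-modules. Fix the system of charts $\gamma_{-1},\gamma_0,\gamma_1$ centered at $1,\infty,0$ from \cite[sec.~13.5]{KL}; with respect to this choice, Proposition \ref{prop:TP1} gives the two tensor products $V\dot\otimes_R M$ and $M\dot\otimes_R V$, and both live in $\bfO^{\nu,\kappa,f}_R$ by Lemma \ref{lem:A16}.

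First I would construct a linear map
\[
\phi:\llangle V,M,DN\rrangle_R \longrightarrow \llangle M,V,DN\rrangle_R
\]
lifted from the $R$-linear map $x\otimes y\otimes z\mapsto \tau y\otimes\tau x\otimes\bar\tau z$ on $V\otimes_R M\otimes_R DN$, where the operators $\tau,\bar\tau$ make sense because $V,M,DN$ are smooth (so $\frakL_0,\frakL_1$ act locally finitely), and because $R$ is analytic so that the exponentials converge. The essential step is to check that $\phi$ descends to coinvariants, i.e.\ that it carries the $\Gamma_R$-action at the source to the $\Gamma_R$-action at the target for the \emph{permuted} system of charts (swapping the points at $0$ and $1$ via rotation by $\pi$); this is exactly where the identity $\tau\,\xi\otimes f(t)\,\tau^{-1}=\xi\otimes f(-t)$ on suitable completions of $\bfg'_{R,\kappa}$ shows up, together with the relation $[\frakL_s,\xi^{(r)}]=-r\xi^{(r+s)}$. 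Over $\bbC$ this is the content of \cite[sec.~19.2--19.7]{KL}; the identities are formal and survive over the analytic ring $R$ without modification, because the $M_a$ are weight $\bft_R$-modules whose weight subspaces are finitely generated over $R$, so every computation reduces to a finite-dimensional one.

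Next, combining $\phi$ with the representability isomorphisms of Lemma \ref{lem:A14} and \eqref{represent}, we obtain a functorial map
\[
\Hom_{\bfg_R}\!\bigl(M\dot\otimes_R V,\, DN\bigr)\longrightarrow \Hom_{\bfg_R}\!\bigl(V\dot\otimes_R M,\, DN\bigr)
\]
as $N$ runs over $\bfO^{\nu,\kappa,f}_R$. Since $\scrD$ is a duality on $\bfO^{\nu,\kappa,f}_R$ by Lemma \ref{lem:duality}, the objects of the form $DN$ generate $\bfO^{\nu,\kappa,f}_R$, and the Yoneda lemma then yields a morphism of $\bfg_R$-modules $\bfc_{V,M}:V\dot\otimes_R M\to M\dot\otimes_R V$ whose induced morphism on coinvariant-spaces is $\phi$. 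Functoriality in $V$ and $M$ follows from functoriality of $\phi$ at the level of coinvariants.

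The last step is to prove $\bfc_{V,M}$ is an isomorphism. By Proposition \ref{prop:introhw} and the fact (Lemma \ref{lem:A16}) that both sides lie in $\bfO^{\nu,\kappa,\Delta}_R$, it suffices to show that the specialization $\Bbbk\bfc_{V,M}$ is an isomorphism; by Lemma \ref{lem:A11} this specialization coincides with the corresponding braiding at the residue field, so we may assume $R=\Bbbk$. Since $\bfc^2$ is expressed through the balancing \eqref{balancing} in terms of $\exp(2\pi\sqrt{-1}\frakL_0)$, which acts invertibly on every object of $\bfO^{\nu,\kappa,f}_\Bbbk$, the morphism $\bfc_{V,M}$ admits a two-sided inverse up to the invertible twist. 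The main obstacle is the $\Gamma_R$-equivariance of $\phi$: once that gauge-theoretic calculation is done exactly as in \cite[sec.~19]{KL}, the rest is formal.
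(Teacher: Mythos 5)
Your overall plan is right in spirit and is indeed one of the two equivalent packagings of the Kazhdan--Lusztig braiding: construct the twist-by-$\tau$ map on the level of coinvariants, check $\Gamma_R$-equivariance, and then pass through the representability isomorphisms. (The paper instead follows the ``third construction'': it defines an isomorphism $P:Z'_R\to Z_R$ as the transpose of $x\otimes y\mapsto\tau y\otimes\tau x$ on the full weight-dual modules $Z_R$ of \eqref{Z}, checks as in \cite[sec.~14]{KL} that $P$ restricts to a $\bfg'_{R,\kappa}$-isomorphism on the smooth parts $Z'_{R,\infty}\to Z_{R,\infty}$, and sets $\bfc_{V,M}=DP$; the coinvariant picture and the $Z_{R,\infty}$ picture are dual to one another.) So the first two paragraphs of your sketch are essentially correct, modulo the slightly off citation: the braiding is \cite[sec.~14]{KL}, not sec.~18--19, which deal with the associator.

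The genuine problem is your last paragraph, on invertibility, which is both circular and unnecessary.
First, it is circular: the balancing identity \eqref{balancing} is a \emph{consequence} of having the full braided (bi)module structure in hand, including the braiding itself; see Remark \ref{rk:balancing}, which invokes \cite[prop.~31.2]{KL}. You cannot use it to establish that $\bfc_{V,M}$ is invertible before the braiding is constructed and its axioms verified.
Second, the reduction to the residue field via Lemma \ref{lem:A11} does not work: that lemma is stated for \emph{flat} base change, whereas $R\to\Bbbk$ is the quotient by the maximal ideal and is not flat (the paper handles specialization separately and later, in Lemma \ref{lem:A20} and Proposition \ref{prop:monoidalR}(b), for tilting objects).
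Third, and most importantly, the whole detour is superfluous. The operators $\tau=\exp(\sqrt{-1}\pi\frakL_0)\exp(\frakL_1)$ and $\bar\tau=\exp(-\sqrt{-1}\pi\frakL_0)\exp(\frakL_1)$ are invertible by construction (they are exponentials, with inverses again exponentials, convergent because $R$ is analytic and the modules are weight modules with finitely generated weight spaces). Hence the map $x\otimes y\otimes z\mapsto\tau y\otimes\tau x\otimes\bar\tau z$ is already an isomorphism before passing to coinvariants, so $\phi$ is an isomorphism of functors, and Yoneda then gives $\bfc_{V,M}$ as an isomorphism with no further argument. Equivalently, in the paper's formulation, $P$ is invertible on the nose and $D$ is a duality (Lemma \ref{lem:duality}), so $\bfc_{V,M}=DP$ is an isomorphism for free. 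If you rewrite the final paragraph along these lines, the proof is sound.
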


\vspace{.5mm}

\begin{proof}
The isomorphism $\bfc_{V,M}$ is defined as in \cite[sec.~14]{KL}.
More precisely, setting $A=[0,1]$, $M_0=V$ and $M_1=M$, we consider
the $\Gamma_{R,\kappa}$-module $Z_R$ in \eqref{Z}.
Switching $V$ and $M$ we define $Z'_R$ in a similar way.
Since the Sugawara operators $\frakL_0$, $\frakL_1$ act on $V$, $M$ 
and since $R$ is an analytic algebra, we can define the 
$R$-module isomorphism $P:Z'_R\to Z_R$ which is the transpose of the $R$-linear map
$V\otimes_RM\to M\otimes_RV$ such that $x\otimes y\mapsto\tau y\otimes\tau x.$
Now, recall that $Z'_{R,\infty}=Z'_R(\infty)$ and $Z_{R,\infty}=Z_R(\infty)$. One check as in loc.~cit.~that 
$P$ induces a $\bfg'_{R,\kappa}$-isomorphism $Z'_{R,\infty}\to Z_{R,\infty}.$ We define the isomorphism
$\bfc_{V,M}$ to be the map
$DZ_{R,\infty}\to DZ'_{R,\infty}$ which is the transpose of $P$. 
The second part of the lemma is proved as in \cite[sec.~14.6]{KL}.
\end{proof}

\vspace{2mm}

Next, we consider the behavior of the tensor product $\dot\otimes_R$ on tilting modules.
To do that, we'll restrict ourselves to
the endofunctors $e,$ $f$ of $\bfO^{\nu,\kappa,f}_R$ given by
$e=\bullet\,\dot\otimes_R\bfV^*_R$ and $f=\bullet\,\dot\otimes_R\bfV_R$.

\vspace{2mm}

\begin{lemma}\label{lem:A19}
The functors $e$, $f$ are exact on $\bfO^{\nu,\kappa,\Delta}_R$ and 
preserve $\bfO_R^{\nu,\kappa,\tilt}$.
\end{lemma}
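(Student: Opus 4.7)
The plan is to deduce both assertions by reducing to statements already established, with the two ingredients being the biexactness of $\dot\otimes_K$ on $\Delta$-filtered modules over the fraction field (Proposition \ref{prop:TP2bis}(c)) and the base-change/tilting specialization principle for highest weight categories (Proposition \ref{prop:introhw}(c)). The right-exactness of $f$ has already been observed inside the proof of Proposition \ref{prop:TP2bis}, and the nontrivial point is to promote this to left-exactness on $\bfO^{\nu,\kappa,\Delta}_R$ and to show that tilting objects are preserved after base change to the residue field $\Bbbk$.

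For exactness, I would start with an exact sequence $0\to M'\to M\to M''\to 0$ in $\bfO^{\nu,\kappa,\Delta}_R$ and apply $f$. Right-exactness, which follows as in the last paragraph of the proof of Proposition \ref{prop:TP2bis} by combining Proposition \ref{prop:TP1} (representability by coinvariants) and Lemma \ref{lem:TP3}, yields a right-exact sequence $f(M')\to f(M)\to f(M'')\to 0$; let $\mathrm{K}$ denote the kernel of $f(M')\to f(M)$. Base-changing to the fraction field $K$ via Proposition \ref{prop:TPR1}(b), the resulting sequence becomes exact by Proposition \ref{prop:TP2bis}(c), so $K\otimes_R \mathrm{K}=0$, i.e. $\mathrm{K}$ is $R$-torsion. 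On the other hand $f(M')\in\bfO^{\nu,\kappa,\Delta}_R$ by Lemma \ref{lem:A16}, so $f(M')$ is free as an $R$-module; hence its submodule $\mathrm{K}$ is torsion-free, forcing $\mathrm{K}=0$. The same argument applies to $e$ by symmetry.

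For preservation of tilting modules, let $T\in \bfO^{\nu,\kappa,\tilt}_R$. Choose $\beta$ large enough that both $T$ and $f(T)$ lie in the highest weight truncation $\lub\bfO^{\nu,\kappa}_R$. By Lemma \ref{lem:A16} the module $f(T)$ is $\Delta$-filtered, hence $R$-free. By Proposition \ref{prop:introhw}(c), applied inside the highest weight $R$-category $\lub\bfO^{\nu,\kappa}_R$, it suffices to check that $\Bbbk f(T)$ is tilting in $\Bbbk\lub\bfO^{\nu,\kappa}_R$. Lemma \ref{lem:A11} identifies $\Bbbk f(T)\simeq f(\Bbbk T)$, and Proposition \ref{prop:introhw}(c) also gives that $\Bbbk T$ is tilting. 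Since $\bfV_\Bbbk$ is tilting in $\bfO^{+,\kappa}_\Bbbk$ by Lemma \ref{lem:V}(b), Proposition \ref{prop:TP2bis}(b) then yields that $f(\Bbbk T)=\Bbbk T\dot\otimes_\Bbbk \bfV_\Bbbk$ is tilting, as required.

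The main obstacle is essentially bookkeeping rather than a deep obstruction: one must verify that the truncated subcategory $\lub\bfO^{\nu,\kappa}_R$ can be chosen large enough to simultaneously accommodate $T$ and $f(T)$ and, more importantly, that the base-change isomorphism $\Bbbk f(T)\simeq f(\Bbbk T)$ of Lemma \ref{lem:A11} intertwines the tilting structures on both sides. Given the machinery already developed, this is essentially formal, and no new analytic input beyond what goes into Lemmas \ref{lem:A11} and \ref{lem:A16} is needed.
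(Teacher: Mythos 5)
Your exactness argument is correct and is a genuinely different route from the paper's. The paper does not use the torsion/generic-exactness trick; instead it notes that $\bfV_R$, $\bfV_R^*$ are already defined over the one-dimensional subring $S=\bbC[\kappa]\subset R$, reduces by flat base change (Lemma \ref{lem:A11}) to a DVR, and invokes the rigidity of $\bfV$, $\bfV^*$ from [KL, prop.~31.3] to get that $e$ and $f$ are biadjoint on $\bfO^{\nu,\kappa,\Delta}_R$, which simultaneously gives exactness. Your approach buys a more elementary argument for exactness (right exactness over $R$ plus generic exactness over $K$ plus torsion-freeness of $\Delta$-filtered objects), at the cost of not establishing biadjointness over $R$, which the paper then uses for the tilting claim.

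The tilting-preservation step, however, has a genuine gap. You invoke Lemma \ref{lem:A11} to identify $\Bbbk f(T)\simeq f(\Bbbk T)$, but that lemma is a flat base-change statement: its hypothesis requires $S$ to be flat over $R$. The residue field $\Bbbk$ is not flat over $R$ once $\dim R\geqslant 1$, so the lemma does not apply. Concretely, the construction of $\dot\otimes_R$ passes through $Z_{R,d,\lambda}=(W_{R,d,\lambda})^*$ and the functor of smooth vectors; while $W_{R,d,\lambda}$ is finitely generated (Lemma \ref{lem:fg}), it need not be projective, so $\Bbbk(W_{R,d,\lambda})^*$ need not agree with $(\Bbbk W_{R,d,\lambda})^*$, and taking smooth vectors is only left exact. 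This is precisely why the paper states the commutation $\Bbbk f(M)\simeq f(\Bbbk M)$ for $M$ tilting as a separate result, Lemma \ref{lem:A20}, and proves it \emph{after} Lemma \ref{lem:A19}: the proof of \ref{lem:A20} uses \ref{lem:A19} to know $e(M)$, $f(M)$ are tilting, and then applies Proposition \ref{prop:introhw}(b) to tilting pairs, plus Lemma \ref{lem:A10}. So substituting \ref{lem:A20} for your misused \ref{lem:A11} makes the argument circular. You would need either to establish biadjointness of $e$ and $f$ over $R$ directly (as the paper does by reduction to $S$ and the [KL] rigidity argument), or find an independent proof that $\Bbbk f(T)\simeq f(\Bbbk T)$ for $\Delta$-filtered $T$ without already knowing that $f$ preserves tilting.
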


\vspace{.5mm}

\begin{proof}
Let $S\subset R$ be the $\bbC$-subalgebra of $R$ generated by $\kappa.$
The modules $\bfV_R$, $\bfV^*_R$ are defined over $S$, i.e., we have 
$\bfV_R=R\bfV_S$ and $\bfV_R^*=R\bfV_S^*$ with $\bfV_S=\Indc_S(V_S)$,
$\bfV_S^*=\Indc_S(V_S^*)$. 

Now, the second claim is proved as Proposition \ref{prop:TP2bis}$(b)$. Since
$\bfV_R$, $\bfV_R^*$ are tilting by Lemma \ref{lem:V}, it is enough to
check that $e$, $f$ are biadjoint on $\bfO^{\nu,\kappa,\Delta}_R$ (hence exact) proving the first claim on the way.
To do so, since $R$ is a regular ring, we may assume that $R$ is flat over $S$.
Then, since $e$, $f$ commute with flat base change by Lemma \ref{lem:A11}, we may assume that $R=S$ is
a regular local ring of dimension 1. So, we are in the same setting as in \cite[sec.~31]{KL}.

Next, proving the lemma is equivalent to proving that  $\bfV_{R}$ and $\bfV_{R}^*$ are \emph{rigid}, see the  appendix to part IV
of \cite{KL} for details.
This is proved in the proof of \cite[prop.~31.3]{KL}, modulo a technical assumption
which is checked in \cite[lem.~31.6]{KL}.
\end{proof}

\vspace{2mm}

\begin{rk}\label{rem:A.24} The same argument implies that the functor $\bullet\,\dot\otimes_R M$ preserves $\bfO_R^{\nu,\kappa,\tilt}$
whenever $M=RM_S$ with $S$ as above and $M_S\in\bfO^{+,\kappa,f}_S$ is a tilting module such that $M_S$ and $DM_S$
satisfy the technical condition in \cite[prop.~31.3]{KL}. This technical assumption is probably not necessary, but this is enough for us.

\end{rk}

\vspace{2mm}

Finally, we consider the behavior of the tensor product $\dot\otimes_R$ under specialization of $R$ to the residue field $\Bbbk$.
The particular case where $R$ is a regular local ring of dimension $1$ has already been
considered in \cite[thm.~29.1]{KL}. 

\vspace{2mm}

\begin{lemma}\label{lem:A20}
For $M\in\bfO^{\nu,\kappa,\tilt}_R,$ we have functorial isomorphisms
$\Bbbk e(M)= e(\Bbbk M)$ and $\Bbbk f(M)= f(\Bbbk M).$
\end{lemma}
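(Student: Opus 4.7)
The strategy is to identify both sides as tilting modules in $\bfO^{\nu,\kappa,\tilt}_\Bbbk$ and then conclude by the uniqueness of tilting modules from their class in the Grothendieck group of $\Delta$-filtered objects. First, I would verify that both sides are tilting over $\Bbbk$. The module $f(M)$ is tilting over $R$ by Lemma \ref{lem:A19}, hence free over $R$, so $\Bbbk f(M)$ is tilting over $\Bbbk$ by Proposition \ref{prop:introhw}(c). The specialization $\Bbbk M$ is tilting over $\Bbbk$ by the same proposition, and then Lemma \ref{lem:A19} applied over $\Bbbk$ shows that $f(\Bbbk M)=\Bbbk M\dot\otimes_\Bbbk\bfV_\Bbbk$ is tilting. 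The case of $e$ is identical with $\bfV_R^*$ in place of $\bfV_R$.

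Next, I would compare the classes of $\Bbbk f(M)$ and $f(\Bbbk M)$ in $[\bfO^{\nu,\kappa,\Delta}_\Bbbk]$. By Proposition \ref{prop:adjoints}(d) applied over both $R$ and $\Bbbk$, the functor $\Indc$ induces $\bbC$-linear isomorphisms $[\scrO^{\nu,\Delta}_R]\simeq[\bfO^{\nu,\kappa,\Delta}_R]$ and $[\scrO^{\nu,\Delta}_\Bbbk]\simeq[\bfO^{\nu,\kappa,\Delta}_\Bbbk]$ intertwining the operators $f=\bullet\,\dot\otimes\,\bfV$ on the right with the ordinary tensor product $\bullet\otimes V$ on the left. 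Both base change and $\Indc$ send parabolic Verma modules to parabolic Verma modules compatibly, so computing classes reduces to computing on $\scrO$, where the ordinary tensor product with $V_R=V_\bbC\otimes_\bbC R$ manifestly commutes with base change. Hence $[\Bbbk f(M)]=[f(\Bbbk M)]$. Since indecomposable tilting modules have linearly independent classes in $[\bfO^{\nu,\kappa,\Delta}_\Bbbk]$, the two tilting modules must be isomorphic.

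For the functoriality of the isomorphism, I would construct an explicit natural comparison morphism using the universal property of $\dot\otimes$ in Lemma \ref{lem:A14} together with the fact that coinvariants commute with arbitrary base change (Lemma \ref{lem:TP0}(a)), then invoke the class equality above to deduce that the map is an isomorphism on each tilting module. The main obstacle is that the base change $R\to\Bbbk$ is not flat, so the clean compatibility of Lemma \ref{lem:A11} does not apply directly; this is why one is forced to argue indirectly through the Grothendieck group rather than producing an isomorphism tautologically from the defining formula $f(M)=DZ_{R,\infty}$. An alternative route, closer to \cite[thm.~29.1]{KL}, would be to factor $R\twoheadrightarrow\Bbbk$ through $R\twoheadrightarrow R/(x)\twoheadrightarrow\Bbbk$ for $x\in\frakm_R\setminus\frakm_R^2$, dealing with the last surjection by the dimension-one case already established in loc.~cit.~and with the first via the short exact sequence $0\to f(M)\xrightarrow{x}f(M)\to f(M)/xf(M)\to 0$ combined with the exactness of $\dot\otimes_R\bfV_R$ on $\bfO^{\nu,\kappa,\Delta}_R$; but identifying the cokernel $f(M)/xf(M)$ with the $R/(x)$-tensor product of $R/(x)\otimes_R M$ with $\bfV_{R/(x)}$ brings back essentially the same subtlety that the Grothendieck group argument sidesteps.
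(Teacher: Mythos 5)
Your Grothendieck-group argument correctly establishes the \emph{non-functorial} isomorphism: both $\Bbbk f(M)$ and $f(\Bbbk M)$ are tilting over $\Bbbk$, and the reduction to $[\scrO^{\nu,\Delta}]$ via Proposition~\ref{prop:adjoints}(d) shows their classes agree, so by linear independence of indecomposable tilting classes they are isomorphic. That part is sound. The gap is in the functoriality step. You propose to build a natural comparison map from Lemma~\ref{lem:A14} and Lemma~\ref{lem:TP0}(a) and then \emph{``invoke the class equality above to deduce that the map is an isomorphism.''} This does not follow: a natural map between two abstractly isomorphic tilting modules can be anything (it could be zero), and class equality in $K_0$ gives no control over a particular morphism. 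Moreover, to even build the map you need to relate $\Bbbk\bigl(\llangle M,\bfV_R,DN\rrangle_R^*\bigr)$ to $\llangle \Bbbk M,\bfV_\Bbbk,D\Bbbk N\rrangle_\Bbbk^*$ — and Lemma~\ref{lem:TP0}(a) only says coinvariants commute with base change, not their \emph{duals}, which is the object actually appearing in \eqref{represent}. Duals commute with the non-flat base change $R\to\Bbbk$ precisely when the $R$-module is free of finite type.

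That freeness is exactly what Lemma~\ref{lem:A10} supplies when $M$, $\bfV_R$, $N$ are all tilting, and it is the ingredient missing from your proposal. The paper's proof uses it as follows: for $M,N$ tilting, Lemma~\ref{lem:A10} plus Lemma~\ref{lem:TP0}(a) yield a functorial isomorphism $\Bbbk\Hom_{\bfg_R}(f(M),N)\iso\Hom_{\bfg_\Bbbk}(f(\Bbbk M),\Bbbk N)$; on the other hand, since $f(M)$ and $N$ are tilting (Lemma~\ref{lem:A19}), Proposition~\ref{prop:introhw}(b) gives $\Bbbk\Hom_{\bfg_R}(f(M),N)\iso\Hom_{\bfg_\Bbbk}(\Bbbk f(M),\Bbbk N)$. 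Combining, $\Hom_{\bfg_\Bbbk}(\Bbbk f(M),\Bbbk N)\simeq\Hom_{\bfg_\Bbbk}(f(\Bbbk M),\Bbbk N)$ functorially in $N\in\bfO^{\nu,\kappa,\tilt}_R$, and since specialization of tiltings is essentially surjective onto $\bfO^{\nu,\kappa,\tilt}_\Bbbk$, Yoneda on the full subcategory of tiltings gives the functorial isomorphism in one step — no Grothendieck-group detour, no separate naturality argument. Your observation that $R\to\Bbbk$ is not flat and that Lemma~\ref{lem:A11} does not apply is exactly the right diagnosis; the cure is Lemma~\ref{lem:A10}, not the $K_0$ argument.
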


\vspace{.5mm}

\begin{proof} 
By \eqref{represent}, for $N\in\bfO^{\nu,\kappa,f}_R$ we have functorial isomorphisms 
$$\gathered
\Hom_{\bfg_R}(\dot\Otimes_{R,a}M_a,N)=\llangle M_{-m},\dots,M_n,DN\rrangle^*_R,\\
\Hom_{\bfg_\Bbbk}(\dot\Otimes_{\Bbbk,a}\Bbbk M_a,\Bbbk N)=
\llangle \Bbbk M_{-m},\dots,\Bbbk M_n,D\Bbbk N\rrangle^*_\Bbbk.
\endgathered$$
If $M_a$, $N$ are tilting, then 
$\llangle M_{-m},\dots,M_n,DN\rrangle_R$ is free of finite type over $R$ by Lemma \ref{lem:A10}.
Therefore, by Lemma \ref{lem:TP0} we have a functorial isomorphism
$\Bbbk\Hom_{\bfg_R}(\dot\Otimes_{R,a}M_a,N)=
\Hom_{\bfg_\Bbbk}(\dot\Otimes_{\Bbbk,a}\Bbbk M_a,\Bbbk N).$
So, for $M,N\in\bfO^{\nu,\kappa,\tilt}_R$ we have functorial isomorphisms
$\Bbbk\Hom_{\bfg_R}(e(M),N)=\Hom_{\bfg_\Bbbk}(e(\Bbbk M),\Bbbk N)$
and similar isomorphisms for $f$.

On the other hand, by Lemma \ref{lem:A19} the modules $e(M)$, $f(M)$ are tilting.
Thus, we have functorial isomorphisms
\begin{equation}\label{spe-funct}
\gathered
\Hom_{\bfg_\Bbbk}(\Bbbk e(M),\Bbbk N)=\Hom_{\bfg_\Bbbk}(e(\Bbbk M),\Bbbk N),\\
\Hom_{\bfg_\Bbbk}(\Bbbk f(M),\Bbbk N)=\Hom_{\bfg_\Bbbk}(f(\Bbbk M),\Bbbk N).
\endgathered
\end{equation}
This proves the lemma.
\end{proof}

\vspace{2mm}

We can now finish the proof of Propositions \ref{prop:TPR1}, \ref{prop:adjoints}, \ref{prop:monoidalR}.

\vspace{2mm}

\begin{proof}[Proof of Proposition \ref{prop:TPR1}]
Clear, by Lemmas \ref{lem:A11}, \ref{lem:A14}, \ref{lem:A16}. 
\end{proof}


\begin{proof}[Proof of Proposition \ref{prop:adjoints}]
Parts $(a)$, $(b)$, $(c)$ follow from  Lemmas \ref{lem:A11}, \ref{lem:A19}, \ref{lem:A20} and \ref{lem:TP2a}. 
Part $(d)$ is proved as Proposition \ref{prop:TP2bis}$(b)$.
\end{proof}


\begin{proof}[Proof of Proposition \ref{prop:monoidalR}]
The isomorphisms of functors $\bfa_R$, $\bfc_R$ are constructed in Lemmas \ref{lem:a}, \ref{lem:c}.
For part $(a)$ we must prove that $\bfa_R$, $\bfc_R$ satisfy the hexagon and the pentagon axioms. 
The tensor product $\dot\otimes_R$ commutes with a flat base change of the ring $R$ by Lemma \ref{lem:A11}.
The isomorphisms of functors $\bfa_R$, $\bfc_R$
commute also with a flat base change. Therefore, embedding $R$ in its fraction field $K$, 
we are reduced to prove that $\bfa_K$, $\bfc_K$ satisfy the hexagon and the pentagon axioms. 
This is proved in Proposition \ref{prop:TP2}.

Now, let $M\in\bfO^{\nu,\kappa,\tilt}_R$ and $N\in\bfO^{\nu,\kappa,\tilt}_R.$
By \eqref{spe-funct} and Propositions  \ref{prop:introhw}, \ref{prop:adjoints}, the specialization at $\Bbbk$ gives functorial isomorphisms
\begin{equation*}
\gathered
\Bbbk\Hom_{\bfg_R}(\bfV^*_R\dot\otimes_RM,N)=
\Hom_{\bfg_\Bbbk}(\bfV^*_\Bbbk\dot\otimes_\Bbbk (\Bbbk M), \Bbbk N),\\
\Bbbk\Hom_{\bfg_R}(M\dot\otimes_R\bfV^*_R,N)=
\Hom_{\bfg_\Bbbk}((\Bbbk M)\dot\otimes_\Bbbk\bfV^*_\Bbbk, \Bbbk N).
\endgathered
\end{equation*}
They are induced by the base-change homomorphisms
\begin{equation}\label{bc}
\gathered
\Bbbk\llangle \bfV^*_R,M,DN\rrangle_R\to\llangle \bfV^*_\Bbbk,\Bbbk M,D(\Bbbk N)\rrangle_\Bbbk,\\
\Bbbk\llangle M,\bfV^*_R,DN\rrangle_R\to\llangle \bfV^*_\Bbbk,\Bbbk M,D(\Bbbk N)\rrangle_\Bbbk.
\endgathered
\end{equation}
We must check that they intertwine the  isomorphisms
$$\bfc_{\bfV^*_R,M}:\bfV^*_R\dot\otimes_RM\to M\dot\otimes_R\bfV^*_R,\qquad
\bfc_{\bfV^*_\Bbbk,\Bbbk M}:\bfV^*_\Bbbk\dot\otimes_\Bbbk (\Bbbk M)\to
(\Bbbk M)\dot\otimes_\Bbbk\bfV^*_\Bbbk.
$$ 
To do so, recall that $\bfc_{\bfV^*_R,M}$ represents the transpose of the morphism of functors
$$P_R:
\llangle \bfV^*_R,M,DN\rrangle_R\to\llangle M,\bfV^*_R,DN\rrangle_R,\quad
x\otimes y\otimes z\mapsto\tau y\otimes\tau x\otimes \bar\tau z.$$
So the claim follows from the commutativity of the following square
$$\xymatrix{
\Bbbk\llangle \bfV^*_R,M,DN\rrangle_R\ar[d]^{\eqref{bc}}\ar[r]^{\Bbbk P_R}&
\Bbbk\llangle M,\bfV^*_R,DN\rrangle_R\ar[d]^{\eqref{bc}}
\\
\llangle \bfV^*_\Bbbk,\Bbbk M,D(\Bbbk N)\rrangle_\Bbbk\ar[r]^{P_\Bbbk}&
\llangle \Bbbk M,\bfV^*_\Bbbk,D(\Bbbk N)\rrangle_\Bbbk.
}$$
The commutation of the specialization with the associativity constraint is proved in a similar way.
\end{proof}

\clearpage

\section*{Index of notation}

\begin{itemize}\setlength{\itemsep}{1mm}
\item[\textbf{2\ \ \ \ }:] $R$, $K$, $\Bbbk$, $\frakm$.

\item[\textbf{2.1\ \ }:] $M^*$, $SM$, $S\phi$, $\frakP$, $\frakM$, $\frakP_1$, $R_\frakp$, $\frakm_\frakp$, $\Bbbk_\frakp$.

\item[\textbf{2.2\ \ }:] $A^\op$, $\scrC^\op$, $1_\scrC$, $K_0(\scrC)$, $[\scrC]$, $[M]$, $A\mmod$, $\scrC^*$, $\Irr(\scrC)$, $\scrC^\proj$, $\scrC^\inj$, $\Irr(A)$, 
$A\mproj$, $A\minj$, $S\scrC$, $SF$, $h$, $h^\ast$, $h^!$.

\item[\textbf{2.3\ \ }:] $\Delta(\scrC)$, $\leqslant$, $\Lambda$, $P(\lambda)$, $I(\lambda)$, $T(\lambda)$, $\nabla(\lambda)$, $\Delta^*(\lambda)$, $P^*(\lambda)$, $I^*(\lambda)$, $T^*(\lambda)$, $\nabla^*(\lambda)$, $L(\lambda)$, 
$\scrC^\Delta$, $\scrC^\nabla$, 
$\scrC^\tilt$, $\scrC^\diamond$, $\scrR$, 
$\Delta^\diamond(\lambda)$, $P^\diamond(\lambda)$, $T^\diamond(\lambda)$, $\scrC^\blackdiamond$, 
$\mathrm{lcd}_\scrC(M)$, $\mathrm{rcd}_\scrC(M)$.

\item[\textbf{2.4.1}:] $F:\scrC\to B\mmod$, $G$, $(B\mmod)^{F\Delta}$, $F^\Delta$, $F^*$, $F^\diamond$. 

\item[\textbf{2.4.3}:] $(KB')_{\le E}$, $S(\lambda)$, $S'(\lambda)$.

\item[\textbf{3\ \ \ \ }:] $q$, $q_R$.

\item[\textbf{3.1\ \ }:] $\scrI$, $\scrI(q)$, $Q_{R,p}$, $\scrI_p$, $I_1$.

\item[\textbf{3.2\ \ }:] $\mathfrak{sl}_{\!\scrI}$, $\Omega$,
$\al_i$, $\check\al_i$, $\Lam_i$, $Q=Q_{\!\scrI}$, $Q^+=Q^+_{\!\scrI}$, $P=P_{\!\scrI}$, $P^+=P^+_{\!\scrI}$, $X=X_{\!\scrI}$, $\varepsilon_i$, $\scrI^\al$, $\mathfrak{sl}_I$.

\item[\textbf{3.3\ \ }:] $\bbZ^\ell(n)$, $\scrC_n^\ell$, $\scrC_{n,+}^\ell$, $\scrP_n$, $|\lam|$, $l(\lam)$,  ${}^t\lambda$, $Y(\lambda)$, $\scrP^\ell_n$, $\scrP$, $\scrP^\ell$, $\scrP^\nu$, $\scrP^\nu_d$, $p(A)$, $q\text{-}\!\res^Q$, $q\text{-}\!\res^s$, $\ct^s$, $q^{s_p}_R=Q_{R,p}$, $Q_p=Q_{R,p}$, $\Gamma$, $\frakS_d$, 
$\Gamma_d$, $\scrX(\lambda)_\bbC$.

\item[\textbf{3.4.1}:] $\bfH_{R,d}$, $T_i$, $X_i$, $\bfH^Q_{R,d}$, $\bfH^+_{R,d}$, $\bfH^s_{R,d}$, 
$\Ind^{d'}_d$, $\Res^{d'}_d$, $\Ind^{d',s}_{d,+}$, $\Res^{d',s}_{d,+}$,
$M_\bfi$, $1_\bfi$, $1_\al$, $\bfH^s_{K,\al}$.

\item[\textbf{3.4.2}:] $H_{R,d}$, $H^s_{R,d}$, $t_i$, $x_i$, $H^s_{K,\alpha}$, $H^s_{I}$, $H^s_{I,d}$.

\item[\textbf{3.4.3}:] $\zeta$, $S(\lambda)^{s,q}_R$, $\unlhd$, $S(\lambda)^{s}_R$. 

\item[\textbf{3.5\ \ }:] $w_\lambda$, $x_\lambda$, $\frakS_\lambda$, $\bfS_{R,d}^s$, $W(\lambda)^{s,q}_R$, $\Xi^s_{R,d}$, $S^s_{R, d}$, $W(\lambda)_R^{s}$.

\item[\textbf{3.6\ \ }:] $(E,F,X,T)$, $\phi_{E^d}$, $\Lambda=\Lambda^s$, $\bfH^s_{\scrI,d}$, $\scrL(\Lambda)_\scrI$, $\scrL(\Lambda)_{\scrI,\Lambda-\al}$, 
$\bfL(\Lambda)$, $\scrL(\Lambda)_I$.

\item[\textbf{4\ \ \ \ }:] $\ell$, $N$, $\nu$.

\item[\textbf{4.1\ \ }:] $\kappa_R=\kappa$, $\tau_{R,p}=\tau_p$, $\tau_R$, $s_{R,p}=s_p$, $\kappa_S$, $\tau_{S,p}$, $e$.

\item[\textbf{4.2\ \ }:] $\frakg_R$, $U(\frakg_R)$, $\frakt_R$, $\frakb_R$, $\frakp_{R,\nu}$, $\frakm_{R,\nu}$, $e_{i,j}$, $e_i$, $\epsilon_i$, $\frakt^*_R$, $\Pi$, $\Pi^+$, $\Pi_\nu$, $\Pi^+_\nu$, $W$, $w\bullet\lambda$, $\rho$, $i_p$, $j_p$, $J^\nu_p$, $p_k$, $\det_p$, $\det$, $P$, $P_R$, $S^\nu$, $P^\nu_R$, $\rho_\nu$, $\tau=\tau_R$, $\varpi$, $\omega=\omega_N$, $\cas=\cas_N$.

\item[\textbf{4.3\ \ }:] $M_\lambda$, $\scrO^\nu_R$, $V(\lambda)_{R,\nu}$, $M(\lambda)_{R,\nu}$, $L(\lambda)_K$, 
$\scrO^\nu_{R,\t}$, $\Delta(\lambda)_{R,\tau}$.

\item[\textbf{4.4\ \ }:] $\Uparrow$, $A^\nu_{R,\tau}$, $A^\nu_{R,\tau}\{d\}$.

\item[\textbf{4.5\ \ }:] $V_R$, $V^*_R$, $e$, $f$, $I$, $\wt(\mu)$, $m_i(\mu)$, $\scrO^\nu_{K,\tau,\lambda}$, $V_I$, $\lambda \overset{i}\to\mu$.

\item[\textbf{4.6\ \ }:] $h$, $E$, $F$, $T_{R,d}=T^\nu_{R,\tau}\{d\}$, $\varphi^s_{R,d}$, 
$\Phi_{R,d}^s$, $A_{R,\tau}^\nu(N)=A^{\nu}_{R,\t}$,
$T_{R,d}(N)=T_{R,d}$.

\item[\textbf{4.7\ \ }:] $a_\bullet$, $a_p$, $a_\circ$, $\Pi_{\nu,u,v}$, $\Pi_{\nu,u,v}^+$, $\frakm_{R,\nu}$, $\frakm_{R,\nu,u,v}$,
$P\{a\}$, $P^\nu\{a\}$, $\det_\bullet$, $\nu_\circ$, $\nu_\bullet$, $\Pi_{\nu,u,v}$, $\Pi_{\nu,u,v}^+$, $\scrO^\gamma_{R,\tau}(\nu)$, $\scrO^\gamma_{R,\tau}(\nu)\{a\}$, $\scrO^\gamma_{R,\tau}(\nu,u,v)$, $\scrO^\gamma_{R,\tau}(\nu,u,v)\{a\}$, $A^\nu_{R,\tau}(\nu)$, $A^\nu_{R,\tau}(\nu, u,v)$.

\item[\textbf{5.1\ \ }:] $q_R=\exp(-2\pi\sqrt{-1}/\kappa_R)$, $Q_{R,p}=q_R^{s_p}=\exp(-2\pi\sqrt{-1}s_{R,p}/\kappa_R).$

\item[\textbf{5.2.1}:] $L\frakg_R$, $\bfg'_R$, $\bfone$, $\partial$, $\bfg_R$, $\bft_R$, $\bfb_R$, $\bfp_{R,\nu}$, $c$, $\bfg_{R,\kappa}$, $\bfg'_{R,\kappa}$, $\bfg_{R,\geqslant d}$, $\bfg'_{R,+}$, $\bfg_{R,+}$, $\Indc_R(M)$, $Q_{R,d}$, $M(d)$, $M(-d)$, $M(\infty)$, $M(-\infty)$, $\scrS_{R,\kappa}$, $\xi^{(r)}$, $\frakL_s$, $\afcas$.

\item[\textbf{5.2.2}:] $\widehat P_R$, $\widehat\Pi$, $\widehat\Pi^+$, $\widehat\Pi_{re}$, $\check\alpha$, $(\bullet:\bullet)$, $\delta$, $\Lambda_0$, $\tilde\rho$, $\langle\bullet :\bullet \rangle$, $\widehat W$, $s_i$, $T_x$, $w\bullet\mu$, $\widehat P$, $\widehat P^\nu$, $\widehat P^\nu_R$, $\widehat\lambda$, $z_\lambda$.

\item[\textbf{5.3.1}:] $\bfO^{\nu,\kappa}_R$, $M(\mu)_{R,\nu}$, $L(\mu)_K$, $\bfM(\lambda)_{R,\nu}$, $\bfL(\lambda)_K$, $\bfO_R$, $\bfM(\lambda)_R$, $\bfO^{+,\kappa}_R$, $\bfM(\lambda)_{R,+}$, $\bfO^{\nu,\kappa,f}_R$, $\bfO^{\nu,\kappa,\Delta}_R$, $\bfO^{\nu,\kappa}_{R,\t}$, $\bfO^{\nu,\kappa,\Delta}_{R,\t}$, $\bfO^{\nu,\kappa}[a]$, $P\{d\}$, $P^\nu\{d\}$, $\bfO_{R,\tau}^\nu(N)$, $\bfO^{\nu,\kappa}_{R,\tau}(N)[a]\{d\}$, $\bfO'$.

\item[\textbf{5.3.2}:] ${}^\sharp M$, ${}^\dag\! M$, $M^*$, $DM$, $\scrD M$, $\lub \bfO^{\nu,\kappa}_R$.

\item[\textbf{5.3.3}:] $\widehat\Pi(\widehat\lambda)$, $\widehat\Pi(\lambda,c)$, $\widehat\lambda\Uparrow\widehat\lambda'$, $\leqslant_\ell$, $\BGG$.

\item[\textbf{5.4\ \ }:] $\dot\otimes_R$, $\bfV_R$, $\bfV_R^*$, $e$, $f$, $X$, $T$, $I$,  $i\sim j$, $\scrI$, $e_i$, $f_i$, $m_i(\lambda)$, $\wt(\lambda)$, $\bfO^{\nu,\kappa}_{K,\tau,\beta}$.

\item[\textbf{5.5\ \ }:] $\pmb\Delta(\lambda)_{R,\t}$, $\bfA^{\nu,\kappa}_{R,\tau}$, $\bfL(\lambda)$, $\bfP(\lambda)_{R,\tau}$, $\bfT(\lambda)_{R,\tau}$, $\bfA^{\nu,\kappa}_{R,\tau}\{d\}$, $\bfT_{R,\tau}^{\nu,\kappa}\{d\}$, $\bfT_{R,d}$, $\bfT_{R,d}(N)$, $\psi_{R,d}^s$, $\Psi_{R,d}^s$.

\item[\textbf{5.6\ \ }:] $\bfm_{R,\nu}$, $\bfm_{R,\nu,\kappa}$, $\widehat W_\nu$, $\bfb_{R,\nu}$, $\bfO_R^\kappa(\nu)$, $\bfO^{\gamma,\kappa}_R(\nu)$, $\bfO^{\gamma,\kappa}_{R,\t}(\nu)$, $\bfO_{R,\tau}^{\gamma,\kappa}(\nu)\{a\}$, $\bfO^{+,\kappa}_R(\nu)$, $\bfO^{+,\kappa}_R(\nu)\{a\}$, $\bfA^{+,\kappa}_{R,\tau}(\nu)$.

\item[\textbf{5.7.1}:] $f_{u,v,z}(\tau_R,\kappa_R)$.

\item[\textbf{5.7.2}:] $\bfO^{\nu,\kappa}_{R,\t}\{a\}$, $\bfA^{\nu,\kappa}_{R,\t}\{a\}$, $p^o$, $\lambda^o$, $h$, $\varkappa=\varkappa_R$, $\scrO^{\nu}_{R,h}\{a\}$, $A^{\nu}_{R,h}\{a\}$, $M(\lambda)_{R,h}$, $\Delta(\lambda)_{R,h}$,
$\scrQ_R$, $T_{R,a_\bullet}(\nu_\bullet)$, $T_{R,h,d}$. 

\item[\textbf{5.7.3}:] $\bfV(\nu_p)$, $f_p$, $\bfT_{R,d}(\nu)$, $\bfH^\ell_{R,a}$, 
$f_\bfp$, $\bfT_{R,\bfp}(\nu)$, $\bfT_{R,(a)}(\nu)$, $\psi_{R_\frakp,d}^+(\nu)$, $\Psi_{R_\frakp,d}^+(\nu)$.

\item[\textbf{5.9\ \ }:] $E$, $F$.

\item[\textbf{6.1.1}:] $W$, $\frakh$, $S$, $\calA$, $\frakh_{reg}$, $c$, $H_c(W,\frakh)_R$, $\alpha_s$, $\check\alpha_s$, $R[\frakh]$, $R[\frakh^*]$, $\calO_c(W,\frakh)_R$, $\Delta(E)_R$, $L(E)$, $P(E)_R$,  $(\bullet)^\vee$, $c^\vee$. 

\item[\textbf{6.1.2}:] $\KZ_R$.

\item[\textbf{6.1.3}:] $W'$, $S'$, $\frakh^{W'}$, $\OInd_{W'}^W$, $\ORes_{W'}^W$, 
$W_H$, $\calO(W_H)_R$, $\OInd_H$.

\item[\textbf{6.1.4}:] $\mathrm{Ch}(M)$.

\item[\textbf{6.2.1}:] $\gamma_i$, $s_{ij}^{\gamma}$, $x_i$, $y_i$, $k$, $c_\gamma$, $h_R$, $h_{R, p}$, $\calO_R^{s,\kappa}\{d\}$, $\calO_R^{\kappa}(\frakS_d)$, $\Delta(\lambda)^{s,\kappa}_R$, $L(\lambda)^{s,\kappa}$, $P(\lambda)^{s,\kappa}_R$, $T(\lambda)^{s,\kappa}_R$, $I(\lambda)^{s,\kappa}_R$.

\item[\textbf{6.2.2}:] $\succ_s$, 
$\geqslant_{s,\kappa}$, $s^\star$, $\lambda^\star$.

\item[\textbf{6.2.3}:] $\KZ_{R,d}^s$.

\item[\textbf{6.2.4}:] $\scrR_\bfH$.
 
 \item[\textbf{6.3.1}:] $\lambda_+$, $\lambda_-$.

 \item[\textbf{7.1\ \ }:] $\Lambda^Q$, $\bfF(\Lambda^s)$, $|\lambda,s\rangle$,
$n_i(\lam)=n^s_i(\lam)=n^Q_i(\lam)$, $\bfwt(|\lambda,s\rangle)$.

 \item[\textbf{7.2\ \ }:] $\calG^\pm(\lambda,s)$, $\calO^{s^\star,-e}$.
 
\item[\textbf{7.3\ \ }:] $\calO^{s}_t$, $|s|$. 

\item[\textbf{7.4\ \ }:] $\Upsilon_d$, $\Sigma=\Sigma^{a,a'}$, $\widetilde\bfA^{\nu,-e}\{d\}$, $\tilde{\Upsilon}^{\nu,-e}_d$, $\tilde{\Psi}^{\nu}_d$, $\widetilde\bfA^{\nu,-e}_\mu$, $\widetilde\bfA^{\nu,-e}$.

\item[\textbf{8.1\ \ }:] $R^{A}$, $\bfg^{A}_{R}$, $\bfg^{A}_{R,\kappa}$, $\bigotimes_{R,a}$, $C$, $\gamma=\{\gamma_a\,;\,a\in A\}$, $\eta_a$, $x_a$, $C_\gamma$, $D_R=D_{R,\gamma}$, $\Gamma_{R}=\Gamma_{R,\gamma}$, ${}^a\!f$, ${}^A\!f$, $\langle N_1,\dots,N_n\rangle_R$, $\llangle M_1,\dots,M_n\rrangle_R$.

\item[\textbf{8.2\ \ }:] $(\bfO^{+,\kappa}_K,\dot\otimes_K,\bfa_K,\bfc_K)$, $(\bfO^{\nu,\kappa}_K,\dot\otimes_K,\bfa,\bfc)$, $\gamma_{-1}$, $\gamma_0$, $\gamma_1$, 

\item[\textbf{8.4\ \ }:] $v=v_R$.



\end{itemize}

\vspace{3mm}

\end{document}